\documentclass{amsart}
\usepackage{amssymb,amsmath,amsfonts,amsthm,epsfig,amscd}
\usepackage{stmaryrd}
\usepackage[all,cmtip,poly]{xy}
\usepackage{svn}
\usepackage{mathrsfs}

\newtheorem{thm}[subsubsection]{Theorem}
\newtheorem{lemma}[subsubsection]{Lemma}

\newtheorem{cor}[subsubsection]{Corollary}

\newtheorem{prop}[subsubsection]{Proposition}

\newtheorem{atheorem}[subsection]{Theorem}

\newtheorem{adefinition}[subsection]{Definition}

\newtheorem{aprop}[subsection]{Proposition}

\newtheorem{defn}[subsubsection]{Definition}

\theoremstyle{remark}
\newtheorem{remark}[subsubsection]{Remark}

\newtheorem{aremark}[subsection]{Remark}

\newtheorem{example}[subsubsection]{Example}

\numberwithin{equation}{subsection}
\def\nummultline{\addtocounter{subsubsubsection}{1}\begin{multline}}
\def\anumequation{\addtocounter{subsection}{1}\begin{equation}}


%
%
%


\newif\iffinalrun
\iffinalrun
\else
\fi

\iffinalrun
  \newcommand{\need}[1]{}
  \newcommand{\mar}[1]{}
\else
  \newcommand{\need}[1]{{\tiny *** #1}}
  \newcommand{\mar}[1]{\marginpar{\raggedright\tiny #1}}
\fi


\newcommand{\p}{\frakp}
\newcommand{\q}{\frakq}



\renewcommand{\bf}{\ensuremath{\mathbf{f}}}

\newcommand{\cA}{{\mathcal A}}

\newcommand{\cC}{{\mathcal C}}
\newcommand{\cD}{{\mathcal D}}
\newcommand{\cE}{{\mathcal E}}

\newcommand{\cG}{{\mathcal G}}
\newcommand{\cH}{{\mathcal H}}

\newcommand{\cK}{{\mathcal K}}

\newcommand{\cM}{{\mathcal M}}

\newcommand{\cO}{{\mathcal O}}
\newcommand{\cP}{{\mathcal P}}

\newcommand{\cR}{{\mathcal R}}
\newcommand{\cS}{{\mathcal S}}

\newcommand{\cV}{{\mathcal V}}
\newcommand{\cW}{{\mathcal W}}
\newcommand{\cX}{{\mathcal X}}
\newcommand{\cY}{{\mathcal Y}}
\newcommand{\cZ}{{\mathcal Z}}


\newcommand{\frakp}{\mathfrak{p}}
\newcommand{\frakq}{\mathfrak{q}}






\DeclareMathOperator{\Gal}{Gal}
\DeclareMathOperator{\GL}{GL}

\DeclareMathOperator{\Hom}{Hom}

\DeclareMathOperator{\Spf}{Spf}


\newcommand{\dR}{\mathrm{dR}}

\newcommand{\ur}{\mathrm{ur}}



\newcommand{\et}{\mathrm{\acute{e}t}}




\newcommand{\toisom}{\buildrel\sim\over\to}


\newcommand{\Fl}{\mathscr{F}\!\ell}

\newcommand{\GrBdR}{\mathrm{Gr}^{B^+_{\mathrm{dR}}}}

\newcommand{\Spl}{\mathrm{Spl}}

\providecommand{\MR}[1]{}
\renewcommand{\MR}[1]{}

\begin{document}
\title[The generic part of the cohomology of Shimura varieties]{On the generic part of the cohomology of compact unitary Shimura varieties}

\author[A. Caraiani]{Ana Caraiani}\email{caraiani@princeton.edu}
\address{Department of Mathematics, Princeton University, Fine Hall, Washington Rd., Princeton, NJ 08544, USA}
\thanks{ A.C. was partially supported by the NSF Postdoctoral Fellowship DMS-1204465 and NSF Grant DMS-1501064.}

\author[P. Scholze]{Peter Scholze}\email{scholze@math.uni-bonn.de}
\address{Mathematisches Institut der Universit\"at Bonn, Endenicher Allee 60, 53115 Bonn, Germany}

\begin{abstract} The goal of this paper is to show that the cohomology of compact unitary Shimura varieties is concentrated in the middle degree and torsion-free, after localizing at a maximal ideal of the Hecke algebra satisfying a suitable genericity assumption. Along the way, we establish various foundational results on the geometry of the Hodge-Tate period map. In particular, we compare the fibres of the Hodge-Tate period map with Igusa varieties.
\end{abstract}

\maketitle

\tableofcontents

\section{Introduction}\label{sec:introduction}
Let $G/\mathbb{Q}$ be a reductive group. The real group $G(\mathbb R)$ acts on its associated symmetric domain $X = G(\mathbb R)/K_\infty$, where $K_\infty\subset G(\mathbb R)$ is a maximal compact subgroup. For any congruence subgroup $\Gamma\subset G(\mathbb Q)$, one can form the locally symmetric space
\[
X_\Gamma = \Gamma\backslash X\ .
\]
We assume that $X_\Gamma$ is compact, and that $\Gamma$ is torsion-free. Then Matsushima's formula, \cite{Matsushima}, expresses the cohomology groups $H^i(X_\Gamma,\mathbb C)$ with complex coefficients in terms of automorphic forms $\pi$ on $G$, and the $(\mathfrak g,K_\infty)$-cohomology of their archimedean component $\pi_\infty$.\footnote{In the non-compact case, this is still true, and a theorem of Franke, \cite{Franke}.} A computation of $(\mathfrak g,K_\infty)$-cohomology then shows that the part of cohomology to which tempered representations contribute is concentrated in the middle range $q_0\leq i\leq q_0+l_0$, cf.~\cite[Theorem III.5.1]{borelwallach}; here $l_0=\mathrm{rk}\ G - \mathrm{rk}\ K_\infty$, and $q_0=\frac{1}{2}(\dim X-l_0)$.

In particular, if $l_0=0$, then tempered representations occur only in the middle degree $q_0$. This happens when the $X_\Gamma$ are complex algebraic varieties, e.g.~when $G$ gives rise to a Shimura variety.

The motivating question of this paper is to establish a similar result for the cohomology groups $H^i(X_\Gamma,\mathbb F_\ell)$ with torsion coefficients. In this context, it is difficult to formulate the analogue of the temperedness condition, which is an analytic one. We learnt the following formulation from M.~Emerton. Recall that for any system $\mathfrak m$ of Hecke eigenvalues appearing in $H^i(X_\Gamma,\mathbb F_\ell)$, one expects to have a mod $\ell$ Galois representation $\rho_{\mathfrak{m}}$ (with values in the Langlands dual group). One may then put the condition that $\rho_{\mathfrak{m}}$ is irreducible, and ask whether this implies that $q_0\leq i\leq q_0+l_0$. In particular, a result of this type for $G=GL_n$ (where $l_0>0$) is important for automorphy lifting theorems in the non-self dual case as in work of Calegari-Geraghty, \cite[Conjecture B]{calegari-geraghty}.

In the present paper, we deal with this question in the case where $X_\Gamma$ is a Shimura variety (so that $l_0=0$). More precisely, we will consider the case where $G$ is an anisotropic unitary similitude group of dimension $n$, for some CM field $F$ with totally real subfield $F^+\subset F$. We assume that $F$ contains an imaginary-quadratic field. Assume moreover that $G$ is associated with a division algebra over $F$, i.e., it is one of Kottwitz' simple Shimura varieties, \cite{Kottwitz-lambda}.\footnote{We also allow the complementary case where $G$ is quasisplit at all finite places, under a small extra assumption (cf.~Section~\ref{setup}), so that our main result also covers cases where nontrivial endoscopy occurs.} Our main theorem is the following.

\begin{atheorem}\label{main theorem intro} Let $\mathfrak m$ be a system of Hecke eigenvalues appearing in $H^i(X_\Gamma,\mathbb F_\ell)$. Then there is an associated Galois representation
\[
\rho_{\mathfrak{m}}: \mathrm{Gal}(\overline{F}/F)\to GL_n(\bar{\mathbb F}_\ell)\ .
\]

Assume that there is a rational prime $p$ such that $F$ is completely decomposed above $p$, and
\[
\rho_{\mathfrak{m}}\mathrm{\ is\ unramified\ and\ decomposed\ generic}
\]
at all places of $F$ above $p$. Then $i=q_0$ is the middle degree.
\end{atheorem}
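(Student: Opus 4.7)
The strategy I would pursue is to pass to infinite level at $p$ and exploit the Hodge--Tate period map
\[
\pi_{\HT}\colon \mathcal{S}_{K^p}\longrightarrow \Fl,
\]
set up earlier in this paper, together with the identification of its geometric fibres with (perfectoid) Igusa varieties. Since $H^i(X_\Gamma, \F_\ell)_{\mathfrak{m}}$ is recovered as a piece of $R\Gamma(\Fl, R\pi_{\HT,*}\F_\ell)_{\mathfrak{m}}$, one is reduced to studying this complex on the flag variety, which carries the Newton stratification
\[
\Fl = \bigsqcup_{b \in B(G,\mu)} \Fl^b.
\]
The plan has two steps: first kill all non-basic strata after localization at $\mathfrak{m}$, and then show that the surviving contribution from the basic (open) stratum $\Fl^{b_0}$ lies only in the middle degree.

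For the first step, the stalks of $R\pi_{\HT,*}\F_\ell$ at a point of $\Fl^b$ are, via the fibre comparison with Igusa varieties, computed by $H^*(\mathrm{Ig}^b, \F_\ell)$. A Mantovan--Shin-type formula expresses this cohomology in terms of automorphic representations of the inner form $J_b(\AA_f)$, and Hecke-localization at $\mathfrak{m}$ isolates those $\pi$ whose attached Galois representation matches $\rho_{\mathfrak{m}}$. The decomposed-generic hypothesis --- that the Satake parameters $\gamma_1,\ldots,\gamma_n$ of $\rho_{\mathfrak{m}}$ at each place $v \mid p$ satisfy $\gamma_i/\gamma_j \ne p^{\pm 1}$ for $i \ne j$ --- forces the local component of any such $\pi$ at $v$ to be an unramified generic principal series of $\GL_n(F_v)$. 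No proper inner form of $\GL_n$ admits a generic unramified representation, so we must have $b = b_0$, and $(R\pi_{\HT,*}\F_\ell)|_{\Fl^b}$ vanishes at $\mathfrak{m}$ whenever $b \ne b_0$.

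For the second step, the perfectoidness of $\mathcal{S}_{K^p}$ and the affinoid-perfectoid nature of the fibres of $\pi_{\HT}$ allow one to treat $R\pi_{\HT,*}\F_\ell$ as an ordinary constructible sheaf on the $d$-dimensional space $\Fl^{b_0}$, where $d = q_0$ is the complex dimension of $X_\Gamma$. An Artin-type vanishing theorem on this admissible open stratum yields $H^i(\Fl^{b_0}, -)_{\mathfrak{m}} = 0$ for $i > d$; Poincar\'e duality on the compact Shimura variety $X_\Gamma$, together with the observation that $\mathfrak{m}^\vee$ is again decomposed generic, supplies the matching vanishing for $i < d$. Combined, the cohomology is concentrated in degree $d = q_0$. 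I expect the main obstacle to be the first step: giving a clean and sufficiently general Mantovan--Shin description of $H^*(\mathrm{Ig}^b)_{\mathfrak{m}}$ --- robust enough to accommodate the endoscopic quasisplit case allowed by the theorem --- and extracting from decomposed genericity the local representation-theoretic statement that rules out $J_b$-contributions for $b \ne b_0$.
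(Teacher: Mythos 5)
Your overall architecture --- pass to infinite level, push forward along $\pi_{HT}$, identify fibres with Igusa varieties, use a Mantovan--Shin type formula plus genericity to kill strata, and combine a perversity/Artin-vanishing input with Poincar\'e duality --- is the paper's. But you have identified the wrong surviving stratum, and this inverts the whole argument. The group $J_b$ is an inner form of $G$ only for \emph{basic} $b$; for general $b$ it is an inner form of the Levi $M_b$. When $\mu$ is non-central, the basic $J_{b_0}$ is built out of division algebras (for $\GL_n$ and $\mu=(1,\ldots,1,0,\ldots,0)$ with $n-q$ ones, the basic Newton point has slope $(n-q)/n$, which is non-integral unless $q\in\{0,n\}$), so it is \emph{not} quasi-split, and the Langlands--Jacquet transfer of a generic principal series to it is \emph{zero}. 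Thus the decomposed-generic hypothesis kills the basic stratum rather than isolating it. The unique $b\in B(\GL_n/L,\mu^{-1})$ with $J_b$ quasi-split is the $\mu$-\emph{ordinary} one, where $J_b\cong \GL_{n-q}\times \GL_q$ is a Levi. So after localization at $\mathfrak m$ the complex $R\pi_{HT\ast}\F_\ell$ is supported on the $\mu$-ordinary stratum, which inside $\Fl_{G,\mu}$ is the \emph{closed}, $0$-dimensional stratum (the closure relations on the flag variety are reversed relative to the special fibre); this reversal is exactly what makes the argument terminate.

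Consequently your second step also does not go through as stated: there is no need for, and no obvious way to prove, an Artin vanishing theorem on the open basic stratum, because nothing is supported there. The paper's mechanism is instead a perversity statement for $R\pi_{HT\ast}\F_\ell$, proved via nearby cycles on formal models of $\Fl_{G,\mu}$: among all $b$ with $H^*(\mathrm{Ig}^b,\F_\ell)_{\mathfrak m}\neq 0$, take $d=\langle 2\rho,\nu_b\rangle$ minimal; then that cohomology is concentrated in degree $d$. This concentration is needed \emph{before} the automorphic input, since one must pass from torsion to $\bar{\mathbb Q}_\ell$-coefficients to apply the trace formula and the genericity argument; one then concludes that the minimal $b$ is $\mu$-ordinary, where $d=\langle 2\rho,\mu\rangle=\dim S_K=q_0$, and the Leray spectral sequence over the $0$-dimensional ordinary stratum gives $H^i=0$ for $i<q_0$. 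Poincar\'e duality (applied to the dual system of Hecke eigenvalues, which satisfies the same hypothesis) gives the other bound, as you say. To repair your write-up you must replace ``basic'' by ``$\mu$-ordinary'' throughout step 1, justify the quasi-splitness criterion for $J_b$ as an inner form of the Levi $M_b$, and reorder the two steps so that the concentration-in-one-degree statement feeds into the vanishing argument rather than following it.
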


\begin{aremark} The first part of the theorem can be deduced from \cite{scholze}, but we give a different proof in this paper. We will make use of the Hodge-Tate period map again, but this time in a $p$-adic context with $p\neq \ell$ (whereas~\cite{scholze} worked in the situation $p=\ell$). We note that this should make it possible to understand the behaviour of $\rho_{\mathfrak{m}}$ at places above $\ell$.
\end{aremark}

\begin{aremark} It is a formal consequence that the $\mathbb Z_\ell$-cohomology localized at $\mathfrak m$ is concentrated in degree $q_0$, and torsion-free, if the conclusion of the theorem holds true.
\end{aremark}

\begin{aremark} The condition that $\rho_{\mathfrak{m}}$ is decomposed generic is defined below. It follows from a suitable ``big image'' assumption. However, note that if $\rho_{\mathfrak{m}}$ is a generic sum of characters, there will still be a prime $p$ as in the theorem, so that our result also applies to many reducible representations.
\end{aremark}

\begin{aremark} We prove the result under a slightly weaker assumption depending on the precise signature of $G$. In particular, if the signature of $G$ is $(0,n)$ at all except for one infinite place, e.g.~in the Harris-Taylor case, we only need the existence of one finite prime $v$ of $F$ at which $\rho_{\mathfrak m}$ is unramified and decomposed generic.
\end{aremark}

\begin{aremark} In the Harris-Taylor case (i.e., $G$ is of signature $(1,n-1)$ at one infinite place, and $(0,n)$ at the other places), there has been previous work on this question, notably by Shin, \cite{shin-torsion}, restricting attention to the cohomology that is supercuspidal modulo $\ell$ at some finite prime $p$, by Emerton and Gee, \cite{emerton-gee}, making suitable assumptions on $\rho_{\mathfrak m}$ at $\ell$-adic places, and by Boyer, \cite{boyer}, under a condition very closely related to our condition.
\end{aremark}

\begin{aremark} Lan and Suh, \cite{lan-suh}, prove that if the level is hyperspecial at $\ell$ and one takes cohomology with coefficients in the local system $\mathcal{L}_\xi$ corresponding to a suitably generic algebraic representation $\xi$ of $G$, then the whole $\ell$-adic cohomology groups $H^i(X_\Gamma,\mathcal{L}_\xi)$ for $i\neq q_0$ vanish. This behaviour cannot be expected in our situation, as at least all even cohomology groups $H^{2i}(X_\Gamma,\mathbb F_\ell)$ are nonzero, so it is necessary to localize at some maximal ideal of the Hecke algebra.
\end{aremark}

\begin{remark} An argument involving the Hochschild-Serre spectral sequence and Poincar\'e duality shows that the theorem also holds when $\mathbb F_\ell$ is replaced by a non-trivial (Hecke-equivariant) coefficient system.
\end{remark}

\begin{aremark} Let $F$ be a CM field and $\Pi$ be a conjugate self-dual regular algebraic cuspidal automorphic representation of $GL_n(\mathbb{A}_F)$. Then $\Pi$ will be obtained by base change from an automorphic representation $\pi$ on a unitary group, which contributes to the cohomology of a compact unitary Shimura variety (see, for example,~\cite{harris-taylor, shin-galois, caraiani}). In this situation, $\pi$ contributes only to the middle degree cohomology, and the proof relies on genericity rather than temperedness. In fact, concentration in middle degree is proved simultaneously with the Ramanujan-Petersson conjecture (at finite places) for $\Pi$ as above, using the template of~\cite{harris-taylor} rather than appealing to~\cite{borelwallach}. These results rely on the fact that the local components of cuspidal automorphic representations of $GL_n$ are \emph{generic}, and follow by combining the classification of unitary generic representations of $GL_n$ due to Tadic (and the bounds of Jacquet-Shalika) with the Weil conjectures. While temperedness is an analytic condition, genericity can be formulated modulo $\ell$.
\end{aremark}

Let us define the critical notion of being decomposed generic.

\begin{adefinition} Let $L$ be a $p$-adic field with residue field $\mathbb F_q$, $\ell\neq p$. An unramified representation
\[
\overline{\rho}: \mathrm{Gal}(\overline{L}/L)\to GL_n(\bar{\mathbb F}_\ell)
\]
is \emph{decomposed generic} if the eigenvalues $\{\lambda_1,\ldots,\lambda_n\}$ of $\overline{\rho}(\mathrm{Frob})$, where $\mathrm{Frob}\in \mathrm{Gal}(\overline{L}/L)$ is an arithmetic Frobenius, satisfy $\lambda_i/\lambda_j\not\in \{1,q\}$ for all $i\neq j$.
\end{adefinition}

The main consequence of this definition is that any characteristic $0$ lift of $\overline{\rho}$ is a direct sum of characters (i.e., ``\emph{decomposed}''), and the associated representation of $GL_n(L)$ under the local Langlands correspondence is a \emph{generic}\footnote{Recall that a generic representation is one which admits a Whittaker model - see, for example Section 2.3 of~\cite{kudla}.} principal series representation, cf.~Lemma~\ref{generic lifts}. 

\textbf{The rough idea.} Let us now explain the idea of our proof. In very rough terms, the idea is to work at a fixed prime $p\neq \ell$, and look at the projection from the Shimura variety $S$, which is a moduli space of abelian varieties (with extra structures), to the corresponding moduli space $M$ of $p$-divisible groups (with extra structures),
\[
\pi: S\to M\ .\footnote{This idea is also behind \cite{scholzeLLC}, and was also mentioned to one of us (P.S.) by R.~Kottwitz.}
\]
One could then analyze the cohomology of the Shimura variety in terms of a Leray spectral sequence. Note that the fibres of $\pi$ should be a moduli space of abelian varieties with a trivialization of their $p$-divisible group, which are essentially the Igusa varieties of~\cite{mantovan}, cf.~also~\cite{harris-taylor}. This means that one can compute the fibres of $R\pi_\ast \mathbb Z_\ell$ in terms of the cohomology of Igusa varieties. The alternating sum of the $\bar{\mathbb Q}_\ell$-cohomology groups has been analyzed in depth by Sug Woo Shin, \cite{shin-igusa}, \cite{shin-stable}.

An important property of the situation is that the Hecke operators away from $p$ act trivially on $M$, so the passage to the localization at $\mathfrak m$ can already be done \emph{on the sheaf} $R\pi_\ast \mathbb F_\ell$. This makes it possible to use geometry on $M$. More specifically, in the actual setup considered below, (the localization at $\mathfrak m$ of) $R\pi_\ast \mathbb F_\ell$ will turn out to be perverse (up to shift), and thus is concentrated in one degree on the largest stratum where it is nonzero. In that case, (the localization at $\mathfrak m$ of) $R\pi_\ast \mathbb Z_\ell$ will be concentrated in one degree \emph{and flat}. Thus, not much information is lost by passing to the alternating sum of the $\bar{\mathbb Q}_\ell$-cohomology groups. Specifically, we will use this argument inductively to show that $(R\pi_\ast \mathbb Z_\ell)_{\mathfrak m}$ is trivial on all strata except the $0$-dimensional stratum, which will then give the desired bound.

Unfortunately, the moduli space $M$ of $p$-divisible groups does not really exist, or at the very least has horrible properties. This makes it hard to execute this strategy in a naive way. In April 2011, \cite{scholze-letter-dat}, one of us realized (in the Harris-Taylor case) that there should be a Hodge-Tate period map, which would make a good substitute for $\pi$.\footnote{We learnt that L.~Fargues had also been aware of the Hodge-Tate period map in some form.} The idea here is that if $C/\mathbb Q_p$ is a complete algebraically closed nonarchimedean field with ring of integers $\cO_C$, then by \cite[Theorem B]{scholzeweinstein}, $p$-divisible groups over $\cO_C$ are classified by pairs $(T,W)$, where $T$ is a finite free $\mathbb Z_p$-module, and $W\subset T\otimes_{\mathbb Z_p} C$ is a subvectorspace, the \emph{Hodge-Tate filtration}. In particular, $p$-divisible groups with a trivialization of their Tate module are classified by a Grassmannian, at least on $(C,\cO_C)$-valued points. Now, even if the moduli space of $p$-divisible groups is not a nice object, one can replace it by this Grassmannian, which is manifestly a nice object. It turns out that with this modification, the argument outlined above works.

\textbf{The precise ideas.} Let us now be more precise. We work ad\`elically, so for any compact open subgroup $K\subset G(\mathbb A_f)$, we have the Shimura variety $S_K$, which is a quasiprojective scheme over the reflex field $E$. For the moment, we allow an arbitrary Shimura variety. Recall that these are associated with Shimura data, which consist of a reductive group $G/\mathbb Q$ and a $G(\mathbb R)$-conjugacy class $X$ of homomomorphisms $h: \mathrm{Res}_{\mathbb C/\mathbb R} \mathbb G_m\to G_{\mathbb R}$, subject to the usual axioms. Then
\[
S_K(\mathbb C) = G(\mathbb Q)\backslash (X\times G(\mathbb A_f)/K)\ .
\]

Associated with any $h$, one has a minuscule cocharacter $\mu = \mu_h: \mathbb G_m\to G_{\mathbb C}$. The reflex field $E\subset \mathbb C$ is the field of definition of the conjugacy class of $\mu$. With any cocharacter $\mu$, one can associate two opposite parabolics $P_\mu$ and $P_\mu^{\mathrm{std}}$, and there are two corresponding flag varieties $\mathrm{Fl}_{G,\mu}$ and $\mathrm{Fl}_{G,\mu}^{\mathrm{std}}$ over $E$, parametrizing parabolic subgroups in the given conjugacy class. The association $h\mapsto \mu_h\mapsto P_{\mu_h}^{\mathrm{std}}$ defines the (holomorphic) Borel embedding $X\hookrightarrow \mathrm{Fl}_{G,\mu}^{\mathrm{std}}(\mathbb C)$. There is also an antiholomorphic embedding $X\hookrightarrow \mathrm{Fl}_{G,\mu}(\mathbb C)$ defined using $P_{\mu_h}$.

Fix any prime $p$, and $\p|p$ a place of the reflex field $E$. Denote by $\cS_K$ the rigid-analytic variety, or rather the adic space, corresponding to $S_K\otimes_E E_\p$, and similarly for $\Fl_{G,\mu}$. Our first main result refines the theory behind the Hodge-Tate period map from~\cite{scholze}, which can be regarded as a $p$-adic version of the (antiholomorphic) Borel embedding.

\begin{atheorem} Assume that the Shimura datum is of Hodge type. Then for any sufficiently small compact open subgroup $K^p\subset G(\mathbb A_f^p)$, there is a perfectoid space $\cS_{K^p}$ over $E_\p$ such that
\[
\cS_{K^p}\sim \varprojlim_{K_p} \cS_{K_pK^p}\ .
\]
Moreover, there is a Hodge-Tate period map
\[
\pi_{HT}: \cS_{K^p}\to \Fl_{G,\mu}\ ,
\]
which agrees with the Hodge-Tate period map constructed in~\cite{scholze} for the Siegel case, and is functorial in the Shimura datum.
\end{atheorem}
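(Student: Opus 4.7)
The plan is to build the perfectoid Shimura variety by pullback from a Siegel embedding. Since $(G,X)$ is of Hodge type, fix a symplectic embedding $\iota\colon (G,X)\hookrightarrow (\GSp_{2g},\mathcal{H}_g)$ of Shimura data. For sufficiently small $K^p\subset G(\mathbb A_f^p)$, choose a compatible level $K'^p\subset \GSp_{2g}(\mathbb A_f^p)$ with $\iota(K^p)\subset K'^p$; by standard properties of Hodge-type Shimura varieties, the induced morphism $S_{K_pK^p}\to S^{\mathrm{Sie}}_{K'_pK'^p}$ is a closed immersion for $K_p$, $K'_p$ compatible and sufficiently small. Let $\cS^{\mathrm{Sie}}_{K'^p}$ denote the perfectoid Siegel Shimura variety of \cite{scholze}, base-changed to $E_\p$, equipped with its Hodge-Tate period map $\pi_{HT}^{\mathrm{Sie}}\colon \cS^{\mathrm{Sie}}_{K'^p}\to \Fl_{\GSp_{2g},\mu_{\mathrm{Sie}}}$.

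For each $K_p$ with compatible $K'_p$, pulling back the closed immersion $\cS_{K_pK^p}\hookrightarrow \cS^{\mathrm{Sie}}_{K'_pK'^p}$ along the projection $\cS^{\mathrm{Sie}}_{K'^p}\to \cS^{\mathrm{Sie}}_{K'_pK'^p}$ produces a closed adic subspace $Z_{K_p}\subset \cS^{\mathrm{Sie}}_{K'^p}$. I would define $\cS_{K^p}$ as the closed subspace with underlying topological space $\bigcap_{K_p}|Z_{K_p}|$ and structure sheaf inherited from $\cS^{\mathrm{Sie}}_{K'^p}$. Granted that $\cS_{K^p}$ is perfectoid, the tilde-limit property $\cS_{K^p}\sim\varprojlim_{K_p}\cS_{K_pK^p}$ is inherited from the analogous Siegel statement, since pullbacks of closed immersions commute with the relevant inverse limits. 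The Hodge-Tate map is the restriction $\pi_{HT}:=\pi_{HT}^{\mathrm{Sie}}|_{\cS_{K^p}}$; one checks it factors through $\Fl_{G,\mu}\hookrightarrow \Fl_{\GSp_{2g},\mu_{\mathrm{Sie}}}$ by inspecting $(C,\cO_C)$-valued points: a point of $\cS_{K^p}$ yields an abelian variety whose rational $p$-adic Tate module carries the $G$-structure coming from Hodge tensors, and the Hodge-Tate filtration respects this structure, so defines a parabolic in the $G$-conjugacy class of $\mu$. Functoriality in the Shimura datum, compatibility with the Siegel construction, and independence of the auxiliary $\iota$ all follow from the uniqueness of a perfectoid space with a given tilde-limit presentation.

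The main obstacle is proving perfectoidness of $\cS_{K^p}$ as a closed subspace of $\cS^{\mathrm{Sie}}_{K'^p}$, since in general closed subspaces of perfectoid spaces need not be perfectoid. I expect to address this by isolating a notion of being \emph{strongly Zariski closed}, meaning that the subspace is Zariski-locally cut out by the $p$-adic closure of a finitely generated ideal of power-bounded functions, and showing that strongly Zariski closed subspaces of perfectoid spaces inherit a perfectoid structure. The remaining task is then to verify this property for $\cS_{K^p}$: working on the affinoid perfectoid cover of $\cS^{\mathrm{Sie}}_{K'^p}$ produced by \cite{scholze}, the finite-level ideals defining $\cS_{K_pK^p}$ inside $\cS^{\mathrm{Sie}}_{K'_pK'^p}$ pull back to a coherent system of ideals in the tower, and their $p$-adic closures should generate the ideal of $\cS_{K^p}$. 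A natural source of defining functions is the Hodge tensors on the universal abelian variety that cut out the reduction of structure from $\GSp_{2g}$ to $G$. Once perfectoidness is established, everything else is formal.
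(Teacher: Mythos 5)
Your construction of the perfectoid space itself is the paper's route: the paper cites \cite[Theorem IV.1.1]{scholze}, whose treatment of the Hodge-type case is precisely your ``strongly Zariski closed'' argument (pull back the finite-level closed immersions into the Siegel tower, and show that the resulting closed subset of an affinoid perfectoid cover, cut out by the $p$-adic closure of the ideal generated by the finite-level defining functions, inherits a perfectoid structure). So the part you single out as ``the main obstacle'' is exactly the part that is already available in the cited literature; the new content of the theorem in this paper lies elsewhere.

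The genuine gap is in the step you dispose of in one sentence: ``the Hodge-Tate filtration respects this structure, so defines a parabolic in the $G$-conjugacy class of $\mu$.'' This is the actual content of the theorem and it is not formal. At a point $x\in\cS_{K^p}(C,\cO_C)$ you have (i) a trivialization $T_pA\otimes\mathbb{Q}_p\cong V$ matching the $p$-adic \'etale realizations $s_{\alpha,p}$ of the Hodge tensors with $s_\alpha$, and (ii) the Hodge-Tate filtration on $T_pA\otimes_{\mathbb{Z}_p}C$. To conclude that $\pi_{HT}(x)$ lies in $\Fl_{G,\mu}\subset\Fl_{\tilde G,\tilde\mu}$ you must produce a trivialization over $C$ that matches the tensors \emph{and} carries the Hodge-Tate filtration to $\mathrm{Fil}_\bullet(\mu)$, i.e.\ you must show that the quasitorsor of such trivializations is a genuine $P_\mu$-torsor. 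The paper does this by comparing the Hodge-Tate filtration with the Hodge--de Rham filtration through the ($p$-adic) comparison isomorphism, which requires two nontrivial inputs: that the de Rham realizations $s_{\alpha,\mathrm{dR}}$ are defined over the reflex field (Deligne's theorem that Hodge cycles on abelian varieties are absolute Hodge), and that $s_{\alpha,p}$ and $s_{\alpha,\mathrm{dR}}$ correspond under the comparison isomorphism (Blasius). Without these, there is no reason the Hodge-Tate filtration is split by a cocharacter of $G$ (rather than merely of the ambient symplectic group) in the conjugacy class of $\mu$. The same inputs are needed for independence of the Siegel embedding: two embeddings are compared via a $G$-invariant idempotent $e$, and one must show its $p$-adic realization $e_p$ respects the Hodge-Tate filtration, which again goes through Blasius. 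This does not follow from uniqueness of perfectoid tilde-limits, which identifies the spaces but says nothing about the period maps. A further (smaller) omission: to upgrade the pointwise factorization to a map of adic spaces one should note that the pullbacks of the functions cutting out $\Fl_{G,\mu}$ vanish because they vanish at all rank~$1$ points of a perfectoid (hence uniform) space; the paper instead constructs the map structurally from the $G$-torsor of tensor-preserving trivializations and its canonical section over $\cS_{K^p}$, which is also what makes functoriality and Hecke-equivariance transparent.
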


Moreover, we prove a result saying that all semisimple automorphic vector bundles come via pullback along $\pi_{HT}$.

The idea here is to chase Hodge tensors through all constructions, which is possible by using Deligne's results that they are absolute Hodge, \cite{delignehodge}, (and also satisfy a compatibility under the $p$-adic comparison isomorphism,~\cite{blasius}), and the results on relative $p$-adic Hodge theory of \cite{scholzerigid}. The details appear in Section~\ref{refining HT}. As stated above, one should think of $\Fl_{G,\mu}$ as a (substitute for the) moduli space of $p$-divisible groups with extra structure and trivialized Tate module.

Next, we want to identify the fibres of $\pi_{HT}$ with Igusa varieties. First, we have to define a natural stratification on $\Fl_{G,\mu}$, which correspondends under $\pi_{HT}$ to the Newton stratification (pulled back from the special fibre through the specialization map). Recall that the Newton strata are parametrized by the finite subset $B(G,\mu^{-1})\subset B(G)$ of Kottwitz' set $B(G)$ of isocrystals with $G$-structure.

\begin{atheorem} Let $G$ be a reductive group over $\mathbb Q_p$, and $\mu$ a conjugacy class of minuscule cocharacters. There is a natural decomposition
\[
\Fl_{G,\mu} = \bigsqcup_{b\in B(G,\mu^{-1})} \Fl_{G,\mu}^b
\]
into locally closed subsets $\Fl_{G,\mu}^b$. The union
\[
\bigsqcup_{b\preceq b^\prime} \Fl_{G,\mu}^{b^\prime}
\]
is closed for all $b\in B(G,\mu^{-1})$; in particular, $\Fl_{G,\mu}^b$ is open when $b$ is the basic element of $B(G,\mu^{-1})$.
\end{atheorem}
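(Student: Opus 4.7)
The approach is to exhibit the Newton stratification via Fargues's classification of $G$-bundles on the Fargues-Fontaine curve. For any geometric point $x\colon \mathrm{Spa}(C,\cO_C)\to \Fl_{G,\mu}$ over a complete algebraically closed extension of $\breve{\mathbb Q}_p$, the minuscule Bialynicki-Birula isomorphism identifies the $\mu$-Schubert cell in $\GrBdR_{G,\mu}$ with $\Fl_{G,\mu}$, so $x$ corresponds to a canonical modification of the trivial $G$-bundle on the Fargues-Fontaine curve $X_{C^\flat}$, of type $\mu$ at the untilt $C$. By Fargues, the isomorphism class of the resulting $G$-bundle $\mathcal{E}_x$ is an element $b(x)\in B(G)$, and the type $\mu$ condition together with a Mazur-type inequality force $b(x)\in B(G,\mu^{-1})$. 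I define $\Fl_{G,\mu}^b$ to be the locus where $b(x)=b$.

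To see this is a genuine stratification of an adic space, and not just a partition of topological points, I would pass to the associated diamond $\Fl_{G,\mu}^\diamond$ over $\Spd\,\breve{\mathbb Q}_p$, where the above construction globalises: over any perfectoid pro-\'etale cover $\mathrm{Spa}(R,R^+)\to \Fl_{G,\mu}^\diamond$, the Bialynicki-Birula map produces a $B^+_{\dR}$-lattice, hence a $G$-bundle $\mathcal{E}_R$ on the relative Fargues-Fontaine curve $X_R$, whose fibrewise Newton data recover $b(x)$. Local closedness of the strata, as well as the closure relation saying that the closure of $\Fl_{G,\mu}^b$ is contained in $\bigsqcup_{b\preceq b'}\Fl_{G,\mu}^{b'}$, then follows from upper semicontinuity of the Harder-Narasimhan/Newton polygon for families of $G$-bundles on the relative Fargues-Fontaine curve, which via the Tannakian formalism reduces to the Kedlaya-Liu semicontinuity theorem for vector bundles. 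Openness of the basic stratum is the standard consequence of the minimum of the Newton polygon being attained on an open locus, i.e., of the semistable locus being open.

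The main technical obstacle is to transport the Newton/HN semicontinuity theorems, which are most naturally formulated over perfectoid bases, to the non-perfectoid rigid-analytic space $\Fl_{G,\mu}$. The remedy is to prove the statement on the diamond $\Fl_{G,\mu}^\diamond$ using any perfectoid pro-\'etale cover and then descend using the fact that $\Fl_{G,\mu}$ and $\Fl_{G,\mu}^\diamond$ have the same underlying topological space. A secondary issue is the reduction from general reductive $G$ to $\GL_n$: given a faithful representation $G\hookrightarrow \GL(V)$, the class $b(x)\in B(G)$ is pinned down by the family of associated vector bundles $\mathcal{E}_x\times^G V$, compatibly with tensor constructions, so the Tannakian description of $B(G)$ allows one to bootstrap semicontinuity and the closure order from the $\GL_n$-case (where they are classical, via slopes of vector bundles) to arbitrary $G$.
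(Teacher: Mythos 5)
Your proposal is correct and follows essentially the same route as the paper: define $b(x)$ via the Bialynicki-Birula isomorphism, the resulting modification of the trivial $G$-bundle on the Fargues-Fontaine curve, and Fargues's classification; force $b(x)\in B(G,\mu^{-1})$ by a Mazur-type inequality together with a computation of the Kottwitz invariant; and obtain local closedness and the closure relations from Kedlaya--Liu semicontinuity after pulling back to a perfectoid pro-\'etale cover that is a topological quotient map, reducing from $G$ to $GL_n$ Tannakianly via representations. Your phrasing in terms of the diamond $\Fl_{G,\mu}^\diamond$ is only a cosmetic repackaging of the paper's descent step.
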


Thus, the closure relations are exactly the opposite of the closure relations of the Newton stratification on the Shimura variety;\footnote{We note that we do not prove that the closure of a stratum is a union of strata, so the term ``closure relations'' is meant in a loose sense.} this change of closure relations is related to a subtle behaviour of $\pi_{HT}$ on certain higher-rank points of the adic space.

To give an idea of what the stratification looks like, we recall the example of the modular curve. In that case, the flag variety is just $\mathbb P^1$. The whole ordinary locus of the modular curve is contracted to $\mathbb P^1(\mathbb Q_p)$, and the Hodge-Tate period map just measures the position of the canonical subgroup on this locus. The supersingular locus is mapped onto Drinfeld's upper half-plane $\Omega^2 = \mathbb P^1\setminus \mathbb P^1(\mathbb Q_p)$ in a way best understood using the isomorphism between the Lubin-Tate and Drinfeld towers. Thus, in this case the relevant stratification of $\mathbb P^1$ is simply the stratification into $\mathbb P^1(\mathbb Q_p)$ and $\Omega^2$. We caution the reader that in general, the strata $\Fl_{G,\mu}^b$ are quite amorphous, and it happens that some nonempty strata have no classical points. The reason is that if $b$ is basic, $\Fl_{G,\mu}^b$ agrees with the \emph{admissible} locus in the sense of~\cite{rapoport-zink}, which does not admit a nice description, but whose classical points agree with the explicit \emph{weakly admissible} locus. If $G$ is a non-split inner form of $GL_5$ and $\mu$ corresponds to $(1,1,0,0,0)$, one can verify that all classical points of $\Fl_{G,\mu}$ are contained in the basic locus, while there are many other nonempty strata.

The proof of this theorem relies on certain recent advances in $p$-adic Hodge theory. First, to define the stratification on points, we make use of the classification of $G$-bundles on the Fargues-Fontaine curve; by a recent result of L.~Fargues, \cite{farguesGbun}, they are classified up to isomorphism by $B(G)$. Here, we construct a $G$-bundle on the Fargues-Fontaine curve by starting with the trivial $G$-bundle and modifying it at the infinite point of the Fargues-Fontaine curve. To construct the modification, we have to relate the flag variety $\Fl_{G,\mu}$ to a Schubert cell in a $B_{\mathrm{dR}}^+$-affine Grassmannian as studied in~\cite{scholzelectures}; however, for our applications, the theory of~\cite{scholzelectures} is not necessary.

Finally, to check the closure relations, we use recent results of Kedlaya and Liu, \cite{kedlayaliu}, on the semicontinuity of the Newton polygon for families of $\varphi$-modules over the Robba ring.

Now we can relate the fibres of $\pi_{HT}$ to Igusa varieties. From now on, we assume that the Shimura variety is of PEL type (of type A or C), and compact, with good reduction at $p$. Pick any $b\in B(G,\mu^{-1})$. Corresponding to $b$, we can find a $p$-divisible group $\mathbb X_b$ over $\bar{\mathbb F}_p$ equipped with certain extra endomorphism and polarization structures. We consider the following kind of Igusa varieties.

\begin{aprop} There is a perfect scheme $\mathrm{Ig}^b$ over $\bar{\mathbb F}_p$ which parametrizes abelian varieties $A$ with extra structures, equipped with an isomorphism $\rho: A[p^\infty]\cong \mathbb X_b$.

One can identify $\mathrm{Ig}^b$ with the perfection of the tower $\mathscr{I}^b_{\mathrm{Mant}} = \varprojlim_m \mathscr{I}^b_{\mathrm{Mant},m}$ of Igusa varieties constructed by Mantovan,~\cite{mantovan}.
\end{aprop}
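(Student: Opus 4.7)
I would first set up $\mathrm{Ig}^b$ as a moduli functor on the category of perfect $\bar{\mathbb F}_p$-algebras. For such an $R$, I declare $\mathrm{Ig}^b(R)$ to be the set of isomorphism classes of tuples $(A,\iota,\lambda,\eta^p,\rho)$, where $(A,\iota,\lambda,\eta^p)$ is an $R$-point of the prime-to-$p$ tower of the special fibre of the Shimura variety (automatically landing in the central leaf $C^b$ once $\rho$ exists), and $\rho\colon A[p^\infty]\isomap \mathbb X_b\times_{\bar{\mathbb F}_p} R$ is an isomorphism of PEL-structured $p$-divisible groups. Restricting $\rho$ to $p^m$-torsion then produces a compatible system of $R$-points of Mantovan's $\mathscr{I}^b_{\mathrm{Mant},m}$, giving a natural transformation
\[
\mathrm{Ig}^b\longrightarrow \varprojlim_m \mathscr{I}^b_{\mathrm{Mant},m}=\mathscr{I}^b_{\mathrm{Mant}},
\]
which, since $\mathrm{Ig}^b$ is only defined on perfect rings, factors through $(\mathscr{I}^b_{\mathrm{Mant}})^{\mathrm{perf}}$. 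Because each transition map in Mantovan's tower is finite, $\mathscr{I}^b_{\mathrm{Mant}}$ exists as an affine scheme over $C^b$, and its perfection exists as a perfect affine scheme over $(C^b)^{\mathrm{perf}}$.

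The main task is then to show that this natural transformation is an isomorphism on perfect $R$-valued points. For the inverse direction, given a compatible system $(\rho_m)_{m\geq 1}$ of truncated isomorphisms over a perfect $R$, one assembles the full $\rho$ as $\varinjlim_m \rho_m$, using that $A[p^\infty]=\varinjlim A[p^m]$ and likewise for $\mathbb X_b$; compatibility gives a well-defined map of fpqc sheaves, and the fact that each $\rho_m$ is an isomorphism on $p^m$-torsion forces $\rho$ to be an isomorphism of $p$-divisible groups. Injectivity is immediate, since $\rho$ is determined by its truncations. Representability of $\mathrm{Ig}^b$ by a perfect scheme then follows from the identification with $(\mathscr{I}^b_{\mathrm{Mant}})^{\mathrm{perf}}$.

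\textbf{The main obstacle} is matching the moduli problem at each finite level $m$: on perfect test rings, an $R$-point of $\mathscr{I}^b_{\mathrm{Mant},m}$ must be the same as a PEL-compatible isomorphism $A[p^m]\isomap \mathbb X_b[p^m]$ that arises from the truncation of a full isomorphism of $p$-divisible groups. Mantovan's definition enforces, in addition to being a truncated isomorphism, that it lift étale locally to an isomorphism of the whole $p$-divisible groups; this cuts out a finite subscheme of $\mathrm{Isom}(A[p^m],\mathbb X_b[p^m])$, and the transition maps acquire purely inseparable fibre components coming from the ambiguity in lifting across the non-étale slopes of $\mathbb X_b$ (the kernel of $\mathrm{Aut}(\mathbb X_b[p^{m+1}])\to\mathrm{Aut}(\mathbb X_b[p^m])$ has a multiplicative-type part contributing such fibres). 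These purely inseparable components become isomorphisms after perfection, so the perfected tower parametrises exactly the data of genuine isomorphisms modulo $p^m$ that extend. To pin this down cleanly I would invoke the classification of $p$-divisible groups over perfect $\bar{\mathbb F}_p$-algebras by their (covariant) Dieudonn\'e modules or displays, under which lifting truncated isomorphisms to full ones is unobstructed once the base is perfect and the slope filtration of $\mathbb X_b$ has been rigidified. With that matching in place, the two functors have identical universal properties and the proposition follows.
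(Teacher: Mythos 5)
There is a genuine gap, and it comes from a mischaracterization of Mantovan's tower. A point of $\mathscr{I}^b_{\mathrm{Mant},m}$ is \emph{not} an isomorphism $A[p^m]\toisom \mathbb X_b[p^m]$: writing $\mathbb X_b=\oplus_i\mathbb X_i$ with $\mathbb X_i$ isoclinic, Mantovan trivializes only the $p^m$-torsion of the \emph{graded pieces} $\cG^i_b$ of the slope filtration of $\cG_b=\cA[p^\infty]|_{\mathscr{C}^b}$, i.e.\ gives isomorphisms $\cG^i_b[p^m]\toisom \mathbb X_i[p^m]$ (liftable to higher level). Consequently the covers $\mathscr{I}^b_{\mathrm{Mant},m}\to\mathscr{C}^b$ are finite \emph{\'etale} (Mantovan, Proposition 4), and your claim that the transition maps acquire purely inseparable fibre components is false; if it were a matter of killing inseparable pieces of the transition maps, perfection of the base would be irrelevant. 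The real issue, which your argument never confronts, is that even over all of $\mathscr{I}^b_{\mathrm{Mant}}=\varprojlim_m\mathscr{I}^b_{\mathrm{Mant},m}$ one has only trivialized $\mathrm{gr}^\bullet\cG_b$, whereas $\mathrm{Ig}^b$ demands a trivialization of $\cG_b$ itself; the slope filtration of $\cG_b$ need not split over $\mathscr{I}^b_{\mathrm{Mant}}$. The paper's proof of this identification consists precisely in producing a \emph{canonical} splitting of the slope filtration after passing to the perfection, by iterating the isogenies $F^s/p^{t_i}$ (which act invertibly on $\cG^i_b$ and nilpotently on the lower filtration steps, and whose inverses exist only after inverting Frobenius, i.e.\ over the perfection). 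This is the mathematical content of the proposition and it is absent from your proposal; the appeal to "Dieudonn\'e modules or displays over perfect rings" does not substitute for it, and in any case the statement that a truncated isomorphism over a perfect base lifts unobstructedly to the full $p$-divisible groups is not true (the liftable locus inside $\mathrm{Isom}(A[p^m],\mathbb X_b[p^m])$ is a proper closed subscheme in general; controlling it is the content of the paper's Lemma on the stabilization of the images $\cH_n^{(m)}$).

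Two further remarks. First, you define $\mathrm{Ig}^b$ only on perfect test rings, which makes "$\mathrm{Ig}^b$ is a perfect scheme" nearly tautological but leaves representability by an honest scheme unaddressed except via the (unproven) identification with the perfected tower; the paper instead defines the functor on all $\bar{\mathbb F}_q$-algebras, proves representability directly (as an inverse limit of Isom-schemes over $\mathscr{S}_{K_pK^p}$), and deduces perfectness from the reformulation of the moduli problem up to $p$-power isogeny, where Frobenius is invertible because Verschiebung inverts it up to multiplication by $p$. Second, even the "easy" direction $\mathrm{Ig}^b\to\mathscr{I}^b_{\mathrm{Mant}}$ needs the observation that any isomorphism $\cG_b\cong\mathbb X_b$ automatically respects the slope filtrations and hence induces isomorphisms on the graded pieces; this uses that there are no nonzero maps from an isoclinic $p$-divisible group of larger slope to one of smaller slope.
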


In particular, the \'etale cohomology of $\mathrm{Ig}^b$ agrees with the \'etale cohomology of Igusa varieties.

Let us also mention the following proposition. Here, $\mathscr{S}_K^b\subset \mathscr{S}_K\otimes \bar{\mathbb F}_p$, $b\in B(G,\mu^{-1})$ denotes a Newton stratum of the natural integral model $\mathscr{S}_K$ of the Shimura variety $S_K$ at hyperspecial level.

\begin{aprop} Fix a geometric base point $\bar{x}\in \mathscr{S}_K^b$. There is a natural map
\[
\pi_1^{\mathrm{pro\acute{e}t}}(\mathscr{S}_K^b,\bar{x})\to J_b(\mathbb Q_p)\ ,
\]
corresponding to a $J_b(\mathbb Q_p)$-torsor over $\mathscr{S}_K^b$ which above any geometric point pa\-ra\-me\-tri\-zes quasi-isogenies between $A[p^\infty]$ and $\mathbb X_b$ respecting the extra structures.
\end{aprop}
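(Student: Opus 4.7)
The plan is to construct the torsor directly as a pro-\'etale sheaf. Let $\mathscr{T}$ be the sheaf on the pro-\'etale site of $\mathscr{S}_K^b$ whose sections over a pro-\'etale map $U \to \mathscr{S}_K^b$ are the quasi-isogenies $\rho \colon A_U[p^\infty] \to \mathbb{X}_b \times U$ of $p$-divisible groups over $U$ respecting the polarization and endomorphism structures inherited from the PEL datum. Since $J_b(\mathbb{Q}_p)$ is, by definition, the group of self-quasi-isogenies of $\mathbb{X}_b$ compatible with these extra structures, it acts on $\mathscr{T}$ by post-composition, and on any nonempty fiber of $\mathscr{T}$ this action is simply transitive (two quasi-isogenies differ by an element of $J_b(\mathbb{Q}_p)$).

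By the very definition of the Newton stratum, $\mathscr{T}$ is pointwise nonempty: any geometric point of $\mathscr{S}_K^b$ admits a quasi-isogeny $A[p^\infty] \to \mathbb{X}_b$ preserving extra structures. To upgrade this to a pro-\'etale $J_b(\mathbb{Q}_p)$-torsor, it suffices to exhibit a pro-\'etale cover of $\mathscr{S}_K^b$ over which $\mathscr{T}$ admits a section. Since the \'etale and pro-\'etale sites of a characteristic $p$ scheme coincide with those of its perfection (relative Frobenius being a universal homeomorphism), it is enough to work over $\mathscr{S}_K^{b,\mathrm{perf}}$. Here I would invoke the preceding proposition: the forgetful map $\mathrm{Ig}^b \to \mathscr{S}_K^{b,\mathrm{perf}}$ is a pro-\'etale cover, being the perfection of Mantovan's tower $\varprojlim_m \mathscr{I}^b_{\mathrm{Mant},m}$ whose transition maps are finite \'etale, and by construction $\mathrm{Ig}^b$ carries a universal \emph{isomorphism} $A[p^\infty] \cong \mathbb{X}_b$ respecting extra structures. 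The underlying quasi-isogeny furnishes the desired trivializing section of $\mathscr{T}|_{\mathrm{Ig}^b}$.

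Once $\mathscr{T}$ is known to be a pro-\'etale $J_b(\mathbb{Q}_p)$-torsor, the fixed geometric base point $\bar{x}$ trivializes its fiber, and the standard equivalence between pro-\'etale torsors under a locally profinite group and continuous homomorphisms out of the pro-\'etale fundamental group yields the desired map $\pi_1^{\mathrm{pro\acute{e}t}}(\mathscr{S}_K^b,\bar{x}) \to J_b(\mathbb{Q}_p)$. The main technical obstacle is the second step, namely verifying that $\mathrm{Ig}^b \to \mathscr{S}_K^{b,\mathrm{perf}}$ is genuinely pro-\'etale rather than merely a pointwise trivialization of the torsor. This requires the Oort--Zink slope filtration for $p$-divisible groups of constant Newton polygon over a perfect base, which guarantees that the trivializations $A[p^m] \cong \mathbb{X}_b[p^m]$ (respecting all extra structures) can be built up in a finite \'etale manner step by step; this is precisely the input already absorbed into the construction of $\mathrm{Ig}^b$ in the previous proposition, so invoking it suffices here.
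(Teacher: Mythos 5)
Your overall framework (define the quasitorsor $\mathscr{T}$ of structure-preserving quasi-isogenies, check simple transitivity of $J_b(\mathbb Q_p)$ on fibres, reduce to the perfection, and convert a pro-\'etale torsor into a homomorphism out of $\pi_1^{\mathrm{pro\acute{e}t}}$) matches the paper. But the trivialization step contains a genuine error: $\mathrm{Ig}^b$ does \emph{not} map onto $\mathscr{S}_K^{b}$. Since $\mathrm{Ig}^b$ is reduced and parametrizes \emph{isomorphisms} $A[p^\infty]\cong \mathbb X_b$, the map $\mathrm{Ig}^b\to \mathscr{S}_K^b$ factors through the central leaf $\mathscr{C}^b\subset \mathscr{S}_K^b$ (the locus where $A[p^\infty]$ is fibrewise \emph{isomorphic}, not merely isogenous, to $\mathbb X_b$), which is a closed subscheme of dimension $\langle 2\rho,\nu_b\rangle$, in general strictly smaller than the Newton stratum. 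Mantovan's tower $\mathscr{I}^b_{\mathrm{Mant},m}$ consists of finite \'etale covers of $\mathscr{C}^b$, not of $\mathscr{S}_K^b$. So your proposed cover is not surjective and cannot trivialize $\mathscr{T}$ over the whole stratum. A symptom that the approach must fail: if $\mathscr{T}$ were induced from a torsor under the profinite deck group of a pro-finite-\'etale cover, the monodromy $\pi_1^{\mathrm{pro\acute{e}t}}(\mathscr{S}_K^b,\bar x)\to J_b(\mathbb Q_p)$ would have compact image, whereas the paper emphasizes that the image is often noncompact (for basic $b$ it is a discrete cocompact subgroup related to $p$-adic uniformization). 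Compactness of the image is exactly what distinguishes the leaf from the full stratum.

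The missing content is the passage from "quasi-isogenous at every geometric point" to "quasi-isogenous pro-\'etale locally". The paper does this in two stages. First, one proves that over a strictly henselian perfect ring $R$, any $p$-divisible group with constant Newton polygon is isogenous to a constant one, so that $G\mapsto G_k$ is an equivalence up to isogeny (this is the nontrivial Lemma~\ref{pdivstrictlylocal}, whose proof requires work over non-noetherian rings with infinitely many minimal primes). This produces a quasi-isogeny $A[p^\infty]\to \mathbb X_b$ over the strict henselization at each point, hence, \'etale locally and after replacing $A[p^\infty]$ by a quasi-isogenous $p$-divisible group, an isomorphism $A[p^n]\cong \mathbb X_b[p^n]$ for a fixed large $n$. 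Only then does one consider the scheme of isomorphisms $\widetilde S\to S$, and show (using Lemma~\ref{precise chai-oort} and the rigidity of $\cH_{\mathbb X_b,\mathbb X_b}$ over strictly henselian perfect rings) that it is faithfully flat, i.e.\ a torsor under the compact open subgroup $K_b\subset J_b(\mathbb Q_p)$; this $\widetilde S$ is the pro-\'etale cover that trivializes $\mathscr{T}$. Your appeal to the Oort--Zink slope filtration is in the right spirit but is only the input for the leaf; it does not by itself produce a trivializing cover of the full Newton stratum.
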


\begin{aremark} Here, $\pi_1^{\mathrm{pro\acute{e}t}}$ is the pro-\'etale fundamental group introduced in~\cite{bhattscholze}. For normal schemes, it agrees with the usual profinite \'etale fundamental group of SGA1. However, Newton strata are usually not normal, and in fact the homomorphism to $J_b(\mathbb Q_p)$ often has noncompact image. For example, if $b$ is basic, then the image is a discrete cocompact subgroup of $J_b(\mathbb Q_p)$, related to the $p$-adic uniformization of the basic locus as in~\cite{rapoport-zink}. Thus, the formalism of $\pi_1^{\mathrm{pro\acute{e}t}}$ is crucial for this statement.

Restricted to the leaf $\mathscr{C}_b\subset \mathscr{S}_K^b$ (the set of points where $A[p^\infty]\cong \mathbb X_b$), the map $\pi_1(\mathscr{C}_b,\bar{x})\to J_b(\mathbb Q_p)$ takes values in a compact open subgroup of $J_b(\mathbb Q_p)$, and then corresponds to the tower of finite \'etale covers $\mathscr{I}^b_{\mathrm{Mant},m}\to \mathscr{C}_b$ considered by Mantovan.
\end{aremark}

There is a close relation between the fibres of $\pi_{HT}$ over points in $\Fl_{G,\mu}^b$ and the perfect schemes $\mathrm{Ig}^b$; note however that the former are of characteristic $0$ while the latter are of characteristic $p$. Roughly, one is the canonical lift of the other, except for issues of higher rank points. In any case, one gets the following cohomological consequence.

\begin{atheorem} Let $\overline{x}$ be any geometric point of $\Fl_{G,\mu}^b\subset \Fl_{G,\mu}$. For any $\ell\neq p$, there is an isomorphism
\[
(R\pi_{HT\ast} \mathbb Z/\ell^n\mathbb Z)_{\overline{x}}\cong R\Gamma(\mathrm{Ig}^b,\mathbb Z/\ell^n\mathbb Z)
\]
compatible with the Hecke action of $G(\mathbb A_f^p)$.
\end{atheorem}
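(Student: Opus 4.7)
The strategy is to identify the geometric fiber $\pi_{HT}^{-1}(\bar x)$ with a lift of the Igusa variety $\mathrm{Ig}^b$, and then to invoke an invariance of $\ell$-adic étale cohomology to descend to the special fiber. Throughout, $\bar x$ is a geometric point, valued in an algebraically closed perfectoid field $C$ over $E_\p$.

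\textbf{Step 1: Stalk computation.} Using the pro-étale inverse-limit presentation $\mathcal S_{K^p}\sim\varprojlim_{K_p}\mathcal S_{K_pK^p}$ and a proper-base-change style argument for each finite-level Shimura variety (which is proper over $E_\p$), the stalk $(R\pi_{HT\ast}\mathbb Z/\ell^n)_{\bar x}$ is computed as $R\Gamma_{\text{ét}}(\pi_{HT}^{-1}(\bar x),\mathbb Z/\ell^n)$. The naturality of the construction makes the resulting isomorphism compatible with the $G(\mathbb A_f^p)$-action on level-$K^p$ cohomology, since Hecke operators away from $p$ act on $\mathcal S_{K^p}$ and commute with $\pi_{HT}$.

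\textbf{Step 2: Moduli description of the fiber.} By the Scholze--Weinstein classification, a $C$-valued point $\bar x\in \mathrm{Fl}_{G,\mu}(C)$ corresponds to a $p$-divisible group $G_{\bar x}$ over $\mathcal O_C$, equipped with the relevant PEL endomorphism/polarisation structures and a trivialisation of its integral Tate module. The constraint $\bar x\in \mathrm{Fl}_{G,\mu}^b$ forces $G_{\bar x}\otimes \bar{\mathbb F}_p$ to lie in the isogeny class $b$, and hence to be (after choice) isomorphic to $\mathbb X_b$ with its extra structures. The chase-of-Hodge-tensors description of $\pi_{HT}$ then identifies $\pi_{HT}^{-1}(\bar x)$ with the moduli groupoid of tuples $(A,\iota,\lambda,\eta^p,\alpha)$, where $(A,\iota,\lambda,\eta^p)$ is a PEL abelian variety over $\mathcal O_C$ with $K^p$-level structure, and $\alpha\colon A[p^\infty]\isomap G_{\bar x}$ is an isomorphism of structured $p$-divisible groups.

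\textbf{Step 3: Serre--Tate reduction.} The special fiber functor sends such a tuple to $(\bar A,\bar\iota,\bar\lambda,\eta^p,\bar\alpha)$ with $\bar\alpha\colon \bar A[p^\infty]\isomap \mathbb X_b$, which is exactly the moduli problem defining $\mathrm{Ig}^b$. Conversely, Serre--Tate deformation theory, applied to the fixed lift $G_{\bar x}$ of $\mathbb X_b$, uniquely lifts any such $\bar A$ to an abelian variety $A/\mathcal O_C$ with an isomorphism $A[p^\infty]\cong G_{\bar x}$ extending $\bar\alpha$. Thus reduction mod $p$ is an equivalence of moduli problems. Prime-to-$p$ Hecke operators act on $\eta^p$ on both sides and commute with this equivalence.

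\textbf{Step 4: Cohomological invariance.} It remains to promote the identification of points to an isomorphism of $\ell$-adic cohomology. The plan is to exhibit the characteristic-zero fiber as (the generic fiber of) a formal scheme over $\mathcal O_C$ whose special fiber is the tower $\mathscr{I}^b_{\mathrm{Mant}}$ of Mantovan, and then use: (i) the topological invariance of the étale site under perfection, identifying $R\Gamma_{\text{ét}}(\mathrm{Ig}^b,-)$ with $R\Gamma_{\text{ét}}(\mathscr I^b_{\mathrm{Mant}},-)$; (ii) smooth/proper base change together with a pro-étale limit argument along the level tower at $p$ to compare the generic and special fibers of this integral model. The \textbf{main obstacle} is precisely this last step: fibers of $\pi_{HT}$ are not classical rigid-analytic spaces and include higher-rank points, so one must carefully set up the fiber as an adic space (or diamond) over $C$ whose $\ell$-adic étale cohomology genuinely computes that of the underlying moduli groupoid, and then match the resulting cohomology with the perfection of the Mantovan tower. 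Once that comparison is in place, Hecke equivariance is automatic since the $G(\mathbb A_f^p)$-action is transported through $\eta^p$ at every stage.
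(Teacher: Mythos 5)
Your overall architecture --- stalk equals cohomology of the fibre, fibre equals the generic fibre of a canonical lift of $\mathrm{Ig}^b$, then descend to the special fibre --- is the same as the paper's. However, two of your steps are not actually carried out, and one of them is the real content of the theorem. In Step 4 you correctly flag the comparison between the generic and special fibres of the integral model as the main obstacle, but the mechanism you propose (``smooth/proper base change together with a pro-\'etale limit argument along the level tower at $p$'') does not apply: there is no level tower at $p$ on the fibre of $\pi_{HT}$ producing finite-type smooth proper models over $\cO_C$. The missing input is the paper's Lemma~\ref{comparison to special fibre}: for a perfect scheme $X/\bar{\mathbb F}_p$ with its unique flat formal lift $\mathfrak{X}_{\cO_C}$ and generic fibre $\cX_C$, the maps $H^i(X,\mathbb Z/\ell^n\mathbb Z)\leftarrow H^i(\mathfrak{X}_{\cO_C},\mathbb Z/\ell^n\mathbb Z)\to H^i(\cX_C,\mathbb Z/\ell^n\mathbb Z)$ are isomorphisms. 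This is proved by writing $X$ as a limit of perfections of finite-type schemes $X_0$ and showing that the nearby cycles of the constant family $X_0\times_{\bar{\mathbb F}_p}\cO_C$ are trivial, via universal local acyclicity of $X_0\to \mathrm{Spec}\ \bar{\mathbb F}_p$ together with Huber's comparison of algebraic and rigid nearby cycles. Without this (or an equivalent statement) the proof does not close.

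Second, Step 2 contains an error that propagates: for $\bar x\in\Fl^b_{G,\mu}$ the reduction of the associated $p$-divisible group $G_{\bar x}$ is only \emph{quasi-isogenous} to $\mathbb X_b$, not isomorphic to it, and the choice of such a quasi-isogeny --- equivalently, of a lift of $\bar x$ to $\cM^b_\infty$ --- is part of the data (the theorem statement explicitly records this dependence). Consequently the fibre is first identified with a fibre of the space $\cX^b_\infty$ parametrizing abelian varieties with a quasi-isogeny $A[p^\infty]\to\mathbb X_b$, and one then needs the product formula of Lemma~\ref{integral product}, which trades the quasi-isogeny for an honest isomorphism by moving $A$ within its $p$-power isogeny class, to extract $(\mathrm{Ig}^b_{\cO_C})^{\mathrm{ad}}_\eta$; your Serre--Tate Step 3 is morally this rigidity statement, so this is repairable, but as written the moduli identification is off. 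Two smaller points: the stalk computation in Step 1 is a quasicompact limit argument over affinoid \'etale neighbourhoods of $\bar x$, not proper base change at finite level ($\pi_{HT}$ is only partially proper, and it is not a map of finite-level Shimura varieties); and higher-rank geometric points are handled by the observation that for locally constant sheaves the cohomology of a qcqs adic space agrees with that of any quasicompact open containing all its rank-$1$ points (Lemma~\ref{only rank 1 matters}), applied twice --- once to reduce $\bar x$ to a rank-$1$ point and once to replace the full fibre $\cS_{K^p,x}$ by $\cS^b_{K^p,x}$, which agree only on rank-$1$ points.
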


We recall that the alternating sum of the $\bar{\mathbb Q}_\ell$-cohomology of Igusa varieties has been computed by Sug Woo Shin, \cite{shin-igusa}, \cite{shin-stable}. His results are presented in Section~\ref{Igusa varieties} and combined with the (twisted) trace formula.

The final ingredient necessary for the argument as outlined above is that $R\pi_{HT\ast} \mathbb F_\ell$ is perverse. Obviously, $R\pi_{HT\ast} \mathbb F_\ell$ should be constructible with respect to the stratification
\[
\Fl_{G,\mu} = \bigsqcup_{b\in B(G,\mu^{-1})} \Fl_{G,\mu}^b\ .
\]
However, as the strata are amorphous, it is technically difficult to define a notion of perverse sheaf in this setup. We content ourselves here with proving just what is necessary for us to conclude. Specifically, we will prove that the $K_p$-invariants of the nearby cycles of $R\pi_{HT\ast} \mathbb F_\ell$ are perverse, for any formal model of $\Fl_{G,\mu}$ and sufficiently small compact open subgroup $K_p\subset G(\mathbb Q_p)$. Choosing these formal models correctly will then make it possible to deduce that the cohomology is concentrated in one degree on the largest stratum where it is nonzero.

\begin{aremark} Heuristically, the reason that $R\pi_{HT\ast} \mathbb F_\ell$ is perverse is that $\pi_{HT}$ is simultaneously affine and partially proper (i.e., satisfies the valuative criterion of properness). In classical algebraic geometry, this would mean that $\pi_{HT}$ is finite, and pushforward along finite morphisms preserves perversity. In general, partially proper implies that $R\pi_{HT\ast} = R\pi_{HT!}$, so assuming that there is a Verdier duality which exchanges these two functors, one has to prove only one of the two support inequalities defining a perverse sheaf. This inequality is precisely Artin's bound on the cohomological dimension of affine morphisms.
\end{aremark}

\begin{aremark} The fact that the closure relations are reversed on the flag variety is critical to our strategy. Namely, our assumption on $\rho_{\mathfrak m}$ ensures that the cohomology should be ``maximally ordinary'', and this makes it reasonable to hope that everything comes from the $\mu$-ordinary locus. In our setup, the $\mu$-ordinary locus inside the flag variety is the closed stratum, and $0$-dimensional. In the naive moduli space of $p$-divisible groups, the $\mu$-ordinary locus would be open and dense (cf.~\cite{wedhorn}), and the inductive argument outlined above would stop at the first step.
\end{aremark}

\begin{aremark} Recently, L.~Fargues, \cite{fargues-geom-langlands}, has conjectured that to any local $L$-parameter, there is a corresponding perverse sheaf on the stack $\mathrm{Bun}_G$ of $G$-bundles on the Fargues-Fontaine curve, thus realizing the local Langlands correspondence as a geometric Langlands correspondence on the Fargues-Fontaine curve. We conjecture that the perverse sheaves $R\pi_{HT\ast} \bar{\mathbb Q}_\ell$ on $\Fl_{G,\mu}$ are related to these conjectural perverse sheaves on $\mathrm{Bun}_G$ via pullback along the natural map $\Fl_{G,\mu}\to \mathrm{Bun}_G$, by some form of local-global compatibility. In the Harris-Taylor case, one can be more explicit, and this was the subject of~\cite{scholze-letter-dat}.
\end{aremark}

\textbf{Acknowledgments.} First, we wish to thank J.-F.~Dat for many discussions on the ``geometrization'' of the results of \cite{scholzeLLC} using perverse sheaves on the moduli space of $p$-divisible groups. The rough strategy of this argument was first explained during a workshop on Barbados in May 2014, and we want to thank the organizers for the chance to present these ideas there. Moreover, we want to thank F.~Calegari, L.~Fargues, K.~Kedlaya and S. W.~Shin for many helpful discussions, and C.-L.~Chai for sending us a preliminary version of his work with F.~Oort on the ``internal Hom $p$-divisible group''. Part of this work was completed while both authors attended the special program on ``New geometric methods in number theory'' at MSRI in Fall 2014; we thank the institute and the organizers for the excellent working atmosphere. During that time, P.~Scholze held a Chancellor's Professorship at UC Berkeley. This work was done while P.~Scholze was a Clay Research Fellow.

\textbf{Notation and conventions.} A nonarchimedean field $K$ is a topological field whose topology is induced by a continuous rank $1$ valuation (which is necessarily uniquely determined, up to equivalence). We denote by $\cO_K\subset K$ the subring of powerbounded elements, which is the set of element of absolute value $\leq 1$ under the rank $1$ valuation. If, in the context of adic spaces, $K$ is equipped with a higher rank valuation, we denote by $K^+\subset \cO_K$ the open and bounded valuation subring of elements which are $\leq 1$ for this higher rank valuation.

We have tried our best to make our signs internally consistent, although the reader may often feel the presence of unnecessarily many minus signs. As regards slopes, we observe the following. We use covariant Dieudonn\'e theory. Usually, this sends $\mathbb Q_p/\mathbb Z_p$ to $(\mathbb Z_p,F=p)$ and $\mu_{p^\infty}$ to $(\mathbb Z_p,F=1)$; this is, however, not compatible with passage to higher tensors. The underlying reason is that in the duality between covariant and contravariant Dieudonn\'e theory, there is an extra Tate twist; for this reason, we divide the usual Frobenius by $p$, which gets rid of this Tate twist. Thus, the covariant Dieudonn\'e module for $\mu_{p^\infty}$ is $(\mathbb Z_p, F=p^{-1})$ in our setup, and one sees that the Frobenius operator does not preserve the lattice; in general, the associated Dieudonn\'e module will have slopes in $[-1,0]$. However, in the passage from isocrystals to vector bundles on the Fargues-Fontaine curve, the isocrystal $(\mathbb Q_p,F=p^{-1})$ is sent to the ample line bundle $\cO(1)$, so the slope changes sign once more, and in the end the usual slope of a $p$-divisible group agrees with the slope of the associated vector bundle on the Fargues-Fontaine curve. We feel that any confusion about signs on this part of the story is inherent to the mathematics involved.

As regards cocharacters (and associated filtrations), we have adopted what we think is the standard definition of the cocharacter $\mu = \mu_h$ corresponding to a Shimura datum $\{h\}$; for example, in the case of the modular curve, $\mu(t) = \mathrm{diag}(t,1)$ as a map $\mathbb G_m\to GL_2$. This has the advantage of being ``positive'', but the disadvantage that virtually everywhere we have to consider $\mu^{-1}$ instead; e.g., with this normalization, it is the set $B(G,\mu^{-1})$ which parametrizes the Newton strata. We feel that on this side of the story, it might be a good idea to exchange $\mu$ by $\mu^{-1}$, but we have stuck with the standard choice.
\newpage

\section{Refining the Hodge-Tate period map}\label{refining HT}
In this section, we work with a general Shimura variety of Hodge type and we prove that the Hodge-Tate period map from the corresponding perfectoid Shimura variety factors through the expected flag variety.

\subsection{Recollections on the Hodge-Tate period map}\label{recollections for Hodge type}
Let $(G,X)$ be a Shimura datum, where $X$ is a $G(\mathbb{R})$-conjugacy class of homomorphisms \[h:\mathrm{Res}_{\mathbb{C}/\mathbb{R}}\mathbb{G}_m\to G_{\mathbb{R}}.\] Recall that $(G,X)$ is a Shimura datum if it satisfies the following three conditions:
\begin{enumerate} 
\item Let $\mathfrak{g}$ denote the Lie algebra of $G(\mathbb{R})$. For any choice of $h\in X$, its composition with the adjoint action of $G(\mathbb{R})$ on $\mathfrak{g}$ determines a Hodge structure of type $(-1,1), (0,0), (1,-1)$ on $\mathfrak{g}$;\footnote{Here, an action of $\mathbb{C}^\ast$ on a $\mathbb{C}$-vector space is said to be of type $\{(p_i,q_i)\}$ if the vector space decomposes as a direct sum of subspaces, on which the action is through the cocharacters $z\mapsto z^{-p_i} \overline{z}^{-q_i}$.}
\item $h(i)$ is a Cartan involution of $G^{\mathrm{ad}}(\mathbb{R})$;
\item $G^\mathrm{ad}$ has no factor defined over $\mathbb{Q}$ whose real points form a compact group.
\end{enumerate}
The second condition implies that the stabilizer of any $h$ is compact modulo its center.

A choice of cocharacter $h$ determines, via base change to $\mathbb{C}$ and restriction to the first $\mathbb{G}_m$ factor, a Hodge cocharacter $\mu:\mathbb{G}_m\to G_{\mathbb{C}}$. This allows us to define two opposite parabolic subgroups:
\[
P_\mu^{\mathrm{std}}:=\{g\in G | \lim_{t\to \infty}\mathrm{ad}(\mu(t))g\ \mathrm{exists}\},\ \mathrm{ and}
\]
\[
P_\mu:=\{g\in G | \lim_{t\to 0}\mathrm{ad}(\mu(t))g\ \mathrm{exists}\}.
\]
The Hodge cocharacter $\mu$ defines a filtration on the category $\mathrm{Rep}_{\mathbb{C}}(G)$ of finite-dimensional representations of $G$ on $\mathbb{C}$-vector spaces. Indeed, the action of $\mathbb{G}_m$ on $\mathrm{Rep}_{\mathbb{C}}(G)$ via $\mu$ induces a grading on $\mathrm{Rep}_{\mathbb{C}}(G)$ and we take $\mathrm{Fil}^\bullet(\mu)$ to be the descending filtration on $\mathrm{Rep}_{\mathbb{C}}(G)$ associated with this grading. Concretely, $\mathrm{Fil}^p(\mu)$ is the direct sum of all subspaces of type $(p^\prime,q^\prime)$ with $p^\prime\geq p$. The parabolic $P_\mu^{\mathrm{std}}$ can equivalently be defined as the subgroup of $G$ stabilizing $\mathrm{Fil}^\bullet(\mu)$. The opposite parabolic $P_\mu$ can be defined as the stabilizer of the opposite, ascending filtration $\mathrm{Fil}_\bullet(\mu)$, where $\mathrm{Fil}_p(\mu)$ is the direct sum of all subspaces of type $(p^\prime,q^\prime)$ with $p^\prime\leq p$. Both conjugacy classes of parabolics are defined over the reflex field $E$ of the Shimura datum, which is the minimal field of definition of the conjugacy class $\{\mu\}$. Note that \[M_\mu:=\mathrm{Cent}_G(\mu)\] is the Levi component of both parabolics.

The two parabolics determine two flag varieties $\mathrm{Fl}_{G,\mu}^\mathrm{std}$ and  $\mathrm{Fl}_{G,\mu}$ over $E$ parametrizing parabolics in the given conjugacy class. The choice of a base point $h$ allows us to identify  $\mathrm{Fl}_{G,\mu}^\mathrm{std}(\mathbb C) \simeq G(\mathbb{C})/P^\mathrm{std}_\mu(\mathbb{C})$. There is an embedding \[\beta: X\hookrightarrow \mathrm{Fl}_{G,\mu}^\mathrm{std}(\mathbb C),\] called the Borel embedding, defined by $h\mapsto \mathrm{Fil}^\bullet(\mu_{h})$. It is easy to see that the Borel embedding is holomorphic. (There is also an embedding \[X\hookrightarrow \mathrm{Fl}_{G,\mu}(\mathbb C),\] which is antiholomorphic, defined in the natural way from the opposite filtration $\mathrm{Fil}_\bullet(\mu)$.)

Let $K\subset G(\mathbb{A}_f)$ be a compact open subgroup. Let
\[
S_K(\mathbb C):= G(\mathbb{Q})\backslash (X\times G(\mathbb{A}_f)/K)\ .
\]
When $K$ is neat (so, when $K$ is small enough), $S_K(\mathbb{C})$ has the structure of an algebraic variety over $\mathbb{C}$ (by a theorem of Baily-Borel) and has a model $S_K$ over the reflex field $E$~\cite{milne}.

\begin{example}\label{Siegel moduli space} Let $g\geq 1$ and let
\[
(V,\psi) = (\mathbb Q^{2g},\psi((a_i),(b_i)) = \sum_{i=1}^g (a_i b_{g+i} - a_{g+i} b_i))
\]
be the split symplectic space of dimension $2g$ over $\mathbb{Q}$. Let $\tilde G:=GSp(V,\psi)$. The hermitian symmetric domain $\tilde X$ is the Siegel double space. Fix the self-dual lattice $\Lambda=\mathbb{Z}^{2g}$ in $V$. For every $h\in \tilde X$, the Hodge structure induced by $\mu_h$ on $V$ has type $(-1,0),(0,-1)$ and $V^{(-1,0)}/\Lambda$ is an abelian variety over $\mathbb{C}$ of dimension $g$. 

For $\tilde K\subset \tilde G(\mathbb{A}_f)$ a neat compact open subgroup, the corresponding Shimura variety $\tilde S_{\tilde K}$ is the moduli space of principally polarized $g$-dimensional abelian varieties with level-$\tilde K$-structure. It has a model over the reflex field $\mathbb{Q}$. It carries a universal abelian variety $\cA$ and a natural ample line bundle $\omega$ given by the determinant of the sheaf of invariant differentials on $\cA$. The flag variety $\mathrm{Fl}_{\tilde{G},\tilde{\mu}}$ parametrizes totally isotropic subspaces $W\subset V$.
\end{example}

We say that a Shimura datum is of \emph{Hodge type} if it admits a closed embedding $(G,X)\hookrightarrow (\tilde G,\tilde X)$, for some choice of Siegel data $(\tilde G,\tilde X)$. A consequence of this is that the associated Shimura variety $S_K$ (for some neat level $K$) carries a universal abelian variety, which is the restriction of the universal abelian variety over $\tilde S_{\tilde K}$. One can regard $S_K$ as a moduli space for abelian varieties equipped with certain Hodge tensors, cf.~below.

Let $(G,X)$ be a Shimura datum of Hodge type and let $(\tilde{G},\tilde{X})$ be a choice of Siegel data, for which there exists an embedding $(G,X)\hookrightarrow (\tilde{G},\tilde{X})$. Fixing such an embedding gives rise to closed embeddings $\mathrm{Fl}_{G,\mu}^{(\mathrm{std})}\hookrightarrow \mathrm{Fl}_{\tilde{G},\tilde{\mu}}^{(\mathrm{std})}$. By \cite[Proposition 1.15]{DeligneVarSh}, there exists some compact open subgroup $\tilde{K}\subset\tilde{G}(\mathbb{A}_f)$ with $K=\tilde{K}\cap G(\mathbb A_f)$ such that there is a closed embedding of the corresponding Shimura varieties over $E$, \[S_K\hookrightarrow \tilde{S}_{\tilde{K}}\otimes_{\mathbb{Q}}E.\]

Let $p$ be a prime number. We will consider compact open subgroups of the forms $K=K^p\times K_p\subset G(\mathbb{A}_f^p)\times G(\mathbb{Q}_p)$, where $K^p$ and $K_p$ are compact open. Fix a place $\mathfrak p$ of $E$ above $p$. Let $\Fl_{G,\mu}$ be the adic space associated with $\mathrm{Fl}_{G,\mu}\otimes_E E_{\mathfrak p}$. The following is part of Theorem IV.1.1 of~\cite{scholze}.\footnote{The setup is slightly different, but the proof works verbatim.}
\begin{thm}\label{htperiodmap}
\begin{enumerate}
\item For any sufficiently small tame level $K^p\subset G(\mathbb{A}_f^p)$, there exists a perfectoid space $\mathcal{S}_{K^p}$ over $E_{\mathfrak p}$, such that \[\mathcal{S}_{K^p}\sim \varprojlim_{K_p} (S_{K^pK_p}\otimes_E E_{\mathfrak p})^\mathrm{ad}.\]
\item There exists a $G(\mathbb{Q}_p)$-equivariant Hodge-Tate period map \[\pi_{HT}:\mathcal{S}_{K^p}\to \Fl_{\tilde{G},\tilde{\mu}}.\]
\item The map $\pi_{HT}$ is equivariant with respect to the natural Hecke action of $G(\mathbb{A}_f^p)$ on the inverse system of $\mathcal{S}_{K^p}$ and the trivial action of $G(\mathbb{A}_f^p)$ on $\Fl_{\tilde{G},\tilde{\mu}}$.
\end{enumerate}
\end{thm}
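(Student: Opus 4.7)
The plan is to deduce everything from the Siegel case, where the theorem is \cite[Theorem IV.1.1]{scholze}. First I would fix a Hodge embedding $(G,X) \hookrightarrow (\tilde G, \tilde X)$ and, via \cite[Proposition 1.15]{DeligneVarSh}, compatible compact open subgroups $\tilde K = \tilde K^p \tilde K_p \subset \tilde G(\mathbb A_f)$ with $K = \tilde K \cap G(\mathbb A_f)$, giving closed embeddings $S_{K^p K_p} \hookrightarrow \tilde S_{\tilde K^p \tilde K_p} \otimes_{\mathbb Q} E$ at each finite level. After base change to $E_\mathfrak p$ and passage to adic spaces, these assemble into a compatible family of closed immersions. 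Shrinking $\tilde K^p$ (which is allowed since $K^p$ is assumed sufficiently small), I may arrange that the Siegel tame level satisfies the hypotheses of \cite[Theorem IV.1.1]{scholze}, producing a perfectoid space $\tilde{\mathcal S}_{\tilde K^p}$ together with the Hodge-Tate period map $\tilde \pi_{HT}: \tilde{\mathcal S}_{\tilde K^p} \to \Fl_{\tilde G, \tilde \mu}$ and the required equivariance properties on the Siegel side.

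Next, I would define $\mathcal S_{K^p}$ as the Zariski-closed subspace of $\tilde{\mathcal S}_{\tilde K^p}$ cut out by the intersection of the preimages of $S_{K^p K_p}^\mathrm{ad} \subset \tilde S_{\tilde K^p \tilde K_p}^\mathrm{ad}$ under the projection maps $\tilde{\mathcal S}_{\tilde K^p} \to \tilde S_{\tilde K^p \tilde K_p}^\mathrm{ad}$, as $\tilde K_p$ shrinks through a cofinal system. The crucial point --- and in my view the main technical obstacle --- is that this Zariski-closed subspace is itself a perfectoid space and that $\mathcal S_{K^p} \sim \varprojlim_{K_p} (S_{K^p K_p}\otimes_E E_\mathfrak p)^\mathrm{ad}$. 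Working on an affinoid cover of $\tilde{\mathcal S}_{\tilde K^p}$ of the sort produced in \cite[\S IV.1]{scholze} (pulled back from a toric chart on the Siegel flag variety), I would show that the uniform completion of the colimit of coordinate rings of the finite-level closed subspaces is again a perfectoid Tate algebra: almost purity and the tilting equivalence propagate from the Siegel tower to the closed tower because closed immersions are preserved under tilting of such strictly totally disconnected affinoids, and the Hecke-tower filtration on $\tilde{\mathcal S}_{\tilde K^p}$ restricts to a cofinal system of levels for the sub-tower. The $\sim$-equivalence then follows formally from the corresponding equivalence on the Siegel side and the fact that closed immersions commute with the relevant inverse limits.

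The Hodge-Tate period map is then simply the restriction $\pi_{HT} := \tilde \pi_{HT}|_{\mathcal S_{K^p}}$. The $G(\mathbb Q_p)$-equivariance is inherited from the $\tilde G(\mathbb Q_p)$-equivariance of $\tilde \pi_{HT}$ via the inclusion $G(\mathbb Q_p) \hookrightarrow \tilde G(\mathbb Q_p)$: the subgroup $G(\mathbb Q_p)$ stabilizes $\mathcal S_{K^p}$ inside $\tilde{\mathcal S}_{\tilde K^p}$ because it preserves each closed embedding $S_{K^p K_p}^\mathrm{ad} \hookrightarrow \tilde S_{\tilde K^p \tilde K_p}^\mathrm{ad}$ at finite level. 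For statement (3), the construction is manifestly functorial in the tame level: varying $K^p$ together with a compatible $\tilde K^p$ gives an inverse system of perfectoid spaces $\mathcal S_{K^p}$ on which prime-to-$p$ Hecke correspondences act by restriction from the Siegel tower, while the target $\Fl_{\tilde G, \tilde \mu}$ depends only on the local group at $p$ and so carries the trivial $G(\mathbb A_f^p)$-action. Thus all three parts reduce, via the Hodge embedding, to the corresponding statements for the Siegel case.
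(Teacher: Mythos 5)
Your proposal is correct and follows essentially the same route as the paper, which does not reprove this statement but cites it as part of Theorem IV.1.1 of \cite{scholze}; the proof there proceeds exactly as you describe, by taking the (Zariski) closure of the Hodge-type tower inside the Siegel perfectoid space $\tilde{\cS}_{\tilde K^p}$, invoking the fact that Zariski closed subspaces of affinoid perfectoid spaces are again perfectoid, and restricting $\tilde\pi_{HT}$. The only point deserving more care than you give it is the choice of $\tilde K = \tilde K^p\tilde K_p$ compatible with the fixed tame level $K^p$ as $K_p$ shrinks, but this is handled in loc.~cit.\ and does not affect the argument.
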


Recall that the Hodge-Tate period map~\cite{scholze, scholzeweinstein} has the following description on points: for $A/C$ an abelian variety of dimension $g$, the Tate module of $A$ admits the Hodge-Tate decomposition: \[0\to (\mathrm{Lie}\ A)(1)\to T_pA\otimes _{\mathbb{Z}_p}C\to (\mathrm{Lie}\ A^\vee)^\vee\to 0.\] A point $x\in \cS_{G, K^p}(C,C^+)$ corresponding to $A/C$ together with a symplectic isomorphism $T_pA \toisom \mathbb{Z}_p^{2g}$ (and extra structures) is mapped to the point $\pi_{HT}(x)\in\Fl_{\tilde G,\tilde{\mu}}(C,C^+)$ corresponding to the Hodge-Tate filtration $\mathrm{Lie}(A)\subset C^{2g}$.

We note that one can think of the Hodge-Tate period map as a $p$-adic analogue of the Borel embedding. The goal of this section is to prove the following theorem. 

\begin{thm}\label{refinedht}
\begin{enumerate}
\item The Hodge-Tate period map for $\mathcal{S}_{K^p}$ factors through $\Fl_{G,\mu}$ and the resulting map \[\pi_{HT}:\mathcal{S}_{K^p}\to\Fl_{G,\mu}\] is independent of the choice of embedding of Shimura data. 
\item Fix some $\mu$ in the given conjugacy class, defined over a finite extension of $E$. The tensor functor from $\mathrm{Rep}\ M_\mu$ to $G(\mathbb{Q}_p)$-equivariant vector bundles on $\mathcal{S}_{K^p}$ given as the composition
\[\begin{aligned}
f_p: \mathrm{Rep}\ M_\mu\hookrightarrow \mathrm{Rep}\ P_\mu&\longrightarrow \{G(\mathbb{Q}_p)\mathrm{-equivariant\ vector\ bundles\ on\ }\Fl_{G,\mu}\}\\
&\buildrel{\pi_{HT}^\ast}\over\longrightarrow\{G(\mathbb{Q}_p)\mathrm{-equivariant\ vector\ bundles\ on\ }\mathcal{S}_{K^p}\}
\end{aligned}\]
is isomorphic to the tensor functor
\[\begin{aligned}
f_\infty: \mathrm{Rep}\ M_\mu\hookrightarrow \mathrm{Rep}\ P_\mu^\mathrm{std}&\longrightarrow \{\mathrm{automorphic\ vector\ bundles\ on\ }S_{K}\}\\
&\longrightarrow \{G(\mathbb{Q}_p)\mathrm{-equivariant\ vector\ bundles\ on\ }\mathcal{S}_{K^p}\}\ .
\end{aligned}\]
The isomorphism is independent of the choice of Siegel embedding, and equivariant for the Hecke action of $G(\mathbb A_f^p)$.
\end{enumerate}
\end{thm}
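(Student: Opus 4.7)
I would prove the factoring first on $(C,\mathcal O_C)$-valued points and then extend by density. Fix a Siegel embedding $(G,X)\hookrightarrow(\tilde G,\tilde X)$ together with a collection of Hodge tensors $(s_\alpha)\subset V^\otimes$ cutting out $G\subset \tilde G$ as their joint stabilizer; any $x\in\mathcal{S}_{K^p}(C,\mathcal O_C)$ then corresponds to an abelian variety $A/C$ with symplectic trivialization of $T_pA$ and \'etale tensors $s_{\alpha,p}\in T_pA^\otimes$ arising as the $p$-adic realizations of Hodge classes on $A$. By Deligne's theorem that Hodge tensors on abelian varieties are absolute Hodge~\cite{delignehodge}, together with Blasius's compatibility of absolute Hodge cycles with the $p$-adic de~Rham comparison~\cite{blasius}, the $s_{\alpha,p}\otimes 1$ correspond under $H^1_{\mathrm{dR}}(A)\otimes B_{\mathrm{dR}}\simeq T_pA\otimes B_{\mathrm{dR}}$ to Hodge tensors $s_{\alpha,\mathrm{dR}}\in F^0 H^1_{\mathrm{dR}}(A)^\otimes$. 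Passing to the graded, the Hodge filtration on $H^1_{\mathrm{dR}}(A)$ induces the Hodge-Tate filtration on $T_pA\otimes C$, and so the $s_{\alpha,p}\otimes 1$ respect the latter; the stabilizer of the Hodge-Tate filtration in $\tilde G_C$ therefore lies in $G_C$ and is a parabolic in the conjugacy class of $P_\mu$, showing $\pi_{HT}(x)\in\Fl_{G,\mu}(C,\mathcal O_C)$. The preimage $\pi_{HT}^{-1}(\Fl_{G,\mu})\subset \mathcal{S}_{K^p}$ is closed (as $\Fl_{G,\mu}\hookrightarrow\Fl_{\tilde G,\tilde \mu}$ is a closed immersion) and contains all such points, which are dense, so the factoring extends to $\mathcal{S}_{K^p}$. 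Independence of the Siegel embedding is then automatic: the refined map is characterized pointwise by $(A,(s_{\alpha,p}))$, with no mention of $\tilde G$.

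\textbf{Plan for (2).} My plan is to realize both $f_p$ and $f_\infty$ as associated-bundle constructions for a single canonical $M_\mu$-torsor on $\mathcal{S}_{K^p}$. On the archimedean side, the universal abelian variety on $S_K$ yields a filtered $G$-torsor with respect to the Hodge filtration, whose associated graded is an $M_\mu$-torsor $\mathcal P_\infty$, and by the standard description of automorphic vector bundles $f_\infty(V) = \mathcal P_\infty\times^{M_\mu}V$ for any $V\in\mathrm{Rep}\ M_\mu$ (after pullback to $\mathcal{S}_{K^p}$). On the $p$-adic side, the tautological $P_\mu$-torsor on $\Fl_{G,\mu}$ pulls back along $\pi_{HT}$ and then reduces via the Levi quotient $P_\mu\twoheadrightarrow M_\mu$ to an $M_\mu$-torsor $\mathcal P_p$, with $f_p(V) = \mathcal P_p\times^{M_\mu}V$. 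The identification $\mathcal P_\infty\simeq \mathcal P_p$ comes from the relative de~Rham / Hodge-Tate comparison of~\cite{scholzerigid} applied to the universal abelian variety over $\mathcal{S}_{K^p}$: this gives a canonical isomorphism between the graded of the Hodge filtration on $H^1_{\mathrm{dR}}(\cA)$ and the graded of the Hodge-Tate filtration on $T_p\cA\otimes \mathcal O_{\mathcal{S}_{K^p}}$, which by the relative form of Part (1) respects the $G$-structure, hence produces the desired isomorphism of $M_\mu$-torsors. Hecke equivariance for $G(\mathbb A_f^p)$ is automatic, as $\mathcal P_p$ is pulled back from the Hecke-trivial flag variety $\Fl_{G,\mu}$, while $\mathcal P_\infty$ is Hecke-equivariant by construction; independence of the Siegel embedding is again intrinsic.

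\textbf{Main obstacle.} The principal technical obstacle is globalizing the comparison: at $(C,\mathcal O_C)$-points the isomorphism $\mathcal P_\infty\simeq \mathcal P_p$ follows from the classical Hodge-Tate decomposition together with Part (1), but to upgrade this to an isomorphism of $M_\mu$-torsors over the entire perfectoid space $\mathcal{S}_{K^p}$, I would first use~\cite{scholzerigid} to obtain the corresponding comparison of $\tilde G$-torsors in families, and then establish compatibility with the $G$-structure by the same closedness-and-density argument as in Part (1), reducing the relative statement about Hodge tensors to the classical Deligne-Blasius case.
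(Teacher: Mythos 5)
Your overall strategy coincides with the paper's: both parts are proved by producing a $P_\mu$-torsor from the relative Hodge--Tate filtration (and a $P_\mu^{\mathrm{std}}$-torsor from the Hodge filtration), passing to the common Levi $M_\mu$, and comparing the two $M_\mu$-torsors via the graded pieces of the relative $p$-adic--de Rham comparison of \cite{scholzerigid}, with Deligne's absolute Hodge cycles and Blasius's compatibility controlling the tensors. Two points in your write-up need repair.

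First, you invoke Deligne--Blasius pointwise at an arbitrary $(C,\cO_C)$-point of $\cS_{K^p}$. For an abelian variety over a general complete algebraically closed $C/\mathbb{Q}_p$ there are no ``Hodge classes on $A$'' and Blasius's theorem does not apply; the tensors $s_{\alpha,p}$ and $s_{\alpha,\mathrm{dR}}$ at such a point are defined only by parallel transport in the local system, resp.\ as horizontal sections of $(\cV_{\mathrm{dR}},\nabla)$. The correct propagation mechanism is not topological density of algebraic points (which would itself require an argument) but horizontality: the matching of $s_{\alpha,p}$ with $s_{\alpha,\mathrm{dR}}$ under the relative comparison is an identity between two flat sections of $\cV_{\mathrm{dR}}^{\otimes}\otimes\cO\mathbb{B}_{\mathrm{dR}}$, hence holds on a whole connected component once it holds at a single point of that component defined over a number field, where Deligne--Blasius is genuinely available. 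This is how the paper argues, and the same device is what makes independence of the Siegel embedding nontrivial rather than ``automatic'': one must check that the idempotent $e_p$ relating $\cV_p'$ to $e_p\cV_p^\otimes$ respects the Hodge--Tate filtration, which is again verified at number-field points and spread out by flatness. Relatedly, your claim that the stabilizer of the Hodge--Tate filtration is a parabolic \emph{in the conjugacy class of} $P_\mu$ needs the explicit Hodge--Tate decomposition at a classical point together with the observation that the type of the filtration is a discrete invariant, constant on connected components.

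Second, in part (2) the graded comparison is not an isomorphism between $\mathrm{gr}^j(\cV_{\mathrm{dR}}\otimes\hat{\cO})$ and $\mathrm{gr}_j(\cV_p\otimes\hat{\cO})$ on the nose: there is a Tate twist, $\mathrm{gr}^j(\cV_{\mathrm{dR}}\otimes\hat{\cO})\cong\mathrm{gr}_j(\cV_p\otimes\hat{\cO})(j)$, cf.\ Corollary~\ref{gradedHodgeTate}. To obtain an isomorphism of $M_\mu$-torsors $\mathscr{M}_{\mathrm{dR}}\cong\mathscr{M}_p$ one must trivialize $\hat{\cO}(1)\cong\hat{\cO}$ compatibly on both sides; the paper does this using the similitude character $GSp(V,\psi)\to\GG_m$, whose realization is the Tate motive and which is independent of the embedding. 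Your sketch omits this, and without it the two tensor functors differ by cyclotomic twists on the graded pieces.
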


\begin{remark} One may avoid choosing $\mu$ by replacing $\mathrm{Rep}\ M_\mu$ with the category of $G$-equivariant vector bundles on the space of cocharacters in the conjugacy class of $\mu$. Note that after fixing any $\mu$, this space identifies with $G/M_\mu$, and so $G$-equivariant vector bundles are identified with representations of $M_\mu$. We leave it as an exercise to the reader to reformulate the theorem and its proof in this more canonical language.
\end{remark}

Let us first recall how the tensor functor $f_\infty$ is defined: any representation $\xi$ of $M_\mu$ determines a representation of $P_\mu^\mathrm{std}$ by making the unipotent radical act trivially. Now, starting with a representation of $P_\mu^\mathrm{std}$, we can define an automorphic vector bundle on $S_{K}$ as in Section III of ~\cite{milne}, provided that the level $K$ is sufficiently small: first, there is an equivalence of categories \[\xi \mapsto \cW(\xi)\] between $\mathrm{Rep}_{\mathbb{C}}(P_\mu^\mathrm{std})$ and the category of $G_\mathbb{C}$-equivariant vector bundles on $\mathrm{Fl}^\mathrm{std}_{G,\mu}$ (the functor in one direction is taking the stalk of the vector bundle above the point corresponding to $\mu$). Then restriction along the image of the Borel embedding gives a $G(\mathbb{R})$-equivariant vector bundle on $X$. Passing to the double quotient defining the Shimura variety \[S_K(\mathbb{C})=G(\mathbb{Q})\setminus (X\times G(\mathbb{A}_f)/K)\] over $\mathbb{C}$ defines the automorphic vector bundle \[\cV(\xi):= G(\mathbb{Q})\setminus(\cW(\xi) \times G(\mathbb{A}_f)/K).\] The automorphic vector bundles $\cV(\xi)$ are algebraic and, when the representation $\xi$ is defined over a finite extension $E^\prime$ of $E$, $\cV(\xi)$ is also defined over $E^\prime$.

\begin{remark}
Proving that the automorphic vector bundles descend to the reflex field makes use of an intermediate algebraic object between $S_{K}$ and $\mathrm{Fl}_{G,\mu}^\mathrm{std}$, called the \emph{standard principal bundle} (see Section IV of~\cite{milne}), which is a $G$-torsor over $S_{K}$. See the proof of Lemma~\ref{spb} for more details.  
\end{remark}

In particular, $f_p$ is defined in an analogous way to $f_\infty$, except that it uses the Hodge-Tate period map in place of the Borel embedding. The appearance of the opposite parabolic $P_\mu$ in this picture forces one to look only at representations inflated from the common Levi $M_\mu$.

\subsection{The $p$-adic-de Rham comparison isomorphism}\label{hodge-tate filtration} For an abelian variety over $C$, its image under the Hodge-Tate period map is determined by the Hodge-Tate filtration on $H^1_\et(A,\mathbb{Q}_p)\otimes_{\mathbb{Q}_p}C$. The Hodge-Tate period map as a map of adic spaces $\cS_{K^p}\to \Fl_{\tilde G,\tilde{\mu}}$ is defined via a relative version of the Hodge-Tate filtration, which is a filtration on the local system given by the $p$-adic \'etale cohomology of the universal abelian variety over $\cS_{K}$, tensored with the completed structure sheaf of the base. In fact, the Hodge-Tate filtration is defined more generally: see Section 3 of~\cite{scholzesurvey} for a construction of the Hodge-Tate filtration for a proper smooth rigid-analytic variety over a geometric point.

As we will need to work with higher tensors in our analysis of Hodge type Shimura varieties, our goal in this section is to give a construction of the relative Hodge-Tate filtration in the case of a proper smooth morphism $\pi: X \to S$ of smooth adic spaces over $\mathrm{Spa}(K,\cO_K)$, where $K$ is a complete discretely valued field of characteristic $0$ with perfect residue field $k$ of characteristic $p$. This will be done in a way that also clarifies its relationship to the relative $p$-adic-de Rham comparison isomorphism. 

The following sheaves on $X_{\mathrm{pro\acute{e}t}}$ are defined in~\cite{scholzerigid}: the completed structure sheaf $\hat{\cO}_X$, the tilted completed structure sheaf $\hat{\cO}_{X^\flat}$, the relative period sheaves $\mathbb{B}^+_{\mathrm{dR},X}$ and $\mathbb{B}_{\mathrm{dR},X}$ as well as the structural de Rham sheaves $\cO\mathbb{B}^+_{\mathrm{dR},X}$ and  $\cO\mathbb{B}_{\mathrm{dR},X}$. We recall some of their definitions: the tilted integral structure sheaf $\hat{\cO}^+_{X^\flat}$ is the (inverse) perfection of $\hat{\cO}^+_X/p$ (i.e., the inverse limit of $\hat{\cO}^+_X/p$ with respect to the Frobenius morphism).

\begin{defn}\label{derham period sheaf}\begin{enumerate} 
\item The relative period sheaf $\mathbb{B}^+_{\mathrm{dR},X}$ is the completion of $W(\hat{\cO}^+_{X^\flat})[1/p]$ along the kernel of the natural map $\theta: W(\hat{\cO}^+_{X^\flat})[1/p] \to \hat{\cO}_{X}$. 
\item The relative period sheaf $\mathbb{B}_{\mathrm{dR},X}$ is $\mathbb{B}^+_{\mathrm{dR},X}[\xi^{-1}]$, where $\xi$ is any element that generates the kernel of $\theta$. 
\end{enumerate}
\end{defn}

\noindent Lemma 6.3 of~\cite{scholzerigid} shows that $\xi$ exists pro\'etale locally on $X$, is not a zero divisor and is unique up to a unit. Therefore, the sheaf $\mathbb{B}_{\mathrm{dR},X}$ is well-defined. When $X=\mathrm{Spa}(C,\cO_C)$, we recover Fontaine's period ring $B_{\mathrm{dR},C}$ from this construction. By construction, the relative period sheaf $\mathbb{B}_{\mathrm{dR},X}$ is equipped with a natural filtration
$\mathrm{Fil}^i \mathbb{B}_{\mathrm{dR},X}= \xi^i \mathbb{B}^+_{\mathrm{dR},X}$, with $\mathrm{gr}^0 \mathbb{B}_{\mathrm{dR},X} = \hat{\cO}_X$.

Recall that $k$ is the residue field of $K$. Then $\cO_X\otimes_{W(k)} W(\hat{\cO}_{X^\flat})$ also admits a map $\theta: \cO_X\otimes_{W(k)}W(\hat{\cO}_{X^\flat}) \to \hat{\cO}_X$. Then $\cO\mathbb{B}^{+}_{\mathrm{dR}}$ is defined as the completion of $\cO_X\otimes_{W(k)}W(\hat{\cO}_{X^\flat})$ along $\ker \theta$ and $\cO\mathbb{B}_\mathrm{dR}:=\cO\mathbb{B}^+_\mathrm{dR}[\xi^{-1}]$ as above. The structural de Rham sheaves $\cO\mathbb{B}^{(+)}_{\mathrm{dR}}$ are equipped with filtrations and connections \[\nabla: \cO\mathbb{B}^{(+)}_{\mathrm{dR},X}\to \cO\mathbb{B}^{(+)}_{\mathrm{dR},X}\otimes_{\cO_X}\Omega^1_X.\] We have an identification $(\cO\mathbb{B}^{(+)}_{\mathrm{dR}})^{\nabla=0}= \mathbb{B}^{(+)}_{\mathrm{dR}}$.

We now recall the relative $p$-adic-de Rham comparison isomorphism for a proper smooth morphism $\pi: X\to S$ of smooth adic spaces over $K$.

\begin{thm}[{\cite[Theorem 8.8]{scholzerigid}}] Assume that $R^i \pi_\ast \mathbb F_p$ is locally free on $S_{\mathrm{pro\acute{e}t}}$ for all $i\geq 0$.\footnote{This condition is verified if $\pi$ is algebraizable, and has been announced in general by Gabber. Another proof will appear in a forthcoming version of \cite{scholzelectures}; the idea is to use (the new version of) pro-\'etale descent to reduce to the case where $S$ is w-strictly local, in which case one can redo the finiteness argument over a geometric point. With $\mathbb Q_p$-coefficients, it has also been announced by Kedlaya-Liu.} Then, for all $i\geq 0$, $R^i \pi_\ast \hat{\mathbb Z}_p$ is de Rham in the sense of \cite[Definition 7.5]{scholzerigid}, with associated filtered module with integrable connection given by $R^i \pi_{\mathrm{dR}\ast} \cO_X$ (with its Hodge filtration, and Gauss-Manin connection). In particular, there is an isomorphism
\[
R^i\pi_{*}\hat{\mathbb Z}_{p,X}\otimes_{\hat{\mathbb{Z}}_{p,S}}\cO\mathbb{B}_{\mathrm{dR},S}\simeq R^i\pi_{\mathrm{dR}\ast}\cO_X\otimes_{\cO_S}\cO\mathbb{B}_{\mathrm{dR},S}
\]
of sheaves on $S_{\mathrm{pro\acute{e}t}}$, compatible with filtrations and connections.
\end{thm}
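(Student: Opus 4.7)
The plan is to deduce the relative $p$-adic-de Rham comparison from the primitive comparison theorem (which handles the trivial $\mathbb{Z}_p$-coefficient case against $\mathbb{B}_{\mathrm{dR}}$) by resolving the constant period sheaf $\mathbb{B}_{\mathrm{dR},X}$ using the structural de Rham sheaf $\cO\mathbb{B}_{\mathrm{dR},X}$. The strategy proceeds in three main steps.

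First, I would establish the \emph{relative Poincar\'e lemma}: the de Rham complex
\[
0 \to \mathbb{B}^+_{\mathrm{dR},X} \to \cO\mathbb{B}^+_{\mathrm{dR},X} \xrightarrow{\nabla} \cO\mathbb{B}^+_{\mathrm{dR},X} \otimes_{\cO_X} \Omega^1_X \xrightarrow{\nabla} \cO\mathbb{B}^+_{\mathrm{dR},X} \otimes_{\cO_X} \Omega^2_X \to \cdots
\]
is exact on $X_{\mathrm{pro\acute{e}t}}$, and similarly after inverting $\xi$. This is a purely local question: after restricting to a small affinoid with \'etale coordinates $T_1,\ldots,T_d$ and passing to a pro-\'etale cover extracting compatible $p$-power roots of the $T_i$, one obtains a presentation $\cO\mathbb{B}^+_{\mathrm{dR},X}\cong \mathbb{B}^+_{\mathrm{dR},X}\llbracket y_1,\ldots,y_d\rrbracket$ with $y_i=T_i\otimes 1-1\otimes [T_i^\flat]$ and $\nabla(y_i)=dT_i$, reducing the exactness to the standard formal Poincar\'e lemma in the variables $y_i$.

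Second, I would apply $R\pi_\ast$ to this resolution. Under the local freeness hypothesis on $R^i\pi_\ast\mathbb{F}_p$, the primitive comparison theorem of \cite{scholzerigid} gives a natural filtered isomorphism
\[
R^i\pi_\ast\mathbb{B}_{\mathrm{dR},X}\;\cong\;(R^i\pi_\ast\hat{\mathbb{Z}}_p)\otimes_{\hat{\mathbb{Z}}_{p,S}}\mathbb{B}_{\mathrm{dR},S},
\]
while the Poincar\'e resolution converts the left-hand side into a spectral sequence built from the sheaves $R^j\pi_\ast(\cO\mathbb{B}_{\mathrm{dR},X}\otimes_{\cO_X}\Omega^i_{X/S})$ with Gauss-Manin differentials. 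Choosing relative \'etale coordinates on $X/S$, a local presentation $\cO\mathbb{B}^+_{\mathrm{dR},X}\cong \pi^{-1}\cO\mathbb{B}^+_{\mathrm{dR},S}\llbracket y_1,\ldots,y_d\rrbracket$ combined with the projection formula identifies these sheaves with $R^j\pi_{\mathrm{dR}\ast}(\Omega^i_{X/S})\otimes_{\cO_S}\cO\mathbb{B}_{\mathrm{dR},S}$. The spectral sequence thereby collapses into the Hodge-to-de Rham spectral sequence for $X/S$ tensored with $\cO\mathbb{B}_{\mathrm{dR},S}$, whose abutment is the claimed right-hand side.

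Third, I would check that the isomorphism so obtained respects the connections and filtrations: compatibility with $\nabla$ is automatic from functoriality of the Gauss-Manin connection on $\cO\mathbb{B}_{\mathrm{dR}}$, while compatibility with filtrations follows by tracking the $\xi$-adic filtration through the spectral sequence and verifying on $\mathrm{gr}^0$ that one recovers the expected relative Hodge-Tate decomposition of $R^i\pi_\ast\hat{\mathbb{Z}}_p\otimes\hat{\cO}_S$. The step I expect to be the main obstacle is the second one: verifying rigorously that the projection formula and the spectral-sequence argument are valid at the level of sheaves on the pro-\'etale site and yield a globally defined, canonical isomorphism, despite $\cO\mathbb{B}^+_{\mathrm{dR}}$ being far from of finite presentation. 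This is precisely where the local freeness hypothesis on $R^i\pi_\ast\mathbb{F}_p$ is used essentially, to control the interaction of derived pushforward with the completed tensor products defining the period sheaves.
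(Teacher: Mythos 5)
The paper does not prove this statement: it is quoted verbatim, with citation, from \cite[Theorem 8.8]{scholzerigid}, so there is no internal proof here to compare against. Your sketch reproduces the strategy of the proof in that reference (Poincar\'e lemma for $\cO\mathbb{B}_{\mathrm{dR}}$, primitive comparison under the local-freeness hypothesis on $R^i\pi_\ast\mathbb F_p$, and the base-change computation identifying the pushforward of the $\cO\mathbb{B}_{\mathrm{dR}}$-linearized de Rham complex with $R^i\pi_{\mathrm{dR}\ast}\cO_X\otimes_{\cO_S}\cO\mathbb{B}_{\mathrm{dR},S}$), and it correctly isolates the delicate step; the one imprecision is that you pass silently from the absolute de Rham resolution of $\mathbb{B}^+_{\mathrm{dR},X}$ in your first step to the relative complex $\Omega^\bullet_{X/S}$ in your second, which requires either a relative Poincar\'e lemma or the Katz--Oda filtration of $\Omega^\bullet_X$ by $\pi^*\Omega^\bullet_S$ to produce the Gauss--Manin connection.
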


Moreover, we need to recall the two different $\mathbb{B}_{\mathrm{dR}}^+$-local systems associated with $R^i \pi_\ast \hat{\mathbb{Z}}_p$. The first one, which is closely related to \'etale cohomology, is given by
\[
\mathbb{M} = R^i \pi_\ast \hat{\mathbb{Z}}_{p,X}\otimes_{\hat{\mathbb{Z}}_{p,S}}\mathbb{B}_{\mathrm{dR},S}^+ \cong R^i \pi_\ast \mathbb{B}_{\mathrm{dR},X}^+\ .
\]
The other one, which is closely related to de Rham cohomology, is given by
\[
\mathbb{M}_0 = (R^i \pi_{\mathrm{dR}\ast}\cO_X\otimes_{\cO_S}\cO\mathbb{B}_{\mathrm{dR},S}^+)^{\nabla = 0}\ .
\]
Note that the definition of $\mathbb{M}_0$ did not make use of the Hodge filtration. The relation between these two lattices is given by the following proposition, which reformulates the condition of being associated.

\begin{prop}[{\cite[Proposition 7.9]{scholzerigid}}]\label{TwoBdRLattices} There is a canonical isomorphism
\[
\mathbb{M}\otimes_{\mathbb{B}_{\mathrm{dR},S}^+} \mathbb{B}_{\mathrm{dR},S}\cong \mathbb{M}_0\otimes_{\mathbb{B}_{\mathrm{dR},S}^+} \mathbb{B}_{\mathrm{dR},S}\ .
\]
Moreover, for any $j\in \mathbb Z$, one has an identification
\[\begin{aligned}
(\mathbb{M}\cap \mathrm{Fil}^j \mathbb{M}_0) / (\mathbb{M}\cap \mathrm{Fil}^{j+1} \mathbb{M}_0) &= (\mathrm{Fil}^{-j} R^i \pi_{\mathrm{dR}\ast}\cO_X)\otimes_{\cO_S} \hat{\cO}_S(j)\\
\subset \mathrm{gr}^j \mathbb{M}_0 &= R^i \pi_{\mathrm{dR}\ast}\cO_X\otimes_{\cO_S} \hat{\cO}_S(j)\ .
\end{aligned}\]
In particular, $\mathbb{M}_0\subset \mathbb{M}$.
\end{prop}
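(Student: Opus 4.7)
The strategy is to deduce both assertions from the relative $p$-adic--de Rham comparison isomorphism of the preceding theorem, supplemented by a pro-\'etale local analysis on $S$.

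For the first assertion, apply the comparison isomorphism
\[
R^i \pi_\ast \hat{\mathbb Z}_{p,X} \otimes_{\hat{\mathbb Z}_{p,S}} \cO\mathbb{B}_{\mathrm{dR},S} \cong R^i \pi_{\mathrm{dR}\ast}\cO_X \otimes_{\cO_S} \cO\mathbb{B}_{\mathrm{dR},S}
\]
and take horizontal sections on both sides. Since $(\cO\mathbb{B}_{\mathrm{dR}})^{\nabla = 0} = \mathbb{B}_{\mathrm{dR}}$, the left-hand side becomes $R^i\pi_\ast\hat{\mathbb{Z}}_p \otimes_{\hat{\mathbb Z}_{p,S}} \mathbb{B}_{\mathrm{dR},S} = \mathbb{M}[\xi^{-1}]$, while by definition the horizontal sections of the right-hand side are $\mathbb{M}_0[\xi^{-1}]$. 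The resulting identification is canonical because the comparison isomorphism is, and it automatically respects filtrations.

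For the graded-piece formula, we argue pro-\'etale locally on $S$, passing to a covering where $R^i \pi_\ast \hat{\mathbb Z}_p$ is a free local system, the Hodge filtration on $R^i\pi_{\mathrm{dR}\ast}\cO_X$ admits an $\cO_S$-linear splitting, and $\cO\mathbb{B}^+_{\mathrm{dR},S}$ admits its explicit presentation in terms of local \'etale coordinates on $S$. Choose an $\cO_S$-basis of $R^i \pi_{\mathrm{dR}\ast}\cO_X$ adapted to the Hodge filtration and a compatible trivialization of $R^i \pi_\ast \hat{\mathbb Z}_p$; the comparison then has an explicit matrix expression over $\cO\mathbb{B}_{\mathrm{dR}}$. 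The essential input is that, because the comparison intertwines the $\ker\theta$-adic filtration on $\cO\mathbb{B}^+_{\mathrm{dR}}$ with the Hodge filtration on de Rham cohomology, passing a basis vector from the $p$-th Hodge piece of $R^i\pi_{\mathrm{dR}\ast}\cO_X$ to an element of $\mathbb{M}$ costs exactly one factor of $\xi$ per Hodge step. Hence an element of $\mathrm{Fil}^j \mathbb{M}_0$ lies in $\mathbb{M}$ precisely when its image in $\mathrm{gr}^j \mathbb{M}_0 = R^i \pi_{\mathrm{dR}\ast}\cO_X \otimes \hat{\cO}_S(j)$ lies in $\mathrm{Fil}^{-j} R^i \pi_{\mathrm{dR}\ast}\cO_X \otimes \hat{\cO}_S(j)$, yielding the desired identification of graded pieces. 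The inclusion $\mathbb{M}_0 \subset \mathbb{M}$ is then formal: for $j \leq 0$ one has $\mathrm{Fil}^{-j} R^i\pi_{\mathrm{dR}\ast}\cO_X = R^i\pi_{\mathrm{dR}\ast}\cO_X$, so the graded-piece equality forces $\mathbb{M} \cap \mathrm{Fil}^j \mathbb{M}_0 = \mathrm{Fil}^j \mathbb{M}_0$ for all sufficiently negative $j$, and taking $j$ small enough that $\mathrm{Fil}^j \mathbb{M}_0 = \mathbb{M}_0$ gives the claim.

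The main technical obstacle is the local computation showing that the comparison exchanges the two filtrations precisely as claimed. This amounts to careful book-keeping in the explicit presentation of $\cO\mathbb{B}^+_{\mathrm{dR}}$, tracking how the auxiliary variables generating $\ker\theta$ together with $\xi$ interact with both the Gauss--Manin connection and the Hodge filtration on the de Rham side, and verifying that the resulting matching of filtrations holds on the nose rather than merely up to a shift.
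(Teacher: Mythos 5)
First, note that the paper does not prove this proposition: it is quoted verbatim from \cite[Proposition 7.9]{scholzerigid}, so there is no internal proof to compare against; your overall strategy (extract everything from the filtered comparison isomorphism over $\cO\mathbb{B}_{\mathrm{dR},S}$ plus a pro-\'etale local analysis of $\cO\mathbb{B}^+_{\mathrm{dR},S}$) is indeed the strategy of the cited source. But as written there are real gaps. In the first part, ``taking horizontal sections on both sides'' only produces a map; to know it identifies $\mathbb{M}_0\otimes_{\mathbb{B}^+_{\mathrm{dR},S}}\mathbb{B}_{\mathrm{dR},S}$ with $\mathbb{M}\otimes_{\mathbb{B}^+_{\mathrm{dR},S}}\mathbb{B}_{\mathrm{dR},S}$ you must first know that $\mathbb{M}_0$ is a finite projective $\mathbb{B}^+_{\mathrm{dR},S}$-module of full rank with $\mathbb{M}_0\otimes_{\mathbb{B}^+_{\mathrm{dR},S}}\cO\mathbb{B}^+_{\mathrm{dR},S}\cong R^i\pi_{\mathrm{dR}\ast}\cO_X\otimes_{\cO_S}\cO\mathbb{B}^+_{\mathrm{dR},S}$; this is the relative Poincar\'e lemma, i.e.\ the equivalence between $\mathbb{B}^+_{\mathrm{dR}}$-local systems and $\cO\mathbb{B}^+_{\mathrm{dR}}$-modules with connection (Theorem 7.2 of the reference), and it is exactly what your ``explicit presentation in local coordinates'' has to deliver. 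More seriously, the heart of the second assertion --- that passing a basis vector from the $p$-th Hodge piece into $\mathbb{M}$ ``costs exactly one factor of $\xi$ per Hodge step'' --- is a restatement of the proposition, not a proof of it; the actual content is that $\mathrm{Fil}^0$ of the \'etale side, namely $\mathbb{M}\otimes_{\mathbb{B}^+_{\mathrm{dR},S}}\cO\mathbb{B}^+_{\mathrm{dR},S}$, equals the convolution filtration $\sum_a \mathrm{Fil}^a(R^i\pi_{\mathrm{dR}\ast}\cO_X)\otimes\xi^{-a}\cO\mathbb{B}^+_{\mathrm{dR},S}$, from which one computes the intersections $\mathbb{M}\cap\mathrm{Fil}^j\mathbb{M}_0$ and then descends from $\cO\mathbb{B}^+_{\mathrm{dR},S}$ back to $\mathbb{B}^+_{\mathrm{dR},S}$ using the structure of $\mathrm{gr}^\bullet\cO\mathbb{B}_{\mathrm{dR},S}$ over $\hat{\cO}_S$. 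You flag this as ``the main technical obstacle,'' but it is the whole proposition.

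The closing deduction of $\mathbb{M}_0\subset\mathbb{M}$ is wrong in the direction of the indices. Since the Hodge filtration is decreasing with $\mathrm{Fil}^0=R^i\pi_{\mathrm{dR}\ast}\cO_X$, one has $\mathrm{Fil}^{-j}R^i\pi_{\mathrm{dR}\ast}\cO_X$ equal to everything for $j\geq 0$, not $j\leq 0$; and $\mathrm{Fil}^j\mathbb{M}_0=\xi^j\mathbb{M}_0$ is strictly \emph{larger} than $\mathbb{M}_0$ for $j<0$, so there is no negative $j$ with $\mathrm{Fil}^j\mathbb{M}_0=\mathbb{M}_0$. (Relatedly, your ``precisely when'' criterion fails for $j\ll 0$: an element of $\mathrm{Fil}^{j+1}\mathbb{M}_0$ has zero image in $\mathrm{gr}^j\mathbb{M}_0$ but need not lie in $\mathbb{M}$.) The correct argument is a \emph{descending} induction: for $j\gg 0$ one has $\xi^j\mathbb{M}_0\subset\mathbb{M}$ because both are lattices in the same $\mathbb{B}_{\mathrm{dR},S}$-module, so $\mathbb{M}\cap\mathrm{Fil}^j\mathbb{M}_0=\mathrm{Fil}^j\mathbb{M}_0$; for $j\geq 0$ the graded identity says $\mathbb{M}\cap\mathrm{Fil}^j\mathbb{M}_0$ surjects onto $\mathrm{gr}^j\mathbb{M}_0$, so descending from $j\gg 0$ to $j=0$ yields $\mathbb{M}\cap\mathrm{Fil}^0\mathbb{M}_0=\mathrm{Fil}^0\mathbb{M}_0=\mathbb{M}_0$, i.e.\ $\mathbb{M}_0\subset\mathbb{M}$.
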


In particular, we get an ascending filtration on
\[
\mathrm{gr}^0 \mathbb{M} = R^i \pi_\ast \hat{\mathbb Z}_{p,X}\otimes_{\hat{\mathbb Z}_{p,S}} \hat{\cO}_S
\]
given by
\[
\mathrm{Fil}_{-j}(R^i \pi_\ast \hat{\mathbb Z}_{p,X}\otimes_{\hat{\mathbb Z}_{p,S}} \hat{\cO}_S) = (\mathbb{M}\cap \mathrm{Fil}^j \mathbb{M}_0) / (\mathrm{Fil}^1 \mathbb{M}\cap \mathrm{Fil}^j \mathbb{M}_0)\ .
\]
Here, $\mathrm{Fil}_{-1}=0$, and $\mathrm{Fil}_i$ is everything. We call this filtration the \emph{relative Hodge-Tate filtration}.

\begin{cor}\label{gradedHodgeTate} For all $j\geq 0$, there are canonical isomorphisms
\[
\mathrm{gr}_j(R^i \pi_\ast \hat{\mathbb Z}_{p,X}\otimes_{\hat{\mathbb Z}_{p,S}} \hat{\cO}_S)\cong (\mathrm{gr}^j R^i \pi_{\mathrm{dR}\ast}\cO_X)\otimes_{\cO_S} \hat{\cO}_S(-j)\ .
\]
\end{cor}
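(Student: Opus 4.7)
The plan is a direct diagram chase built on Proposition~\ref{TwoBdRLattices}. Write $W := R^i\pi_{\mathrm{dR}\ast}\cO_X$ and $N := R^i\pi_\ast \hat{\mathbb{Z}}_{p,X}\otimes_{\hat{\mathbb{Z}}_{p,S}}\hat{\cO}_S = \mathrm{gr}^0\mathbb{M}$, and set
\[
X_j := \mathbb{M}\cap\mathrm{Fil}^{-j}\mathbb{M}_0,\qquad Y_j := \mathrm{Fil}^1\mathbb{M}\cap\mathrm{Fil}^{-j}\mathbb{M}_0 = X_j\cap \mathrm{Fil}^1\mathbb{M}.
\]
By definition of the Hodge-Tate filtration, $\mathrm{Fil}_j N$ is the image of $X_j$ in $N = \mathbb{M}/\mathrm{Fil}^1\mathbb{M}$, namely $X_j/Y_j$. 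A direct check gives $Y_j\cap X_{j-1} = Y_{j-1}$, so the natural inclusion of short exact sequences
\[
(0\to Y_{j-1}\to X_{j-1}\to \mathrm{Fil}_{j-1}N\to 0)\hookrightarrow (0\to Y_j\to X_j\to \mathrm{Fil}_j N\to 0)
\]
consists of monomorphisms in all three columns, and the snake lemma collapses to a short exact sequence of cokernels
\[
0\to Y_j/Y_{j-1}\to X_j/X_{j-1}\to \mathrm{gr}_j N\to 0.
\]

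Next I would identify the left two terms using Proposition~\ref{TwoBdRLattices}. Setting its indexing variable to $-j$ gives directly
\[
X_j/X_{j-1} \cong (\mathrm{Fil}^j W)\otimes_{\cO_S}\hat{\cO}_S(-j).
\]
For $Y_j/Y_{j-1}$, I would exploit multiplication by a generator $\xi$ of $\mathrm{Fil}^1\mathbb{B}_{\mathrm{dR},S}^+$. Since $\xi$ is a unit in $\mathbb{B}_{\mathrm{dR},S}$, multiplication by $\xi$ shifts the filtration on $\mathbb{M}_0$ (inherited from $\cO\mathbb{B}_{\mathrm{dR},S}^+$) by $1$ and satisfies $\xi\cdot\mathbb{M} = \mathrm{Fil}^1\mathbb{M}$, and preserves intersections of submodules, so
\[
\xi\cdot X_{j+1} = \xi\mathbb{M}\cap\xi\cdot\mathrm{Fil}^{-j-1}\mathbb{M}_0 = \mathrm{Fil}^1\mathbb{M}\cap\mathrm{Fil}^{-j}\mathbb{M}_0 = Y_j,
\]
and similarly $\xi\cdot X_j = Y_{j-1}$. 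The induced isomorphism $X_{j+1}/X_j\isomap Y_j/Y_{j-1}$ carries a Tate twist because the canonical class of $\xi$ generates $\mathrm{gr}^1\mathbb{B}_{\mathrm{dR},S}^+\cong\hat{\cO}_S(1)$. Combining with another application of Proposition~\ref{TwoBdRLattices} to identify $X_{j+1}/X_j$, this yields
\[
Y_j/Y_{j-1}\cong (X_{j+1}/X_j)\otimes\hat{\cO}_S(1)\cong (\mathrm{Fil}^{j+1}W)\otimes_{\cO_S}\hat{\cO}_S(-j-1)\otimes \hat{\cO}_S(1) = (\mathrm{Fil}^{j+1}W)\otimes_{\cO_S}\hat{\cO}_S(-j),
\]
and one checks that the inclusion $Y_j/Y_{j-1}\hookrightarrow X_j/X_{j-1}$ is the Hodge inclusion $\mathrm{Fil}^{j+1}W\hookrightarrow \mathrm{Fil}^jW$ tensored with $\hat{\cO}_S(-j)$.

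Feeding these identifications into the short exact sequence yields
\[
\mathrm{gr}_j N\cong (\mathrm{Fil}^jW/\mathrm{Fil}^{j+1}W)\otimes_{\cO_S}\hat{\cO}_S(-j) = (\mathrm{gr}^jW)\otimes_{\cO_S}\hat{\cO}_S(-j),
\]
as required. The overall argument is essentially two applications of Proposition~\ref{TwoBdRLattices}; the main delicate step is the Tate twist bookkeeping in the identification $Y_j/Y_{j-1}\cong (\mathrm{Fil}^{j+1}W)\otimes \hat{\cO}_S(-j)$, since although $\xi$ itself is only canonical modulo $\mathrm{Fil}^2\mathbb{B}_{\mathrm{dR},S}^+$, its image in $\mathrm{gr}^1\mathbb{B}_{\mathrm{dR},S}^+\cong \hat{\cO}_S(1)$ is canonical and accounts for the $(1)$-twist picked up on graded pieces.
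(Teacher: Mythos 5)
Your argument is correct and is precisely the computation that the paper compresses into ``immediate from Proposition~\ref{TwoBdRLattices} by passing to gradeds'': two applications of that proposition (at indices $-j$ and $-j-1$), the snake lemma for the inclusion $\mathrm{Fil}_{j-1}\hookrightarrow\mathrm{Fil}_j$, and multiplication by $\xi$ (with its canonical class in $\mathrm{gr}^1\mathbb{B}^+_{\mathrm{dR},S}\cong\hat{\cO}_S(1)$) to identify $Y_j/Y_{j-1}$ with $(\mathrm{Fil}^{j+1}R^i\pi_{\mathrm{dR}\ast}\cO_X)\otimes\hat{\cO}_S(-j)$ compatibly with the Hodge inclusion. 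The Tate-twist bookkeeping and the verification $Y_j\cap X_{j-1}=Y_{j-1}$ are both handled correctly, so this is a faithful unpacking of the paper's one-line proof.
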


\begin{proof} This is immediate from Proposition \ref{TwoBdRLattices} by passing to gradeds.
\end{proof}

In particular, one sees that
\[
\mathrm{Fil}_0 (R^i \pi_\ast \hat{\mathbb Z}_{p,X}\otimes_{\hat{\mathbb Z}_{p,S}} \hat{\cO}_S) = R^i \pi_\ast \cO_X\otimes_{\cO_S} \hat{\cO}_S\ .
\]
This map can be identified.

\begin{prop}\label{firstgradedHodgeTate} The first filtration step $\mathrm{Fil}_0$ of the relative Hodge-Tate filtration is given by the natural map
\[
R^i \pi_\ast \cO_X\otimes_{\cO_S} \hat{\cO}_S\to R^i \pi_\ast \hat{\cO}_X\cong R^i \pi_\ast {\hat{\mathbb Z}_{p,X}} \otimes_{\hat{\mathbb Z}_{p,S}} \hat{\cO}_S,
\]
which is injective.
\end{prop}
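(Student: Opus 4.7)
The plan is to obtain the abstract identification $\mathrm{Fil}_0 \cong R^i \pi_\ast \cO_X \otimes_{\cO_S} \hat{\cO}_S$ from Corollary \ref{gradedHodgeTate} and then identify the resulting embedding $\mathrm{Fil}_0 \hookrightarrow \mathrm{gr}^0 \mathbb{M} = R^i\pi_\ast\hat{\cO}_X$ with the natural map. The first step is immediate: since $\mathrm{Fil}_{-1}=0$, the filtration step $\mathrm{Fil}_0$ equals $\mathrm{gr}_0$, and the $j=0$ case of Corollary \ref{gradedHodgeTate} gives
\[
\mathrm{gr}_0 = \mathrm{gr}^0 R^i\pi_{\mathrm{dR}\ast}\cO_X \otimes_{\cO_S} \hat{\cO}_S = R^i\pi_\ast\cO_X \otimes_{\cO_S}\hat{\cO}_S,
\]
using that $\mathrm{gr}^0$ of the Hodge filtration on $R^i\pi_{\mathrm{dR}\ast}\cO_X$ is $R^i\pi_\ast\cO_X$.

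Tracing through the construction, $\mathrm{Fil}_0$ is the image of $\mathbb{M}_0$ under $\mathbb{M} \twoheadrightarrow \mathbb{M}/\xi\mathbb{M}$, and the isomorphism of Corollary \ref{gradedHodgeTate} factors this image first as a quotient $\mathbb{M}_0/(\mathbb{M}\cap\mathrm{Fil}^1\mathbb{M}_0) = R^i\pi_{\mathrm{dR}\ast}\cO_X\otimes\hat{\cO}_S$ (the $j=0$ case of Proposition \ref{TwoBdRLattices}) followed by the Hodge quotient $R^i\pi_{\mathrm{dR}\ast}\cO_X \twoheadrightarrow R^i\pi_\ast\cO_X$. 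To show this composite agrees with the natural map, I would apply $R^i\pi_\ast$ to the tautological factorization
\[
\cO_X \hookrightarrow \cO\mathbb{B}^+_{\mathrm{dR},X} \twoheadrightarrow \cO\mathbb{B}^+_{\mathrm{dR},X}/\mathrm{Fil}^1 = \hat{\cO}_X
\]
of the inclusion $\cO_X\hookrightarrow\hat{\cO}_X$, and use that the relative de Rham comparison identifies the $\nabla=0$ part of $R^i\pi_\ast\cO\mathbb{B}^+_{\mathrm{dR},X}$ with $\mathbb{M}_0\otimes_{\mathbb{B}^+_{\mathrm{dR},S}}\cO\mathbb{B}^+_{\mathrm{dR},S}$. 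Since the image of $\cO_X$ sits in Hodge-filtration step $0$ of $\cO\mathbb{B}^+_{\mathrm{dR},X}$, the lifted map $R^i\pi_\ast\cO_X\otimes\hat{\cO}_S \to R^i\pi_\ast\cO\mathbb{B}^+_{\mathrm{dR},X}/\mathrm{Fil}^1$ factors through $\mathbb{M}_0/(\mathbb{M}\cap\mathrm{Fil}^1\mathbb{M}_0)$ and further through the Hodge quotient, reproducing the isomorphism from Corollary \ref{gradedHodgeTate}.

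Injectivity is then automatic: by construction, the map $\mathrm{Fil}_0\hookrightarrow \mathrm{gr}^0\mathbb{M}$ is an inclusion of a subsheaf, and we have identified it with the natural map in question. The main obstacle is the diagram chase in the second paragraph, namely verifying that the further quotient $R^i\pi_{\mathrm{dR}\ast}\cO_X\otimes\hat{\cO}_S \twoheadrightarrow \mathrm{Fil}_0$ really corresponds to dividing by the Hodge $\mathrm{Fil}^1$, and that the induced map from $R^i\pi_\ast\cO_X\otimes\hat{\cO}_S$ obtained by applying $R^i\pi_\ast$ to the factorization above agrees with this description. This amounts to a careful bookkeeping exercise in the simultaneous Hodge and $\xi$-adic filtrations underpinning Proposition \ref{TwoBdRLattices}.
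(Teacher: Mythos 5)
Your proposal follows the same route as the paper's proof: identify $\mathrm{Fil}_0$ abstractly via the $j=0$ case of Corollary~\ref{gradedHodgeTate}, then pin down the embedding $\mathrm{Fil}_0\hookrightarrow\mathrm{gr}^0\mathbb{M}$ by tensoring with $\cO\mathbb{B}^+_{\mathrm{dR},S}$, invoking the relative Poincar\'e lemma to write $\mathbb{M}\otimes\cO\mathbb{B}^+_{\mathrm{dR},S}=R^i\pi_{\mathrm{dR}\ast}\cO\mathbb{B}^+_{\mathrm{dR},X}$, and using that the composite $\cO_X\to\cO\mathbb{B}^+_{\mathrm{dR},X}\to\hat{\cO}_X$ is the natural inclusion, with injectivity then forced by the graded identification. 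This is essentially the paper's argument (the only imprecision is that the $\nabla=0$ part of $R^i\pi_{\mathrm{dR}\ast}\cO\mathbb{B}^+_{\mathrm{dR},X}$ is $\mathbb{M}$, not $\mathbb{M}_0\otimes\cO\mathbb{B}^+_{\mathrm{dR},S}$, which instead equals $R^i\pi_{\mathrm{dR}\ast}\cO_X\otimes_{\cO_S}\cO\mathbb{B}^+_{\mathrm{dR},S}$).
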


We note that in \cite{scholze}, only the first step of the Hodge-Tate filtration was used (for $i=1$), and it was defined as the natural map \[R^i \pi_\ast \cO_X\otimes_{\cO_S} \hat{\cO}_S\to R^i \pi_\ast \hat{\cO}_X.\]

\begin{proof} We have to identify the image of $\mathbb{M}_0\to \mathrm{gr}^0 \mathbb{M}$. This can be done after $\otimes_{\mathbb{B}_{\mathrm{dR},S}^+} \cO\mathbb{B}_{\mathrm{dR},S}^+$, as this operation preserves $\mathrm{gr}^0$. Now note that
\[
\mathbb{M}_0\otimes_{\mathbb{B}_{\mathrm{dR},S}^+} \cO\mathbb{B}_{\mathrm{dR},S}^+ = R^i \pi_{\mathrm{dR}\ast} \cO_X\otimes_{\cO_S} \cO\mathbb{B}_{\mathrm{dR},S}^+\ ,
\]
and
\[
\mathbb{M}\otimes_{\mathbb{B}_{\mathrm{dR},S}^+} \cO\mathbb{B}_{\mathrm{dR},S}^+ = R^i \pi_{\mathrm{dR}\ast} \cO\mathbb{B}_{\mathrm{dR},X}^+\ ,
\]
by the relative Poincar\'e lemma. The map $\mathbb{M}_0\to \mathbb{M}$ is induced by the natural inclusion $\cO_X\to \cO\mathbb{B}_{\mathrm{dR},X}^+$, which commutes with the natural connections.

Passing to $\mathrm{gr}^0$ on the side of $\mathbb{M}$ replaces the relative de Rham complex of $\cO\mathbb{B}_{\mathrm{dR},X}^+$ with just $\hat{\cO}_X$, as the differentials sit in positive degrees. We note that the composite $\cO_X\to \cO\mathbb{B}_{\mathrm{dR},X}^+\to \hat{\cO}_X$ is the natural inclusion, as
\[
\hat{\cO}_X = \mathrm{gr}^0 \cO\mathbb{B}_{\mathrm{dR},X}^+ = (\cO_X\otimes_{W(k)} W(\hat{\cO}_{X^\flat}^+)) / (\ker \theta)\ ,
\]
using the map $\theta: \cO_X\otimes_{W(k)} W(\hat{\cO}_{X^\flat}^+)\to \hat{\cO}_X$, which is $\cO_X$-linear. It follows that the map
\[
\mathbb{M}_0\otimes_{\mathbb{B}_{\mathrm{dR},S}^+} \cO\mathbb{B}_{\mathrm{dR},S}^+\to \mathrm{gr}^0 \mathbb{M}
\]
agrees with the map
\[
R^i f_{\mathrm{dR}\ast} \cO_X\otimes_{\cO_S} \cO\mathbb{B}_{\mathrm{dR},S}^+\to R^i f_\ast \hat{\cO}_X
\]
which projects $R^i f_{\mathrm{dR}\ast} \cO_X\to R^i f_\ast \cO_X\to R^i f_\ast \hat{\cO}_X$, and then extends $\cO\mathbb{B}_{\mathrm{dR},S}^+$-linearly. Thus, its image is given by the image of $R^i f_\ast \cO_X\otimes_{\cO_S} \hat{\cO}_S\to R^i f_\ast \hat{\cO}_X$. By the identification of the graded pieces of the relative Hodge-Tate filtration, this map has to be injective, giving the result.
\end{proof}

\subsection{Hodge cycles and torsors}\label{comparisons}

Let \[(G,X)\hookrightarrow (\tilde G,\tilde X)\] be an embedding of Shimura data, as in the previous section, where $\tilde G=GSp(V,\psi)$. Let \[V^{\otimes}:=\bigoplus_{r,s\in \mathbb{N}} V^{\otimes r}\otimes (V^\vee)^{\otimes s}.\] By Proposition 3.1 of~\cite{delignehodge}, the subgroup $G$ of $\tilde G$ is the pointwise stabilizer of a finite collection of tensors $(s_\alpha)\subset V^\otimes$. 

As above, the embedding of Shimura data determines an embedding of Shimura varieties defined over $E$: \[S_K\hookrightarrow \tilde S_{\tilde K}\otimes_{\mathbb{Q}}E.\] Let $\cA$ be the abelian scheme over $S_K$ obtained by pulling back the universal abelian scheme over the Siegel moduli space. Let $\pi:\cA\to S_K$ be the projection. The first relative Betti homology of $\cA$, i.e. the dual of $R^1\pi^\mathrm{an}_*\mathbb{Q}$, defines a local system of $\mathbb{Q}$-vector spaces $\cV_B$ on $S_K(\mathbb{C})$. Since the Betti cohomology of an abelian variety parametrized by $X\times G(\mathbb A_f)/K$ gets identified with $V$, $\cV_B$ can be identified with the local system of $\mathbb{Q}$-vector spaces over $S_K(\mathbb{C})$ given by the $G(\mathbb{Q})$-representation $V$ and the $G(\mathbb Q)$-torsor
\[
X\times G(\mathbb A_f)/K\to G(\mathbb Q)\backslash (X\times G(\mathbb A_f)/K) = S_K(\mathbb{C})\ .
\]
Corresponding to the $G(\mathbb{Q})$-invariant tensors $(s_{\alpha})$, we get global sections $(s_{\alpha,B})\subset \cV^\otimes_B$. Moreover, these are Hodge tensors for the Hodge structure on Betti homology, since they are $G$-invariant, and in particular invariant under the action of any $h\in X$.

\begin{lemma} The $G(\mathbb Q)$-torsor
\[
X\times G(\mathbb A_f)/K\to G(\mathbb Q)\backslash (X\times G(\mathbb A_f)/K) = S_K(\mathbb{C})
\]
can be identified with the $G(\mathbb Q)$-torsor sending any $U\subset S_K(\mathbb{C})$ to
\[
\{\beta: V\times U\cong \cV_B|_U\mid \beta(s_\alpha) = s_{\alpha,B}\}\ .
\]
\end{lemma}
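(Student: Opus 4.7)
The plan is to construct a canonical $G(\mathbb{Q})$-equivariant map from the first torsor to the second and then invoke the general principle that an equivariant map of $G(\mathbb{Q})$-torsors is automatically an isomorphism.

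First, I would recall the construction of $\cV_B$ as an associated local system: the $G(\mathbb{Q})$-torsor $X\times G(\mathbb{A}_f)/K \to S_K(\mathbb{C})$ together with the algebraic representation of $G$ on $V$ produces, by the standard contracted-product construction, the local system $\cV_B = G(\mathbb{Q})\backslash(V \times X\times G(\mathbb{A}_f)/K)$. Pulled back to $X\times G(\mathbb{A}_f)/K$, this local system acquires a tautological trivialization by $V$. Thus, for each point $(h,g)\in X\times G(\mathbb{A}_f)/K$ with image $x\in S_K(\mathbb{C})$, there is a canonical isomorphism $\beta_{(h,g)}\colon V \isomap \cV_{B,x}$, and these assemble over any open $U\subset S_K(\mathbb{C})$ admitting a section of the torsor. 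By the very definition of $s_{\alpha,B}$ as the image of the $G(\mathbb{Q})$-invariant tensor $s_\alpha$ under this construction, $\beta_{(h,g)}(s_\alpha) = s_{\alpha,B}$ at every point, so $\beta_{(h,g)}$ lies in the target torsor.

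Next, I would check $G(\mathbb{Q})$-equivariance: if $q\in G(\mathbb{Q})$ acts on the source torsor by $(h,g)\mapsto (qh, qg)$, then by construction of the contracted product, $\beta_{(qh,qg)} = \beta_{(h,g)}\circ \rho(q)$, where $\rho$ denotes the action on $V$; this matches the natural right $G(\mathbb{Q})$-action on the torsor of trivializations. (One is free here to pass to the opposite convention by inverting; the point is that the two $G(\mathbb{Q})$-actions correspond.)

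Then I would verify that the target really is a $G(\mathbb{Q})$-torsor. Non-emptiness of each fibre follows from the existence of the map just constructed. Simple transitivity is where Deligne's characterisation of $G$ is used: any two trivializations $\beta_1,\beta_2\colon V\isomap \cV_{B,x}$ preserving the Hodge tensors differ by an automorphism $\beta_2^{-1}\beta_1$ of $V$ fixing every $s_\alpha$. Since $(s_\alpha)\subset V^\otimes$ cut out $G$ as a subgroup of $GL(V)$ (by Proposition~3.1 of~\cite{delignehodge}), this automorphism lies in $G(\mathbb{Q})\subset GL(V)(\mathbb{Q})$, and acts simply transitively.

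Finally, since any $G(\mathbb{Q})$-equivariant map between $G(\mathbb{Q})$-torsors on the same base is an isomorphism, we conclude. The main (very mild) subtlety is merely bookkeeping of left/right actions and the fact that the tensors $(s_\alpha)$ recover $G(\mathbb{Q})$ on $\mathbb{Q}$-points and not just the algebraic group $G$; both are immediate from Deligne's description of $G$ as the stabilizer of $(s_\alpha)$.
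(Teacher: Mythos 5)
Your argument is correct and is essentially the paper's proof, which consists of the single observation that $G\subset \GL(V)$ is the closed subgroup stabilizing the tensors $s_\alpha$; your write-up just makes explicit the contracted-product construction of $\cV_B$ and the resulting equivariant map of torsors. No gaps.
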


\begin{proof} This follows from the fact that $G\subset \GL(V)$ is the closed subgroup which is the stabilizer of the $s_\alpha$.
\end{proof}

Now assume that $(G,X)\hookrightarrow (\tilde G^\prime, \tilde X^\prime)$ is a second symplectic embedding, where $\tilde G^\prime = GSp(V^\prime,\psi^\prime)$. Like for any representation of $G$, there is a $G$-invariant idempotent $e\in V^\otimes$ such that $V^\prime = e V^\otimes$. Using $e$, any $G$-invariant tensor $s_\alpha^\prime\in (V^\prime)^\otimes$ can be transferred to a $G$-invariant tensor in $V^\otimes$. Moreover, one also has an identification
\[
\cV_B^\prime = e \cV_B^\otimes\ ,
\]
compatibly with their natural Hodge structures. We will generally assume that $e$ belongs to the family $s_\alpha$, by adjoining it if necessary.

Let $\cV_{\mathrm{dR}}:=(R^1\pi_{\mathrm{dR}\ast}\cO_{\cA})^\vee$ be the first relative de Rham homology of $\cA$. This is a vector bundle over $S_K$ equipped with an integrable connection $\nabla$. The base change to $\mathbb{C}$ can be defined directly: We have to specify an analytic vector bundle $\cV^{\mathrm{an}}_{\mathrm{dR},\mathbb{C}}$ over $S_K(\mathbb{C})$, which corresponds to the algebraic vector bundle $\cV_{\mathrm{dR},\mathbb{C}}$. (Here, we make use of the equivalence of categories between algebraic vector bundles equipped with a flat connection with regular singular points and analytic vector bundles equipped with a flat connection~\cite{delignevb}.) Then the relative de Rham comparison isomorphism over $\mathbb{C}$ gives rise to an isomorphism
\[
\cV^{\mathrm{an}}_{\mathrm{dR},\mathbb{C}}\cong \cV_B\otimes_{\mathbb Q} \cO_{S_K(\mathbb{C})}\ ,
\]
compatible with the connection.

In particular, the global sections $(s_{\alpha,B})\subset \cV_B^\otimes$ give rise to horizontal global sections $(s_{\alpha,\mathrm{dR}})\subset (\cV^{\mathrm{an}}_{\mathrm{dR},\mathbb{C}})^\otimes$, which are necessarily algebraic, i.e.
\[
(s_{\alpha,\mathrm{dR}})\subset \cV_{\mathrm{dR},\mathbb{C}}^\otimes\ .
\]
The following lemma appears in work of Kisin~\cite{kisinintegral}, based on Deligne's result that Hodge cycles on abelian varieties are absolute Hodge, \cite{delignehodge}.

\begin{lemma}\label{reflex field} The tensors $s_{\alpha, \mathrm{dR}}$ in $\cV^\otimes_{\mathrm{dR},\mathbb{C}}$ are defined over $E$.
\end{lemma}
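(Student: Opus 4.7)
The strategy is to invoke Deligne's theorem that Hodge cycles on abelian varieties are absolutely Hodge \cite{delignehodge}, using the $\ell$-adic étale realization as an intermediary that is manifestly defined over $E$. In outline: (i) construct an étale analogue $s_{\alpha,\mathrm{et},\ell}$ of $s_{\alpha,B}$ on $S_K/E$; (ii) check it matches $s_{\alpha,B}$ pointwise via the Betti--étale comparison; (iii) use Deligne's theorem to propagate the $E$-rationality from the étale side to $s_{\alpha,\mathrm{dR}}$.

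For step (i), since $S_K$, the universal abelian scheme $\cA\to S_K$, and the level-$K$-structure all live over $E$, the local system $\cV_{\mathrm{et},\ell}:=(R^1\pi_{\mathrm{et}*}\mathbb Q_\ell)^\vee$ on $S_K\otimes_E\bar E$ admits, exactly as on the Betti side, a $G(\mathbb Q_\ell)$-reduction of its structure group compatible with the level structure. Hence the $G$-invariant tensors $s_\alpha\in V^\otimes$ give global sections $s_{\alpha,\mathrm{et},\ell}$ of $\cV_{\mathrm{et},\ell}^\otimes$ which are $\mathrm{Gal}(\bar E/E)$-equivariant, so descend to $E$. Step (ii) is immediate: at every complex point $s$, the Betti--étale comparison identifies $s_{\alpha,\mathrm{et},\ell,s}$ with $s_{\alpha,B,s}\otimes 1$, because both sides are built from the same $G$-invariant tensor $s_\alpha$ via parallel torsor constructions.

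For step (iii), fix $\sigma\in\mathrm{Aut}(\mathbb C/E)$ and a complex point $s$, and let $\sigma_\ast s_{\alpha,\mathrm{dR},s}$ denote the image in $H^1_\mathrm{dR}(A_{\sigma(s)})^\otimes$ under the natural $\sigma$-conjugate isomorphism. Deligne's theorem, applied to the Hodge class $s_{\alpha,B,s}$ on $A_s$, produces a Hodge class $t'\in H^1_B(A_{\sigma(s)},\mathbb Q)^\otimes$ whose de Rham realization equals $\sigma_\ast s_{\alpha,\mathrm{dR},s}$ and whose $\ell$-adic realization equals the $\sigma$-transport of $s_{\alpha,\mathrm{et},\ell,s}$. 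By step (i) this transported class is $s_{\alpha,\mathrm{et},\ell,\sigma(s)}$, which by step (ii) coincides with the $\ell$-adic realization of $s_{\alpha,B,\sigma(s)}$. Since a Hodge class is determined by its $\ell$-adic realization (the map $H^1_B(A,\mathbb Q)^\otimes\to H^1_\ell(A)^\otimes$ being injective), $t'=s_{\alpha,B,\sigma(s)}$ and therefore $\sigma_\ast s_{\alpha,\mathrm{dR},s}=s_{\alpha,\mathrm{dR},\sigma(s)}$. Letting $s$ and $\sigma$ vary gives $\mathrm{Aut}(\mathbb C/E)$-invariance of the global section $s_{\alpha,\mathrm{dR}}$, so by fpqc descent it comes from a section of $\cV_\mathrm{dR}^\otimes$ over $S_K/E$. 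The only nontrivial input is Deligne's absolute Hodge theorem in step (iii); steps (i) and (ii) are formal analogues of the Betti construction in another cohomology theory.
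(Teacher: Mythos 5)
Your proof is correct and is essentially the paper's argument (which follows Kisin): both rest on Deligne's absolute Hodge theorem together with the observation that the \'etale realization of $s_\alpha$ is Galois-equivariant because it arises from a $G$-invariant tensor via the level-structure torsor, and that a Hodge cycle is determined by its \'etale component. The only differences are cosmetic: you argue at all complex points with $\mathrm{Aut}(\mathbb C/E)$-invariance and descent, and use an auxiliary prime $\ell$, whereas the paper argues at the generic point of each component with $\mathrm{Gal}(\bar\kappa/\kappa)$ and uses the prime $p$ already fixed.
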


\begin{proof} We sketch Kisin's proof here. We work with each connected component of $S_K$ individually. Let $x$ be the generic point of one such component, with function field $\kappa$ (containing $E$) and choose a complex embedding of its algebraic closure $\bar \kappa \hookrightarrow \mathbb{C}$. Let $\cA_x$ be the corresponding abelian variety over $\kappa$. Let $s_{\alpha,B,x}$ be the fiber of $s_{\alpha,B}$ over $x$. Let $s_{\alpha,\mathrm{dR},x}\in H^1_{\mathrm{dR}}(\cA_{x})^\otimes\otimes_{\kappa}\mathbb{C}$ be the image of $s_{\alpha,B,x}$ under the de Rham comparison isomorphism (this is also the fiber of $s_{\alpha,\mathrm{dR}}$ over $x$.) Let $s_{\alpha,p,x}\in H^1_\et(\cA_{x,\bar{\kappa}},\mathbb{Q}_p)^\otimes$ be the image of $s_{\alpha,B,x}$ under the comparison between Betti and $p$-adic \'etale cohomology. 

Note that by definition $(s_{\alpha,\mathrm{dR},x},s_{\alpha,p,x})$ is a Hodge cycle. By Deligne~\cite{delignehodge}, it is an absolute Hodge cycle. This means that $s_{\alpha,\mathrm{dR},x}$ is defined over $\bar \kappa$ and it remains to show that the action of $\mathrm{Gal}(\bar \kappa/\kappa)$ on it is trivial. For this, it is enough to check that the $\mathrm{Gal}(\bar \kappa/\kappa)$-action on $s_{\alpha,p,x}$ is trivial, since a Hodge cycle is determined by either its de Rham or \'etale component.

For this latter statement, consider the $\tilde K_p$-torsor over Siegel moduli space given by $\varprojlim_{\tilde K'_p}\tilde S_{\tilde K^p \tilde K'_p}$, where $\tilde K'_p$ runs over open compact subgroups of $\tilde K_p$. Fixing a $\bar \kappa$-point $\tilde{x}$ of this tower above $x$, the $\mathrm{Gal}(\bar \kappa/\kappa)$-action on $H^1_\et(\cA_{x,\bar{\kappa}},\mathbb{Q}_p)$ is induced by the map $\mathrm{Gal}(\bar \kappa/\kappa)\to \tilde K_p$ describing the action on $\tilde{x}$.  There is an analogous $K_p$-torsor over $S_K$ defined by $\varprojlim_{K'_p}S_{K^pK'_p}$. This fits into a commutative, $K_p$-equivariant diagram \[\xymatrix{S_{K^p}\ar[d]\ar[r]&\tilde S_{\tilde K^p}\ar[d]\\S_{K}\ar[r]&\tilde S_{\tilde K}}.\] Taking for $\tilde{x}$ a lift to $S_{K^p}$, we see that the action of $\mathrm{Gal}(\bar \kappa/\kappa)$ on $H^1_\et(\cA_{x,\bar{\kappa}},\mathbb{Q}_p)$ factors through a map \[\mathrm{Gal}(\bar \kappa/\kappa)\to  K_p\subset G(\mathbb{Q}_p).\] Since the tensors $s_{\alpha,p,x}$ are $G(\mathbb{Q}_p)$-invariant, the Galois action on these tensors is trivial as well.
\end{proof}

\begin{remark}\label{reflexfieldsecondemb} If $(G,X)\hookrightarrow (\tilde G^\prime,\tilde X^\prime)$ is a second symplectic embedding with $\tilde G^\prime = GSp(V^\prime,\psi^\prime)$, and $e \in V^\otimes$ is an idempotent with $V^\prime = e V^\otimes$, as above, then applying Lemma \ref{reflex field} to the embedding $G\hookrightarrow GSp(V\oplus V^\prime,\psi\oplus \psi^\prime)$, one sees that the isomorphism
\[
\cV_{\mathrm{dR},\mathbb{C}}^\prime\cong e_{\mathrm{dR}} \cV_{\mathrm{dR},\mathbb{C}}^\otimes
\]
is defined over $E$.
\end{remark}

There is also a $\mathbb{Q}_p$-local system $\cV_p$ over $S_K$ defined by restricting to $S_K$ the first relative $p$-adic \'etale homology of the family $\cA$. There are families of Hodge tensors $(s_{\alpha,p})\subset \cV^\otimes_p$ coming from the comparison between Betti and $p$-adic \'etale homology (over $\mathbb{C}$). By the argument in Lemma~\ref{reflex field}, the $s_{\alpha,p}$ are also defined over the reflex field $E$.

Choose a cocharacter $\mu$ in the conjugacy class $X$, which is defined over some finite extension $E'/E$. We will base change everything to $E'$ from now on, but drop $E'$ from our notation. Recall that $P^{\mathrm{std}}_\mu$ can be identified with the parabolic subgroup of $G$ which stabilizes the descending filtration induced by $\mu$ on a faithful representation $V$ of $G$. We can define a $P^{\mathrm{std}}_\mu$-torsor $\cP_{\mathrm{dR}}$ over $S_{K}$ as the torsor of frames on the vector bundle $\cV_{\mathrm{dR}}$ which respect the Hodge filtration. More precisely, for any $U\subset S_{K}$, we have:
\[
\cP_{\mathrm dR}(U) = \{\beta: \cV_{\mathrm{dR}}|_U \toisom V\otimes_{\mathbb Q} \cO_U \mid \beta(s_{\alpha,\mathrm{dR}})=s_\alpha\otimes 1,\beta(\mathrm{Fil}^\bullet)=\mathrm{Fil}_\mu^\bullet\}\ ,
\]
where $\mathrm{Fil}^\bullet$ on $\cV_{\mathrm{dR}}$ is the Hodge filtration and $\mathrm{Fil}_\mu^\bullet$ on $V$ is the descending filtration defined by $\mu$. The existence of one such isomorphism $\beta$ follows from the fact that the comparison between Betti and de Rham cohomology respects the Hodge filtrations and matches the Hodge cycles $s_\alpha$ with $s_{\alpha,\mathrm{dR}}$. 

\begin{lemma}\label{de Rham torsor} The $P^{\mathrm{std}}_\mu$-torsor $\cP_{\mathrm{dR}}$ over $S_K$ is independent of the choice of symplectic embedding $G\hookrightarrow GSp(V,\psi)$.
\end{lemma}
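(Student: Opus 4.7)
The plan is to reduce to a common refinement coming from the direct sum of the two symplectic embeddings. Given two symplectic embeddings $(G,X)\hookrightarrow (GSp(V,\psi),\tilde X)$ and $(G,X)\hookrightarrow (GSp(V',\psi'),\tilde X')$, I form the direct sum embedding $(G,X)\hookrightarrow (GSp(V\oplus V',\psi\oplus\psi'),\tilde X^\oplus)$. This is a legitimate symplectic embedding of Shimura data: the two $G$-actions have the same similitude character (the similitude character of $G$), so $G$ preserves $\psi\oplus\psi'$ up to that character, and the direct sum of two Hodge structures of type $(-1,0),(0,-1)$ is again of that type. The corresponding abelian scheme over $S_K$ is just $\cA^\oplus=\cA\times_{S_K}\cA'$, so $\cV_{\mathrm{dR}}^\oplus=\cV_{\mathrm{dR}}\oplus \cV'_{\mathrm{dR}}$ and similarly on the Betti side. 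Let $\cP_{\mathrm{dR}}^\oplus$ denote the associated $P^{\mathrm{std}}_\mu$-torsor; in defining it I choose the family of tensors $(s^\oplus_\alpha)\subset (V\oplus V')^\otimes$ to include the projection idempotents $e_V,e_{V'}\in\mathrm{End}(V\oplus V')$ as well as the original tensors $(s_\alpha)$ and $(s'_{\alpha'})$, regarded inside $(V\oplus V')^\otimes$ via the evident inclusions.

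Next, I construct two restriction morphisms
\[
r_V\colon \cP_{\mathrm{dR}}^\oplus\to \cP_{\mathrm{dR}},\qquad r_{V'}\colon \cP_{\mathrm{dR}}^\oplus\to \cP'_{\mathrm{dR}}.
\]
For a local frame $\beta^\oplus\in \cP_{\mathrm{dR}}^\oplus(U)$, the requirement $\beta^\oplus(e_{V,\mathrm{dR}})=e_V$ forces $\beta^\oplus$ to respect the direct sum decomposition of $\cV_{\mathrm{dR}}^\oplus$, so I set $r_V(\beta^\oplus):=\beta^\oplus|_{\cV_{\mathrm{dR}}}$ and $r_{V'}(\beta^\oplus):=\beta^\oplus|_{\cV'_{\mathrm{dR}}}$. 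Tensor preservation is automatic because the original $(s_\alpha)$ and $(s'_{\alpha'})$ are already among the $(s^\oplus_\alpha)$; the key compatibility here is that under the natural inclusion $\cV^\otimes_{\mathrm{dR}}\hookrightarrow (\cV^\oplus_{\mathrm{dR}})^\otimes$, the tensor $s_{\alpha,\mathrm{dR}}$ maps to $s^\oplus_{\alpha,\mathrm{dR}}$, which follows from the functoriality of the Betti/de Rham comparison applied to the product decomposition $\cA^\oplus=\cA\times_{S_K}\cA'$ (over $\mathbb{C}$), together with Lemma~\ref{reflex field} to descend to $E$. Hodge-filtration preservation is likewise automatic, as both the Hodge filtration on $\cV^\oplus_{\mathrm{dR}}$ and the $\mu$-induced filtration on $V\oplus V'$ decompose as direct sums of the corresponding filtrations on the two summands.

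Finally, since the $P^{\mathrm{std}}_\mu$-action on $V\oplus V'$ factors through $G\hookrightarrow GL(V)\times GL(V')$ and so preserves the direct-sum decomposition, the maps $r_V$ and $r_{V'}$ are $P^{\mathrm{std}}_\mu$-equivariant. Any $P^{\mathrm{std}}_\mu$-equivariant morphism between $P^{\mathrm{std}}_\mu$-torsors is automatically an isomorphism, so the composite $r_{V'}\circ r_V^{-1}$ yields the desired canonical isomorphism $\cP_{\mathrm{dR}}\cong \cP'_{\mathrm{dR}}$. The main subtlety to pin down is the behaviour of the $E$-rational de Rham tensors under the diagonal inclusion $\cV^\otimes_{\mathrm{dR}}\hookrightarrow (\cV^\oplus_{\mathrm{dR}})^\otimes$; this is precisely the content of the absoluteness result of Deligne invoked in Lemma~\ref{reflex field}, applied to the closed embedding $\cA\hookrightarrow \cA\times_{S_K}\cA'$ of abelian schemes over $S_K$.
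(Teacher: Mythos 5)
Your reduction to the direct sum embedding is the right instinct (the paper itself uses $G\hookrightarrow GSp(V\oplus V',\psi\oplus\psi')$ inside Remark~\ref{reflexfieldsecondemb}), but there is a genuine gap in the torsor step. The tensors you name on $V\oplus V'$ --- the two projection idempotents $e_V,e_{V'}$ together with $(s_\alpha)\subset V^\otimes$ and $(s'_{\alpha'})\subset (V')^\otimes$ --- have pointwise stabilizer inside $GSp(V\oplus V',\psi\oplus\psi')$ equal to $G\times_{\mathbb{G}_m,c}G$ (block-diagonal pairs with a common similitude factor, each block separately in $G$), not the diagonal copy of $G$. Consequently the object $\cP^\oplus_{\mathrm{dR}}$ you define is a torsor under the corresponding parabolic of $G\times_{\mathbb{G}_m}G$, not under $P^{\mathrm{std}}_\mu$; the map $r_V$ is then surjective with nontrivial fibres rather than an isomorphism, and $r_{V'}\circ r_V^{-1}$ is only a correspondence, not a map. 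To cut out the diagonal $G$ you must adjoin tensors that \emph{mix} $V$ and $V'$ --- for instance a $G$-equivariant idempotent $e\in V^\otimes$ with $V'=eV^\otimes$, regarded as an element of $(V\oplus V')^\otimes$ --- and then the substantive point of the lemma reappears: you must show that the de Rham realization $e_{\mathrm{dR}}$ of this mixed tensor is defined over $E$ and is a Hodge tensor (hence respects the Hodge filtration). That is exactly what the paper's proof does directly, by producing the isomorphism $\cV'_{\mathrm{dR}}\simeq e_{\mathrm{dR}}\cV^\otimes_{\mathrm{dR}}$ via Lemma~\ref{reflex field} and Remark~\ref{reflexfieldsecondemb} and observing that it matches the tensors and filtrations; so the missing ingredient in your argument is not a cosmetic one but the actual content of the lemma.

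A secondary, fixable imprecision: your appeal to ``functoriality of the Betti/de Rham comparison applied to $\cA^\oplus=\cA\times_{S_K}\cA'$'' only handles tensors lying in $\cV^\otimes_{\mathrm{dR}}$ or $(\cV'_{\mathrm{dR}})^\otimes$ separately; once the diagonal-cutting tensors are added, $E$-rationality of their de Rham realizations requires running the absolute-Hodge argument of Lemma~\ref{reflex field} for the direct sum embedding itself, not just for the two factors.
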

\begin{proof} Considering a second symplectic embedding $G\hookrightarrow GSp(V',\psi')$, there is a $G$-invariant idempotent $e\in V^\otimes$ such that $V^\prime = e V^\otimes$. This determines a Hodge tensor $e_B$ in $\cV^\otimes_B$, and by Lemma~\ref{reflex field} a tensor $e_{\mathrm{dR}}$ in $\cV^\otimes_{\mathrm{dR}}$. This defines an isomorphism of vector bundles $\cV'_{\mathrm{dR}}\simeq e_{\mathrm{dR}} \cV^\otimes_{\mathrm{dR}}$ by Remark~\ref{reflexfieldsecondemb}, which respects all the Hodge tensors $s_{\alpha,\mathrm{dR}}$ and which respects the Hodge filtration on the two vector bundles (because $e_{\mathrm{dR}}$ is a Hodge tensor). This gives a map of $P^{\mathrm{std}}_\mu$-torsors $\cP_{\mathrm{dR}}\to \cP'_{\mathrm{dR}}$ and any such map is an isomorphism.
\end{proof}

From the above $P^{\mathrm{std}}_\mu$-torsor $\cP_{\mathrm{dR}}$ and from the projection $P_\mu \twoheadrightarrow M_\mu$, we get an $M_\mu$-torsor $\cM_\mathrm{dR}$ over $S_K$ via pushout:
\[
\cM_{\mathrm{dR}}=\cP_{\mathrm{dR}}\times_{P^{\mathrm{std}}_\mu} M_\mu\ .
\]
Since $\cP_{\mathrm{dR}}$ is independent of the choice of symplectic embedding, so is $\cM_{\mathrm{dR}}$. This $M_\mu$-torsor corresponds to trivializing the graded pieces of the Hodge filtration on $\cV_{\mathrm{dR}}$ individually. By the Tannakian formalism, $\cM_{\mathrm{dR}}$ is equivalent to a functor from finite-dimensional representations of the Levi subgroup $M_\mu$ to vector bundles on $S_K$.

\begin{lemma}\label{spb} The $M_\mu$-torsor $\cM_{\mathrm{dR}}$ encodes the tensor functor
\[
f_\infty: \mathrm{Rep}\ M_\mu \to  \{\mathrm{automorphic\ vector\ bundles\ on\ }S_{K}\}
\]
in the statement of Theorem~\ref{refinedht}.
\end{lemma}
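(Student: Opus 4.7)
The plan is to identify $\cP_{\mathrm{dR}}$, analytically over $\C$, with the $P^{\mathrm{std}}_\mu$-torsor pulled back along the Borel embedding from the tautological torsor $G \to G/P^{\mathrm{std}}_\mu = \mathrm{Fl}^{\mathrm{std}}_{G,\mu}$, and then to match $\cM_{\mathrm{dR}}$-associated bundles with the standard principal bundle construction of automorphic vector bundles sketched after the statement of Theorem~\ref{refinedht}. Everything will finally descend to $E'$ because $\cP_{\mathrm{dR}}$ is already defined over $E'$ (Lemma~\ref{reflex field} and Lemma~\ref{de Rham torsor}) and the automorphic bundles $\cV(\xi)$ are defined over $E'$ by Milne.

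First I would work over $\C$, using the lemma preceding Lemma~\ref{reflex field} to identify the $G(\Q)$-torsor $X \times G(\A_f)/K \to S_K(\C)$ with the torsor of Betti trivializations $\beta_B: V \otimes \Q \isomap \cV_B$ matching the $s_{\alpha}$ with $s_{\alpha,B}$. Under the relative de Rham comparison, any such Betti trivialization extends to a horizontal isomorphism $\beta_{\mathrm{dR}}: V \otimes \cO_{S_K(\C)} \isomap \cV^{\mathrm{an}}_{\mathrm{dR},\C}$ matching $s_{\alpha}$ with $s_{\alpha,\mathrm{dR}}$. By the definition of the Borel embedding, at a point $[h,g]$ the filtration $\beta_{\mathrm{dR}}^{-1}(\mathrm{Fil}^\bullet \cV_{\mathrm{dR},[h,g]})$ equals $\mathrm{Fil}^\bullet(\mu_h)$. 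Hence a frame in $\cP_{\mathrm{dR}}^{\mathrm{an}}$ over $[h,g]$ is obtained from such a $\beta_{\mathrm{dR}}$ by composing with any element $p \in G(\C)$ with $p \cdot \mathrm{Fil}^\bullet(\mu_h) = \mathrm{Fil}^\bullet(\mu)$, and the ambiguity in $p$ is exactly a right $P^{\mathrm{std}}_\mu$-action. This exhibits $\cP_{\mathrm{dR}}^{\mathrm{an}}$ as the pullback, along the Borel embedding $S_K(\C) \to G(\Q)\backslash(\mathrm{Fl}^{\mathrm{std}}_{G,\mu}(\C) \times G(\A_f)/K)$, of the tautological $P^{\mathrm{std}}_\mu$-torsor.

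Given this identification, for $\xi \in \mathrm{Rep}(M_\mu)$ inflated to $P^{\mathrm{std}}_\mu$, the associated bundle $\cP_{\mathrm{dR}} \times^{P^{\mathrm{std}}_\mu} \xi$ agrees analytically with the bundle obtained by restricting $\cW(\xi) = G \times^{P^{\mathrm{std}}_\mu} \xi$ along the Borel embedding and passing to the double quotient; this is precisely $\cV(\xi)^{\mathrm{an}}$ as recalled after Theorem~\ref{refinedht}. Since the unipotent radical of $P^{\mathrm{std}}_\mu$ acts trivially, the associated bundle factors through the pushout, yielding a canonical isomorphism $\cV(\xi)^{\mathrm{an}} \cong \cM_{\mathrm{dR}}^{\mathrm{an}} \times^{M_\mu} \xi$. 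The independence from the Siegel embedding is built in, since both $\cM_{\mathrm{dR}}$ (Lemma~\ref{de Rham torsor}) and $\cV(\xi)$ are intrinsically defined.

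Finally I would descend to $E'$. Both sides of the isomorphism are algebraic vector bundles on $S_K \otimes_E E'$: the left-hand side because $\cV(\xi)$ is defined over $E'$ by Milne's construction via the standard principal bundle, and the right-hand side because $\cM_{\mathrm{dR}}$ arises from $\cP_{\mathrm{dR}}$, whose descent to $E'$ follows from the fact that the $s_{\alpha,\mathrm{dR}}$ are defined over $E$ (Lemma~\ref{reflex field}) and $\mu$ is defined over $E'$. An isomorphism of algebraic vector bundles on a smooth variety over $E'$ which agrees analytically over $\C$ is already defined over $E'$ by GAGA and faithfully flat descent along $E' \to \C$. The main technical point — and the only nontrivial step — is the identification of $\cP_{\mathrm{dR}}^{\mathrm{an}}$ with the pullback of the tautological torsor via the Borel embedding; once this is in place, the remainder is formal Tannakian manipulation and descent.
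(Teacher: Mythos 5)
Your proposal is correct in substance and lands in the same place as the paper, but it unfolds by hand what the paper obtains by citation. The paper's proof is essentially a one-step identification: the pushout $\cG_{\mathrm{dR}}=\cP_{\mathrm{dR}}\times^{P^{\mathrm{std}}_\mu}G$ is the torsor of tensor-preserving frames on $\cV_{\mathrm{dR}}$, i.e.\ Milne's standard principal bundle, and it carries a canonical map to $\mathrm{Fl}^{\mathrm{std}}_{G,\mu}$ because it arose from a $P^{\mathrm{std}}_\mu$-torsor; Milne's Proposition 3.5 then says that automorphic vector bundles are exactly pullback from the flag variety to $\cG_{\mathrm{dR}}$ followed by descent to $S_K$, and his Theorems 4.1 and 4.3 give algebraicity and the model over the reflex field. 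Your analytic identification of $\cP_{\mathrm{dR}}^{\mathrm{an}}$ with the pullback of the tautological torsor along the Borel embedding is precisely the content of Milne's Proposition 3.5 reproved from scratch, and that part is fine --- arguably more self-contained than the paper's version. The one place where you are too quick is the final descent: appealing to GAGA and faithfully flat descent along $E'\to\C$ does not by itself descend an isomorphism constructed over $\C$, since descent requires verifying a cocycle condition over $\C\otimes_{E'}\C$, and the transcendental Betti--de Rham comparison you used to build the isomorphism does not obviously supply one (at best a nonempty $\mathrm{Isom}$-scheme argument gives \emph{some} isomorphism over $E'$, not the canonical, Hecke- and tensor-compatible one you need). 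The honest statement is that the rationality over the reflex field of the diagram $S_K\leftarrow\cG_{\mathrm{dR}}\to\mathrm{Fl}^{\mathrm{std}}_{G,\mu}$ is exactly Milne's Theorem 4.3 (proved via special points), and you should invoke it at that step, as the paper does, rather than a generic descent argument.
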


\begin{proof} By construction, the tensor functor corresponding to $\cM_{\mathrm{dR}}$ factors through the inflation map $\mathrm{Rep}\ M_\mu \to \mathrm{Rep}\ P^{\mathrm{std}}_\mu$.

It remains to see that the functor corresponding to $P^\mathrm{std}_{\mu}$ maps a representation of $P^{\mathrm{std}}_\mu$ to the associated automorphic vector bundle on $S_K$. This is essentially the definition of automorphic vector bundles, as given by~\cite{milne}. For this, note that $\cP_{\mathrm{dR}}$ and the map $P^{\mathrm{std}}_\mu \to G$ define by pushout a $G$-torsor $\cG_{\mathrm{dR}}$ over $S_K$, which parametrizes frames of $\cV_{\mathrm{dR}}$ respecting the Hodge tensors $s_{\alpha,\mathrm{dR}}$ (but not necessarily respecting the Hodge filtration). This is what Milne calls the standard principal bundle. Since it was constructed from a $P^\mathrm{std}_\mu$-torsor, $\cG_{\mathrm{dR}}$ is equipped with a canonical map to the flag variety $\mathrm{Fl}^\mathrm{std}_{G,\mu}\simeq G/P_\mu$. We have a diagram \[\xymatrix{S_K&\cG_{\mathrm{dR}}\ar[l]\ar[r]&\mathrm{Fl}^{\mathrm{std}}_{G,\mu}}.\] Proposition 3.5 of~\cite{milne} proves that automorphic vector bundles are obtained by pullback from $\mathrm{Fl}_{G,\mu}$ to $\cG_{\mathrm{dR}}$ followed by descent to $S_K$.  We note that Theorems 4.1 and 4.3 of~\cite{milne} show that the diagram is algebraic and has a model over the reflex field $E$. 
\end{proof}

We now work with the local system $\cV_p$ determined by the relative $p$-adic \'etale cohomology of $\cA$. This is a local system of $\mathbb{Q}_p$-vector spaces over $S_K$. After pulling it back to the adic space $\cS_K$, we can think of it as a locally free $\hat{\mathbb{Q}}_p$-module on $(\cS_K)_{\mathrm{pro\acute{e}t}}$. 

Regard $P_\mu$ as a group object in the pro-\'etale site of $\cS_K$ by sending $U$ to $P_\mu(\hat{\cO}_{\cS_K}(U))$; we emphasize that we are using the completed structure sheaf in this definition. We can now define a $P_\mu$-quasitorsor $\mathscr{P}_p$ on the pro-\'etale site of $\cS_K$ from the Hodge-Tate filtration on $\cV_p\otimes_{\hat{\mathbb{Q}}_p}\hat{\cO}_{S_K}$ as follows. For $U$ in $(\cS_K)_{\mathrm{pro\acute{e}t}}$, set
\[
\mathscr{P}_p(U)=\{\beta: \cV_p\otimes_{\hat{\mathbb Q}_p} \hat{\cO}_{\cS_K}|_U\toisom V\otimes_{\mathbb Q} \hat{\cO}_{\cS_K}|_U \mid \beta(s_{\alpha, p}\otimes 1)=s_\alpha\otimes 1, \beta(\mathrm{Fil}_\bullet)=\mathrm{Fil}_\bullet(\mu)\}\ ,
\]
where $\mathrm{Fil}_\bullet$ on $\cV_p\otimes \hat{\cO}_{S_K}$ is the relative Hodge-Tate filtration and $\mathrm{Fil}_\bullet(\mu)$ is the ascending filtration determined by $\mu$ on $V$.

\begin{lemma} The object $\mathscr{P}_p$ over $\cS_K$ is a $P_\mu$-torsor.
\end{lemma}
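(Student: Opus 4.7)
Since $G\subset \GL(V)$ is defined as the pointwise stabilizer of $(s_\alpha)$, and $P_\mu\subset G$ is by definition the stabilizer of the ascending filtration $\mathrm{Fil}_\bullet(\mu)$ on $V$, any two sections $\beta,\beta'\in\mathscr{P}_p(U)$ necessarily differ by an element of $P_\mu(\hat\cO_{\cS_K}(U))$, and conversely $P_\mu$ acts simply transitively on $\mathscr{P}_p(U)$ whenever the latter is nonempty. Hence $\mathscr{P}_p$ is automatically a $P_\mu$-quasitorsor, and the problem reduces to showing that $\mathscr{P}_p$ is pro-\'etale locally nonempty.

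The plan is to produce $\beta$ in two stages. First, pro-\'etale locally on $\cS_K$ one can choose an isomorphism $\beta_0\col \cV_p\isomap V\otimes_{\mathbb Q}\mathbb Q_p$ of pro-\'etale $\mathbb Q_p$-local systems with $\beta_0(s_{\alpha,p})=s_\alpha$. This is provided by the perfectoid cover $\cS_{K^p}\to \cS_{K^pK_p}$ of Theorem~\ref{htperiodmap}: at infinite level the Tate module, and hence $\cV_p$, is trivialized, and the argument given in the proof of Lemma~\ref{reflex field} shows that the resulting $\GL(V)(\mathbb Q_p)$-valued monodromy is actually $G(\mathbb Q_p)$-valued, precisely because the tensors $s_{\alpha,p}$ match the tensors $s_\alpha$ on the model side. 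Transporting the relative Hodge--Tate filtration $\mathrm{Fil}_\bullet$ through $\beta_0\otimes_{\mathbb Q_p}\hat\cO_{\cS_K}$ then produces an ascending filtration $\mathrm{Fil}^{HT}_\bullet$ on $V\otimes\hat\cO_{\cS_K}|_U$ whose stabilizer contains the tensors $s_\alpha$, so that $\mathrm{Fil}^{HT}_\bullet$ is stabilized by a parabolic subgroup of $G_{\hat\cO_{\cS_K}}|_U$. The second stage is to identify the conjugacy class of that parabolic with $P_\mu$ and then trivialize the resulting $G/P_\mu$-torsor pro-\'etale locally. For the identification, Corollary~\ref{gradedHodgeTate} expresses the graded pieces of $\mathrm{Fil}^{HT}_\bullet$ (up to Tate twist) in terms of those of the Hodge filtration on $\cV_{\mathrm{dR}}$, which by Lemma~\ref{spb} are globally governed by the cocharacter $\mu$; thus the type of $\mathrm{Fil}^{HT}_\bullet$ is that of $\mu^{-1}$ or equivalently of the opposite parabolic $P_\mu$, not of $P_\mu^{\mathrm{std}}$. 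Since the space of such filtrations inside $V\otimes\hat\cO_{\cS_K}|_U$ respecting the tensors $s_\alpha$ is a $\Fl_{G,\mu}$-torsor once we have fixed $\beta_0$, and this flag variety is smooth (so its torsors are pro-\'etale locally trivial after base change to $\hat\cO$), we may refine our cover to obtain a $\beta$ carrying $\mathrm{Fil}_\bullet$ to $\mathrm{Fil}_\bullet(\mu)$, i.e.~an element of $\mathscr{P}_p$ over the refined cover.

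The main technical content—and the step I expect to be the hardest—is the second one: checking that the filtration $\mathrm{Fil}^{HT}_\bullet$ comes from a cocharacter in the given conjugacy class $\{\mu\}$ (rather than some other conjugacy class of cocharacters of $G$ with the same type in $\tilde G$). The key point is that this matching holds for the Siegel embedding already (by construction of $\pi_{HT}$ for $\tilde G$) and that the tensors $s_{\alpha,p}$ cut out $G$ inside $\tilde G$; combined with the compatibility between the Hodge--Tate and Hodge filtrations from Proposition~\ref{TwoBdRLattices} and Corollary~\ref{gradedHodgeTate}, this forces the cocharacter to lie in $\{\mu\}$, and the sign conventions are such that the relevant parabolic is $P_\mu$ rather than its opposite $P_\mu^{\mathrm{std}}$, in agreement with the antiholomorphic nature of the $p$-adic Borel embedding.
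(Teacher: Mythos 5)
Your reduction to local nonemptiness is fine (the quasitorsor property is indeed automatic from $G$ and $P_\mu$ being the stabilizers of the tensors and of the filtration), and stage one — trivializing $\cV_p$ together with the tensors pro-\'etale locally via the infinite-level perfectoid cover — is exactly how the paper produces the underlying $G$-torsor $\mathscr{G}_p$. The problem is stage two, which you correctly flag as the crux but then do not actually prove. After transporting the Hodge--Tate filtration through $\beta_0$, you obtain a point of $\Fl_{\tilde G,\tilde\mu}(\hat\cO_{\cS_K}(U))$ and must show it lies in the closed subspace $\Fl_{G,\mu}$, i.e.\ in the particular $G$-orbit containing $\mathrm{Fil}_\bullet(\mu)$ and not in one of the other $G$-orbits on $\Fl_{\tilde G,\tilde\mu}$ compatible with the tensors. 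The sentence ``the tensors $s_{\alpha,p}$ cut out $G$ inside $\tilde G$ \dots\ this forces the cocharacter to lie in $\{\mu\}$'' is an assertion, not an argument: the tensors only tell you the stabilizer of the filtration meets $G$ in a parabolic, and Corollary~\ref{gradedHodgeTate} only controls the associated graded (hence at best the conjugacy class of the Levi), which does not by itself pin down the $G$-conjugacy class of the filtration, nor is it explained why a filtration with the correct graded data is \emph{locally} $G$-conjugate to $\mathrm{Fil}_\bullet(\mu)$.

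The paper closes exactly this gap by two moves you are missing. First, the type of the Hodge--Tate filtration on $\cV_p\otimes\hat\cO_{\cS_K}$ is a discrete invariant, hence constant on connected components, so it suffices to check the statement above a single classical point of each component. Second, at a classical point $x\in\cS_K(L,\cO_L)$ the relative filtration becomes the honest Hodge--Tate \emph{decomposition}: choosing a point of $\cM_{\mathrm{dR}}(C)$ above $x$ (a tensor-compatible trivialization of the graded Hodge cohomology) and a choice of $p$-power roots of unity identifies $\cV_{p,x}\otimes C\cong\bigoplus_j V_j\otimes C(-j)\cong V\otimes C$ carrying the Hodge--Tate filtration to $\mathrm{Fil}_\bullet(\mu)$ on the nose; the compatibility of this identification with the tensors is Blasius's theorem, which you never invoke at this step. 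Over a point the filtration splits and the splitting realizes it as induced by a cocharacter in $\{\mu\}$ — that is the content your graded-pieces argument cannot supply over a general base. You would need to add both the reduction to classical points and the pointwise decomposition argument (or some substitute) for the proof to be complete.
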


\begin{proof} Similarly to $\mathscr{P}_p$, one can define a $G$-quasitorsor $\mathscr{G}_p$ over the pro-\'etale site of $\cS_K$, by removing the condition on filtrations. The latter is the pushout of a $G(\hat{\mathbb Q}_p)$-torsor on the pro-\'etale site of $\cS_K$ given by looking at isomorphisms between $\cV_p$ and $V\otimes_{\mathbb Q_p} \hat{\mathbb Q}_p$ respecting all tensors. This is a torsor, since, for example, it admits a global section over the perfectoid Shimura variety $\cS_{K^p}$. In order to prove that $\mathscr{P}_p$ is a torsor, we note that the type of the Hodge-Tate filtration on $\cV_p\otimes_{\hat{\mathbb{Q}}_p}\hat{\cO}_{\cS_K}$ is a discrete invariant, so it is constant on each connected component of $\cS_K$. Therefore, it suffices to check the statement above classical points.

Thus, let $x\in \cS_K(L,\cO_L)$ be a point defined over a finite extension $L$ of $E_{\mathfrak p}$ with completed algebraic closure $C$. We may pick a point of $\cM_{\mathrm{dR}}(C)$ above $x$, which amounts to trivializing all Hodge cohomology groups (compatibly with the tensors). Then the Hodge-Tate decomposition reads
\[
\cV_{p,x}\otimes_{\mathbb Q_p} C\cong \bigoplus_j V_j\otimes C(-j)\cong V\otimes C\ ,
\]
where $V=\bigoplus_j V_j$ is the weight decomposition according to the action of $\mu$, and we are using any fixed choice of $p$-power roots of unity in $C$ in the second isomorphism. Under this isomorphism, the Hodge-Tate filtration on the left-hand side is taken to $\mathrm{Fil}_\bullet(\mu)$, as desired.

The fact that $s_{\alpha,p}$ can be identified with $s_{\alpha}$ under the Hodge-Tate isomorphism is proved in~\cite{blasius}.
\end{proof}

As before, this torsor is independent of the choice of symplectic embedding.

\begin{lemma}\label{p-adic} The $P_\mu$-torsor $\mathscr{P}_p$ is independent of the choice of symplectic embedding. 
\end{lemma}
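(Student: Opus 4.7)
The plan is to mimic the proof of Lemma \ref{de Rham torsor} in the $p$-adic setting. Let $(G,X)\hookrightarrow(\tilde G',\tilde X')$ be a second symplectic embedding with $\tilde G'=GSp(V',\psi')$, and pick a $G$-invariant idempotent $e\in V^\otimes$ such that $V'=eV^\otimes$. The associated Hodge tensor $e_B\in\cV_B^\otimes$ transports to an \'etale tensor $e_p\in\cV_p^\otimes$ via the Betti-to-$p$-adic-\'etale comparison, giving an identification $\cV'_p\cong e_p\cV_p^\otimes$ of $\mathbb{Q}_p$-local systems on $\cS_K$. Compatibility of this identification with the tensor collections $(s_{\alpha,p})$ and $(s'_{\alpha,p})$ is automatic, since they all arise from $G$-invariant tensors and are pinned down by the Betti picture just as in the proof of Lemma \ref{de Rham torsor}.

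The substantive step is to verify that $e_p$ preserves the relative Hodge-Tate filtration on $\cV_p\otimes_{\hat{\mathbb{Q}}_p}\hat{\cO}_{\cS_K}$. I would argue this by reducing to the already-established de Rham statement via Proposition \ref{TwoBdRLattices}. Namely, after base change along $\mathbb{B}^+_{\mathrm{dR},\cS_K}\to\cO\mathbb{B}^+_{\mathrm{dR},\cS_K}$ the two $\mathbb{B}^+_{\mathrm{dR},\cS_K}$-lattices coming from \'etale and de Rham cohomology of $\cA$ are canonically identified, and passing to graded pieces converts the relative Hodge-Tate filtration on the \'etale side into the Hodge filtration on the de Rham side (twisted appropriately). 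Since Lemma \ref{de Rham torsor}, via Remark \ref{reflexfieldsecondemb}, already shows that $e_{\mathrm{dR}}$ respects the Hodge filtration on $\cV_{\mathrm{dR}}^\otimes$, the tensor $e_p$ must respect the Hodge-Tate filtration on $\cV_p^\otimes\otimes\hat{\cO}_{\cS_K}$.

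Alternatively, since the filtration type is a locally constant, discrete invariant on each connected component of $\cS_K$ (as noted in the previous lemma), it suffices to check the statement at classical points $x\in\cS_K(L,\cO_L)$ with completed algebraic closure $C$. There Blasius's compatibility \cite{blasius}, already invoked in the proof that $\mathscr{P}_p$ is a torsor, supplies a Hodge-Tate isomorphism $\cV_{p,x}\otimes_{\mathbb{Q}_p}C\cong V\otimes_{\mathbb{Q}}C$ sending $s_{\alpha,p}\mapsto s_\alpha$ and carrying the Hodge-Tate filtration to $\mathrm{Fil}_\bullet(\mu)$; in particular $e_p\mapsto e$, and since $e$ is $G$-invariant it preserves $\mathrm{Fil}_\bullet(\mu)$ on $V^\otimes$.

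Either route yields a morphism $\mathscr{P}_p\to\mathscr{P}'_p$ of $P_\mu$-torsors, which is automatically an isomorphism. The main obstacle is precisely this filtration compatibility: everything combinatorial transfers formally from the Betti picture, but the Hodge-Tate filtration is genuinely $p$-adic analytic data and must be related either to the Hodge filtration on de Rham cohomology (via Proposition \ref{TwoBdRLattices}) or to the cocharacter filtration at classical fibres (via Blasius) in order to conclude that $G$-invariant idempotents act through endomorphisms respecting it.
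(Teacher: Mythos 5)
Your proposal is correct and follows essentially the same route as the paper: transfer via a $G$-invariant idempotent $e$ with $p$-adic realization $e_p$, with the only substantive point being that $e_p$ respects the relative Hodge--Tate filtration, which the paper likewise establishes by identifying $e_p$ with the image of $e_{\mathrm{dR}}$ under the $p$-adic--de Rham comparison (using \cite{blasius} at points over number fields and horizontality to spread over each connected component) and then invoking the definition of the relative Hodge--Tate filtration via Proposition~\ref{TwoBdRLattices}. Your two alternative routes for this step are just the two halves of the paper's single argument.
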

\begin{proof} This uses the same idea as the proof of Lemma~\ref{de Rham torsor}. Let $(V,\psi)$ be a symplectic embedding of $G$, which defines the $P_\mu$-torsor $\mathscr{P}_p$. For another symplectic embedding $G\hookrightarrow GSp(V',\psi')$, we define a $P_\mu$-torsor $\mathscr{P}'_p$ analogously. We can relate the two symplectic embeddings given by $(V,\psi)$ and $(V',\psi')$ via a $G$-invariant idempotent $e\in V^\otimes$, with $p$-adic realization $e_p\in \cV^\otimes_p$. The tensor $e_p$ defines an isomorphism of vector bundles \[\cV'_{p}\otimes \hat{\cO}_{\cS_K}\simeq e_{p}(\cV^\otimes_{p}\otimes \hat{\cO}_{\cS_K}),\] which matches the tensors $s'_{\alpha,p}\in \cV'^\otimes_p$ with tensors in $\cV^\otimes_p$.

Moreover, $e_p$ respects the Hodge-Tate filtration on the two vector bundles. Indeed, $e_p$ is the image of $e_{\mathrm{dR}}$ under the $p$-adic-de Rham comparison isomorphism. At points of $S_K$ corresponding to abelian varieties defined over number fields, this follows from~\cite{blasius}. Since both $e_p$ and $e_{\mathrm{dR}}$ are horizontal sections, the result extends over all of $\cS_{K}$ after checking it at such a point in every connected component of $\cS_K$. The definition of the relative Hodge-Tate filtration in terms of the $p$-adic-de Rham comparison isomorphisms then ensures that $e_p$ respects the Hodge-Tate filtration, and the isomorphism induced by $e_p$ gives a map of $P_\mu$-torsors $\mathscr{P}_{p}\to \mathscr{P}^\prime_{p}$, which has to be an isomorphism.
\end{proof}

The $P_\mu$-torsor $\mathscr{P}_p$ defines a $G$-torsor $\mathscr{G}_p$ by inflation along the map $P_\mu\to G$. For any $U\in (\cS_K)_\mathrm{pro\acute{e}t}$,
\[
\mathscr{G}_p(U)=\{\beta:\cV_{p}\otimes_{\hat{\mathbb Q}_p} \hat{\cO}_{\cS_K}|_U\toisom V\otimes_{\mathbb Q} \hat{\cO}_{\cS_K}|_U \mid \beta(s_{\alpha, p}\otimes 1)=s_\alpha\otimes 1\}\ .
\]
The perfectoid Shimura variety $\cS_{K^p}$ can be regarded as a $K_p$-torsor in $(\cS_K)_{\mathrm{pro\acute{e}t}}$. From the moduli description of $\cS_{K^p}$, we see that $\mathscr{G}_p(\cS_{K^p})$ has a canonical section, given by the trivialization of the $p$-adic Tate module of the universal abelian variety $\cA$ over $\cS_{K^p}$, which by definition respects the tensors $(s_{\alpha,p})$.

The map $P_\mu\twoheadrightarrow M_\mu$ defines an $M_\mu$-torsor $\mathscr{M}_{p}$ by pushout. This can be described as a sheaf on $(\cS_K)_{\mathrm{pro\acute{e}t}}$ as follows:
\[
\mathscr{M}_p(U)=\{\beta: \mathrm{gr}_\bullet(\cV_p\otimes \hat{\cO}_{\cS_K})|_U\toisom \mathrm{gr}_\bullet(\mu)(V\otimes_{\mathbb Q_p} \hat{\cO}_{\cS_K})|_U \mid \beta(s_{\alpha, p}\otimes 1)=s_\alpha\otimes 1\}\ .
\]

As in the complex case, the existence of $\mathscr{P}_p$ determines a map $\mathscr{G}_p\to \Fl_{G,\mu}$, which is independent of the choice of symplectic embedding $G\hookrightarrow GSp(V,\psi)$ by Lemma~\ref{p-adic}. Here, we abuse notation by writing $\Fl_{G,\mu}$ for the sheaf on $(\cS_K)_{\mathrm{pro\acute{e}t}}$ sending $U$ to $\Fl_{G,\mu}(U)$. This and the given section of $\mathscr{G}_p(\cS_{K^p})$ define an element of $\Fl_{G,\mu}(\cS_{K^p})$, i.e. a map of adic spaces
\[
\pi_{HT}: \cS_{K^p}\to \Fl_{G,\mu}\ .
\]
By functoriality of this construction (for $G$ and for $\tilde G:=GSp(V,\psi)$), we have the commutative diagram of adic spaces \[\xymatrix{\cS_{K^p}\ar[d]\ar[r]&\tilde{\cS}_{\tilde K^p}\ar[d]\\ \Fl_{G,\mu}\ar[r]&\Fl_{\tilde G,\tilde{\mu}}.}\] Therefore, the Hodge-Tate period map for $\cS_{K^p}$ factors through this canonical map $\cS_{K^p}\to \Fl_{G,\mu}$. This proves the first part of Theorem~\ref{refinedht}. 

The second part of Theorem~\ref{refinedht} will follow from the next lemma and from the comparison isomorphism between de Rham and $p$-adic \'etale cohomology. 

\begin{lemma}~\label{tannetale}The $M_\mu$-torsor $\mathscr{M}_p$ encodes the tensor functor
\[
f_p: \mathrm{Rep}\ M_\mu \to \{G(\mathbb{Q}_p)-\mathrm{equivariant\ vector\ bundles\ on\ }\cS_{K^p}\}
\]
in the statement of Theorem~\ref{refinedht}.
\end{lemma}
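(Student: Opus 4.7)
\textbf{Proof plan for Lemma~\ref{tannetale}.} The plan is to unwind the definition of $\pi_{HT}$ and match the two tensor functors step by step, using that by construction $\pi_{HT}$ arose exactly from the $P_\mu$-torsor $\mathscr{P}_p$ together with the canonical trivialization of $\mathscr{G}_p|_{\cS_{K^p}}$ coming from the universal Tate module trivialization.

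First, I would recall the general mechanism: since $\Fl_{G,\mu} \simeq G/P_\mu$, the category of $G$-equivariant vector bundles on $\Fl_{G,\mu}$ is equivalent to $\mathrm{Rep}\, P_\mu$ via $\xi \mapsto \cW(\xi) := G \times^{P_\mu} V_\xi$, where $V_\xi$ is the representation space. The pullback of $\cW(\xi)$ under any morphism $f\colon T \to \Fl_{G,\mu}$ is canonically identified with the vector bundle $\mathscr{Q}\times^{P_\mu} V_\xi$ associated with the $P_\mu$-torsor $\mathscr{Q} = f^\ast(G \to G/P_\mu)$ on $T$. Applied to $\pi_{HT}\colon \cS_{K^p} \to \Fl_{G,\mu}$, the relevant $P_\mu$-torsor is precisely the reduction of structure group from $\mathscr{G}_p$ to $P_\mu$ provided by $\mathscr{P}_p$; this is built into the construction of $\pi_{HT}$ in the paragraph preceding the lemma, since $\pi_{HT}$ was defined by composing the map $\mathscr{G}_p \to \Fl_{G,\mu}$ coming from $\mathscr{P}_p$ with the canonical section of $\mathscr{G}_p$ over $\cS_{K^p}$ supplied by the universal trivialization of the Tate module. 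Thus, for any $\xi \in \mathrm{Rep}\, P_\mu$, one has a canonical isomorphism of $G(\mathbb{Q}_p)$-equivariant vector bundles
\[
\pi_{HT}^\ast \cW(\xi) \;\cong\; \mathscr{P}_p \times^{P_\mu} V_\xi
\]
on $\cS_{K^p}$, where the right-hand side is a priori a pro-\'etale $\cO_{\cS_{K^p}}$-module but is finite locally free, hence descends to an analytic vector bundle by standard pro-\'etale descent as in~\cite{scholzerigid}.

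Next, I would specialize to $\xi$ inflated from a representation of $M_\mu$ along the projection $P_\mu \twoheadrightarrow M_\mu$. In that case the unipotent radical of $P_\mu$ acts trivially on $V_\xi$, so the twisted product factors through the pushout $P_\mu$-torsor along $P_\mu \to M_\mu$. By the definition
\[
\mathscr{M}_p = \mathscr{P}_p \times^{P_\mu} M_\mu,
\]
this yields a canonical identification
\[
\mathscr{P}_p \times^{P_\mu} V_\xi \;\cong\; \mathscr{M}_p \times^{M_\mu} V_\xi,
\]
which is exactly the vector bundle encoded by the $M_\mu$-torsor $\mathscr{M}_p$ under the Tannakian dictionary. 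Combining the two displays yields the claimed identification of $f_p$ with the tensor functor associated to $\mathscr{M}_p$.

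Finally, I would check the two remaining pieces of structure. Equivariance for $G(\mathbb Q_p)$ is automatic: $\mathscr{P}_p$, $\mathscr{M}_p$, and the map $\pi_{HT}$ were all constructed from pro-\'etale sheaves defined in a manifestly $G(\mathbb Q_p)$-equivariant way (with $G(\mathbb Q_p)$ acting trivially on $\Fl_{G,\mu}$). Independence of the choice of symplectic embedding is exactly Lemma~\ref{p-adic}, which shows that $\mathscr{P}_p$, and hence its $M_\mu$-pushout $\mathscr{M}_p$, does not depend on this choice. The only genuinely substantive input is thus the pro-\'etale descent step ensuring that the twisted product is an honest vector bundle on $\cS_{K^p}$; everything else is a direct unwinding of the construction of $\pi_{HT}$, so I expect this to be the only place requiring any care.
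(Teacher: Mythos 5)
Your unwinding is correct and matches the paper's (one-line) proof, which simply declares the lemma immediate from the definitions: $\pi_{HT}$ is by construction the classifying map of the reduction $\mathscr{P}_p\subset\mathscr{G}_p$ composed with the tautological section coming from the universal Tate-module trivialization, so $\pi_{HT}^\ast\cW(\xi)\cong \mathscr{P}_p\times^{P_\mu}V_\xi\cong \mathscr{M}_p\times^{M_\mu}V_\xi$ for $\xi$ inflated from $M_\mu$, exactly as you argue. One small correction: $G(\mathbb{Q}_p)$ does \emph{not} act trivially on $\Fl_{G,\mu}$ (it is the prime-to-$p$ Hecke action of $G(\mathbb{A}_f^p)$ that is trivial there); the $G(\mathbb{Q}_p)$-equivariance of $f_p$ instead comes from the $G$-equivariant structure on $\cW(\xi)$ combined with the $G(\mathbb{Q}_p)$-equivariance of $\pi_{HT}$.
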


\begin{proof} This is immediate from the definitions.
\end{proof}

We now compare the two $M_\mu$-torsors, $\cM_{\mathrm{dR}}$ and $\mathscr{M}_p$. For this, we first consider a $P^{\mathrm{std}}_\mu$-torsor $\mathscr{P}_{\mathrm{dR}}$ over $\cS_{K}$, which will be the sheaf on $(\cS_{K})_{\mathrm{pro\acute{e}t}}$ defined by \[\mathscr{P}_{\mathrm{dR}}(U)=\{\beta:\cV_{\dR}\otimes_{\cO_{\cS_K}} \hat{\cO}_{\cS_K}|_U\toisom V\otimes_{\mathbb Q} \hat{\cO}_{\cS_K}|_U \mid \beta(s_{\alpha, \mathrm{dR}}\otimes 1)=s_\alpha\otimes 1, \beta(\mathrm{Fil}^\bullet)=\mathrm{Fil}^\bullet(\mu)\},\]  where $\mathrm{Fil}^\bullet$ is the Hodge-de Rham filtration on $\cV_{\dR}$. It is easy to see from the definitions that $\mathscr{P}_{\mathrm{dR}}$ is the pullback of $\cP_{\mathrm{dR}}$ from $S_K$ (ringed with $\cO_{S_K}$) to $(\cS_{K})_{\mathrm{pro\acute{e}t}}$ (ringed with $\hat{\cO}_{\cS_K}$). We can define $\mathscr{M}_{\mathrm{dR}}$ by pushout. This is also a sheaf on $(\cS_K)_{\mathrm{pro\acute{e}t}}$, parametrizing isomorphisms 
\[
\mathrm{gr}^\bullet(\cV_{\mathrm{dR}}\otimes \hat{\cO}_{\cS_K})\toisom \mathrm{gr}^\bullet(\mu)(V\otimes \hat{\cO}_{\cS_K})
\]
which map the tensors $s_{\alpha,\mathrm{dR}}$ to $s_\alpha$. Again, $\mathscr{M}_{\mathrm{dR}}$ is the pullback of $\cM_{\mathrm{dR}}$ from $S_K$ to $(\cS_K)_{\mathrm{pro\acute{e}t}}$. 
 
\begin{prop}\label{isomorphism of torsors} There is a canonical isomorphism $\mathscr{M}_{\mathrm{dR}}\cong \mathscr{M}_{p}$ of $M_\mu$-torsors on $(\cS_K)_{\mathrm{pro\acute{e}t}}$, independent of the choice of symplectic embedding.
\end{prop}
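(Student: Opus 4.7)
The plan is to deduce the isomorphism directly from the relative $p$-adic--de Rham comparison applied to $\pi\colon\cA\to S_K$, combined with the compatibility of Hodge cycles under this comparison.

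First I would apply Corollary~\ref{gradedHodgeTate} to $\pi$ (dualizing, since $\cV_p$ and $\cV_{\mathrm{dR}}$ are the relative homology). This produces a canonical isomorphism
\[
\mathrm{gr}_\bullet(\cV_p\otimes_{\hat{\mathbb Q}_p} \hat{\cO}_{\cS_K})\cong \mathrm{gr}^\bullet \cV_{\mathrm{dR}}\otimes_{\cO_{\cS_K}} \hat{\cO}_{\cS_K}
\]
of graded locally free $\hat{\cO}_{\cS_K}$-modules on $(\cS_K)_{\mathrm{pro\acute{e}t}}$, where the Tate twists appearing in Corollary~\ref{gradedHodgeTate} are absorbed into the natural identification of the ascending grading $\mathrm{gr}_\bullet(\mu)$ with the descending grading $\mathrm{gr}^\bullet(\mu)$ on $V\otimes \hat{\cO}_{\cS_K}$ (after fixing a compatible system of $p$-power roots of unity on each connected component).

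Next I would promote this to an identification compatible with all tensor constructions. Since the comparison of Proposition~\ref{TwoBdRLattices} is a morphism of tensor functors on the category of local systems on $\cS_K$, applying it to any Schur functor of $R^1\pi_\ast$ yields the analogous statement for $\cV_p^\otimes$ and $\cV_{\mathrm{dR}}^\otimes$. The key step is then to verify that the Hodge tensors $s_{\alpha,p}$ and $s_{\alpha,\mathrm{dR}}$ correspond under the resulting isomorphism. Both are horizontal sections (pro-\'etale, respectively Gauss--Manin), so the locus where they match is open and closed in the pro-\'etale sense, and it suffices by the argument of Lemma~\ref{reflex field} to check the assertion at a single classical point of each connected component coming from an abelian variety defined over a number field. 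At such a point, the statement is exactly the theorem of Blasius~\cite{blasius} that Hodge cycles on abelian varieties are compatible with the $p$-adic--de Rham comparison isomorphism.

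Combining these two inputs produces an isomorphism of graded $\hat{\cO}_{\cS_K}$-modules carrying $s_{\alpha,p}$ to $s_{\alpha,\mathrm{dR}}$. Since $\mathscr{M}_{\mathrm{dR}}$ and $\mathscr{M}_p$ parametrize, respectively, trivializations of $\mathrm{gr}^\bullet \cV_{\mathrm{dR}}\otimes \hat{\cO}_{\cS_K}$ and $\mathrm{gr}_\bullet(\cV_p\otimes \hat{\cO}_{\cS_K})$ matching those tensors with the fixed $s_\alpha\in V^\otimes$, this directly yields the desired isomorphism $\mathscr{M}_{\mathrm{dR}}\cong \mathscr{M}_p$ of $M_\mu$-torsors. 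Independence of the symplectic embedding is then automatic: both torsors are already embedding-independent by Lemmas~\ref{p-adic} and~\ref{de Rham torsor}, and the isomorphism produced from the comparison is manifestly natural in the Hodge cycle data, hence compatible with the idempotents $e\in V^\otimes$ relating any two symplectic embeddings. The main obstacle is precisely the tensor compatibility step, which is resolved only by invoking~\cite{blasius} through the reduction to classical points over number fields.
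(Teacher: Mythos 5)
Your overall strategy (pass to gradeds via Corollary~\ref{gradedHodgeTate}, extend to tensor constructions, match the Hodge tensors using~\cite{blasius} at points over number fields and spread out by horizontality, then deduce independence of the embedding from the idempotent $e$) is the same as the paper's. However, there is a genuine gap in your treatment of the Tate twists. Corollary~\ref{gradedHodgeTate} gives
\[
\mathrm{gr}_j(\cV_p\otimes_{\hat{\mathbb Q}_p}\hat{\cO}_{\cS_K})\cong (\mathrm{gr}^j\cV_{\mathrm{dR}})\otimes_{\cO_{\cS_K}}\hat{\cO}_{\cS_K}(-j)\ ,
\]
and the twist $(-j)$ cannot be ``absorbed into the identification of the ascending with the descending grading'' --- that identification is pure reindexing and is untwisted. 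Nor can you remove it by ``fixing a compatible system of $p$-power roots of unity on each connected component'': the proposition lives on $(\cS_K)_{\mathrm{pro\acute{e}t}}$ at finite level over $E_{\mathfrak p}$ (or a finite extension), where $\zeta_{p^\infty}$ is not present, and by Tate's theorem $H^0$ of the pro-\'etale site of a connected smooth rigid space over a discretely valued field with coefficients in $\hat{\cO}(j)$ vanishes for $j\neq 0$. So there is no global isomorphism $\hat{\cO}_{\cS_K}(1)\cong\hat{\cO}_{\cS_K}$, and the untwisted graded comparison you write down in your first display does not exist as a map of sheaves on $(\cS_K)_{\mathrm{pro\acute{e}t}}$. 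A choice made only pro-\'etale locally would also destroy the canonicity and the $M_\mu$-equivariance you need.

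The paper resolves this using the torsor structure itself: the determinant (similitude) representation $GSp(V,\psi)\to\mathbb G_m$ realizes the Tate motive, so both $\mathscr{M}_{\mathrm{dR}}$ and $\mathscr{M}_p$ come with canonical maps to the $\hat{\cO}_{\cS_K}^\times$-torsor of trivializations $\hat{\cO}_{\cS_K}(1)\cong\hat{\cO}_{\cS_K}$ (for $\mathscr{M}_p$ this is the similitude component of a frame $\beta$; it is exactly the datum that is supplied at infinite level by the symplectic level structure). One then writes the isomorphism $\mathscr{M}_{\mathrm{dR}}\cong\mathscr{M}_p$ relative to this common trivialization, which untwists Corollary~\ref{gradedHodgeTate} in a way that is canonical, $M_\mu$-equivariant (the similitude character factors through $M_\mu$), and independent of the symplectic embedding. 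You should incorporate this step; the rest of your argument then goes through.
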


\begin{proof} The determinant representation $GSp(V,\psi)\to \mathbb{G}_m$ gives rise geometrically to the Tate motive, and is independent of the choice of symplectic embedding. Using this, both torsors map to the torsor of trivializations $\hat{\cO}_{\cS_K}(1)\cong \hat{\cO}_{\cS_K}$. Now, for any $j\in \mathbb Z$, there is the isomorphism
\[
\mathrm{gr}^j(\cV_{\mathrm{dR}}\otimes_{\cO_{\cS_K}}\hat{\cO}_{\cS_K})\toisom \mathrm{gr}_j(\cV_p\otimes_{\hat{\mathbb{Q}}_p}\hat{\cO}_{\cS_K})(j)
\]
coming from the relative $p$-adic-de Rham comparison isomorphism, Corollary \ref{gradedHodgeTate}. One gets a similar comparison for $\cV_{\mathrm{dR}}^\otimes$ and $\cV_p^\otimes$, and we know by~\cite{blasius} that all tensors $s_{\alpha,\mathrm{dR}}$ resp. $s_{\alpha,p}$ are matched at points defined over number fields, and thus globally.

Using these isomorphisms as well as the trivialization $\hat{\cO}_{\cS_K}(1)\cong \hat{\cO}_{\cS_K}$, one writes down the isomorphism $\mathscr{M}_{\mathrm{dR}}\cong \mathscr{M}_p$. To check that it is independent of the choice of symplectic embedding, one argues as before.
\end{proof}

As mentioned above, Proposition~\ref{isomorphism of torsors} implies the second part of Theorem~\ref{refinedht}, once we use the Tannakian formalism in Lemmas~\ref{spb} and~\ref{tannetale} to reinterpret $\mathscr{M}_{\mathrm{dR}}$ and $\mathscr{M}_p$ as tensor functors \[\mathrm{Rep}\ M_\mu\to \{G(\mathbb{Q}_p)-\mathrm{equivariant\ vector\ bundles\ on\ }\cS_{K^p}\}.\]
\newpage

\section{The Newton stratification on the flag variety}

We start with some motivation. Assume that the Shimura varieties $\cS_{K}$ are of Hodge type and that $K=K^pK_p\subset G(\mathbb{A}_f)$ is a compact open subgroup such that $K_p$ is hyperspecial. This means that $G$ extends to a reductive group over $\mathbb{Z}_p$ and that $K_p=G(\mathbb{Z}_p)$. Then (at least if $p>2$) the Shimura variety $\cS_{K}$ admits an integral model $\mathscr{S}_K$ by~\cite{kisinintegral}. Moreover, as in Section 1.4 of~\cite{kisin}, we can define a Newton stratification on the special fiber of $\mathscr{S}_{K}$, in terms of the Kottwitz set $B(G, \mu^{-1})$ (whose definition we recall below). Pulling this stratification back along the continuous specialization map, we get a stratification on $\cS_K$, which in turn can be pulled back to the perfectoid Shimura variety to get a Newton stratification $\cS_{K^p}=\bigsqcup_{b\in B(G,\mu^{-1})} \cS^b_{K^p}$. There is a unique closed stratum, corresponding to the basic locus and a unique open stratum, corresponding to the $\mu$-ordinary locus. 

Our goal in this section is to define a stratification on the flag variety  \[\Fl_{G,\mu}=\bigsqcup_{b\in B(G, \mu^{-1})} \Fl^b_{G,\mu},\] such that the following properties are satisfied:
\begin{enumerate}
\item On points of rank one, \[\cS^b_{K^p}=\pi_{HT}^{-1}(\Fl_{G,\mu}^b)\ .\]
\item All $\Fl_{G,\mu}^b$ are locally closed subspaces of the adic space $\Fl_{G,\mu}$, in the topological sense.
\item The basic stratum is open, and the $\mu$-ordinary stratum is closed.
\end{enumerate}

We will define this stratification independently of the one on the Shimura variety, using relative versions of the Fargues-Fontaine curve~\cite{farguesfontaine} and a classification result for vector bundles with $G$-structure over this curve, due to Fargues, \cite{farguesGbun}. We will reinterpret vector bundles over the curve as $\varphi$-modules over the Robba ring, \`a la Kedlaya-Liu~\cite{kedlayaliu}, and use their results to conclude that the strata we define are locally closed. In Section~\ref{a product formula}, we will see that this is compatible with the stratification pulled back from the special fiber, in the sense described above, for compact Shimura varieties of PEL type.

Throughout this section, our notation will be purely local, so fix a prime $p$ and a connected reductive group $G$ over $\mathbb Q_p$. Moreover, we fix a conjugacy class of cocharacters $\mu: \mathbb G_m\to G_{\overline{\mathbb Q}_p}$, defined over the reflex field $E/\mathbb Q_p$. Often, we will assume that $\mu$ is minuscule, meaning that in the induced action on the Lie algebra of $G$, only the weights $-1$, $0$ and $1$ appear. However, for the moment, $\mu$ is allowed to be arbitrary.

\subsection{Background on isocrystals with $G$-structure}\label{isocrystals} We recall here the definition of the sets $B(G)$ and $B(G, \mu)$, originally due to Kottwitz~\cite{kottwitz}. We start with $B(G)$. Let $L:=W(\bar{\mathbb{F}}_p)[1/p]$. Let $\sigma$ be the automorphism of $L$ induced by the $p$th power Frobenius on $\bar{\mathbb{F}}_p$. There is an action of $G(L)$ on itself by $\sigma$-conjugation, defined by $g\mapsto hg\sigma(h)^{-1}$ for $g,h\in G(L)$. Then $B(G)$ is defined to be the set of $\sigma$-conjugacy classes of elements $b\in G(L)$. (We note that instead of working with $\bar{\mathbb{F}}_p$ here, we could work with any algebraically closed field of characteristic $p$, as Kottwitz shows that the definition is independent of this choice.)

One can reinterpret this definition in terms of isocrystals with $G$-structure. Recall the following definition.

\begin{defn} An isocrystal over $\bar{\mathbb{F}}_p$ is a pair $(V,\phi)$ consisting of a finite-dimensional $L$-vector space and a $\sigma$-linear automorphism $\phi$ of $V$. The height of an isocrystal $(V,\phi)$ is the dimension of $V$ over $L$.

An isocrystal with $G$-structure is an exact tensor functor \[\mathrm{Rep}_{\mathbb{Q}_p}G\to \{\mathrm{Isocrystals}/\bar{\mathbb{F}}_p\}.\]
\end{defn}

\noindent For $G=GL_{n}/\mathbb{Q}_p$, the set $B(G)$ is in bijection with the set of isomorphism classes of isocrystals of height $n$ over $\bar{\mathbb{F}}_p$ via $b\mapsto (L^n,b\sigma)$. For general $G$, this extends to a bijection between $B(G)$ and isomorphism classes of isocrystals with $G$-structure.

The Dieudonn\'e-Manin classification shows that $B(GL_n)$ is in bijection with a corresponding set of Newton polygons, via the slope decomposition of the isocrystals. More precisely, any isocrystal $(V,\phi)$ over $\bar{\mathbb{F}}_p$ is isomorphic to a unique isocrystal of the form
\[
V \cong \bigoplus_{\lambda=s/r\in \mathbb Q} V_\lambda^{\oplus n_\lambda}\ ,
\]
where $\lambda=s/r$ runs through rational numbers written in primitive form with $r>0$, the $n_\lambda$ are nonnegative integers, almost all zero, and
\[
V_\lambda = (L^r,\left(\begin{array}{cccc} & 1 &  &  \\  &  & \ddots &  \\  &  &  & 1 \\ p^s &  &  &  \end{array}\right)\sigma)\ .
\]
The subspaces $V_\lambda^{\oplus n_\lambda}\subset V$ are uniquely determined, and referred to as the subspace of slope $\lambda$.

For a general reductive group $G$, an element $b\in B(G)$ is determined by a version of the Newton polygon, and an additional finite datum encoded in the Kottwitz invariant. In the following, fix a splitting of $G_{\bar{\mathbb Q}_p}$ and in particular a maximal torus $T\subset G_{\bar{\mathbb Q}_p}$, and let $X_*(G) := X_*(T)$ be the corresponding cocharacter lattice, which comes with a dominant chamber.

Let us first recall the Newton map
\[
\nu: B(G)\to (X_*(G)\otimes\mathbb{Q})_{\mathrm{dom}}^\Gamma\ .
\]
Here, $\Gamma:=\mathrm{Gal}(\bar{\mathbb{Q}}_p/\mathbb{Q}_p)$ is the absolute Galois group of $\mathbb Q_p$, and $(X_*(G)\otimes\mathbb{Q})_{\mathrm{dom}}$ is the set of dominant rational cocharacters. If we let $\mathbb{D}$ be the (pro-)algebraic torus with character group $\mathbb{Q}$, the latter set can be identified with the set of conjugacy classes of $\mathrm{Hom}(\mathbb{D}_{\bar{\mathbb{Q}}_p},G_{\bar{\mathbb{Q}}_p})$, on which $\Gamma$ acts naturally.

To construct the Newton map, Kottwitz assigns to any $b\in G(L)$ a slope homomorphism $\nu_b\in \mathrm{Hom}(\mathbb{D}_L,G_L)$. In the case of $G=GL_n$, this gives the slope decomposition of the corresponding isocrystal; in general, it is defined by the Tannakian formalism. Changing $b$ by a $\sigma$-conjugate does not change the conjugacy class of $\nu_b$, and (thus) this conjugacy class is invariant under $\sigma$.

However, the Newton map is not, in general, injective. In fact, $\nu_b$ is trivial if and only if $b$ is in the image of the natural injection $H^1(\mathbb Q_p,G)\hookrightarrow B(G)$. Here, one can identify the Galois cohomology group $H^1(\mathbb Q_p,G)$ with the isomorphism classes of exact tensor functors
\[
\mathrm{Rep}_{\mathbb Q_p} G\to \{\mathbb Q_p\mathrm{-vector\ spaces}\}\ .
\]
Such tensor functors embed fully faithfully into the category of isocrystals with $G$-structure, via sending a $\mathbb Q_p$-vector space $W$ to $W\otimes_{\mathbb Q_p} L$ with the induced Frobenius from $L$.

For this reason, Kottwitz also constructs a map
\[
\kappa: B(G)\to \pi_1(G_{\bar{\mathbb Q}_p})_{\Gamma}\ .
\]
For $G=GL_n$, this map is defined by $b\mapsto \kappa(b)=\mathrm{val}_p(\det b)\in \mathbb{Z}$. In general, there is a unique natural transformation $B(\ )\to \pi_1(\ )_{\Gamma}$ of set-valued functors on the category of connected reductive groups over $\mathbb{Q}_p$ with this property. (Kottwitz defines his map in terms of the center of the Langlands dual group. See Section 1.13 of~\cite{rapoport-richartz} for more on the definition using the algebraic fundamental group.) Again, we abbreviate $\pi_1(G) = \pi_1(G_{\bar{\mathbb Q}_p})$. Moreover, according to Theorem 1.15 of~\cite{rapoport-richartz}, the natural transformation $B(\ )\to  \pi_1(\ )_{\Gamma}$ fits into a commutative diagram
\[
\xymatrix{B(G)\ar[r]\ar[d]& (X_*(G)\otimes\mathbb{Q})^{\Gamma}\ar[d]\\
\pi_1(G)_{\Gamma}\ar[r] & \pi_1(G)^{\Gamma}\otimes\mathbb{Q}},
\]
where the lower horizontal arrow is given by averaging over all Galois conjugates. Then Kottwitz proves that
\[
(\nu,\kappa): B(G)\to (X_*(G)\otimes\mathbb{Q})_{\mathrm{dom}}^\Gamma\times \pi_1(G)_{\Gamma}
\]
is injective.

The set $(X_*(G)\otimes \mathbb{Q})^{\Gamma}_{\mathrm{dom}}$ admits a partial ordering. Under this ordering, we say that $\nu\preceq \nu'$ if $\nu'-\nu$ is a non-negative $\mathbb{Q}$-linear combination of positive coroots. This defines a partial ordering on $B(G)$, where we say $b\preceq b'$ if $\nu_b\preceq \nu_{b'}$ and $\kappa(b)=\kappa(b')$.     

Now, recall that we have fixed a conjugacy class of cocharacters $\mu: \mathbb{G}_m\to G_{\bar{\mathbb Q}_p}$. The set of conjugacy classes of cocharacters of $G_{\mathbb{\bar Q}_p}$ is in bijection with the set $X_*(G)_{\mathrm{dom}}$. There is a natural map $X_*(G)_{\mathrm{dom}}\to (X_*(G)\otimes \mathbb{Q})^{\Gamma}_{\mathrm{dom}}$ given by averaging over all Galois conjugates:
\[\bar \mu = \frac{1}{[E^\prime:\mathbb{Q}_p]}\sum_{\gamma\in \mathrm{Gal}(E^\prime/\mathbb{Q}_p)}\gamma(\mu)\]
for $E^\prime$ large enough. Let $\mu^\flat$ be the image of $\mu$ in $\pi_1(G)_{\Gamma}$. 

\begin{defn}The subset $B(G,\mu)\subset B(G)$ of $\mu$-\emph{admissible} elements is the subset of elements $b$ for which $\nu_b\preceq \bar \mu$ and $\kappa(b)=\mu^\flat$.
\end{defn}

In fact, we will really be interested in $B(G,\mu^{-1})$, where $\mu^{-1}$ denotes a dominant representative of the inverse of $\mu$.

\subsection{The Fargues-Fontaine curve}\label{the curve}

The goal of this subsection is to define the (adic) Fargues-Fontaine curve, and discuss some of its properties. For this, we start with some background on the curve as in~\cite{farguesfontaine} and~\cite{scholzelectures}, and then compare with constructions of Kedlaya and Liu~\cite{kedlayaliu}. 

Let $F$ be a complete algebraically closed nonarchimedean field of characteristic $p$, e.g. $F=\widehat{\overline{\mathbb F_p((t))}}$. Let $\cO_F\subset F$ be its ring of integers, i.e. the subring of powerbounded elements. Fix $\varpi\in F$ with $0<|\varpi|<1$; different choices will give rise to the same objects. First, we define the Fargues-Fontaine curve as an adic space. Let
\[
\cY_{(0,\infty)}=\mathrm{Spa}(W(\cO_F),W(\cO_F))\setminus\left(p[\varpi]=0\right)\ ,
\]
where $W(\cO_F)$ is endowed with the $(p,[\varpi])$-adic topology. As in \cite{scholzelectures}, this space admits a natural continuous map
\[
\alpha: \cY_{(0,\infty)}\to (0,\infty)\ ,
\]
sending any point $x\in \cY_{(0,\infty)}$ to
\[
\alpha(x)=\frac{\log |[\varpi](\tilde x)|}{\log|p(\tilde x)|}\in (0,\infty)\ ,
\]
where $\tilde x$ is the maximal generalization of $x$, which corresponds to a continuous rank-$1$-valuation on $W(\cO_F)$ taking nonzero values on $[\varpi]$ and $p$. For any interval $I\subset (0,\infty)$, we let $\cY_I\subset \cY_{(0,\infty)}$ be the interior of $\alpha^{-1}(I)$. In the following proposition, we use some terminology from~\cite{scholzeweinstein}.

\begin{prop}\label{Yhonest} For any closed interval $I=[s,r]\subset (0,\infty)$ with $r,s\in \mathbb Q$, the space
\[
\cY_I = \mathrm{Spa}(\cR^{[s,r]}_F,\cR^{[s,r],+}_F)
\]
is a sheafy affinoid adic space, where $\cR^{[s,r],+}_F$ is the $p$-adic completion of the integral closure of
\[
W(\cO_F)\left[\frac{p}{[\varpi^{1/r}]},\frac{[\varpi^{1/s}]}{p}\right]
\]
inside $W(\cO_F)\left[\frac{p}{[\varpi^{1/r}]},\frac{[\varpi^{1/s}]}{p}\right][1/p]$, and $\cR^{[s,r]}_F = \cR^{[s,r],+}_F[1/p]$. More precisely, $\cR^{[s,r]}_F$ is preperfectoid in the sense that $\cR^{[s,r]}_F\hat{\otimes}_{\mathbb Q_p} K$ is a perfectoid $K$-algebra for any perfectoid field $K/\mathbb Q_p$.

In particular, $\cY_{(0,\infty)}$ is an honest adic space. 
\end{prop}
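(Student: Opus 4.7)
The plan is to reduce the claim to the sheafiness of perfectoid Tate algebras via the tilting correspondence, the preperfectoid statement being the main point; the honesty of $\cY_{(0,\infty)}$ will then follow by gluing over an exhaustion by rational closed intervals $[s,r]$ and using that honesty is local. So I focus on showing that $\cR^{[s,r]}_F\hat\otimes_{\mathbb Q_p} K$ is a perfectoid $K$-algebra for any perfectoid field $K/\mathbb Q_p$; once this is established, sheafiness of $\cR^{[s,r]}_F$ itself follows from Scholze's sheafiness theorem for perfectoid algebras plus faithfully flat descent along the base change $\mathbb Q_p \to K$ (after choosing $K$ to be, say, a pro-cyclotomic extension, base change preserves enough structure to transport sheafiness back).

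For the preperfectoid claim, the strategy is to choose $K$ to contain a compatible system of $p$-power roots of $p$; concretely, take $K = \widehat{\mathbb Q_p(p^{1/p^\infty})}$ with tilt $K^\flat = \widehat{\mathbb F_p((t^{1/p^\infty}))}$, where the element $t\in\cO_{K^\flat}$ has sharp $t^\sharp = p$. After this base change, the generators $p$ and $[\varpi]$ of the rational localization both acquire compatible $p$-power roots (for $p$ by construction and for $[\varpi]$ via $[\varpi^{1/p^n}]$), so the resulting ring $R:=\cR^{[s,r]}_F\hat\otimes_{\mathbb Q_p} K$ is a completed rational localization of the perfectoid Tate algebra whose underlying integral ring is the $(p,[\varpi])$-adic completion of $W(\cO_F)\otimes_{\mathbb Z_p}\cO_K[p^{1/p^\infty},[\varpi^{1/p^\infty}]]$. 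One checks that $R$ is uniform (this is built into the definition, since we took integral closure before $p$-adic completion) and contains a pseudo-uniformizer with $p$-power roots, so it is a Tate ring satisfying the axioms of a perfectoid algebra once the Frobenius on $R^+/p$ is shown to be surjective, which is immediate from the explicit description of generators.

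The tilt can then be identified explicitly: $R^\flat$ is the rational localization of the perfectoid algebra $\cO_F\hat\otimes_{\mathbb F_p}\cO_{K^\flat}[1/(\varpi t)]$ obtained by imposing the inequalities $|t|\le |\varpi|^{1/r}$ and $|\varpi|^{1/s}\le |t|$. This is a rational subset of a perfectoid Tate algebra over $K^\flat$, hence itself perfectoid; by the tilting equivalence, $R$ is perfectoid, which is the preperfectoid property for $\cR^{[s,r]}_F$. Sheafiness of $\cR^{[s,r]}_F$ follows: any rational cover $\{U_i\}$ of $\cY_{[s,r]}$ base-changes to a rational cover of the perfectoid space $\mathrm{Spa}(R,R^+)$, for which the structure presheaf is a sheaf with vanishing higher cohomology by Scholze's theorem, and one descends acyclicity along the faithfully flat map $\mathbb Q_p\to K$.

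Finally, given that each $\cY_{[s,r]}$ with $s,r\in\mathbb Q$ is an honest affinoid adic space, and that these form a cofinal family of rational subsets exhausting $\cY_{(0,\infty)}$ (with transition maps being rational localizations, hence compatible on the structure (pre)sheaves), one concludes by gluing that $\cY_{(0,\infty)}$ is an honest adic space. The main obstacle I expect is the explicit identification of the tilt and verifying it is the correct rational localization of $\cO_F\hat\otimes\cO_{K^\flat}$; this requires bookkeeping with Witt vector coordinates and the map $\theta$, and a careful check that integral closure before $p$-adic completion on the mixed-characteristic side corresponds, modulo a pseudo-uniformizer, to the analogous construction on the tilt.
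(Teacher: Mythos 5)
Your overall strategy is the same as the paper's: exhibit $\cY_I$ as a rational subset of something whose base change to a perfectoid field is perfectoid, deduce that $\cR^{[s,r]}_F$ is preperfectoid, and conclude sheafiness, then glue to get honesty of $\cY_{(0,\infty)}$. The paper does this by viewing $\cY_I$ as a rational subset of the auxiliary space $\cZ = \mathrm{Spa}(W(\cO_F)[1/p],W(\cO_F))$ (with the $p$-adic topology), noting that $W(\cO_F)\hat\otimes_{\mathbb Z_p}K$ is perfectoid for \emph{every} perfectoid $K/\mathbb Q_p$, and invoking the fact that perfectoidness passes to rational subsets; your explicit identification of the tilt is then unnecessary (and note that $[\varpi^{1/p^n}]\in W(\cO_F)$ already, since $\cO_F$ is perfect, so nothing needs to be adjoined there).

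Two points in your write-up need repair. First, the statement asserts that $\cR^{[s,r]}_F\hat\otimes_{\mathbb Q_p}K$ is perfectoid for \emph{any} perfectoid field $K/\mathbb Q_p$, but you only treat $K=\widehat{\mathbb Q_p(p^{1/p^\infty})}$; an arbitrary perfectoid field need not contain $p$-power roots of $p$, so you should run the argument with a pseudo-uniformizer $\varpi_K$ of $K$ admitting compatible $p$-power roots (with $\varpi_K^p\mid p$) rather than with $p$ itself — this is exactly what makes the paper's uniform argument work. Second, your passage from ``$\cR\hat\otimes K$ is perfectoid'' back to sheafiness of $\cR$ via ``faithfully flat descent along $\mathbb Q_p\to K$'' is not a routine descent: completed tensor products of Huber rings do not satisfy naive faithfully flat descent of acyclicity. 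The actual mechanism is that $\mathbb Q_p\to K$ splits as topological $\mathbb Q_p$-vector spaces, so uniformity of $\cR\hat\otimes K$ and of all its rational localizations descends to $\cR$, i.e.\ $\cR$ is \emph{stably uniform}, and stably uniform Tate rings are sheafy; this is precisely the content of \cite[Theorem 3.7.4]{kedlayaliu}, which the paper cites. With that citation (or a proof of the splitting/stable-uniformity argument) in place of the descent claim, and the quantifier over $K$ fixed, your proof is complete.
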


\begin{proof} The identification
\[
\cY_I = \mathrm{Spa}(\cR^{[s,r]}_F,\cR^{[s,r],+}_F)
\]
follows from the definitions. By~\cite[Theorem 3.7.4]{kedlayaliu}, it is enough to show that $\cR^{[s,r]}_F$ is preperfectoid, for which cf.~\cite[Theorem 5.3.9]{kedlayaliu}. One can also argue as follows. Let $K/\mathbb Q_p$ be any perfectoid field. We can consider the auxiliary space $\cZ = \mathrm{Spa}(W(\cO_F)[1/p],W(\cO_F))$, where we endow $W(\cO_F)$ with the $p$-adic topology. As on $\cY_{(0,\infty)}$, $p$ is topologically nilpotent, one gets a map $\cY_{(0,\infty)}\to \cZ$, which is an open embedding, and one can thus consider $\cY_I$ as a rational subset of $\cZ$. As the base change of $\cZ$ to $K$ is perfectoid, or more precisely $W(\cO_F)\hat{\otimes}_{\mathbb Z_p} K$ is a perfectoid $K$-algebra, and the property of being a perfectoid $K$-algebra passes to rational subsets, one finds that also $\cR^{[s,r]}_F\hat{\otimes}_{\mathbb Q_p} K$ is a perfectoid $K$-algebra.
\end{proof}

The space $\cY_{(0,\infty)}$ has an action of $\varphi$, defined by taking the lift of the Frobenius on $\cO_F$. This $\varphi$-action is properly discontinuous, as can be seen by observing that $\alpha$ is equivariant with respect to the $\varphi$-action if one lets $\varphi$ act through multiplication by $p$ on $(0,\infty)$. Therefore, the following definition is sensible.

\begin{defn}\label{fargues-fontaine curve}
The adic Fargues-Fontaine curve is given by $\cX_F =\cY_{(0,\infty)}/\varphi^\mathbb{Z}$. 
\end{defn}

After defining the scheme version of the curve, we will discuss more precisely in which sense this is a curve.

Often, we will be in the situation where we start with a complete algebraically closed nonarchimedean field $C$ over $\mathbb Q_p$, and take $F=C^\flat$, the tilt of $C$. In that case, there is a natural map $\theta: W(\cO_F)\to \cO_C$, which induces a natural $(C,\cO_C)$-point of $\cY_{(0,\infty)}$, and thus of $\cX_F$, which we denote by $\infty\in \cX_F(C,\cO_C)$. In fact, $\infty$ is a closed point of $\cX_F$ with residue field $C$. We will denote the inclusion
\[
i_\infty: \mathrm{Spa}(C,\cO_C)\to \cX_F\ .
\]
The completed local ring of $\cX_F$ at $\infty$ can be identified with the ring of periods $B^+_{\mathrm{dR},C}$, which is the $\ker \theta$-adic completion of $W(\cO_F)[1/p]$, cf. also Definition~\ref{derham period sheaf}. Note that $B^+_{\mathrm{dR},C}$ is a complete discrete valuation ring, as expected for the completed local ring of a curve.

There is a close relationship between vector bundles on $\cX_F$ and isocrystals. Recall that $L$ was defined as $W(\bar{\mathbb F}_p)[1/p]$. A choice of an embedding $\bar{\mathbb F}_p\to \cO_F$ gives a structure map $\cY_{(0,\infty)}\to \mathrm{Spa}(L,\cO_L)$. If $(V,\varphi_V)$ is an isocrystal, one can thus pull it back to a vector bundle on $\cY_{(0,\infty)}$ with a $\varphi$-linear automorphism; by descent, this gives a vector bundle on $\cX_F$. We denote the resulting functor by $V\mapsto \cE(V)$.

\begin{thm}[\cite{farguesfontaine}]\label{isocrystals to vector bundles} The above composition of functors induces a bijection between isomorphism classes of isocrystals, and isomorphism classes of vector bundles on $\cX_F$. 
\end{thm}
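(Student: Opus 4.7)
The plan is to invoke the Dieudonn\'e--Manin classification on the source side and a parallel classification of vector bundles on $\cX_F$ on the target side, and then to match the two classifications via the functor $V\mapsto \cE(V)$.

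First, recall that Dieudonn\'e--Manin gives a canonical direct sum decomposition of any isocrystal over $\bar{\mathbb F}_p$ as $\bigoplus_{\lambda\in \mathbb Q} V_\lambda^{\oplus n_\lambda}$, where $V_\lambda$ is the simple isocrystal of slope $\lambda = s/r$. Because $V\mapsto \cE(V)$ is additive (in fact, an exact tensor functor), it is entirely determined by its values on the simple pieces. Writing $\cO_{\cX_F}(\lambda):=\cE(V_{-\lambda})$ (the sign chosen to match the discussion of conventions in the introduction), one gets a distinguished family of vector bundles on $\cX_F$, each of rank $r$ when $\lambda = s/r$ in lowest terms with $r>0$. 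A direct computation on each rational annulus $\cY_{[s,r]}$ together with the $\varphi$-descent identifies $\cO_{\cX_F}(\lambda)$ as the image of the obvious line bundle on the scheme-theoretic curve, so that $\cO_{\cX_F}(\lambda)$ is stable in the Harder--Narasimhan sense and of slope $\lambda$; in particular, bundles of distinct slopes are non-isomorphic, and there are no nonzero maps from a bundle of higher slope to one of lower slope.

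The main step, and the main obstacle, is to prove a parallel decomposition theorem on the target: every vector bundle $\cE$ on $\cX_F$ admits a direct sum decomposition $\cE\cong \bigoplus_\lambda \cO_{\cX_F}(\lambda)^{\oplus m_\lambda}$. There are two possible routes. The intrinsic route, due to Fargues--Fontaine, is to develop enough algebraic geometry on $\cX_F$ to set up the Harder--Narasimhan formalism (degree via $H^0(\cX_F,\cO(n))$, divisors via the closed points, each of whose local rings is a complete DVR as observed above for $\infty$), then show that semistable bundles of slope $\lambda$ are necessarily direct sums of $\cO_{\cX_F}(\lambda)$ using the explicit structure of $H^0$ of twists, and finally show that the Harder--Narasimhan filtration splits, reducing the last point to vanishing of $\mathrm{Ext}^1$ between bundles of decreasing slopes. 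The more analytic route, following Kedlaya--Liu~\cite{kedlayaliu}, is to use Proposition~\ref{Yhonest} and $\varphi$-equivariant descent to identify vector bundles on $\cX_F$ with $\varphi$-modules over the Robba ring $\cR_F$, and then apply the Kedlaya--Liu slope filtration theorem together with the classification of pure $\varphi$-modules to obtain the same decomposition.

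Granting the decomposition theorem on both sides, essential surjectivity of $V\mapsto\cE(V)$ on isomorphism classes is immediate: any $\cE$ is a direct sum $\bigoplus \cO_{\cX_F}(\lambda)^{\oplus m_\lambda}$, and this is manifestly the image of $\bigoplus V_{-\lambda}^{\oplus m_\lambda}$. For injectivity on isomorphism classes, given $\cE(V)\cong \cE(V')$ one compares the two Harder--Narasimhan decompositions: the multiplicities $n_\lambda$ and $n'_\lambda$ on the isocrystal side must equal the corresponding multiplicities $m_{-\lambda}$ on the vector bundle side, so they agree, hence $V\cong V'$. The only remaining check is that the multiplicities really match, i.e.\ that $\cO_{\cX_F}(\lambda)$ is indecomposable with endomorphism ring a division algebra whose dimension and invariant match those of the isocrystal $V_{-\lambda}$; this can be done by computing $\End$ directly from the $\varphi$-module description.
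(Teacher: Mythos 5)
The paper does not actually prove this statement: it is quoted from \cite{farguesfontaine}, and the only argument supplied in the text is the remark following the theorem, namely that Fargues--Fontaine establish the classification for the schematic curve $X_F$ and that this transfers to the adic curve $\cX_F$ by the GAGA equivalence of \cite{kedlayaliu} and \cite{fargues}. Your outline reproduces the standard architecture of the actual proof (Dieudonn\'e--Manin on the source, a decomposition theorem for bundles on the target, matching of stable objects and multiplicities), your sign convention $\cO_{\cX_F}(\lambda)=\cE(V_{-\lambda})$ agrees with the paper's Definition~\ref{line bundles}, and you correctly claim only a bijection on isomorphism classes rather than an equivalence of categories --- consistent with the paper's explicit warning that there are nonzero maps between bundles of different slopes.

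The one place where your proposal understates the difficulty is precisely the step you flag as the main obstacle. Proving that a semistable bundle of slope $\lambda$ on $\cX_F$ is a direct sum of copies of $\cO_{\cX_F}(\lambda)$ is \emph{not} a formal consequence of setting up the Harder--Narasimhan formalism and computing $H^0$ of twists: in \cite{farguesfontaine} it rests on genuinely deep input from $p$-adic Hodge theory (the analysis of modifications of bundles attached to $p$-divisible groups, equivalently ``weakly admissible implies admissible''), and in the Kedlaya--Liu route it is Kedlaya's slope filtration theorem together with the classification of pure $\varphi$-modules over $\tilde\cR_F$ --- likewise a hard theorem, not a verification. As a reduction of the statement to these external results your argument is sound, and that is in effect all the paper itself does; but read as a self-contained proof, the decomposition theorem is being assumed rather than established. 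Two smaller points you mention but should not elide: uniqueness of the decomposition into stable summands requires $\End(\cO_{\cX_F}(\lambda))$ to be a division algebra so that Krull--Schmidt applies, and the splitting of the Harder--Narasimhan filtration requires the vanishing of $H^1(\cX_F,\cE)$ for $\cE$ of nonnegative slopes, i.e.\ of the relevant $\Ext^1$ groups between consecutive graded pieces.
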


\begin{remark} In fact, Fargues-Fontaine prove this result for the scheme version of their curve, which we introduce below. However, by a GAGA result proved in~\cite{kedlayaliu} and~\cite{fargues}, this is equivalent to the stated result for the adic curve.

It is important to note that this functor from isocrystals to vector bundles is not an equivalence of categories; there are nonzero maps between vector bundles of different slope, in general.
\end{remark}

To define a scheme version of the curve, we define a natural line bundle $\cO_{\cX_F}(1)$ on $\cX_F$, which we regard as ample.

\begin{defn}\label{line bundles} For any $d\in \mathbb Z$, let $\cO_{\cX_F}(d)$ be the line bundle corresponding to the isocrystal $(L,p^{-d}\sigma)$.
\end{defn}

\begin{remark} This construction induces a map $\mathbb Z\to \mathrm{Pic}\ \cX_F$. It follows from Theorem~\ref{isocrystals to vector bundles} that this is an isomorphism. Using this identification, one can define the degree of any vector bundle on $\cX_F$ by looking at the determinant line bundle. This gives rise to a notion of slopes of vector bundles, and a Harder-Narasimhan filtration. We warn the reader that if an isocrystal $V$ is sent to the vector bundle $\cE(V)$, then the slopes of $V$ and $\cE(V)$ differ by a sign.
\end{remark}

Now we define a scheme
\[
X_F = \mathrm{Proj}\left(\oplus_{d\geq 0}H^0\left(\cX_F,\cO_{\cX_F}(d)\right)\right).
\]
There is a natural map of locally ringed topological spaces $\cX_F\to X_F$. In particular, there is a natural functor from vector bundles on $X_F$ to vector bundles on $\cX_F$. This functor is an equivalence of categories, cf.~\cite{kedlayaliu} and~\cite{fargues}. The following theorem summarizes some of the properties of $X_F$.

\begin{thm}[\cite{farguesfontaine}] The scheme $X_F$ is a regular, noetherian scheme of Krull dimension $1$ with field of constants $\mathbb Q_p$. All residue fields of $X_F$ at closed points are algebraically closed complete extensions $C$ of $\mathbb Q_p$ with $C^\flat\cong F$. For any closed point $x\in X_F$, $X_F\setminus \{x\}$ is the spectrum of a principal ideal domain.
\end{thm}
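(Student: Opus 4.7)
The plan is to work directly with the graded ring $P_\ast = \bigoplus_{d\geq 0} H^0(\cX_F, \cO_{\cX_F}(d))$, where by Definition~\ref{line bundles} one identifies $P_d = \{f \in H^0(\cY_{(0,\infty)},\cO) : \varphi(f) = p^d f\}$. The first step is to produce, for each untilt $C/\mathbb Q_p$ of $F$ (i.e., complete algebraically closed $C/\mathbb Q_p$ equipped with an isomorphism $C^\flat\cong F$), a nonzero element $t_C \in P_1$ vanishing to order one along the $\varphi$-orbit of the corresponding untilt point $\infty_C \in \cY_{(0,\infty)}$. Concretely, given a generator $\xi$ of $\ker(\theta_C\colon W(\cO_F)[1/p]\to C)$, one defines $t_C$ as a suitable normalization of $\prod_{n\in\mathbb Z}\varphi^n(\xi)/p$, with convergence on each affinoid $\cY_{[s,r]}$ controlled by the $\varphi$-equivariance of $\alpha\colon\cY_{(0,\infty)}\to(0,\infty)$ (where $\varphi$ acts by multiplication by $p$).

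With these elements in hand, I would classify closed points of $X_F$. A height-one homogeneous prime $\mathfrak p\subset P_\ast$ contains some $0\neq t\in P_d$, and the vanishing locus of $t$ on $\cY_{(0,\infty)}$ is a $\varphi$-stable union of untilt points; each $\varphi$-orbit $[\infty_C]$ in this locus yields a prime $(t_C)\subset\mathfrak p$, and minimality forces $\mathfrak p = (t_C)$. This sets up a bijection between closed points of $X_F$ and untilts of $F$ modulo Frobenius, and identifies the residue field at $x_C$ with $C$. The completed local ring $\widehat{\cO}_{X_F,x_C}$ is then the $(t_C)$-adic completion of $(P_\ast[1/t_{C'}])_0$ for any auxiliary $C'\neq C$, and via the universal property of $\theta_C$ it gets identified with Fontaine's $B^+_{\mathrm{dR},C}$, which is a complete DVR. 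This simultaneously yields regularity, Krull dimension one, and the stated form of the residue fields. For the field of constants, $H^0(X_F,\cO_{X_F})=P_0=\cO_{\cY_{(0,\infty)}}(\cY_{(0,\infty)})^{\varphi=1}$: a $\varphi$-invariant global section is automatically bounded (again via equivariance of $\alpha$), so lies in $W(\cO_F)[1/p]^{\varphi=1}=\mathbb Q_p$.

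The main obstacle is the PID statement for $B_e := (P_\ast[1/t_C])_0$, the coordinate ring of $X_F\setminus\{x_C\}$. The essential input is a Weierstrass/factorization theorem: every nonzero $f\in P_d$ admits a convergent product decomposition $f = u\prod_i t_{C_i}$, where the $C_i$ run over the zeros of $f$ with multiplicity and $u$ is a unit in an appropriate sense. This is proved by Newton-polygon analysis on each slice $\cY_{[s,r]}=\Spa(\cR_F^{[s,r]},\cR_F^{[s,r],+})$, which is preperfectoid by Proposition~\ref{Yhonest} and thus admits a Weierstrass preparation theorem, combined with the finiteness observation that any fixed $\cY_{[s,r]}$ meets a given $\varphi$-orbit in only finitely many points. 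Granted the factorization, one checks that every nonzero element of $B_e$ has a divisor supported on finitely many closed points, so finitely generated ideals in $B_e$ are principal; combined with unique factorization into the $t_{C_i}$'s, this upgrades $B_e$ to a PID.

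Finally, noetherianness of $X_F$ follows by covering it with two opens $\Spec B_e$ and $\Spec B_{e'}$ attached to untilts $C,C'$ with disjoint Frobenius orbits (which exist because $F$ has many untilts; for instance, pick two non-$\varphi$-conjugate points of $\cY_{(0,\infty)}$). I expect the Weierstrass-preparation/factorization step to be the main technical burden; everything else is essentially formal once enough $t_C$'s have been produced and the completed local rings have been identified with $B^+_{\mathrm{dR},C}$.
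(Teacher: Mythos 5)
This theorem is quoted in the paper from \cite{farguesfontaine} without proof, so there is no internal argument to compare against; what you have written is essentially a faithful outline of the Fargues--Fontaine proof itself (untilts as closed points, the elements $t_C\in P_1$ with divisor the $\varphi$-orbit of $\infty_C$, completed local rings identified with $B^+_{\mathrm{dR},C}$, the Weierstrass factorization of elements of $P_d=B^{\varphi=p^d}$, and the two-chart argument for noetherianness). You are also right that the factorization theorem is where the real work lives: it is the theory of primitive elements and their Weierstrass products, and it is a substantial chapter of \cite{farguesfontaine}, not a routine Newton-polygon exercise on a single slice $\cY_{[s,r]}$ (one has to control zeros across all of $\cY_{(0,\infty)}$ simultaneously and show that zeros of degree-one primitive elements are exactly untilts).

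Two softer points deserve flagging. First, the inference ``every nonzero element of $B_e$ has divisor supported on finitely many closed points, so finitely generated ideals are principal'' is a non sequitur as written: finiteness of divisors does not give the B\'ezout property, which in \cite{farguesfontaine} comes from the fundamental exact sequence $0\to \mathbb Q_p t^d\to P_d\to B^+_{\mathrm{dR},C}/\mathrm{Fil}^d\to 0$ (making $B_e$ almost Euclidean for the degree function). However, your conclusion survives without that step: granted the factorization theorem, every nonzero element of $B_e$ is a unit times a product of the prime elements $t_{C'}/t_C$, each of which generates a maximal ideal (quotient $C'$), and then every nonzero prime of $B_e$ contains, hence equals, one of these, so $B_e$ is a PID by Kaplansky's criterion. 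Second, the identification of residue fields with $C$ and of $\widehat{\cO}_{X_F,x_C}$ with $B^+_{\mathrm{dR},C}$ is not purely formal from ``the universal property of $\theta_C$'': it again uses the surjectivity in the fundamental exact sequence (one must know $\theta$ maps $P_d$, and then $B_e$, onto $C$ resp.\ densely into $B^+_{\mathrm{dR},C}/\mathrm{Fil}^d$). With those two inputs made explicit, the outline is the correct and standard proof.
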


Fargues, \cite{farguesGbun}, has recently extended the classification of vector bundles to a classification of $G$-bundles for any reductive group $G$ over $\mathbb Q_p$. As it is technically easiest for us to do so, we define $G$-bundles on $X_F$ (or $\cX_F$) using the Tannakian perspective.

\begin{defn} A $G$-bundle on $X_F$ (or $\cX_F$) is an exact tensor functor
\[
\mathrm{Rep}_{\mathbb Q_p} G\to \mathrm{Bun}_{X_F}\cong \mathrm{Bun}_{\cX_F}\ .
\]
\end{defn}

Using the functor from isocrystals over $\bar{\mathbb F}_p$ to vector bundles on the Fargues-Fontaine curve, we get a natural functor from isocrystals with $G$-structure to $G$-bundles on $X_F$. We denote this functor by $b\mapsto \cE_b$.

\begin{thm}[\cite{farguesGbun}]\label{classification of G-bundles} The functor from isocrystals with $G$-structure to $G$-bundles on $X_F$ induces a bijection on isomorphism classes.
\end{thm}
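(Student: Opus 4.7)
The plan is to follow the standard two-step strategy for classifying principal bundles on a curve: first establish a canonical Harder--Narasimhan reduction for any $G$-bundle, reducing to the semistable case on a Levi; then classify semistable $G$-bundles directly. The overall goal is to construct an inverse to the functor $b\mapsto \cE_b$ and read off the Kottwitz invariants $(\nu,\kappa)$ from any $G$-bundle.

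First I would set up slope theory for $G$-bundles. Using Theorem~\ref{isocrystals to vector bundles}, every vector bundle on $X_F$ has a well-defined HN filtration with rational slopes, and the Newton polygon of an isocrystal matches (up to sign) the HN polygon of its associated vector bundle. Tannakianly, a $G$-bundle $\cE$ assigns to every $V\in\mathrm{Rep}_{\mathbb Q_p}G$ a filtered vector bundle (its HN filtration), and these data assemble into a slope morphism $\mathbb D \to \mathrm{Aut}^\otimes(\cE)$ defined over $\bar{\mathbb Q}_p$ (as HN filtrations are defined after a finite extension of constants and descend by Galois invariance), yielding a dominant Newton point $\nu(\cE)\in (X_*(G)\otimes\mathbb Q)_{\mathrm{dom}}^\Gamma$. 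Next I would show that $\cE$ admits a canonical reduction to the parabolic $P_{\nu(\cE)}$ associated with this cocharacter such that the induced Levi bundle $\cE_M$ has \emph{central} Newton point, i.e.\ is semistable with slope $\nu(\cE)$ viewed in the center of $M$. Existence and uniqueness of this reduction follow the classical Atiyah--Bott/Behrend argument: the reduction is determined as the common HN-maximal subobjects for all representations, using that slopes of associated bundles vary linearly with weights. On the isocrystal side, by the Dieudonn\'e--Manin classification, every element $b\in B(G)$ admits a \emph{standard} reduction to a Levi $M_b$ such that $\nu_b$ is central in $M_b$, and the induced $M_b$-isocrystal is basic; so the HN reduction on the bundle side should match this reduction on the isocrystal side.

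Second, I would classify semistable $G$-bundles of central slope. The key observation is that a semistable vector bundle of slope $0$ on $X_F$ is trivial (since $H^0(X_F,\cO)=\mathbb Q_p$ and $\mathrm{Hom}$ between slope-$0$ semistables is exact), and therefore the functor $H^0(X_F,-)$ is an equivalence from semistable slope-$0$ vector bundles to finite-dimensional $\mathbb Q_p$-vector spaces. Consequently, a semistable $G$-bundle with trivial Newton point is precisely a $G$-torsor over $\mathrm{Spec}(\mathbb Q_p)$, giving a bijection with $H^1(\mathbb Q_p,G)$, which by Kottwitz embeds into $B(G)$ as the set of basic classes with $\kappa=0$. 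For a semistable $G$-bundle whose central slope $\nu$ is nonzero, one twists by a central isocrystal: the cocharacter $\nu$ factors through the center of a Levi $M$, and twisting by the associated central $M$-bundle $\cE(\nu)$ reduces to the slope-$0$ case, giving a bijection between semistable $G$-bundles of central slope $\nu$ and basic elements of $B(G)$ with that Newton point (equivalently, elements of $H^1(\mathbb Q_p,J_b)$ for a fixed basic $b$ with $\nu_b=\nu$). Assembling the two steps and using the fact that $(\nu,\kappa)\colon B(G)\hookrightarrow (X_*(G)\otimes\mathbb Q)_{\mathrm{dom}}^\Gamma\times \pi_1(G)_\Gamma$ is injective gives both surjectivity (one recovers $\cE$ from its HN-Levi reduction together with the basic class in $B(M)$) and injectivity (the Newton point and $\kappa$ can be read off the bundle: $\nu$ from the HN filtration, and $\kappa$ from the degree of the determinant/$\det$-line-bundle attached to $G\to G^{\mathrm{ab}}$).

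The main obstacle is the existence and canonicity of the HN reduction for $G$-bundles: over a classical curve this is Behrend's theorem, but here one needs semicontinuity/convexity properties of slopes in a Tannakian setting over $X_F$, and must verify that the parabolic obtained from the collection of HN maxima for all $V\in\mathrm{Rep}_{\mathbb Q_p}G$ is conjugate to $P_{\nu(\cE)}$ and defined over $\mathbb Q_p$. A secondary delicate point is proving that the global sections functor is exact on semistable slope-$0$ $G$-bundles; by Tannakian reconstruction this reduces to the corresponding statement for vector bundles, which follows from $H^1(X_F,\cO)=0$ together with the triviality of semistable slope-$0$ vector bundles. Once these structural facts are in place, matching the classification with $B(G)$ is essentially bookkeeping through Kottwitz's invariants.
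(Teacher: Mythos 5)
The paper does not prove this theorem; it is imported wholesale from \cite{farguesGbun}, so your proposal has to be measured against Fargues' argument rather than against anything in the text. Your skeleton --- Harder--Narasimhan reduction to a Levi, classification of semistable bundles, reassembly via the injectivity of $(\nu,\kappa)$ --- is the right frame for injectivity and for organizing the problem, and several points you flag (tensor-compatibility of HN filtrations, which on $X_F$ does follow from the classification of vector bundles; vanishing of $H^1$ of the positively-sloped unipotent radical, so that a reduction to $P$ is determined by its Levi quotient) are genuine ingredients of the real proof.

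The step that fails is your classification of semistable bundles of non-zero central slope. You propose to twist by ``the central $M$-bundle $\cE(\nu)$'' to reduce to slope $0$ and then quote $H^1(\mathbb{Q}_p,G)$. But $\nu$ is only a \emph{rational} central cocharacter, and when it is not integral no such central isocrystal or twist exists: already for $G=GL_n$ a semistable bundle of slope $d/n$ with $(d,n)=1$ cannot be untwisted by a line bundle, and correspondingly $B(G)_{\mathrm{basic}}\cong\pi_1(G)_\Gamma$ is strictly larger than $H^1(\mathbb{Q}_p,G)\cong \pi_1(G)_{\Gamma,\mathrm{tors}}$. The semistable bundles with non-torsion $\kappa$ --- which exist whenever $\pi_1(G)_\Gamma$ is infinite --- are exactly the ones your argument cannot reach, and they are the heart of the theorem. (A related soft spot: for injectivity you need an invariant $\kappa(\cE)\in\pi_1(G)_\Gamma$ and not merely in $\pi_1(G^{\mathrm{ab}})_\Gamma$; the degree of the determinant of the abelianization loses all torsion, e.g.\ it is trivial for $G$ semisimple but not simply connected.) Fargues' proof of surjectivity is correspondingly of a different nature: a computation of $H^1_{\et}(X_F,T)$ for tori, an analogue of Steinberg's theorem producing a reduction of any $G$-bundle to a Borel of the quasi-split form, and a d\'evissage through $z$-extensions and the simply connected cover; the Borel-reduction step requires producing rational points on flag-variety bundles over $X_F$, which is genuinely new input beyond the vector-bundle classification. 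None of this is bookkeeping; it is where the content of the theorem lies.
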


\noindent In other words, any $G$-bundle on $X_F$ is isomorphic to $\cE_b$ for a unique $b\in B(G)$.

Next, we discuss the relationship between vector bundles on the Fargues-Fontaine curve and $\varphi$-modules over the Robba ring. The Robba ring is the ring of functions defined on a small unspecified annulus $\cY_{(0,r)}$:

\begin{defn} The Robba ring is the direct limit \[\tilde \cR_F=\varinjlim_r H^0(\cY_{(0,r]}, \cO_{\cY_{(0,r]}})\ .\] 
\end{defn}

One can make this more explicit, cf.~\cite[Definition 4.2.2]{kedlayaliu}. The space of global sections $\tilde \cR_F^r=H^0(\cY_{(0,r]},\cO_{\cY})$ can be identified with the inverse limit of the Banach algebras $\tilde \cR^{[s,r]}_F$ as $s$ runs over $(0,r]$, and thus acquires a structure of Fr\'echet algebra. Let
\[
W(\cO_F)\left\langle \frac{p}{[\varpi]^{1/r}}\right\rangle=\left\{\sum_{n\geq 0}[c_n]p^n  \mid c_n\in \varpi^{-n/r} \cO_F, c_n \varpi^{n/r}\to 0\right\}\ .
\]
Then $\tilde \cR_F^r$ can also be described as the Fr\'echet completion of
\[
W(\cO_F)\left\langle \frac{p}{[\varpi]^{1/r}}\right\rangle \left[\frac{1}{p}\right]=\left\{\sum_{n>-\infty}[c_n]p^n \mid c_n\in F, c_n \varpi^{n/r}\to 0\right\}
\]
along the norms $\max_n\{|c_n \varpi^{n/s}|\}$ for $s\in(0,r]$. When $r'<r$, there is a natural inclusion map $\tilde \cR_F^r\hookrightarrow \tilde \cR_F^{r'}$ coming from restriction of global sections. The $\varphi$-action on $\cY_{(0,\infty)}$ sends $\cY_{[s,r]}$ isomorphically to $\cY_{[ps,pr]}$ and $\cY_{(0,r]}$ isomorphically to $\cY_{(0,pr]}$. Therefore, $\varphi$ induces isomorphisms $\tilde \cR^{[s,r]}_F\toisom \tilde \cR^{[s/p,r/p]}_F$ and $\tilde \cR^r_F\toisom \tilde \cR^{r/p}_F$, and thus an automorphism of $\tilde \cR_F$.

We note that the Robba ring is the ring of functions defined on some small punctured disc of unspecified radius around the point $\mathrm{Spa}(F,\cO_F)$ of $\mathrm{Spa}(W(\cO_F),W(\cO_F))$.

\begin{defn} A \emph{$\varphi$-module over $\tilde \cR_F$} is a finite projective $\tilde \cR_F$-module $M$ equipped with a $\varphi$-linear automorphism.
\end{defn}

\begin{remark} As $\tilde \cR_F$ is a B\'ezout domain, cf.~\cite[Lemma 4.2.6]{kedlayaliu}, any $\varphi$-module $M$ is finite free as $\tilde \cR_F$-module.
\end{remark}

\begin{thm}[{\cite[Theorem 6.3.12]{kedlayaliu}}]\label{Robba ring equivalence} 
There is an equivalence of categories \[\left\{\mathrm{Vector\ bundles\ on}\ \cX_F\right\}\simeq\left\{\varphi\mathrm{-modules\ over}\ \tilde \cR_F\right\}.\] 
\end{thm}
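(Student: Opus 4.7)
The strategy is to factor the desired equivalence through the intermediate category of $\varphi$-equivariant vector bundles on $\cY_{(0,\infty)}$. Since $\cX_F = \cY_{(0,\infty)}/\varphi^{\mathbb Z}$ is the quotient by a properly discontinuous action (recall that $\alpha$ intertwines $\varphi$ with multiplication by $p$ on $(0,\infty)$), descent along the \'etale $\varphi^{\mathbb Z}$-torsor $\cY_{(0,\infty)}\to \cX_F$ identifies vector bundles on $\cX_F$ with pairs $(\tilde V,\iota)$, where $\tilde V$ is a vector bundle on $\cY_{(0,\infty)}$ and $\iota:\varphi^\ast \tilde V\toisom \tilde V$. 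The real content is then the equivalence between such pairs and $\varphi$-modules over $\tilde \cR_F$.

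\textbf{Forward direction.} Given $(\tilde V,\iota)$, restrict $\tilde V$ to $\cY_{(0,r]}$ for a small $r>0$ and pass to the colimit as $r\to 0^+$, using $\tilde \cR_F=\varinjlim_r \tilde \cR_F^r$; the isomorphism $\iota$ induces the required $\varphi$-linear automorphism on the resulting $\tilde \cR_F$-module $M$. To verify that $M$ is finite projective, cover $\cY_{(0,r]}$ by the sheafy affinoid adic spaces $\cY_{[s,r]}$ of Proposition~\ref{Yhonest}; each restriction $\tilde V|_{\cY_{[s,r]}}$ corresponds to a finite projective $\tilde \cR_F^{[s,r]}$-module $M^{[s,r]}$, and these are compatible under the transition maps. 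A Stein-type argument, exploiting the Fr\'echet structure $\tilde \cR_F^r = \varprojlim_s \tilde \cR_F^{[s,r]}$, shows that $M^r:=\varprojlim_s M^{[s,r]}$ is finite projective over $\tilde \cR_F^r$. Taking the colimit in $r$ yields $M$, finite projective over $\tilde \cR_F$ (equivalently, finite free, as $\tilde \cR_F$ is B\'ezout).

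\textbf{Reverse direction.} Given a $\varphi$-module $M$ over $\tilde \cR_F$, use that $M$ is finitely presented and $\tilde \cR_F$ is a filtered colimit to choose a model $M^{r_0}$ over some $\tilde \cR_F^{r_0}$; after shrinking $r_0$ if necessary, the semilinear automorphism of $M$ spreads out to an isomorphism $\varphi^\ast M^{r_0} \cong M^{r_0}\otimes_{\tilde \cR_F^{r_0}}\tilde \cR_F^{r_0/p}$ of $\tilde \cR_F^{r_0/p}$-modules. Base change along $\tilde \cR_F^{r_0}\to \tilde \cR_F^{[s,r_0]}$ for every $s\in(0,r_0]$ defines a vector bundle on each sheafy affinoid $\cY_{[s,r_0]}$, and these glue to a vector bundle $V_0$ on $\cY_{(0,r_0]}$. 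Since $\varphi$ is an automorphism of $\cY_{(0,\infty)}$ carrying $\cY_{(0,r_0]}$ isomorphically onto $\cY_{(0,p r_0]}$, one defines $V_n:=(\varphi^{-n})^\ast V_0$ on $\cY_{(0,p^n r_0]}$; the spread-out $\varphi$-structure furnishes the gluing isomorphisms $V_{n+1}|_{\cY_{(0,p^n r_0]}}\cong V_n$, producing a $\varphi$-equivariant vector bundle on all of $\cY_{(0,\infty)}$, which then descends to $\cX_F$.

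\textbf{Main obstacle.} The delicate step is the Stein-type descent in the forward direction: showing that finite projective modules over the Fr\'echet algebra $\tilde \cR_F^r$ are equivalent to compatible systems of finite projective modules over the Banach algebras $\tilde \cR_F^{[s,r]}$. This requires a Kiehl-type theorem for the half-open annulus $\cY_{(0,r]}$, including the vanishing of the relevant $R^1\varprojlim$ terms for coherent sheaves. Once this is in place, the two functors are quasi-inverse essentially by construction, and the equivalence follows. A secondary technical point is the compatibility of the spreading-out construction with the $\varphi$-structure, which is handled by repeatedly shrinking the interval of definition and using that the two sides of the iso $\varphi^\ast M\cong M$ are both finitely presented.
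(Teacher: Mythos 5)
Your proposal follows essentially the same route as the paper, which itself only sketches the argument and defers to Kedlaya--Liu: descend along $\cY_{(0,\infty)}\to\cX_F=\cY_{(0,\infty)}/\varphi^{\mathbb Z}$, and in the reverse direction spread a model of the $\varphi$-module over some $\tilde\cR_F^{r_0}$ to all of $\cY_{(0,\infty)}$ by Frobenius pullback. You correctly isolate the genuinely hard input (the Kiehl-type comparison between finite projective modules over the Fr\'echet algebra $\tilde\cR_F^r$ and compatible systems over the Banach algebras $\tilde\cR_F^{[s,r]}$), which is exactly the technical content supplied by Kedlaya--Liu and not reproved in the paper.
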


The proof is based on the observation that any $\varphi$-module over $\tilde \cR_F$ is defined over $\tilde \cR_F^r$ for $r$ small enough. This can be turned into a $\varphi$-module over $\cY_{(0,r]}$, and then be spread to a $\varphi$-module over all of $\cY_{(0,\infty)}$ via pullback under Frobenius. By descent, this gives a vector bundle over $\cX_F$.

\subsection{The relative Fargues-Fontaine curve}\label{relative curve}

In this subsection, we extend the constructions to the relative setting. Here, our basic input will be a perfectoid affinoid algebra $(R,R^+)$ of characteristic $p$.\footnote{We will not fix a perfectoid base field inside $R$, although one can always find one.} Let $\varpi$ be a pseudouniformizer of $R$. Define
\[
\cY_{(0,\infty)}(R, R^+)=\mathrm{Spa}(W(R^+),W(R^+))\setminus\left(p[\varpi]=0\right)\ .
\]
Many constructions carry over to this relative situation. In particular, there is still a continuous map
\[
\alpha: \cY_{(0,\infty)}(R,R^+)\to (0,\infty)
\]
defined in the same way. Again, we let $\cY_I(R,R^+)\subset \cY_{(0,\infty)}(R,R^+)$ denote the interior of the preimage $\alpha^{-1}(I)$, for any interval $I\subset (0,\infty)$. Proposition~\ref{Yhonest} extends to the relative setting.

\begin{prop}\label{relYhonest} For any closed interval $I=[s,r]\subset (0,\infty)$ with $r,s\in \mathbb Q$, the space
\[
\cY_I(R,R^+) = \mathrm{Spa}(\cR^{[s,r]}_R,\cR^{[s,r],+}_{R,R^+})
\]
is a sheafy affinoid adic space, where $\cR^{[s,r],+}_{R,R^+}$ is the $p$-adic completion of the integral closure of
\[
W(R^+)\left[\frac{p}{[\varpi^{1/r}]},\frac{[\varpi^{1/s}]}{p}\right]
\]
inside $W(R^+)\left[\frac{p}{[\varpi^{1/r}]},\frac{[\varpi^{1/s}]}{p}\right][1/p]$, and $\cR^{[s,r]}_R = \cR^{[s,r],+}_{R,R^+}[1/p]$.\footnote{One can check that $\cR^{[s,r]}_R$ depends only on $R$, and not on $R^+$.} More precisely, $\cR^{[s,r]}_R$ is preperfectoid in the sense that $\cR^{[s,r]}_R\hat{\otimes}_{\mathbb Q_p} K$ is a perfectoid $K$-algebra for any perfectoid field $K/\mathbb Q_p$.

In particular, $\cY_{(0,\infty)}(R,R^+) = \bigcup_I \cY_I(R,R^+)$ is an honest adic space. 
\end{prop}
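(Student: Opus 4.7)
The plan is to mimic the proof of Proposition \ref{Yhonest} in the relative setting, exploiting the fact that all the constructions there (the auxiliary space $\cZ$, the identification of $\cY_I$ as a rational subset, and the perfectoidness after base change) are functorial in $(R,R^+)$.

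First I would set up the auxiliary space $\cZ(R,R^+) = \mathrm{Spa}(W(R^+)[1/p], W(R^+))$, where $W(R^+)$ is endowed with its $p$-adic topology. Since $p$ is topologically nilpotent on $\cY_{(0,\infty)}(R,R^+)$, the natural map $\cY_{(0,\infty)}(R,R^+)\to \cZ(R,R^+)$ is an open embedding, and the identification
\[
\cY_I(R,R^+) = \mathrm{Spa}(\cR^{[s,r]}_R,\cR^{[s,r],+}_{R,R^+})
\]
exhibits $\cY_I(R,R^+)$ as a rational subset of $\cZ(R,R^+)$ cut out by $|p|\leq |[\varpi^{1/r}]|\neq 0$ and $|[\varpi^{1/s}]|\leq |p|\neq 0$. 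This reduces the sheafiness and preperfectoid claims to the corresponding assertions for $\cZ(R,R^+)$ itself, since both properties are inherited by rational localizations (see \cite[Theorem 3.7.4]{kedlayaliu}).

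Next I would verify the preperfectoid property of $\cZ(R,R^+)$. Fix a perfectoid field $K/\mathbb Q_p$ with pseudouniformizer $\pi$ satisfying $\pi|p$, and a compatible system $\pi^{1/p^n}\in K$. Then $(W(R^+)/p)[\pi^{1/p^\infty}]$ is a perfect $R^+$-algebra, and taking its $p$-adic completion produces exactly $W(R^+)\hat{\otimes}_{\mathbb Z_p}\cO_K$, whose tilt is the perfectoid $R^+$-algebra $R^+[\pi^{\flat,1/p^\infty}]^\wedge$. Hence $W(R^+)\hat{\otimes}_{\mathbb Z_p}K$ is a perfectoid $K$-algebra. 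Intersecting with a rational subset gives that $\cR^{[s,r]}_R\hat{\otimes}_{\mathbb Q_p}K$ is perfectoid, which is exactly the preperfectoid condition for $\cR^{[s,r]}_R$. By \cite[Theorem 3.7.4]{kedlayaliu}, preperfectoid Tate rings are sheafy and their structure (pre)sheaves have vanishing higher cohomology on rational covers, so $\cY_I(R,R^+)$ is indeed a sheafy affinoid adic space.

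For the last assertion, I would note that $\cY_{(0,\infty)}(R,R^+)$ is covered by the rational opens $\cY_{[s,r]}(R,R^+)$ as $[s,r]\subset(0,\infty)$ ranges over closed rational intervals; since each is an honest (sheafy) affinoid adic space and honesty is a local property, the glued space $\cY_{(0,\infty)}(R,R^+)$ is an honest adic space. The main subtlety — and essentially the only one — is producing the perfectoid tilt of $W(R^+)\hat\otimes_{\mathbb Z_p}\cO_K$; once that is in hand, everything else reduces to formal manipulations from Kedlaya--Liu and the fact that rational localization preserves being preperfectoid. The footnote about $\cR^{[s,r]}_R$ depending only on $R$ is verified as in the absolute case, since inverting $p$ in $\cR^{[s,r],+}_{R,R^+}$ loses all information about the auxiliary ring of integral elements $R^+\subset R$.
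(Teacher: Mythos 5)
Your argument is correct and is essentially the paper's own: the proof of Proposition~\ref{relYhonest} simply invokes the same argument as Proposition~\ref{Yhonest}, namely realizing $\cY_I(R,R^+)$ as a rational subset of the auxiliary space $\mathrm{Spa}(W(R^+)[1/p],W(R^+))$, observing that $W(R^+)\hat{\otimes}_{\mathbb Z_p}K$ is perfectoid and that this property passes to rational subsets, and then citing \cite[Theorem 3.7.4]{kedlayaliu} for sheafiness. Your extra paragraph identifying the tilt of $W(R^+)\hat{\otimes}_{\mathbb Z_p}\cO_K$ just makes explicit a step the paper leaves as an assertion.
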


\begin{proof} The same arguments as for Proposition \ref{Yhonest} apply.
\end{proof}

Again, there is a totally discontinuous action $\varphi$ of Frobenius.

\begin{defn} The relative Fargues-Fontaine curve $\cX(R,R^+)$ is the quotient $\cY_{(0,\infty)}(R,R^+)/\varphi^\mathbb{Z}$.
\end{defn}

As before, there is a line bundle $\cO_{\cX(R,R^+)}(d)$ for any $d\in \mathbb Z$, and one can form the scheme
\[
X(R) = \mathrm{Proj}\left(\oplus_{d\geq 0}H^0\left(\cX(R,R^+),\cO_{\cX(R,R^+)}(d)\right)\right).\footnote{As notation suggests, this does not depend on $R^+$.}
\]
This comes with a map of locally ringed topological spaces $\cX(R,R^+)\to X(R)$, and one has a relative GAGA result.

\begin{thm}[{\cite[Theorem 8.7.7]{kedlayaliu}}] The pullback functor from vector bundles on $X(R)$ to vector bundles on $\cX(R,R^+)$ is an equivalence of categories.
\end{thm}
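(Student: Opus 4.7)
The plan is to mimic the standard GAGA strategy, adapted to the Fargues–Fontaine setting, using that $\cO(1)$ is ``ample'' on $\cX(R,R^+)$ by construction of $X(R)$ as a $\mathrm{Proj}$. Write $\pi \colon \cX(R,R^+) \to X(R)$ for the natural map of locally ringed spaces. For a coherent sheaf $\cF$ on $X(R)$, set $\cF^{\mathrm{ad}} := \pi^\ast \cF$. I would prove (a) the comparison map $H^0(X(R),\cF) \to H^0(\cX(R,R^+),\cF^{\mathrm{ad}})$ is an isomorphism for all coherent $\cF$, which gives full faithfulness after applying it to internal Homs, and (b) every vector bundle on $\cX(R,R^+)$ admits a two-term resolution by finite direct sums of $\cO(-d_i)$'s, which together with (a) gives essential surjectivity.

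For (a), the graded ring $P := \bigoplus_{d\geq 0} H^0(\cX(R,R^+),\cO(d))$ is the defining ring of $X(R)$, so the claim for $\cF = \widetilde{M}$ with $M$ a graded $P$-module reduces to showing that the global sections of $\widetilde{M}^{\mathrm{ad}}$ recover $M$ in sufficiently large degree. This in turn reduces via \v{C}ech cohomology to understanding sections and $H^1$ of line bundles $\cO(d)$ on $\cX(R,R^+)$. The cover of $\cY_{(0,\infty)}(R,R^+)$ by the affinoids $\cY_{[s,r]}(R,R^+)$ (Proposition~\ref{relYhonest}) descends modulo $\varphi^{\ZZ}$ to an affinoid cover of $\cX(R,R^+)$, and one computes $H^0(\cX(R,R^+),\cO(d))$ as the $\varphi = p^{-d}$-eigenspace on $H^0(\cY_{(0,\infty)}(R,R^+),\cO)$. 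This is a relative version of Fargues–Fontaine's ``fundamental exact sequence'', and the key input is that these $\varphi$-invariants form a finite locally free $R$-module compatibly with base change.

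For (b), I would use ampleness of $\cO(1)$ in the following sense: for any vector bundle $\cE$ on $\cX(R,R^+)$, I want to show that for $d \gg 0$, $\cE(d)$ is generated by finitely many global sections. Granting this and the vanishing of $H^1(\cX(R,R^+),\cE(d))$ for $d \gg 0$ (which is the more technical ingredient, proved by reducing to the case of line bundles on the absolute curve after specialization, using the equivalence with $\varphi$-modules over the relative Robba ring and the Kedlaya–Liu slope formalism), one obtains a surjection $\cO(-d_0)^{n_0} \twoheadrightarrow \cE$ and, applying the same procedure to the kernel, a presentation
\[
\cO(-d_1)^{n_1} \longrightarrow \cO(-d_0)^{n_0} \longrightarrow \cE \longrightarrow 0.
\]
By (a), the first arrow comes from a map of coherent sheaves on $X(R)$, whose cokernel then pulls back to $\cE$.

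The main obstacle is the uniform control over $R$ in the vanishing and finite-generation statements above: on a single geometric fiber one has the classical Fargues–Fontaine theory, but propagating this over a perfectoid affinoid base requires the full machinery of families of $\varphi$-modules over the relative Robba ring and the semicontinuity of the Newton polygon from~\cite{kedlayaliu}. Once those results are in hand, gluing along the affinoid cover $\{\cY_{[s,r]}(R,R^+)\}$ and descending under $\varphi^{\ZZ}$ produces the required cohomology bounds, and the GAGA equivalence follows formally.
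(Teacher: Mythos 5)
The paper offers no proof of this statement: it is quoted verbatim from \cite[Theorem 8.7.7]{kedlayaliu}, so there is nothing internal to compare against, and your sketch has to stand on its own. Its overall GAGA skeleton (cohomology comparison plus presentations by sums of $\cO(-d_i)$'s) is the right general shape --- it is essentially how Fargues--Fontaine argue in the absolute case --- but there is a concrete gap in step (a). The ``key input'' you invoke, that the $\varphi$-eigenspaces $H^0(\cX(R,R^+),\cO(d))$ are finite locally free $R$-modules, is false. These spaces are not naturally $R$-modules at all ($R$ has characteristic $p$, while $\cO_{\cX(R,R^+)}$ is a sheaf of $\mathbb{Q}_p$-algebras), and already over a geometric point $H^0(X_F,\cO(d)) = (B^+_{\mathrm{cris}})^{\varphi=p^d}$ is an infinite-dimensional $\mathbb{Q}_p$-Banach space (a Banach--Colmez space) for every $d\geq 1$. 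Consequently the graded ring $P=\bigoplus_d H^0(\cO(d))$ defining $X(R)$ as a $\mathrm{Proj}$ is not finitely generated in any classical sense, $X(R)$ is far from finite type, ``coherent sheaf on $X(R)$'' is not a robust notion, and the Serre-style \v{C}ech argument for recovering a graded module from analytic global sections does not go through as written. This failure of finiteness is precisely why GAGA for the Fargues--Fontaine curve is not a formal corollary of classical GAGA and needs a bespoke argument.

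Beyond that, the genuinely hard content of your step (b) --- uniform global generation of $\cE(d)$ and vanishing of $H^1(\cX(R,R^+),\cE(d))$ for $d\gg 0$ over a perfectoid base, with compatibility under base change --- is exactly what you defer to ``the full machinery of \cite{kedlayaliu}'', i.e.\ to the very reference the theorem is being quoted from, which makes the argument circular as a proof. For the record, Kedlaya--Liu do not run GAGA directly in this way: they identify vector bundles on the adic curve with $\varphi$-modules over the relative Robba ring $\tilde\cR_R$ (their Theorem 6.3.12, quoted in the paper as Theorem~\ref{relative Robba ring equivalence}) and match this with the schematic side via a Beauville--Laszlo-type gluing at the divisor at infinity (compare Theorem~\ref{BeauvilleLaszloRelCurve}), so that the slope-theoretic and base-change difficulties are absorbed into the $\varphi$-module formalism rather than into cohomology vanishing on the curve. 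If you want a self-contained argument, that is the route to take.
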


Moreover, we can define $\tilde \cR^{r}_R$ as the inverse limit of the Banach algebras $\tilde \cR^{[s,r]}_R$ as $s$ runs over $(0,r]$ and the \emph{relative Robba ring} $\tilde \cR_R$ as the direct limit of the Fr\'echet algebras $\tilde \cR^{r}_R$ over $r>0$. Again, a $\varphi$-module over $\tilde \cR_R$ is a finite projective $\tilde \cR_R$-module $M$ equipped with a $\varphi$-linear automorphism.

\begin{thm}[{\cite[Theorems 6.3.12, 8.7.7]{kedlayaliu}}]\label{relative Robba ring equivalence} There is an equivalence of categories
\[\left\{\mathrm{Vector\ bundles\ on}\ \cX(R,R^+)\right\}\simeq\left\{\varphi\mathrm{-modules\ over}\ \tilde \cR_R\right\}.\] 
\end{thm}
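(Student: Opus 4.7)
The plan is to adapt the absolute case (Theorem~\ref{Robba ring equivalence}) to the relative setting, using the fact that a vector bundle on $\cX(R,R^+) = \cY_{(0,\infty)}(R,R^+)/\varphi^\mathbb{Z}$ is the same datum as a $\varphi$-equivariant vector bundle on $\cY_{(0,\infty)}(R,R^+)$, and that such a bundle is determined by its restriction to any germ of annulus near zero, since the action of $\varphi$ lets one spread data from a small annulus to arbitrarily large ones.

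First I would construct the forward functor. Given a vector bundle $\cE$ on $\cX(R,R^+)$, pull it back to a $\varphi$-equivariant vector bundle $\widetilde\cE$ on $\cY_{(0,\infty)}(R,R^+)$. Proposition~\ref{relYhonest} tells us that each $\cY_{[s,r]}(R,R^+)$ is a sheafy affinoid adic space, so $\widetilde\cE|_{\cY_{[s,r]}(R,R^+)}$ corresponds to a finite projective module over $\tilde\cR^{[s,r]}_R$. Taking the inverse limit over $s\in(0,r]$ and then the direct limit over $r\to 0$, I obtain a $\tilde\cR_R$-module $M$ equipped with a $\varphi$-linear automorphism. The main claim is that $M$ is finite projective over $\tilde\cR_R$.

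Next I would construct the inverse functor. Starting from a $\varphi$-module $M$ over $\tilde\cR_R$, descend it to a finite projective module $M^r$ over $\tilde\cR^r_R$ for some $r$ small enough (this is possible because $M$ is finitely generated). This gives a vector bundle $\widetilde\cE^r$ on $\cY_{(0,r]}(R,R^+)$. The $\varphi$-structure provides canonical identifications $\varphi^\ast M^r \cong M^{r/p}$, and hence $\varphi^\ast \widetilde\cE^r$ is a vector bundle on $\cY_{(0,pr]}(R,R^+)$ extending $\widetilde\cE^r$. Iterating $\varphi^n$ for $n\to\infty$ and gluing, one obtains a $\varphi$-equivariant vector bundle on $\cY_{(0,\infty)}(R,R^+)$, which descends to a vector bundle on $\cX(R,R^+)$. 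Checking that the two functors are mutually inverse and compatible with tensor products is then formal.

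The main obstacle is the finite projectivity in the forward direction. In the absolute case the Robba ring $\tilde\cR_F$ is a B\'ezout domain, so finite projective is automatic once one has finite generation; but $\tilde\cR_R$ is not B\'ezout in the relative setting. Instead, one must argue by descent along the cover of $\cY_{(0,r]}(R,R^+)$ by the quasi-Stein exhaustion $\{\cY_{[s,r]}(R,R^+)\}_{s\downarrow 0}$, using Kiehl-type glueing for finite projective modules over the Fr\'echet algebra $\tilde\cR^r_R = \varprojlim_s \tilde\cR^{[s,r]}_R$. This is exactly the content of the machinery of pseudocoherent sheaves and the Fr\'echet-Stein-like behavior of these rings developed in \cite{kedlayaliu}, in particular their sheafiness and acyclicity results for rational covers (via the preperfectoid statement in Proposition~\ref{relYhonest}, reducing everything to the perfectoid setting where the vanishing of higher cohomology gives the required gluing). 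Once this descent is in hand, the rest of the argument is parallel to Kedlaya-Liu's proof of Theorem~6.3.12, with the $\varphi$-spreading principle supplying the equivalence with vector bundles on the full $\cX(R,R^+)$.
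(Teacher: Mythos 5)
Your sketch is correct and matches the paper's treatment: the paper does not prove this statement but quotes it directly from Kedlaya--Liu, and the one-line strategy it records after the absolute case (Theorem~\ref{Robba ring equivalence}) --- descend a $\varphi$-module to some $\tilde\cR^r_R$, spread over all of $\cY_{(0,\infty)}(R,R^+)$ by Frobenius pullback, and descend to $\cX(R,R^+)$ --- is exactly what you elaborate. You also correctly identify the genuinely relative difficulty (finite projectivity of global sections, since $\tilde\cR_R$ is no longer B\'ezout) and appropriately defer it to the gluing/acyclicity machinery of \cite{kedlayaliu}, which is where the real content lies.
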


\subsection{The mixed characteristic affine Grassmannian}\label{affinegrass}

Our goal in this section is to construct an isomorphism between the flag variety $\Fl_{G,\mu}$ and the Schubert cell corresponding to $\mu$ in the $B_{\mathrm{dR}}^+$-Grassmannian for $G$, \emph{assuming that $\mu$ is minuscule}. This is an analogue of a classical statement about the usual affine Grassmannian.

Throughout this section, $G$ is a connected reductive group over $\mathbb Q_p$. First, we define the version of the affine Grassmannian that we will consider. Let $(R,R^+)$ a perfectoid affinoid algebra over $\mathbb Q_p$, in the sense of~\cite[Definition 3.6.1]{kedlayaliu}.\footnote{If $R$ contains a perfectoid field, this agrees with the definition of~\cite{scholzeperfectoid}, and this case would suffice for our discussion here.} One has the surjective map $\theta: W(R^{\flat +})\to R^+$, whose kernel is generated by a non-zerodivisor $\xi\in W(R^+)$. Then $\mathbb B_{\mathrm{dR},R}^+$ is defined as the $\xi$-adic completion of $W(R^{\flat +})[1/p]$, and $\mathbb B_{\mathrm{dR},R} = \mathbb B_{\mathrm{dR},R}^+[\xi^{-1}]$. We note that, as notation suggests, these rings are independent of the choice of $R^+$.

\begin{defn} Let $\GrBdR_G$ be the functor associating to any perfectoid affinoid $\mathbb Q_p$-algebra $(R,R^+)$ the set of $G$-torsors over $\mathrm{Spec}\ \mathbb B_{\mathrm{dR},R}^+$ trivialized over $\mathrm{Spec}\ \mathbb B_{\mathrm{dR},R}$, up to isomorphism.
\end{defn}

We refer to~\cite{scholzelectures} for a more thorough discussion of this object, in the case $G=GL_n$.

If $(R,R^+) = (K,K^+)$ where $K$ is a perfectoid field, then $\mathbb B_{\mathrm{dR},K}^+$ is a complete discrete valuation ring, abstractly isomorphic to $K[[\xi]]$. In that case, one sees that
\[
\GrBdR_G(K,K^+) = G(\mathbb B_{\mathrm{dR},K}) / G(\mathbb B_{\mathrm{dR},K}^+)\ .
\]
In particular, assume that $K=C$ is algebraically closed, and fix an embedding $\bar{\mathbb Q}_p\to C$. Then, using the Cartan decomposition
\[
G(\mathbb B_{\mathrm{dR},C}) = \bigsqcup_{\mu\in X_\ast(G)_{\mathrm{dom}}} G(\mathbb B_{\mathrm{dR},C}^+) \mu(\xi)^{-1} G(\mathbb B_{\mathrm{dR},C}^+)
\]
(where the induced embedding $\bar{\mathbb Q}_p\hookrightarrow \mathbb B_{\mathrm{dR},C}^+$ is used to define $\mu(\xi)$ for a cocharacter $\mu: \mathbb G_m\to G_{\bar{\mathbb Q}_p}$), we can associate an element of $\mu(x)\in X_\ast(G)_{\mathrm{dom}}$ to any point of $x\in \GrBdR_G(C,\cO_C)$. This is the decomposition into Schubert cells.\footnote{We have inserted a slightly nonstandard sign in $\mu(\xi)^{-1}$.}

Now, we fix a conjugacy class $\mu$ of cocharacters $\mathbb{G}_m\to G_{\bar{\mathbb Q}_p}$, defined over $E$. In the following, we assume that $R$ is an $E$-algebra. Any choice of representative $\mu: \mathbb{G}_m\to G_{\bar{\mathbb Q}_p}$ in this conjugacy class determines an ascending filtration $\mathrm{Fil}_\bullet(\mu)$ on $\mathrm{Rep}_{\bar{\mathbb Q}_p} G$, where $\mathrm{Fil}_m(\mu)$ is the direct sum of all subspaces where $\mu$ acts through weights $m^\prime\geq -m$.\footnote{One reason the minus sign appears here is for consistenty with the global definitions, where type $(p,q)$ refers to characters $z\mapsto z^{-p}\overline{z}^{-q}$.} Let $\Fl_{G,\mu}/E$ be the rigid-analytic flag variety parametrizing all such filtrations. The choice of $\mu$ identifies $\Fl_{G,\mu} = G/P_\mu$, where $P_\mu\subset G$ is the stabilizer of $\mathrm{Fil}_\bullet(\mu)$.

\begin{defn} Let $\GrBdR_{G,\mu}\subset \GrBdR_G\otimes_{\mathbb Q_p} E$ be the subfunctor sending a perfectoid affinoid $E$-algebra $(R,R^+)$ to the set of those $G$-torsors over $\mathrm{Spec}\ \mathbb B_{\mathrm{dR},R}^+$ trivialized over $\mathrm{Spec}\ \mathbb B_{\mathrm{dR},R}$ whose relative position $\mu(x)$ is given by $\mu$, for all $x\in \mathrm{Spa}(R,R^+)$.
\end{defn}

\begin{prop} There is a natural Bialynicki-Birula map
\[
\pi_{G,\mu}: \GrBdR_{G,\mu}\to \Fl_{G,\mu}\ ,
\]
where we regard $\Fl_{G,\mu}$ as a functor on perfectoid affinoid $E$-algebras.
\end{prop}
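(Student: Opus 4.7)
The plan is to construct $\pi_{G,\mu}$ Tannakianly: for any $(R,R^+)$-point of $\GrBdR_{G,\mu}$ and any $V \in \mathrm{Rep}_{\mathbb Q_p} G$, one produces a natural decreasing filtration of type $\mu$ on $V \otimes_E R$, functorially in $V$ and compatibly with tensor products and short exact sequences. By the Tannakian description of $\Fl_{G,\mu}$ as the variety of type-$\mu$ filtrations on $\mathrm{Rep}_E G$, such data is precisely a morphism to $\Fl_{G,\mu}$.

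Unwinding the definitions, a point of $\GrBdR_{G,\mu}(R,R^+)$ is an exact tensor functor $\cE\colon V \mapsto \cE_V$ from $\mathrm{Rep}_{\mathbb Q_p} G$ to finite projective $\mathbb{B}^+_{\mathrm{dR},R}$-modules, equipped with tensor isomorphisms $\beta_V\colon \cE_V[\xi^{-1}] \isomap V \otimes_{\mathbb Q_p} \mathbb{B}_{\mathrm{dR},R}$. Via $\beta_V$, both $\cE_V$ and $V^+ := V \otimes_{\mathbb Q_p} \mathbb{B}^+_{\mathrm{dR},R}$ sit as lattices inside $V \otimes_{\mathbb Q_p} \mathbb{B}_{\mathrm{dR},R}$. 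I would then define
\[
\mathrm{Fil}^i(V \otimes_E R) := \mathrm{image}\bigl(\xi^i \cE_V \cap V^+ \longrightarrow V^+/\xi V^+ = V \otimes_E R\bigr).
\]
The decreasing property is immediate, and the compatibility with tensor products and exact sequences is formal from the fact that $\cE$ is an exact tensor functor and that the formation of $\xi^i \cE_V \cap V^+$ commutes with sub-quotients and tensor products up to the standard convolution identity $\mathrm{Fil}^i(V_1 \otimes V_2) = \sum_{j+k=i} \mathrm{Fil}^j V_1 \otimes \mathrm{Fil}^k V_2$.

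The key content is checking that this filtration is locally of type $\mu$, i.e. that the graded pieces are locally free $R$-modules of ranks prescribed by the $\mu$-weight spaces. At any geometric point $x \in \mathrm{Spa}(R,R^+)$ with residue field $C$, the Cartan decomposition writes the fiber $\cE_{V,x}$ in the form $g_1 \cdot \mu(\xi)^{-1} V^+_x$ for some $g_1 \in G(\mathbb{B}^+_{\mathrm{dR},C})$; since $g_1$ preserves $V^+_x$ and $\xi^i V^+_x$, one may delete it and compute the filtration directly on the standard lattice $\mu(\xi)^{-1} V^+_x = \bigoplus_m \xi^{-m} V^+_{m,x}$ coming from the weight decomposition $V = \bigoplus_m V_m$ under $\mu$. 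A direct calculation then gives $\mathrm{gr}^i(V \otimes_E C) \cong V_i \otimes_E C$, which is the filtration type attached to $\mu$ (the sign convention explaining the appearance of $\mu(\xi)^{-1}$ rather than $\mu(\xi)$ in the earlier display). Semicontinuity of the ranks of the intersections $\xi^i \cE_V \cap V^+$, together with their pointwise values summing to $\dim V$, forces these $R$-modules to be locally free, so the filtration has type $\mu$ globally.

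Finally, by Tannakian reconstruction, the exact tensor functor $V \mapsto (V \otimes_E R, \mathrm{Fil}^\bullet)$ with filtrations of type $\mu$ is equivalent to a $P_\mu$-reduction of the trivial $G$-torsor over $\mathrm{Spec}\ R$, i.e. a morphism $\mathrm{Spa}(R,R^+) \to \Fl_{G,\mu}$; the construction is functorial in $(R,R^+)$ and independent of all choices, yielding the desired $\pi_{G,\mu}\colon \GrBdR_{G,\mu} \to \Fl_{G,\mu}$. The main obstacle is propagating the pointwise Cartan normal form to a workable family statement: the Cartan decomposition is only immediately available over algebraically closed residue fields, and one needs either a local trivialization over a pro-\'etale cover, or a direct semicontinuity argument for the ranks of the $\xi^i \cE_V \cap V^+$. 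For minuscule $\mu$ this obstacle is mild, since there are at most two nontrivial filtration steps and the relevant intersections admit a simple description; for general $\mu$ the same construction works to produce the map, but one should not expect $\pi_{G,\mu}$ to be an isomorphism outside the minuscule case.
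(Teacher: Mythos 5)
Your construction is essentially the paper's own proof: reduce via the Tannakian formalism to a single representation (the paper phrases this as reducing to $G=GL_n$), define the filtration as the image of $\xi^i\Lambda\cap V^+$ in $V^+/\xi V^+$, and deduce local freeness of the graded pieces from the pointwise constancy of ranks (the paper cites \cite[Proposition 2.8.4]{kedlayaliu} for exactly this). The only cosmetic difference is your descending versus the paper's ascending indexing, and your slightly more roundabout route to rank constancy (semicontinuity plus summation, where the paper uses that the relative position is $\mu$ at every point by the definition of the Schubert cell).
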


\begin{proof} By the Tannakian formalism, it is enough to prove this result in the case $G=GL_n$. In that case, write $\mu=(k_1,\ldots,k_n)$ as a tuple of $n$ integers, $k_1\geq k_2\geq \ldots \geq k_n$. The functor $\GrBdR_{GL_n}$ parametrizes $\mathbb B_{\mathrm{dR},R}^+$-lattices $\Lambda\subset \mathbb B_{\mathrm{dR},R}^n$, i.e. finite projective submodules such that $\Lambda[1/\xi] = \mathbb B_{\mathrm{dR},R}^n$. Any such lattice gives rise to a filtration on $R^n$ by setting
\[
\mathrm{Fil}_m R^n = ((\mathbb B_{\mathrm{dR},R}^+)^n\cap \xi^{-m} \Lambda) / ((\xi \mathbb B_{\mathrm{dR},R}^+)^n\cap \xi^{-m} \Lambda)\ .
\]
Using the fact that a finitely generated $R$-module $M$ for which $\dim_{C(x)} M\otimes_R C(x)$ is the same for all $x=\mathrm{Spa}(C(x),\cO_{C(x)})\to \mathrm{Spa}(R,R^+)$ is finite projective, cf.~\cite[Proposition 2.8.4]{kedlayaliu}, one verifies that $R^n/\mathrm{Fil}_m R^n$ is a finite projective $R$-module for any $m$.

Note that $\mathrm{Fil}_\bullet R^n$ is an increasing filtration, where the rank of $\mathrm{Fil}_m R^n$ is given by the largest $i$ such that $k_i\geq -m$. The same type of filtrations is parametrized by $\Fl_{G,\mu}$, as desired.
\end{proof}

\begin{lemma}\label{BialynickiBirulaPoints} Assume that $\mu$ is minuscule, and that $(R,R^+)=(K,K^+)$, where $K/E$ is a perfectoid field. Then
\[
\pi_{G,\mu}: \GrBdR_{G,\mu}(K,K^+)\to \Fl_{G,\mu}(K,K^+)
\]
is a bijection.
\end{lemma}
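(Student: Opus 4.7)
The plan is to first handle $G=GL_n$ by an explicit lattice computation, and then to upgrade to general reductive $G$ via a stabilizer analysis on a $G(\mathbb B^+_{\mathrm{dR},C})$-homogeneous space, after passing to an algebraic closure. For $GL_n$ with $\mu$ minuscule, I can twist by a power of the determinant so that $\mu=(1,\ldots,1,0,\ldots,0)$ with $d$ ones. A point of $\GrBdR_{GL_n,\mu}(K,K^+)$ is then a $\mathbb B^+_{\mathrm{dR},K}$-lattice $\Lambda$ with
\[
(\mathbb B^+_{\mathrm{dR},K})^n \subset \Lambda \subset \xi^{-1}(\mathbb B^+_{\mathrm{dR},K})^n,
\]
and $\Lambda/(\mathbb B^+_{\mathrm{dR},K})^n$ of dimension $d$ over $K$. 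Unwinding the Bialynicki-Birula formula identifies $\pi_{GL_n,\mu}(\Lambda)$ with the image of $\Lambda/(\mathbb B^+_{\mathrm{dR},K})^n$ inside $\xi^{-1}(\mathbb B^+_{\mathrm{dR},K})^n/(\mathbb B^+_{\mathrm{dR},K})^n\cong K^n$. Conversely, any $d$-dimensional subspace $W\subset K^n$ is the image of the unique lattice $\Lambda=\pi^{-1}(W)\subset \xi^{-1}(\mathbb B^+_{\mathrm{dR},K})^n$ under this projection, so $\pi_{GL_n,\mu}$ is a bijection. The minusculity is essential here: $\Lambda$ is trapped between two consecutive lattices, so is determined by its reduction modulo $\xi$.

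For a general reductive $G$, direct Tannakian reduction to the $GL_n$ case is problematic because a faithful representation $V$ may yield a non-minuscule cocharacter $\mu_V$ on $GL(V)$ (already the adjoint representation has $\mu$-weights in $\{-1,0,1\}$, spanning three integers). Instead, I would first reduce to $K=C$ algebraically closed by Galois / faithfully flat descent, using that both functors are sheaves for such a cover. Over $C$, Steinberg's theorem trivializes every $G$-torsor on $\mathrm{Spec}\,C$, and the Cartan decomposition gives
\[
\GrBdR_{G,\mu}(C,\mathcal{O}_C)=G(\mathbb B^+_{\mathrm{dR},C})\cdot [\mu(\xi)^{-1}],\qquad \Fl_{G,\mu}(C,\mathcal{O}_C)=G(C)/P_\mu(C),
\]
both as $G(\mathbb B^+_{\mathrm{dR},C})$-homogeneous spaces (the right-hand side via the surjection $G(\mathbb B^+_{\mathrm{dR},C})\twoheadrightarrow G(C)$ coming from smoothness of $G$). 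The $GL_n$ case applied to a faithful representation shows $\pi_{G,\mu}([\mu(\xi)^{-1}])=[P_\mu(C)]$, so by $G(\mathbb B^+_{\mathrm{dR},C})$-equivariance, bijectivity reduces to identifying the stabilizer $H_\mu:=G(\mathbb B^+_{\mathrm{dR},C})\cap \mu(\xi)^{-1}G(\mathbb B^+_{\mathrm{dR},C})\mu(\xi)$ of $[\mu(\xi)^{-1}]$ with the preimage $S$ of $P_\mu(C)$ in $G(\mathbb B^+_{\mathrm{dR},C})$.

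The main obstacle is the inclusion $S\subset H_\mu$, which is the place where minusculity enters globally. I would argue via a weight analysis under $\mathrm{Ad}(\mu(\xi))$: on $\mathrm{Lie}(P_\mu)$ the $\mu$-weights are $\geq 0$, so conjugation by $\mu(\xi)$ preserves $\mathrm{Lie}(P_\mu)(\mathbb B^+_{\mathrm{dR},C})$ and hence $P_\mu(\mathbb B^+_{\mathrm{dR},C})\subset H_\mu$; on $\mathrm{Lie}(G)$ the weights are $\geq -1$ by minusculity, so when combined with the extra factor of $\xi$ present in the kernel $G(\mathbb B^+_{\mathrm{dR},C})^1$ of reduction (which, by formal smoothness of $G$ and $\xi$-adic completeness of $\mathbb B^+_{\mathrm{dR},C}$, is exponentiated from $\xi\cdot\mathrm{Lie}(G)(\mathbb B^+_{\mathrm{dR},C})$), conjugation by $\mu(\xi)$ still lands in $G(\mathbb B^+_{\mathrm{dR},C})$, whence $G(\mathbb B^+_{\mathrm{dR},C})^1\subset H_\mu$. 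Since $S=P_\mu(\mathbb B^+_{\mathrm{dR},C})\cdot G(\mathbb B^+_{\mathrm{dR},C})^1$ by smoothness of $P_\mu$ (which lets one lift an element of $P_\mu(C)$ to $P_\mu(\mathbb B^+_{\mathrm{dR},C})$), this yields $S\subset H_\mu$ and hence $H_\mu=S$, completing the proof over $C$. Galois descent along $\mathbb B^+_{\mathrm{dR},K}\to \mathbb B^+_{\mathrm{dR},C}$ then delivers the result over the original perfectoid $K$.
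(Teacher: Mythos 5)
Your route is genuinely different from the paper's, which disposes of this lemma in two lines: $\mathbb B^+_{\mathrm{dR},K}$ is a complete discrete valuation ring of equal characteristic zero with residue field $K$, so the Cohen structure theorem gives $\mathbb B^+_{\mathrm{dR},K}\cong K[[\xi]]$, identifying $\pi_{G,\mu}$ with the Bialynicki--Birula morphism on the minuscule Schubert cell of the usual affine Grassmannian over the field $K$, which is an isomorphism by Ng\^o--Polo. Your argument is a self-contained substitute for that citation, and its skeleton (explicit lattice computation for $GL_n$; orbit-and-stabilizer comparison over an algebraically closed $C$) is sound. Two points need attention. First, the paper works directly over $K$ and needs no descent; in your version, descending surjectivity from $C$ back to $K$ requires producing a descent datum, and for that you need injectivity of $\pi_{G,\mu}$ on $(C\hat{\otimes}_K C)$-points, not merely that both functors are sheaves. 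That injectivity is essentially the first half of Theorem~\ref{BialynickiBirulaIsom}; it can be deduced from your result over algebraically closed fields together with Lemma~\ref{CheckContainmentAtPoints}, so there is no circularity, but the step should be made explicit.

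The one step that does not work as written is the inclusion $G(\mathbb B^+_{\mathrm{dR},C})^1\subset H_\mu$. Writing $g=\exp(Y)$ with $Y\in\xi\cdot\mathrm{Lie}(G)(\mathbb B^+_{\mathrm{dR},C})$, minusculity does give $\mathrm{Ad}(\mu(\xi))Y\in \mathrm{Lie}(G)(\mathbb B^+_{\mathrm{dR},C})$, but you cannot conclude that $\mu(\xi)g\mu(\xi)^{-1}=\exp(\mathrm{Ad}(\mu(\xi))Y)$ lies in $G(\mathbb B^+_{\mathrm{dR},C})$: the exponential only converges on the topologically nilpotent locus $\xi\cdot\mathrm{Lie}(G)(\mathbb B^+_{\mathrm{dR},C})$, and $\mathrm{Ad}(\mu(\xi))Y$ has a weight $-1$ component of the form $\xi^0\cdot Y_{-1}$, which is not topologically nilpotent, so the right-hand side is not even defined. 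The standard repair is the big cell: since $\mathbb B^+_{\mathrm{dR},C}$ is local and $g\equiv 1\bmod\xi$, one has a unique factorization $g=u\cdot p$ with $u\in U^-(\mathbb B^+_{\mathrm{dR},C})$ and $p\in P_\mu(\mathbb B^+_{\mathrm{dR},C})$, both congruent to $1$ modulo $\xi$, where $U^-$ is the unipotent group with Lie algebra the weight $-1$ part of $\mathrm{Lie}(G)$. For the unipotent group the algebraic exponential is an isomorphism of schemes, so $u=\exp(Z)$ with $Z\in\xi\cdot\mathrm{Lie}(U^-)(\mathbb B^+_{\mathrm{dR},C})$ and $\mu(\xi)u\mu(\xi)^{-1}=\exp(\xi^{-1}Z)\in U^-(\mathbb B^+_{\mathrm{dR},C})$ precisely because $\mu$ is minuscule; the factor $p$ is handled by the attractor property of $P_\mu$, as you indicate. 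With this repair and the descent point above, your proof goes through.
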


\begin{proof} Recall that $\mathbb B_{\mathrm{dR},K}^+$ is a complete discrete valuation ring with residue field $K$. By the Cohen structure theorem, we may choose an isomorphism $\mathbb B_{\mathrm{dR},K}^+\cong K[[\xi]]$. This identifies
\[
\GrBdR_{G,\mu}(K,K^+) = G(K((\xi))) / G(K[[\xi]])\ ,
\]
and the Bialynicki-Birula morphism becomes the Bialynicki-Birula morphism for the usual affine Grassmannian for $G/\mathbb Q_p$. This is known to be an isomorphism, cf.~e.g.~\cite[Lemme 6.2]{ngopolo}.
\end{proof}

\begin{thm}\label{BialynickiBirulaIsom} Assume that $\mu$ is minuscule. Then the Bialynicki-Birula morphism
\[
\pi_{G,\mu}: \GrBdR_{G,\mu}\to \Fl_{G,\mu}
\]
is an isomorphism.
\end{thm}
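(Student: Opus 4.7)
The strategy is to construct an explicit inverse to $\pi_{G,\mu}$ via a Rees-type construction and verify it is well-defined globally. Using the Tannakian formalism, I would describe both $\GrBdR_{G,\mu}(R,R^+)$ and $\Fl_{G,\mu}(R,R^+)$ as exact tensor functors on $\mathrm{Rep}_{\mathbb Q_p}(G)$: the former sending $V$ to a $\mathbb B^+_{\mathrm{dR},R}$-lattice $\Lambda_V \subset V\otimes_{\mathbb Q_p}\mathbb B_{\mathrm{dR},R}$ of pointwise position $\mu_V$, the latter sending $V$ to a filtration $\mathrm{Fil}_\bullet V_R \subset V_R$ of type $\mu_V$. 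The Bialynicki-Birula map operates representation by representation, so it suffices to produce a tensor-compatible inverse at the level of representations. Given a filtration $(V_R,\mathrm{Fil}_\bullet)$, the inverse assigns the Rees lattice
\[
\Lambda_V \,:=\, \sum_{m\in\mathbb Z}\,\xi^{-m}\,\widetilde{\mathrm{Fil}_m V_R},
\]
where $\widetilde{\mathrm{Fil}_m V_R}\subset V\otimes \mathbb B^+_{\mathrm{dR},R}$ is the preimage of $\mathrm{Fil}_m V_R$ under reduction modulo $\xi$. Since the minuscule hypothesis on $\mu$ bounds the range of $m$ that contributes (uniformly across $V$, modulo central twists), the sum is finite, and the construction is tautologically functorial in $V$ and compatible with tensor products.

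The concrete heart of the argument is the case $G = GL_n$ with $\mu = (1^a, 0^{n-a})$, to which an arbitrary minuscule $\mu$ of $GL_n$ reduces up to a central twist by a power of the determinant. I would first show that the pointwise type-$\mu$ condition on a lattice $\Lambda \subset \mathbb B_{\mathrm{dR},R}^n$ translates globally into the combination $(\mathbb B^+_{\mathrm{dR},R})^n \subset \Lambda \subset \xi^{-1}(\mathbb B^+_{\mathrm{dR},R})^n$ with $\Lambda/(\mathbb B^+_{\mathrm{dR},R})^n$ a rank-$a$ finite projective $R$-module; this uses standard facts about finite projective modules over $\mathbb B^+_{\mathrm{dR},R}$, together with a Nakayama-type argument to promote pointwise inclusions to global ones. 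Under this characterization, the Bialynicki-Birula map is the explicit assignment $\Lambda \mapsto W := \xi\Lambda/\xi(\mathbb B^+_{\mathrm{dR},R})^n \subset R^n$, and the Rees construction becomes $W \mapsto \xi^{-1}\widetilde W$ with $\widetilde W \subset (\mathbb B^+_{\mathrm{dR},R})^n$ the preimage of $W$. A direct unwinding shows these are mutually inverse bijections.

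Finally, for general $G$ with minuscule $\mu$, the functoriality and tensor-compatibility of the Rees construction, combined with the $GL_n$ verification applied to each representation, promote the pointwise inverse of Lemma~\ref{BialynickiBirulaPoints} to an inverse of functors. The main obstacle is verifying that the family $(\Lambda_V)$ produced by the Rees construction really defines a $G$-torsor of type $\mu$ at every geometric point, not merely of some weaker type; this is where the minuscule hypothesis is essential, since for non-minuscule $\mu$ the Rees lattice can fail to have the expected relative position, and the tensor structure on filtrations is no longer rigid enough to force the correct $G$-structure. Granted the minuscule hypothesis, pointwise type is handed to us by Lemma~\ref{BialynickiBirulaPoints}, and the Tannakian compatibility of the Rees construction ensures that these pointwise $G$-torsors assemble into a global one, completing the proof.
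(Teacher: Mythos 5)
Your overall strategy for surjectivity -- an explicit Rees-lattice inverse over perfectoid affinoids, identified pointwise with the classical inverse via Lemma~\ref{BialynickiBirulaPoints} -- is viable and genuinely different from the paper's, which instead produces the universal $\mathbb B^+_{\mathrm{dR}}$-local system on the pro-\'etale site of $\Fl_{G,\mu}$ from the filtered module with (trivial) connection, using Griffiths transversality (this is where minusculeness enters there) and \cite[Proposition 7.9]{scholzerigid}, plus pro-\'etale descent of finite projective $\mathbb B^+_{\mathrm{dR},R}$-modules. Your route trades that machinery for a hands-on construction, at the cost of having to check directly that the Rees lattice is finite projective, exact and tensor-compatible; this requires locally splitting the $P_\mu$-reduction (possible Zariski-locally since $G\to G/P_\mu$ has local sections) and then gluing, a step you should make explicit. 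However, there are two genuine problems with the writeup.

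First, the displayed formula $\Lambda_V=\sum_m \xi^{-m}\widetilde{\mathrm{Fil}_m V_R}$ is not a lattice: for $m\gg 0$ one has $\mathrm{Fil}_m V_R=V_R$, hence $\widetilde{\mathrm{Fil}_m V_R}=V\otimes\mathbb B^+_{\mathrm{dR},R}$ and $\xi^{-m}\widetilde{\mathrm{Fil}_m V_R}$ is unbounded; this has nothing to do with minusculeness. The exponent must be $\xi^{+m}$ and the sum restricted to the jumps of the filtration (your own two-step formula $W\mapsto\xi^{-1}\widetilde W$ for $W=\mathrm{Fil}_{-1}$ is consistent with $\xi^{+m}$, not with $\xi^{-m}$). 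Second, and more seriously, the reduction of the ``concrete heart'' to $GL_n$ with $\mu=(1^a,0^{n-a})$ applied representation by representation fails: for $V\in\mathrm{Rep}(G)$ the induced cocharacter $\mu_V$ of $GL(V)$ is almost never minuscule (already the adjoint representation has weights $-1,0,1$), and for non-minuscule $\mu_V$ the Bialynicki--Birula map for $GL(V)$ has positive-dimensional fibres, so a lattice of type $\mu_V$ is \emph{not} determined by its filtration. Consequently ``mutually inverse'' cannot be verified one representation at a time; the injectivity of $\pi_{G,\mu}$, equivalently the identity $\mathcal R\circ\pi_{G,\mu}=\mathrm{id}$, really uses the full tensor structure and the minusculeness of $\mu$ as a cocharacter of $G$. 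The repair is to prove both composites are the identity after specialization to every geometric point $\mathrm{Spa}(K,K^+)$ -- where Lemma~\ref{BialynickiBirulaPoints} applies to $G$ itself -- and then promote pointwise equality of lattices to equality over $R$ by an argument such as Lemma~\ref{CheckContainmentAtPoints}; your proposal invokes the pointwise statement only for ``type'' and supplies neither this globalization nor any verification of the multi-step Rees construction that every $V$ actually requires.
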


\begin{proof} In the proof, we will use the Tannakian formalism. This interprets $\GrBdR_G$ as the associations mapping any $V\in \mathrm{Rep}\ G$ to a lattice $\Lambda_V\subset V\otimes \mathbb B_{\mathrm{dR}}$, compatibly with tensor products and short exact sequences.

First, let us check injectivity of $\pi_{G,\mu}$. Thus, take two $(R,R^+)$-valued points $x,y\in \GrBdR_{G,\mu}(R,R^+)$ which are sent to the same point of $\Fl_{G,\mu}$. We have to show that the corresponding lattices $\Lambda_{V,x}$, $\Lambda_{V,y}$ agree for all $V\in \mathrm{Rep}\ G$. But at any point $z\in \mathrm{Spa}(R,R^+)$ with completed residue field $K(z)$, Lemma~\ref{BialynickiBirulaPoints} implies that
\[
\Lambda_{V,x}\otimes_{\mathbb B_{\mathrm{dR},R}^+} \mathbb B_{\mathrm{dR},K(z)}^+ = \Lambda_{V,y}\otimes_{\mathbb B_{\mathrm{dR},R}^+} \mathbb B_{\mathrm{dR},K(z)}^+\ .
\]
One concludes that $\Lambda_{V,x} = \Lambda_{V,y}$ by applying the following lemma to all elements of $\Lambda_{V,x}$, and $\Lambda_{V,y}$.

\begin{lemma}\label{CheckContainmentAtPoints} Let $\Lambda$ be a finite projective $\mathbb B_{\mathrm{dR},R}^+$-module, and $a\in \Lambda\otimes_{\mathbb B_{\mathrm{dR},R}^+} \mathbb B_{\mathrm{dR},R}$ any element. Assume that for all $z\in \mathrm{Spa}(R,R^+)$ with completed residue field $K(z)$, $a\in \Lambda\otimes_{\mathbb B_{\mathrm{dR},R}^+} \mathbb B_{\mathrm{dR},K(z)}^+$. Then $a\in \Lambda$.
\end{lemma}

\begin{proof} We may choose $m\geq 0$ minimal such that $a\in \xi^{-m} \Lambda$, and assume $m>0$ for contradiction. Then $a$ induces a nonzero element $\bar{a}$ of the finite projective $R$-module $\xi^{-m} \Lambda / \xi^{-m+1} \Lambda$. By assumption, the specialization of $\bar{a}$ to $K(z)$ vanishes for all $z\in \mathrm{Spa}(R,R^+)$. But an element of $R$ vanishing at all points of $\mathrm{Spa}(R,R^+)$ is trivial, as $R$ is reduced.
\end{proof}

Now, to prove surjectivity, we first observe that $\GrBdR_G$ is in fact a sheaf for the pro-\'etale topology used in~\cite{scholzerigid}.\footnote{It is also a sheaf for stronger topologies as used in~\cite{scholzelectures}, but we do not need this here.} More precisely, we allow covers $Y=\mathrm{Spa}(S,S^+)\to X=\mathrm{Spa}(R,R^+)$ which can be written as a composite $Y\to Y_0\to X$, where $Y\to Y_0$ is an inverse limit of finite \'etale surjective maps, and $Y_0\to X$ is \'etale. This pro-\'etale topology of perfectoid spaces is defined in~\cite[\S 9.2]{kedlayaliu}. The descent result we need is~\cite[Theorem 9.2.15]{kedlayaliu}. Indeed, using the Tannakian formalism, it is enough to prove that one can glue finite projective $\mathbb B_{\mathrm{dR},R}^+$-modules in the pro-\'etale topology. As $\mathbb B_{\mathrm{dR},R}^+$ is $\xi$-adically complete with $\xi$ a non-zerodivisor and $\mathbb B_{\mathrm{dR},R}^+/\xi = R$, a standard argument reduces us to gluing finite projective $R$-modules, which is precisely~\cite[Theorem 9.2.15]{kedlayaliu}.

Thus, we see that it is enough to construct, for any representation $V$ of $G$, a $\mathbb B_{\mathrm{dR}}^+$-local system $\mathbb{M}_V\subset V\otimes \mathbb{B}_{\mathrm{dR}}$ on the pro-\'etale site of $\Fl_{G,\mu}$, compatibly with tensor products and short exact sequences, which maps to the correct filtration under the Bialynicki-Birula morphism. Indeed, by pullback, this will induce a similar $\mathbb B_{\mathrm{dR}}^+$-local system on the pro-\'etale site of $\mathrm{Spa}(R,R^+)$ for any $(R,R^+)$-valued point of $\Fl_{G,\mu}$, which by the descent result above gives an $(R,R^+)$-point of $\GrBdR_{G,\mu}$.

Now note that any representation $V$ of $G$ gives rise to a filtered module with integrable connection $(V\otimes \cO_{\Fl_{G,\mu}},\mathrm{id}\otimes \nabla,\mathrm{Fil}_{-\bullet})$, where $\mathrm{Fil}_\bullet$ is the universal ascending filtration parametrized by $\Fl_{G,\mu}$ (so that $\mathrm{Fil}_{-\bullet}$ is a descending filtration). Because $\mu$ is minuscule, this filtered module with integrable connection satisfies Griffiths transversality (with the same proof as in the complex case, cf.~\cite[Proposition 1.1.14]{deligne-varietes}). Now~\cite[Proposition 7.9]{scholzerigid} constructs a corresponding $\mathbb B_{\mathrm{dR}}^+$-local system $\mathbb{M}_V\subset V\otimes \mathbb{B}_{\mathrm{dR}}$ on the pro-\'etale site of $\Fl_{G,\mu}$, and this construction is compatible with tensor products and short exact sequences. One verifies that the induced filtration is correct, finishing the proof.
\end{proof}

\subsection{Vector bundles over $\cX$ and the Newton stratification}\label{Newton stratification}

The goal of this subsection is to define the Newton stratification on $\Fl_{G,\mu}$, where $G/\mathbb Q_p$ is a reductive group, and $\mu$ is a conjugacy class of minuscule cocharacters, defined over the reflex field $E$. The idea is that, given a $(C,\cO_C)$-point of $\Fl_{G,\mu}\cong \GrBdR_{G,\mu}$, one can modify the trivial $G$-bundle over $\cX_{C^\flat}$ along $\infty$ to obtain a new $G$-bundle over $\cX_{C^\flat}$, and therefore (by Fargues' theorem) an element of $B(G)$.

Fix any perfectoid affinoid $(R,R^+)$ over $\mathbb Q_p$. We recall how to construct a vector bundle over $\cX(R^\flat,R^{\flat +})$ from a $\mathbb B_{\mathrm{dR},R}^+$-lattice in $\mathbb{B}_{\mathrm{dR},R}^n$. First note that, by GAGA for the curve, it is enough to define a vector bundle on a scheme version $X(R^\flat)$ of $\cX(R^\flat,R^{\flat +})$. Let $Z$ be the image of the canonical closed immersion
\[
i_\infty: \mathrm{Spec}\ R\to X(R^\flat)\ .
\]
Then $\mathrm{Spec}\ \mathbb{B}^+_{\mathrm{dR},R}$ is the completion of $X(R^\flat)$ along $Z$. Moreover, $\mathrm{Spec}\ \mathbb{B}_{\mathrm{dR},R}$ can be identified with the fiber product of $\mathrm{Spec}\ \mathbb{B}^+_{\mathrm{dR},R}$ and the complement of $Z$ over $X(R^\flat)$.

\begin{thm}[{\cite[Theorem 8.9.6]{kedlayaliu}}]\label{BeauvilleLaszloRelCurve} There is an equivalence between the category of vector bundles over $X(R^\flat)$ (or over $\cX(R^\flat,R^{\flat +})$) and the category of triples $(M_1,M_2,\iota)$, where $M_1$ is a vector bundle on $X(R^\flat)\setminus Z$, $M_2$ is a vector bundle over $\mathrm{Spec}\ \mathbb B_{\mathrm{dR},R}^+$, and $\iota$ is an isomorphism between $M_1|_{\mathrm{Spec}\ \mathbb B_{\mathrm{dR},R}}$ and $M_2|_{\mathrm{Spec}\ \mathbb B_{\mathrm{dR},R}}$. This equivalence is compatible with tensor products and short exact sequences.
\end{thm}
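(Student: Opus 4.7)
The plan is to prove this Beauville-Laszlo type statement for the relative Fargues-Fontaine curve by combining the GAGA comparison with a faithfully flat gluing along the Cartier divisor $Z$, then reducing the gluing to the classical Beauville-Laszlo theorem applied (non-noetherianly) to each affine chart of $X(R^\flat)$ meeting $Z$.

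First, by the relative GAGA theorem \cite[Theorem 8.7.7]{kedlayaliu} already cited, one may replace $\cX(R^\flat,R^{\flat +})$ by the scheme $X(R^\flat)$ throughout, so the question becomes a scheme-theoretic gluing on $X(R^\flat)$. The functor from vector bundles on $X(R^\flat)$ to triples is the evident one: restrict to $U := X(R^\flat)\setminus Z$, pull back to $\mathrm{Spec}\ \mathbb{B}^+_{\mathrm{dR},R}$, and take $\iota$ to be the tautological isomorphism after inverting $\xi$. It is obviously a tensor functor exact in the input, so the content is that this functor is an equivalence.

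Next, I would verify two structural facts about the embedding $i_\infty$. First, $Z$ is an effective Cartier divisor in $X(R^\flat)$; indeed, on a suitable affine cover $\{\mathrm{Spec}\ A_i\}$ of a neighborhood of $Z$, the ideal sheaf of $Z$ is locally generated by a single non-zerodivisor $f_i$ (one may take $f_i$ to be a suitable dehomogenization of the distinguished element $\xi\in W(R^{\flat +})[1/p]$ whose principal role in the definition of $\mathbb{B}^+_{\mathrm{dR},R}$ was reviewed in Section~\ref{hodge-tate filtration}). Second, the formal completion of $X(R^\flat)$ along $Z$ is canonically $\mathrm{Spf}\ \mathbb{B}^+_{\mathrm{dR},R}$ with its $\xi$-adic topology, and more precisely each $f_i$-adic completion $(A_i)^{\wedge}_{f_i}$ is compatibly identified with the corresponding localization of $\mathbb{B}^+_{\mathrm{dR},R}$. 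This identification reduces to the construction of $\mathbb{B}^+_{\mathrm{dR},R}$ as the $\xi$-adic completion of $W(R^{\flat +})[1/p]$ combined with the fact that $Z$ is affine.

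With these facts in hand, the equivalence follows from the classical Beauville-Laszlo theorem applied ring by ring. For each affine $\mathrm{Spec}\ A_i$ in the cover, Beauville-Laszlo (in the form valid for arbitrary rings and $f_i$-adically separated completions, applied to finite projective modules, which are automatically flat) gives that the datum of a finite projective $A_i$-module is equivalent to the datum of a finite projective $A_i[1/f_i]$-module, a finite projective $(A_i)^{\wedge}_{f_i}$-module, and a gluing isomorphism after inverting $f_i$ in the completion. Restricting the triple $(M_1,M_2,\iota)$ to $\mathrm{Spec}\ A_i$ and using the identification of $(A_i)^{\wedge}_{f_i}$ with a localization of $\mathbb{B}^+_{\mathrm{dR},R}$ produces such a datum, hence a finite projective $A_i$-module; the uniqueness clause of Beauville-Laszlo makes these local modules glue with $M_1|_U$ into a vector bundle on $X(R^\flat)$ canonically, and the construction is inverse to the evident functor. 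Tensor compatibility and exactness are automatic from the gluing description, since all the functors involved (restriction, completion, Beauville-Laszlo fiber product) preserve tensor products and short exact sequences of flat modules.

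The main obstacle is the non-noetherian nature of all rings involved: $X(R^\flat)$ is not locally noetherian, and $\mathbb{B}^+_{\mathrm{dR},R}$ is a non-noetherian complete valuation-type ring. One must therefore invoke Beauville-Laszlo in its formulation that does not require noetherian hypotheses, which is available precisely because one is gluing flat (indeed finite projective) modules along a non-zerodivisor with $f_i$-adic separatedness. Once the local identification between the $f_i$-adic completion of each $A_i$ and the corresponding fraction of $\mathbb{B}^+_{\mathrm{dR},R}$ is cleanly established — this verification being the most delicate point, as it amounts to a Beauville-Laszlo style gluing already at the level of structure sheaves — the proof reduces to a formal descent assembly.
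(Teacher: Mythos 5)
A preliminary remark: the paper does not prove this statement at all — it is imported verbatim from \cite[Theorem 8.9.6]{kedlayaliu} — so there is no internal proof to measure yours against. Your Beauville--Laszlo strategy is nonetheless the standard route to results of this kind, and in outline it is the right one: reduce by GAGA to $X(R^\flat)$, realize $Z$ as a principal Cartier divisor affine-locally, identify the formal completion along $Z$ with $\mathrm{Spf}\ \mathbb{B}^+_{\mathrm{dR},R}$, and apply the non-noetherian Beauville--Laszlo theorem for finite projective modules (which needs no flatness of $A\to A^\wedge_f$, only that $f$ be a nonzerodivisor on both $A$ and $A^\wedge_f$; the latter holds here since $\xi$ is a nonzerodivisor in $\mathbb{B}^+_{\mathrm{dR},R}$).

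The gap is that the two facts you defer are exactly where the content lies, and one of them is not merely a verification. You need an affine open $\mathrm{Spec}\ A\subset X(R^\flat)$ containing (an open piece of) $Z$ on which $Z$ is cut out by a nonzerodivisor; concretely this amounts to producing a section of $\cO_{X(R^\flat)}(d)$, $d>0$, invertible along $Z$. This is a nontrivial consequence of the ampleness of $\cO(1)$ on the relative curve, and in general such a section need only exist after passing to a (pro-)\'etale cover of $\mathrm{Spa}(R,R^+)$; your argument then acquires an additional descent step (glueing vector bundles on $X(R^\flat)$ and finite projective $\mathbb{B}^+_{\mathrm{dR},R}$-modules along such covers, in the style of \cite[Theorem 9.2.15]{kedlayaliu}) which the sketch does not account for. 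Relatedly, the phrase ``the corresponding localization of $\mathbb{B}^+_{\mathrm{dR},R}$'' does not parse as written: $\mathbb{B}^+_{\mathrm{dR},R}$ has no evident localizations attached to proper opens of $Z=\mathrm{Spec}\ R$, so unless $Z$ lies in a single chart, what you must compare $A_i^{\wedge}_{f_i}$ with is a completed rational localization $\mathbb{B}^+_{\mathrm{dR},R_i}$, and the compatibility of these for varying $i$ is itself a descent statement, not a formality. If you arrange a single chart $\mathrm{Spec}\ A\supset Z$, then $A^\wedge_f=\varprojlim_n \Gamma(Z_n,\cO)=\mathbb{B}^+_{\mathrm{dR},R}$ follows from the description of the completion along $Z$ recorded just before the theorem, and the rest of your argument (glueing the Beauville--Laszlo output with $M_1|_U$ over $\mathrm{Spec}\ A[1/f]$, tensor and exactness compatibility) does go through as stated.
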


In particular, one gets a functor from $\mathbb B_{\mathrm{dR},R}^+$-lattices in $\mathbb B_{\mathrm{dR},R}^n$ by gluing it to the trivial rank $n$ vector bundle on $X(R^\flat)\setminus Z$.

\begin{cor}\label{ModifVectBund} For any perfectoid affinoid $\mathbb Q_p$-algebra $(R,R^+)$, there is a natural map
\[
\cE: \GrBdR_G(R,R^+)\to \{G\mathrm{-bundles\ over\ }\cX(R^\flat,R^{\flat +})\}\ .
\]
\end{cor}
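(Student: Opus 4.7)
The plan is to unwind both sides of the claimed map through the Tannakian formalism and reduce to the Beauville--Laszlo-type gluing statement given as Theorem~\ref{BeauvilleLaszloRelCurve}. Concretely, a point of $\GrBdR_G(R,R^+)$ is a $G$-torsor $\mathcal{T}$ on $\Spec\ \mathbb B_{\mathrm{dR},R}^+$ equipped with a trivialization over $\Spec\ \mathbb B_{\mathrm{dR},R}$; by the Tannakian dictionary, this is the same datum as an exact tensor functor
\[
\mathrm{Rep}_{\mathbb Q_p} G\longrightarrow \{\text{pairs }(\Lambda_V,\tau_V)\}\ ,
\]
where $\Lambda_V\subset V\otimes_{\mathbb Q_p}\mathbb B_{\mathrm{dR},R}$ is a $\mathbb B_{\mathrm{dR},R}^+$-lattice and $\tau_V\colon \Lambda_V[\xi^{-1}]\isomap V\otimes_{\mathbb Q_p}\mathbb B_{\mathrm{dR},R}$ is the tautological trivialization. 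Similarly, a $G$-bundle on $\cX(R^\flat,R^{\flat +})$ is, by definition, an exact tensor functor from $\mathrm{Rep}_{\mathbb Q_p} G$ to vector bundles on $\cX(R^\flat,R^{\flat +})$, which via GAGA is the same as such a functor to vector bundles on the scheme $X(R^\flat)$.

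Next, for each $V\in \mathrm{Rep}_{\mathbb Q_p} G$, I would construct the vector bundle $\cE(V)$ on $X(R^\flat)$ by applying Theorem~\ref{BeauvilleLaszloRelCurve} to the triple
\[
\bigl(V\otimes_{\mathbb Q_p}\cO_{X(R^\flat)\setminus Z},\ \Lambda_V,\ \tau_V\bigr)\ ,
\]
where $Z\subset X(R^\flat)$ is the image of $i_\infty\colon \Spec R\to X(R^\flat)$, the first entry is the trivial $V$-bundle of rank $\dim V$ on the complement of $Z$, and $\tau_V$ is the gluing datum along $\Spec\ \mathbb B_{\mathrm{dR},R}$. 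This yields a vector bundle $\cE(V)$ on $X(R^\flat)$ and hence on $\cX(R^\flat,R^{\flat +})$.

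Finally, I would verify that the assignment $V\mapsto \cE(V)$ defines an exact tensor functor. Since Theorem~\ref{BeauvilleLaszloRelCurve} asserts the gluing equivalence is compatible with tensor products and short exact sequences, and since both the input trivial bundle on $X(R^\flat)\setminus Z$ and the lattice $\Lambda_V$ depend in an exact tensor-compatible way on $V$ (the former tautologically, the latter by hypothesis), the resulting functor $V\mapsto \cE(V)$ is exact and tensor. Naturality in $(R,R^+)$ is automatic from the naturality in Theorem~\ref{BeauvilleLaszloRelCurve} and the functoriality of the formation of $\mathbb B_{\mathrm{dR},R}^+$, $X(R^\flat)$, and $Z$.

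There is no real obstacle here beyond bookkeeping: the only nontrivial ingredients are the Beauville--Laszlo gluing (Theorem~\ref{BeauvilleLaszloRelCurve}) and the Tannakian reformulation of both sides, and once these are in hand the construction is formal. If one wanted to avoid Tannakian language, one could alternatively fix a faithful representation $G\hookrightarrow\GL_n$ and descend the resulting rank-$n$ bundle with $G$-structure; but the Tannakian approach is cleaner and makes compatibility with tensor products transparent.
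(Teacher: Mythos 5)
Your proposal is correct and follows essentially the same route as the paper: the paper reduces to $GL_n$ via the Tannakian formalism and there applies the gluing equivalence of Theorem~\ref{BeauvilleLaszloRelCurve} to the lattice together with the trivial bundle on $X(R^\flat)\setminus Z$. Your write-up simply spells out the exactness and tensor-compatibility bookkeeping that the paper leaves implicit.
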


\begin{proof} If $G=GL_n$, this follows from the discussion above. In general, it follows from the Tannakian formalism.
\end{proof}

In particular, consider the case where $(R,R^+) = (C,\cO_C)$, with $C/\mathbb Q_p$ complete and algebraically closed, and $\cO_C\subset C$ its ring of integers; moreover, fix an embedding $\bar{\mathbb Q}_p\hookrightarrow C$. Using Fargues' classification of $G$-bundles, Theorem~\ref{classification of G-bundles}, one gets a composite map
\[
b(\cdot): \GrBdR_G(C,\cO_C)\to B(G)\ :\ x\mapsto b(\cE(x))
\]
classifying the isomorphism class of the associated $G$-bundle $\cE(x)$. We will need to know the following compatibility between $\mu$ and $b$.

\begin{prop}\label{InBGmu} Let $G$ be any reductive group over $\mathbb Q_p$, and $\mu$ any conjugacy class of cocharacters (not necessarily minuscule). For any $x\in \GrBdR_{G,\mu}(C,\cO_C)$ with $b=b(\cE(x))$, one has $b\in B(G,\mu^{-1})$.
\end{prop}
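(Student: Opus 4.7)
The plan is to verify separately the two defining conditions of $B(G,\mu^{-1})$: the Newton inequality $\nu_b \preceq \overline{\mu^{-1}}$ in $(X_\ast(G)\otimes \mathbb{Q})^\Gamma_{\mathrm{dom}}$, and the Kottwitz identity $\kappa(b) = (\mu^{-1})^\flat$ in $\pi_1(G)_\Gamma$. Both the invariants $\nu,\kappa$ and the construction $x \mapsto b(\cE(x))$ are functorial in the pair $(G,\mu)$, since modification at $\infty$ and Fargues' classification (Theorem~\ref{classification of G-bundles}) commute with morphisms of reductive groups, and $\kappa$ is defined as a natural transformation.

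For the Newton inequality, I would reduce via the Tannakian formalism to the case $G = GL_n$: the dominance order on $(X_\ast(G)\otimes \mathbb{Q})^\Gamma_{\mathrm{dom}}$ is characterized by inequalities of pairings with highest weights of representations of $G$, so functoriality lets me check the inequality after composing with each irreducible $V_\lambda$. For $G = GL_n$, Theorem~\ref{BeauvilleLaszloRelCurve} realizes $\cE(x)$ as the Beauville--Laszlo gluing of the trivial bundle on $X\setminus\{\infty\}$ with a $\mathbb{B}^+_{\mathrm{dR},C}$-lattice $\Lambda\subset \mathbb{B}^n_{\mathrm{dR},C}$ of type $\mu = (k_1\geq \cdots\geq k_n)$. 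The required Mazur-type inequality is that every rank-$r$ subbundle $\cF\subset \cE(x)$ satisfies $\deg \cF \leq k_1+\cdots+k_r$. I would prove this by extending $\cF|_{X\setminus\{\infty\}}\subset \cO_\cX^n|_{X\setminus\{\infty\}}$ to a rank-$r$ subbundle $\overline{\cF}^{\mathrm{tr}}\subset \cO_\cX^n$; the semistability of the slope-zero bundle $\cO_\cX^n$ gives $\deg \overline{\cF}^{\mathrm{tr}}\leq 0$, while the difference $\deg \cF - \deg \overline{\cF}^{\mathrm{tr}}$ is a signed length at $\infty$ bounded, via the embedding of an $r$-generated submodule into $\Lambda/(\cO_\cX^n)_\infty \cong \bigoplus_{i=1}^n \mathbb{B}^+_{\mathrm{dR},C}/\xi^{k_i}$, by the elementary-divisor inequality $k_1+\cdots+k_r$. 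With the sign convention of Definition~\ref{line bundles}, this bundle HN bound translates into $\nu_b\preceq \mu^{-1}$, hence $\nu_b\preceq \overline{\mu^{-1}}$ after Galois averaging.

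For the Kottwitz identity, the subtlety is that $\pi_1(G)_\Gamma$ may contain torsion not detected by any representation $G \to GL_n$. I would first argue that the composite $\GrBdR_{G,\mu}(C,\cO_C)\to B(G)\xrightarrow{\kappa}\pi_1(G)_\Gamma$ is constant on the $G(\mathbb{B}^+_{\mathrm{dR},C})$-orbit of the basepoint $\mu(\xi)^{-1}$, using discreteness of $\pi_1(G)_\Gamma$ and the fact that $\kappa$ factors through a $\pi_0$-type invariant of the stack of $G$-bundles, hence is locally constant in families. It then suffices to compute at the basepoint. Choosing a maximal torus $T\subset G$ through which $\mu$ factors and using functoriality along $T\hookrightarrow G$, the calculation reduces to $T$ in place of $G$, where $B(T) = X_\ast(T)_\Gamma = \pi_1(T)_\Gamma$ and both $\kappa,\nu$ are given by the identity map. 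For $T = \mathbb{G}_m$ and $\mu = k\in \mathbb{Z}$, Definition~\ref{line bundles} yields $\cE(\xi^{-k}) = \cO_\cX(k)$, corresponding to the isocrystal class $-k = \mu^{-1}$; the general torus case follows by Tannakian duality on characters. Pushing forward along $T\hookrightarrow G$ then gives $\kappa(b) = (\mu^{-1})^\flat$ as required.

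The main obstacle is the Kottwitz identity: whereas the Newton inequality reduces cleanly to $GL_n$ and to elementary lattice-theoretic facts plus semistability of the trivial bundle on the Fargues--Fontaine curve, the torsion part of $\pi_1(G)_\Gamma$ requires a genuine torus reduction compatible with all the functorial structures, and the constancy-on-the-orbit argument must be justified carefully, for example by invoking some form of continuity of the map to $\pi_0$ of the stack of $G$-bundles.
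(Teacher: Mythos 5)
Your treatment of the Newton inequality is correct and takes a genuinely different route from the paper. You prove the Mazur-type bound $\deg\cF\leq k_1+\cdots+k_r$ for every rank-$r$ subbundle directly, by comparing $\cF$ with its saturation inside $\cO_{\cX}^n$ and bounding the discrepancy at $\infty$ by elementary divisors of $\Lambda/(\mathbb B_{\mathrm{dR}}^+)^n$ (after twisting so that all $k_i\geq 0$). The paper instead adapts Katz's argument (Lemma~\ref{Newton above Hodge}): reduce via exterior powers to (i) the rank-one case and (ii) nonnegativity of the top slope, the latter following from the extension of the trivialization to a map $\cO_{X_{C^\flat}}^n\hookrightarrow\cE$ and the absence of nonzero maps $\cO\to\cO(\lambda)$ for $\lambda<0$. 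Both arguments are sound; yours needs all the HN slopes rather than just the top one, but avoids the exterior-power bookkeeping.

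The Kottwitz identity is where your proposal has genuine gaps. First, your reduction to the basepoint rests on the claim that $\kappa\circ b(\cdot)$ is constant on the $G(\mathbb B_{\mathrm{dR},C}^+)$-orbit because $\kappa$ is ``locally constant in families'' on the stack of $G$-bundles. That local constancy (equivalently, that the Schubert cell maps to a single connected component of $\mathrm{Bun}_G$) is a substantive theorem about the geometry of $\mathrm{Bun}_G$ which you do not prove and which the paper deliberately avoids invoking. The paper's Lemma~\ref{Kottwitz map} instead computes $\kappa(b(\cE(x)))$ for \emph{every} $x$ at once: it lifts $x$ along the surjection $\GrBdR_{\tilde G,\tilde\mu}(C,\cO_C)\to\GrBdR_{G,\mu}(C,\cO_C)$ for a central extension $\tilde G\to G$ with simply connected derived group (surjectivity by the Cartan decomposition), and then maps \emph{out} to the torus $T=\tilde G/\tilde G^{\mathrm{der}}$, for which $\pi_1(\tilde G)_\Gamma\to\pi_1(T)_\Gamma$ is an isomorphism; your maximal torus $T\hookrightarrow G$ only captures the basepoint, which is why you are forced into the unproved constancy step. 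Second, your torus computation ``by Tannakian duality on characters'' is insufficient: characters of $T$ defined over $\mathbb Q_p$ only detect the image of the class in $X_\ast(T)_\Gamma$ modulo torsion, and $\pi_1(T)_\Gamma=X_\ast(T)_\Gamma$ can have torsion --- precisely the obstacle you flag but do not overcome. The paper resolves this by a further surjection from a product of induced tori $\mathrm{Res}_{K/\mathbb Q_p}\mathbb G_m$ (again using surjectivity on Grassmannians), where $\pi_1(\tilde T)_\Gamma=\mathbb Z$ is torsion-free and the class is detected by the norm character, reducing finally to $\mathbb G_m$ and Definition~\ref{line bundles}. As written, your argument only establishes $\kappa(b)=-\mu^\flat$ modulo torsion and only at the basepoint.
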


\begin{proof} Unraveling the definition of $B(G,\mu^{-1})$, we have to prove two separate statements. The first statement is $\nu_b\preceq \overline{\mu^{-1}}$ as elements of $(X_\ast(G)\otimes \mathbb{Q})^\Gamma_{\mathrm{dom}}$. This reduces to the case of $G=GL_n$ by~\cite[Lemma 2.2]{rapoport-richartz}. In that case, the statement is the following.

\begin{lemma}\label{Newton above Hodge}
Let $\cE$ be a vector bundle of rank $n$ over $X_{C^\flat}$, together with a trivialization outside the point $\infty$. Its relative position from the trivial bundle on $X_{C^\flat}$ is measured by a cocharacter $\mu(\cE)$ of $GL_n$. Let $\nu_{\cE}\in (X_\ast(GL_n)\otimes \mathbb{Q})_{\mathrm{dom}}$ be the Newton polygon of $\cE$, with slopes $\{\lambda_i\mid \cE\cong \bigoplus_i \cO_{X_{C^\flat}}(\lambda_i)\}$. One has the inequality
\[
\nu_{\cE}\preceq \mu(\cE)\ ,
\]
i.e.~``The Newton polygon of $\cE$ lies above its Hodge polygon''.\footnote{We remind the reader that the correspondence between isocrystals and vector bundles on $X_{C^\flat}$ reverses slopes, so that this statement translates into $b(\cE)^{-1}\in B(GL_n,\mu(\cE))$, which is equivalent to $b(\cE)\in B(GL_n,\mu(\cE)^{-1})$.}
\end{lemma}

\begin{proof} We adapt the original argument in~\cite{katz}. By considering exterior powers of vector bundles, it suffices to check that 
\begin{enumerate} 
\item the Newton and Hodge slopes match for the top exterior power of $\cE$, and
\item the first slope of the Newton polygon of $\cE$ always lies above the first slope of the Hodge polygon of $\cE$.
\end{enumerate}

The fact that the Hodge and Newton slopes match in the case of line bundles on $\cX_{FF,C}$ is a direct verification: The modification $\cE$ is given by the lattice $\cE\otimes_{\cO_{X^\flat}} \mathbb B_{\mathrm{dR},C}^+ = \xi^{-d} \mathbb B_{\mathrm{dR},C}$ for a unique $d\in \mathbb{Z}$, and in fact $\mu(\cE) = d\in X_\ast(GL_1) = \mathbb Z$ in our normalization. The resulting line bundle is given by $\cO_{X^\flat}(d)$, which is of slope $d$, as desired.

For the second part, up to twisting, we may assume that the first slope of the Hodge polygon is $0$, in particular all Hodge slopes are nonnegative. This implies that
\[
(\mathbb{B}^+_{\mathrm{dR}, C})^n\subseteq \cE\otimes_{\cO_{X_{C^\flat}}} \mathbb B_{\mathrm{dR},C}^+\ .
\]
This, in turn, implies that the trivialization of $\cE$ away from $\infty$ extends to an injection $\cO_{X_{C^\flat}}^n\hookrightarrow \cE$. We have to show that all slopes of $\cE$ are nonnegative, so assume for contradiction that there is a quotient $\cE\to \cO_{X_{C^\flat}}(\lambda)$ with $\lambda<0$. This induces a nonzero map $\cO_{X_{C^\flat}}^n\to \cO_{X_{C^\flat}}(\lambda)$. On the other hand, there are no nonzero maps $\cO_{X_{C^\flat}}\to \cO_{X_{C^\flat}}(\lambda)$ by~\cite{farguesfontaine}.
\end{proof}

The other part of the condition $b\in B(G,\mu^{-1})$ concerns the Kottwitz map, and is given by the following lemma.

\begin{lemma}\label{Kottwitz map} The composition $\GrBdR_{G,\mu}(C,\cO_C)\to B(G)\buildrel{\kappa}\over\longrightarrow \pi_1(G)_\Gamma$ is constant, and equal to $-\mu^\flat$.
\end{lemma}

\begin{proof} We note that the map in question is functorial in $(G,\mu)$. We first reduce to the case where $G$ has simply connected derived group by making a central extension $\tilde{G}\to G$ (cf.~\cite[5.6]{kottwitz}); picking any lift $\tilde{\mu}$ of $\mu$, the resulting map
\[
\GrBdR_{\tilde{G},\tilde{\mu}}(C,\cO_C)\to \GrBdR_{G,\mu}(C,\cO_C)
\]
is surjective, as follows from the Cartan decomposition, so it is enough to prove the result for $(\tilde{G},\tilde{\mu})$.

Now if $G$ has simply connected derived group $G^{\mathrm{der}}$, then $T=G/G^{\mathrm{der}}$ is a torus for which $\pi_1(G)_\Gamma\to \pi_1(T)_\Gamma$ is an isomorphism; thus, we are reduced to the case of a torus.

If $G=T$ is a torus, we may find a surjection $\tilde{T}\to T$, where $\tilde{T}$ is a product of induced tori $\mathrm{Res}_{K/\mathbb Q_p} \mathbb{G}_m$. Arguing as before, we are reduced to the case of $\tilde{T}$, and then to the case $\tilde{T} = \mathrm{Res}_{K/\mathbb Q_p} \mathbb G_m$. In that case, $\pi_1(\tilde{T})_\Gamma = \mathbb Z$ (cf.~\cite[Lemma 2.2]{kottwitz}), which is torsion-free, so it is enough to identify the image in $\pi_1(\mathbb G_m) = \mathbb Z$ under the norm map $\mathrm{Norm}_{K/\mathbb Q_p} : \tilde{T}\to \mathbb G_m$. Finally, we are reduced to the case $G=\mathbb G_m$, which is part of Lemma~\ref{Newton above Hodge}.
\end{proof}
\end{proof}

Now fix a minuscule $\mu$ as above, defined over $E$. The inverse of the isomorphism $\pi_{G,\mu}$ in Theorem~\ref{BialynickiBirulaIsom} gives rise to a composition
\[
\cE: \Fl_{G,\mu}(R,R^+)\to \GrBdR_{G,\mu}(R,R^+)\to \{G\mathrm{-bundles\ over\ }\cX(R^\flat,R^{\flat +})\}\ .
\]

\begin{defn} The map
\[
|\Fl_{G,\mu}|\to B(G)
\]
sends any $(C,C^+)$-valued point $x\in \Fl_{G,\mu}(C,C^+)$, where $C$ is a complete algebraically closed extension of $E$ and $C^+\subset C$ is an open and bounded valuation subring, to the isomorphism class of the associated $G$-bundle $\cE(x)$, which by Theorem~\ref{classification of G-bundles} is given by an element of $B(G)$.

For any $b\in B(G)$, we let $\Fl_{G,\mu}^b\subset \Fl_{G,\mu}$ be the subset of all points with image $b$.
\end{defn}

One easily checks that this map is well-defined as a map on $|\Fl_{G,\mu}|$, i.e.~is independent of the choice of complete algebraically closed extension of the residue field at any point. We remark that by definition a higher rank point has the same image as its maximal, rank $1$, generalization, and therefore the map factors over the maximal hausdorff quotient of $|\Fl_{G,\mu}|$, which can be identified with the topological space $\Fl_{G,\mu}^{\mathrm{Berk}}$ underlying the corresponding Berkovich space.

\begin{prop}\label{map to B(G)}\begin{enumerate} \item The map $b(\cdot): |\Fl_{G,\mu}| \to B(G)$ is lower semicontinuous.
\item The image of the map $b(\cdot): |\Fl_{G,\mu}|\to B(G)$ is contained in the set of $\mu^{-1}$-admissible elements $B(G,\mu^{-1})$.
\end{enumerate}
\end{prop}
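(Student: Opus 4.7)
Part (2) is essentially immediate from what has been set up. At any $(C,C^+)$-valued point of $\Fl_{G,\mu}$, passing to the maximal generalization (where $C^+ = \cO_C$) does not change the associated $G$-bundle on the relative curve and hence does not change the image in $B(G)$. The Bialynicki-Birula isomorphism of Theorem~\ref{BialynickiBirulaIsom} then identifies $\Fl_{G,\mu}(C,\cO_C)$ with $\GrBdR_{G,\mu}(C,\cO_C)$, and Proposition~\ref{InBGmu} places $b(x)$ in $B(G,\mu^{-1})$.

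For part (1), the plan is to deduce the semicontinuity from the semicontinuity of Newton polygons for families of $\varphi$-modules over the relative Robba ring, due to Kedlaya--Liu~\cite{kedlayaliu}. Given a perfectoid affinoid $(R,R^+)$ over $E$ together with a map $\mathrm{Spa}(R,R^+)\to \Fl_{G,\mu}$, the chain of constructions set up in the previous subsections --- Theorem~\ref{BialynickiBirulaIsom}, followed by Corollary~\ref{ModifVectBund}, followed by Theorem~\ref{relative Robba ring equivalence} --- produces a tensor functor from $\mathrm{Rep}_{\mathbb{Q}_p} G$ to $\varphi$-modules over $\tilde{\cR}_{R^\flat}$. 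The Kedlaya--Liu semicontinuity theorem for $GL_n$ asserts that the Newton polygon of a $\varphi$-module over $\tilde{\cR}_{R^\flat}$ jumps only along a closed subset of $|\mathrm{Spa}(R,R^+)|$. Applying this to each representation of $G$ and combining via the Tannakian formalism, one obtains semicontinuity of the composite map $|\mathrm{Spa}(R,R^+)|\to B(G)\xrightarrow{\nu} (X_*(G)\otimes\mathbb{Q})^\Gamma_{\mathrm{dom}}$. By part (2), the image lies in $B(G,\mu^{-1})$, on which $\kappa$ is constant equal to $-\mu^\flat$ (Lemma~\ref{Kottwitz map}); since $(\nu,\kappa)$ is injective on $B(G)$, this promotes the $\nu$-semicontinuity to semicontinuity of the full $B(G)$-valued map on $|\mathrm{Spa}(R,R^+)|$.

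It remains to transfer this to $|\Fl_{G,\mu}|$, which is not itself perfectoid. Locally around any point of $|\Fl_{G,\mu}|$ one can construct a surjective, open, pro-\'etale map from a perfectoid affinoid $\mathrm{Spa}(R,R^+)$, for instance by working on a small \'etale chart of $\Fl_{G,\mu}$ and adjoining all $p$-power roots of local toric coordinates. Because such maps are open and surjective on underlying topological spaces, the closed Newton loci on $|\mathrm{Spa}(R,R^+)|$ descend to closed loci on the image, and quasicompactness of $\Fl_{G,\mu}$ allows one to patch finitely many such charts to obtain the semicontinuity globally.

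The main obstacle I anticipate is verifying that the individual Newton polygon semicontinuities for each representation of $G$ package correctly into a semicontinuity statement for the $B(G)$-valued map with respect to the dominance order. Concretely, one must show that the failure of the inequality $\nu(x)\preceq \nu_0$ in $(X_*(G)\otimes\mathbb{Q})^\Gamma_{\mathrm{dom}}$ can always be detected on the ordinary Newton polygon of \emph{some} faithful representation, so that the various individual closed loci (one for each representation) assemble to the desired closed ``bad'' locus; this comes down to a standard fact about dominant rational cocharacters being detected by their images in faithful representations, but needs to be spelled out.
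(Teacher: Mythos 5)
Your proposal is correct and follows essentially the same route as the paper: part (2) via Proposition~\ref{InBGmu}, and part (1) by reducing to the Newton map (using constancy of $\kappa$), passing to a pro-\'etale perfectoid cover, and invoking Kedlaya--Liu semicontinuity for the $\varphi$-modules attached to representations of $G$. The one point you flag as needing care --- that semicontinuity of the $B(G)$-valued Newton map can be checked representation by representation --- is exactly what the paper handles by citing \cite[Lemma 2.2]{rapoport-richartz}.
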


\begin{remark} In~\cite[Proposition A.9]{rapoportappendix}, based on the discussion here, it is proved that in fact the image of $|\Fl_{G,\mu}|\to B(G,\mu^{-1})$ is all of $B(G,\mu^{-1})$.
\end{remark}

\begin{proof} The second part follows from Proposition~\ref{InBGmu} above. For the first part, by the definition of the partial ordering on $B(G)$, and the fact that the Kottwitz map is constant by the second part, it remains to prove semicontinuity of the Newton map. We may pick an affinoid perfectoid space $\mathrm{Spa}(R,R^+)$ with a map to $\Fl_{G,\mu}$ which is a topological quotient map, by using a pro-\'etale cover.  It is then enough to show that the composite map $|\mathrm{Spa}(R,R^+)|\to |\Fl_{G,\mu}|\to B(G)$ is lower semicontinuous. But semicontinuity of the Newton map can be checked on representations of $G$ (cf.~\cite[Lemma 2.2]{rapoport-richartz}), so pick a representation of $G$. We get a corresponding vector bundle over $\cX(R^\flat,R^{\flat +})$. Now, the result follows from Theorem 7.4.5 of~\cite{kedlayaliu}, using Corollary~\ref{relative Robba ring equivalence}.
\end{proof}

\begin{cor}\label{locally closed strata} The strata $\Fl^b_{G,\mu}$ are locally closed in $\Fl_{G,\mu}$. More precisely, the stratum corresponding to the basic element is open in $\Fl_{G,\mu}$, and the strata \[\Fl^{\succeq b}_{G,\mu}:=\bigsqcup_{b\preceq b'}\Fl^{b'}_{G,\mu}\] are closed.    
\end{cor}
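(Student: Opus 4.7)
The plan is to deduce this corollary directly and formally from Proposition~\ref{map to B(G)}, which does all the real work. The key point is that lower semicontinuity of the map $b(\cdot)\colon |\Fl_{G,\mu}|\to B(G)$ (with the partial order on $B(G)$ inducing the Alexandrov topology whose closed sets are the upward-closed subsets) means precisely that for every $b\in B(G)$, the preimage
\[
\Fl^{\succeq b}_{G,\mu} = \{\,x\in |\Fl_{G,\mu}|\mid b(x)\succeq b\,\}
\]
of the upward-closed set $\{b'\succeq b\}\subset B(G)$ is closed in $\Fl_{G,\mu}$. So the second claim of the corollary is immediate. The input that makes this work is, via Proposition~\ref{map to B(G)}, the Kedlaya--Liu semicontinuity theorem (\cite[Theorem 7.4.5]{kedlayaliu}) applied to the vector bundle over $\cX(R^\flat,R^{\flat+})$ obtained from a representation of $G$, together with the fact that the Kottwitz invariant $\kappa\circ b(\cdot)$ is constant (Lemma~\ref{Kottwitz map}).

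Next, I would handle the basic stratum. The basic element $b_0\in B(G,\mu^{-1})$ is the unique minimum of $B(G,\mu^{-1})$ under the partial order $\preceq$. Since the image of $b(\cdot)$ lies in the finite set $B(G,\mu^{-1})$ (the second assertion of Proposition~\ref{map to B(G)}), the complement of $\Fl^{b_0}_{G,\mu}$ in $\Fl_{G,\mu}$ can be written as the finite union
\[
|\Fl_{G,\mu}|\setminus \Fl^{b_0}_{G,\mu} \;=\; \bigcup_{\substack{b'\in B(G,\mu^{-1})\\ b'\succ b_0}} \Fl^{\succeq b'}_{G,\mu},
\]
each term of which is closed by the preceding paragraph. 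A finite union of closed sets is closed, so $\Fl^{b_0}_{G,\mu}$ is open.

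Finally, for a general $b\in B(G,\mu^{-1})$, I would write
\[
\Fl^{b}_{G,\mu} \;=\; \Fl^{\succeq b}_{G,\mu}\;\setminus \bigcup_{\substack{b'\in B(G,\mu^{-1})\\ b'\succ b}} \Fl^{\succeq b'}_{G,\mu},
\]
exhibiting it as the difference of a closed set and a finite union of closed sets, hence as a locally closed subset. None of these steps is hard; they are all formal consequences of semicontinuity together with the finiteness of $B(G,\mu^{-1})$. The entire difficulty of the corollary is concentrated in Proposition~\ref{map to B(G)} and, beneath that, in the Kedlaya--Liu semicontinuity theorem for Newton polygons of $\varphi$-modules over the relative Robba ring together with Fargues' classification of $G$-bundles on the Fargues--Fontaine curve.
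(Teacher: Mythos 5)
Your proposal is correct and is exactly the deduction the paper intends: its own proof of this corollary is the single sentence "This follows immediately from Proposition~\ref{map to B(G)}," and your argument just spells out the formal steps (preimages of up-sets are closed by semicontinuity, the basic element is the minimum of the finite set $B(G,\mu^{-1})$ so its stratum's complement is a finite union of closed sets, and a general stratum is a difference of closed sets). No gaps; all the substance indeed lives in Proposition~\ref{map to B(G)} as you say.
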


\begin{proof} This follows immediately from Proposition~\ref{map to B(G)}.
\end{proof}
\newpage

\section{The geometry of Newton strata and Igusa varieties}

In this section, we will return to the global setup, but will in addition assume that the Shimura datum $(G,X)$ is of \emph{PEL type}, and has good reduction at $p$. This means that they will admit smooth integral models which are moduli spaces of abelian varieties equipped with \emph{p}olarizations, \emph{e}ndomorphisms and \emph{l}evel structure. Our goal is to understand the fibers of the Hodge-Tate period map \[\pi_{HT}:\cS_{K^p}\to \Fl_{G,\mu}\] defined in Theorem~\ref{refinedht} in terms of the Igusa varieties introduced by Mantovan,~\cite{mantovan}.

We start with some preliminaries on $p$-divisible groups, which recall material from~\cite{scholzeweinstein} as well as a construction of Chai and Oort. We then express the Newton strata in $\cS_{K^p}$ in terms of Rapoport-Zink spaces and Igusa varieties, in the spirit of~\cite{mantovan}.

\subsection{Preliminaries on $p$-divisible groups}\label{$p$-divisible groups}

We recall the notions of Tate module and universal cover of a $p$-divisible group as used in~\cite{scholzeweinstein}, together with some of their properties. Let $\mathrm{Nilp}$ be the category of $\mathbb Z_p$-algebras on which $p$ is nilpotent. If $R$ is a $p$-adically complete $\mathbb Z_p$-algebra, let $\mathrm{Nilp}^{\mathrm{op}}_R$ be the opposite category to the category of $R$-algebras on which $p$ is nilpotent. A $p$-divisible group $\cG$ can be thought of as an fpqc sheaf on $\mathrm{Nilp}^{\mathrm{op}}_R$ sending an $R$-algebra $S$ to $\varinjlim \cG[p^n](S)$.

\begin{defn}\label{Tate module and universal cover}\begin{enumerate}\item The fpqc sheaf $T_p(\cG)(S)=\varprojlim_n\cG[p^n](S)$ on $\mathrm{Nilp}^{\mathrm{op}}_R$ is called the (integral) Tate module of $\cG$. 
\item The fpqc sheaf $\tilde \cG(S)=\varprojlim_{p:\cG\to \cG} \cG(S)$ on $\mathrm{Nilp}^{\mathrm{op}}_R$ is called the universal cover of $\cG$. 
\end{enumerate}
\end{defn}

\noindent We note that $T_p(\cG)$ is a sheaf of $\mathbb{Z}_p$-modules, while $\tilde \cG = T_p(\cG)[1/p]$ is a sheaf of $\mathbb{Q}_p$-vector spaces. We can canonically identify \[T_p\cG=\mathscr{H}om(\mathbb{Q}_p/\mathbb{Z}_p,\cG), \tilde \cG=\mathscr{H}om(\mathbb{Q}_p/\mathbb{Z}_p,\cG)[1/p].\]

\begin{prop}\label{properties of p-divisible groups}\begin{enumerate}\item If $\cG$ is connected, then it is representable by an affine formal scheme with finitely generated ideal of definition. If $\mathrm{Lie}\ \cG$ is free of dimension $r$ then \[\cG\simeq \mathrm{Spf}\ R[[x_1,\dots,x_r]].\]
\item If $\rho :\cG_1 \to \cG_2$ is an isogeny, then the induced morphism $\tilde \rho : \tilde\cG_1\to \tilde \cG_2$ is an isomorphism.
\item If $R$ is perfect of characteristic $p$, $\cG$ is connected and $\mathrm{Lie}\ \cG$ is free of dimension $r$ then \[\tilde \cG\simeq \mathrm{Spf}\ R[[x^{1/p^\infty}_1,\dots, x^{1/p^\infty}_r]].\]
\item If $R$ is perfect of characteristic $p$, $\cG$ is connected and $\mathrm{Lie}\ \cG$ is free of dimension $r$ then \[T_p\cG\simeq \mathrm{Spec}\ R[[x^{1/p^\infty}_1,\dots, x^{1/p^\infty}_r]]/(x_1,\dots,x_r).\]
\end{enumerate}
\end{prop}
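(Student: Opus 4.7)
The plan is to prove the four parts sequentially, since parts (3) and (4) rely on parts (1) and (2). Part (1) is the classical structure theorem for connected $p$-divisible groups due to Messing: since $\cG$ is connected, each truncation $\cG[p^n]$ is a connected finite locally free $R$-group scheme, hence of the form $\mathrm{Spec}\ A_n$ with augmentation ideal nilpotent on each fibre, so $\cG = \mathrm{Spf}\ A$ with $A = \varprojlim_n A_n$ and the augmentation ideal as ideal of definition. The assumption that $\mathrm{Lie}\ \cG$ is free of rank $r$, combined with flatness of $\cG$, forces $A \simeq R[[x_1,\ldots,x_r]]$ via a formal smoothness argument and a choice of basis of $\mathrm{Lie}\ \cG$. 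Part (2) is essentially formal: any isogeny $\rho: \cG_1 \to \cG_2$ admits a quasi-inverse $\rho': \cG_2 \to \cG_1$ with $\rho' \rho = [p^n]_{\cG_1}$ and $\rho \rho' = [p^n]_{\cG_2}$ for some $n$; since the universal cover construction inverts $[p]$ by definition, $\tilde\rho \circ \tilde{\rho'}$ and $\tilde{\rho'} \circ \tilde\rho$ are isomorphisms, hence so is $\tilde\rho$.

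Part (3) is the main computational step. In characteristic $p$, multiplication by $p$ factors as $[p] = V \circ F$, where $F: \cG \to \cG^{(p)}$ is the relative Frobenius, an isogeny. Since $R$ is perfect, the absolute Frobenius on $R$ is an isomorphism and one canonically identifies $\cG^{(p)}$ with $\cG$, so $F$ may be viewed as an isogeny $\cG \to \cG$. By part (2), $\tilde F$ becomes an isomorphism, and since $[p] = V\circ F$, so does $\tilde V$; hence
\[
\tilde \cG = \varprojlim_{[p]} \cG = \varprojlim_{F} \cG\ .
\]
By part (1), $\cG = \mathrm{Spf}\ R[[x_1,\ldots,x_r]]$, and the relative Frobenius acts on parameters by $x_i \mapsto x_i^p$ (the generic feature of relative Frobenius on smooth formal schemes over $R$, once coordinates are fixed and the Frobenius twist is trivialized using perfectness of $R$). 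The inverse limit of formal schemes along $F$ corresponds to the $(x_i)$-completed colimit of rings along $x_i \mapsto x_i^p$, which is exactly $R[[x_1^{1/p^\infty},\ldots,x_r^{1/p^\infty}]]$, representing $\tilde\cG$ as claimed.

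Part (4) then follows formally: the short exact sequence
\[
0 \to T_p\cG \to \tilde\cG \to \cG \to 0
\]
of fpqc sheaves on $\mathrm{Nilp}^{\mathrm{op}}_R$, obtained from the projection $\tilde\cG \to \cG$ to the $0$-th level (surjective by fpqc-surjectivity of $[p]$ on a $p$-divisible group), identifies $T_p\cG$ with the fibre $\tilde\cG \times_\cG \{0\}$. Translated via part (3), this is the quotient of $R[[x_1^{1/p^\infty},\ldots,x_r^{1/p^\infty}]]$ by the ideal $(x_1,\ldots,x_r)$, as desired. The principal obstacle in this plan is part (3), specifically the verification that the relative Frobenius acts on chosen parameters by raising to the $p$-th power; this is not built into the formal group structure on $R[[x_1,\ldots,x_r]]$ but rather reflects the behaviour of Frobenius on the underlying smooth formal scheme. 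The perfectness of $R$ enters twice and essentially: first to trivialize the Frobenius twist $\cG^{(p)}\simeq \cG$ so that $F$ can be iterated, and second to ensure that the completed colimit along $x_i\mapsto x_i^p$ retains coefficients in $R$ rather than passing to a perfection.
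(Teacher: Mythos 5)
Your parts (1), (2) and (4) are fine and follow the same route as the source the paper cites (part (1) is Messing's theorem, part (4) is exactly the paper's parenthetical argument via the sequence $0\to T_p\cG\to\tilde\cG\to\cG\to 0$ and the fibre over the zero section). The gap is in part (3), at the step
\[
\varprojlim_{[p]}\cG \;=\; \varprojlim_{F}\cG\ ,
\]
which you justify by the fact that $\tilde F$, and hence $\tilde V$, is an isomorphism. That implication is false. The assertion that $\tilde F$ is an isomorphism compares the $[p]$-towers of $\cG$ and $\cG^{(p)}$; it says nothing about the tower whose transition maps are $F$ itself. Concretely, for $\cG=\mathbb Q_p/\mathbb Z_p$ over a perfect field, $F$ is an isomorphism and $V$ is an isogeny, so both $\tilde F$ and $\tilde V$ are isomorphisms, yet $\varprojlim_F\cG=\mathbb Q_p/\mathbb Z_p$ while $\tilde\cG=\underline{\mathbb Q_p}$. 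So the same hypotheses you invoke hold in a case where the conclusion fails.

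What actually identifies the two towers is connectedness, which your part (3) uses only to borrow coordinates from part (1). Since $\cG$ is connected and $R$ is perfect (hence reduced), each $\cG[p^n]$ is infinitesimal and therefore killed by a power of the relative Frobenius; thus $F^N$ factors through $[p]$ for suitable $N$, while $[p]=V\circ F$ factors through $F$. These two factorizations make the $[p]$-tower and the $F$-tower of the twists $\cG^{(p^{-n})}$ mutually cofinal, so their inverse limits agree. From there your computation is correct: each twist is again $\mathrm{Spf}\ R[[x_1,\dots,x_r]]$ because $R$ is perfect, $F^\ast$ sends coordinates to their $p$-th powers, and the completed colimit is $R[[x_1^{1/p^\infty},\dots,x_r^{1/p^\infty}]]$. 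A smaller point: the identification of $\cG^{(p)}$ with $\cG$ is only Frobenius-semilinear, not canonical as $R$-group schemes; this is harmless because you only need the underlying formal scheme of each twist, but it should be phrased that way.
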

\begin{proof} The first part is proved in~\cite{messing}. The remaining results are proved in~\cite{scholzeweinstein}: the second and third parts in Proposition 3.1.3 and the fourth part follows from the first part, the third part and the short exact sequence of sheaves on $\mathrm{Nilp}^{\mathrm{op}}_R$ given by \[0\to T_p\cG\to \tilde \cG\to \cG\to 0.\] (This short exact sequence is a restatement of Proposition 3.3.1 of~\cite{scholzeweinstein} in the case when $\cG$ is connected: the Tate module is the closed subfunctor of $\tilde \cG$ given by pullback along the natural map $\tilde \cG\to \cG$ - projection onto the last coordinate - from the zero section in $\cG$.) \end{proof}

The universal vector extension $E\cG$ of $\cG$ is a crystal on the nilpotent crystalline site of $R$ defined in~\cite{messing}. Its Lie algebra $\mathrm{Lie}\ E\cG$ can be made into a crystal on the crystalline site of $R$ by~\cite{bbm}, which we will denote by $\mathbb{M}(\cG)$. 

If $\cG$ is a $p$-divisible group over $\bar{\mathbb{F}}_p$, the Dieudonn\'e module $D(\cG)$ is obtained by evaluating the crystal $\mathbb{M}(\cG)$ on the PD thickening $W(\bar{\mathbb{F}}_p)\to \bar{\mathbb{F}}_p$. Then $D(\cG)[1/p]$ is an isocrystal over $L$, as defined in Section 4. Here, the Frobenius $\varphi_\cG$ on $D(\cG)[1/p]$ satisfies
\[
D(\cG)\subset \varphi_\cG(D(\cG))\subset p^{-1} D(\cG)\ ,
\]
and $p \varphi_\cG$ is the Frobenius usually considered.\footnote{If one uses the usual Frobenius on contravariant Dieudonn\'e theory, then our convention corresponds to defining covariant Dieudonn\'e theory as the literal dual of contravariant Dieudonn\'e theory, i.e.~without a Tate twist.} We will call a $p$-divisible group $\cG$ over $\bar{\mathbb{F}}_p$ \emph{isoclinic} if the corresponding isocrystal has only one slope. If this slope is given by $-\lambda$, we say that $\cG$ is isoclinic of slope $\lambda$, so that $\mu_{p^\infty}$ is isoclinic of slope $1$.

Given a $p$-divisible group $\cG$ over $\bar{\mathbb F}_p$, we can use the isocrystal $D(\cG)[1/p]$ to construct a vector bundle $\cE(\cG)$ over the Fargues-Fontaine curve $\cX_F$, for any complete algebraically closed nonarchimedean field $F\supset \bar{\mathbb F}_p$.

\begin{example} If $\cG = \mathbb Q_p/\mathbb Z_p$, then $D(\cG) = L$ with $\varphi_\cG = \sigma$, and $\cE(\cG) = \cO_{\cX_F}$. If $\cG = \mu_{p^\infty}$, then $D(\cG) = L$ with $\varphi_\cG = p^{-1}\sigma$, and $\cE(\cG) = \cO_{\cX_F}(1)$.
\end{example}

On the other hand, one can use the schematic version of the Fargues-Fontaine curve to build a vector bundle corresponding to a $p$-divisible group over $\cO_C/p$, where $C$ is any complete algebraically closed extension of $\mathbb Q_p$ with ring of integers $\cO_C/p$. Define $A_{\mathrm{cris}}$ to be the $p$-adic completion of the PD envelope of the surjection $W(\cO_C^\flat)\twoheadrightarrow \cO_C/p$ and $B^+_{\mathrm{cris}}:=A_{\mathrm{cris}}[1/p]$. If $\cG$ is a $p$-divisible group over the semiperfect ring $\cO_C/p$, then its Dieudonn\'e module is a finite projective $A_{\mathrm{cris}}$-module $M(\cG)$ obtained by evaluating $\mathbb{M}(\cG)$ on the PD thickening $A_{\mathrm{cris}}\to \cO_C/p$. Then $M(\cG)[1/p]$ is a $B^+_{\mathrm{cris}}$-module equipped with a Frobenius-semilinear map $\varphi_\cG$. Recall, cf.~\cite{farguesfontaine}, that the schematic Fargues-Fontaine curve can also be defined as
\[
X_{C^\flat}=\mathrm{Proj}\left(\oplus_{d\geq 0}\left(B^+_{\mathrm{cris}}\right)^{\varphi=p^d}\right).
\]
We associate to $\cG$ the vector bundle $E(\cG)$ on $X_{C^\flat}$ corresponding to the graded module \[\oplus_{d\geq 0}\left(M(\cG)[1/p]\right)^{\varphi=p^d}.\]
 
\begin{thm}\label{p-divisible groups and vector bundles}\begin{enumerate}
\item For any $p$-divisible group $\cG$ over $\cO_C/p$, there exists a $p$-divisible group $\cH$ over $\mathbb{\bar F}_p$ and a quasi-isogeny \[\rho : \cH\times_{\mathbb{\bar F}_p}\cO_C/p\to \cG\]
\item The functor $\cG\mapsto E(\cG)$ from $p$-divisible groups over $\cO_C/p$ up to isogeny to vector bundles on $X_{C^\flat}$ is fully faithful, with essential image the vector bundles whose slopes are all between $0$ and $1$.
\item Let $\cG$ be a $p$-divisible group over $\mathbb{\bar F}_p$. Then GAGA for the curve identifies $\cE(\cG)$ with $E(\cG)$. 
\end{enumerate}
\end{thm}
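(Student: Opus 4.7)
I would prove the three parts in the order (3), (2), (1), since each builds on the previous.

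For Part (3), I would directly compare the two constructions. When $\cG$ descends from $\bar{\mathbb F}_p$, crystalline functoriality gives $M(\cG) = D(\cG)\otimes_{W(\bar{\mathbb F}_p)} A_{\mathrm{cris}}$, and hence $M(\cG)[1/p] = D(\cG)[1/p]\otimes_L \tB^+_{\mathrm{cris}}$ with Frobenius $\varphi_\cG\otimes\varphi$. The Proj presentation of $X_{C^\flat}$ then exhibits $E(\cG)$ as the vector bundle associated with the graded module $\bigoplus_d (D(\cG)[1/p]\otimes_L \tB^+_{\mathrm{cris}})^{\varphi_\cG\otimes\varphi = p^d}$. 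On the adic side, $\cE(\cG)$ is obtained by pulling the isocrystal $(D(\cG)[1/p], \varphi_\cG)$ back to $\cY_{(0,\infty)}$ through the structural map $\cY_{(0,\infty)}\to\mathrm{Spa}(L,\cO_L)$ and $\varphi$-descending to $\cX_{C^\flat}$; taking global sections of the twists $\cO_{\cX_{C^\flat}}(d)\otimes\cE(\cG)$ recovers the same graded module, so the GAGA equivalence between vector bundles on $X_{C^\flat}$ and those on $\cX_{C^\flat}$ identifies $E(\cG)$ with $\cE(\cG)$.

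For Part (2), I would factor the functor as $\cG\mapsto M(\cG)[1/p]\mapsto E(\cG)$. Full faithfulness of the first arrow is the fact that covariant crystalline Dieudonn\'e theory on the semiperfect ring $\cO_C/p$ is fully faithful up to isogeny, which I would cite from~\cite{scholzeweinstein}. Full faithfulness of the second arrow is tautological from the Proj construction: a morphism of vector bundles on $X_{C^\flat}$ is a morphism of the associated graded modules, and these are induced by $\varphi$-equivariant maps of the underlying $\tB^+_{\mathrm{cris}}$-modules. For the essential image, the forward containment follows from Lemma~\ref{Newton above Hodge} applied to $\cG$ and its Serre dual: a $p$-divisible group and its dual both have Hodge slopes in $\{0,1\}$, which bounds the Newton slopes both above and below, placing them in $[0,1]$. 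For essential surjectivity, Fargues' theorem (Theorem~\ref{isocrystals to vector bundles}) writes any vector bundle with slopes in $[0,1]$ as $\cE(\cH)$ for an isocrystal over $\bar{\mathbb F}_p$; Dieudonn\'e-Manin realises any such isocrystal as coming from a $p$-divisible group $\cH/\bar{\mathbb F}_p$, and by Part~(3) one then has $\cE(\cH) \simeq E(\cH\times_{\bar{\mathbb F}_p}\cO_C/p)$.

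For Part (1), given $\cG$ over $\cO_C/p$, Part~(2) shows that $E(\cG)$ has slopes in $[0,1]$, so by Fargues' classification together with Dieudonn\'e-Manin, $E(\cG)\simeq\cE(\cH)$ for a $p$-divisible group $\cH/\bar{\mathbb F}_p$. Part~(3) identifies $\cE(\cH)$ with $E(\cH\times_{\bar{\mathbb F}_p}\cO_C/p)$, and then full faithfulness in Part~(2) lifts the resulting isomorphism $E(\cH\times_{\bar{\mathbb F}_p}\cO_C/p)\simeq E(\cG)$ to a quasi-isogeny $\rho\col\cH\times_{\bar{\mathbb F}_p}\cO_C/p\to\cG$.

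\textbf{The main obstacle.} The essential hard input is the full faithfulness up to isogeny of the covariant crystalline Dieudonn\'e functor on $p$-divisible groups over $\cO_C/p$, which rests on Grothendieck-Messing-style deformation theory combined with the divided power structure on $\ker(A_{\mathrm{cris}}\to\cO_C/p)$; I would import this from~\cite{scholzeweinstein}. A secondary subtlety, easy to get wrong, is the bookkeeping of slope conventions between $p$-divisible groups, the modified-Frobenius isocrystals of this paper, and bundles on the Fargues-Fontaine curve (the paper's normalisation is arranged so that the usual slopes of $\cG$ match the bundle slopes of $\cE(\cG)$), which must be tracked to make the essential image in Part~(2) come out to $[0,1]$ rather than $[-1,0]$.
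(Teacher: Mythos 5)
The paper's own ``proof'' of this theorem is a one-line citation -- parts (1) and (2) are Theorem 5.1.4 of \cite{scholzeweinstein} and part (3) is declared clear -- so what you are really doing is reconstructing the cited argument. Your treatment of (3), the full faithfulness and essential surjectivity in (2) (via Theorem A of \cite{scholzeweinstein} for $\cG\mapsto M(\cG)[1/p]$ and the Fargues--Fontaine classification plus Dieudonn\'e--Manin), and the deduction of (1) from (2) and (3) are all sound and close in spirit to what Scholze--Weinstein actually do. One caveat on a side remark: the ``tautological'' full faithfulness of the passage from $\varphi$-modules over $B^+_{\mathrm{cris}}$ to bundles is not a formality of $\mathrm{Proj}$; it is the comparison $H^0(X_{C^\flat},\cE(D)) = (D\otimes_L B^+_{\mathrm{cris}})^{\varphi=1}$ (equivalently $(B^+_{\mathrm{cris}})^{\varphi=p^d} = B_{\mathrm{cris}}^{\varphi=p^d}$), which is a genuine theorem of Fargues--Fontaine, not a consequence of the graded-module formalism.

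The one genuine gap is the forward containment of the essential image in (2) -- and since your proof of (1) feeds through it, the gap propagates. Lemma~\ref{Newton above Hodge} does not apply to $E(\cG)$ for $\cG$ over $\cO_C/p$: that lemma concerns a vector bundle \emph{equipped with a trivialization away from $\infty$}, i.e.\ a modification of the trivial bundle, and a $p$-divisible group over $\cO_C/p$ carries no such datum. Producing one would require lifting $\cG$ to $\cO_C$ and knowing that the Tate module of the lift is free of rank equal to the height and sits in the Hodge--Tate sequence -- which is itself part of the content of the theorem being proved (it is how Proposition~\ref{local Hodge-Tate period map} later uses \cite[Proposition 5.1.6]{scholzeweinstein}), so the argument as written is circular. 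The correct source of the slope bound is the integral structure of the Dieudonn\'e crystal: the existence of Verschiebung gives $M(\cG)\subset\varphi(M(\cG))\subset p^{-1}M(\cG)$, and after descending the $\varphi$-module $M(\cG)[1/p]$ over $B^+_{\mathrm{cris}}$ to an isocrystal over $L$ (Fargues--Fontaine) this forces the slopes into $[-1,0]$, hence bundle slopes in $[0,1]$. Equivalently -- and this is the order in \cite{scholzeweinstein} -- prove (1) first from this descent-plus-integrality argument, and then read the slope bound in (2) off Dieudonn\'e--Manin over $\bar{\mathbb F}_p$.
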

\begin{proof} The first two parts are Theorem 5.1.4 of~\cite{scholzeweinstein}. The last part is clear.
\end{proof}

We now specialize to $p$-divisible groups over a perfect field $k$. (Since every $p$-divisible group over $\cO_C/p$ is quasi-isogenous to one defined over $\bar {\mathbb{F}}_p$, if we are interested in understanding quasi-self-isogenies, it is enough to restrict to this case.) Let $\cG,\cG'$ be two isoclinic $p$-divisible groups over $k$. Our goal is to define an ``internal Hom'' $p$-divisible group $\cH_{\cG,\cG'}$ over $k$ satisfying the following two properties:
\begin{enumerate}
\item The Tate module $T_p(\cH_{\cG,\cG'})$ can be identified with the sheaf $\mathscr{H}om(\cG,\cG')$. 
\item The Dieudonn\'e module $D(\cH_{\cG,\cG'})[1/p]$ is equal to \[\Hom(D(\cG)[1/p], D(\cG')[1/p])^{\leq 0},\] where the latter denotes the internal homomorphism in Dieudonn\'e modules, and we are taking the slope $\leq 0$-part.
\end{enumerate}

In a talk of C.-L.~Chai at the Faltings conference 2014, we learnt that a $p$-divisible group satisfying these properties has been defined by Chai and Oort. We explain their construction below.  

We define $\cH_{\cG,\cG'}$ as an inductive system of finite group schemes. For each $n\geq 1$ consider the commutative group schemes of finite type over $k$ defined as \[\cH_n:=\mathscr{H}om(\cG[p^n],\cG'[p^n]).\]
For $m\geq n$, there are natural restriction maps
\[
r_{m,n}: \cH_m\to \cH_n
\]
which restrict a homomorphism $\cG[p^m]\to \cG^\prime[p^m]$ to $\cG[p^n]\subset \cG[p^m]$. The kernel $\mathrm{ker}\ r_{m,n}\subset \cH_m$ is a closed subgroup scheme. As we are working over a field, one can form the qoutient $\cH_n^{(m)} = \cH_m / \mathrm{ker}\ r_{m,n}$, which is a subgroup scheme of $\cH_n$. As $m$ increases, they form a descending chain.

\begin{lemma}\label{precise chai-oort} The subgroup scheme $\cH_n^{(m)}$ stabilizes for $m\gg 0$; let $\cH_n^\prime = \cH_n^{(m)}$ for $m$ sufficiently large. Then $\cH'_n$ is a finite group scheme over $k$.
\end{lemma}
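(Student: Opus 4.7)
My plan is to separate the stabilization claim, which is formal, from the finiteness claim, which requires the isoclinic hypothesis via Dieudonn\'e theory.

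For stabilization, I would observe that $\cH_n = \mathscr{H}om(\cG[p^n], \cG'[p^n])$ is representable by a commutative affine group scheme of finite type over $k$, as it is the internal Hom of finite commutative group schemes over the field $k$. In particular $\cH_n$ is Noetherian, so the descending chain of closed subgroup schemes $\cH_n \supseteq \cH_n^{(n+1)} \supseteq \cH_n^{(n+2)} \supseteq \cdots$ stabilizes, and I set $\cH_n'$ equal to the common stable value; this gives the first assertion.

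For finiteness of $\cH_n'$, the idea is to work at the level of covariant Dieudonn\'e modules and identify the stable image with the $p^n$-torsion of a genuine $p$-divisible group. Let $D = D(\cG)$ and $D' = D(\cG')$, which are finite free $W(k)$-modules equipped with Frobenii $\varphi_\cG$, $\varphi_{\cG'}$, and set $H = \Hom_{W(k)}(D, D')$ with its natural Frobenius $\varphi_H(f) = \varphi_{\cG'} \circ f \circ \varphi_\cG^{-1}$ induced on $H[1/p]$. Since $\cG, \cG'$ are isoclinic, $H[1/p]$ is an isoclinic isocrystal of slope $\lambda - \lambda'$, where $\lambda, \lambda'$ are the slopes of $\cG, \cG'$; in particular its ``slope $\leq 0$'' part $H^{\leq 0}$ is either $0$ or all of $H$, and is a finite free $W(k)$-module stable under an appropriately normalized Frobenius. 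I would then construct a $p$-divisible group $\cH_{\cG, \cG'}$ over $k$ with covariant Dieudonn\'e module $H^{\leq 0}$, and identify the scheme-theoretic image $\cH_n^{(m)}$ for $m \gg 0$ with the finite group scheme $\cH_{\cG, \cG'}[p^n]$; this would yield finiteness of $\cH_n'$.

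The main obstacle is precisely this last identification, and it is where the isoclinicity hypothesis is essential. The functor $\mathscr{H}om$ between finite group schemes does not agree with internal Hom on Dieudonn\'e modules --- for instance $\mathscr{H}om(\alpha_p, \alpha_p) = \mathbb{G}_a$, so $\cH_n$ is typically far from finite --- and one must explain why the ``spurious'' infinitesimal contributions to $\cH_n$ disappear after passing to the image from $\cH_m$ for $m$ large. The key point is that extending a homomorphism $\cG[p^n] \to \cG'[p^n]$ compatibly to all $\cG[p^m] \to \cG'[p^m]$ forces Frobenius-compatibility, which selects exactly the elements arising from Dieudonn\'e-module homomorphisms on $H^{\leq 0}$; making this scheme-theoretic (rather than merely pointwise), using perfectness of $k$ and a Frobenius-twist argument to upgrade the comparison, is the essential technical content of the lemma.
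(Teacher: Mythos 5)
Your stabilization argument is fine, and in fact runs in the opposite direction from the paper's: you get the descending chain condition for free from Noetherianity of the affine finite-type group scheme $\cH_n$, whereas the paper first proves finiteness of $\cH_n^{(m)}$ for $m\gg 0$ and only then deduces stabilization by comparing orders. That part of your proposal is a valid (and arguably cleaner) alternative.

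The finiteness half, however, has a genuine gap: you reduce everything to identifying $\cH_n^{(m)}$ (for $m\gg 0$) with $\cH_{\cG,\cG'}[p^n]$ for a $p$-divisible group built out of $\Hom(D(\cG)[1/p],D(\cG')[1/p])^{\leq 0}$, and then you explicitly concede that this identification is ``the essential technical content'' without proving it. As stated, this is circular in spirit --- in the paper that identification (Lemmas~\ref{Tate module} and~\ref{dieudonne module}) is only established \emph{after}, and using, the present lemma --- and more importantly nothing in your sketch actually forces the scheme-theoretic image to be finite. The paper's route avoids the Dieudonn\'e-module comparison entirely and is purely a point-counting and compactness argument: over algebraically closed $k$, the group $\Hom(\cG,\cG')$ is a finite free $\mathbb Z_p$-module independent of $k$, so its image $\cH_n(k)_\infty\subset\cH_n(k)$ is a finite set; the schemes $Y_m=\cH_m\times_{\cH_n}(\cH_n\setminus \cH_n(k)_\infty)$ form a cofiltered system of quasicompact schemes with affine transition maps whose inverse limit has no points (a point would give a compatible system of homomorphisms, i.e.\ an element of $\Hom(\cG,\cG')$, landing outside $\cH_n(k)_\infty$), so some $Y_m$ is already empty. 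Hence for $m\gg 0$ the topological image of $\cH_m\to\cH_n$ lies in the finite set $\cH_n(k)_\infty$, and a finite-type group scheme supported on finitely many points is finite. If you want to salvage your approach, you would need to supply the ``Frobenius-compatibility selects exactly the Dieudonn\'e homomorphisms'' step scheme-theoretically, which is precisely what you have not done; note also that isoclinicity is not actually needed for the finiteness statement itself, only for the later structure results.
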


\begin{proof} We may assume that $k$ is algebraically closed. First, we claim that $\cH_n^{(m)}$ is a finite group scheme for $m\gg 0$. It is enough to see that $\cH_n^{(m)}(k)$ is finite. By Dieudonn\'e theory, one sees that $\Hom(\cG,\cG^\prime)$ is a finite free $\mathbb Z_p$-module, independent of the algebraically closed field $k$. In particular, the image $\cH_n(k)_\infty\subset \cH_n(k)$ of
\[
\Hom(\cG,\cG^\prime)\to \cH_n(k)
\]
is finite, and independent of $k$. Now the sequence of $\cH_m\times_{\cH_n} (\cH_n\setminus \cH_n(k)_\infty)$ forms a cofiltered system of quasicompact schemes with affine transition maps and with empty inverse limit. It follows that one of the schemes is already empty, showing that the image of $\cH_m(k)\to \cH_n(k)$ agrees with the finite set $\cH_n(k)_\infty$.

Now, the $\cH_n^{(m)}$ form a decreasing sequence of finite group schemes over $k$. As such, they are eventually constant, e.g.~by looking at their order.
\end{proof}

We define $\iota_n: \cH_n\to \cH_{n+1}$ to be the map given by pre-composition with the multiplication by $p$ map $\cG[p^{n+1}] \to \cG[p^{n}]$ followed by composition with the inclusion $\cG'[p^n] \hookrightarrow \cG'[p^{n+1}]$.

\begin{lemma} The maps $\iota_n: \cH_n\to \cH_{n+1}$ send $\cH^\prime_n$ into $\cH^\prime_{n+1}$. The colimit
\[
\cH = \cH_{\cG,\cG^\prime} = \varinjlim_{\iota_n} \cH^\prime_n
\]
is a $p$-divisible group over $k$ with $\cH[p^n] = \cH^\prime_n$.
\end{lemma}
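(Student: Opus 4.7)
The statement has two parts, and I would prove them in sequence.

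For the preservation $\iota_n(\cH'_n)\subset\cH'_{n+1}$, I would construct the lift directly. Given $f\in\cH'_n$ with $f=r_{m,n}(\tilde f)$ for some $\tilde f\in\cH_m$ and $m$ in the stabilization range of Lemma \ref{precise chai-oort}, I would set $\tilde g := \tilde f\circ [p]\in\cH_m$, where $[p]$ denotes multiplication by $p$ on $\cG[p^m]$. A direct check gives $r_{m,n+1}(\tilde g)=\iota_n(f)$, since $\tilde g(y)=\tilde f(py)=f(py)=\iota_n(f)(y)$ for $y\in\cG[p^{n+1}]$.

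The second part---that $\cH$ is a $p$-divisible group with $\cH[p^n]=\cH'_n$---rests on two structural computations, both flowing from the homomorphism identity $g(py)=pg(y)$. First, $\iota_n$ is injective because $\iota_n(f)=0$ forces $f$ to vanish on $p\cG[p^{n+1}]=\cG[p^n]$. Second, and more importantly, multiplication by $p$ on $\cH_{n+1}$ factors as $\iota_n\circ r_{n+1,n}$: for $g\in\cH_{n+1}$ and $y\in\cG[p^{n+1}]$, $(pg)(y)=pg(y)=g(py)=\iota_n(r_{n+1,n}g)(y)$. This shows $p$ preserves $\cH'_{n+1}$ and commutes with $\iota_n$, so it descends to a multiplication-by-$p$ endomorphism of $\cH$. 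Iterating the factorization, $p^n$ on $\cH_m$ becomes $\iota_{m-1}\cdots\iota_{m-n}\circ r_{m,m-n}$, so an element $\xi\in\cH'_m$ is killed by $p^n$ in $\cH$ if and only if the underlying homomorphism vanishes on $\cG[p^{m-n}]$.

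The inclusion $\cH'_n\subset\cH[p^n]$ is then immediate, since $p^n$ annihilates $\cH_n$ pointwise. For the reverse $\cH[p^n]\subset\cH'_n$, I would use the characterization just obtained: such a $\xi\in\cH'_m$ descends uniquely along the faithfully flat map $[p^{m-n}]:\cG[p^m]\twoheadrightarrow\cG[p^n]$ to some $h:\cG[p^n]\to\cG'[p^m]$, whose image lies in $\cG'[p^n]$ because $p^n h(z)=h(p^n z)=0$, giving $h\in\cH_n$; and any lift $\tilde\xi\in\cH_M$ of $\xi$ also vanishes on $\cG[p^{m-n}]\subset\cG[p^M]$ and descends to a lift of $h$ in $\cH_{M-m+n}$, placing $h\in\cH'_n$ for $M$ large enough. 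It then remains to verify that $p:\cH\to\cH$ is an epimorphism; this follows from the factorization $p=\iota_n\circ r_{n+1,n}$ together with the scheme-theoretic surjectivity of $r_{n+1,n}:\cH'_{n+1}\to\cH'_n$, which in turn follows from $r_{M,n}=r_{n+1,n}\circ r_{M,n+1}$ and the definition of $\cH'_{n+1}$ as the image of $r_{M,n+1}$. Finiteness of $\cH'_n$ from Lemma \ref{precise chai-oort}, together with automatic flatness over the field $k$, finishes the $p$-divisible group axioms. The main obstacle will be this reverse inclusion $\cH[p^n]\subset\cH'_n$, where the torsion index $n$ and the lifting index $m$ must be coordinated through both the factorization of $p^n$ and the stabilization property of Lemma \ref{precise chai-oort}.
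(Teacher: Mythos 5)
Your proposal is correct and follows essentially the same route as the paper's proof: the same key identity $p=\iota_n\circ r_{n+1,n}$, the same descent of $p^n$-torsion homomorphisms along multiplication by $p$ combined with lifting to high level to verify membership in $\cH'_n$, and the same appeal to surjectivity of $r_{n+1,n}$ coming from the stabilization in the preceding lemma. The only difference is organizational (you treat $\cH[p^n]$ for general $m$ versus $n$ at once, where the paper proves $\cH'_{n+1}[p^n]=\cH'_n$ and iterates), which is immaterial.
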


\begin{proof} From the commutation between $\iota_n$ and $r_{m,n}$, one infers that $\iota_n$ sends $\cH^\prime_n$ into $\cH^\prime_{n+1}$. First, we check that $\iota_n: \cH^\prime_n\to \cH^\prime_{n+1}$ is injective with image $\cH^\prime_{n+1}[p^n]$. Let $S$ be any $k$-scheme. If $f: \cG[p^n]_S\to \cG^\prime[p^n]_S$ induces the trivial map
\[
\cG[p^{n+1}]_S\buildrel p\over\longrightarrow \cG[p^n]_S\buildrel f\over\longrightarrow \cG^\prime[p^n]_S\hookrightarrow \cG^\prime[p^{n+1}]_S\ ,
\]
then $f=0$ as the first map is surjective, and the last injective; this proves injectivity of $\iota_n$. Now let $f: \cG[p^{n+1}]_S\to \cG^\prime[p^{n+1}]_S$ be a map killed by $p^n$, which for any $m\geq n$ lifts fppf locally to a map $f_m: \cG[p^{m+1}]_S\to \cG^\prime[p^{m+1}]_S$. It follows that $f$ factors uniquely as
\[
\cG[p^{n+1}]_S\buildrel p\over\longrightarrow \cG[p^n]_S\buildrel g\over\longrightarrow \cG^\prime[p^n]_S\hookrightarrow \cG^\prime[p^{n+1}]_S\ ,
\]
for some $g: \cG[p^n]\to \cG^\prime[p^n]$, as $f$ has image in the $p^n$-torsion, and kills $p^n \cG[p^{n+1}] = \cG[p]$. Similarly, any lift $f_m: \cG[p^{m+1}]_S\to \cG^\prime[p^{m+1}]_S$ of $f$ is killed by $p^m$, which implies that $f_m$ factors uniquely through a map $g_m: \cG[p^m]\to \cG^\prime[p^m]$, which necessarily lifts $g$. This shows that $\cH^\prime_n = \cH^\prime_{n+1}[p^n]$.

Moreover, we need to see that $p: \cH^\prime_{n+1}\to \cH^\prime_{n+1}$ has image $\cH^\prime_n$; by the above, it follows that the image is contained in $\cH^\prime_n$; the resulting map $\cH^\prime_{n+1}\to \cH^\prime_n$ is in fact the map $r_{n+1,n}$. By construction of the $\cH^\prime_n$, the map $r_{n+1,n}$ is indeed surjective, finishing the proof.
\end{proof}

\begin{lemma}\label{Tate module} The Tate module $T_p\cH_{\cG,\cG'}$ can be identified with the sheaf $\mathscr{H}om(\cG,\cG')$. 
\end{lemma}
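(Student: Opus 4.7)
The plan is to match the two inverse-limit descriptions on $S$-points. For any $k$-scheme $S$, a homomorphism $\cG_S \to \cG'_S$ is the same datum as a compatible system of homomorphisms $f_n : \cG[p^n]_S \to \cG'_S$; since $\cG[p^n]$ is killed by $p^n$, each $f_n$ factors uniquely through $\cG'[p^n]_S$, so $f_n \in \cH_n(S)$. The compatibility condition $f_{n+1}|_{\cG[p^n]_S} = f_n$ translates into $r_{n+1,n}(f_{n+1}) = f_n$. This gives a natural identification
\[
\mathscr{H}om(\cG,\cG')(S) \;=\; \varprojlim_{n,\, r_{n+1,n}} \cH_n(S).
\]

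On the Tate module side, by construction $T_p\cH(S) = \varprojlim_n \cH[p^n](S) = \varprojlim_n \cH'_n(S)$, and the preceding lemma already identifies the transition map $p : \cH'_{n+1} \to \cH'_n$ with $r_{n+1,n}|_{\cH'_{n+1}}$. Thus it suffices to prove that the inclusion $\varprojlim_n \cH'_n(S) \hookrightarrow \varprojlim_n \cH_n(S)$ is a bijection.

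For the nontrivial direction, I would start from a literal compatible system $(f_n) \in \varprojlim_n \cH_n(S)$ and show $f_n \in \cH'_n(S)$ for every $n$. By definition, for every $m \geq n$ the element $f_n$ equals $r_{m,n}(f_m)$, so $f_n$ genuinely lifts to $\cH_m(S)$ and therefore lies in the scheme-theoretic image $\cH_n^{(m)}(S) \subseteq \cH_n(S)$. Applying Lemma \ref{precise chai-oort}, for all sufficiently large $m$ we have $\cH_n^{(m)} = \cH'_n$, and hence $f_n \in \cH'_n(S)$, as desired.

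I do not anticipate a serious obstacle: once the previous lemma (that $p$ on $\cH'_{n+1}$ agrees with $r_{n+1,n}$, and that $\cH[p^n] = \cH'_n$) is in hand, everything reduces to formal manipulation with the definitions. The only point that deserves explicit mention is that a literal global compatible system automatically lands in the scheme-theoretic images $\cH'_n$, so one does not need to worry about the subtlety that the image sheaf could a priori be strictly larger than the set-theoretic image on $S$-points.
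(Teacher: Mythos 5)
Your proof is correct and follows the same route as the paper's: identify $T_p\cH_{\cG,\cG'}$ with $\varprojlim_n \cH'_n$ under the $r_{n+1,n}$ maps (via the preceding lemma) and observe that this inverse limit coincides with $\varprojlim_n \cH_n = \mathscr{H}om(\cG,\cG')$. The paper treats the equality of the two inverse limits as immediate from the definitions; you usefully spell out why a compatible system in $\varprojlim_n \cH_n(S)$ automatically lands in the stabilized images $\cH'_n$, which is exactly the point being used.
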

\begin{proof} The Tate module  $T_p\cH_{\cG,\cG'}$ is the inverse limit of $\cH_{\cG,\cG'}[p^n] \simeq \cH'_n$ with respect to the $r_{n+1,n}$ maps. This, by definition is the same as the inverse limit of the projective system of $\cH_n$'s with respect to the $r_{n+1,n}$ maps, which is the sheaf $\mathscr{H}om(\cG,\cG')$. 
\end{proof}

\begin{lemma}\label{dieudonne module} The Dieudonn\'e module $D(\cH_{\cG,\cG'})[1/p]$ is equal to \[\Hom(D(\cG)[1/p], D(\cG')[1/p])^{\leq 0},\] where $\Hom(D(\cG)[1/p], D(\cG')[1/p])$ is the internal homomorphism in Dieudonn\'e modules, and we are taking the slope $\leq 0$-part. 
\end{lemma}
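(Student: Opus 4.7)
The plan is to identify both sides as isocrystals via the covariant Dieudonn\'e equivalence between the isogeny category of $p$-divisible groups over $\Fpbar$ and isocrystals with slopes in $[-1,0]$. Write $N=D(\cG)[1/p]$ and $N'=D(\cG')[1/p]$; since $\cG,\cG'$ are isoclinic of slopes $\lambda,\lambda'\in[0,1]\cap\Q$, the internal Hom isocrystal $\Hom(N,N')$ is isoclinic of slope $\lambda-\lambda'$. Its slope-$\le 0$ part is therefore either all of $\Hom(N,N')$ (when $\lambda\le\lambda'$, a case where the slope automatically lies in $[-1,0]$) or zero. In either case it is the isocrystal of a (possibly trivial) $p$-divisible group $\cH^{\star}$ over $\Fpbar$, uniquely determined up to isogeny, and the lemma amounts to exhibiting a quasi-isogeny $\cH_{\cG,\cG'}\sim\cH^{\star}$.

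I would construct this quasi-isogeny via universal covers. Inverting $p$ in Lemma~\ref{Tate module} gives
\[
\tilde\cH_{\cG,\cG'}=T_p\cH_{\cG,\cG'}[1/p]\cong\mathscr{H}om(\cG,\cG')[1/p]\cong\mathscr{H}om(\tilde\cG,\tilde\cG').
\]
For a $p$-divisible group $\cH$ over $\Fpbar$, the universal cover $\tilde\cH$ recovers the isocrystal $D(\cH)[1/p]$, and hence the isogeny class of $\cH$ (equivalently, by Theorem~\ref{p-divisible groups and vector bundles}, one recovers the associated vector bundle on the Fargues--Fontaine curve). It therefore suffices to identify $\mathscr{H}om(\tilde\cG,\tilde\cG')$ with the universal cover of $\cH^{\star}$; after passing to isocrystals this is the statement that morphisms between the universal covers of two isoclinic $p$-divisible groups over $\Fpbar$ correspond to the slope-$\le 0$ piece of the internal Hom isocrystal. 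This can be checked on test bases such as $\cO_C/p$ for $C$ a complete algebraically closed extension of $\Qp$, where the universal cover of a $p$-divisible group admits an explicit description in terms of its isocrystal and Fontaine's period ring $B^+_{\mathrm{cris}}$.

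The main obstacle is this final identification. Morally, a slope-$>0$ morphism of isocrystals cannot come from a $p$-divisible group morphism because the target isocrystal would fall outside $[-1,0]$; but making this rigorous at the level of sheaves requires care about what universal covers represent. An alternative, more direct argument works at finite level: using Dieudonn\'e theory for finite commutative $p$-group schemes, one translates the Chai--Oort image-construction $\cH'_n=\mathrm{image}(\cH_m\to\cH_n)$ into an image-construction on internal Hom modules $\Hom_W(D(\cG)/p^m,D(\cG')/p^m)\to\Hom_W(D(\cG)/p^n,D(\cG')/p^n)$, passes to the limit to realize $D(\cH_{\cG,\cG'})$ as a sub-Dieudonn\'e-module of $\Hom_W(D(\cG),D(\cG'))$, and checks that the resulting isocrystal (constrained to slopes in $[-1,0]$, being that of a $p$-divisible group) coincides with $\Hom(N,N')^{\le 0}$. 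A height/dimension count then confirms the identification.
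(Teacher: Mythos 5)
Your main route is the one the paper takes: invert $p$ in the identification $T_p\cH_{\cG,\cG'}\cong\mathscr{H}om(\cG,\cG')$, let $\cH_D$ be a $p$-divisible group with rational Dieudonn\'e module $\Hom(D(\cG)[1/p],D(\cG')[1/p])^{\leq 0}$, and compare the two universal covers by evaluating on semiperfect test rings. Concretely, for $R$ f-semiperfect one has, by \cite[Theorem A]{scholzeweinstein},
\[
\tilde{\cH}_{\cG,\cG'}(R)=\Hom_R(\cG,\cG')[1/p]=\bigl(\Hom(D(\cG)[1/p],D(\cG')[1/p])\otimes B^+_{\mathrm{cris}}(R)\bigr)^{\varphi=1},
\]
\[
\tilde{\cH}_D(R)=\bigl(\Hom(D(\cG)[1/p],D(\cG')[1/p])^{\leq 0}\otimes B^+_{\mathrm{cris}}(R)\bigr)^{\varphi=1},
\]
and the map $\tilde{\cH}_D\to\tilde{\cH}_{\cG,\cG'}$ is induced by the inclusion of the slope-$\leq 0$ part. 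So the reduction you propose is exactly right; note also that since both universal covers are locally $\mathrm{Spf}$ of inverse limits of f-semiperfect rings, checking on such $R$ suffices both to construct the map and to verify it is an isomorphism.

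The step you flag as the ``main obstacle'' is, however, the entire mathematical content of the proof, and your heuristic for it is not quite the right mechanism: the issue is not that a slope-$>0$ morphism would produce an isocrystal outside $[-1,0]$, but that positive-slope isocrystals have no Frobenius invariants over $B^+_{\mathrm{cris}}(R)$. Precisely, one must show $(D\otimes B^+_{\mathrm{cris}}(R))^{\varphi=1}=0$ whenever all slopes of $D$ are positive; by Dieudonn\'e--Manin this reduces to showing there is no nonzero $x\in A_{\mathrm{cris}}(R)$ with $p^a\varphi^b(x)=x$ for integers $a,b>0$. Since $\varphi$ preserves the $p$-adically complete ring $A_{\mathrm{cris}}(R)$, iterating gives $x=p^{ma}\varphi^{mb}(x)$ for all $m$, so $x$ is infinitely $p$-divisible and hence zero. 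With this vanishing supplied, the inclusion above becomes an equality on $\varphi$-invariants and the map of universal covers is an isomorphism; no separate height/dimension count or finite-level Dieudonn\'e argument is needed. (Your auxiliary isomorphism $\mathscr{H}om(\cG,\cG')[1/p]\cong\mathscr{H}om(\tilde\cG,\tilde\cG')$ is also an unnecessary detour that would itself require justification; the paper works directly with $\mathscr{H}om(\cG,\cG')[1/p]$.)
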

\begin{remark} Note that the statement only depends on $\cG$ and $\cG'$ up to quasi-isogeny. Chai and Oort prove Lemma~\ref{dieudonne module} by directly computing the relative Frobenius on $\cH_{\cG,\cG'}$ in terms of the relative Frobenius on conveniently chosen $\cG$ and $\cG'$. We give a different proof below. Also, Chai-Oort give an integral version of Lemma~\ref{dieudonne module}.
\end{remark}

\begin{proof} Let $\cH_D$ be a $p$-divisible group over $k$ with rational Dieudonn\'e module
\[
\Hom(D(\cG)[1/p], D(\cG')[1/p])^{\leq 0}\ .
\]
First, we construct a natural map
\[
\tilde{\cH}_D\to \tilde{\cH}_{\cG,\cG^\prime} = \mathscr{H}om(\cG,\cG^\prime)[1/p]\ .
\]
In order to construct such a map, it is enough to construct a functorial map on $R$-valued points, where $R$ is f-semiperfect in the sense of~\cite[Definition 4.1.2]{scholzeweinstein}, as $\tilde{\cH}_{\cG,\cG^\prime}$, like the universal cover of any $p$-divisible group, is represented by a formal scheme which is locally of the form $\mathrm{Spf}\ S$, where $S$ is an inverse limit of f-semiperfect rings.

Thus, let $R$ be f-semiperfect, with associated $B_{\mathrm{cris}}^+(R)$. Then by~\cite[Theorem A]{scholzeweinstein}, we have
\[\begin{aligned}
\tilde{\cH}_{\cG,\cG^\prime}(R) &= \Hom_R(\cG,\cG^\prime)[1/p] = \Hom_{B_{\mathrm{cris}}^+(R),\varphi}(D(\cG)\otimes B_{\mathrm{cris}}^+(R),D(\cG^\prime)\otimes B_{\mathrm{cris}}^+(R))\\
& = (\Hom(D(\cG)[1/p],D(\cG^\prime)[1/p])\otimes B_{\mathrm{cris}}^+(R))^{\varphi = 1}\ ,
\end{aligned}\]
and
\[\begin{aligned}
\tilde{\cH}_D(R) &= \Hom_R(\mathbb Q_p/\mathbb Z_p,\cH_D)[1/p] = (D(\cH_D)\otimes B_{\mathrm{cris}}^+(R))^{\varphi = 1} \\
&= (\Hom(D(\cG)[1/p],D(\cG^\prime)[1/p])^{\leq 0}\otimes B_{\mathrm{cris}}^+(R))^{\varphi = 1}\ .
\end{aligned}\]
Now the obvious inclusion
\[
\Hom(D(\cG)[1/p],D(\cG^\prime)[1/p])^{\leq 0}\subset \Hom(D(\cG)[1/p],D(\cG^\prime)[1/p])
\]
induces the desired map $\tilde{\cH}_D\to \tilde{\cH}_{\cG,\cG^\prime}$.

To check that this is an isomorphism, it suffices by the same argument to check on $R$-valued points, where $R$ is f-semiperfect. Thus, it remains to see that
\[
(\Hom(D(\cG)[1/p],D(\cG^\prime)[1/p])\otimes B_{\mathrm{cris}}^+(R))^{\varphi = 1} = (\Hom(D(\cG)[1/p],D(\cG^\prime)[1/p])^{\leq 0}\otimes B_{\mathrm{cris}}^+(R))^{\varphi = 1}\ .
\]
For this, it suffices to see that for any Dieudonn\'e module $D$ with only positive slopes,
\[
(D\otimes B_{\mathrm{cris}}^+(R))^{\varphi = 1} = 0\ .
\]
For this, using the Dieudonn\'e-Manin classification, we have to see that there are no elements $x\in A_{\mathrm{cris}}(R)$ with $p^a \varphi^b(x) = x$, where $a,b>0$ are positive integers. Note that $\varphi$ preserves the $p$-adically complete ring $A_{\mathrm{cris}}(R)$; on the other hand, the equation on $x$ implies $x=p^{ma} \varphi^{mb}(x)$ for any $m\geq 1$, so that $x$ is infinitely divisible by $p$, which implies $x=0$.
\end{proof}

\begin{cor}\label{the hom p-divisible group} Assume that $\cG$ and $\cG^\prime$ are isoclinic. \begin{enumerate}\item If the slope of $\cG$ is strictly greater than the slope of $\cG'$, then $\cH_{\cG,\cG'}$ vanishes. 
\item If the slopes of $\cG$ and $\cG'$ are equal, then $\cH_{\cG,\cG'}$ is an \'etale $p$-divisible group. 
\item If the slope of $\cG$ is strictly less than the slope of $\cG'$, then $\cH_{\cG,\cG'}$ is a connected $p$-divisible group. 
\end{enumerate}
\end{cor}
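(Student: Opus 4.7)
The plan is to deduce everything directly from Lemma~\ref{dieudonne module}, i.e.\ from the identification
\[
D(\cH_{\cG,\cG'})[1/p] = \Hom(D(\cG)[1/p], D(\cG')[1/p])^{\leq 0}.
\]
First I would translate the hypotheses into isocrystal slopes using the sign convention fixed at the start of Section~\ref{$p$-divisible groups}: if $\cG$ and $\cG'$ are isoclinic $p$-divisible groups of slopes $\lambda,\lambda'\in[0,1]$, then $D(\cG)[1/p]$ is a pure isocrystal of slope $-\lambda$ and $D(\cG')[1/p]$ is pure of slope $-\lambda'$. The internal Hom in isocrystals is therefore pure of slope $(-\lambda')-(-\lambda)=\lambda-\lambda' \in [-1,1]$.

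Next I would split into the three cases by inspecting the slope $\leq 0$ truncation. In case (1), $\lambda-\lambda'>0$, so the whole isocrystal $\Hom(D(\cG)[1/p],D(\cG')[1/p])$ is concentrated in strictly positive slopes and its slope $\leq 0$ part is $0$; thus $D(\cH_{\cG,\cG'})[1/p]=0$, which forces $\cH_{\cG,\cG'}=0$. In case (2), $\lambda-\lambda'=0$, so $\Hom(D(\cG)[1/p],D(\cG')[1/p])$ is pure of slope $0$ and equals its own slope $\leq 0$ part; thus $D(\cH_{\cG,\cG'})[1/p]$ is pure of slope $0$. In case (3), $\lambda-\lambda'<0$, so the Hom is already pure of strictly negative slope and again the truncation changes nothing; thus $D(\cH_{\cG,\cG'})[1/p]$ is pure of strictly negative slope.

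Finally I would convert these statements about the rational Dieudonn\'e module into the desired structural statements about the $p$-divisible group $\cH_{\cG,\cG'}$. The key point is that the \'etale-connected decomposition is insensitive to isogeny and is read off from the slopes of the rational Dieudonn\'e module: a $p$-divisible group is \'etale precisely when its rational Dieudonn\'e module is pure of slope $0$ (equivalently, $\varphi$ is bijective), and is connected precisely when all slopes of its rational Dieudonn\'e module are strictly negative in our conventions (equivalently, $\varphi$ is topologically nilpotent, so that $\mu_{p^\infty}$ of slope $-1$ is the prototype). Case (2) thus yields $\cH_{\cG,\cG'}$ \'etale, and case (3) yields $\cH_{\cG,\cG'}$ connected.

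There is essentially no obstacle beyond carefully tracking the two sign flips (the one between the slope of a $p$-divisible group and the slope of its covariant Dieudonn\'e module, and the one inside the formula for the slope of an internal Hom of isocrystals); everything else is a formal consequence of Lemma~\ref{dieudonne module} and the Dieudonn\'e--Manin classification.
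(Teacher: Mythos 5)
Your proof is correct and is exactly the argument the paper intends: the corollary is stated there without proof as an immediate consequence of Lemma~\ref{dieudonne module}, and your slope bookkeeping (the isocrystal $D(\cG)[1/p]$ has slope $-\lambda$ when $\cG$ has slope $\lambda$, the internal Hom has slope $\lambda-\lambda'$, and one then truncates at slope $\leq 0$ and reads off the connected--\'etale structure over the perfect base field) matches the paper's conventions. The only, inessential, slip is the parenthetical claim that connectedness is equivalent to $\varphi$ being topologically nilpotent: in the paper's normalization $\varphi_{\mu_{p^\infty}}=p^{-1}\sigma$ does not even preserve the lattice, so the correct formulation is simply that all slopes of the rational Dieudonn\'e module are strictly negative, which is the criterion your argument actually uses.
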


\begin{cor}\label{representability} If $\cG$ and $\cG^\prime$ are isoclinic and the slope of $\cG$ is strictly less than the slope of $\cG'$ and $\cH_{\cG,\cG'}$ has dimension $r$, then the sheaf $\mathscr{H}om(\cG,\cG')$ is representable by the scheme \[\mathrm{Spec}\ k[[x^{1/p^\infty}_1,\dots,x^{1/p^\infty}_r]]/(x_1,\dots,x_r).\] 
\end{cor}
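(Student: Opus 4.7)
The proof is essentially a direct concatenation of results already established in this subsection, so the plan is just to assemble them carefully.

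First, I would invoke Lemma \ref{Tate module}, which identifies the sheaf $\mathscr{H}om(\cG,\cG')$ with the Tate module $T_p\cH_{\cG,\cG'}$ of the internal Hom $p$-divisible group. Thus the question reduces to giving an explicit description of $T_p\cH_{\cG,\cG'}$ as a scheme.

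Next, I would apply Corollary \ref{the hom p-divisible group}(3): under the hypothesis that the slope of $\cG$ is strictly less than that of $\cG'$, $\cH_{\cG,\cG'}$ is a connected $p$-divisible group. Since $k$ is a field, $\mathrm{Lie}\ \cH_{\cG,\cG'}$ is automatically a finite-dimensional $k$-vector space, hence free; by hypothesis its dimension is $r$.

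Finally, I would invoke Proposition \ref{properties of p-divisible groups}(4), which states that for a connected $p$-divisible group $\cH$ over a perfect field of characteristic $p$ with $\mathrm{Lie}\ \cH$ free of dimension $r$, the Tate module is represented by
\[
\mathrm{Spec}\ k[[x_1^{1/p^\infty},\dots,x_r^{1/p^\infty}]]/(x_1,\dots,x_r)\ .
\]
Combined with the identification of step one, this gives exactly the claim. There is no real obstacle here; the corollary is a formal consequence of the three cited results, and the only thing worth double-checking is that the dimension hypothesis on $\cH_{\cG,\cG'}$ (which enters via $r$) matches the dimension of $\mathrm{Lie}\ \cH_{\cG,\cG'}$ used in Proposition \ref{properties of p-divisible groups}(4), which it does by definition of the dimension of a $p$-divisible group.
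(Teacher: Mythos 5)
Your proposal is correct and matches the paper's own argument, which simply cites Proposition~\ref{properties of p-divisible groups} and Corollary~\ref{the hom p-divisible group}; your additional explicit appeal to Lemma~\ref{Tate module} to identify $\mathscr{H}om(\cG,\cG')$ with $T_p\cH_{\cG,\cG'}$ is the same identification the paper relies on implicitly. Nothing to add.
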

\begin{proof} This follows from Proposition~\ref{properties of p-divisible groups} and Corollary~\ref{the hom p-divisible group}.
\end{proof}

\subsection{Rapoport-Zink spaces of PEL type}\label{section on rapoport-zink spaces}

In this section, we introduce the Rapoport-Zink spaces of PEL type that we will consider, and recall some of the results we will need. In close analogy to the EL case treated in~\cite{scholzeweinstein}, we define a local avatar of the Hodge-Tate period morphism, mapping the infinite-level Rapoport-Zink space to $\Fl_{G,\mu}$.

We first introduce PEL structures, as in~\cite{rapoport-zink}, with several simplifying assumptions that will be verified in the global case that we want to consider. Fix a finite-dimensional, semisimple algebra $B$ over $\mathbb{Q}_p$, endowed with an anti-involution $*$, and a finite left $B$-module $V$ equipped with an alternating bilinear form
\[
(\cdot,\cdot):V\otimes_{\mathbb{Q}_p}V\to \mathbb{Q}_p
\]
such that $(bv,w)=(v,b^*w)$ for all $v,w\in V$, $b\in B$. The data so far define an algebraic group $G$ over $\mathbb{Q}_p$, whose values over a $\mathbb{Q}_p$-algebra $R$ are
\[
G(R)=\{(g,c)\in GL_{B\otimes R}(V\otimes R)\times R^\times\mid (gv,gw) = c(v,w)\}\ .
\]
We refer to $c: G\to \mathbb{G}_m$ as the multiplier character of $G$. We make the general assumption that $G$ is connected, which amounts to excluding type D in the classification.

Moreover, we assume that the data are unramified. More precisely, we assume that $B$ is a product of matrix algebras over unramified extensions of $\mathbb Q_p$, and admits a $\ast$-stable maximal $\mathbb Z_p$-order $\cO_B\subset B$, which we fix. Moreover, we assume that there is an $\cO_B$-stable lattice $\Lambda \subset V$, which is self-dual under $(\cdot,\cdot)$; again, we fix such a lattice $\Lambda$. These data define a reductive group $G_{\mathbb Z_p}$ over $\mathbb Z_p$ via
\[
G(R) = \{(g,\lambda)\in GL_{\cO_B\otimes R}(\Lambda\otimes R)\times R^\times\mid (gv,gw) = \lambda (v,w)\}\ .
\]

Now also fix a conjugacy class of cocharacters $\mu:\mathbb{G}_m\to G_{\mathbb{\bar Q}_p}$ such that in the induced weight decomposition of $V_{\bar{\mathbb Q}_p}$, only weights $0$ and $1$ appear,
\[
V_{\bar{\mathbb Q}_p} = V_0\oplus V_1\ ,
\]
and $\lambda\circ \mu: \mathbb G_m\to \mathbb G_m$ is the identity morphism. This implies, in particular, that the subspaces $V_0$ and $V_1$ are totally isotropic. We let $E/\mathbb{Q}_p$ be the field of definition of $\mu$. Finally, we fix an element $b\in G(L)$, satisfying the compatibility $b\in B(G,\mu^{-1})$. Set $\breve E:=E\cdot L$.

Note that the condition $b\in B(G,\mu^{-1})$ together with the condition on the weights of $\mu$ on $V$ imply that the slopes of $b$ on $V$ are in $[-1,0]$. In particular, in our (nonstandard) normalization of the covariant Dieudonn\'e module, there is a $p$-divisible group $\mathbb X_b$ over $\bar{\mathbb F}_p$ whose rational Dieudonn\'e module is given by
\[
\left(V\otimes_{\mathbb{Q}_p}L, b(\mathrm{id}\otimes \sigma)\right)\ ;
\]
then $\mathbb X_b$ is uniquely determined up to isogeny, and its universal cover $\widetilde{\mathbb X}_b$ is uniquely determined. By functoriality, $\mathbb X_b$ is equipped with an action $\iota: B\to \mathrm{End}(\widetilde{\mathbb{X}}_b)$ and with a symmetric polarization (i.e. an anti-symmetric quasi-isogeny to its dual), with induced Rosati involution being compatible with $\ast$ on $B$.

Write $\cD=(B,*,V,(\cdot,\cdot),b, \mu)$ for the rational data and $\cD^\mathrm{int}= (\cO_B,*,\Lambda, (\cdot,\cdot),b,\mu)$ for the integral data.

\begin{defn} The Rapoport-Zink space $\mathfrak{M}_{\cD^\mathrm{int}}$ of PEL type associated to $\cD^\mathrm{int}$ is the functor on $\mathrm{Nilp}_{\cO_{\breve E}}^{\mathrm{op}}$ sending an $\cO_{\breve E_0}$-algebra $R$ to the set of isomorphism classes of pairs $(\cG,\rho)$, where $\cG$ is a $p$-divisible group over $R$ equipped with an action of $\cO_B$ and a principal polarization whose induced Rosati involution is compatible with $\ast$ on $\cO_B$, such that the $\cO_B$-action satisfies the \emph{determinant condition} (see 3.23 in~\cite{rapoport-zink} for a precise formulation), and
\[
\rho:\mathbb{X}_b\times_{\mathbb{\bar F}_p}R/p\to \cG\times_{R}R/p
\]
is a quasi-isogeny compatible with the $\cO_B$-action and the polarization, up to an automorphism of $\widetilde{\mu}_{p^\infty,R/p}$.
\end{defn}

The following combines Theorem 3.25 and \S 3.82 of~\cite{rapoport-zink}. In our unramified situation, and excluding type D, we may allow $p=2$.

\begin{thm} The functor $\mathfrak{M}_{\cD^{\mathrm{int}}}$ is representable by a formal scheme which locally admits a finitely generated ideal of definition. Moreover, $\mathfrak{M}_{\cD^{\mathrm{int}}}$ is formally smooth.
\end{thm}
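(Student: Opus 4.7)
The plan is to follow Rapoport and Zink's strategy~\cite{rapoport-zink}, using the unramifiedness of the data to simplify several steps and to admit $p=2$.

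For representability, I would first realize $\mathfrak{M}_{\cD^{\mathrm{int}}}$ as a closed formal subfunctor of a Siegel-type Rapoport-Zink functor obtained by forgetting the $\cO_B$-action. The symplectic lattice $(\Lambda,(\cdot,\cdot))$ viewed as a free $\mathbb{Z}_p$-module, together with $\mathbb{X}_b$ regarded only as a polarized $p$-divisible group, defines Siegel-type data whose associated Rapoport-Zink functor $\mathfrak{M}_{\mathrm{Sieg}}$ is representable by a formal scheme locally admitting a finitely generated ideal of definition; this is established either via Zink's theory of displays, or directly by combining Grothendieck-Messing deformation theory with the rigidity of quasi-isogenies of $p$-divisible groups. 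Next, $\mathfrak{M}_{\cD^{\mathrm{int}}}$ is cut out of $\mathfrak{M}_{\mathrm{Sieg}}$ by requiring that the prescribed $\cO_B$-action on $\mathbb{X}_b$ extends to each universal deformation and is compatible with the polarization. As $\cO_B$ is a finite $\mathbb{Z}_p$-algebra and compatibility is a set of closed conditions on homomorphisms of $p$-divisible groups (themselves representable), this defines a closed formal subfunctor, and representability descends.

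For formal smoothness, I would invoke Grothendieck-Messing deformation theory. Given a PD thickening $R \twoheadrightarrow R_0$ with locally nilpotent PD kernel and a point $(\cG_0,\iota_0,\lambda_0,\rho_0)$ over $R_0$, lifts to $R$ correspond to lifts of the Hodge filtration inside the evaluation of the Dieudonn\'e crystal on $R$, subject to being $\cO_B$-stable, isotropic for the polarization, and of the type prescribed by the determinant condition. The space of such lifts is a torsor under a free $R_0$-module given by the tangent space of the flag variety attached to $(G_{\mathbb{Z}_p},\mu)$. Since $\mu$ is minuscule and $G_{\mathbb{Z}_p}$ is reductive, the parabolic $P_\mu$ admits a smooth model over $\cO_E$, so the integral flag variety $G_{\cO_E}/P_\mu$ is smooth and all obstructions to lifting vanish locally; this yields formal smoothness.

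The main technical obstacle is the careful handling of the determinant condition, particularly at $p=2$. In the general Rapoport-Zink setup this requires substantial work via wedge or Pappas-Rapoport conditions; in our unramified setting, excluding type D, the determinant condition merely selects a union of connected components of the naive moduli problem and is automatically preserved under lifts, which is what allows the argument to go through uniformly, including at $p=2$.
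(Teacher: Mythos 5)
Your proposal is correct and matches the paper's approach: the paper simply cites Theorem 3.25 and \S 3.82 of Rapoport--Zink (noting that unramifiedness and the exclusion of type D allow $p=2$), and your sketch — representability via a closed immersion into the Siegel-type space cut out by the closed condition that the quasi-endomorphisms giving the $\cO_B$-action be genuine, plus formal smoothness via Grothendieck--Messing and the smoothness of the integral flag variety $G_{\mathbb{Z}_p}/P_\mu$, with the determinant condition being open and closed in the unramified case — is exactly the argument of the cited reference.
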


We let $\cM_{\cD^\mathrm{int}}:=(\mathfrak{M}_{\cD^{\mathrm{int}}})^{\mathrm{ad}}_{\eta}$ be the adic generic fiber associated to the formal scheme (representing) $\mathfrak{M}_{\cD^\mathrm{int}}$. The adic generic fiber is taken in the sense of Section 2 of~\cite{scholzeweinstein}: Proposition 2.2.1 of loc. cit. gives a fully faithful functor \[\mathfrak{M}\mapsto \mathfrak{M}^{\mathrm{ad}}\] from formal schemes over $\cO_{\breve E}$ which locally admit a finitely generated ideal of definition to adic spaces over $\mathrm{Spa}(\cO_{\breve{E}},\cO_{\breve{E}})$, and \[\mathfrak{M}_\eta^{\mathrm{ad}}:=\mathfrak{M}^\mathrm{ad}\times_{\mathrm{Spa}(\cO_{\breve E}, \cO_{\breve E})}\mathrm{Spa}(\breve E, \cO_{\breve E}).\] 
Then $\cM_{\cD^{\mathrm{int}}}$ agrees with the adic space corresponding to the usual rigid-analytic generic fibre of $\mathfrak{M}_{\cD^{\mathrm{int}}}$.

For each $n\geq 1$, one can define a cover $\cM_{\cD^\mathrm{int},n}$ of $\cM_{\cD^\mathrm{int}}$ which parametrizes full level $n$ structures. More precisely, define the compact open subgroups \[K_0:=\{g\in G(\mathbb{Q}_p) \mid g\Lambda = \Lambda\}\] and \[K_n:=\{g\in K_0 \mid g\equiv 1\pmod{p^n} \}.\] Let $\cM_{\cD^\mathrm{int},n}$ be the functor on complete affinoid $(\breve E(\zeta_{p^n}),\cO_{\breve E(\zeta_{p^n})})$-algebras parametrizing $\cO_B$-linear maps \[\Lambda/p^n\to \cG[p^n]^\mathrm{ad}_{\eta}(R,R^+),\] which match the pairing $(\cdot,\cdot)$ on $\Lambda$ with the one induced by the polarization on $\cG[p^n]$. Here, note that the second pairing takes values in $\mu_{p^n}$, but using the fixed primitive $p^n$-th root of unity $\zeta_{p^n}\in E(\zeta_{p^n})$, we can identify $\mu_{p^n}\cong \mathbb Z/p^n$. Then by Lemma 5.33 of~\cite{rapoport-zink}, the $\cM_{\cD^\mathrm{int},n}$ are finite \'etale covers of $\cM_{\cD^\mathrm{int}}$.

We can also define an infinite-level version of these Rapoport-Zink spaces.

\begin{defn}\label{infinite rz spaces} Let $\cM_{\cD^\mathrm{int}, \infty}$ be the functor on complete affinoid $(\breve E(\zeta_{p^\infty}),\cO_{\breve E(\zeta_{p^\infty})})$-algebras sending $(R,R^+)$ to the set of triples $(\cG,\rho,\alpha)$, where $(\cG,\rho)\in \cM_{\cD^\mathrm{int}}(R,R^+)$ and \[\alpha:\Lambda\to T_p\cG^\mathrm{ad}_\eta(R,R^+)\] is a morphism of $\cO_B$-modules such that the following conditions are satisfied. 
\begin{enumerate}
\item The pairing $(\cdot,\cdot)$ on $\Lambda$ matches the pairing on $T_p\cG$ induced by the polarization. More precisely, the diagram
\[\xymatrix{
\Lambda\otimes_{\mathbb Z_p} \Lambda\ar[rrr]^{\alpha\otimes \alpha} \ar[d]_{(\cdot,\cdot)} &&& T_p\cG^{\mathrm{ad}}_\eta(R,R^+)\otimes_{\mathbb Z_p} T_p \cG^{\mathrm{ad}}_\eta(R,R^+)\ar[d] \\
\mathbb Z_p\ar[rrr]_{(1,\zeta_p,\zeta_{p^2},\ldots)} &&& (T_p\mu_{p^\infty})^{\mathrm{ad}}_\eta(R,R^+)
}\]
commutes, where the right vertical map is the pairing induced from the polarization, and the lower map is defined using the fixed $p$-power roots of unity in the base field $E(\zeta_{p^\infty})$.
\item The induced maps \[\Lambda\to T_p\cG^\mathrm{ad}_{\eta}(C,C^+),\] are isomorphisms, for all geometric points $\mathrm{Spa}(C,C^+)$ of $\mathrm{Spa}(R,R^+)$.
\end{enumerate}
\end{defn}

\noindent Recall that we have the quasi-logarithm map defined in Section 3 of~\cite{scholzeweinstein}, which induces a map of sheaves on complete affinoid $(\breve E,\cO_{\breve E})$-algebras $(R,R^+)$: \[\mathrm{qlog}_{\mathbb{X}_b}: (\tilde{\mathbb{X}}_b)^\mathrm{ad}_\eta(R,R^+) \to D(\mathbb{X}_b)[1/p]\otimes_L R.\] If $(R,R^+) = (C,C^+)$ is a geometric point, then the image of $T_p\cG^\mathrm{ad}_\eta(C,C^+)\otimes_{\mathbb Z_p} C$ under $\mathrm{qlog}_{\mathbb{X}_b}$ can be identified with $(\mathrm{Lie}\ \cG^\vee)^\vee\otimes C$.

The arguments in Section 6 of~\cite{scholzeweinstein} give the following theorem. (The case of Rapoport-Zink spaces of EL type is Theorem 6.5.4 of~\cite{scholzeweinstein}. We remark that \cite{scholzeweinstein} follows the conventions on $b$ and $\mu$ in \cite{rapoport-zink}, which differ from our conventions here.)

\begin{thm}\label{perfectoid rz spaces} The functor $\cM_{\cD^\mathrm{int},\infty}$ is representable by an adic space over $\mathrm{Spa}(\breve E(\zeta_{p^\infty}),\cO_{\breve E(\zeta_{p^\infty})})$. The space $\cM_{\cD^\mathrm{int},\infty}$ is preperfectoid, and \[\cM_{\cD^\mathrm{int},\infty}\sim \varprojlim_n \cM_{\cD^\mathrm{int},n}.\]

Moreover, there is the following alternate description of $\cM_{\cD^\mathrm{int},\infty}$, which depends only on the rational data $\cD$. The sheaf $\cM_{\cD^\mathrm{int},\infty}$ is the sheafification of the functor on complete affinoid $(\breve E(\zeta_{p^\infty},\cO_{\breve E(\zeta_{p^\infty})})$-algebras sending $(R,R^+)$ to the set of $B$-linear maps \[V\to (\tilde{\mathbb{X}}_b)^\mathrm{ad}_\eta(R,R^+)\]  which match the pairing $(\cdot,\cdot)$ on $V$ with the polarization on $(\widetilde{\mathbb{X}}_b)^\mathrm{ad}_{\eta}$ (up to the fixed choice of $p$-power roots of unity, as above) and which in addition satisfy:

\begin{enumerate}
\item The image of $V\otimes_{\mathbb{Q}_p} R$ in $D(\mathbb{X}_b)[1/p]\otimes_L R$ is totally isotropic under the pairing $(\cdot,\cdot)$ induced by the identification $D(\mathbb{X}_b)[1/p]\simeq V\otimes_{\mathbb{Q}_p} L$.
\item The quotient $W$ of $D(\mathbb{X}_b)[1/p]\otimes_L R$ by the image of $V\otimes_{\mathbb{Q}_p} R$ is a finite projective $R$-module, which locally on $R$ is isomorphic to $V_1\otimes R$ as a $B\otimes_{\mathbb{Q}_p}R$-module. 
\item For any point $\mathrm{Spa}(C,C^+)$ of $\mathrm{Spa}(R,R^+)$, the sequence
\[
0\to V\to (\widetilde{\mathbb{X}}_b)^\mathrm{ad}_\eta(C,C^+)\to W\otimes_R C\to 0
\]
is exact.
\end{enumerate}
\end{thm}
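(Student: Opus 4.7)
The plan is to follow closely the template of Section 6 of~\cite{scholzeweinstein}, where the corresponding result is proved in the EL case (\cite[Theorem 6.5.4]{scholzeweinstein}), and to adapt it to accommodate the polarization data. The core observation is that passing to infinite level converts the integral Rapoport-Zink space into a preperfectoid object precisely because the universal cover $\widetilde{\mathbb X}_b$ is representable by a preperfectoid formal scheme after passage to the generic fiber, by Proposition~\ref{properties of p-divisible groups}(3); indeed, setting $s:=\rho^{-1}\circ(\alpha\otimes_{\mathbb Z_p}\mathbb Q_p): V\to(\widetilde{\mathbb X}_b)^{\mathrm{ad}}_\eta(R,R^+)$, the triple $(\cG,\rho,\alpha)$ can be essentially recovered from $s$ alone.

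First, I would construct the forward map from $\cM_{\cD^{\mathrm{int}},\infty}$ to the sheafification of the rational functor via the recipe above, and verify that conditions (1)--(3) hold. Condition~(1) follows because the Hodge filtration on de Rham cohomology, recovered by composing $s$ with the quasi-logarithm, is isotropic for the polarization since $V_0$ and $V_1$ are totally isotropic; condition~(2) is the determinant condition together with the fact that the Hodge filtration is locally of the expected type in the sense of~\cite{rapoport-zink}; and condition~(3) is the Hodge-Tate exact sequence from Section~\ref{hodge-tate filtration} applied to $\cG$ at each geometric point. For the reverse direction, given $s$ satisfying (1)--(3), one uses the image of the quasi-logarithm of $s$ to cut out a Hodge-Tate filtration on $V\otimes_{\mathbb Q_p}R$ and appeals to~\cite[Theorem B]{scholzeweinstein} to reconstruct a $p$-divisible group $\cG$ with trivialized Tate module; the $\cO_B$-action and polarization descend functorially from the $B$-action and the pairing on $V$.

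The representability and preperfectoid assertions then follow as in the EL case: the rational functor is a locally closed subfunctor of a product of copies of $(\widetilde{\mathbb X}_b)^{\mathrm{ad}}_\eta$ (after fixing a basis of $V$), cut out by closed conditions ($B$-linearity and compatibility with the pairing) together with the open genericity conditions~(2)--(3) which ensure one lies over the generic fiber of the Rapoport-Zink space. Since $(\widetilde{\mathbb X}_b)^{\mathrm{ad}}_\eta$ is preperfectoid, so is the resulting locally closed subspace. The tilde relation $\cM_{\cD^{\mathrm{int}},\infty}\sim\varprojlim_n\cM_{\cD^{\mathrm{int}},n}$ will follow because the latter tower is by construction the tower of finite \'etale quotients of $\cM_{\cD^{\mathrm{int}},\infty}$ by the subgroups $K_n$, matched to reduction of $\alpha$ modulo $p^n$.

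The main obstacle I anticipate is the careful bookkeeping of the pairing normalization involving the fixed compatible system of $p$-power roots of unity $\zeta_{p^\infty}\in\breve E(\zeta_{p^\infty})$; it is this choice that forces $\cM_{\cD^{\mathrm{int}},\infty}$ to be defined over $\breve E(\zeta_{p^\infty})$ rather than $\breve E$, and one must match the pairing induced on $(\widetilde{\mathbb X}_b)^{\mathrm{ad}}_\eta$ by the symmetric polarization of $\mathbb X_b$ with the given pairing on $V$ up to the appropriate Tate twist. A secondary subtlety is the uniform exactness of the Hodge-Tate sequence, which requires control of quasi-logarithms away from classical points; this should follow from the period-sheaf formalism of Section~\ref{hodge-tate filtration}.
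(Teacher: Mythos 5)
Your overall strategy is the same as the paper's: reduce everything to the EL/structure-free infinite-level Rapoport--Zink space of \cite{scholzeweinstein} and impose the $\cO_B$-action and polarization as additional conditions. Two steps, however, need repair. First, your reverse direction reconstructs $\cG$ from the map $s\colon V\to(\widetilde{\mathbb X}_b)^{\mathrm{ad}}_\eta(R,R^+)$ by appealing to \cite[Theorem B]{scholzeweinstein}; that classification of $p$-divisible groups by pairs $(T,W)$ is only available over $\cO_C$ for $C$ algebraically closed, so it reconstructs $\cG$ fibrewise but not over a general $(R,R^+)$. The reconstruction in families is precisely the content of the isomorphism $\cM_\infty\toisom\cM'_\infty$ (Lemma 6.3.6 of \cite{scholzeweinstein}), and the argument should be routed through that statement (as the paper does, via a commutative square in which the vertical maps $\cM_{\cD^{\mathrm{int}},\infty}\hookrightarrow\cM_\infty$ and $\cM_{\cD,\infty}\hookrightarrow\cM'_\infty$ are closed embeddings) rather than through Theorem B.

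Second, when you say the $\cO_B$-action and polarization ``descend functorially'' to the reconstructed $\cG$, you are skipping the point that distinguishes the integral moduli problem from a rational one: conditions (1) and (2) only guarantee, via Grothendieck--Messing, that the $B$-action and the pairing preserve the Hodge filtration and hence define \emph{quasi-isogenies} of $\cG$. To land in $\cM_{\cD^{\mathrm{int}},\infty}$ you must check these quasi-isogenies are genuine isogenies (indeed isomorphisms, giving an honest $\cO_B$-action and a principal polarization); the paper does this by observing that the integral Tate module of $\cG$ is identified with $\Lambda\subset V$, which is $\cO_B$-stable and self-dual, and that a quasi-isogeny preserving the integral Tate module on the generic fibre is an isogeny (an open and closed condition, as in Proposition 3.3.3 of \cite{scholzeweinstein}). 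With these two corrections your argument matches the paper's; the remaining points (the forward verification of (1)--(3), the preperfectoid assertion via embedding into a preperfectoid ambient space, and the tilde-limit statement) are handled as you describe.
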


\begin{proof} To see that $\cM_{\cD^\mathrm{int}, \infty}$ is representable by a preperfectoid space, we will show that it is a closed subfunctor of the Rapoport-Zink space at infinite level $\cM_\infty$ for the $p$-divisible group $\mathbb{X}_b$, which is defined in Section 6.3 of~\cite{scholzeweinstein}. Recall that the space $\cM_\infty$ only keeps track of deformations of $\mathbb{X}_b$, without the $\cO_B$-action or the polarization. By abuse of notation, let us actually denote by $\cM_\infty$ the base change of this space to $\mathrm{Spa}(\breve{E}(\zeta_{p^\infty}),\cO_{\breve{E}(\zeta_{p^\infty})})$.

We claim that the natural (forgetful) map $\cM_{\cD^\mathrm{int},\infty} \hookrightarrow \cM_\infty$ is a closed embedding. We follow Theorem 3.25 of~\cite{rapoport-zink}. Let $\cG$ be the universal $p$-divisible group over $\cM_\infty$. The conditions that the $\cO_B$-action and the polarization lift to quasi-isogenies on $\cG$ depend only on preserving the Hodge filtration on $D(\cG)[1/p]$, by Grothendieck-Messing theory, so these are closed conditions. They correspond to restricting to a closed subset of the image of the Grothendieck-Messing period morphism. On the other hand, the condition that a quasi-isogeny be a genuine isogeny on the adic generic fiber is an open and closed condition. (This follows in the same way as Proposition 3.3.3 of~\cite{scholzeweinstein}, which is the special case of a quasi-isogeny from the $p$-divisible group $\mathbb{Q}_p/\mathbb{Z}_p$. In the general case, the key observation is that $\{e\}\hookrightarrow \mathrm{Hom}(\cG_1[p^n],\cG_2[p^n])^{\mathrm{ad}}_\eta$ is an open and closed embedding when $\{e\}$ corresponds to the trivial isogeny and $n\in \mathbb{Z}_{\geq 1}$.) Finally, the condition that the trivialization $\alpha$ of $(T_p\cG)^\mathrm{ad}_\eta$ be $\cO_B$-linear and respect the polarization is closed.

The first part of the theorem now follows from Theorem 6.3.4 of~\cite{scholzeweinstein}, which shows that $\cM_\infty$ is preperfectoid and Proposition 2.3.7 of loc. cit., which shows that a closed subspace of a preperfectoid space is preperfectoid. 

For the second part, let $\cM_{\cD, \infty}$ be the functor defined by the rational data. There is a natural map of functors $\cM_{\cD^\mathrm{int},\infty} \to \cM_{\cD,\infty}$: For $(R,R^+)$ a complete affinoid algebra over $(\breve E(\zeta_{p^\infty}), \cO_{\breve E(\zeta_{p^\infty})})$, let $(\cG,\rho,\alpha)\in \cM_{\cD^\mathrm{int},\infty} (R,R^+)$. The quasi-isogeny $\rho$ gives an identification $\widetilde{\mathbb{X}}_b\simeq \widetilde{\cG}$. The map from the rational Tate module of $\cG$ to its universal cover, precomposed with the trivialization $\alpha$ gives a map \[V\to (\widetilde{\mathbb{X}}_b)^\mathrm{ad}_\eta(R,R^+).\] By construction, this map will respect the polarization and the $B$-action. The first condition is satisfied because the image of $V\otimes_{\mathbb{Q}_p}R$ in $D(\mathbb{X}_b)[1/p]\otimes_L R$ can be identified with $(\mathrm{Lie}\ \cG^\vee)^\vee\otimes R$ (see the proof of Proposition 7.1.1 of~\cite{scholzeweinstein}). The compatibility between the pairing $(\cdot,\cdot)$ on $V$ and the polarization on $\cG$ imply that $(\mathrm{Lie}\ \cG^\vee)^\vee\otimes R$ is totally isotropic under $(\cdot,\cdot)$. The second condition is satisfied because $W$ can be identified with $\mathrm{Lie}\ \cG \otimes R$. The third condition follows from~\cite[Proposition 3.4.2 (v)]{scholzeweinstein}.

We also recall the functor $\cM'_\infty$ defined in Section 6.3 of~\cite{scholzeweinstein} (which again, we base change to $\mathrm{Spa}(\breve{E}(\zeta_{p^\infty}),\cO_{\breve{E}(\zeta_{p^\infty})})$): this parametrizes maps \[V\to (\widetilde{\mathbb{X}}_b)^\mathrm{ad}_\eta(R,R^+)\]  which satisfy: 
\begin{enumerate}
\item the quotient $W$ of $D(\mathbb{X}_b)[1/p]\otimes_L R$ by the image of $V\otimes_{\mathbb{Q}_p} R$ is a finite projective $R$-module, of the same rank as that of $V_1$. 
\item For any geometric point $\mathrm{Spa}(C,C^+)$ of $\mathrm{Spa}(R,R^+)$, the sequence
\[
0\to V\to (\widetilde{\mathbb{X}}_b)^\mathrm{ad}_\eta(C,C^+)\to W\otimes_R C\to 0
\]
is exact.
\end{enumerate}
Lemma 6.3.6 of~\cite{scholzeweinstein} shows that $\cM_\infty \toisom \cM'_\infty$ and we have a commutative diagram of adic spaces \[\xymatrix{\cM_{\cD^\mathrm{int}, \infty}\ar@{^{(}->}[d]\ar[r]&\cM_{\cD,\infty}\ar@{^{(}->}[d] \\ \cM_\infty\ar[r]^{\sim}&\cM'_\infty.}\] The bottom map is an isomorphism and the vertical maps are closed embeddings. 

It remains to see that the top map is surjective. For this, note that there is a $p$-divisible group $\cG$ over $\cM_{\cD,\infty}$, obtained by restriction from $\cM_\infty$. The integral Tate module $(T_p\cG)^\mathrm{ad}_\eta$ is identified with the lattice $\Lambda\subset V$, which is stable under $\cO_B$ and self-dual under $(\cdot,\cdot)$. The $p$-divisible group $\cG$ is equipped locally on $\cM_{\cD,\infty}$ with a quasi-isogeny on the special fiber to $\mathbb{X}_b$. The first two conditions on the image of $V\otimes_{\mathbb{Q}_p}R$ ensure that the $B$-action and the polarization on $D(\mathbb{X}_b)[1/p]\otimes_L R$ preserve the Hodge filtration of $\cG$, so that they define quasi-isogenies on $\cG$. The fact that these quasi-isogenies are genuine isogenies follows from the fact that they preserve the integral Tate module.
\end{proof}

\noindent From now on, we identify $\cM_{\cD^\mathrm{int},\infty} \simeq \cM_{\cD,\infty}$, so the moduli problem only depends on the rational data $\cD$. 

Recall that $\Fl_{G,\mu}$ is the flag variety over $\mathrm{Spa}(E,\cO_E)$ parametrizing filtrations on $\mathrm{Rep}\ G$ of the same type as the ascending filtration corresponding to the cocharacter $\mu$. On the faithful representation $V$ of $G$, $\mu$ induces the decomposition \[V_{\bar{\mathbb Q}_p}=V_0\oplus V_1\ ,\]
and the ascending filtration is given by \[\mathrm{Fil}_{-1,\mu}(V_{\bar{\mathbb Q}_p}):=V_1\ \mathrm{and}\ \mathrm{Fil}_{0,\mu}(V_{\bar{\mathbb Q}_p}):=V_{\bar{\mathbb Q}_p}.\] 

In the case we are considering, we can be more explicit: $\Fl_{G,\mu}$ parametrizes $B$-equivariant quotients $W'$ of $V\otimes_{\mathbb{Q}_p}R$ that are finite projective $R$-modules such that
\begin{enumerate}
\item the kernel of the map $V\otimes R\twoheadrightarrow W'$ is totally isotropic under $(\cdot,\cdot)$ and
\item locally on $R$, $W'$ is isomorphic to $V_0\otimes R$ as $B\otimes_{\mathbb{Q}_p}R$-modules.
\end{enumerate}

\begin{prop}There is a local Hodge-Tate period map \[\pi_{HT}:\cM_{\cD,\infty}\to \Fl_{G,\mu},\] sending an $(R,R^+)$-valued point of $\cM_{\cD,\infty}$ given by a map $V\to (\widetilde{\mathbb{X}}_b)^\mathrm{ad}_\eta(R,R^+)$ to the quotient of $V\otimes_{\mathbb{Q}_p}R$ given as the image of the map \[V\otimes_{\mathbb{Q}_p}R \to D(\mathbb{X}_b)[1/p]\otimes_L R.\] The local Hodge-Tate period map is $G(\mathbb{Q}_p)$-equivariant.
\end{prop}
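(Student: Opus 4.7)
The plan is to construct $\pi_{HT}$ exactly as the proposition prescribes, and then verify that it lands in $\Fl_{G,\mu}$ and is $G(\mathbb{Q}_p)$-equivariant. Given an $(R,R^+)$-point of $\cM_{\cD,\infty}$, realized by the second description in Theorem~\ref{perfectoid rz spaces} as a $B$-linear map $\alpha \colon V\to (\widetilde{\mathbb{X}}_b)^{\mathrm{ad}}_\eta(R,R^+)$ compatible with pairings and satisfying conditions (1)--(3) there, post-composition with $\mathrm{qlog}_{\mathbb{X}_b}$ yields a $B\otimes R$-linear map
\[
f\colon V\otimes_{\mathbb{Q}_p} R \longrightarrow D(\mathbb{X}_b)[1/p]\otimes_L R,
\]
and we set $W_0 := \im(f)$, viewed as a quotient of $V\otimes_{\mathbb{Q}_p} R$.

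To see that this quotient defines a point of $\Fl_{G,\mu}(R,R^+)$, two things need checking. First, that $W_0$ is finite projective and locally isomorphic to $V_0\otimes R$ as a $B\otimes R$-module: this is immediate from condition (2), which says the cokernel $W$ of $f$ is finite projective and locally isomorphic to $V_1\otimes R$; fixing an isomorphism $D(\mathbb{X}_b)[1/p]\cong V\otimes_{\mathbb{Q}_p} L$ of $B\otimes L$-modules and using $V_{\bar{\mathbb{Q}}_p}= V_0 \oplus V_1$, we conclude the same for $W_0$. Hence $f$ factors as a surjection $V\otimes R \twoheadrightarrow W_0$ with finite projective kernel $K$.

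The substantive point is that $K$ is totally isotropic under $(\cdot,\cdot)$. This is the abstract form of the fact that the Hodge filtration on the $p$-adic Tate module of a polarized abelian variety is Lagrangian. Concretely, the polarization on $\mathbb{X}_b$ endows $D(\mathbb{X}_b)[1/p]$ with a perfect symplectic form $\psi_D$; condition (1) then says $W_0$ is totally isotropic for $\psi_D$, and by the rank count it is Lagrangian. The compatibility of $\alpha$ with pairings---matching $(\cdot,\cdot)$ on $V$ with the polarization pairing on $(\widetilde{\mathbb{X}}_b)^{\mathrm{ad}}_\eta$, and via $\mathrm{qlog}$ with $\psi_D$ after identifying $D(\mu_{p^\infty})[1/p]$ suitably---then forces $K$ to be self-orthogonal under $(\cdot,\cdot)$ extended to $V\otimes R$, i.e.\ totally isotropic. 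To make this rigorous when $(R,R^+)$ is not a geometric point, we test isotropy at all $\mathrm{Spa}(C,C^+)\to \mathrm{Spa}(R,R^+)$; there, condition (3) gives an exact sequence $0\to V\to (\widetilde{\mathbb{X}}_b)^{\mathrm{ad}}_\eta(C,C^+)\to W\otimes_R C\to 0$, and the claim reduces to the classical Lagrangian property of the Hodge-Tate filtration of the polarized $p$-divisible group corresponding to the point.

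Finally, $G(\mathbb{Q}_p)$-equivariance is tautological: $g\in G(\mathbb{Q}_p)$ acts on $\cM_{\cD,\infty}$ by replacing $\alpha$ with $\alpha\circ g^{-1}$, which transforms $f$ by precomposition with $g^{-1}$ on $V\otimes R$, hence transforms $W_0$ by the same action, matching the natural action on $\Fl_{G,\mu}$. I expect the main obstacle to be the rigorous treatment of the pairing compatibility used for the isotropy of $K$, which is where the reduction to geometric points via condition (3) and the classical Lagrangian property of the Hodge-Tate filtration do the real work.
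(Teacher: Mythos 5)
Your construction and verification are correct, and they follow the same route as the paper, which simply proves this proposition by reference to Proposition 7.1.1 of \cite{scholzeweinstein}: the quotient type is read off from condition (2) of Theorem~\ref{perfectoid rz spaces}, the isotropy of the kernel is the PEL-specific point, and equivariance is formal. Your handling of the isotropy — reducing to geometric points (legitimate since $R$ is reduced and the kernel is a direct summand, so its formation commutes with specialization) and invoking the Lagrangian property of the Hodge--Tate filtration of a polarized $p$-divisible group there — is exactly the adaptation needed, so there is no gap.
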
 

\begin{proof} This is proved in exactly the same way as Proposition 7.1.1 of~\cite{scholzeweinstein}.
\end{proof}

\noindent Recall that, by Theorem~\ref{locally closed strata}, we have a stratification of $\Fl_{G,\mu}$ by locally closed strata indexed by elements of $B(G,\mu^{-1})$ and that we have fixed an element $b\in B(G,\mu^{-1})$.

\begin{prop}\label{local Hodge-Tate period map} The local Hodge-Tate period map factors through \[\pi^b_{HT}: \cM_{\cD,\infty}\to \Fl^b_{G,\mu}.\] 
\end{prop}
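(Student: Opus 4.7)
The plan is to show that for every $(C,\cO_C)$-valued point $x=(\cG,\rho,\alpha)$ of $\cM_{\cD,\infty}$, the associated $G$-bundle $\cE(\pi_{HT}(x))$ on $\cX_{C^\flat}$ is isomorphic to $\cE_b$; this suffices because $b(\cdot):|\Fl_{G,\mu}|\to B(G)$ factors through the maximal Hausdorff quotient, hence is determined by rank-one points. The strategy is to introduce a third $G$-bundle on $\cX_{C^\flat}$, namely the bundle $E(\cG)$ attached to $\cG$ by Theorem~\ref{p-divisible groups and vector bundles} and refined to a $G$-bundle via the Tannakian formalism (using the $\cO_B$-action and polarization on $\cG$), and identify it with both $\cE_b$ and $\cE(\pi_{HT}(x))$.

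First I would check that $E(\cG)\cong \cE_b$ using the quasi-isogeny $\rho:\mathbb X_b\otimes\cO_C/p\to \cG\otimes\cO_C/p$. By full faithfulness of $E$ on the isogeny category (Theorem~\ref{p-divisible groups and vector bundles}(2)), $\rho$ induces a PEL-compatible isomorphism $E(\cG)\cong E(\mathbb X_b\otimes_{\bar{\mathbb F}_p}\cO_C/p)$, which by part~(3) of the same theorem (combined with GAGA) equals $\cE(\mathbb X_b)$; by the definition of $\cE(\cdot)$ applied to the isocrystal with $G$-structure $(V\otimes L,b\sigma)$, this is $\cE_b$.

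Next I would identify $E(\cG)$ with $\cE(\pi_{HT}(x))$ via the Beauville--Laszlo description of Theorem~\ref{BeauvilleLaszloRelCurve}. Away from the point $\infty\in X_{C^\flat}$, the trivialization $\alpha:\Lambda\isomap T_p\cG(C,\cO_C)$ endows $E(\cG)$ with a $V$-framing; at $\infty$, completion yields a $B^+_{\mathrm{dR},C}$-lattice $\Lambda_\infty\subset V\otimes_{\mathbb Q_p}B_{\mathrm{dR},C}$, i.e.~a point $y\in \GrBdR_G(C,\cO_C)$, necessarily lying in $\GrBdR_{G,\mu}(C,\cO_C)$ because of the determinant condition imposed on $\cG$. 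Corollary~\ref{ModifVectBund} then supplies $E(\cG)\cong \cE(y)$, so one is reduced to checking the equality $\pi_{G,\mu}(y)=\pi_{HT}(x)$ in $\Fl_{G,\mu}(C,\cO_C)$. Unwinding the Bialynicki--Birula recipe, the relevant filtration step on $V\otimes C$ is exactly the image of $(\mathrm{Lie}\ \cG^\vee)^\vee\otimes C\hookrightarrow V\otimes C$, which is the filtration defining $\pi_{HT}(x)$ according to Theorem~\ref{perfectoid rz spaces}.

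The hard part will be this final matching of filtrations compatibly with the $G$-structure: one needs to compare the $B^+_{\mathrm{dR}}$-lattice arising from the $p$-adic Hodge theory of $\cG$ with the lattice produced from $\pi_{HT}(x)$ via the inverse of the Bialynicki--Birula isomorphism of Theorem~\ref{BialynickiBirulaIsom}, compatibly with the $\cO_B$-action and polarization. Morally this upgrades Proposition~\ref{firstgradedHodgeTate} to a $G$-equivariant statement via Tannakian reduction, using that the PEL tensors on $\cG$ correspond under $\rho$ to those on $\mathbb X_b$.
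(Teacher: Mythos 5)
Your proposal is correct and follows essentially the same route as the paper: reduce to $(C,\cO_C)$-points, compare the modification $\cE(\pi_{HT}(x))$ with the $G$-bundle attached to $\cG$ via the Tannakian formalism applied to the faithful representation $V$, and then use the quasi-isogeny $\rho$ together with Theorem~\ref{p-divisible groups and vector bundles} to identify the result with $\cE_b$. The step you flag as hard --- matching the $B^+_{\mathrm{dR}}$-lattice of $\cG$ with the lattice produced from the Hodge--Tate filtration by the inverse of the Bialynicki--Birula map --- is exactly where the paper writes down the explicit one-step modification diagram between $\cO_{X_{C^\flat}}\otimes V$ and $\cO_{X_{C^\flat}}(1)\otimes V$ and invokes the proof of Proposition 5.1.6 of~\cite{scholzeweinstein}, rather than a general $G$-equivariant upgrade of Proposition~\ref{firstgradedHodgeTate}.
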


\begin{proof}
It suffices to check this on $\mathrm{Spa}(C,\cO_C)$-valued points. Thus, we have a $p$-divisible group $\cG/\cO_C$ with extra structures, equipped with a quasi-isogeny $G\times_{\cO_C} \cO_C/p\to \mathbb{X}_b\times_{\mathbb{\bar F}_p}\cO_C/p$. Moreover, there is a trivialization $T_p\cG\otimes_{\mathbb Z_p} \mathbb Q_p = V$ compatible with all extra structures, and we have the Hodge-Tate filtration
\[
0\to \mathrm{Lie}\ \cG\otimes C(1)\to T_p \cG\otimes_{\mathbb Z_p} C\to (\mathrm{Lie}\ \cG^\vee)^\vee\otimes C\to 0\ ,
\]
where $\mathrm{Fil}_{-1} = \mathrm{Lie}\ \cG\otimes C(1)$, and $\mathrm{Fil}_0 = T_p \cG\otimes_{\mathbb Z_p} C$.

Let $\cE$ be the $G$-bundle on $X_{C^\flat}$ corresponding to the image of $\cG$ under $\pi_{HT}$ and the identification $\Fl_{G,\mu}\cong \GrBdR_{G,\mu}$. Let $\cE_V$ be the vector bundle on $X_{C^\flat}$ corresponding to $\cE$ and the faithful representation $V$; note that $\mu$ is still minuscule as cocharacter into $GL(V)$. Then $\cE_V$ is constructed from the $\mathbb B^+_{\mathrm{dR},C}$-lattice $\Xi$ in $V\otimes_{\mathbb{Q}_p}\mathbb B_{\mathrm{dR},C}$ inducing the above filtration on $V\otimes_{\mathbb{Q}_p}C$ under the Bialynicki-Birula map. Explicitly, if $\xi\in \mathbb B_{\mathrm{dR},C}^+$ is a generator of the maximal ideal, then the lattice $\Xi_x\subset V\otimes_{\mathbb{Q}_p}\mathbb B_{\mathrm{dR},C}$ satisfies
\[
V\otimes_{\mathbb{Q}_p}\mathbb B^+_{\mathrm{dR},C}\subset \Xi \subset V\otimes_{\mathbb{Q}_p}\xi^{-1} \mathbb B^+_{\mathrm{dR},C}
\]
and
\[
\Xi/(V\otimes_{\mathbb{Q}_p} \mathbb B_{\mathrm{dR},C}^+)=\mathrm{Lie}\ \cG\otimes C\ .
\]
Then $\cE_V$ is the modification of the trivial vector bundle $V\otimes_{\mathbb Q_p} \cO_{X_{C^\flat}}$ at the point $\infty$ by the lattice $\Xi$. 

In the case of a one-step filtration, one can construct the vector bundle $\cE_V$ directly: it is the unique vector bundle on $X_{C^\flat}$ which fits into the diagram of coherent sheaves
\[\xymatrix{
0\ar[r]&\cO_{X_{C^\flat}}\otimes_{\mathbb{Q}_p}V\ar@{=}[d]\ar[r]&\cE_V\ar@{^{(}->}[d]\ar[r]&i_{\infty*}(\mathrm{Lie}\ \cG\otimes C)\ar@{^{(}->}[d]\ar[r]&0\\
0\ar[r]&\cO_{X_{C^\flat}}\otimes_{\mathbb{Q}_p}V\ar[r]&\cO_{X_{FF,C}}(1)\otimes_{\mathbb{Q}_p}V\ar[r]&i_{\infty*}(V\otimes_{\mathbb{Q}_p}C(-1))\ar[r]&0.
}\] But then the proof of Proposition 5.1.6 of~\cite{scholzeweinstein} shows that $\cE_V$ is the vector bundle attached to the $p$-divisible group $\cG\times_{\cO_C}\cO_C/p$, which is quasi-isogenous to $\mathbb{X}_b\times_{\mathbb{\bar F}_p}\cO_C/p$. 

By unraveling the Tannakian formalism behind the construction of the $G$-bundle $\cE$ and keeping in mind the fact that $\mathbb{X}_b$ together with the $B$-action and polarization determine $b$, we see that $\cE\simeq \cE_b$ as $G$-bundles, as desired.
\end{proof}

\begin{remark} The same proof, without keeping track of the polarization, also works in the case of Rapoport-Zink spaces of EL type to show that the local Hodge-Tate period map defined in Proposition 7.1.1 of~\cite{scholzeweinstein} factors through $\Fl^b_{G,\mu}$. 
\end{remark}

\begin{remark}\label{local and global HT}
We have defined the Hodge-Tate filtration in Section~\ref{refining HT} in terms of the $p$-adic \'etale cohomology of a universal family of abelian varieties. If $A/\cO_C$ is an abelian variety and $\cG=A[p^\infty]$, then Proposition 4.15 of~\cite{scholzesurvey} shows that the Hodge-Tate filtration on $T_p\cG\otimes_{\mathbb{Z}_p}C$ is compatible with the filtration defined in Section~\ref{refining HT}, so the local and global Hodge-Tate period maps are compatible. 
\end{remark}

\begin{defn}\label{automorphism sheaf} Define the sheaf $\mathrm{Aut}_G(\widetilde{\mathbb{X}}_b)$ on $\mathrm{Nilp}^{\mathrm{op}}_{W(\bar{\mathbb F}_p)}$ by
\[
\mathrm{Aut}_G(\widetilde{\mathbb{X}}_b)(R)=\{\alpha\in \mathrm{Aut}_B(\widetilde{\mathbb{X}}_{b,R}), \beta\in \mathrm{Aut}(\widetilde{\mu}_{p^\infty,R})\mid \alpha\mathrm{\ respects\ the\ polarization\ up\ to\ }\beta\}\ .
\]
\end{defn}

\begin{lemma}\label{aut-representability} The sheaf $\mathrm{Aut}_G(\widetilde{\mathbb{X}}_b)$ is representable by a formal scheme over $\mathrm{Spf}\ W(\bar{\mathbb F}_p)$, locally of the form $\mathrm{Spf}\ W(R)$ for a perfect ring $R$.
\end{lemma}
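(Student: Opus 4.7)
The plan is to combine the Chai--Oort representability of internal Hom $p$-divisible groups from the previous subsection with Drinfeld's rigidity of quasi-isogenies, and then promote the resulting perfect affine scheme to a formal $W(\bar{\mathbb{F}}_p)$-scheme via the Witt vector functor.

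First, I would decompose $\mathbb{X}_b = \bigoplus_\lambda \mathbb{X}_{b,\lambda}$ into isoclinic components over $\bar{\mathbb{F}}_p$. Using Lemma~\ref{Tate module}, one has
\[
\mathscr{H}om(\widetilde{\mathbb{X}}_{b,\lambda'}, \widetilde{\mathbb{X}}_{b,\lambda}) = \mathscr{H}om(\mathbb{X}_{b,\lambda'}, \mathbb{X}_{b,\lambda})[1/p] = \widetilde{\cH}_{\mathbb{X}_{b,\lambda'}, \mathbb{X}_{b,\lambda}}\ ,
\]
so by Corollary~\ref{the hom p-divisible group} and Proposition~\ref{properties of p-divisible groups}, $\mathscr{E}nd(\widetilde{\mathbb{X}}_b)$ breaks up into pieces of three types: the zero sheaf (when $\lambda' > \lambda$); a constant $\mathbb{Q}_p$-vector space, i.e.\ a disjoint union of copies of $\mathrm{Spec}\,\bar{\mathbb{F}}_p$ (when $\lambda' = \lambda$); and the universal cover of a connected $p$-divisible group, representable by a formal scheme of the form $\mathrm{Spf}\,\bar{\mathbb{F}}_p[[x_1^{1/p^\infty},\ldots,x_r^{1/p^\infty}]]$ (when $\lambda' < \lambda$), whose coordinate ring is manifestly perfect.

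Next, I would cut out $\mathrm{Aut}_G(\widetilde{\mathbb{X}}_b)$ inside $\mathscr{E}nd(\widetilde{\mathbb{X}}_b) \times \mathscr{E}nd(\widetilde{\mu}_{p^\infty})$ by imposing three conditions: $B$-linearity of $\alpha$, compatibility of $(\alpha,\beta)$ with the polarization, and invertibility of $\alpha$. The first two are closed conditions given by the vanishing of explicit equations on the coordinate ring; they preserve the property of being locally of the form $\mathrm{Spf}\,W(R)$ for $R$ perfect, since quotienting such a ring by a radical ideal and then taking perfection stays within this class. Invertibility is enforced by requiring a two-sided inverse $\alpha'$ in $\mathscr{E}nd(\widetilde{\mathbb{X}}_b)$; this inverse is unique where it exists, so $\mathrm{Aut}_G(\widetilde{\mathbb{X}}_b)$ is realized as a closed subfunctor of a product of two copies of $\mathscr{E}nd(\widetilde{\mathbb{X}}_b)$, and hence is representable by a perfect affine scheme on perfect $\mathbb{F}_p$-algebras.

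The main obstacle is the transition from perfect $\mathbb{F}_p$-algebras back to the category $\mathrm{Nilp}^{\mathrm{op}}_{W(\bar{\mathbb{F}}_p)}$, together with the rationalization $[1/p]$ in the definition of the universal cover. Both are handled by Drinfeld's rigidity of quasi-isogenies: any quasi-isogeny of $p$-divisible groups over $S \in \mathrm{Nilp}^{\mathrm{op}}_{W(\bar{\mathbb{F}}_p)}$ is uniquely determined by its reduction to $S/p$, and further (by pullback along Frobenius, itself a quasi-isogeny) by its restriction to $(S/p)^{\mathrm{red},\mathrm{perf}}$. Since for perfect $R$ the formal scheme $\mathrm{Spf}\,W(R)$ represents on $\mathrm{Nilp}^{\mathrm{op}}_{W(\bar{\mathbb{F}}_p)}$ precisely the functor $S \mapsto \mathrm{Hom}(R,(S/p)^{\mathrm{red},\mathrm{perf}})$ by the universal property of Witt vectors of a perfect ring, the perfect affine scheme constructed above lifts canonically and uniquely to a formal scheme locally of the form $\mathrm{Spf}\,W(R)$ with $R$ perfect, as required.
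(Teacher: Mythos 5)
Your proposal is correct and follows essentially the same route as the paper: decompose $\mathbb X_b$ into isoclinic pieces, identify the Hom sheaves with universal covers of the Chai--Oort internal Hom $p$-divisible groups (hence representable by Proposition~\ref{properties of p-divisible groups}), realize $\mathrm{Aut}_G(\widetilde{\mathbb X}_b)$ as a closed subfunctor of a product of two copies of the endomorphism sheaf by keeping track of the inverse, and lift to $\mathrm{Spf}\ W(R)$ via the rigidity of universal covers (the paper's reference to \cite[Proposition 3.1.3]{scholzeweinstein} is exactly your ``Drinfeld rigidity'' step) together with the universal property of Witt vectors of perfect rings. The only cosmetic difference is that the paper first forgets the extra structures (a closed embedding) and then treats a bare $p$-divisible group, whereas you impose $B$-linearity and polarization compatibility as closed conditions at the end; these are interchangeable.
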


\begin{proof} Forgetting all extra structures defines a closed embedding, so it is enough to show representability of $\mathrm{Aut}(\widetilde{\mathbb X})$ for any $p$-divisible group $\mathbb X$ over $\bar{\mathbb F}_p$. We may assume that $\mathbb{X}$ is \emph{completely slope divisible}, i.e.~that it is isomorphic to a direct sum of slope divisible isoclinic $p$-divisible groups $\mathbb{X}_i$, defined over a finite field, for $i=1,\dots, r$, with non-increasing slopes. Then $\mathrm{Aut}(\widetilde{\mathbb{X}})$ is a closed subfunctor of the product of two copies of $\mathscr{H}om(\mathbb{X}_i,\mathbb{X}_j)[1/p]$ over $i,j\in \{1,\dots,r\}$ with $i\geq j$, via sending an automorphism to the endomorphism, and its inverse. Each of the factors can be identified with the universal cover of the $p$-divisible group $\cH_{\mathbb{X}_i,\mathbb{X}_j}$. Therefore, each of the factors is representable by a formal scheme over $\mathrm{Spf}\ W(\bar{\mathbb F}_p)$, by Proposition~\ref{properties of p-divisible groups}.

For the final statement, it is enough to see that $\mathrm{Aut}_G(\widetilde{\mathbb X}_b)(R) = \mathrm{Aut}_G(\widetilde{\mathbb X}_b)(R/p)$, and that if $R$ is of characteristic $p$, then Frobenius induces a bijection of $\mathrm{Aut}_G(\widetilde{\mathbb X}_b)(R)$. Both statements follow from the similar properties of universal covers of $p$-divisible groups, for which see~\cite[Proposition 3.1.3]{scholzeweinstein}.
\end{proof}

In fact, one can give a more precise description of $\mathrm{Aut}_G(\widetilde{\mathbb{X}}_b)$. As usual, we denote by
\[
\rho\in X^\ast(G)_{\mathrm{dom}}
\]
the half-sum of the positive roots.

\begin{prop}\label{description quasi-aut} Let $\underline{J_b(\mathbb Q_p)}$ be the locally profinite set $J_b(\mathbb Q_p)$ made into a formal scheme over $W(\bar{\mathbb F}_p)$, i.e. the sections over $U\subset J_b(\mathbb Q_p)$ are continuous maps $U\to W(\bar{\mathbb F}_p)$. There is a natural map
\[
\mathrm{Aut}_G(\widetilde{\mathbb X}_b)\to \underline{J_b(\mathbb Q_p)}
\]
all of whose fibres are isomorphic to
\[
\mathrm{Spf} W(\bar{\mathbb F}_p)[[x_1^{1/p^\infty},\ldots,x_d^{1/p^\infty}]]\ ,
\]
where $d=\langle 2\rho,\nu_b\rangle$.
\end{prop}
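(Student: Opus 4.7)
My plan is to decompose $\mathbb{X}_b$ by slopes and use the Chai--Oort internal Hom $p$-divisible groups from the preceding subsection. As in the proof of Lemma~\ref{aut-representability}, we may assume $\mathbb{X}_b$ is completely slope divisible, writing $\mathbb{X}_b = \bigoplus_{k=1}^r \mathbb{X}_k$ with $\mathbb{X}_k$ isoclinic of slope $\lambda_k$ and height $n_k$, sorted so $\lambda_1 > \cdots > \lambda_r$. Then the sheaf of endomorphisms of $\widetilde{\mathbb{X}}_b$ decomposes as $\bigoplus_{i,j}\mathscr{H}om(\mathbb{X}_i,\mathbb{X}_j)[1/p]$, and by Corollary~\ref{the hom p-divisible group} the $(i,j)$-summand vanishes for $i<j$, is a constant sheaf of finite-dimensional $\mathbb{Q}_p$-vector spaces for $i=j$, and for $i>j$ is the universal cover $\widetilde{\cH}_{ij}$ of the connected $p$-divisible group $\cH_{ij}:=\cH_{\mathbb{X}_i,\mathbb{X}_j}$. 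By Proposition~\ref{properties of p-divisible groups}(3), each such universal cover $\widetilde{\cH}_{ij}$ is a formal perfectoid open ball of dimension $d_{ij} = n_in_j(\lambda_j-\lambda_i)$.

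Next, I would construct the map to $\underline{J_b(\mathbb{Q}_p)}$ as the composite of the inclusion of $\mathrm{Aut}_G(\widetilde{\mathbb{X}}_b)$ into $\bigoplus_{i\geq j}\mathscr{H}om(\mathbb{X}_i,\mathbb{X}_j)[1/p]$ with the projection onto the diagonal part $\prod_i \mathrm{End}(\mathbb{X}_i)[1/p]$. By definition, the image lies in the subset of invertible elements that commute with the $B$-action and respect the polarization, which is exactly $J_b(\mathbb{Q}_p)$; as the target is a constant sheaf, this projection is locally constant on any connected formal subscheme and hence factors through $\underline{J_b(\mathbb{Q}_p)}$. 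The resulting map is surjective and $J_b(\mathbb{Q}_p)$-equivariant with respect to left translation on the source, so all fibers are non-canonically isomorphic to the fiber over the identity.

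A point of the identity fiber has the form $\alpha = \mathrm{id} + \sum_{i>j}\alpha_{ij}$ with $\alpha_{ij} \in \widetilde{\cH}_{ij}$, and such $\alpha$ is automatically invertible because the off-diagonal correction is nilpotent of order at most $r$. Ignoring the $G$-structure momentarily, the fiber is therefore the whole formal perfectoid ball $\prod_{i>j}\widetilde{\cH}_{ij}$, of dimension $\sum_{i>j}n_in_j(\lambda_j-\lambda_i) = \langle 2\rho_{GL(V)},\nu_b\rangle$. Re-imposing $B$-equivariance and polarization-compatibility cuts the rational Hom space $\Hom_L(D(\mathbb{X}_i)[1/p], D(\mathbb{X}_j)[1/p])$ down to the weight-$(\lambda_j-\lambda_i)$ subspace $\mathfrak{g}_{L,\lambda_j-\lambda_i}$ of $\mathrm{ad}(\nu_b)$ on $\mathfrak{g}_L$. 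Correspondingly, this carves out a connected $p$-divisible subgroup of $\cH_{ij}$ of slope $\lambda_j-\lambda_i$ and height $\dim_L \mathfrak{g}_{L,\lambda_j-\lambda_i}$, whose universal cover is again a perfectoid open ball by Proposition~\ref{properties of p-divisible groups}(3). The total dimension of the fiber is then
\[
\sum_{\lambda>0}\lambda\cdot\dim_L \mathfrak{g}_{L,\lambda} = \sum_{\alpha>0}\langle\alpha,\nu_b\rangle = \langle 2\rho,\nu_b\rangle,
\]
as claimed.

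The main technical obstacle is justifying that the $B$-equivariance and polarization conditions really do produce a connected $p$-divisible \emph{subgroup} of $\bigoplus_{i>j}\cH_{ij}$, rather than merely a closed formal subscheme. For this, one should interpret both conditions as the vanishing of certain compositions involving fixed morphisms of isoclinic $p$-divisible groups (the $B$-action itself and the polarization), and observe that kernels and intersections of morphisms between $p$-divisible groups remain $p$-divisible, so that the resulting universal cover is still covered by Proposition~\ref{properties of p-divisible groups}(3).
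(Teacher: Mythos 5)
Your overall strategy is the one the paper uses: decompose $\mathbb X_b$ into isoclinic pieces, use Corollary~\ref{the hom p-divisible group} to get the lower-triangular shape with constant diagonal, project to the diagonal to obtain the map to $\underline{J_b(\mathbb Q_p)}$, and compute the fibre as a product of universal covers of connected internal-Hom $p$-divisible groups via Proposition~\ref{properties of p-divisible groups}(3). Your uniform bookkeeping $\sum_{\lambda>0}\lambda\cdot\dim\mathfrak g_{L,\lambda}=\sum_{\alpha>0}\langle\alpha,\nu_b\rangle=\langle 2\rho,\nu_b\rangle$ is a genuinely cleaner way to organize the final count than the paper, which instead reduces by Morita theory to the simple cases ($GL_n$, $\mathrm{Res}_{F/\mathbb Q_p}GL_n$, $GSp_n$, $GU_n$) and matches $d_{i,j}=m_im_j(\lambda_j-\lambda_i)$ against explicit lists of positive roots case by case. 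But note that your one-line identity only pays off once you know that the $B$-equivariance and polarization conditions cut each block $\Hom(D(\mathbb X_i)[1/p],D(\mathbb X_j)[1/p])$ down to something whose \emph{height} is $\dim_L\mathfrak g_{L,\lambda_j-\lambda_i}$, with the correct handling of the polarization: for blocks with $\lambda_i+\lambda_j\neq 1$ the polarization does not impose a condition on a single block but identifies the $(i,j)$-block with a dual block (so only one of the pair is a free parameter), while for the self-paired blocks it induces an involution whose fixed part contributes $\tfrac{m_i(m_i+1)}{2}(2\lambda_i-d)$-type terms. That verification is exactly the content of the paper's case analysis and cannot be absorbed into the root-theoretic identity; as written, your proposal asserts the identification rather than proving it.

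Two further points in your final paragraph need repair. First, the claim that \emph{kernels and intersections of morphisms between $p$-divisible groups remain $p$-divisible} is false in general (the kernel of multiplication by $p$ is $\cG[p]$). The correct mechanism is to work at the level of rational Dieudonn\'e modules: the $B$-equivariance condition cuts out a sub-isocrystal of $\Hom(D(\mathbb X_i)[1/p],D(\mathbb X_j)[1/p])^{\leq 0}$, and since by Lemma~\ref{dieudonne module} (via \cite[Theorem~A]{scholzeweinstein}) one has $\widetilde{\cH}(R)=(D\otimes B^+_{\mathrm{cris}}(R))^{\varphi=1}$ on f-semiperfect $R$, a $\varphi$-stable direct summand of $D$ gives the universal cover of the corresponding $p$-divisible group as a direct factor; this is how the paper extends the computation to the $\cO_F$-linear and polarized settings. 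Second, the polarization condition on an automorphism $\alpha=\mathrm{id}+n$ is $n^*\lambda+\lambda n+n^*\lambda n=0$, which is quadratic in $n$, so it is not literally ``the vanishing of a composition with a fixed morphism''; it only linearizes on the graded pieces of the filtration of the unipotent part by slope differences. This is harmless for the dimension count and, with a little care, for the assertion that the fibre is a perfectoid open ball (the fibre is a successive extension of such balls), but your proposed justification does not cover it as stated.
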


\begin{remark} Let us illustrate this result in the case $\mathbb X_b = \mu_{p^\infty}\times \mathbb Q_p/\mathbb Z_p$, without extra structures. Then there are no maps $\mu_{p^\infty}\to \mathbb Q_p/\mathbb Z_p$, so $\mathrm{Aut}_G(\widetilde{\mathbb X}_b)$ has lower triangular form; more precisely,
\[
\mathrm{Aut}(\widetilde{\mathbb X}_b) = \left(\begin{array}{cc} \underline{\mathbb Q_p^\times} & 0 \\ \widetilde{\mu_{p^\infty}} & \underline{\mathbb Q_p^\times}\end{array}\right)\ .
\]
In this case, $J_b(\mathbb Q_p) = \mathbb Q_p^\times\times \mathbb Q_p^\times$, and the projection
\[
\mathrm{Aut}_G(\widetilde{\mathbb X}_b)\to \underline{J_b(\mathbb Q_p)}
\]
is given by the diagonal elements. The fibres are given by the unipotent part $\widetilde{\mu_{p^\infty}}\cong \mathrm{Spf}\ W(\bar{\mathbb F}_p)[[x^{1/p^\infty}]]$.
\end{remark}

\begin{proof} It is enough to prove the results for $\mathrm{Aut}_G(\widetilde{\mathbb{X}}_b)$ as a formal scheme over $\mathbb{\bar F}_p$, as all structures lift uniquely to $W(\bar{\mathbb F}_p)$ by rigidity of perfect rings. We first consider the case when $\mathbb{X}_b$ has an unramified EL structure. By standard Morita arguments, one can reduce to the case when the EL structure is given by $(F,\cO_F)$, with $F/\mathbb{Q}_p$ an unramified extension and $G=\mathrm{Res}_{F/\mathbb{Q}_p}GL_n$. If $(B,\cO_B)$ is an unramified PEL datum and $B=\prod_iB_i$ is its decomposition into simple factors, then $\mathbb{X}_b$ decomposes as $\prod_i\mathbb{X}_{b,i}$ and $\mathrm{Aut}_G(\widetilde{\mathbb{X}}_b)=\prod_i \mathrm{Aut}_{G_i}(\widetilde{\mathbb{X}}_{b,i})$. Similarly, when $B\simeq M_d(F)$ is simple, the equivalence of categories between $p$-divisible groups with $(B,\cO_B)$-EL structure and $p$-divisible groups with $(F,\cO_F)$-structure means that it suffices to compute $\mathrm{Aut}_F(\widetilde{\mathbb{X}}_b)$. See~\cite[Section 4.1]{hamacher} for more details on this reduction step. 

If $F=\mathbb{Q}_p$, then $G=GL_n$ and we are considering quasi-self-isogenies of $p$-divisible groups, without any extra compatibilities. Since $\mathbb{X}_b$ is completely slope divisible, we can write it as $\mathbb{X}_b=\oplus_{i=1}^r\mathbb{X}_i$, where the $\mathbb{X}_i$ are isoclinic $p$-divisible groups of strictly decreasing slopes $\lambda_i\in [0,1]$. Using Corollary~\ref{the hom p-divisible group} (1), we see that $\mathrm{Aut}(\widetilde{\mathbb X}_b)$ takes the lower triangular form
\[
\mathrm{Aut}(\widetilde{\mathbb X}_b) = \left(\begin{array}{cccc} \mathrm{Aut}(\widetilde{\mathbb X_1}) & & & \\ \widetilde{\cH}_{\mathbb X_2,\mathbb X_1} & \mathrm{Aut}(\widetilde{\mathbb X_2}) & & \\ \vdots & \vdots & \ddots & \\ \widetilde{\cH}_{\mathbb X_r,\mathbb X_1} & \widetilde{\cH}_{\mathbb X_r,\mathbb X_2} & \cdots & \mathrm{Aut}(\widetilde{\mathbb X_r}) \end{array}\right)\ .
\]
Moreover, Corollary~\ref{the hom p-divisible group} (2) implies that $\mathrm{Aut}(\widetilde{\mathbb X_i}) = \underline{\mathrm{Aut}(\widetilde{\mathbb X_i})(\bar{\mathbb F}_p)}$; as
\[
J_b(\mathbb Q_p) = \mathrm{Aut}(\widetilde{\mathbb X}_b)(\bar{\mathbb F}_p) = \prod_{i=1}^r \mathrm{Aut}(\widetilde{\mathbb X_i})(\bar{\mathbb F}_p)\ ,
\]
we see that projection to the diagonal defines a map
\[
\mathrm{Aut}(\widetilde{\mathbb X}_b)\to \underline{J_b(\mathbb Q_p)}\ .
\]
The structure of the fibres now follows from Corollary~\ref{the hom p-divisible group} (3) and Proposition~\ref{properties of p-divisible groups} (3). To check that $d=\langle 2\rho,\nu_b\rangle$, we count dimensions. More precisely, for $i>j$, $\widetilde{\cH}_{\mathbb{X}_i,\mathbb{X}_j}$ is representable by $\Spf\ \bar{\mathbb{F}}_p[[x^{1/p^\infty}_1,\dots,x^{1/p^\infty}_{d_{i,j}}]]$, where $d_{i,j}$ is the dimension of $\cH_{\mathbb{X}_i,\mathbb{X}_j}$. If the height of $\mathbb{X}_i$ is $m_i$, then Lemma~\ref{dieudonne module} implies that the slope of $\cH_{\mathbb{X}_i,\mathbb{X}_j}$ is $\lambda_j-\lambda_i$ and its dimension is $d_{i,j}=m_im_j\left(\lambda_j-\lambda_i\right)$.

On the other hand, by making the root data of $GL_n$ explicit, cf.~\cite[Appendix A]{hamacher}, we can compute the contribution of the slopes $\lambda_i,\lambda_j$ to $\langle 2\rho,\nu_b\rangle$. The positive roots of $GL_n$ (corresponding to the Borel subgroup given by the upper triangular matrices) are 
\[R^+=\{e_k-e_l |k,l\in \{1,\dots,n\},k<l\}.\] We also have 
\[\nu_b=(\underbrace{\lambda_1,\dots,\lambda_1}_{m_1},\dots,\underbrace{\lambda_r,\dots,\lambda_r}_{m_r}).\]
The contribution coming from $\lambda_i,\lambda_j$ to $\langle 2\rho,\nu_b\rangle$ is precisely $m_im_j\left(\lambda_j-\lambda_i\right)=d_{i,j}$.

The case of a general unramified extension $F/\mathbb{Q}_p$ follows in the same way, by working in the category of $p$-divisible groups with $\cO_F$-action instead. Let $d=[F:\mathbb Q_p]$. The theory developed in Section 4.1 can be extended to define an internal homomorphism in the category of $p$-divisible groups with $\cO_F$-action. If $\cG$ is a $p$-divisible group with $\cO_F$-action, its rational Dieudonn\'e module $\mathbb D(\cG)$ decomposes as $\mathbb{D}(\cG)=\oplus_{\tau: \cO_F\hookrightarrow W(\mathbb{\bar F}_p)} \mathbb{D}(\cG)_\tau$. Choose an embedding $\tau_0: \cO_F\hookrightarrow W(\mathbb{\bar F}_p)$ and let $\mathbb{D}_F(\cG):=\mathbb{D}(\cG)_{\tau_0}$. The analogue of Lemma~\ref{dieudonne module} holds for $\mathbb{D}_F$ and homomorphisms of $p$-divisible groups with $\cO_F$-action, with the same proof (but replacing $\varphi$ by $\varphi^d$ and embedding $F$ into $B^+_{\mathrm{cris}}$ via $\tau_0$).\footnote{For $p$-divisible groups with $\cO_F$-action, there is a more restricted notion of $p$-divisible $\cO_F$-module; the requirement is that the two actions of $\cO_F$ on the Lie algebra agree. This condition cannot be formulated for $p$-divisible groups with $\cO_F$-action up to quasi-isogeny, and in fact for $p$-divisible groups with $\cO_F$-action up to quasi-isogeny, everything works very similarly to the case of $p$-divisible $\cO_F$-modules. For example, note that $B(F,GL_n) = B(\mathbb Q_p,\mathrm{Res}_{F/\mathbb Q_p} GL_n)$.} The structure of $\mathrm{Aut}_F(\widetilde{\mathbb X}_b)$ can now be deduced in the same way. The dimension computation is also analogous to the one above. Let $\mathbb{X}_b=\oplus_{i=1}^r\mathbb{X}_i$, with the slope of the $F$-isocrystal attached to $\mathbb{X}_i$ being equal to $\lambda_i$ (here, $0\leq\lambda_i\leq d$, and $\lambda_i/d$ is the slope of $\mathbb X_i$ as a $p$-divisible group) and $\mathbb{X}_i$ having height $m_i$ as a $p$-divisible group with $\cO_F$-action, i.e. height $dm_i$ as $p$-divisible group. The dimension of the $p$-divisible group with $\cO_F$-action corresponding to the $\cO_F$-linear homomorphisms between $\mathbb{X}_i$ and $\mathbb{X}_j$ is $d_{i,j}=m_im_j\left(\lambda_j-\lambda_i\right)$. On the other hand, the positive roots of $\mathrm{Res}_{F/\mathbb{Q}_p}GL_n$ are 
\[R^+=\{e_{\tau,k}-e_{\tau,l} |k,l\in \{1,\dots,n\},k<l,\tau:F\hookrightarrow \mathbb{\bar{Q}}_p\}\] 
and,
\[\nu_b=(\underbrace{\frac{\lambda_1}{d},\dots,\frac{\lambda_1}{d}}_{m_1},\dots,\underbrace{\frac{\lambda_r}{d},\dots,\frac{\lambda_r}{d}}_{m_r}).\] 
The contribution from slopes $\lambda_i,\lambda_j$ is again $d_{i,j}=m_im_j\left(\lambda_j-\lambda_i\right)$.

We now consider the case when $\mathbb{X}_b$ has an unramified PEL structure. Recall that we are assuming that the PEL datum is of type (AC). By similar Morita-theoretic arguments as above, cf.~\cite[Corollary 4.5]{hamacher}, we can write $(B,\cO_B,*)=\prod_{i}(B_i,\cO_{B_i},*)$ as a product of simple PEL data. On the level of quasi-self-isogenies we get
\[\mathrm{Aut}_{G}(\widetilde{\mathbb{X}}_{b})=\left(\prod_i\mathrm{Aut}_{G_i}(\widetilde{\mathbb{X}}_{b,i})\right)^1\hookrightarrow \prod_i\mathrm{Aut}_{G_i}(\widetilde{\mathbb{X}}_{b,i}),\]
where $\left(\prod_i\mathrm{Aut}_{G_i}(\widetilde{\mathbb{X}}_{b,i})\right)^1$ is a closed subfunctor of the product, defined by the condition that the similitude factors on each term are the same. The group $G$ is defined similarly, as the closed subgroup $(\prod_i G_i)^1\hookrightarrow \prod_i G_i$. The similitude factor on $\mathrm{Aut}_{G_i}(\widetilde{\mathbb X}_{b_i})$ defines a map
\[
\mathrm{Aut}_{G_i}(\widetilde{\mathbb X}_{b_i})\to \underline{\mathbb Q_p^\times}
\]
which will factor as
\[
\mathrm{Aut}_{G_i}(\widetilde{\mathbb X}_{b_i})\to \underline{J_{b_i}(\mathbb Q_p)}\to \underline{\mathbb Q_p^\times}\ ,
\]
where the latter map is the natural similitude morphism on $J_{b_i}$. We see that the result for all $G_i$ implies the result for $G$, so we can assume that $G$ is simple.

We reduce to one of the following three cases. 
\begin{enumerate}
\item $\mathbb{X}_b$ is a $p$-divisible group with $(F,\cO_F)$-EL structure, where $F/\mathbb{Q}_p$ is unramified.
\item $\mathbb{X}_b$ is a $p$-divisible group with $(F,\cO_F,*)$-PEL structure, where $*$ is the identity on $F$.
\item $\mathbb{X}_b$ is a $p$-divisible group with $(F,\cO_F, *)$-PEL structure, with $\mathbb{Q}_p\subset F^+\subset F$ unramified extensions, $*$ an automorphism of order $2$ and $F^+=F^{*=1}$.
\end{enumerate}
\noindent The first case was already dealt with above. The second case corresponds to $G=GSp_{n}/\cO_F$ with $n$ even, while the third to $G=GU_n/\cO_{F^+}$. 

We explain the computation of $\mathrm{Aut}_G(\widetilde{\mathbb{X}}_b)$ in the case of $G=GSp_{n}/\cO_F$. As before $d=[F:\mathbb Q_p]$, and we write $\mathbb{X}_b=\oplus_{i=1}^r\mathbb{X}_i$, with each $\mathbb{X}_i$ isoclinic of slope $\lambda_i\in [0,d]$ as $p$-divisible group with $\cO_F$-action, and the $\lambda_i$ in strictly decreasing order. The fact that $\mathbb{X}_b$ is equipped with a symmetric polarization means that $d-\lambda_i$ is also a slope of $\mathbb{X}_b$, occuring corresponding to the same height $m_i$ as $\lambda_i$. As before, the restriction of an automorphism of $\widetilde{\mathbb X}_b$ to the graded pieces $\widetilde{\mathbb X}_i$ of the slope filtration defines the map
\[
\mathrm{Aut}_G(\widetilde{\mathbb X}_b)\to \underline{J_b(\mathbb Q_p)}\ .
\]

The fibres of this map can be computed at the same time as the dimension, and we concentrate on the dimension in the following. We can write
\[\nu_b=(\underbrace{\frac{\lambda_1}{d},\dots,\frac{\lambda_1}{d}}_{m_1},\dots,\underbrace{\frac{\lambda_{r}}{d},\dots,\frac{\lambda_r}{d}}_{m_r}),\] with $\lambda_i+\lambda_{r+1-i}=d$, $m_i=m_{r+1-i}$. 
Using the same choices as in~\cite[Appendix A]{hamacher} and recalling that $c:G\to \mathbb{G}_m$ is the multiplier character, the positive roots of $G=GSp_{n}/\cO_F$ are 
\[R^+=\{e_{\tau,k}-e_{\tau,l}|k<l\in \{1,\dots,n/2\},\tau:F\hookrightarrow \mathbb{\bar{Q}}_p\}\]
\[\cup\{e_{\tau,k}+e_{\tau,l}-c | k\not= l\in \{1,\dots,n/2\}, \tau:F\hookrightarrow \mathbb{\bar{Q}}_p\}\]
\[\cup\{2e_{\tau,k}-c | k\in\{1,\dots,n/2\}, \tau:F\hookrightarrow \mathbb{\bar{Q}}_p\}.\] 
We compute the contributions coming from slopes $\lambda_i,\lambda_j$ to both the dimension of $\mathrm{Aut}_G(\widetilde{\mathbb{X}}_b)$ and to $\langle 2\rho,\nu_b\rangle$ and check that they are the same. 
\begin{enumerate}
\item If $\lambda_j>\lambda_i\geq \frac{d}{2}$, then the contribution to the dimension of $\mathrm{Aut}_G(\widetilde{\mathbb{X}}_b)$ is, just like in the EL case, $d_{i,j}=m_im_j(\lambda_j-\lambda_i)$ and it matches the contribution from $\frac{\lambda_j}{d},\frac{\lambda_i}{d}$ to $\langle 2\rho,\nu_b\rangle$ by the same argument. Using the polarization, this also takes care of all cases with $\frac d2\geq \lambda_j>\lambda_i$.
 
\item If $\lambda_j\geq \frac{d}{2}\geq d-\lambda_i$, with $i\neq j$, then the contribution to the dimension of $\mathrm{Aut}_G(\widetilde{\mathbb{X}}_b)$ is $m_im_j(\lambda_i+\lambda_j-d)$. This is given by the dimension of the internal Hom $\cO_F$-module between $\mathbb{X}^\vee_i$ and $\mathbb{X}_j$ if $j<i$, computed as in the EL case, which by the compatibility with the polarization also pins down the quasi-isogeny between $\mathbb{X}^\vee_j$ and $\mathbb{X}_i$. This matches the contribution from $\frac{\lambda_j}{d}, 1-\frac{\lambda_i}{d}$ and $\frac{\lambda_i}{d}, 1-\frac{\lambda_j}{d}$ to $\langle 2\rho,\nu_b\rangle$, using the fact that $\langle c,\nu_b\rangle=1$.

\item If $\lambda_i > \frac{d}{2}$, the contribution to $\langle 2\rho,\nu_b\rangle$ from $\frac{\lambda_i}{d},1-\frac{\lambda_i}{d}$ is $\frac{m_i(m_i+1)}{2}(2\lambda_i-d)$. This is also the dimension of the part of $\mathscr{H}om_{\cO_F}(\mathbb{X}^\vee_i,\mathbb{X}_i)[1/p]$ which is compatible with the polarization. Indeed, the polarization induces an involution on $\mathscr{H}om_{\cO_F}(\mathbb{X}^\vee_i,\mathbb{X}_i)[1/p]$ and we can compute the dimension of the part fixed under the polarization using Lemma~\ref{dieudonne module}: the slope is $2\frac{\lambda_i}{d}-1$ and the height of the fixed part as a $p$-divisible $\cO_F$-module is $\frac{m_i(m_i+1)}{2}$.
\end{enumerate}

The case $G=GU_n$ is similar and left as an exercise. 
\end{proof}

\begin{remark} In view of the theory developed in Subsection~\ref{a product formula} and Corollary~\ref{faithfully flat} in particular, the dimension of $\mathrm{Aut}_G(\widetilde{\mathbb X}_b)$ should match the dimension of central leaves inside the Newton stratum corresponding to $b$ on the special fiber of a corresponding Shimura variety. This indeed agrees with the dimension of central leaves as computed by~\cite[Corollary 7.8]{hamacher}.
\end{remark}

Note that there is an action of $\mathrm{Aut}_G(\widetilde{\mathbb X}_b)$ on $\mathfrak{M}_{\cD_{\mathrm{int}}}$. We let $\mathrm{Aut}_G(\widetilde{\mathbb{X}}_b)^{\mathrm{ad}}_{\eta}$ be its adic generic fiber over $\mathrm{Spa}(L,\cO_L)$. Then the action of $\mathrm{Aut}_G(\widetilde{\mathbb{X}}_b)^{\mathrm{ad}}_{\eta}$ on $\cM_{\cD_{\mathrm{int}}}$ extends to an action on $\cM_{\cD,\infty}$. The map $\pi_{HT}^b: \cM_{\cD,\infty}\to \Fl^b_{G,\mu}$ is equivariant for this action with respect to the trivial action on the target. We would like to say that $\pi_{HT}^b: \cM_{\cD,\infty}\to \Fl^b_{G,\mu}$ is an $\mathrm{Aut}_G(\widetilde{\mathbb X}_b)^{\mathrm{ad}}_\eta$-torsor. However, we have only defined the target as a locally closed subspace of $\Fl_{G,\mu}$. Also, the condition of being a torsor includes the condition that the map is surjective locally in some specified topology. It is probably necessary to use some of the fine topologies from~\cite{scholzelectures} here. Thus, we content ourselves with some more basic information. Recall that $\cM_{\cD,\infty}$ is preperfectoid and lives over the perfectoid field $E(\zeta_{p^\infty})^\wedge$; thus, one can form a perfectoid space $\widehat{\cM}_{\cD,\infty}$ as in~\cite[Proposition 2.3.6]{scholzeweinstein}. The product
\[
\widehat{\cM}_{\cD,\infty}\times_{\mathrm{Spa}(L,\cO_L)} \mathrm{Aut}_G(\widetilde{\mathbb{X}}_b)^\mathrm{ad}_\eta
\]
exists in the category of adic spaces, and is still a perfectoid space, by the local structure of the automorphism scheme. On the other hand, the space
\[
\cM_{\cD,\infty} \times_{\Fl_{G,\mu}} \cM_{\cD,\infty}\subset \cM_{\cD,\infty}\times_{\mathrm{Spa}(\breve E,\cO_{\breve E})} \cM_{\cD,\infty}
\]
is preperfectoid (as this condition passes to closed subsets, cf.~\cite[Proposition 2.3.7]{scholzeweinstein}), so again we can pass to a perfectoid space
\[
(\cM_{\cD,\infty} \times_{\Fl_{G,\mu}} \cM_{\cD,\infty})^\wedge\ .
\]

\begin{prop}\label{torsors for aut} The action map
\[
\widehat{\cM}_{\cD,\infty}\times_{\mathrm{Spa}(L,\cO_L)} \mathrm{Aut}_G(\widetilde{\mathbb{X}}_b)^\mathrm{ad}_\eta\to (\cM_{\cD,\infty} \times_{\Fl_{G,\mu}} \cM_{\cD,\infty})^\wedge
\]
is an isomorphism of perfectoid spaces.
\end{prop}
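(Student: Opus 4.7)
The plan is to exhibit an inverse to the action map at the level of functors of points on perfectoid affinoid algebras $(R,R^+)$ over $E(\zeta_{p^\infty})^\wedge$. By Theorem~\ref{perfectoid rz spaces}, an $(R,R^+)$-point of the right-hand side is a pair $(\alpha,\alpha')$ of $B$-linear maps $V\to \widetilde{\mathbb X}_b^{\mathrm{ad}}_\eta(R,R^+)$ each satisfying conditions (1)--(3) of that theorem, and having equal image in $\Fl_{G,\mu}$. From this data I would construct a canonical element $g=g(\alpha,\alpha')\in \mathrm{Aut}_G(\widetilde{\mathbb X}_b)^{\mathrm{ad}}_\eta(R,R^+)$ with $\alpha'=g\circ\alpha$; sending $(\alpha,\alpha')$ to $(\alpha,g)$ then inverts the action map.

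For uniqueness, condition (3) of Theorem~\ref{perfectoid rz spaces} forces $\alpha$ to be injective on every geometric fibre $(C,\cO_C)$, so the relation $g\circ\alpha=\alpha'$ pins down $g$ on $\alpha(V)$. Combining this with $B$-linearity and the fact that $V\otimes_{\mathbb Q_p}L$ spans the rational Dieudonn\'e module $D(\mathbb X_b)[1/p]$, one reconstructs $g$ as an element of $\mathrm{Aut}_G(\widetilde{\mathbb X}_b)(C,\cO_C)$. By Proposition~\ref{description quasi-aut}, the formal model of $\mathrm{Aut}_G(\widetilde{\mathbb X}_b)^{\mathrm{ad}}_\eta$ is a locally profinite set times open perfectoid unit discs, in particular separated, so $g$ is determined by its values on geometric points.

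For existence, the essential input is Theorem~B of \cite{scholzeweinstein}, which classifies $p$-divisible groups over $\cO_C$ by pairs $(T,W)$ consisting of a finite free $\mathbb Z_p$-module $T$ and a subspace $W\subset T\otimes_{\mathbb Z_p}C$. On $(C,\cO_C)$-points, unwinding Definition~\ref{infinite rz spaces}, the pair $(\alpha,\pi_{HT}(\alpha))$ reconstructs a $p$-divisible group $\cG/\cO_C$ with PEL structure together with a trivialization of its integral Tate module identifying $V$ with $T_p\cG\otimes\mathbb Q_p$; the same holds for $\alpha'$. The hypothesis $\pi_{HT}(\alpha)=\pi_{HT}(\alpha')$ then forces the two reconstructed pairs $(\cG,\alpha)$ and $(\cG',\alpha')$ to be canonically isomorphic. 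The only remaining moduli datum is the quasi-isogeny $\rho\colon \mathbb X_b\to \cG\bmod p$, and the composition $\rho'^{-1}\circ\rho\in J_b(\mathbb Q_p)\subset \mathrm{Aut}_G(\widetilde{\mathbb X}_b)^{\mathrm{ad}}_\eta(C,\cO_C)$ provides the desired $g$ on each geometric point.

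To promote this pointwise construction to a morphism of adic spaces, I would invoke the fact that both sides are perfectoid (for the right-hand side, this uses that the condition $\pi_{HT}(\alpha)=\pi_{HT}(\alpha')$ cuts out a closed preperfectoid subspace of $\cM_{\cD,\infty}\times \cM_{\cD,\infty}$, cf.~the argument in the proof of Theorem~\ref{perfectoid rz spaces}) and that a morphism between perfectoid spaces which is a bijection on geometric points and an isomorphism on completed residue fields at each point is an isomorphism. The main obstacle I anticipate is this last globalization: Theorem~B of \cite{scholzeweinstein} is stated for $\cO_C$ rather than for an arbitrary perfectoid integral base, so one either has to develop a family version of the reconstruction or work diamond-theoretically and reduce to the geometric-point analysis carried out above. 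Either way, the structural description in Proposition~\ref{description quasi-aut} of the source of the map and the moduli description in Theorem~\ref{perfectoid rz spaces} of the target are the two engines that make the comparison possible.
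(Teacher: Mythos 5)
Your proposal correctly identifies the two engines (the moduli description of $\cM_{\cD,\infty}$ and the classification of $p$-divisible groups over $\cO_C$ by pairs $(T,W)$), and your analysis at a geometric point $(C,\cO_C)$ is essentially the right one. But the step you yourself flag as the "main obstacle" — passing from geometric points to an actual morphism of perfectoid spaces — is precisely where the content of the proposition lies, and your proposed fix does not work. There is no usable general principle saying that a map of perfectoid spaces which is a bijection on geometric points and an isomorphism on completed residue fields is an isomorphism (and even granting something like it, you would still need to produce the inverse as a \emph{map}, i.e. compatibly in families, which is the issue). The paper resolves this by reorganizing the argument so that the only pointwise check is a \emph{propagation} statement rather than a \emph{construction}: given an $(R,R^+)$-point of the fibre product, one has two $p$-divisible groups $\cG_1,\cG_2$ over $R^+$ with trivialized Tate modules over $R$, and the trivializations already give a global isomorphism $\cG_{1,R}\cong\cG_{2,R}$ over the generic fibre (no Theorem B needed, since over $R$ the torsion subgroups are finite \'etale and determined by the local system). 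The whole problem is then to extend this isomorphism from $R$ to $R^+$, and Lemma~\ref{extend map from generic fibre} shows that such an extension exists if and only if it exists after base change to every geometric rank $1$ point $(C,\cO_C)$ — this reduction uses $R^+=\{f\in R\mid |f(x)|\le 1\ \forall x\}$ together with full faithfulness of $p$-divisible groups over valuation rings of characteristic $p$ (Lemma~\ref{valringfullyfaithful}). Only at that pointwise stage does \cite[Theorem B]{scholzeweinstein} enter, exactly to verify that the isomorphism of Tate modules preserves the Hodge--Tate filtration, which is the fibre-product condition over $\Fl_{G,\mu}$. So the "family version of Theorem B" you contemplate is never needed; what is needed is the extension lemma, which your proposal is missing.

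A secondary point: your existence step asserts $\rho'^{-1}\circ\rho\in J_b(\mathbb Q_p)$. This is not correct: that composite is a self-quasi-isogeny of $\mathbb X_b\times_{\bar{\mathbb F}_p}\cO_C/p$, i.e.\ an element of $\mathrm{Aut}_G(\widetilde{\mathbb X}_b)(\cO_C/p)=\mathrm{Aut}_G(\widetilde{\mathbb X}_b)^{\mathrm{ad}}_\eta(C,\cO_C)$, which is strictly larger than $J_b(\mathbb Q_p)=\mathrm{Aut}_G(\widetilde{\mathbb X}_b)(\bar{\mathbb F}_p)$ by Proposition~\ref{description quasi-aut} (the fibres of the projection to $\underline{J_b(\mathbb Q_p)}$ are perfectoid open unit discs of dimension $\langle 2\rho,\nu_b\rangle$). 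If the fibres of $\pi^b_{HT}$ were $J_b(\mathbb Q_p)$-torsors the dimension count in Proposition~\ref{dim aut} and the computation of $\dim\Fl^b_{G,\mu}$ would fail, so this is not a harmless abuse of notation.
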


\begin{proof} Let $(R,R^+)$ be a perfectoid affinoid algebra over $\breve E$.\footnote{In the proof, we are really only using that $R^+\subset R$ is bounded, and that this property passes to rational subsets.} We have to construct an inverse map
\[
(\cM_{\cD,\infty} \times_{\Fl_{G,\mu}} \cM_{\cD,\infty})(R,R^+)\to (\cM_{\cD,\infty}\times_{\mathrm{Spa}(L,\cO_L)} \mathrm{Aut}_G(\widetilde{\mathbb{X}}_b)^{\mathrm{ad}}_{\eta})(R,R^+)\ .
\]
Given an element of the source, we have (after localization on $\mathrm{Spa}(R,R^+)$) two $p$-divisible groups $\cG_1$, $\cG_2$ over $R^+$,\footnote{Here, we use that $R^+\subset R$ is bounded.} equipped with quasi-isogenies to $\mathbb X_b$ over $R^+/p$, and trivializations of the Tate module on the generic fibre. In particular, we get an isomorphism of the $\mathbb Z_p$-local systems given by the Tate modules of $\cG_1$ and $\cG_2$ over $R$, in other words an isomorphism $\cG_{1,R}\cong \cG_{2,R}$. We need to check that this isomorphism extends to $R^+$, as one can then compose this isomorphism with the given quasi-isogenies to $\mathbb X_b$ over $R^+/p$ to get a self-quasi-isogeny of $\mathbb X_b$, as desired. In this regard, we observe the following lemma, which is a non-noetherian version of a result of Berthelot,~\cite{berthelotvalparfait}.

\begin{lemma}\label{extend map from generic fibre} Let $R^+$ be a $\mathbb Z_p$-algebra which is integrally closed in $R=R^+[1/p]$. Let $G$, $H$ be $p$-divisible groups over $R^+$. Assume that the Newton polygon of $G_s$ is independent of $s\in \mathrm{Spec}(R^+/p)$, and that the same holds true for $H$. Let $f_R: G_R\to H_R$ be a morphism of $p$-divisible groups over $R$. Then $f_R$ extends, necessarily uniquely, to a morphism $f: G\to H$ of $p$-divisible groups over $R^+$ if and only if for all geometric rank $1$ points $\mathrm{Spa}(C,\cO_C)$ of $\mathrm{Spa}(R,R^+)$, the base change $f_C: G_C\to H_C$ extends to a map $f_{\cO_C}: G_{\cO_C}\to H_{\cO_C}$.
\end{lemma}

\begin{proof} For each $n\geq 1$, we have to check that the map $G[p^n]_R\to H[p^n]_R$ extends to $R^+$. Both schemes $G[p^n]$, $H[p^n]$ in question are affine, and finite locally free over $R^+$. Thus, the question whether this morphism extends is the question whether a matrix with entries in $R$ has entries in $R^+$. As
\[
R^+ = \{f\in R\mid \forall x\in \mathrm{Spa}(R,R^+)\ :\ |f(x)|\leq 1\}\ ,
\]
we can reduce to the case of a point, i.e. $R=K$ is a complete nonarchimedean field, and $K^+\subset K$ is an open and bounded valuation subring. We may also assume that $K$ is algebraically closed, and rename $C=K$, $C^+=K^+$. By assumption, the map extends to $\cO_C$. Let $\mathfrak{m}_{\cO_C}\subset \cO_C$ be the maximal ideal; it is also contained in $C^+$. Then $C^+/\mathfrak{m}_{\cO_C}\subset \cO_C/\mathfrak{m}_{\cO_C}$ is a valuation subring. Finally, we are reduced to the following lemma.
\end{proof}

\begin{lemma}\label{valringfullyfaithful} Let $V$ be a valuation ring of characteristic $p$ with quotient field $K$. Let $G$, $H$ be $p$-divisible groups over $V$ with constant Newton polygon. Then the map

\[
\Hom(G,H)\to \Hom(G_K,H_K)
\]
is a bijection.
\end{lemma}

\begin{remark}\label{intdomfullyfaithful} Using this lemma, one can remove the noetherian hypothesis from the main result of~\cite{berthelotvalparfait}, i.e.~the same fully faithfulness result holds true for any integral domain $R$ in place of $V$. Indeed, to check whether a homomorphism over $K$ extends to $R$, one has to check whether certain matrices over $K$ have entries in $R$, which can be checked on valuation rings.
\end{remark}

\begin{proof} The map is clearly injective. For surjectivity, we have to check as above that certain matrices with coefficients in $K$ have entries in $V$. Thus, we may assume that $K$ is algebraically closed.

Observe that it is enough to prove the result up to quasi-isogeny. Indeed, if $f: G\to H$ becomes divisible by $p$ over $K$, then $G[p]_K\subset G_K$ is killed by $f$, whence its flat closure $G[p]\subset G$ is killed by $f$, which shows that $f$ is divisible by $p$.

Now, e.g.~by the Dieudonn\'e-Manin classification, both $G_K$ and $H_K$ admit a quasi-isogeny to a completely slope divisible $p$-divisible group $G_0$, $H_0$ (defined over $\bar{\mathbb F}_p\subset V$). We may assume that these quasi-isogenies are genuine isogenies; then we may take their flat closures over $V$ and divide $G$, resp. $H$, by them; thus, we may assume that $G_K$ and $H_K$ are completely slope divisible. Then by~\cite[Proposition 2.3]{oort-zink}, $G$ and $H$ are themselves completely slope divisible. As $V$ is perfect, both $G$ and $H$ decompose as a direct sum of their isoclinic pieces, cf.~\cite[Proposition 1.3]{oort-zink}; thus $G\cong G_0\times_{\bar{\mathbb F}_p} V$, $H\cong H_0\times_{\bar{\mathbb F}_p} V$.

Finally, we use that the Dieudonn\'e module functor on $V$ is fully faithful, cf.~\cite{berthelotvalparfait}. Thus, as $G$ and $H$ come via base extension from $\bar{\mathbb F}_p$, it remains to show that if $(D,\varphi)$ is any isocrystal over $\bar{\mathbb F_p}$, then
\[
(D\otimes_{W(\bar{\mathbb F}_p)[1/p]} W(V)[1/p])^{\varphi = 1} = (D\otimes_{W(\bar{\mathbb F}_p)[1/p]} W(K)[1/p])^{\varphi = 1}\ .
\]
We may assume that $D=D_\lambda$ is simple of slope $\lambda=s/r$. In that case, we have to prove
\[
W(V)[1/p]^{\varphi^r = p^s} = W(K)[1/p]^{\varphi^r = p^s}\ .
\]
Clearly, the left-hand side is contained in the right-hand side. If $s\neq 0$, then the right-hand is $0$, as follows by looking at the $p$-adic valuation of any nonzero element. We are left with the case $s=0$, where $r=1$. But $W(K)[1/p]^{\varphi = 1} = \mathbb Q_p\subset W(V)[1/p]^{\varphi = 1}$, finishing the proof.
\end{proof}

Using Lemma~\ref{extend map from generic fibre}, we only have to check the result on geometric rank $1$ points. But now, by~\cite[Theorem B]{scholzeweinstein}, $p$-divisible groups over $\cO_C$ are equivalent to pairs $(T,W)$, where $T$ is a finite free $\mathbb Z_p$-module, and $W\subset T\otimes_{\mathbb Z_p} C$ is the Hodge-Tate filtration. Thus, it remains to check that the Hodge-Tate filtration is preserved, but this is true as we started with an element of the fibre product
\[
(\cM_{\cD,\infty} \times_{\Fl_{G,\mu}} \cM_{\cD,\infty})(R,R^+)\ .
\]
\end{proof}

We also have the following surjectivity result.

\begin{lemma}\label{surjectivitypiHT} Let $C/\breve{E}(\zeta_{p^\infty})$ be a complete algebraically closed extension with ring of integers $\cO_C$. Then the map
\[
\pi_{HT}^b: \cM_{\cD,\infty}(C,\cO_C)\to \Fl^b_{G,\mu}(C,\cO_C)
\]
is surjective.
\end{lemma}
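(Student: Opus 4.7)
The plan is to invert the construction underlying Proposition~\ref{local Hodge-Tate period map}: given $x \in \Fl^b_{G,\mu}(C,\cO_C)$, I will produce a triple $(\cG,\rho,\alpha) \in \cM_{\cD,\infty}(C,\cO_C)$ with $\pi_{HT}^b(\cG,\rho,\alpha) = x$. The essential ingredients are the Bialynicki--Birula isomorphism of Theorem~\ref{BialynickiBirulaIsom}, the modification/gluing formalism on the Fargues--Fontaine curve (Corollary~\ref{ModifVectBund}, Theorem~\ref{BeauvilleLaszloRelCurve}), and the classification of $p$-divisible groups over $\cO_C$ from~\cite[Theorem B]{scholzeweinstein}.

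First, I would use Theorem~\ref{BialynickiBirulaIsom} to translate $x$ into a $(C,\cO_C)$-point of $\GrBdR_{G,\mu}$, which upon evaluation at the faithful representation $V$ yields a $\mathbb B_{\mathrm{dR},C}^+$-lattice $\Xi \subset V \otimes_{\mathbb Q_p} \mathbb B_{\mathrm{dR},C}$ sandwiched between $V \otimes \mathbb B_{\mathrm{dR},C}^+$ and $\xi^{-1}(V \otimes \mathbb B_{\mathrm{dR},C}^+)$ (using that $\mu$ has weights $0,1$ on $V$). By Tannakian functoriality of $\GrBdR$, the lattice $\Xi$ is automatically $B$-stable and self-dual under the polarization pairing up to the expected twist. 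Applying Corollary~\ref{ModifVectBund}, the modification at $\infty$ of the trivial $G$-bundle by $\Xi$ produces a $G$-bundle $\cE$ on $X_{C^\flat}$; the hypothesis $x \in \Fl^b_{G,\mu}$ says exactly that $\cE \cong \cE_b$, and I fix one such isomorphism. Applied to $V$, this supplies a vector bundle $\cE_V$ with an identification $\cE_V \cong \cE(\mathbb X_b) \cong E(\mathbb X_b \times_{\bar{\mathbb F}_p} \cO_C/p)$ via Theorem~\ref{p-divisible groups and vector bundles}(3), compatibly with the PEL tensors.

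Next, I would apply~\cite[Theorem B]{scholzeweinstein}: a $p$-divisible group over $\cO_C$ is equivalent to a pair $(T,W)$ with $T$ a finite free $\mathbb Z_p$-module and $W \subset T \otimes_{\mathbb Z_p} C$, equivalently to a modification of the trivial bundle on $X_{C^\flat}$ by a minuscule $\mathbb B_{\mathrm{dR},C}^+$-lattice in $T \otimes \mathbb B_{\mathrm{dR},C}$. Applied here with $T = \Lambda$, our data $\Xi$ (together with the PEL structure, via Tannakian extension) defines a $p$-divisible group $\cG$ over $\cO_C$ with PEL structure, together with a canonical identification $\alpha\colon \Lambda \isomap T_p\cG$; the Hodge--Tate filtration of $\cG$ matches $x$ by construction. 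Finally, reducing mod $p$, one has $E(\cG \times_{\cO_C} \cO_C/p) \cong \cE_V \cong E(\mathbb X_b \times_{\bar{\mathbb F}_p} \cO_C/p)$, and by the full faithfulness part of Theorem~\ref{p-divisible groups and vector bundles}(2), this identification is realized by a unique quasi-isogeny $\rho\colon \mathbb X_b \times_{\bar{\mathbb F}_p} \cO_C/p \to \cG \times_{\cO_C} \cO_C/p$ compatible with PEL structures. The triple $(\cG,\rho,\alpha)$ is then the desired preimage of $x$.

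The main obstacle will be the Tannakian/PEL bookkeeping: both~\cite[Theorem B]{scholzeweinstein} and Theorem~\ref{p-divisible groups and vector bundles}(2) are stated for individual $p$-divisible groups and vector bundles, so the $G$-enhanced and PEL-enhanced variants have to be extracted via the Tannakian formalism. In particular, one must verify that a polarization---an antisymmetric form valued in the Tate twist, rather than a plain bundle morphism---transports correctly through the equivalences, that the similitude character matches the identification $\mu_{p^\infty} \leftrightarrow \mathbb Z_p(1)$ fixed in Definition~\ref{infinite rz spaces}, and that the integral structure on $T_p\cG$ produced by~\cite[Theorem B]{scholzeweinstein} agrees with the one coming from the canonical trivialization of $\cE_b$ away from $\infty$ combined with the lattice $\Lambda \subset V$. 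Once these compatibilities are unwound---and the proof of Proposition~\ref{local Hodge-Tate period map} essentially dictates what they must be---the construction fits together and produces a point of $\cM_{\cD,\infty}(C,\cO_C)$ mapping to $x$.
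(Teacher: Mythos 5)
Your proposal is correct and follows essentially the same route as the paper: construct the $p$-divisible group $\cG/\cO_C$ with trivialized Tate module and extra structures from $x$ via \cite[Theorem B]{scholzeweinstein} (functoriality handling the PEL data), then obtain the quasi-isogeny $\rho$ from the chain of identifications $\cE_{\cG}\cong \cE_x\cong \cE_b$ (the first coming from the proof of Proposition~\ref{local Hodge-Tate period map}, the second from $x\in\Fl^b_{G,\mu}$) together with the full faithfulness statement of Theorem~\ref{p-divisible groups and vector bundles}. Your version spells out the Tannakian bookkeeping more explicitly than the paper does, but the underlying argument is the same.
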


\begin{proof} Given $x\in \Fl^b_{G,\mu}(C,\cO_C)$, we get (corresponding to the representation $G\to GL(V)$, and using~\cite[Theorem B]{scholzeweinstein}) a $p$-divisible group $\cG/\cO_C$ with trivialized Tate module, which by functoriality comes equipped with an action of $\cO_B$ and a principal polarization. To give a point of $\cM_{\cD,\infty}(C,\cO_C)$, it remains to construct a quasi-isogeny $\rho$ over $\cO_C/p$. For this, note that the proof of Proposition~\ref{local Hodge-Tate period map} gives an identification between the $G$-bundle $\cE_{\cG}$ corresponding to $\cG$, and the $G$-bundle $\cE_x$ corresponding to the point $x$. By assumption, $x\in \Fl^b_{G,\mu}(C,\cO_C)$, so there is an isomorphism of $G$-bundles $\cE_x\cong \cE_b$, which gives an isomorphism of $G$-bundles $\cE_{\cG}\cong \cE_b$. Using Theorem~\ref{p-divisible groups and vector bundles}, this gives the desired quasi-isogeny.
\end{proof}

Using these results, we can compute the dimension of the strata $\Fl_{G,\mu}^b\subset \Fl_{G,\mu}$. Here, we define the dimension as the Krull dimension, i.e.~the length of the longest chain of specializations.

\begin{prop}\label{transcendence dim} Let $K$ be a complete nonarchimedean field with ring of integers $\cO_K$ and residue field $k$. Let $X$ be a partially proper adic space over $\mathrm{Spa}(K,\cO_K)$. Then the dimension of $X$ is equal to the maximal transcendence degree of $k(x)$ for $x\in X$, where $k(x)$ is the residue field of the ring of integers $\cO_{K(x)}$ in the completed residue field $K(x)$ at $x$.
\end{prop}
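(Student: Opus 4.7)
The plan is to reduce specialization chains in $X$ to chains of valuation subrings of the residue field $k(x)$ over $k$, then apply Abhyankar's inequality (rank of a valuation ring $\le$ transcendence degree of the ambient field). Given any chain $x_0\rightsquigarrow x_1\rightsquigarrow\cdots\rightsquigarrow x_n$ in $X$, first replace $x_0$ by its maximal generalization $\tilde x$; its existence as a rank-$1$ point of $X$ is exactly what partial properness provides. Because the $x_i$ are successive higher-rank specializations of $\tilde x$, they share its completed residue field $K(x_i)=K(\tilde x)$, its bounded subring $\cO_{K(x_i)}=\cO_{K(\tilde x)}$, and hence its residue field $k(x_i)=k(\tilde x)$. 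Each $x_i$ is then recorded by an open valuation subring $K(x_i)^+\subseteq \cO_{K(\tilde x)}$, and the chain corresponds to strict inclusions $\cO_{K(\tilde x)}=K(x_0)^+\supsetneq K(x_1)^+\supsetneq\cdots\supsetneq K(x_n)^+$. Partial properness of $X$ over $\mathrm{Spa}(K,\cO_K)$ both produces every such sequence from genuine points of $X$ and forces the structural inclusion $\cO_K\subseteq K(x_i)^+$.

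Reduction modulo $\mathfrak{m}_{K(\tilde x)}$ yields a bijection between open valuation subrings of $K(\tilde x)$ inside $\cO_{K(\tilde x)}$ and valuation subrings of $k(\tilde x)$, under which our chain corresponds to $k(\tilde x)=W_0\supsetneq W_1\supsetneq\cdots\supsetneq W_n$, all containing $k$. Since such a chain is realized as the chain of localizations of $W_n$ at its primes, its length $n$ is bounded by the rank (Krull dimension) of $W_n$ as a valuation ring over $k$, which by Abhyankar's inequality is at most $\mathrm{trdeg}(k(\tilde x)/k)=\mathrm{trdeg}(k(x_0)/k)$. This proves $\dim X\le \sup_{x\in X}\mathrm{trdeg}(k(x)/k)$.

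For the reverse inequality, fix a rank-$1$ point $\tilde x\in X$ with $d:=\mathrm{trdeg}(k(\tilde x)/k)$, pick a transcendence basis $t_1,\dots,t_d$ of $k(\tilde x)$ over $k$, and consider the standard rank-$d$ lex valuation on $k(t_1,\dots,t_d)$. Extending it to $k(\tilde x)$ by a Chevalley--Zorn argument produces a valuation ring $W_d\subseteq k(\tilde x)$ of rank exactly $d$ containing $k$; its chain of prime ideals unfolds to $k(\tilde x)\supsetneq W_1\supsetneq\cdots\supsetneq W_d$, which we pull back through $\pi\col \cO_{K(\tilde x)}\twoheadrightarrow k(\tilde x)$ to a chain of open valuation subrings of $K(\tilde x)$. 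Partial properness then converts this into a chain of specializations of length $d$ in $X$ starting from $\tilde x$, so $\dim X\ge d$, and the sup is attained. The main technical obstacle is precisely this careful use of partial properness: one must verify that Huber's valuative criterion really does produce the exact bijection between higher-rank specializations of $\tilde x$ inside $X$ and open valuation subrings of $K(\tilde x)$ contained in $\cO_{K(\tilde x)}$ and containing $\cO_K$, with no missing or extraneous points.
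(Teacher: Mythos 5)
Your proof is correct and follows essentially the same route as the paper: reduce to a rank-$1$ point $\tilde x$ via maximal generization, identify the specializations of $\tilde x$ inside $X$ (using partial properness) with the Zariski--Riemann space of $k(\tilde x)/k$, and compute the dimension of the latter as the transcendence degree. The only difference is that the paper cites this last dimension computation as known, whereas you carry it out explicitly via Abhyankar's inequality and the construction of a rank-$d$ composite valuation.
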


\begin{remark} Recall that a map $f: X\to Y$ of analytic adic spaces is \emph{partially proper} if for any complete nonarchimedean field $K$ with ring of integers $\cO_K\subset K$ and open and bounded valuation subring $K^+\subset K$ (so $K^+\subset \cO_K$), the map
\[
X(K,K^+)\to X(K,\cO_K)\times_{Y(K,\cO_K)} Y(K,K^+)
\]
is a bijection. This is the analogue of the valuative criterion for properness in this setup.
\end{remark}

\begin{proof} As $X$ lives over $\mathrm{Spa}(K,\cO_K)$, it is analytic, and thus any point generalizes to a rank $1$ point. It is thus enough to prove the more precise assertion that for any rank $1$ point $x$, the dimension of the closure $\overline{\{x\}}$ is equal to the transcendence degree of $k(x)$. But the closure $\overline{\{x\}}$ gets identified with the Zariski-Riemann space for $k(x)/k$ (using partial properness), whose dimension is equal to the transcendence degree of $k(x)/k$.
\end{proof}

\begin{prop}\label{dim add} Let $K$ be a complete nonarchimedean field with ring of integers $\cO_K$ and residue field $k$. Let $f: X\to Y$ be map of partially proper adic spaces over $\mathrm{Spa}(K,\cO_K)$, and fix a rank $1$ point $x\in X$, with image $y\in Y$. Let $X_y = X\times_Y \{y\}$ be the fibre of $f$ over $y$. Let $\overline{\{x\}}^X\subset X$, $\overline{\{y\}}^Y\subset Y$ and $\overline{\{x\}}^{X_y}\subset X_y$ be the respective closure. Then
\[
\dim \overline{\{x\}}^X = \dim \overline{\{y\}}^Y + \dim \overline{\{x\}}^{X_y}\ .
\]
\end{prop}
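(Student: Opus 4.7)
The strategy is to reduce the claim to the additivity of transcendence degrees of residue fields, via Proposition~\ref{transcendence dim}. Throughout, for a point $p$ of an analytic adic space, write $K(p)$ for the completed residue field, $\cO_{K(p)}$ for its rank $1$ ring of integers, and $k(p) = \cO_{K(p)}/\mathfrak{m}_{K(p)}$ for the residue field.

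First I would verify that $y = f(x)$ is itself a rank $1$ point of $Y$. Indeed, the rank $1$ valuation at $x$ corresponds to a continuous valuation $\cO_{X,x}\to \mathbb{R}_{\geq 0}$, and pulling back along $f^{\ast}\col \cO_{Y,y}\to \cO_{X,x}$ yields a continuous valuation on $\cO_{Y,y}$ again with values in $\mathbb{R}_{\geq 0}$, necessarily nontrivial since it restricts nontrivially to $K\subset \cO_{Y,y}$. Hence $y$ is rank $1$, i.e.\ $K(y)^{+} = \cO_{K(y)}$.

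Next, form the fibre $X_y = X\times_Y \mathrm{Spa}(K(y),\cO_{K(y)})$. Partial properness is stable under base change, so $X_y$ is partially proper over $\mathrm{Spa}(K(y),\cO_{K(y)})$; the universal property of the fibre product then promotes $x$ to a rank $1$ point of $X_y$ whose completed residue field is canonically identified with $K(x)$, now viewed as an extension of $K(y)$. Applying the precise form of Proposition~\ref{transcendence dim} extracted from its proof (for a rank $1$ point $p$ of a partially proper adic space over $\mathrm{Spa}(L,\cO_L)$, $\dim \overline{\{p\}} = \mathrm{tr.deg}_{l}\, k(p)$, where $l$ is the residue field of $L$) to the three situations $(X,x)$, $(Y,y)$, $(X_y,x)$ over their respective bases $\mathrm{Spa}(K,\cO_K)$, $\mathrm{Spa}(K,\cO_K)$, $\mathrm{Spa}(K(y),\cO_{K(y)})$ yields
\[
\dim \overline{\{x\}}^X = \mathrm{tr.deg}_{k} k(x),\quad \dim \overline{\{y\}}^Y = \mathrm{tr.deg}_{k} k(y),\quad \dim \overline{\{x\}}^{X_y} = \mathrm{tr.deg}_{k(y)} k(x).
\]
Classical additivity of transcendence degrees, $\mathrm{tr.deg}_{k} k(x) = \mathrm{tr.deg}_{k} k(y) + \mathrm{tr.deg}_{k(y)} k(x)$, then closes the argument.

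The only step meriting any care is the implicit identification $k(y)\hookrightarrow k(x)$ needed for the last equality. This is immediate from continuity of $K(y)\hookrightarrow K(x)$, which sends $\cO_{K(y)}$ into $\cO_{K(x)}$ and $\mathfrak{m}_{K(y)}$ into $\mathfrak{m}_{K(x)}$, hence induces an injection of residue fields. No serious obstacle is anticipated: everything rests on being able to invoke Proposition~\ref{transcendence dim} uniformly in the three situations, which requires only preservation of partial properness under base change.
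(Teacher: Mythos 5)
Your proof is correct and follows exactly the paper's argument: the paper's proof is the one-line observation that, via Proposition~\ref{transcendence dim}, the statement translates into additivity of transcendence degrees for $k(x)/k(y)/k$. You merely spell out the routine verifications (the image $y$ is rank $1$, partial properness passes to the fibre, the residue field inclusions) that the paper leaves implicit.
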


\begin{proof} Let $k(x)$ and $k(y)$ have the same meaning as in Proposition~\ref{transcendence dim}. Then the statement translates into the additivity of transcendence degrees for the extensions $k(x)/k(y)/k$.
\end{proof}

\begin{prop}\label{dim aut} For any complete nonarchimedean field $K/\cO_{\breve{E}}$, the space
\[
\mathrm{Aut}_G(\widetilde{\mathbb X}_b)^{\mathrm{ad}}\times_{\mathrm{Spa}(\cO_{\breve{E}},\cO_{\breve{E}})} \mathrm{Spa}(K,\cO_K)
\]
is partially proper over $\mathrm{Spa}(K,\cO_K)$, of dimension $\langle 2\rho,\nu_b\rangle$.
\end{prop}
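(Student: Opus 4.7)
The plan is to leverage Proposition~\ref{description quasi-aut}, which provides a morphism
\[
\pi \colon \mathrm{Aut}_G(\widetilde{\mathbb X}_b) \longrightarrow \underline{J_b(\mathbb Q_p)}
\]
of formal schemes over $W(\bar{\mathbb F}_p)$ all of whose fibres are isomorphic to $\mathrm{Spf}\ W(\bar{\mathbb F}_p)[[x_1^{1/p^\infty}, \ldots, x_d^{1/p^\infty}]]$ with $d = \langle 2\rho, \nu_b \rangle$. Base changing along $\cO_{\breve E}\to \cO_K$ and passing to adic generic fibres produces a map
\[
\pi^{\mathrm{ad}}_\eta \colon \mathrm{Aut}_G(\widetilde{\mathbb X}_b)^{\mathrm{ad}} \times_{\mathrm{Spa}(\cO_{\breve E},\cO_{\breve E})} \mathrm{Spa}(K,\cO_K) \longrightarrow \underline{J_b(\mathbb Q_p)}_K
\]
whose target is a disjoint union of copies of $\mathrm{Spa}(K, \cO_K)$ indexed by the locally profinite set $J_b(\mathbb Q_p)$, and whose fibres are each isomorphic to the $d$-dimensional perfectoid open unit polydisc $\mathbb D^{d,\circ}_K$ over $K$.

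For partial properness, I would use that it is preserved under composition. The structure map $\underline{J_b(\mathbb Q_p)}_K \to \mathrm{Spa}(K, \cO_K)$ is an isomorphism onto each component and hence trivially partially proper, so it suffices to check partial properness for $\pi^{\mathrm{ad}}_\eta$. Since the property can be checked on a neighbourhood of any point, it reduces to verifying the valuative criterion on a single fibre $\mathbb D^{d,\circ}_K$. Given any $(K', K'^+)$ over $(K, \cO_K)$ and a morphism $\mathrm{Spa}(K', \cO_{K'}) \to \mathbb D^{d,\circ}_K$, the coordinates $x_i \in K'$ satisfy $|x_i| < 1$ in the rank-$1$ valuation of $K'$, so the $x_i$ are topologically nilpotent in $K'$, and hence $|x_i(y)| < 1$ at every continuous valuation $y \in \mathrm{Spa}(K', K'^+)$. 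Thus the morphism extends uniquely to $\mathrm{Spa}(K', K'^+) \to \mathbb D^{d,\circ}_K$, verifying the criterion.

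For the dimension, I would invoke Proposition~\ref{dim add} applied to $\pi^{\mathrm{ad}}_\eta$: the target $\underline{J_b(\mathbb Q_p)}_K$ has dimension $0$ over $\mathrm{Spa}(K, \cO_K)$, while the fibres $\mathbb D^{d,\circ}_K$ have dimension $d$. The latter follows from Proposition~\ref{transcendence dim}: at the Gauss point with $|x_1|=\cdots=|x_d|=r$ for any $r<1$ in $|K^\times|^{\mathbb Q}$, the residue field is purely transcendental of transcendence degree $d$ over the residue field of $K$, and this is the maximal transcendence degree occurring on $\mathbb D^{d,\circ}_K$. Adding gives $\dim = 0 + d = \langle 2\rho, \nu_b \rangle$, as required. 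Neither verification presents a serious obstacle: the essential structural input has already been made in Proposition~\ref{description quasi-aut}, and the remaining arguments are standard facts about the perfectoid open polydisc and locally profinite adic spaces.
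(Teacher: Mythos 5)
Your proposal is correct, and the dimension computation is essentially the paper's: the paper also reduces to a single connected component, identifies it via Proposition~\ref{description quasi-aut} with the generic fibre of $\mathrm{Spf}\,\cO_{\breve{E}}[[x_1^{1/p^\infty},\ldots,x_d^{1/p^\infty}]]$, and concludes it is (topologically) the $d$-dimensional open unit disc. Where you genuinely diverge is partial properness. The paper does not use the coordinate description for this at all; it invokes Lemma~\ref{extend map from generic fibre} — the statement that a morphism of $p$-divisible groups with constant Newton polygon over $R$ extends to $R^+$ as soon as it does over all rank~$1$ points — together with the injectivity in Lemma~\ref{valringfullyfaithful} to see that the extended quasi-isogeny still respects the extra structures. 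That argument is intrinsic to the moduli interpretation and is the same mechanism used for $\cM_{\cD,\infty}$. Your route, reading partial properness off the explicit presentation as a profinite family of perfectoid open polydiscs, is also valid and arguably more economical once Proposition~\ref{description quasi-aut} is in hand (there is no circularity, since that proposition is proved purely in characteristic $p$). Two small points of care your write-up glosses over: $\underline{J_b(\mathbb Q_p)}_K$ is a \emph{profinite} family of copies of $\mathrm{Spa}(K,\cO_K)$, not a discrete disjoint union, so the reduction of the valuative criterion to a single fibre should be justified by noting that each fibre is closed (preimage of a point of a profinite set), hence stable under specialization; and for the upper bound $\dim \mathbb{D}^{d,\circ}_K\leq d$ in the perfectoid setting one should, as the paper does, reduce to $K$ algebraically closed and tilt to identify the space topologically with the ordinary open polydisc. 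Neither is a real gap.
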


\begin{proof} The adic generic fiber is partially proper by Lemma~\ref{extend map from generic fibre}. (A quasi-self-isogeny respecting extra structures over $\mathrm{Spa}(C,\cO_C)$ will also respect the extra structures when it extends to $\mathrm{Spa}(C,C^+)$ by the injectivity of the map in Lemma~\ref{valringfullyfaithful}.) For the claim about the dimension of $\mathrm{Aut}_G(\widetilde{\mathbb{X}}_b)^{\mathrm{ad}}\times_{\mathrm{Spa}(\cO_{\breve{E}},\cO_{\breve{E}})} \mathrm{Spa}(K,\cO_K)$, it is enough to consider a connected component, all of which are by Proposition~\ref{description quasi-aut} given by
\[
\mathrm{Spa}(\cO_{\breve{E}}[[x_1^{1/p^\infty},\ldots,x_d^{1/p^\infty}]],\cO_{\breve{E}}[[x_1^{1/p^\infty},\ldots,x_d^{1/p^\infty}]])\times_{\mathrm{Spa}(\cO_{\breve{E}},\cO_{\breve{E}})} \mathrm{Spa}(K,\cO_K)\ .
\]
To compute the dimension, we may assume that $K$ is algebraically closed. Then $K$ is perfectoid, and by tilting we can assume that $K$ is of characteristic $p$. In that case, the space is topologically the same as
\[
\mathrm{Spa}(\cO_{\breve{E}}[[x_1,\ldots,x_d]],\cO_{\breve{E}}[[x_1,\ldots,x_d]])\times_{\mathrm{Spa}(\cO_{\breve{E}},\cO_{\breve{E}})} \mathrm{Spa}(K,\cO_K)\ .
\]
But this is the $d$-dimensional open unit disc over $K$.
\end{proof}

\begin{prop} The dimension of $\Fl_{G,\mu}^b$ is equal to $\langle 2\rho,\mu\rangle -\langle 2\rho,\nu_b\rangle$.
\end{prop}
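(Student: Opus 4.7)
The idea is to fibre $\cM_{\cD,\infty}$ over $\Fl_{G,\mu}^b$ via the local Hodge-Tate period map $\pi_{HT}^b$, and combine the dimension of $\cM_{\cD,\infty}$ with the dimension of its fibres (computed via the torsor structure of Proposition~\ref{torsors for aut} and Proposition~\ref{dim aut}) using the fibre-dimension formula of Proposition~\ref{dim add}. Throughout, dimensions are measured as in Proposition~\ref{transcendence dim}, with base field $\breve E(\zeta_{p^\infty})^\wedge$ and residue field $\bar{\mathbb F}_p$.

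As a preliminary step I would verify that $\dim\cM_{\cD,\infty} = \langle 2\rho,\mu\rangle$. The Rapoport-Zink formal scheme $\mathfrak{M}_{\cD^{\mathrm{int}}}$ is formally smooth over $\mathrm{Spf}\ \cO_{\breve E}$ of relative dimension equal to $\dim\Fl_{G,\mu}^{\mathrm{std}} = \langle 2\rho,\mu\rangle$, so its adic generic fibre $\cM_{\cD^{\mathrm{int}}}$ and each finite \'etale cover $\cM_{\cD,n}$ have dimension $\langle 2\rho,\mu\rangle$; the pro-finite-\'etale limit $\cM_{\cD,\infty}\sim\varprojlim_n\cM_{\cD,n}$ only produces algebraic extensions of residue fields, so transcendence degrees are preserved, and conversely any maximally generic rank $1$ point at finite level lifts to one at infinite level.

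For the lower bound I would pick a rank $1$ point $\tilde\xi_0\in\cM_{\cD,\infty}$ with $\mathrm{tr.deg}(k(\tilde\xi_0)/\bar{\mathbb F}_p) = \langle 2\rho,\mu\rangle$ and set $\xi_0 := \pi_{HT}^b(\tilde\xi_0)\in\Fl_{G,\mu}^b$. Using $\tilde\xi_0$ as a section, Proposition~\ref{torsors for aut} identifies the fibre $\pi_{HT}^{-1}(\xi_0)$ with $\mathrm{Aut}_G(\widetilde{\mathbb X}_b)^{\mathrm{ad}}_\eta\times_{\mathrm{Spa}(\breve E,\cO_{\breve E})}\mathrm{Spa}(K(\xi_0),\cO_{K(\xi_0)})$, which is partially proper of dimension $\langle 2\rho,\nu_b\rangle$ by Proposition~\ref{dim aut}. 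Applying Proposition~\ref{dim add} to $\pi_{HT}\colon\cM_{\cD,\infty}\to\Fl_{G,\mu}$ yields
\[
\langle 2\rho,\mu\rangle \;=\; \dim\overline{\{\xi_0\}}^{\Fl_{G,\mu}} + \dim\overline{\{\tilde\xi_0\}}^{\mathrm{fibre}} \;\leq\; \dim\Fl_{G,\mu}^b + \langle 2\rho,\nu_b\rangle,
\]
since $\xi_0\in\Fl_{G,\mu}^b$ forces $\dim\overline{\{\xi_0\}}^{\Fl_{G,\mu}} = \mathrm{tr.deg}(k(\xi_0)/\bar{\mathbb F}_p)\leq\dim\Fl_{G,\mu}^b$.

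For the upper bound I would start from a rank $1$ point $\xi_0\in\Fl_{G,\mu}^b$ with $\mathrm{tr.deg}(k(\xi_0)/\bar{\mathbb F}_p) = \dim\Fl_{G,\mu}^b$, take $C$ a complete algebraically closed field containing both $K(\xi_0)$ and $\breve E(\zeta_{p^\infty})^\wedge$, and use Lemma~\ref{surjectivitypiHT} to produce a section $\tilde\xi_0\in\cM_{\cD,\infty}(C,\cO_C)$ above $\xi_0$. Via this section, the fibre over $\xi_0$ becomes $\mathrm{Aut}_G(\widetilde{\mathbb X}_b)^{\mathrm{ad}}_\eta\times_{\breve E} C$, so by Propositions~\ref{dim aut} and~\ref{transcendence dim} it contains a rank $1$ point $\tilde\xi$ with $\mathrm{tr.deg}(k(\tilde\xi)/k(\xi_0)) = \langle 2\rho,\nu_b\rangle$; by additivity, $\mathrm{tr.deg}(k(\tilde\xi)/\bar{\mathbb F}_p) = \dim\Fl_{G,\mu}^b + \langle 2\rho,\nu_b\rangle$, and this must be $\leq\dim\cM_{\cD,\infty} = \langle 2\rho,\mu\rangle$. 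The main technical worry is the preliminary step: ensuring that the $\sim$-limit $\cM_{\cD,\infty}$ has the expected dimension in the sense of Proposition~\ref{transcendence dim}; as an alternative one can run the whole argument at a fixed finite level $\cM_{\cD,n}$, replacing $\mathrm{Aut}_G(\widetilde{\mathbb X}_b)^{\mathrm{ad}}_\eta$ by its quotient by $K_n$, which has the same dimension.
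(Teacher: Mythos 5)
Your proposal is correct and follows essentially the same route as the paper: both inequalities are obtained by applying Proposition~\ref{dim add} to the local Hodge--Tate period map, identifying geometric fibres with $\mathrm{Aut}_G(\widetilde{\mathbb X}_b)^{\mathrm{ad}}_\eta$ via Proposition~\ref{torsors for aut} and Lemma~\ref{surjectivitypiHT}, and invoking Proposition~\ref{dim aut}. The only addition is your preliminary justification that $\dim \cM_{\cD,\infty}=\langle 2\rho,\mu\rangle$ (via formal smoothness of $\mathfrak{M}_{\cD^{\mathrm{int}}}$ and invariance of transcendence degree under the pro-finite-\'etale tower), which the paper simply asserts.
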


\begin{proof} Both $\Fl_{G,\mu}$ and $\cM_{\cD,\infty}$ are partially proper adic spaces over $\mathrm{Spa}(\breve{E},\cO_{\breve{E}})$ of dimension $\langle 2\rho,\mu\rangle$. Pick any rank $1$ point $x\in \cM_{\cD,\infty}$ such that the dimension of $\overline{\{x\}}$ is $\langle 2\rho,\mu\rangle$, and let $y\in \Fl_{G,\mu}^b$ be its image. Let $\bar{y}$ be a geometric point above $y$, corresponding to a completed algebraic closure $C$ of $K(y)$, and pick a lift of $\bar{y}$ to $\cM_{\cD,\infty}$, using Lemma~\ref{surjectivitypiHT}. Then Proposition~\ref{dim add} shows that
\[
\langle 2\rho,\mu\rangle\leq \dim \overline{\{y\}} + \dim \cM_{\cD,\infty,y}\ .
\]
But $\dim \cM_{\cD,\infty,y} = \dim \cM_{\cD,\infty,\bar{y}}$, and using Proposition~\ref{torsors for aut} and the choice of $\bar{y}$, one has
\[
\dim \cM_{\cD,\infty,\bar{y}} = \dim \mathrm{Aut}_G(\widetilde{\mathbb X}_b)^{\mathrm{ad}}\times_{\mathrm{Spa}(\cO_{\breve{E}},\cO_{\breve{E}})} \mathrm{Spa}(C,\cO_C)\ .
\]
The latter has been computed in Proposition~\ref{dim aut}, showing the inequality
\[
\dim \Fl_{G,\mu}^b\geq \dim \overline{\{y\}}\geq \langle 2\rho,\mu\rangle -\langle 2\rho,\nu_b\rangle\ .
\]
For the converse, pick any rank $1$ point $y\in \Fl_{G,\mu}$. As before, one sees that $\dim \cM_{\cD,\infty,y} = \langle 2\rho,\nu_b\rangle$, so pick a rank $1$ point $x\in \cM_{\cD,\infty,y}$ whose closure is of dimension $\langle 2\rho,\nu_b\rangle$. Applying Proposition~\ref{dim add}, we see that the dimension of the closure of $x$ in $\cM_{\cD,\infty}$ is at least $\dim \overline{\{y\}} + \langle 2\rho,\nu_b\rangle$. On the other hand, the dimension of the closure of $x$ is bounded by $\dim \cM_{\cD,\infty} = \langle 2\rho,\mu\rangle$. This shows that
\[
\dim \overline{\{y\}}\leq \langle 2\rho,\mu\rangle -\langle 2\rho,\nu_b\rangle\ ,
\]
which (as $y$ was arbitrary) proves the other inequality.
\end{proof}

\subsection{A product formula}\label{a product formula}
We now return to our global setting, where we want to study the Hodge-Tate period map $\pi_{HT}:\cS_{K^p}\to \Fl_{G}$. Recall that we are restricting to the case when the Shimura datum $(G,X)$ is of PEL type.

More precisely, we fix global PEL data as follows, cf.~\cite[\S 5]{kottwitzpoints}. Let $B$ be a finite-dimensional simple $\mathbb Q$-algebra with center $F$, and let $V$ be a faithful finitely generated $B$-representation. Let $\ast$ be a positive involution on $B$, and $F^+ = F^{\ast = 1}$. On $V$, we fix a nondegenerate $\mathbb Q$-valued alternating form $(\cdot,\cdot)$ such that $(bv,w) = (v,b^\ast w)$ for all $v,w\in V$ and $b\in B$. Let $G/\mathbb Q$ be the algebraic group whose $R$-valued points are
\[
G(R) = \{x\in \mathrm{End}_{B\otimes R}(V\otimes R)\mid xx^\ast\in R^\times\}\ .
\]
We assume that $G$ is connected; under the classification of~\cite{kottwitzpoints}, this amounts to excluding type D. Finally, we fix a $\ast$-homomorphism $h: \mathbb{C}\to \mathrm{End}_{B\otimes \mathbb R}(V\otimes \mathbb R)$ such that the symmetric real-valued bilinear form $(v,h(i)w)$ on $V\otimes \mathbb R$ is positive-definite. Note that $h$ induces a map, denoted in the same way, $h: \mathrm{Res}_{\mathbb C/\mathbb R}\to G_{\mathbb{R}}$, and in particular a Shimura datum.

We need to assume that these data are ``unramified'' at $p$. More precisely, we assume that $B_{\mathbb Q_p}$ is a product of matrix algebras over unramified extensions of $\mathbb Q_p$, and fix a maximal $\mathbb{Z}_{(p)}$-order $\cO_B\subset B$; we assume that $\ast$ preserves $\cO_B$. Finally, we assume that there exists a $\mathbb Z_{(p)}$-lattice $\Lambda\subset V$ that is self-dual under $(\cdot,\cdot)$ and stable under $\cO_B$, and we fix such a $\Lambda$. Using these data, we can define a connected reductive group $G_{\mathbb Z_{(p)}}$ over $\mathbb Z_{(p)}$ with generic fibre $G$ as
\[
G_{\mathbb Z_{(p)}}(R) = \{x\in \mathrm{End}_{\cO_B\otimes R}(\Lambda\otimes R)\mid xx^\ast\in R^\times\}\ .
\]
We fix the hyperspecial maximal compact open subgroup $K_p = G_{\mathbb Z_{(p)}}(\mathbb Z_p)\subset G(\mathbb Q_p)$.

Let $K^p\subset G(\mathbb{A}_f^p)$ be a compact open subgroup, and fix a place $\mathfrak p|p$ of $E$. As in~\cite{kottwitzpoints}, one can define a moduli space of abelian varieties with extra structures $\mathscr{S}_{K_pK^p}$ over $\cO_{E,\mathfrak p}\subset E$. In most cases, the generic fibre $S_{K_pK^p} / E$ of $\mathscr{S}_{K_pK^p}$ is the Shimura variety corresponding to $(G,\{h\})$; in general, however, the Hasse principle for the group $G$ fails, and it consists of $|\mathrm{ker}^1(\mathbb Q,G)|$ copies of this Shimura variety. Thus, the notation of this section conflicts slightly with the previous notation for Shimura varieties of Hodge type.

Let $\mathbb F_q$ be the residue field of $\cO_{E,\mathfrak p}$. The special fiber $\mathscr{S}_{K_pK^p}\times_{\cO_{E,\mathfrak p}} \mathbb F_q$ admits a Newton stratification by locally closed strata $\mathscr{S}_{K_pK^p}^b$ indexed by $b\in B(G, \mu^{-1})$, cf.~\cite{rapoport-richartz}: A point $x\in \mathscr{S}_{K_pK^p}\times_{\cO_{E,\mathfrak p}} \mathbb F_q$ gives rise to a $p$-divisible group with extra structure, which can be translated into an isocrystal with $G$-structure, and is classified by an element $b\in B(G)$. By~\cite{rapoport-richartz}, this element actually lies in $B(G,\mu^{-1})$.

One of the main results of~\cite{mantovan} is a decomposition of the Newton stratum $\mathscr{S}_{K_pK^p}^b$ into the Rapoport-Zink space $\mathfrak{M}^b$ and the Igusa variety $\mathrm{Ig}^b$ corresponding to $b$. Thus, we first recall these two objects. From the last section, we already know the Rapoport-Zink space:

For $b\in B(G,\mu^{-1})$, choose a completely slope divisible $p$-divisible group $\mathbb{X}_b$ over $\bar{\mathbb{F}}_q$ with extra structures giving rise to the $\sigma$-conjugacy class $b$, as in~\cite[\S 3]{mantovan}. Let $\cD_{\mathrm{int},b}$ be the integral data corresponding to the base extension of $B,V,\cO_B,\Lambda$ to $\mathbb Z_p$, and $(\mu, b)$. Then $\cD_{\mathrm{int},b}$ is of PEL type, and we consider the corresponding Rapoport-Zink space $\mathfrak{M}^b := \mathfrak{M}_{\cD_{\mathrm{int},b}}$, which lives over $\cO_{\breve{E}}$, where $\breve{E}$ is the completion of the maximal unramified extension of $E_{\mathfrak{p}}$.

Next, we want to introduce the Igusa variety.

\begin{defn}\label{new Igusa variety}
We let $\mathrm{Ig}^b/\mathrm{Spec}\ \bar{\mathbb F}_q$ be the functor sending an $\bar{\mathbb F}_q$-algebra $R$ to the set of isomorphism classes of pairs
\[
\{(A, \rho) \mid A\in \mathscr{S}_{K_pK^p}(R)\ ,\ \rho: A[p^\infty]\toisom \mathbb{X}_b\times_{\bar{\mathbb{F}}_p} R\}\ ,
\]
where $A\in \mathscr{S}_{K_pK^p}(R)$ is an abelian variety equipped with extra structures (and satisfying the determinant condition) and the isomorphism $\rho$ is compatible with the extra structures; as usual, it is only supposed to preserve the polarization up to a scalar, i.e. an automorphism of $\mu_{p^\infty,R}$.
\end{defn}

\begin{remark} This definition is different from the Igusa varieties defined in~\cite{mantovan}, and we will explain their relation below.
\end{remark}

\begin{prop}\label{Ig repr} The functor $\mathrm{Ig}^b$ is representable by a scheme.
\end{prop}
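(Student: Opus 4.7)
The plan is to realize $\mathrm{Ig}^b$ as the inverse limit $\varprojlim_m \mathrm{Ig}^b_m$ of its finite-level variants, to verify that each $\mathrm{Ig}^b_m$ is representable by a scheme, and to conclude via affineness of the transition maps. For each $m\ge 1$, let $\mathrm{Ig}^b_m$ denote the functor on $\bar{\mathbb F}_q$-algebras sending $R$ to the set of isomorphism classes of pairs $(A,\rho_m)$, where $A\in \mathscr S_{K_pK^p}(R)$ and $\rho_m\col A[p^m]\toisom \mathbb X_b[p^m]\times_{\bar{\mathbb F}_p}R$ is compatible with the $\cO_B$-action and with the polarization up to an automorphism of $\mu_{p^m, R}$. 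Since specifying an isomorphism $\rho\col A[p^\infty]\toisom \mathbb X_b\times R$ is equivalent to specifying a compatible system of $\rho_m$'s, we have $\mathrm{Ig}^b = \varprojlim_m \mathrm{Ig}^b_m$ as functors.

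First I would show that each $\mathrm{Ig}^b_m$ is representable by a scheme of finite presentation over $\overline{\mathscr S}:=\mathscr S_{K_pK^p}\times_{\cO_{E,\mathfrak p}}\bar{\mathbb F}_q$. With universal abelian scheme $\mathcal A$, both $\mathcal A[p^m]$ and $\mathbb X_b[p^m]\times \overline{\mathscr S}$ are finite locally free group schemes over $\overline{\mathscr S}$, so the internal $\mathrm{Hom}$-sheaf $\underline{\mathrm{Hom}}(\mathcal A[p^m],\mathbb X_b[p^m]\times \overline{\mathscr S})$ is represented by an affine scheme of finite presentation. Being an isomorphism is an open-and-closed condition (both group schemes have the same order); compatibility with the $\cO_B$-action is closed; and compatibility with the polarization up to a unit in $(\mathbb Z/p^m)^\times$ cuts out a finite disjoint union of closed subschemes. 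This realizes $\mathrm{Ig}^b_m$ as a scheme over $\overline{\mathscr S}$.

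Next, I would show that the transition maps $\mathrm{Ig}^b_{m+1}\to \mathrm{Ig}^b_m$ are affine. Since $\mathrm{Ig}^b$ is set-theoretically supported on the central leaf $\mathscr C_b\subset \overline{\mathscr S}^b$ (the locus where $A[p^\infty]\cong \mathbb X_b$ geometrically), along which $\mathcal A[p^\infty]$ is \'etale-locally isomorphic to $\mathbb X_b$ by Oort--Mantovan foliation theory, I would pull back to such an \'etale cover: the fibre of the transition map over any section $\rho_m$ is then a torsor for the affine algebraic group $\ker\bigl(\mathrm{Aut}^{\mathrm{PEL}}(\mathbb X_b[p^{m+1}])\to \mathrm{Aut}^{\mathrm{PEL}}(\mathbb X_b[p^m])\bigr)$, and in particular is affine; since affineness descends along \'etale covers, the transition maps are affine globally. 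It follows that $\mathrm{Ig}^b = \varprojlim_m \mathrm{Ig}^b_m$ is representable by a scheme, affine over $\mathrm{Ig}^b_1$. The main technical input is the \'etale-local triviality of $\mathcal A[p^\infty]$ along the central leaf, which comes from Oort's theory of central leaves in the PEL setting as developed by Mantovan; granted this, the representability reduces to the elementary fact that inverse limits of schemes along affine transition maps are again schemes.
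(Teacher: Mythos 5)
Your proof is correct and follows essentially the same route as the paper: reduce to relative representability over $\mathscr{S}_{K_pK^p}\times_{\cO_{E,\mathfrak p}}\bar{\mathbb F}_q$, represent each finite-level isomorphism functor inside an internal $\mathrm{Hom}$-scheme of finite locally free group schemes, and pass to the inverse limit. The detour through central leaves to get affineness of the transition maps is unnecessary (each $\mathrm{Ig}^b_m$ is already affine over the base, so any morphism between them over the base is affine), and the isomorphism locus is open (non-vanishing of a determinant) rather than open-and-closed, but neither point affects the argument.
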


\begin{proof} It is enough to prove that the map $\mathrm{Ig}^b\to \mathscr{S}_{K_pK^p}\times_{\cO_{E,\mathfrak p}} \bar{\mathbb F}_q$ is relatively representable. Let $\cA$ be the universal abelian variety over $\mathscr{S}_{K_pK^p}$. Then we are considering the inverse limit of the schemes parametrizing isomorphisms $\cA[p^n]\cong \mathbb X_b[p^n]$ compatible with extra structures, each of which is representable.
\end{proof}

From the definition of $\mathrm{Ig}^b$, it is evident that the group of automorphisms of $\mathbb X_b$ respecting the extra structures acts on it. However, we give next an alternative description of $\mathrm{Ig}^b$ which shows that the larger group $\mathrm{Aut}_G(\widetilde{\mathbb X}_b)$ acts on $\mathrm{Ig}^b$.

\begin{lemma}\label{alternate description of Ig^b} For an $\bar{\mathbb F}_q$-algebra $R$, $\mathrm{Ig}^b(R)$ can be identified with the set of isomorphism classes of pairs $(A,\tilde \rho)$, where $A\in \mathscr{S}_{K_pK^p}(R)$ is an abelian variety \emph{considered up to $p$-power isogeny (respecting the extra structures)} and \[\rho:A[p^\infty] \toisom \mathbb{X}_b\times_{\mathbb{\bar F}_p}R\] is a quasi-isogeny (respecting the extra structures).
\end{lemma}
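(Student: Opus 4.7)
The plan is to write down natural maps in both directions and check they are inverse. In one direction, given $(A,\rho)\in\mathrm{Ig}^b(R)$, simply forget that $\rho$ is an isomorphism and regard it as a quasi-isogeny; call this $\Phi$. In the other direction, given $(A,\tilde\rho)$, I will modify $A$ by a canonical $p$-power isogeny to absorb the denominators of $\tilde\rho$ into $A$, producing an honest isomorphism $\rho_1$; call this $\Psi$. The substantive content is the construction of $\Psi$ and checking well-definedness.

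To construct $\Psi$, choose $N\geq 0$ large enough so that $\rho := p^N\tilde\rho$ lies in $\mathrm{Hom}(A[p^\infty],\mathbb{X}_b\times_{\bar{\mathbb{F}}_p}R)\subset \mathrm{Hom}(A[p^\infty],\mathbb{X}_b\times_{\bar{\mathbb{F}}_p}R)[1/p]$; this is possible because $\tilde\rho$ is a quasi-isogeny. Since $\tilde\rho$ is invertible in the isogeny category, $\rho$ is then an isogeny. Its kernel $K=\ker\rho$ is a finite flat closed subgroup scheme of $A[p^\infty]$, contained in $A[p^M]$ for some $M$, and $K$ is stable under the $\cO_B$-action and symmetric under the polarization, because $\tilde\rho$ respects those structures. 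Form $A_1:=A/K$; it inherits an $\cO_B$-action and a principal polarization from $A$ (the quotient polarization is principal because $K$ was symmetric), and the natural quotient map $f\col A\to A_1$ is a $p$-power isogeny of abelian varieties with PEL structure. The induced map $\rho_1\col A_1[p^\infty]=A[p^\infty]/K\toisom \mathbb{X}_b\times_{\bar{\mathbb{F}}_p}R$ is an isomorphism of $p$-divisible groups compatible with all structures. Finally, $A_1$ still satisfies the determinant condition because the Kottwitz determinant is invariant under $p$-power isogeny; hence $(A_1,\rho_1)\in\mathrm{Ig}^b(R)$, and we set $\Psi(A,\tilde\rho):=(A_1,\rho_1)$.

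Next I would verify well-definedness. Enlarging $N$ to $N+1$ replaces $\rho$ by $p\rho$, whose kernel is $K+A[p]=\rho^{-1}(\mathbb{X}_b[p])$ (using that $\rho$ is surjective to see $\rho^{-1}(\mathbb{X}_b[p])=K+A[p]$ scheme-theoretically), and therefore replaces $A_1$ by $A_1/A_1[p]\cong A_1$, with $\rho_1$ unchanged under this identification. Similarly one checks that $\Psi$ is constant on $p$-power-isogeny equivalence classes: if $g\col A\to A'$ is a $p$-power isogeny of PEL type with $\tilde\rho'\circ g_\ast=\tilde\rho$, then the kernels $K,K'$ used to build $A_1,A_1'$ fit into a compatible diagram giving a canonical isomorphism $A_1\cong A_1'$ intertwining $\rho_1$ and $\rho_1'$. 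The composition $\Psi\circ\Phi$ is the identity (take $N=0$, so $K=0$ and $A_1=A$), and $\Phi\circ\Psi$ is the identity because $f\col A\to A_1$ together with the rescaling factor $p^{-N}$ exhibits $(A,\tilde\rho)$ and $(A_1,\rho_1)$ as equivalent in the alternate description, via the equation $\rho_1\circ f_\ast=p^N\tilde\rho$ in the isogeny category.

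The main technical obstacle is bookkeeping the PEL structures at each stage: one must check that the kernel $K$ defined intrinsically from $\tilde\rho$ is automatically $\cO_B$-stable and symmetric under the polarization, that the quotient polarization is principal (not merely up to a $p$-power), and that the determinant condition descends. All of these reduce to the fact that $\tilde\rho$ respects the structures on $A[p^\infty]$ and $\mathbb{X}_b$, together with the fact that the PEL structures live fpqc-locally and are preserved by quotients by structure-stable finite flat subgroups; none of these steps are genuinely difficult, but they must be tracked carefully.
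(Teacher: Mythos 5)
Your construction matches the paper's own proof, which simply asserts that there is a unique $A'$ with a $p$-power isogeny to $A$ making the induced quasi-isogeny an isomorphism; you realize this $A'$ concretely as $A/\ker(p^N\tilde\rho)$ and track the PEL structures, which is exactly the intended argument. One small slip in the well-definedness check: $\ker(p^{N+1}\tilde\rho)$ is the preimage of $K$ under multiplication by $p$ on $A[p^\infty]$, not $K+A[p]$ (these differ already when $K=A[p]$, where the former is $A[p^2]$), but since that preimage contains $K$ with quotient $A_1[p]$, the resulting abelian variety is still $A_1/A_1[p]\cong A_1$ and your conclusion stands.
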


\begin{proof} Each element $(A, \rho)$ of $\mathrm{Ig}^b(R)$ determines a pair $(A,\rho)$ as in the statement of the lemma.

Conversely, given $A\in \mathscr{S}_{K_pK^p}(R)$ with a quasi-isogeny
\[\rho: A[p^\infty] \toisom \mathbb{X}_b\times_{\mathbb{\bar F}_p}R\ ,\]
we can find a unique abelian variety $A^\prime$ with extra structures equipped with a $p$-power isogeny to $A$, such that $A[p^\infty]$ gets identified with $\mathbb X_b$, i.e.~the induced quasi-isogeny
\[\rho^\prime: A^\prime[p^\infty] \toisom \mathbb{X}_b\times_{\mathbb{\bar F}_p}R
\]
is an isomorphism. Then $(A^\prime,\rho^\prime)$ defines a point of $\mathrm{Ig}^b(R)$, as desired.
\end{proof}

\begin{cor}\label{Ig perfect} The formal group scheme $\mathrm{Aut}_G(\widetilde{\mathbb X}_b)$ acts canonically on $\mathrm{Ig}^b$. Moreover, $\mathrm{Ig}^b$ is perfect, i.e.~the Frobenius map is an automorphism.
\end{cor}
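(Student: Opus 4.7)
The plan is to establish both claims using the alternative description of $\mathrm{Ig}^b$ from Lemma~\ref{alternate description of Ig^b}, where an $R$-point is a pair $(A,\rho)$ with $A$ taken up to $p$-power isogeny and $\rho:A[p^\infty]\toisom \mathbb{X}_b\times R$ merely a quasi-isogeny compatible with the PEL-structures. This description is the right one because it makes the moduli problem homogeneous with respect to self-quasi-isogenies of $\mathbb{X}_b$.

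For the first assertion, given an $\bar{\mathbb F}_q$-algebra $R$ and $\alpha\in \mathrm{Aut}_G(\widetilde{\mathbb X}_b)(R)$, I will define the action by
\[
\alpha\cdot (A,\rho) = (A,\alpha\circ\rho)\ .
\]
The composite $\alpha\circ\rho$ is again a quasi-isogeny $A[p^\infty]\to \mathbb{X}_b\times R$. It respects the $\cO_B$-action because both $\rho$ and $\alpha$ do, and it respects the polarization up to an automorphism of $\widetilde{\mu}_{p^\infty}$, the scalar being the product of the scalars for $\rho$ and $\alpha$ (as allowed by Definition~\ref{automorphism sheaf} and the analogous tolerance built into $\mathrm{Ig}^b$). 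Checking that this defines a functorial action (composition law, identity) is straightforward once the moduli description is in place.

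For perfectness, the key observation is that, since $\mathbb{X}_b$ is defined over the perfect field $\bar{\mathbb F}_q$, the relative Frobenius $F_{\mathbb{X}_b}\colon \mathbb{X}_b\to \mathbb{X}_b^{(p)}$ can be canonically identified with a self-quasi-isogeny of $\mathbb{X}_b$ respecting the PEL-structures (the polarization is scaled by a $p$-power, which is absorbed by the allowed $\widetilde{\mu}_{p^\infty}$-automorphism). In particular, $F_{\mathbb{X}_b}$ defines a canonical element of $\mathrm{Aut}_G(\widetilde{\mathbb X}_b)(\bar{\mathbb F}_q)$, which is invertible because every quasi-isogeny is invertible in the category up to quasi-isogeny. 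Now, the absolute Frobenius on $\mathrm{Ig}^b$ sends an $R$-point $(A,\rho)$ to $(A^{(p)},\rho^{(p)})$, where the $(p)$-twist is the pullback along the absolute Frobenius of $R$. Under the alternative moduli description, the relative Frobenius $F_A\colon A\to A^{(p)}$ is an isomorphism (a $p$-power isogeny), so I may identify $A^{(p)}$ with $A$; naturality of Frobenius then rewrites $\rho^{(p)}$ as $F_{\mathbb{X}_b}\circ\rho$. Consequently the absolute Frobenius on $\mathrm{Ig}^b$ coincides with the action of $F_{\mathbb{X}_b}$ constructed above, and hence is an automorphism with inverse given by the action of $F_{\mathbb{X}_b}^{-1}$.

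The bookkeeping step most likely to cause trouble is verifying that all the compatibilities---the $\cO_B$-linearity, the polarization-up-to-scalar, and the determinant condition on $A$---transport correctly through the alternative description and through the Frobenius identification $\mathbb{X}_b^{(p)}\cong \mathbb{X}_b$. The determinant condition is the most delicate: strictly speaking it is a condition on $A$ itself, not on its quasi-isogeny class, so one must check that in passing to the alternative description one still produces a genuine $\mathrm{Ig}^b$-point (this is exactly what Lemma~\ref{alternate description of Ig^b} ensures, by reconstructing a distinguished integral model of $A$ from the quasi-isogeny data). Once this is settled, both assertions follow formally from the moduli-theoretic description together with the fact that Frobenius on $\mathbb{X}_b$ is a quasi-isogeny.
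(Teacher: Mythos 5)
Your proposal is correct and follows essentially the same route as the paper: the action is defined by composing with $\rho$ in the quasi-isogeny description of Lemma~\ref{alternate description of Ig^b}, and perfectness follows because Frobenius pullback is an equivalence on abelian varieties up to $p$-power isogeny (Verschiebung inverting relative Frobenius up to $p$) and on $p$-divisible groups up to quasi-isogeny. The only imprecision is your identification of relative Frobenius with an element of $\mathrm{Aut}_G(\widetilde{\mathbb X}_b)(\bar{\mathbb F}_q)$: since $F_{\mathbb X_b}$ lands in the twist $\mathbb X_b^{(p)}$ rather than $\mathbb X_b$ itself, the Frobenius of $\mathrm{Ig}^b$ is only semilinearly of that form, but this does not affect the argument, which the paper phrases more safely as the statement that pullback along Frobenius induces equivalences of the relevant categories.
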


\begin{proof} The first part follows from Lemma~\ref{alternate description of Ig^b} by acting on $\rho$ (noting that quasi-isogenies of $\mathbb X_b$ are the same as automorphisms of $\widetilde{\mathbb X}_b$).

For the second part, we have to see that for any $\bar{\mathbb F}_q$-algebra $R$, the Frobenius of $R$ induces an automorphism of $\mathrm{Ig}^b(R)$. But pulling back under Frobenius induces an equivalence on the category of abelian varieties up to $p$-power isogeny (as Verschiebung gives an inverse up to multiplication by $p$). Similarly, pull back under Frobenius induces an equivalence on the category of $p$-divisible groups up to quasi-isogeny, showing that the datum of $\rho$ is preserved. 
\end{proof}

Now we recall the more classical objects; for more details, see~\cite{mantovan}. The leaf $\mathscr{C}^b$ corresponding to $\mathbb{X}_b$ is the subset of the locally closed stratum $\mathscr{S}^b_{K_pK^p}\times_{\mathbb F_q} \bar{\mathbb F}_q$ where the fibers of the $p$-divisible group $\cA[p^\infty]$ at all geometric points are isomorphic to $\mathbb{X}_b$:
\[
\mathscr C^b:=\left\{x\in \mathscr{S}^b_K\ |\ \cA_x[p^\infty]\times_{\kappa(x)}\overline{\kappa(x)}\simeq \mathbb{X}_b\times_{\bar{\mathbb F}_p} \overline{\kappa(x)} \right\}\ .
\]
This is a priori defined only as a subset of $\mathscr{S}^b_{K_pK^p}\times_{\mathbb F_q} \bar{\mathbb F}_q$, but Proposition 1 of~\cite{mantovan} shows that $\mathscr C^b$ is a closed subset and defines a smooth subscheme of $\mathscr{S}^b_{K_pK^p}\times_{\mathbb F_q} \bar{\mathbb F}_q$ when endowed with the induced reduced structure. We note that contrary to the objects defined so far, $\mathscr{C}^b$ depends on the choice of $\mathbb X_b$ within its isogeny class.

Recall that \[\mathbb{X}_b=\oplus_{i=1}^r\mathbb{X}_i,\] where the $\mathbb{X}_i$ are isoclinic $p$-divisible groups of strictly decreasing slopes $\lambda_i\in [0,1]$. Let $\cG_b$ be the $p$-divisible group of the universal abelian variety $\mathscr{A}/\mathscr{S}_{K_pK^p}$ restricted to $\mathscr C^b$. Then $\cG_b$ is completely slope divisible, with slope filtration \[0\subset \cG_{b,1}\subset \dots \subset \cG_{b,r}=\cG_b,\] with $\cG^{i}_b:=\cG_{b,i}/\cG_{b,i-1}$ isoclinic of slope $\lambda_{i}$. The $\cO_B$-action on $\cG$ and the polarization respect this filtration, so that each $\cG_b^i$ is endowed with an $\cO_B$-action and there are induced polarizations $\cG_b^i\to (\cG_b^j)^\vee$ for all $i,j$ with $\lambda_i+\lambda_j=1$. 

\begin{defn}\label{classical Igusa variety} The \emph{(pro-)Igusa variety} is the map
\[
\mathscr{I}^b_{\mathrm{Mant}}\to \mathscr{C}^b
\]
which over a $\mathscr{C}^b$-scheme $S$ parametrizes tuples $(\rho_i)_{i=1}^r$ of isomorphisms
\[
\rho_i: \cG^i_b\times_{\mathscr{C}^b} S\toisom\mathbb{X}_i\times_{\mathrm{Spec}\ \bar{\mathbb F}_p}S
\]
which are compatible with the $\cO_B$-actions on $\cG^i_b$ and $\mathbb{X}_i$, and commute with the polarizations on $\cG$ and $\mathbb{X}_b$, up to an automorphism of $\mu_{p^\infty,S}$.
\end{defn}

\begin{remark}\label{comparison with finite level} A version of these Igusa varieties is considered in~\cite{mantovan} (see also Section II of~\cite{harris-taylor} for the case of one-dimensional $p$-divisible groups). Rather than trivializing the whole isoclinic $p$-divisible group $\cG^i_b$, one trivializes the $\cG^i_b[p^m]$ for some positive integer $m$. More precisely, let $\mathscr{I}^{b}_{\mathrm{Mant},m}$ be the moduli space of isomorphisms on $\mathscr{C}^b$-schemes $S$ \[\rho_{i,m}:\cG^i_b[p^m]\toisom \mathbb{X}_i[p^m]\times_{\mathbb{\bar F}_p}S,\] which (fppf locally) lift to arbitrary $m'\geq m$ and which respect the extra structures. Proposition 4 of~\cite{mantovan} shows that the underlying reduced subscheme of $\mathscr{I}^{b}_{\mathrm{Mant},m}$ is a finite \'etale and Galois cover of $\mathscr{C}^b$. 

In view of the theory developed in Section~\ref{$p$-divisible groups}, we can identify the set of endomorphisms of $\mathbb{X}_i[p^m]$, which lift to arbitrary $m'\geq m$, with the $p^m$-torsion in the \'etale $p$-divisible group $\cH_{\mathbb{X}_i,\mathbb{X}_i}$. Now consider the intersection of the scheme-theoretic images of the automorphisms of $\mathbb{X}_i[p^{m+k}]$ inside the automorphisms of $\mathbb{X}_i[p^m]$ (under the natural restriction map). By Lemma~\ref{precise chai-oort}, the images of $\mathscr{A}ut(\mathbb{X}_i[p^{m+k}])\hookrightarrow \mathscr{A}ut(\mathbb{X}_i[p^{m}])$ will stabilize for large enough $k$, giving rise to an open and closed subscheme of the finite \'etale scheme $\cH_{\mathbb{X}_i,\mathbb{X}_i}[p^m]$. This shows that $\mathscr{I}^b_{\mathrm{Mant},m}\to \mathscr{C}^b$ is a quasitorsor under an \'etale group scheme. From~\cite[Proposition 4]{mantovan} (which produces sections over a finite \'etale cover), it follows that they are actually torsors. In particular, we see that $\mathscr{I}^b_{\mathrm{Mant},m}$ is actually already reduced.

Thus, the scheme
\[
\mathscr{I}^b_{\mathrm{Mant}} = \varprojlim_m \mathscr{I}^b_{\mathrm{Mant},m}
\]
is a pro-\'etale cover of $\mathscr{C}^b$.
\end{remark}

Note that, as $\mathrm{Ig}^b$ is reduced, the natural map $\mathrm{Ig}^b\to \mathscr{S}_{K_pK^p}$ factors over $\mathrm{Ig}^b\to \mathscr{C}^b$. Moreover, as any homomorphism between $p$-divisible groups preserves the slope filtration by Corollary~\ref{the hom p-divisible group}, we see that any isomorphism $\cG_b\cong \mathbb X_b$ induces isomorphisms $\cG^i_b\cong \mathbb X_i$, and thus there is a natural map $\mathrm{Ig}^b\to \mathscr{I}^b_{\mathrm{Mant}}$.

\begin{prop}\label{perfection} The perfect scheme $\mathrm{Ig}^b$ is the perfection of $\mathscr{I}^b_{\mathrm{Mant}}$, via the natural map $\mathrm{Ig}^b\to \mathscr{I}^b_{\mathrm{Mant}}$.
\end{prop}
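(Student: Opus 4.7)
The map $\mathrm{Ig}^b\to \mathscr{I}^b_{\mathrm{Mant}}$ factors through $(\mathscr{I}^b_{\mathrm{Mant}})^{\mathrm{perf}}$ because $\mathrm{Ig}^b$ is perfect by Corollary~\ref{Ig perfect}. My plan is to show that the induced map to the perfection is an isomorphism; since both sides are perfect schemes, it suffices to check bijectivity on $R$-points for every perfect $\bar{\mathbb F}_q$-algebra $R$, where $(\mathscr{I}^b_{\mathrm{Mant}})^{\mathrm{perf}}(R) = \mathscr{I}^b_{\mathrm{Mant}}(R)$ by definition of the perfection.

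Fix such an $R$. An element of $\mathrm{Ig}^b(R)$ is a pair $(A,\rho)$ with $\rho:A[p^\infty]\isomap \mathbb X_b\times R$; since $\rho$ is a global isomorphism, $A$ automatically lies in $\mathscr{C}^b(R)$, and the natural map sends $\rho$ to the tuple $(\rho_i)$ of induced isomorphisms on the slope-filtration graded pieces $\rho_i:\cG_b^i|_R\isomap \mathbb X_i\times R$. Injectivity and surjectivity in perfect $R$ are both controlled by the same input.

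The key input is the theorem of Oort--Zink, \cite[Proposition 1.3]{oort-zink}, already invoked in the proof of Lemma~\ref{valringfullyfaithful}: over any perfect $\mathbb F_p$-algebra, the slope filtration of a completely slope divisible $p$-divisible group canonically splits. Applied to $\cG_b|_R$ and to $\mathbb X_b\times R$, this yields canonical, functorial direct-sum decompositions
\[
\cG_b|_R \;\cong\; \bigoplus_i \cG_b^i|_R,\qquad \mathbb X_b\times R \;\cong\; \bigoplus_i \mathbb X_i\times R.
\]
Because the splittings are canonical, an isomorphism of the two sides decomposes uniquely as a direct sum of isomorphisms of the slope pieces, and conversely a tuple $(\rho_i)$ reassembles into a unique $\rho$, producing the desired bijection of underlying sets.

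The main point requiring care is compatibility of this correspondence with the PEL datum. The $\cO_B$-action is straightforward: every $\cO_B$-linear endomorphism preserves the slope filtration, hence by functoriality of the Oort--Zink splitting acts diagonally on $\oplus_i \cG_b^i$. The polarization is subtler, since Serre duality sends a slope-$\lambda$ piece to a slope-$(1-\lambda)$ piece; the step to check is that the canonical splitting of the slope filtration of $\cG_b^\vee$ agrees with the dual of the canonical splitting of $\cG_b$, which follows from the intrinsic (functorial) characterization of the slope pieces as maximal isoclinic subobjects of a given slope. Once this compatibility is in hand, the polarization on $\cG_b$ decomposes into pairings between the slope-$\lambda_i$ and slope-$(1-\lambda_i)$ components, and $\rho$ matches the polarization up to a scalar $c\in R^\times$ if and only if every $\rho_i$ does so up to the same $c$, which finishes the argument.
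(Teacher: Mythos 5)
Your proposal is correct and follows essentially the same route as the paper: the paper's proof also rests on the canonical splitting of the slope filtration of $\cG_b$ over the perfection (which it constructs by hand, iterating $\frac{F^s}{p^{t_i}}$ — precisely the Oort--Zink argument you cite), and then identifies trivializations of the graded pieces with trivializations of $\cG_b$ itself. The only cosmetic difference is that you phrase this as a bijectivity check on perfect $R$-points and spell out the duality/polarization compatibility, whereas the paper directly exhibits the canonical isomorphism $\cG_b\cong\mathbb X_b$ over $(\mathscr{I}^b_{\mathrm{Mant}})^{\mathrm{perf}}$ and leaves those verifications implicit.
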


\begin{proof} Let $(\mathscr{I}^b_{\mathrm{Mant}})^\mathrm{perf}$ be the perfection of $\mathscr{I}^b_{\mathrm{Mant}}$. Then we claim that the $p$-divisible group $\cG_b$ over $\mathscr{C}^b$ becomes canonically isomorphic to $\mathbb{X}_b$ when pulled back to $(\mathscr{I}^b_{\mathrm{Mant}})^\mathrm{perf}$. Recall that $\cG_b$ has a slope filtration \[0\subset \cG_{b,1}\subset \dots \subset \cG_{b,r}=\cG_b,\] with $\cG^{i}_b:=\cG_{b,i}/\cG_{b,i-1}$ isoclinic of slope $\lambda_{i}$. Moreover, when pulled back along $\mathscr{I}^b_{\mathrm{Mant}}\to \mathscr{C}^b$, each $\cG^i_{b}$ becomes trivialized to $\mathbb{X}_i$.

The existence of the slope filtration on $\cG_b$ means that we have integers $0\leq t_r<\dots<t_2<t_1\leq s$, such that for $i=1,\dots,r$:
\begin{enumerate}
\item the slope $\lambda_i=\frac{t_i}{s}$;
\item the quasi-isogenies \[\frac{F^{s}}{p^{t_i}}:\cG_{b,i}\to (\cG_{b,i})^{(p^s)},\] where $F$ is the Frobenius isogeny, are genuine isogenies.
\item the induced maps \[\frac{F^{s}}{p^{t_i}}: \cG^i_b\to (\cG_b^i)^{(p^s)}\] are isomorphisms.  
\end{enumerate}
The inequalities between the $t_i$ imply that $\frac{F^{s}}{p^{t_i}}$ acts nilpotently on $\cG_{b,i-1}$. Repeated iterations of \[\frac{F^{s}}{p^{t_i}}:(\cG_{b,i})^{(p^{-s})}\to \cG_{b,i}\] can be used to construct canonical splittings $\cG^i_b\hookrightarrow \cG_{b,i}$ over $(\mathscr{I}^b_{\mathrm{Mant}})^\mathrm{perf}$.

Thus, $\cG$ decomposes canonically into $\cG_1\times\dots\times \cG_r$ over $(\mathscr{I}^b_{\mathrm{Mant}})^\mathrm{perf}$, and this is trivialized to $\mathbb X_1\times\dots \times \mathbb X_r = \mathbb X_b$, as desired.
\end{proof}

We remark that $J_b(\mathbb Q_p)\subset \mathrm{Aut}_G(\widetilde{\mathbb X}_b)$ acts on $\mathrm{Ig}^b$. However, only a certain submonoid of $J_b(\mathbb Q_p)$ acts on $\mathscr{I}^b_{\mathrm{Mant}}$; Mantovan, \cite{mantovan}, does however construct a canonical action of $J_b(\mathbb Q_p)$ on the \'etale cohomology of $\mathscr{I}^b_{\mathrm{Mant}}$. From Proposition~\ref{perfection}, it follows that the \'etale cohomology of $\mathscr{I}^b_{\mathrm{Mant}}$ is also the \'etale cohomology of $\mathrm{Ig}^b$, on which we have a natural action of $J_b(\mathbb Q_p)$. We leave it to the reader to verify that this is the same action as the one constructed by Mantovan.

\begin{cor}\label{faithfully flat} The map $\mathrm{Ig}^b\to \mathscr{C}^b$ is faithfully flat.
\end{cor}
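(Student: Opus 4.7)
The plan is to use Proposition~\ref{perfection}, which identifies $\mathrm{Ig}^b$ with the perfection of $\mathscr{I}^b_{\mathrm{Mant}}$, together with Remark~\ref{comparison with finite level}, which writes $\mathscr{I}^b_{\mathrm{Mant}} = \varprojlim_m \mathscr{I}^b_{\mathrm{Mant},m}$ as an inverse limit of finite \'etale surjections onto $\mathscr{C}^b$. Thus $\mathrm{Ig}^b\to \mathscr{C}^b$ factors as $\mathrm{Ig}^b\to \mathscr{I}^b_{\mathrm{Mant}}\to \mathscr{C}^b$ with both maps affine, and it suffices to verify faithful flatness of each factor.

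For the second map $\mathscr{I}^b_{\mathrm{Mant}}\to \mathscr{C}^b$, I would argue that each finite \'etale torsor $\mathscr{I}^b_{\mathrm{Mant},m}\to \mathscr{C}^b$ is faithfully flat, and that taking inverse limits in the category of affine schemes over $\mathscr{C}^b$ corresponds to taking a filtered colimit of faithfully flat $\cO_{\mathscr{C}^b}$-algebras, which is again faithfully flat (filtered colimits of flat modules are flat, and faithful flatness survives since surjectivity on spectra is preserved). For the first map, which is exactly the perfection morphism, I would invoke Kunz's theorem: for a regular $\mathbb F_p$-scheme, the absolute Frobenius is faithfully flat. Since $\mathscr{C}^b$ is smooth over $\bar{\mathbb F}_p$ and each $\mathscr{I}^b_{\mathrm{Mant},m}\to \mathscr{C}^b$ is finite \'etale, each $\mathscr{I}^b_{\mathrm{Mant},m}$ is smooth, hence regular, so Frobenius is faithfully flat on it. Passing to the filtered colimit of structure sheaves (which computes $\Gamma(\mathscr{I}^b_{\mathrm{Mant}},\cO)$), Frobenius remains faithfully flat, and hence so does the colimit along Frobenius, which computes the structure sheaf of $\mathrm{Ig}^b$ over $\mathscr{I}^b_{\mathrm{Mant}}$.

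The main technical point to be careful about is the commutation between perfection and the inverse limit over $m$, since Proposition~\ref{perfection} phrases things for $\mathscr{I}^b_{\mathrm{Mant}}$ itself; however, because perfection is affine and computed as a colimit of structure sheaves along Frobenius, and because the structure sheaf of $\mathscr{I}^b_{\mathrm{Mant}}$ is the colimit of those of the $\mathscr{I}^b_{\mathrm{Mant},m}$, one can swap the two colimits freely. Once this bookkeeping is set up, no further input is needed, so I do not expect any serious obstacle beyond ensuring that flatness is propagated correctly through the filtered colimit.
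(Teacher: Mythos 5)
Your proof is correct and follows essentially the same route as the paper: factor $\mathrm{Ig}^b\to \mathscr{C}^b$ through $\mathscr{I}^b_{\mathrm{Mant}}$, deduce faithful flatness of $\mathscr{I}^b_{\mathrm{Mant}}\to\mathscr{C}^b$ from the finite \'etale covers $\mathscr{I}^b_{\mathrm{Mant},m}\to\mathscr{C}^b$, and get faithful flatness of the perfection map from the fact that Frobenius is faithfully flat on a cofiltered limit of smooth schemes along affine (here \'etale, so the Frobenius squares are even cartesian) transition maps. The paper's proof is just a condensed version of exactly this argument.
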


As the map is obviously a quasitorsor under the automorphisms of $\mathbb X_b$ respecting the extra structure, this implies that it is in fact a torsor under this group. Note that $\mathscr{C}^b$ is smooth, while the scheme of automorphisms of $\mathbb X_b$ is a highly nonreduced object like $\mathrm{Spec} \bar{\mathbb F}_p[[X_1^{1/p^\infty},\ldots,X_d^{1/p^\infty}]]/(X_1,\ldots,X_d)$. The fact that a torsor under this group over something smooth is a perfect scheme forces the smooth directions of the base to match with the nonreduced directions of the group, so that one can deduce that the dimension of $\mathscr{C}^b$ is $d=\langle 2\rho,\nu_b\rangle$, for example by looking at the transitivity triangle for the cotangent complex.

\begin{proof} As $\mathscr{I}^b_{\mathrm{Mant}}$ is a cofiltered limit of smooth schemes along affine transition maps, its Frobenius morphism is (faithfully) flat, and thus $\mathrm{Ig}^b\to \mathscr{I}^b_{\mathrm{Mant}}$ is faithfully flat. We have already seen that $\mathscr{I}^b_{\mathrm{Mant}}\to \mathscr{C}^b$ is faithfully flat, so we get the result.
\end{proof}

As $\mathrm{Ig}^b$ is a perfect scheme, it lifts uniquely to a flat $p$-adic formal scheme over $W(\bar{\mathbb F}_q) = \cO_{\breve{E}}$, which we denote by $\mathrm{Ig}^b_{\cO_{\breve{E}}}$. As a moduli problem on $\mathrm{Nilp}^{\mathrm{op}}_{\cO_{\breve{E}}}$, it parametrizes abelian varieties up to $p$-power isogeny in $\mathscr{S}_{K_pK^p}$, equipped with an isomorphism of $\widetilde{A[p^\infty]}$ with (the canonical lift of) $\widetilde{\mathbb X}_b$, respecting all extra structures.

One can also describe this deformation of $\mathrm{Ig}^b$ to mixed characteristic differently. For this, fix a lift $(\mathbb X_b)_{\cO_K}$ of $\mathbb X_b$ up to quasi-isogeny (with its extra structures) to $\cO_K$, where $\cO_K$ is the ring of integers of some complete nonarchimedean field $K/\breve{E}$; in other words, pick a point $(\mathbb X_b)_{\cO_K}\in \mathfrak{M}^b(\cO_K)$. This is possible (with $K=\breve{E}$), as $\mathfrak{M}^b$ is formally smooth. One gets the following lemma.

\begin{lemma}\label{Ig mixed char} The points of the formal scheme $\mathrm{Ig}^b_{\cO_K} = \mathrm{Ig}^b_{\cO_{\breve{E}}}\times_{\cO_{\breve{E}}} \cO_K$ over $R\in \mathrm{Nilp}^{\mathrm{op}}_{\cO_K}$ are given by the pairs $(A,\rho)$, where $A\in \mathscr{S}_{K_pK^p}(R)$ is an abelian variety with extra structure, and
\[
\rho: A[p^\infty]\toisom (\mathbb X_b)_{\cO_K}\times_{\cO_K} R
\]
is an isomorphism compatible with the extra structure.$\hfill \Box$
\end{lemma}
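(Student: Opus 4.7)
The plan is to identify the moduli functor $F:\mathrm{Nilp}^{\mathrm{op}}_{\cO_K}\to\mathrm{Sets}$ defined by the pairs $(A,\rho)$ of the statement with $\mathrm{Ig}^b_{\cO_K}$, by exhibiting both as flat formal $\cO_K$-schemes with perfect special fiber $\mathrm{Ig}^b$ that satisfy the same unique-lifting property along square-zero nilpotent thickenings. Representability of $F$ and the identification $F\otimes_{\cO_K}\bar{\mathbb F}_q=\mathrm{Ig}^b$ should be essentially immediate: the forgetful map $F\to\mathscr S_{K_pK^p}$ is relatively representable, Zariski-locally on $\mathscr S_{K_pK^p}$, as the inverse limit of the finite-type $\mathrm{Isom}$-schemes between $A[p^n]$ and $(\mathbb X_b)_{\cO_K}[p^n]$ respecting the PEL data; and since $(\mathbb X_b)_{\cO_K}\otimes_{\cO_K}\bar{\mathbb F}_q\cong \mathbb X_b$, the moduli problem restricted to $\bar{\mathbb F}_q$-algebras is literally that of $\mathrm{Ig}^b$ of Definition~\ref{new Igusa variety}.

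The heart of the plan is to verify the unique-lifting property for $F$. Given a square-zero surjection $R\twoheadrightarrow R_0$ in $\mathrm{Nilp}_{\cO_K}$ and a point $(A_0,\rho_0)\in F(R_0)$, the isomorphism $\rho_0$ singles out $(\mathbb X_b)_{\cO_K}\times_{\cO_K} R$ as the preferred deformation of $A_0[p^\infty]$ to $R$, carrying the fixed polarization and $\cO_B$-action of $(\mathbb X_b)_{\cO_K}$. Applying the Serre--Tate theorem in its PEL-equivariant form then produces a unique abelian scheme $A/R$ lifting $A_0$ with $A[p^\infty]$ canonically identified with this preferred lift, and the $K^p$-level structure lifts uniquely by \'etaleness. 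Thus $F(R)\to F(R_0)$ is bijective. The analogous lifting property holds tautologically for $\mathrm{Ig}^b_{\cO_K}$: being the canonical flat lift of the perfect scheme $\mathrm{Ig}^b$, it is locally of the form $\Spf W(A)$ with $A$ a perfect $\bar{\mathbb F}_q$-algebra, and such a formal scheme admits unique lifts along square-zero thickenings in $\mathrm{Nilp}_{\cO_K}$.

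Finally, I would construct a natural transformation $F\to\mathrm{Ig}^b_{\cO_K}$ by sending $(A,\rho)\in F(R)$ to the class of $A$ up to $p$-power isogeny equipped with the induced isomorphism of universal covers $\widetilde{A[p^\infty]}\toisom\widetilde{(\mathbb X_b)_{\cO_K}}=\widetilde{\mathbb X}_b$ (the last identification being canonical since universal covers depend only on the quasi-isogeny class), invoking the moduli description of $\mathrm{Ig}^b_{\cO_K}$ recalled immediately before the lemma. This map is a bijection on the special fiber, and by d\'evissage along the finite filtration $R\supset pR\supset p^2R\supset\cdots\supset 0$, combined with the unique-lifting property established above on both sides at each step, it is a bijection on $R$-points for every $R\in\mathrm{Nilp}_{\cO_K}$. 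The principal technical input is the PEL-equivariant Serre--Tate theorem, which is classical; the main source of tedium, and the step where errors are most likely to creep in, is the compatible bookkeeping of polarization, $\cO_B$-action, and $K^p$-level structure through the lifting procedure.
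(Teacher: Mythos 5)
Your proposal is correct and is essentially the argument the paper intends (the proof is left to the reader, with the key inputs being the universal-cover moduli description of $\mathrm{Ig}^b_{\cO_{\breve E}}$ stated just before the lemma, Serre--Tate theory, and the renormalization-within-the-isogeny-class maneuver of Lemma~\ref{alternate description of Ig^b}). Your third paragraph is exactly that argument, and your Serre--Tate lifting computation is the justification for the universal-cover description itself.

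Two imprecisions are worth flagging, though neither sinks the proof. First, the claim that $(\mathbb X_b)_{\cO_K}\otimes_{\cO_K}\bar{\mathbb F}_q\cong\mathbb X_b$, and hence that $F$ restricted to the special fibre is \emph{literally} $\mathrm{Ig}^b$, is not right in general: $(\mathbb X_b)_{\cO_K}$ is an arbitrary point of $\mathfrak{M}^b(\cO_K)$, the residue field $k_K$ of $\cO_K$ need not be $\bar{\mathbb F}_q$, and the reduction of $(\mathbb X_b)_{\cO_K}$ to $k_K$ comes equipped only with a quasi-isogeny to $\mathbb X_b\times k_K$, not an isomorphism (quasi-isogenous $p$-divisible groups over an algebraically closed field need not be isomorphic). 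So the two special fibres are identified only after replacing $A$ by the unique $p$-power-isogenous $A'$ for which the composite quasi-isogeny becomes an isomorphism --- precisely the maneuver your last paragraph deploys, so the comparison map you build does handle this; just do not rely on the literal equality from paragraph one. Second, your d\'evissage along $R\supset pR\supset\cdots$ bottoms out at $R/p$, which is an arbitrary $\cO_K/p$-algebra rather than a (reduced) $k_K$-algebra, so ``bijection on the special fibre'' must be established for all $R$ killed by $p$; one cannot reach $R^{\mathrm{red}}$ by square-zero steps since the nilradical is only locally nilpotent. The cleanest repair is to note that the renormalization maneuver (as in the construction preceding Lemma~\ref{integral product}) produces the inverse bijection directly over every $R\in\mathrm{Nilp}_{\cO_K}$, so the d\'evissage is not actually needed.
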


We will also need a variant of Igusa varieties, where one trivializes $A[p^\infty]$ only up to quasi-isogeny.

\begin{defn}\label{product formal scheme} Let $\mathfrak{X}^b$ be the functor sending $R\in \mathrm{Nilp}_{\cO_{E,\mathfrak p}}^{\mathrm{op}}$ to the set of pairs $(A,\rho)$, where $A\in \mathscr{S}_{K_pK^p}(R)$ is an abelian variety with extra structure, and
\[
\rho: A[p^\infty]\times_R R/p\to \mathbb{X}_b\times_{\bar{\mathbb{F}}_q} R/p
\]
is a quasi-isogeny compatible with the $\cO_B$-action and the polarization, up to an automorphism of $\widetilde{\mu}_{p^\infty,R/p}$.
\end{defn}

Fix a lift $(\mathbb X_b)_{\cO_K}$ of $\mathbb X_b$ to $\cO_K$ as above. We define a map of formal schemes over $\cO_K$,
\[
\mathrm{Ig}^b_{\cO_K} \times_{\cO_{\breve{E}}} \mathfrak{M}^b\to \mathfrak{X}^b_{\cO_K}\ .
\]
For $R\in \mathrm{Nilp}^{\mathrm{op}}_{\cO_K}$, let
\[
(A,\rho),(\cG,\rho')\in (\mathrm{Ig}^b_{\cO_K}\times \mathfrak{M}^b)(R)\ .
\]
Thus, $A\in \mathscr{S}_{K_pK^p}(R)$ is an abelian variety with extra structure, equipped with an isomorphism
\[
\rho: A[p^\infty]\cong (\mathbb X_b)_{\cO_K}\times_{\cO_K} R\ .
\]
On the other hand, $\cG$ is a $p$-divisible group with extra structure over $R$, equipped with a quasi-isogeny $\rho^\prime$ to $\mathbb X_b$ over $R/p$, which lifts uniquely to a quasi-isogeny (denoted in the same way)
\[
\rho^\prime: \cG\to (\mathbb X_b)_{\cO_K}\times_{\cO_K} R\ .
\]
We get the composite quasi-isogeny $\cG\to A[p^\infty]$. It follows that there is a unique quasi-isogeny of $p$-power order $A^\prime\to A$ such that $A^\prime[p^\infty]\to A[p^\infty]$ gets identified with $\cG\to A[p^\infty]$. This defines a new point $A^\prime\in \mathscr{S}_{K_pK^p}(R)$, which comes equipped with a quasi-isogeny
\[
\rho^\prime: A^\prime[p^\infty] = \cG\to (\mathbb X_b)_{\cO_K}\times_{\cO_K} R\ ,
\]
and in particular a quasi-isogeny to $\mathbb X_b$ over $R/p$.

\begin{lemma}\label{integral product} The map constructed above induces an isomorphism, and fits into a commutative diagram
\[
\xymatrix{\mathrm{Ig}^b_{\cO_K} \times_{\cO_{\breve{E}}} \mathfrak{M}^b\ar[d]\ar[rr]^{\cong} && \mathfrak{X}^b_{\cO_K}\ar[d]\\ \mathfrak{M}^b\ar@{=}[rr]&&\mathfrak{M}^b.
}\]
Here, the first vertical map is projection onto the second factor, and the second vertical map sends $(A,\rho)\in \mathfrak{X}^b$ to $(A[p^\infty],\rho)\in \mathfrak{M}^b$.

In particular, choosing $K=\breve{E}$ above, $\mathfrak{X}^b$ is representable by a formal scheme.
\end{lemma}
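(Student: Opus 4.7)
The plan is to construct an explicit inverse to the displayed map, using Drinfeld's rigidity of quasi-isogenies together with the fact that the moduli problem $\mathscr{S}_{K_pK^p}$ classifies abelian varieties only up to prime-to-$p$ isogeny, so modifications by $p$-power isogenies genuinely change the point. First, I would recall rigidity: for any $R\in\mathrm{Nilp}^{\mathrm{op}}_{\cO_K}$, a quasi-isogeny of $p$-divisible groups over $R/p$ lifts uniquely to $R$ once both source and target are fixed. Applied to $(A,\rho)\in\mathfrak{X}^b_{\cO_K}(R)$ together with the chosen lift $(\mathbb{X}_b)_{\cO_K}$, this produces a canonical lift $\tilde\rho\colon A[p^\infty]\to (\mathbb{X}_b)_{\cO_K}\times_{\cO_K}R$.

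The inverse $\Psi\colon \mathfrak{X}^b_{\cO_K}\to\mathrm{Ig}^b_{\cO_K}\times_{\cO_{\breve E}}\mathfrak{M}^b$ is then defined as follows. The $\mathfrak{M}^b$-component is tautological: send $(A,\rho)$ to $(A[p^\infty],\rho)$. For the $\mathrm{Ig}^b_{\cO_K}$-component, use $\tilde\rho$ to modify $A$ by a $p$-power isogeny. Concretely, Zariski-locally on $\mathrm{Spf}\,R$ there exists $n\geq 0$ such that $p^n\tilde\rho$ is a genuine isogeny $\beta\colon A[p^\infty]\to (\mathbb{X}_b)_{\cO_K}\times R$ of $p$-divisible groups; its kernel is a finite locally free $\cO_B$-stable subgroup scheme $K\subset A[p^{n+N}]$ which is isotropic for the Weil pairing up to the similitude factor recorded by $\rho$. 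Set $A'=A/K$: then $A'$ is an abelian variety with induced $\cO_B$-action, and the polarization on $A$ descends to $A'$, becoming principal after rescaling by $p^n$ (this is precisely allowed because $\rho$ preserves the polarization only up to an automorphism of $\widetilde{\mu}_{p^\infty}$). The prime-to-$p$ level structure is unaffected by the $p$-power isogeny, so $A'\in\mathscr{S}_{K_pK^p}(R)$, and $\beta$ induces a canonical isomorphism $A'[p^\infty]\cong (\mathbb{X}_b)_{\cO_K}\times R$ respecting all extra structures. The construction is independent of the local choice of $n$ and patches globally by uniqueness of $\tilde\rho$.

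Finally, I would verify that $\Psi$ is inverse to the map defined in the text and that the stated diagram commutes. In both compositions, one rediscovers the chain of quasi-isogenies $A[p^\infty]\to \cG=A'[p^\infty]\to(\mathbb{X}_b)_{\cO_K}$ uniquely determined by $\tilde\rho$ and $\rho'$, so the checks reduce to the unique lifting property of quasi-isogenies; commutativity of the square is immediate since the map $\mathfrak{X}^b_{\cO_K}\to\mathfrak{M}^b$ is by definition the projection $(A,\rho)\mapsto (A[p^\infty],\rho\bmod p)$. The representability claim for $\mathfrak{X}^b$ then follows because $\mathrm{Ig}^b_{\cO_{\breve E}}$ is a formal scheme (being the flat lift of a perfect scheme) and $\mathfrak{M}^b$ is a formal scheme by Rapoport-Zink. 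The main obstacle in this argument is the bookkeeping for the polarization and the Kottwitz determinant condition under the modification $A\rightsquigarrow A'$: the compatibility of $\tilde\rho$ with the polarization up to a $\mathbb{Q}_p^\times$-scalar is exactly what is needed to guarantee that the descended polarization on $A'$ is principal, and the determinant condition transfers because $A[p^\infty]$ and $(\mathbb{X}_b)_{\cO_K}\times R$ are quasi-isogenous and the condition is isogeny-invariant (via the rational Dieudonné module).
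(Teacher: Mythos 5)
Your proposal is correct and follows essentially the same route as the paper: the inverse is defined by keeping $(A'[p^\infty],\rho')$ as the $\mathfrak{M}^b$-component and modifying $A'$ by the unique $p$-power quasi-isogeny that turns the (rigidly lifted) quasi-isogeny to $(\mathbb X_b)_{\cO_K}$ into an isomorphism, yielding the $\mathrm{Ig}^b_{\cO_K}$-component. You merely spell out the details (the explicit kernel construction and the polarization/determinant bookkeeping) that the paper dispatches with ``it is easy to verify''.
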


\begin{proof} The diagram commutes by construction.

We now define the inverse of the top horizontal map: suppose we are given a pair $(A',\rho')\in \mathfrak{X}^b(R)$. In order to define $(\cG,\rho')\in \mathfrak{M}^b(R)$ we just take $(A'[p^\infty],\rho')$. From the quasi-isogeny
\[
\rho': A'[p^\infty]\to (\mathbb X_b)_{\cO_K}\times_{\cO_K} R\ ,
\]
we find a quasi-isogeny of $p$-power degree $A'\to A$ such that the induced quasi-isogeny
\[
\rho: A[p^\infty]\to (\mathbb X_b)_{\cO_K}\times_{\cO_K} R
\]
is an isomorphism, so we get $(A,\rho)\in \mathrm{Ig}^b_{\cO_K}(R)$. It is easy to verify that this construction is inverse to the horizontal map.
\end{proof}

We would like to say that $\mathfrak{X}^b$ is an $\mathrm{Aut}_G(\widetilde{\mathbb X}_b)$-torsor over the completion of $\mathscr{S}_{K_pK^p}$ along $\mathscr{S}^b_{K_pK^p}$. It is clear that it is a quasitorsor, and it remains to show that the map is locally surjective in some topology, the naive choice of course being the fpqc topology.

If this were true, then one could take the pushout along $\mathrm{Aut}_G(\widetilde{\mathbb X}_b)\to J_b(\mathbb Q_p)$ to get a $J_b(\mathbb Q_p)$-torsor over $\mathscr{S}_{K_pK^p}^b$. This $J_b(\mathbb Q_p)$-torsor can in fact be constructed, as in the following proposition (which will not be used in the sequel, but is included as it fits the current discussion).

\begin{prop} Let $S$ be a scheme over $\bar{\mathbb F}_p$, and let $\mathbb X$ be a $p$-divisible group with extra structure over $S$. Assume that there is some $b\in B(G)$ such that all fibres of $\mathbb X$ are quasi-isogenous to $\mathbb X_b$ (compatibly with extra structures). Then there is a natural $J_b(\mathbb Q_p)$-torsor over $S$ which above any geometric point $\bar{x}\in S$ parametrizes quasi-isogenies between $\mathbb X_{\bar{x}}$ and $\mathbb X_b$ (compatible with extra structures).
\end{prop}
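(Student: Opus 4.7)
I would define the sheaf $\mathcal{T}$ on $S_{\mathrm{pro\acute{e}t}}$ sending $T \to S$ to the set of quasi-isogenies $\rho: \mathbb{X}_T \to \mathbb{X}_b \times_{\bar{\mathbb F}_p} T$ compatible with the extra structures (up to an automorphism of $\widetilde{\mu}_{p^\infty,T}$ in the polarization). Post-composition with elements of $J_b(\mathbb{Q}_p) = \mathrm{Aut}_G(\widetilde{\mathbb X}_b)(\bar{\mathbb F}_p)$ gives a natural $J_b(\mathbb{Q}_p)$-action, and the hypothesis that every geometric fibre $\mathbb{X}_{\bar x}$ is quasi-isogenous to $\mathbb{X}_b$ ensures that $\mathcal{T}(\bar x)$ is a $J_b(\mathbb{Q}_p)$-torsor for every geometric point $\bar x$. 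Thus $\mathcal{T}$ is automatically a $J_b(\mathbb{Q}_p)$-quasi-torsor, and the entire content is the verification of local triviality in the pro-\'etale topology.

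The strategy is to write $\mathcal{T}$ as an increasing union of finite \'etale pieces. For each $N \geq 0$, let $\mathcal{T}^{(N)} \subset \mathcal{T}$ be the subsheaf of quasi-isogenies $\rho$ such that $p^N\rho$ and $p^N\rho^{-1}$ are genuine morphisms of $p$-divisible groups (compatible with the extra structures). Then $\mathcal{T}^{(N)}$ embeds as a locally closed subsheaf of the representable Hom-scheme $\mathscr{H}om(\mathbb{X}[p^{2N+1}], \mathbb{X}_b[p^{2N+1}] \times S)$ (the Hom functor of finite locally free group schemes being representable by a closed subscheme of a Weil restriction). The $G$-equivariance and the ``genuine quasi-isogeny'' conditions cut out a locally closed subscheme of $S$-finite type.

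The constant Newton polygon hypothesis ensures that $\mathcal{T}^{(N)}(\bar x)$ has cardinality independent of $\bar x$ (once a single quasi-isogeny is fixed, the set is in bijection with the intersection of $J_b(\mathbb{Q}_p)$ with a certain compact subset, determined by the Newton decomposition), and rigidity of bounded-denominator quasi-isogenies under nilpotent thickenings --- a consequence of Lemma~\ref{extend map from generic fibre} and the fully faithful extension in Remark~\ref{intdomfullyfaithful} --- upgrades this constancy to \'etaleness of $\mathcal{T}^{(N)}$ over its image in $S$. Since any section of $\mathcal{T}$ lies in some $\mathcal{T}^{(N)}$ and the transition maps $\mathcal{T}^{(N)} \hookrightarrow \mathcal{T}^{(N+1)}$ are open immersions, the colimit $\mathcal{T} = \bigcup_N \mathcal{T}^{(N)}$ is an ind-finite-\'etale sheaf, hence a pro-\'etale sheaf, and the quasi-torsor property then makes it a genuine $J_b(\mathbb{Q}_p)$-torsor in $S_{\mathrm{pro\acute{e}t}}$.

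\textbf{Main obstacle.} The delicate point is verifying that the $\mathcal{T}^{(N)}$ are \'etale (and not merely quasi-finite) over their images, since a priori quasi-isogenies need not be rigid across arbitrary nilpotent thickenings of non-perfect bases. The cleanest route is to first work over the perfection $S^{\mathrm{perf}}$, where Dieudonn\'e theory and the Chai--Oort representability of $\mathscr{H}om$ $p$-divisible groups from Section~\ref{$p$-divisible groups} apply in full, and then descend using that $\mathcal{T}$ is insensitive to Frobenius pullback (quasi-isogenies factoring uniquely through Frobenius up to multiplication by $p$); this is a direct analogue of the argument used to identify $\mathrm{Ig}^b$ with the perfection of $\mathscr{I}^b_{\mathrm{Mant}}$ in Proposition~\ref{perfection} and to prove faithful flatness in Corollary~\ref{faithfully flat}. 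Tracking the extra structures throughout is mostly bookkeeping, since all relevant conditions (compatibility with the $B$-action and with the polarization up to an element of $\widetilde{\mu}_{p^\infty}$) are closed conditions preserved under the filtrations above.
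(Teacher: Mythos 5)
Your setup (reduce to $S$ perfect, consider the sheaf of quasi-isogenies on $S_{\mathrm{pro\acute{e}t}}$, observe it is automatically a quasi-torsor, isolate local triviality as the content) matches the paper, but the mechanism you propose for local triviality does not work. The pieces $\mathcal{T}^{(N)}$ are not finite \'etale over $S$: over a geometric point $\bar{x}$, once a base quasi-isogeny is fixed, $\mathcal{T}^{(N)}(\bar{x})$ is a compact \emph{open} subset of $J_b(\mathbb Q_p)$, hence an infinite profinite set; correspondingly, the restriction map to $\mathscr{H}om(\mathbb X[p^{2N+1}],\mathbb X_b[p^{2N+1}])$ is very far from injective (any two homomorphisms differing by $p^{2N+1}\cdot(\text{something})$ have the same restriction), so $\mathcal{T}^{(N)}$ does not embed into a finite-type scheme. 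At best the level-$m$ truncations are pro-(finite flat), and the connected part of the Chai--Oort internal Hom makes the full Hom-scheme genuinely non-\'etale; the whole ``ind-finite-\'etale'' structure that drives your argument collapses. (A $\underline{J_b(\mathbb Q_p)}$-torsor is ind-\emph{pro}-finite-\'etale, never ind-finite-\'etale.)

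More fundamentally, your argument never exhibits a section of $\mathcal{T}$ over any cover: ``constant fibre cardinality'' plus ``rigidity'' would at most give unramifiedness, not surjectivity or flatness, and the rigidity results you invoke (Lemma~\ref{extend map from generic fibre}, Remark~\ref{intdomfullyfaithful}) are injectivity/extension statements, not existence statements. Producing a section is where all the work lies, and it proceeds in three steps, none of which your proposal engages with. First, one shows that over a strictly henselian perfect ring a quasi-isogeny to $\mathbb X_b$ exists; this rests on Lemma~\ref{pdivstrictlylocal}, i.e.\ that any $p$-divisible group with constant Newton polygon over such a ring is isogenous to a constant one, which itself requires a nontrivial argument over the (possibly infinite) set of minimal primes together with $v$-descent. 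Second, one spreads out to an isomorphism on $p^n$-torsion over an \'etale neighborhood of a given geometric point, after modifying $\mathbb X$ by a quasi-isogeny. Third, one shows that the resulting quasi-torsor $\tilde S\to S$ of genuine isomorphisms $\mathbb X\cong \mathbb X_b$ (under the compact open $K_b\subset J_b(\mathbb Q_p)$) is faithfully flat, again by a strictly local argument combining Lemma~\ref{precise chai-oort}, Remark~\ref{lifthomstrictlylocal} and the fact that an isomorphism on $p^n$-torsion for $n\gg 0$ forces an isomorphism of $p$-divisible groups. Your appeal to the perfection trick of Proposition~\ref{perfection} is in the right spirit but addresses a different difficulty (descending along Frobenius), not the existence of local sections.
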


\begin{remark} The $J_b(\mathbb Q_p)$-torsor is to be understood as in~\cite{bhattscholze}; more precisely, there is a sheaf of (abstract) groups on $S_{\mathrm{pro\acute{e}t}}$ corresponding to the topological group $J_b(\mathbb Q_p)$, and we are considering a torsor on $S_{\mathrm{pro\acute{e}t}}$ under this sheaf of groups. If $S$ is connected and locally topologically noetherian and $\bar{x}\in S$ is a geometric base point, this corresponds to a map
\[
\pi_1^{\mathrm{pro\acute{e}t}}(S,\bar{x})\to J_b(\mathbb Q_p)\ .
\]
This map, and the $J_b(\mathbb Q_p)$-torsor, only depend on $\mathbb X$ up to isogeny. We remark that the displayed map may have noncompact image in general, but the image is compact in case $\mathbb X$ admits a slope decomposition (or is isogenous to such an $\mathbb X$); this explains~\cite[Example 4.2]{oort-zink}, where a $p$-divisible group over a non-normal base is constructed which is not isogenous to one admitting a slope filtration. We remark that most Newton strata, e.g.~the basic one, give such examples: For the basic Newton stratum, the image of the displayed homomorphism is a discrete cocompact subgroup of $J_b(\mathbb Q_p)$ related to $p$-adic uniformization.
\end{remark}

\begin{proof} We may assume that $S$ is perfect. In that case, we consider the functor sending any $T\in S_{\mathrm{pro\acute{e}t}}$ to the set of quasi-isogenies between $\mathbb X_T$ and $(\mathbb X_b)_T$, respecting extra structures. This is a $J_b(\mathbb Q_p)$-quasitorsor, and we want to prove that it is a torsor.

First, we check this when $S$ is strictly local, so assume $S=\mathrm{Spec}\ R$ is the spectrum of a strictly henselian perfect ring $R$. In that case, we need to show that there is a quasi-isogeny between $\mathbb X$ and $\mathbb X_b$, compatible with extra structures. As there is such a quasi-isogeny over the special point, the result follows from the following lemma.

\begin{lemma}\label{pdivstrictlylocal} Let $R$ be a strictly henselian perfect ring with residue field $k$. Then the functor $G\mapsto G_k$ from the category of $p$-divisible groups over $R$ with constant Newton polygon, up to isogeny, to $p$-divisible groups over $k$ up to isogeny is an equivalence of categories.
\end{lemma}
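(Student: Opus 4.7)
My plan is to reduce to the case of isoclinic $p$-divisible groups via the slope filtration, and then settle that case using the description of the internal Hom $p$-divisible group from Section~\ref{section on rapoport-zink spaces} together with the strict henselianness of $R$. Note that since $R$ is perfect and strictly henselian, its residue field $k$ is perfect and separably closed, hence algebraically closed in characteristic $p$.

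For the reduction step, I would follow the scheme used in Lemma~\ref{valringfullyfaithful}. By Proposition 2.3 of Oort-Zink, a $p$-divisible group $G$ over $R$ with constant Newton polygon is isogenous to one whose special fibre is completely slope divisible, hence itself completely slope divisible. By Proposition 1.3 of Oort-Zink, a completely slope divisible $p$-divisible group over a perfect base splits canonically as a direct sum of isoclinic pieces. Hence, up to isogeny, every object on either side decomposes into isoclinic factors of prescribed slopes, and the problem reduces to checking that for each pair of slopes $(\lambda_G,\lambda_H)$, the base change functor is fully faithful and essentially surjective on the (isogeny categories of) isoclinic $p$-divisible groups.

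For fully faithfulness, injectivity in the isogeny category is a standard flat-closure argument: a map $f:G\to H$ whose reduction $f_k$ vanishes kills the flat closure of each $G[p^n]_k\subset G[p^n]$, which is all of $G[p^n]$, so $f$ is divisible by arbitrary powers of $p$, hence zero in the isogeny category. For surjectivity, fix isoclinic $G,H$ over $R$ of slopes $\lambda_G,\lambda_H$; the sheaf $\mathscr{H}om(G,H)$ is representable by (a formal neighborhood of) the internal Hom $p$-divisible group $\cH_{G,H}$ of Corollary~\ref{the hom p-divisible group}. When $\lambda_G=\lambda_H$, this $p$-divisible group is \'etale, so its sections over the strictly henselian ring $R$ are in bijection with those over the residue field $k$, giving the desired lifting (up to clearing denominators by a power of $p$). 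When $\lambda_G>\lambda_H$, it vanishes, as does $\mathrm{Hom}(G_k,H_k)[1/p]$. When $\lambda_G<\lambda_H$, $\cH_{G,H}$ is connected of some dimension $d$, so by Proposition~\ref{properties of p-divisible groups}(1) it is representable by $\mathrm{Spf}\, R[[x_1,\dots,x_d]]$, which has only the zero section over $R$ (as $R$ is reduced, the topologically nilpotent coordinates must vanish); the analogous vanishing holds over $k$, so both Hom groups are zero in the isogeny category. For essential surjectivity, a completely slope divisible isoclinic $p$-divisible group $\mathbb{X}_0$ of slope $\lambda=s/r$ over $k$ is determined (up to isogeny) by its Dieudonn\'e-Manin data over the algebraically closed field $k$; such objects descend to $\bar{\mathbb{F}}_p$, and we may then simply base change to $R$ to obtain a lift.

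The main obstacle is the isoclinic case with $\lambda_G<\lambda_H$: one must know that the only $R$-section of the connected formal $p$-divisible group $\cH_{G,H}$ is zero. This is where the combination of $R$ being perfect (so that the Chai-Oort construction identifies $\mathscr{H}om(G,H)$ with the Tate module of $\cH_{G,H}$ as in Lemma~\ref{Tate module}) and the explicit shape from Proposition~\ref{properties of p-divisible groups}(1,4) is essential; any nonzero section would give a topologically nilpotent element of $R$, contradicting reducedness. Once this is in place, the decomposition and lifting go through formally, completing the proof.
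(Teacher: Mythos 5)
There is a genuine gap at the very first step, and it is exactly the point the paper's proof spends most of its effort on. You assert that ``by Proposition 2.3 of Oort--Zink, a $p$-divisible group $G$ over $R$ with constant Newton polygon is isogenous to one whose special fibre is completely slope divisible, hence itself completely slope divisible.'' This is not available here. The Oort--Zink results on existence of slope filtrations up to isogeny require a \emph{noetherian} base (and, for the statement you are invoking, normality/integrality; their Proposition 2.3 propagates complete slope divisibility from the \emph{generic} fibre of a noetherian integral base, not from the special fibre of a henselian one). A strictly henselian perfect ring $R$ is essentially never noetherian and need not be an integral domain --- it can have infinitely many minimal primes --- and the paper even points to \cite[Example 4.2]{oort-zink} of a $p$-divisible group with constant Newton polygon that is \emph{not} isogenous to one with a slope filtration over a non-normal base. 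The actual content of Lemma~\ref{pdivstrictlylocal} is precisely that over a strictly henselian perfect base this pathology does not occur; the paper proves it by (i) treating the case where $R$ is an integral domain via full faithfulness of restriction to the fraction field (Lemma~\ref{valringfullyfaithful} and Remark~\ref{intdomfullyfaithful}), producing a quasi-isogeny to a constant group of degree bounded only in terms of the height, and then (ii) gluing these over the possibly infinitely many minimal primes $\p_i$ of $R$ using the auxiliary ring $\tilde R\subset \prod_i R/\p_i$ and the $v$-descent identity $R=\{f\in\tilde R\mid f\otimes 1=1\otimes f\}$. Without some substitute for this, your reduction to the isoclinic case, and hence your essential surjectivity argument, does not get off the ground.

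The remainder of your argument is fine but only kicks in after that step: once $G$ and $H$ are known to be isogenous to constant groups $G_{0,R}$, $H_{0,R}$, your computation of $\Hom_R(G_{0,R},H_{0,R})[1/p]$ via the Chai--Oort internal Hom $p$-divisible group (\'etale sections over a strictly henselian base equal those over $k$; connected $p$-divisible groups have no nonzero $R$-points of their Tate module since $R$ is reduced) is a correct repackaging of the paper's Dieudonn\'e-theoretic check that $(D\otimes W(R)[1/p])^{\varphi=1}=(D\otimes W(k)[1/p])^{\varphi=1}$. Note also that the Chai--Oort construction is set up in the paper over a perfect \emph{field}, so you may only apply it to the constant groups $G_0$, $H_0$, not directly to arbitrary isoclinic $G$, $H$ over $R$ --- which again presupposes the constancy statement you have not proved.
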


\begin{remark}\label{lifthomstrictlylocal} In fact, the proof will show that if $G$ and $H$ are $p$-divisible groups with constant Newton polygon over $R$, then there is a constant $c$ depending only on the heights of $G$ and $H$ such that for any homomorphism $\psi_k: G_k\to H_k$ over $k$, $p^c \psi_k$ lifts to a (necessarily unique) homomorphism $G\to H$. (Cf.~\cite[Corollary 3.4]{oort-zink}.)
\end{remark}

\begin{proof} Choose an embedding $\bar{\mathbb F}_p\hookrightarrow R$. Assume for the moment that we know that any $p$-divisible group $G$ over $R$ with constant Newton polygon is isogenous to $G_{0,R} := G_0\times_{\bar{\mathbb F}_p} R$ for some $p$-divisible group $G_0$ over $\bar{\mathbb F}_p$. By the Dieudonn\'e-Manin classification, the functor in the lemma is essentially surjective. To check fully faithfulness of the functor, we may restrict to calculating $\Hom_R(G,H)[1/p]$ where $G=G_{0,R}$, $H=H_{0,R}$. By fully faithfulness of the Dieudonn\'e module functor over perfect rings (first deduced by Gabber from results of Berthelot, \cite{berthelotvalparfait}, cf.~also~\cite[Theorem D]{lau}), it is then enough to check that for any isocrystal $(D,\varphi)$ over $\bar{\mathbb F}_p$,
\[
(D\otimes W(R)[1/p])^{\varphi = 1} = (D\otimes W(k)[1/p])^{\varphi = 1}\ .
\]
We may assume that $D=D_\lambda$ is simple of slope $\lambda$; if $\lambda\neq 0$, then there are no $\varphi$-invariants, and if $\lambda = 0$, then both sides are equal to $\mathbb Q_p$.

It remains to see that any $p$-divisible group $G$ over $R$ with constant Newton polygon is isogenous to a constant $p$-divisible group.\footnote{Cf.~\cite[Corollary 3.6]{oort-zink} in the case where $R$ is the perfection of a noetherian strictly henselian ring $R^\prime$ and $G$ is defined over $R^\prime$.} More precisely, choose a completely slope divisible $G_0/\bar{\mathbb F}_p$ with an isogeny $\psi_k: G_k\to G_{0,k}$ which one can assume to be of degree bounded only in terms of the height $h$ of $G$. Then we claim that there is a (necessarily unique) quasi-isogeny $\psi: G\to G_{0,R}$ lifting $\psi_k$, and whose degree is bounded only in terms of $h$; i.e.~there is a constant $c=c(h)$ such that $p^c \psi: G\to G_{0,R}$ is an isogeny.

For this, assume first that $R$ is an integral domain, with quotient field $K$. By Lemma~\ref{valringfullyfaithful} (cf.~Remark~\ref{intdomfullyfaithful}), the functor from $p$-divisible groups over $R$ to $p$-divisible groups over $K$ is fully faithful. We can find an isogeny $\psi^\prime_K: G_K\to G_{0,K}$ of degree bounded only in terms of $h$, which then extends to a map $\psi^\prime: G\to G_{0,R}$ of degree bounded only in terms of $h$. Over $k$, $\psi_k$ and $\psi^\prime_k$ differ by a quasi-isogeny of $G_0$ of bounded degree; correcting $\psi^\prime$ by this quasi-isogeny gives the desired quasi-isogeny $\psi: G\to G_{0,R}$ lifting $\psi_k$, which is of bounded degree.

In general, let $\{\p_i\}$ be the minimal prime ideals of $R$ (which may be infinitely many);\footnote{If there are only finitely many, e.g.~if $S$ is the perfection of a noetherian scheme, one can argue as in~\cite[end of proof of Proposition 3.3]{oort-zink}.} then the result holds true over each $R/\p_i$, which is still a strictly henselian perfect ring. Let $\tilde{R}\subset \prod_i R/\p_i$ be the subring of those elements $f=\{f_i\in R/\p_i\}$ for which $\bar{f}:=\bar{f}_i\in k$ is independent of $i$. Then $\tilde{R}$ is another strictly henselian perfect ring, $R\hookrightarrow \tilde{R}$, and there is an isogeny
\[
\psi_{\tilde{R}}: G_{\tilde{R}}\to G_{0,\tilde{R}}\ .
\]
Indeed, $p^c \psi_{\tilde{R}}$ will be an actual isogeny, and then to write down this isogeny, one has to write down many matrices with entries in $\tilde{R}$; but one has these matrices with entries in $R/\p_i$ for each $i$, reducing to the same matrix over $k$. It remains to see that $\psi_{\tilde{R}}$ is defined over $R$, i.e.~that some matrices with coefficients in $\tilde{R}$ have coefficients in $R$. For each $i$, $\tilde{R}/\p_i\tilde{R}$ is a strictly henselian perfect ring, so $\psi_{\tilde{R}/\p_i\tilde{R}}$ is uniquely determined; by uniqueness, it must be given by the base extension of $\psi_{R/\p_i}$, which is already known to exist. Thus, we finish by observing that
\[
R = \{f\in \tilde{R}\mid \forall i: f\mod \p_i\in R/\p_i\subset \tilde{R}/\p_i\tilde{R}\}\ .
\]
To verify the displayed equation, we observe that $R\to \tilde{R}$ is a $v$-cover in the sense of~\cite{bhattscholzeperf}, so that by~\cite[Theorem 4.1 (i)]{bhattscholzeperf} (applied to $\mathcal{E} = \cO_X$)
\[
R = \{f\in \tilde{R}\mid f\otimes 1 = 1\otimes f\in \tilde{R}\otimes_R \tilde{R}\}\ .
\]
As everything is reduced, the latter equality can be checked as a system of equalities in
\[
(\tilde{R}\otimes_R \tilde{R})/\p_i(\tilde{R}\otimes_R \tilde{R}) = \tilde{R}/\p_i\tilde{R}\otimes_{R/\p_i} \tilde{R}/\p_i\tilde{R}\ ,
\]
as desired.
\end{proof}

Now we go back to a general perfect base scheme $S$. We need to find a quasi-isogeny between $\mathbb X$ and $\mathbb X_b$ (compatible with extra structures) locally on $S_{\mathrm{pro\acute{e}t}}$. For any geometric point $\bar{x}\in S$, we can find such a quasi-isogeny over $S_{\bar{x}}$. Thus, fixing any $n$, after replacing $S$ by an \'etale neighborhood of $\bar{x}$ and $\mathbb X$ by a quasi-isogenous $p$-divisible group, we can assume that there is an isomorphism $\mathbb X[p^n]\cong \mathbb X_b[p^n]$ compatible with extra structure.

In that case, we can look at the $K_b$-quasitorsor $\tilde{S}\to S$ of isomorphisms $\mathbb X_T\cong (\mathbb X_b)_T$ compatible with extra structures on the category of perfect $S$-schemes $T$, where $K_b\subset J_b(\mathbb Q_p)$ is the compact open subgroup of automorphisms of $\mathbb X_b$, compatible with extra structures. Note that $\tilde{S}$ is representable by a perfect scheme. We claim that if $n$ was chosen large enough (depending only on $\mathbb X_b$), then this quasitorsor is a torsor, i.e.~$\tilde{S}\to S$ is faithfully flat. This will then give the desired quasi-isogeny locally on $S_{\mathrm{pro\acute{e}t}}$ (namely over the pro-\'etale cover $\tilde{S}\to S$).

To show that $\tilde{S}$ is a torsor, we need to see that it is faithfully flat, so we can assume that $S=\mathrm{Spec}\ R$ is strictly local. We need to show that there is an isomorphism $\mathbb X\cong (\mathbb X_b)_R$ compatible with extra structures, assuming that such an isomorphism exists on $p^n$-torsion for $n$ big enough.

As before, let $k$ be the residue field of $R$. Then $\mathbb X_k$ and $\mathbb X_b$ have isomorphic $p^n$-torsion; from~\cite[Lemma 4.4]{scholzeLKgen} one deduces that there is an isomorphism $\psi_x: \mathbb X_b\cong \mathbb X_k$ compatible with extra structures, if $n$ was chosen large enough; moreover, one can assume that this isomorphism reduces to the given one $\mathbb X_b[p^n]\cong \mathbb X_k[p^n]$ on $p^{n/2}$-torsion (say, $n=2m$ is even). From Lemma~\ref{pdivstrictlylocal} and Remark~\ref{lifthomstrictlylocal}, we see that $\psi_x$ lifts to a quasi-isogeny $\psi: (\mathbb X_b)_R\to \mathbb X$, such that $p^c \psi: (\mathbb X_b)_R\to \mathbb X$ and $p^c \psi^{-1}: \mathbb X\to (\mathbb X_b)_R$ are actual isogenies, where $c$ is a constant depending only on $\mathbb X_b$. Then the kernel $G\subset (\mathbb X_b)_R$ of $p^c \psi$ is contained in the $p^{2c}$-torsion; thus, it is the kernel of $p^c \psi: (\mathbb X_b)_R[p^{2c}]\to \mathbb X[p^{2c}]\cong (\mathbb X_b)_R[p^{2c}]$ (if $m\geq 2c$, which we may assume). By choosing $m$ large enough and using Lemma~\ref{precise chai-oort}, we may arrange that $p^c \psi$ lies in $\cH_{\mathbb X_b,\mathbb X_b}[p^{2c}](R)$. But as $R$ is strictly henselian perfect,
\[
\cH_{\mathbb X_b,\mathbb X_b}[p^{2c}](R) = \cH_{\mathbb X_b,\mathbb X_b}[p^{2c}](\bar{\mathbb F}_p)\ .
\]
It follows that $G\subset (\mathbb X_b)_R$ is constant, $G=G_{0,R}$, for $G_0\subset \mathbb X_b$, with $\mathbb X_b/G_0\cong \mathbb X_b$ compatibly with extra structures (as this is true over $k$). But then $p^c\psi$ factors over an isomorphism
\[
(\mathbb X_b/G_0)_R\cong \mathbb X\ ,
\]
where the left-hand side is isomorphic to $(\mathbb X_b)_R$. This gives the desired isomorphism $\mathbb X\cong (\mathbb X_b)_R$ compatible with extra structures.
\end{proof}

Now we go back to the study of Igusa varieties. Let $\cX^b:=(\mathfrak{X}^b)^\mathrm{ad}_\eta$ be the adic generic fiber of the formal scheme $\mathfrak{X}^b$.

\begin{defn} Let $\cX^b_\infty$ be the functor on complete affinoid $(\breve{E}(\zeta_{p^\infty}),\cO_{\breve{E}(\zeta_{p^\infty})})$-algebras sending $(R,R^+)$ to the set of triples $(\cA, \rho, \alpha)$, where $(\cA,\rho)\in \cX^b(R,R^+)$ and \[\alpha:\Lambda \to T_p\cA \] is a morphism of $\cO_B$-modules such that
\begin{enumerate}
\item the pairing $(\cdot,\cdot)$ on $\Lambda$ matches the pairing on $T_p\cA$ induced by the polarization and the fixed choice of $p$-power roots of unity, and
\item the induced maps \[\Lambda\to T_p\cA^\mathrm{ad}_{\eta}(C,C^+),\] on all geometric points $\mathrm{Spa}(C,C^+)$ of $\mathrm{Spa}(R,R^+)$ are isomorphisms.
\end{enumerate}
\end{defn}

\begin{remark}\label{cartesian diagram 1} There are natural maps $\mathfrak{X}^b\to \mathfrak{M}^b$ and $\cX^b_\infty \to \cM^b_\infty$, defined by sending an abelian variety to its $p$-divisible group. We can check on the level of moduli problems that $\cX^b_\infty$ fits into the Cartesian diagram \[\xymatrix{\cX^b_\infty\ar[r]\ar[d]&\cM^b_\infty\ar[d]\\ \mathcal{X}^b\ar[r]&\cM^b\ ,}\] therefore it is representable by an adic space.
\end{remark}

We let $(\mathrm{Ig}^b_{\cO_K})^\mathrm{ad}_\eta$ be the generic fiber of the formal scheme $\mathrm{Ig}^b_{\cO_K}$.

\begin{cor}\label{adic product} We have an isomorphism \[(\mathrm{Ig}^b_{\cO_K})^\mathrm{ad}_\eta\times_{\mathrm{Spa}(\breve{E},\cO_{\breve{E}})} \cM^b_\infty\toisom \cX^b_{\infty,K}.\]

In particular, $\cX^b_\infty$ is preperfectoid.
\end{cor}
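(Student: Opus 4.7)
The plan is to deduce Corollary~\ref{adic product} by combining the integral product decomposition of Lemma~\ref{integral product} with the Cartesian description of $\cX^b_\infty$ in Remark~\ref{cartesian diagram 1}. First, I would pass to adic generic fibers in the isomorphism of formal schemes
\[
\mathrm{Ig}^b_{\cO_K}\times_{\cO_{\breve{E}}}\mathfrak{M}^b\toisom \mathfrak{X}^b_{\cO_K}
\]
from Lemma~\ref{integral product}. Because the functor $\mathfrak{M}\mapsto \mathfrak{M}^{\mathrm{ad}}_\eta$ from Proposition 2.2.1 of~\cite{scholzeweinstein} is compatible with fiber products of formal schemes that locally admit a finitely generated ideal of definition (which holds for both $\mathrm{Ig}^b_{\cO_K}$ and $\mathfrak{M}^b$), this yields a canonical isomorphism
\[
(\mathrm{Ig}^b_{\cO_K})^\mathrm{ad}_\eta\times_{\mathrm{Spa}(\breve E,\cO_{\breve E})}\cM^b\toisom \cX^b_K.
\]

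Next, I would apply the Cartesian square of Remark~\ref{cartesian diagram 1}, namely $\cX^b_\infty = \cX^b\times_{\cM^b}\cM^b_\infty$, base-changed to $K$. Substituting the previous isomorphism into this fiber product gives
\[
\cX^b_{\infty,K} \;\cong\; \bigl((\mathrm{Ig}^b_{\cO_K})^\mathrm{ad}_\eta\times_{\mathrm{Spa}(\breve E,\cO_{\breve E})}\cM^b\bigr)\times_{\cM^b}\cM^b_\infty \;\cong\; (\mathrm{Ig}^b_{\cO_K})^\mathrm{ad}_\eta\times_{\mathrm{Spa}(\breve E,\cO_{\breve E})}\cM^b_\infty,
\]
which is the asserted isomorphism. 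Both constructions are manifestly functorial and $G(\mathbb Q_p)$-equivariant (with the group action on the $\cM^b_\infty$-factor), and one should check this compatibility for the record.

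For the ``in particular'' statement, I would argue that $\cX^b_\infty$ is preperfectoid because it is a base change of the preperfectoid space $\cM^b_\infty$ (Theorem~\ref{perfectoid rz spaces}) along the structure map $(\mathrm{Ig}^b_{\cO_K})^\mathrm{ad}_\eta\to \mathrm{Spa}(\breve E,\cO_{\breve E})$. Concretely, by Proposition 2.3.6 of~\cite{scholzeweinstein} one passes to the associated perfectoid space $\widehat{\cM}^b_\infty$ over the perfectoid field $\breve E(\zeta_{p^\infty})^\wedge$, and the product $\widehat{\cM}^b_\infty\times_{\mathrm{Spa}(\breve E,\cO_{\breve E})}(\mathrm{Ig}^b_{\cO_K})^\mathrm{ad}_\eta$ is again perfectoid since one is tensoring a perfectoid algebra with a finite-type (in fact smooth) adic space over the same base; this exhibits $\cX^b_{\infty,K}$ as preperfectoid. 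The only mildly delicate point in the argument is confirming that passage to the adic generic fiber commutes with the product in Lemma~\ref{integral product}; this is the step I expect to require the most care, and it should be handled by working locally where both formal schemes are of the form $\mathrm{Spf}\ A$ with $A$ admitting a finitely generated ideal of definition, so that the generic fibers agree with the usual rigid-analytic generic fiber and the product is computed by completed tensor product of the Tate algebras.
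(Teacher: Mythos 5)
Your derivation of the product isomorphism is correct and is exactly the paper's argument: pass to adic generic fibres in the isomorphism of Lemma~\ref{integral product} and then use the Cartesian square of Remark~\ref{cartesian diagram 1} (together with the commutativity of the diagram in Lemma~\ref{integral product}, which identifies the two projections to $\cM^b$) to absorb the $\cM^b$-factor. No issues there.

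Your justification of the ``in particular'' statement, however, contains a genuine error. You assert that $(\mathrm{Ig}^b_{\cO_K})^\mathrm{ad}_\eta$ is ``a finite-type (in fact smooth) adic space'' and that tensoring a perfectoid algebra with such a space yields a perfectoid algebra. Both claims are false. First, $\mathrm{Ig}^b$ is a \emph{perfect} scheme --- the perfection of $\mathscr{I}^b_{\mathrm{Mant}}$, i.e.\ an inverse limit along Frobenius --- so it is essentially never of finite type, and its lift $\mathrm{Ig}^b_{\cO_K}$ is locally $\mathrm{Spf}(W(R)\widehat{\otimes}_{\cO_{\breve E}}\cO_K)$ for a perfect ring $R$, not a smooth formal scheme. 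Second, even if the space were smooth of finite type, the principle you invoke fails: for a perfectoid field $K$, the Tate algebra $K\langle T\rangle$ is not perfectoid (one needs $K\langle T^{1/p^\infty}\rangle$), so ``perfectoid $\times$ finite-type smooth'' is not perfectoid. The correct argument, which is the one the paper gives, runs the other way around: precisely \emph{because} $\mathrm{Ig}^b_{\cO_K}$ is locally $\mathrm{Spf}$ of $W(R)\otimes_{\cO_{\breve E}}\cO_K$ with $R$ perfect, its generic fibre is already a perfectoid space whenever $K$ is perfectoid; the product of this perfectoid space with the preperfectoid space $\cM^b_\infty$ (Theorem~\ref{perfectoid rz spaces}) over a perfectoid base field is then again perfectoid, which is what exhibits $\cX^b_\infty$ as preperfectoid. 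You should replace your justification by this one; the conclusion itself is unaffected.
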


\begin{proof} The first part follows from the decomposition of $\mathfrak{X}^b$ in Lemma~\ref{integral product}, and the cartesian diagram of Remark~\ref{cartesian diagram 1}. The final assertion follows formally from the facts that $\cM^b_\infty$ is preperfectoid, and that $\mathrm{Ig}^b_{\cO_K}$ is locally of the form $W(R)\otimes_{\cO_{\breve{E}}} \cO_K$ for a perfect ring $R$, so that (if $K$ is perfectoid) its generic fibre is a perfectoid space.
\end{proof}

We let $\widehat{\cX}^b_\infty$ be the perfectoid space associated with $\cX^b_\infty$ as in~\cite[Proposition 2.3.6]{scholzeweinstein}. Let $\cS_{K^p}$ be the perfectoid infinite-level Shimura variety over $E_{\mathfrak p}$. Let $\cS_{K^p}^b\subset \cS_{K^p}$ be the locus of those points $\mathrm{Spa}(K,K^+)\to \cS_{K^p}$ over which the universal abelian variety over $K$ extends to $K^+$, and defines a point of $\mathscr{S}_{K_pK^p}^b$ over the residue field of $K^+$. This is the preimage under the continuous specialization map of the locally closed subset $\mathscr{S}_{K_pK^p}^b\subset \mathscr{S}_{K_pK^p}\times_{\cO_{E,\mathfrak p}} \mathbb F_q$, and thus $\cS_{K^p}^b\subset \cS_{K^p}$ is a locally closed subset.

\begin{lemma}\label{cartesian diagram 2} The perfectoid space $\widehat{\cX}^b$ maps to $\cS_{K^p}^b$ by forgetting the quasi-isogeny $\rho$ and to $\cM^b_\infty$ by sending $(\cA,\rho)$ to $(\cA[p^\infty],\rho)$. The induced map
\[
\widehat{\cX}^b_\infty\to (\cM_\infty^b\times_{\Fl_{G,\mu}} \cS_{K^p}^b)^\wedge
\]
is an isomorphism of perfectoid spaces.
\end{lemma}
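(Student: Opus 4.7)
The plan is to construct an explicit inverse by exploiting the Scholze-Weinstein classification of $p$-divisible groups over $\cO_C$, combined with Lemma~\ref{extend map from generic fibre} to pass from pointwise statements to an integral isomorphism.

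First I would verify that both projections are well-defined. Forgetting the quasi-isogeny $\rho$ sends $(\cA,\rho,\alpha)\in \widehat{\cX}^b_\infty(R,R^+)$ to $(\cA,\alpha)\in \cS_{K^p}(R,R^+)$; the existence of $\rho$ over $R^+/p$ forces the special-fibre point to lie in the Newton stratum $\mathscr{S}^b_{K_pK^p}$, so the image actually lies in $\cS_{K^p}^b$. The second projection $(\cA,\rho,\alpha)\mapsto(\cA[p^\infty],\rho,\alpha)$ lands in $\cM^b_\infty$ using the identification $T_p\cA = T_p\cA[p^\infty]$. Compatibility of the two projections with the global Hodge-Tate period map (Theorem~\ref{refinedht}) and the local one (Proposition~\ref{local Hodge-Tate period map}) is precisely the content of Remark~\ref{local and global HT}; this yields the desired map to the fibre product.

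To construct the inverse, work with a perfectoid affinoid $(R,R^+)$-point of $(\cM_\infty^b\times_{\Fl_{G,\mu}} \cS_{K^p}^b)^\wedge$. This consists of an abelian variety $A$ over $R^+$ with extra structure and trivialization $\alpha_A:\Lambda\toisom T_pA$, together with a $p$-divisible group $\cG$ over $R^+$, a quasi-isogeny $\rho:\cG\times_{R^+}R^+/p\to \mathbb X_b\times R^+/p$, and a trivialization $\alpha_\cG:\Lambda\toisom T_p\cG$, all with matching image in $\Fl_{G,\mu}(R,R^+)$. The composition $\alpha_\cG\circ\alpha_A^{-1}$ gives an isomorphism of pro-\'etale $\mathbb Z_p$-local systems $T_pA\toisom T_p\cG$, equivalently an isomorphism $A[p^\infty]_R\toisom \cG_R$ on generic fibres. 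To produce the required point of $\widehat{\cX}^b_\infty$, the crucial step is to extend this isomorphism from $R$ to $R^+$; composing the extension with $\rho$ then yields the requisite quasi-isogeny $A[p^\infty]\to\mathbb X_b$ over $R^+/p$.

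For the extension I would invoke Lemma~\ref{extend map from generic fibre}. Its two hypotheses hold: both $A[p^\infty]$ and $\cG$ have constant Newton polygon over $\mathrm{Spec}(R^+/p)$, since the base lies in the Newton stratum $\mathscr{S}^b_{K_pK^p}$ and $\cG$ is quasi-isogenous to the constant $\mathbb X_b$; and for each rank-one geometric point $\mathrm{Spa}(C,\cO_C)\to \mathrm{Spa}(R,R^+)$, the isomorphism on generic fibres extends to $\cO_C$ by~\cite[Theorem B]{scholzeweinstein}. Indeed, that theorem classifies $p$-divisible groups over $\cO_C$ by pairs consisting of a Tate module and a Hodge-Tate filtration; the two Tate modules match via the trivializations $\alpha_A,\alpha_\cG$, and the two Hodge-Tate filtrations match because the given points of $\cS_{K^p}^b$ and $\cM^b_\infty$ share the same image in $\Fl_{G,\mu}(C,\cO_C)$ (again via Remark~\ref{local and global HT}). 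The construction is manifestly functorial in $(R,R^+)$, and one verifies it is inverse to the map of the lemma by direct unraveling of the moduli descriptions.

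The principal obstacle is ensuring that the isomorphism produced pointwise via Scholze-Weinstein genuinely glues to an integral isomorphism over $R^+$, rather than only existing on the generic fibre; this is handled cleanly by Lemma~\ref{extend map from generic fibre}, which reduces the integral extension problem to the combination of constant-Newton-polygon and pointwise extendability. With that in place, the remaining verifications (functoriality, that the resulting morphism is a map of perfectoid spaces, and mutual inverseness) are routine.
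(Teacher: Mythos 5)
Your proposal is correct and follows essentially the same route as the paper: commutativity via Remark~\ref{local and global HT}, the pointwise inverse on $(C,\cO_C)$-points via \cite[Theorem B]{scholzeweinstein} (matching Tate modules and Hodge--Tate filtrations because the images in $\Fl_{G,\mu}$ agree), and the extension from $R$ to $R^+$ via Lemma~\ref{extend map from generic fibre}. Your explicit check of the constant-Newton-polygon hypothesis of that lemma is a detail the paper leaves implicit, but otherwise the arguments coincide.
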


In other words,
\[\xymatrix{
\cX^b_\infty\ar[r]\ar[d] & \cM_\infty^b\ar[d]^{\pi^b_{HT}} \\
\cS_{K^p}^b\ar[r]^{\pi_{HT}} & \Fl_{G,\mu}
}\]
becomes a Cartesian diagram when one takes points over a perfectoid space.

\begin{proof} Note that the diagram commutes by Remark~\ref{local and global HT}. Therefore, the map in the lemma is well-defined. We first check the fact that the diagram is Cartesian on $(C,\cO_C)$-points, where $C/\breve{E}(\zeta_{p^\infty})$ is complete and algebraically closed with ring of integers $\cO_C$. A $(C,\cO_C)$-point of $\cS_{K^p}^b$ gives rise to a couple $(\cA,\alpha)$, where $\cA/\cO_C$ is an abelian variety with extra structures and $\alpha:\Lambda \to T_p\cA(C,\cO_C)$ is an isomorphism compatible with extra structures. A $(C, \cO_C)$-point of $\cM^b_\infty$ gives us a triple $(\cG,\beta, \rho)$, where $\cG/\cO_C$ is a $p$-divisible group with extra structures, $\beta: \Lambda \toisom T_p\cG(C,\cO_C)$ is a trivialization of its integral Tate module and $\cG\times_{\cO_C} \cO_C/p\to \mathbb{X}_b \times_{\mathbb{\bar F}_p} \cO_C/p$ is a quasi-isogeny. 

The fact that $(\cA,\alpha)$ and $(\cG,\beta, \rho)$ are mapped to the same point of $\Fl_{G,\mu}$ under $\pi_{HT}$ and $\pi^b_{HT}$ means that the Hodge-Tate filtrations on $T_p\cA\otimes C$ and $T_p\cG\otimes C$ are identified under the isomorphism $\beta\circ \alpha^{-1}$. Now \cite[Theorem B]{scholzeweinstein} gives an isomorphism $\cA[p^\infty]\cong \cG$ extending the given isomorphism on the generic fibre. Thus, the given data assemble into a point of $\cX^b_\infty$, and one checks that these constructions are inverse.

Now, if $(R,R^+)$ is any perfectoid affinoid $\breve{E}(\zeta_{p^\infty})$-algebra, one gets similar data $(\cA,\alpha)$, $(\cG,\beta,\rho)$ over $R^+$. One has to check that the isomorphism $\beta\circ \alpha^{-1}$ between $\cA[p^\infty]_R$ and $\cG_R$ extends to $R^+$. This follows from Lemma~\ref{extend map from generic fibre} above.
\end{proof}

Putting together Remark~\ref{cartesian diagram 1} and Lemma~\ref{cartesian diagram 2}, we get a diagram with Cartesian squares (the right one when evaluated on perfectoid spaces)
\[
\xymatrix{
\mathfrak{X}^b\ar[d] & \cX^b_\infty\ar[l]\ar[r]\ar[d] & \cS_{K^p}^b\ar[d]^{\pi_{HT}^b}\\
\mathfrak{M}^b & \cM_\infty^b\ar[l]\ar[r]_{\pi_{HT}} & \Fl_{G,\mu}\ .
}\]

\subsection{\'Etale cohomology}

Fix a prime $\ell\neq p$, and consider the map
\[
\pi_{HT}: \cS_{K^p}\to \Fl_{G,\mu}\ .
\]
In this final subsection, we use the geometric results established so far to identify the fibres of $\mathcal{F} = R\pi_{HT*}\mathbb Z/\ell^n \mathbb Z$ with the cohomology of Igusa varieties. In this section, we make the additional assumption that $\mathscr{S}_{K_pK^p}$ is proper over $\cO_{E,\mathfrak p}$. It is known that this is equivalent to asking that $G$ is anisotropic over $\mathbb Q$, cf.~\cite{lan}.

Let $C$ be a complete algebraically closed extension of $\breve{E}(\zeta_{p^\infty})$, with an open and bounded valuation subring $C^+\subset C$, and fix a point $x\in \Fl_{G,\mu}(C,C^+)$; we assume that $C$ is the completed algebraic closure of the residue field of $\Fl_{G,\mu}$ at the underlying (topological) point. We are interested in understanding the stalk $\mathcal{F}_x = (R^i\pi_{HT*} \mathbb Z/\ell^n \mathbb Z)_x$. In this respect, we have the following general base change lemma.

\begin{lemma}\label{base change adic} Let $f: Y\to X$ be a quasicompact and quasiseparated map of analytic adic spaces, and for definiteness assume that $X$ is either a locally strongly noetherian adic space or a perfectoid space over $\mathrm{Spa}(\mathbb Z_p,\mathbb Z_p)$, and $Y$ is perfectoid.\footnote{We only need to know that they have well-defined \'etale sites, and that the same holds for all fibres of $f$ over geometric points. For example, the lemma is also true when one asssumes instead that both $X$ and $Y$ are perfectoid.} Let $x\in X$ be a point with residue field $K(x)$ and open and bounded valuation subring $K(x)^+$. Let $C(\bar{x})$ be a completed algebraic closure of $K(x)$ with an open and bounded valuation subring $C(\bar{x})^+\subset C(\bar{x})$ lifting $K(x)^+$, giving rise to a geometric point $\bar{x}=\mathrm{Spa}(C(\bar{x}),C(\bar{x})^+)\to X$. Let
\[
Y_{\bar{x}} = (Y\times_X \mathrm{Spa}(C(\bar{x}),C(\bar{x})^+))^\wedge
\]
be the fibre of $Y$ over $\bar{x}$, which is a perfectoid space over $C(\bar{x})$. For any sheaf $\mathcal G$ of abelian groups on $Y_{\mathrm{\acute{e}t}}$ and all $i\geq 0$, the natural map
\[
(R^i f_\ast \mathcal G)_{\bar{x}}\to H^i(Y_{\bar{x}},\mathcal G)
\]
is an isomorphism.
\end{lemma}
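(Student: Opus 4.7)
The plan is to combine the topos-theoretic description of stalks of higher direct images with a limit theorem for \'etale cohomology of analytic adic spaces, following the pattern used throughout Scholze's foundational work on perfectoid spaces.

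First, by the very definition of the \'etale topos of an adic space and the construction of higher direct images as sheafifications of naive higher direct images, the stalk at the geometric point $\bar x$ is a filtered colimit
\[
(R^i f_\ast \mathcal G)_{\bar x} = \varinjlim_{(U,u)} H^i_{\et}(Y\times_X U,\, \mathcal G),
\]
where $(U,u)$ runs over the cofiltered system of qcqs \'etale neighborhoods of $\bar x$ in $X_{\et}$. Since $f$ is qcqs and $U$ is qcqs, each $Y\times_X U$ is qcqs, so the terms of the colimit are honest cohomology groups on qcqs analytic adic spaces.

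Second, I would identify the $\sim$-inverse limit of the cofiltered system $\{Y\times_X U\}$ with $Y_{\bar x}$. On the base one has the standard realization of the geometric point as a cofiltered limit of its \'etale neighborhoods, $\mathrm{Spa}(C(\bar x), C(\bar x)^+) \sim \varprojlim_{(U,u)} U$. Base change along the qcqs map $f$ is compatible with such limits on underlying topological spaces, while on structure sheaves it produces a completed colimit; combined with the definition of $Y_{\bar x}$ as the perfectoid space associated to the preperfectoid fibre product, this gives
\[
Y_{\bar x} \sim \varprojlim_{(U,u)} Y\times_X U .
\]

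Third, I would invoke the limit theorem for \'etale cohomology of analytic adic spaces: for a cofiltered inverse system of qcqs analytic adic spaces with qcqs transition maps whose $\sim$-limit is representable by an analytic adic space $Y_\infty$, the natural map $\varinjlim H^i_{\et}(Y_i,\mathcal G)\to H^i_{\et}(Y_\infty,\mathcal G)$ is an isomorphism. In the locally strongly noetherian case this is Huber's theorem, while in the perfectoid setting it follows from the fact that the \'etale site of a $\sim$-limit is the $2$-colimit of the \'etale sites of the members. Applying this principle to the system from step two yields exactly the desired identification of $(R^i f_\ast \mathcal G)_{\bar x}$ with $H^i(Y_{\bar x},\mathcal G)$.

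The hard part will be the tilde-equivalence in step two: one must verify that passing to the perfectoid completion of the fibre product $Y\times_X \mathrm{Spa}(C(\bar x), C(\bar x)^+)$ does not disturb the \'etale site, and that this completion is indeed the $\sim$-limit of the $Y\times_X U$. I would reduce to the affinoid perfectoid case, choose a cofinal subsystem of rational \'etale neighborhoods of $\bar x$, and verify at the level of rings of sections that the appropriate uniform completion of the colimit of their pullbacks to $Y$ reproduces the perfectoid algebra defining $Y_{\bar x}$. Once this geometric identification is in place, the cohomological step is essentially formal.
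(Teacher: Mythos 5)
Your proposal follows the same route as the paper's proof: write the stalk as a filtered colimit over qcqs étale neighborhoods $U$ of $\bar x$, identify $Y_{\bar x}\sim\varprojlim_U (Y\times_X U)$ (the paper cites \cite[Proposition 2.4.3]{scholzeweinstein} for this, rather than re-verifying it on rings), and conclude with the limit theorem for étale cohomology of qcqs adic spaces (\cite[Corollary 7.18]{scholzeperfectoid}). The argument is correct and matches the paper's.
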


Here, and in the following, these statements will also be true for sheaves of groups and $i=0,1$, and sheaves of sets and $i=0$. We will not spell this out.

\begin{proof} Let $U_j = \mathrm{Spa}(R_j,R_j^+)\to X$ be a cofinal system of affinoid \'etale neighborhoods of $\bar{x}$; then
\[
\mathrm{Spa}(C(x),C(x)^+)\sim \varprojlim_j \mathrm{Spa}(R_j,R_j^+)\ ,
\]
and one has
\[
(R^i f_\ast \mathcal G)_{\bar{x}} = \varinjlim_j H^i(Y\times_X \mathrm{Spa}(R_j,R_j^+),\mathcal{G})\ .
\]
It remains to see that
\[
\varinjlim_j H^i(Y\times_X \mathrm{Spa}(R_j,R_j^+),\mathcal{G}) = H^i(Y_{\bar{x}},\mathcal G)\ .
\]
But this follows from $Y_{\bar{x}}\sim \varprojlim_j Y\times_X \mathrm{Spa}(R_j,R_j^+)$ (cf.~\cite[Proposition 2.4.3]{scholzeweinstein}), where all terms are quasicompact and quasiseparated, and the resulting consequence for \'etale cohomology, cf.~\cite[Corollary 7.18]{scholzeperfectoid}.\footnote{In the discussion around \cite[Corollary 7.18]{scholzeperfectoid}, the $X_i$ are assumed to be strongly noetherian; the discussion is also valid if all $X_i$ are perfectoid.}
\end{proof}

In particular, the fibre
\[
(R^i\pi_{HT\ast} \mathbb Z/\ell^n\mathbb Z)_x = H^i(\cS_{K^p,x},\mathbb Z/\ell^n \mathbb Z)\ .
\]
Next, we reduce to the case of rank $1$ points. For this, we use the following lemma.

\begin{lemma}\label{only rank 1 matters} Let $X$ be a quasicompact and quasiseparated analytic adic space, and for definiteness assume that $X$ is a perfectoid space.\footnote{Again, the lemma also holds true when $X$ is a strongly noetherian adic space, or whenever $X$ has a well-behaved \'etale site.} Let $U\subset X$ be a quasicompact open subset which contains all rank $1$ points of $X$. Then, for any locally constant sheaf $\mathcal G$ of abelian groups on $X_{\mathrm{\acute{e}t}}$ and all $i\geq 0$, the natural map
\[
H^i(X,\mathcal{G})\to H^i(U,\mathcal G)
\]
is an isomorphism.
\end{lemma}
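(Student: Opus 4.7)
The approach is to establish the stronger claim that the unit map $\mathcal G \to Rj_\ast j^\ast \mathcal G$ is an isomorphism, where $j: U \hookrightarrow X$; applying $R\Gamma(X, -)$ then yields the lemma. This can be checked stalkwise, and is immediate at points of $U$. At a point $x \in Z := X \setminus U$ (necessarily of rank $\geq 2$), since $\mathcal G$ is locally constant, we may shrink to an affinoid étale neighborhood $V$ of $x$ on which $\mathcal G$ is the constant sheaf $\Lambda$; the claim then becomes $\Lambda \toisom \varinjlim_V R\Gamma(V \cap U, \Lambda)$ over a cofinal system of such $V$.

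Because an étale map preserves the rank of valuations, $V \cap U$ is a quasicompact open in $V$ that contains every rank $1$ point of $V$. Hence it suffices to prove the following local form: for a quasicompact quasiseparated analytic adic space $Y$ and a quasicompact open $W \subset Y$ which contains every rank $1$ point of $Y$, the restriction map $R\Gamma(Y, \Lambda) \to R\Gamma(W, \Lambda)$ is an isomorphism for every abelian group $\Lambda$. Applying this to $Y = V$, $W = V \cap U$ gives $\varinjlim_V R\Gamma(V \cap U, \Lambda) = \varinjlim_V R\Gamma(V, \Lambda) = \Lambda$ (concentrated in degree $0$), finishing the stalk computation.

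To prove the local form, I would use the continuous specialization map $\sigma: Y \to Y^{\max}$ onto the maximal Hausdorff quotient of $Y$ (the underlying topological space of the associated Berkovich space), which sends each point to its unique rank-$1$ generalization. By hypothesis, $\sigma(W) = \sigma(Y) = Y^{\max}$, so that $W$ and $Y$ have the same Berkovich quotient. The essential technical input is the identity $R\sigma_\ast \Lambda = \Lambda_{Y^{\max}}$, which yields $R\Gamma(Y, \Lambda) = R\Gamma(Y^{\max}, \Lambda) = R\Gamma(W, \Lambda)$; this is really the statement that constant-coefficient étale cohomology of an analytic adic space is determined by its Berkovich quotient.

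The principal obstacle is establishing $R\sigma_\ast \Lambda = \Lambda_{Y^{\max}}$. By a standard stalk computation this reduces to showing that each fiber $\sigma^{-1}(\tilde x)$, consisting of all specializations of a given rank-$1$ point $\tilde x$, has no higher étale cohomology with constant torsion coefficients. These fibers are pro-constructible spectral subspaces of $Y$, which can be written as cofiltered intersections of their quasicompact open neighborhoods in $Y$; the vanishing of their higher cohomology follows from the ``pro-finite'' behaviour of such spectral spaces for constant coefficients, together with continuity of étale cohomology under cofiltered limits of quasicompact quasiseparated adic spaces.
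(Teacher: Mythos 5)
Your opening reduction (checking that $\mathcal G\to Rj_\ast\mathcal G$ is an isomorphism on stalks, and shrinking so that $\mathcal G$ becomes constant) is exactly how the paper's proof begins, but your proof of the ``local form'' has a genuine gap: it conflates \'etale cohomology with topological cohomology of the Berkovich quotient. If $Y^{\max}$ is the topological maximal Hausdorff quotient, the identity $R\sigma_\ast\Lambda=\Lambda_{Y^{\max}}$ cannot produce $R\Gamma_{\mathrm{\acute{e}t}}(Y,\Lambda)=R\Gamma(Y^{\max},\Lambda)$, because constant-coefficient \emph{\'etale} cohomology of an analytic adic space is not computed by the underlying topological space of its Berkovich quotient. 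An annulus over an algebraically closed nonarchimedean field already shows this: it has nontrivial $H^1_{\mathrm{\acute{e}t}}$ with $\mathbb Z/n$-coefficients ($n$ invertible) coming from Kummer covers, while its Berkovich quotient deformation retracts onto an interval; the Kummer classes survive in the colimit over shrinking tube neighborhoods of a Gauss-type point, so $R^1\sigma_\ast\Lambda\neq 0$ there. Your justification via the fibers $\sigma^{-1}(\tilde x)=\overline{\{\tilde x\}}$ only controls \emph{topological} cohomology of those subspaces; the stalks of $R\sigma_\ast$ for the \'etale site are colimits of \'etale cohomology of the tube neighborhoods $\sigma^{-1}(V')$, which is exactly what fails to be trivial. (If you instead equip $Y^{\max}$ with the Berkovich \'etale site, then identifying the \'etale sites attached to $W$ and to $Y$ is essentially equivalent to the lemma you are trying to prove, so the argument becomes circular.)

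The missing idea is to compute the stalk of $Rj_\ast\mathcal G$ not through \'etale neighborhoods of $x$ in $X$ and a Berkovich-type argument, but through the fiber of $U$ over a geometric point: by Lemma~\ref{base change adic}, for a geometric point $\bar x=\mathrm{Spa}(C,C^+)\to X$ one has $(Rj_\ast\mathcal G)_{\bar x}=R\Gamma(U\times_X\mathrm{Spa}(C,C^+),\mathcal G)$. Since $U$ contains all rank $1$ points, this fiber equals $\mathrm{Spa}(C,D^+)$ for some open and bounded valuation subring $D^+\subset C$ containing $C^+$; like $\mathrm{Spa}(C,C^+)$ itself it is strictly local (every \'etale cover splits), so its cohomology with the constant sheaf is just $\Lambda$ concentrated in degree $0$. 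This replaces your ``local form'' entirely and closes the gap.
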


\begin{proof} Let $j: U\hookrightarrow X$ be the inclusion. It is enough to prove that $\mathcal G\to Rj_\ast \mathcal G$ is an isomorphism. This can be checked on geometric points, which, using Lemma~\ref{base change adic}, reduces us to the case $X=\mathrm{Spa}(C,C^+)$ for some complete algebraically closed field $C$ with an open and bounded valuation subring $C^+\subset C$. Then $U=\mathrm{Spa}(C,D^+)$ for a different open and bounded valuation subring $D^+\subset C$, containing $C^+$. As $X$ is strictly local, the sheaf $\mathcal G$ is the constant sheaf associated with some abelian group $G$. But as any \'etale cover of $X$ splits, one has $R\Gamma(X,G) = G$, and similarly for $U$, giving the result.
\end{proof}

Applying Lemma~\ref{only rank 1 matters} to the inclusion $\cS_{K^p,\tilde{x}}\subset \cS_{K^p,x}$ shows that
\[
(R^i\pi_{HT\ast} \mathbb Z/\ell^n\mathbb Z)_x = (R^i\pi_{HT\ast} \mathbb Z/\ell^n\mathbb Z)_{\tilde{x}} = H^i(\cS_{K^p,\tilde{x}},\mathbb Z/\ell^n \mathbb Z)\ .
\]
Thus, we will from now on assume that $x=\tilde{x}$ is a rank $1$ point, and write $C^+ = \cO_C$. Now choose $b\in B(G,\mu^{-1})$ such that $x\in \Fl_{G,\mu}^b$. If $y\in \cS_{K^p,x}$ is any (geometric) rank $1$ point, the argument of Lemma~\ref{surjectivitypiHT} shows that $y\in \cS_{K^p}^b$. Thus, $\cS_{K^p,x}^b=\cS_{K^p,x}\times_{\cS_{K^p}} \cS_{K^p}^b\subset \cS_{K^p,x}$ is a quasicompact open subset with the same rank $1$ points, so applying Lemma~\ref{only rank 1 matters} once more, we see that
\[
(R^i\pi_{HT\ast} \mathbb Z/\ell^n\mathbb Z)_x = H^i(\cS_{K^p,x}^b,\mathbb Z/\ell^n \mathbb Z)\ .
\]

Now we apply Lemma~\ref{surjectivitypiHT} to lift $x\in \Fl_{G,\mu}^b(C,\cO_C)$ to a point $z\in \cM^b_\infty(C,\cO_C)$, giving rise in particular to a $p$-divisible group $(\mathbb X_b)_{\cO_C}$ (with extra structures) lifting $\mathbb X_b$. Then Lemma~\ref{cartesian diagram 2} identifies the fibre $\cS_{K^p,x}^b$ with the fibre $\cX^b_{\infty,z}$. This, in turn, gets identified with $(\mathrm{Ig}^b_{\cO_C})^{\mathrm{ad}}_\eta$ by Corollary~\ref{adic product}. Combining the discussion so far, we see that
\[
(R^i\pi_{HT\ast} \mathbb Z/\ell^n \mathbb Z)_x = H^i((\mathrm{Ig}^b_{\cO_C})^{\mathrm{ad}}_\eta,\mathbb Z/\ell^n \mathbb Z)\ .
\]
Next, we pass to the special fibre.

\begin{lemma}\label{comparison to special fibre} Let $X/\bar{\mathbb F}_p$ be a perfect scheme and let $C$ be a complete algebraically closed nonarchimedean field whose residue field contains $\bar{\mathbb F}_p$. Let $\mathfrak{X}_{\cO_C}$ be the flat formal scheme over $\mathrm{Spf}\ \cO_C$ which is the unique lifting of $X\times_{\bar{\mathbb F}_p} \cO_C/p$, and let $\cX_C = (\mathfrak{X}_{\cO_C})^{\mathrm{ad}}_\eta$ be its generic fibre, which is a perfectoid space. For all $i$, the canonical maps
\[
H^i(X,\mathbb Z/\ell^n \mathbb Z)\leftarrow H^i(\mathfrak{X}_{\cO_C},\mathbb Z/\ell^n\mathbb Z)\to H^i(\cX_C,\mathbb Z/\ell^n \mathbb Z)
\]
are isomorphisms.
\end{lemma}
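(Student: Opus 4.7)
The plan is to establish the two isomorphisms separately. For the first, $H^i(\mathfrak{X}_{\cO_C}, \mathbb Z/\ell^n) \to H^i(X, \mathbb Z/\ell^n)$, I would combine topological invariance of the \'etale site with base change along an extension of algebraically closed fields. Concretely, $\mathfrak{X}_{\cO_C}$ is a $p$-adic formal scheme flat over $\cO_C$; since $\ell \neq p$, its \'etale cohomology coincides with that of its underlying reduced subscheme $X \times_{\bar{\mathbb F}_p} k_C$, where $k_C$ is the (algebraically closed) residue field of $\cO_C$. The base-change map $X \to X \times_{\bar{\mathbb F}_p} k_C$ then induces an isomorphism on $H^i(-, \mathbb Z/\ell^n)$: this is standard when $X$ is of finite type, and extends to the perfect case (in particular to $X = \mathrm{Ig}^b$, which by Proposition~\ref{perfection} is the perfection of a cofiltered system of smooth finite-type schemes) by passing to limits and using that \'etale cohomology is invariant under perfection for prime-to-$p$ coefficients.

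The second isomorphism is the subtler one. The plan here is to work locally and exploit the perfectoid nature of the generic fibre. Writing $X$ locally as $\mathrm{Spec}(R)$ with $R$ perfect, the unique flat lift is $\mathrm{Spf}(W(R) \hat{\otimes}_{W(\bar{\mathbb F}_p)} \cO_C)$, and its adic generic fibre is an affinoid perfectoid space $\mathrm{Spa}(A, A^+)$. I would then invoke two ingredients: the tilting equivalence of \'etale sites for perfectoid spaces, which identifies $H^i(\mathrm{Spa}(A, A^+), \mathbb Z/\ell^n)$ with the \'etale cohomology of the tilt, a characteristic-$p$ perfectoid space naturally built from $\mathrm{Spec}(R)$ and $\cO_{C^\flat}$; and the corresponding characteristic-$p$ comparison, which identifies this tilted generic fibre with $\mathrm{Spec}(R)$ via topological invariance applied to the formal scheme $\mathrm{Spf}(R \hat{\otimes} \cO_{C^\flat})$ together with base change for the algebraically closed extension $C^\flat / \bar{\mathbb F}_p$. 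A \v{C}ech-cohomology argument on an affine cover of $X$ then glues these local comparisons into the global statement, compatibly with the first isomorphism.

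I expect the main obstacle to be the careful globalization in the second step, since $X$ need not be quasicompact and the formal scheme $\mathfrak{X}_{\cO_C}$ is not locally topologically of finite type over $\cO_C$. In the application to $X = \mathrm{Ig}^b$, however, one can exploit the presentation of Proposition~\ref{perfection}: $\cX_C$ can be approximated by a tower of rigid-analytic generic fibres of finite-type smooth formal schemes lifting the finite-level Igusa varieties $\mathscr{I}^b_{\mathrm{Mant},m}$, for which the comparison with the special fibre is Huber's classical theorem; continuity of \'etale cohomology on both sides then recovers the desired perfectoid statement. Compatibility of the two isomorphisms with the canonical maps $X \leftarrow \mathfrak{X}_{\cO_C} \to \cX_C$ is straightforward from the construction.
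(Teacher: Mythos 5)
Your treatment of the first map is fine and is essentially the paper's argument: reduce by limits to the case where $X$ is the perfection of a finite-type affine $X_0$, identify the cohomology of the formal scheme with that of its special fibre $X\times_{\bar{\mathbb F}_p}k$, and invoke invariance of \'etale cohomology under change of algebraically closed base field. The tilting step for the second map is also correct and is how the paper begins.

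The gap is in what you do after tilting. You claim the cohomology of the generic fibre of $\mathrm{Spf}(R\hat{\otimes}\,\cO_{C^\flat})$ is identified with that of $\mathrm{Spec}(R)$ ``via topological invariance applied to the formal scheme together with base change.'' Topological invariance of the \'etale site applies to universal homeomorphisms (reductions, perfections); it does not relate a formal scheme to its \emph{analytic generic fibre}, which is the open locus where the pseudouniformizer is invertible and is a genuinely different space. The passage from the special fibre to the generic fibre is a nearby-cycles statement, and this is where the actual content of the lemma sits. The paper handles it by reducing (via tilting and Frobenius limits) to the finite-type constant family $\mathfrak{X}_{0,\cO_C}=X_0\times_{\mathrm{Spec}\,\bar{\mathbb F}_p}\mathrm{Spf}\,\cO_C$, applying Huber's comparison (\cite[Corollary 3.5.17]{huber}), and then showing $R\psi\,\mathbb Z/\ell^n\mathbb Z$ is the constant sheaf because $X_0\to\mathrm{Spec}\,\bar{\mathbb F}_p$ is universally locally acyclic (any finite-type scheme is ULA over a point). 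Without some such local-acyclicity input the comparison is simply false for general formal models, so your argument as written does not close. Your fallback via finite-type smooth formal lifts of the $\mathscr{I}^b_{\mathrm{Mant},m}$ has the same issue hidden in ``the comparison with the special fibre is Huber's classical theorem'' (Huber's theorem expresses the generic-fibre cohomology through nearby cycles on the special fibre; one still must show the nearby cycles are trivial), and in addition it requires constructing compatible smooth lifts of the whole Igusa tower and does not prove the lemma for a general perfect scheme $X$, which is how it is stated and used.
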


\begin{proof} The question is local on $X$, so we can assume that $X$ is affine. Then we can write $X=\varprojlim X_j$ as a cofiltered inverse limit of affine schemes $X_j$ which are perfections of schemes of finite type over $\bar{\mathbb F}_p$. One also gets $\cX_C\sim\varprojlim_j \cX_{j,C}$, so all cohomology groups in question become a filtered colimit over $j$; thus, we can assume that $X$ is the perfection of an affine scheme $X_0$ of finite type. Then the cohomology of $X$ agrees with the cohomology of $X_0$.

Moreover, the cohomology of $\mathfrak{X}_{\cO_C}$ is the same as the cohomology of its special fibre $X\times_{\bar{\mathbb F}_p} k$, where $k$ is the residue field of $\cO_C$, which in turn agrees with the cohomology of $X_0\times_{\bar{\mathbb F}_p} k$. Thus, the first map is an isomorphism by invariance of \'etale cohomology under change of algebraically closed base field.

Also, under tilting, the \'etale cohomology of $\cX_C$ agrees with the \'etale cohomology of $\cX_{C^\flat}$. We may thus assume that $C$ is of characteristic $p$. In that case, one can also form $\mathfrak{X}_{0,\cO_C} = X_0\times_{\mathrm{Spec}\ \bar{\mathbb F}_p} \mathrm{Spf}\ \cO_C$ and its generic fibre $\cX_{0,C}$, which is a rigid-analytic variety over $C$, with $\cX_C\sim\varprojlim_{\mathrm{Frob}} \cX_{0,C}$. Thus, the cohomology of $\cX_C$ agrees with the cohomology of $\cX_{0,C}$. Finally, we are reduced to proving that the map
\[
H^i(\mathfrak{X}_{0,\cO_C},\mathbb Z/\ell^n \mathbb Z)\to H^i(\cX_{0,C},\mathbb Z/\ell^n \mathbb Z)
\]
is an isomorphism. The right hand side can be computed, by~\cite[Corollary 3.5.17]{huber}, in terms of $H^{i-j}(X_0\times_{\bar{\mathbb F}_p}k,R^{j}\psi\mathbb Z/\ell^n \mathbb Z)$. 

It is enough to see that, if $X_0$ is a scheme of finite type over $\bar{\mathbb{F}}_p$, then the complex of nearby cycles of $X_{0,C}=X_0\times_{\bar{\mathbb{F}}_p}C$ is quasi-isomorphic to the constant sheaf $\mathbb Z/\ell^n \mathbb Z$. By~\cite[XIII 2.1.4]{SGA7.2}, we can compute the stalk of $R^{j}\psi\mathbb Z/\ell^n \mathbb Z$ at a geometric point $\bar x$ as $H^{j}((X_{0,\cO_C})_{\bar x}\times C,\mathbb Z/\ell^n \mathbb Z)$, with $(X_{0,\cO_C})_{\bar x}$ the strict henselization of $X_{0,\cO_C}=X_0\times_{\bar{\mathbb F}_p} \cO_C$ at $\bar x$. We conclude, since the map $X_{0,C}\to \mathrm{Spec}\ \cO_C$ is the base change along the map $\mathrm{Spec}\ \cO_C\to\mathrm{Spec}\ \bar{\mathbb F}_p$ of the universally locally acyclic map $X_0\to \bar{\mathbb{F}}_p$. (For universal local acyclicity, we use the definition of~\cite{SGA41/2}. Every scheme of finite type is universally locally acyclic over a point, cf.~\cite[Th. finitude, Th\'eor\`eme 2.13]{SGA41/2}.)
\end{proof}

Thus, we get
\[
(R^i\pi_{HT\ast} \mathbb Z/\ell^n \mathbb Z)_x = H^i(\mathrm{Ig}^b,\mathbb Z/\ell^n \mathbb Z)\ ,
\]
where $\mathrm{Ig}^b / \bar{\mathbb F}_p$ is the perfect scheme introduced in Definition~\ref{new Igusa variety}. Using Proposition~\ref{perfection}, we finally arrive at the following formula.

\begin{thm}\label{comp fiber} For any geometric point $\bar{x}$ of $\Fl_{G,\mu}$ contained in $\Fl_{G,\mu}^b$, there is an isomorphism
\[
(R^i\pi_{HT\ast} \mathbb Z/\ell^n \mathbb Z)_{\bar{x}} = H^i(\mathrm{Ig}^b,\mathbb Z/\ell^n \mathbb Z) = \varinjlim_m H^i(\mathscr{I}^b_{\mathrm{Mant},m},\mathbb Z/\ell^n\mathbb Z)\ .
\]
It (only) depends on the choice of a lift of $\bar{x}$ to $\cM^b_\infty$, and is compatible with the Hecke action of $G(\mathbb A_f^p)$.$\hfill \Box$
\end{thm}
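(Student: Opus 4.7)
The statement is essentially a cleanup that assembles the geometric input developed in this section into a cohomological identification. The plan is to trace a chain of isomorphisms, pushing the problem from the analytic flag variety down to a perfect scheme in characteristic $p$.

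First, I would apply the base change lemma for adic spaces (Lemma~\ref{base change adic}) to reduce the stalk $(R^i\pi_{HT\ast}\mathbb{Z}/\ell^n)_{\bar{x}}$ to the \'etale cohomology of the fibre $\cS_{K^p,\bar{x}}$. Since $R\pi_{HT\ast}\mathbb{Z}/\ell^n\mathbb{Z}$ has locally constant cohomology sheaves (which one sees a posteriori, but for the computation at a fixed point one only needs to compare two valuation rings on the same completed algebraic closure), Lemma~\ref{only rank 1 matters} lets me replace the higher-rank point by its rank-$1$ generalization without changing the cohomology. So I may assume $\bar{x} = \mathrm{Spa}(C,\cO_C)$ is a geometric rank-$1$ point lying in some stratum $\Fl_{G,\mu}^b$.

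Next, since all rank-$1$ points of $\cS_{K^p,\bar{x}}$ lie in the stratum $\cS_{K^p}^b$ (by the specialization argument implicit in Lemma~\ref{surjectivitypiHT}: any geometric point specializes to an abelian variety whose $p$-divisible group has Newton element $b$), Lemma~\ref{only rank 1 matters} applied again allows me to restrict to $\cS_{K^p,\bar{x}}^b := \cS_{K^p,\bar{x}} \times_{\cS_{K^p}} \cS_{K^p}^b$. Now I invoke Lemma~\ref{surjectivitypiHT} to lift $\bar{x}$ to a point $z \in \cM_{\cD,\infty}^b(C,\cO_C)$ (which gives a distinguished lift $(\mathbb{X}_b)_{\cO_C}$ of $\mathbb{X}_b$), and then apply the Cartesian diagram of Lemma~\ref{cartesian diagram 2} to identify $\cS_{K^p,\bar{x}}^b$ with the fibre $\cX^b_{\infty,z}$ of $\cX^b_\infty \to \cM_\infty^b$ at $z$. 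Corollary~\ref{adic product} collapses this fibre to $(\mathrm{Ig}^b_{\cO_C})^{\mathrm{ad}}_\eta$.

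It remains to pass from the generic fibre of the canonical lift to the special fibre. This is exactly Lemma~\ref{comparison to special fibre}, applied to the perfect scheme $\mathrm{Ig}^b/\bar{\mathbb{F}}_p$: the cohomology of $(\mathrm{Ig}^b_{\cO_C})^{\mathrm{ad}}_\eta$ agrees with the cohomology of the formal model, which in turn agrees with the cohomology of $\mathrm{Ig}^b$ itself. Finally, Proposition~\ref{perfection} identifies $\mathrm{Ig}^b$ with the perfection of Mantovan's pro-Igusa tower $\mathscr{I}^b_{\mathrm{Mant}} = \varprojlim_m \mathscr{I}^b_{\mathrm{Mant},m}$; since perfection does not change \'etale cohomology and \'etale cohomology commutes with cofiltered limits of qcqs schemes along affine transition maps, one obtains the displayed colimit formula. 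The Hecke equivariance at every stage is immediate from the construction, since $G(\mathbb{A}_f^p)$ acts compatibly on $\cS_{K^p}$, on $\cX^b_\infty$ (via its factor in the product decomposition of Lemma~\ref{integral product}), and on $\mathrm{Ig}^b$ by its moduli interpretation, while acting trivially on $\Fl_{G,\mu}$ and $\cM_\infty^b$; the only choice-dependence is that of the lift of $\bar{x}$ to $\cM^b_\infty$, which enters only through the identification in Lemma~\ref{cartesian diagram 2}. There is no serious obstacle here: every step has been set up precisely in the preceding subsections, and the proof is purely a matter of concatenation.
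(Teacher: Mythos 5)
Your proposal is correct and follows essentially the same chain of identifications as the paper's own proof: base change to the fibre, reduction to rank~$1$ points via Lemma~\ref{only rank 1 matters}, restriction to the $b$-stratum, the Cartesian diagram and product formula to identify the fibre with $(\mathrm{Ig}^b_{\cO_C})^{\mathrm{ad}}_\eta$, and finally the comparison with the special fibre and Proposition~\ref{perfection}. The only cosmetic slip is the aside about $R\pi_{HT\ast}\mathbb Z/\ell^n\mathbb Z$ having locally constant cohomology sheaves — Lemma~\ref{only rank 1 matters} is applied to the constant sheaf on the fibre itself, so no such input is needed, as your own parenthetical correctly notes.
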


One can formulate a version of this result where one replaces $\mathbb Z/\ell^n \mathbb Z$ by the local system corresponding to an algebraic representation $\xi$.
\newpage

\section{The cohomology of Igusa varieties}\label{Igusa varieties}

The goal of this section is to compute the alternating sum of cohomology groups $[H(\mathfrak{Ig}^b,\bar{\mathbb{Q}}_\ell)]$ as a virtual representation of $G(\mathbb{A}_f^p)\times J_b(\mathbb{Q}_p)$. We will work with (the Igusa varieties corresponding to) unitary Shimura varieties. Our setup is similar to that of~\cite{scholze-shin} (see Section~\ref{setup} for more detail) and we intend to prove a version of Theorem 6.1 of~\cite{shin-galois} in this situation.

By Proposition~\ref{perfection} and since perfection does not change the \'etale topos, it is enough to work with the classical objects $\mathscr{I}^b_{\mathrm{Mant}}$. By Poincar\'e duality, it is enough to compute the alternating sum of the compactly supported cohomology groups. Sug Woo Shin has derived a formula for the alternating sum $[H_c(\mathscr{I}^b_{\mathrm{Mant}},\bar{\mathbb{Q}}_\ell)]$ as a sum of stable orbital integrals for $G$ and its elliptic endoscopic groups (see Theorem~\ref{stable trace formula for Igusa}). We reinterpret this formula as the geometric side of the twisted trace formula and compare it to the spectral side. 

\subsection{Setup}\label{setup} We assume that $F=F^{+}\cdot \cK$ is the composition of a totally real field $F^+$ and an imaginary quadratic field $\cK$. Let $c\in \Gal(F/F^+)$ be the non-trivial element. Let $G/\mathbb{Q}$ be a unitary similitude group preserving an alternating hermitian form $\langle\ ,\ \rangle$ on an $F$-vector space $V$ of dimension $n$. Let $\Spl_{F/F^+}$ denote the set of rational primes $v$ such that every prime of $F^+$ above $v$ splits in $F$. We make the following further assumptions on $F$ and $G$. 

\begin{enumerate}
\item $F^+\neq \mathbb{Q}$;
\item the set of rational primes which are ramified in $F$ is contained in $\Spl_{F/F^+}$;
\item $G$ is quasi-split at all finite places.
\end{enumerate}

\noindent See Section 10 of~\cite{scholze-shin} for a discussion of these conditions. The first two are imposed to avoid issues with $L$-packets and base change for unitary groups.\footnote{Actually, (2) implies (1).} The third condition implies that endoscopic representations will contribute to $[H_c(\mathscr{I}^b_{\mathrm{Mant}},\bar{\mathbb{Q}}_\ell)]$, and is thus in some sense the hardest case.

Let $h:\mathbb{C}\to \mathrm{End}_F(V)_{\mathbb{R}}$ be an $\mathbb{R}$-algebra homomorphism such that $h(z^c)=h(z)^c$ for all $z\in \mathbb{C}$ and such that the bilinear pairing $(v,w)\mapsto \langle v,h(i)w\rangle$ is symmetric and positive definite. Then $(F,c,V,\langle\ ,\ \rangle,h)$ is a Shimura datum of PEL type. The fact that $c$ is an involution of the second kind implies that the PEL datum is of type $(A)$, according to the classification on page 375 of~\cite{kottwitz}. 

The $\mathbb{R}$-algebra homomorphism $h$ induces a homomorphism of algebraic groups $h:\mathrm{Res}_{\mathbb{C}/\mathbb{R}}\mathbb{G}_m\to G_\mathbb{R}$. Then $(G,\{h\})$ is a Shimura datum as in Section~\ref{recollections for Hodge type}. For $K\subset G(\mathbb{A}_f)$ an open compact subgroup, we can define the Shimura variety $S_K$, which has a model over the reflex field $E$. Let $\mu$ be the Hodge cocharacter corresponding to $h$. We follow the slight abuse of notation in denoting by $S_K$ not the actual Shimura variety, but the PEL moduli problem, which is the disjoint union of $|\mathrm{ker}^1(G,\mathbb Q)|$ copies of the actual Shimura variety. This factor $|\mathrm{ker}^1(\mathbb Q,G)|$ will thus appear in many formulas below.

Also assume that the prime $p$ is unramified in $F$ and splits in $\cK$ (so, in particular, it lies in $\Spl_{F/F^+}$).

Let $\p$ be a prime in the reflex field $E$ of the Shimura datum above the rational prime $p$. Let $K\subset G(\mathbb{A}_f)$ be a compact open subgroup which is sufficiently small and has the form $K^pK_p$, such that $K_p\subset G(\mathbb{Q}_p)$ is hyperspecial. The fact that $p$ is unramified in $F$ means that good integral models $\mathscr{S}_K$ of $S_K$ exist over $\cO_{E_\p}$.

We fix a field isomorphism $\iota_\ell:\mathbb{\bar Q}_\ell \toisom \mathbb{C}$ throughout. If $\mathrm{G}$ is a topological group, such that every neighborhood of the identity contains a compact-open subgroup and $\Omega$ is an algebraically closed field of characteristic $0$, we let $C_c^\infty(\mathrm{G})$ be the space of smooth, compactly supported, $\Omega$-valued functions on $\mathrm{G}$ (usually they will be $\mathbb{C}$-valued; if they are valued in $\mathbb{\bar Q}_\ell$, then by smooth we mean locally-constant). We let $\mathrm{Irr}(\mathrm{G})$ denote the set of isomorphism classes of irreducible admissible representations of $\mathrm{G}$ over $\Omega$ and $\mathrm{Groth}(\mathrm{G})$ be the corresponding Grothendieck group. For all the groups we consider, we choose Haar measures and transfer factors as in~\cite{shin-stable, shin-galois}. 

In particular, if $\mathrm{G}$ is an unramified group over a non-archimedean field $\mathrm{F}$, we choose a hyperspecial maximal compact subgroup $\mathrm{K}$ and a Haar measure such that $\mathrm{K}$ has volume $1$. We let $\cH^{\mathrm{ur}}(\mathrm{G}(\mathrm{F}))$ be the subspace of $C_c^\infty(\mathrm{G}(\mathrm{F}))$ consisting of bi-$\mathrm{K}$-invariant functions, which is an algebra with respect to convolution. 

\subsection{A stable trace formula}\label{stable formula} In this section, we recall the main constructions and results of~\cite{shin-stable}. For any open compact subgroup $K\subset G(\mathbb{A}_f)$ which is hyperspecial at $p$ we have an integral model $\mathscr{S}_K/\cO_{E_\p}$. As described in Section~\ref{a product formula}, the special fiber of $\mathscr{S}_{K}$ has a Newton polygon stratification, in terms of elements $b\in B(G, \mu^{-1})$. Fix $b$ and also a $p$-divisible group with extra structures $\mathbb{X}_b/\mathbb{\bar F}_p$ as in Section~\ref{a product formula}. Recall that $J_b(\mathbb{Q}_p)$ is the group of quasi-self-isogenies of $\mathbb{X}_b$ which respect all the extra structures. 

By \emph{the Igusa variety} $\mathscr{I}^b_{\mathrm{Mant}}$ we mean the projective system of $\mathbb{\bar F}_p$-schemes $\mathscr{I}^b_{\mathrm{Mant},K^p,m}$, where $K^p\subset G(\mathbb{A}_f^p)$ runs over sufficiently small open compact subgroups and $m$ runs over positive integers. Each of these schemes is a finite Galois cover of the leaf $\mathscr{C}_b$ inside $\mathscr{S}_K^b$. Define 
\[[H_c(\mathscr{I}^b_{\mathrm{Mant}},\mathbb{\bar Q}_\ell)]:=\bigoplus_k (-1)^k\varinjlim_{K^p,m}H^k_c(\mathscr{I}^b_{\mathrm{Mant},K^p,m},\mathbb{\bar Q}_\ell).\] 
Since each of the summands is an admissible representation of $G(\mathbb{A}_{f}^p)\times J_b(\mathbb{Q}_p)$, we think of $[H_c(\mathscr{I}^b_{\mathrm{Mant}},\mathbb{\bar Q}_\ell)]$ as a virtual representation in $\mathrm{Groth}(G(\mathbb{A}_{f}^p)\times J_b(\mathbb{Q}_p))$.

Often, we will fix a finite set $S$ of places of $\mathbb Q$ including $p,\infty$ and all places at which $F$ ramifies. If we fix a compact open subgroup $K^S\subset G(\mathbb A^S)$ which is a product of hyperspecial maximal compact open subgroups $K_q\subset G(\mathbb Q_q)$, we let
\[
[H_c(\mathscr{I}^b_{\mathrm{Mant}},\bar{\mathbb Q}_\ell)]^{S\ur}
\]
be the summand of $[H_c(\mathscr{I}^b_{\mathrm{Mant}},\bar{\mathbb Q}_\ell)]$ of those representations which are unramified outside $S$. More precisely, any element $\pi\in \mathrm{Groth}(G(\mathbb{A}_{f}^p)\times J_b(\mathbb{Q}_p))$ can be written as a (possibly infinite) sum
\[
\pi = \sum_i n_i \pi_i\ ,
\]
where $\pi_i$ runs through the irreducible representations of $G(\mathbb A_f^p)\times J_b(\mathbb Q_p)$ (all of which decompose into a tensor product), $n_i\in \mathbb Z$, and for each compact open subgroup $K\subset G(\mathbb A_f^p)\times J_b(\mathbb Q_p)$, there are only finitely many $i$ for which $n_i\neq 0$ and $\pi_i^K\neq 0$. Then we define
\[
\pi^{S\ur} = \sum_{i: \pi_i^{K^S}\neq 0} n_i \pi_i\ .
\]
Let $\mathrm{Groth}(G(\mathbb{A}_{f}^p)\times J_b(\mathbb{Q}_p))^{S\ur}$ denote the subgroup of $\mathrm{Groth}(G(\mathbb{A}_{f}^p)\times J_b(\mathbb{Q}_p))$ consisting of those $\pi$ for which $\pi=\pi^{S\ur}$. Then there are nondegenerate trace pairings
\[
\mathrm{Groth}(G(\mathbb{A}_{f}^p)\times J_b(\mathbb{Q}_p))\times C_c^\infty(G(\mathbb{A}_{f}^p)\times J_b(\mathbb{Q}_p))\to \mathbb C\ ,
\]
and
\[
\mathrm{Groth}(G(\mathbb{A}_{f}^p)\times J_b(\mathbb{Q}_p))^{S\ur}\times \left(\cH^\ur(G(\mathbb A^S))\otimes C_c^\infty(G(\mathbb A_{S_{\mathrm{fin}}\setminus \{p\}})\times J_b(\mathbb Q_p))\right)\to \mathbb C\ .
\]

Let $\phi \in C^\infty_c(G(\mathbb{A}_{f}^p)\times J_b(\mathbb{Q}_p))$. We say that $\phi$ is \emph{acceptable} if it satisfies the conditions of Definition 6.2 of~\cite{shin-igusa}. The main condition is that $\phi$ is a linear combination of functions of the form $\phi^p\times \phi_p$, where $\phi_p$ is supported on $\nu_b$-acceptable elements of $J_b(\mathbb{Q}_p)$. These are those elements $\delta \in J_b(\mathbb{Q}_p)$, $\delta=(\delta_i)\in \prod_{i=1}^r\mathscr{A}ut^0(\mathbb{X}_i)$, such that any eigenvalues $e_i$ of $\delta_i$ satisfy \[v_p(e_i)<v_p(e_j)\ \mathrm{whenever}\ \lambda_i>\lambda_j\] (Definition 6.1 of~\cite{shin-igusa}). 
\begin{remark} This condition will separate components of $J_b(\mathbb{Q}_p)$ corresponding to different slopes in terms of their $p$-adic valuation, which in turn is needed in order to transfer functions from $J_b(\mathbb{Q}_p)$ to $G(\mathbb{Q}_p)$. See Lemma 3.9 of~\cite{shin-stable} and Lemma V.5.2 of~\cite{harris-taylor} for more details. 
\end{remark}
\noindent Lemma 6.3 of~\cite{shin-igusa} guarantees that the twist of any $\phi$ by a sufficiently high power of Frobenius is acceptable.  

We recall the set $\cE^\mathrm{ell}(G)$ of elliptic endoscopic triples for $G$. In fact, we work more generally: let $\mathrm{F}$ be a local or global field of characteristic $0$ and let $\mathrm{G}$ be a connected reductive group over $\mathrm{F}$. An \emph{endoscopic triple} for $\mathrm{G}$ is a triple $(\mathrm{H},s,\eta)$, where $\mathrm{H}$ is a quasi-split connected reductive group over $\mathrm{F}$, $s$ is an element of $Z(\hat{\mathrm{H}})$ and $\eta: \hat{\mathrm{H}}\to \hat{\mathrm{G}}$ is an embedding of complex Lie groups. The triple has to satisfy certain conditions, as in 7.4 of~\cite{kottwitz-cuspidal}. Let $\Gamma:= \Gal (\mathbb{\bar Q}/\mathbb{Q})$. An endoscopic triple is called \emph{elliptic} if $(Z(\hat{\mathrm{H}})^{\Gamma})^\circ \subset Z(\mathrm{G})$. We will use the notion of isomorphism of endoscopic triples in Section 2.1 of~\cite{shin-stable}, which is stronger than the one in~\cite{kottwitz-cuspidal}. We let $\cE^\mathrm{ell}(\mathrm{G})$ be the set of isomorphism classes of elliptic endoscopic triples for $\mathrm{G}$. 

Assume that $\mathrm{G}^{\mathrm{der}}$ is simply-connected (this will be the case for $\mathrm{G}:=G$, our unitary similitude group). We use Weil groups to construct $L$-groups; then we can choose an extension of $\eta$ to an $L$-group morphism $\tilde\eta: {}^L\mathrm{H}\to {}^L\mathrm{G}$ by Proposition 1 of~\cite{langlands}.

Assume that $\mathrm{F}$ is a local field. Given $\tilde \eta$, Langlands and Shelstad (see~\cite{langlands-shelstad}) define a transfer factor \[\Delta:\mathrm{H}(\mathrm{F})_{\mathrm{ss},(\mathrm{G},\mathrm{H})\mathrm{-reg}}\times\mathrm{G}(\mathrm{F})_{\mathrm{ss}}\to \mathbb{C},\] which is canonical up to a non-zero constant. 

The fundamental lemma and the transfer conjecture, which are now theorems due to Ngo, Waldspurger and others (see~\cite{ngo, waldspurger}), assert that for each function $\phi\in C_c^\infty(\mathrm{G}(\mathrm{F}))$, there exists $\phi^\mathrm{H}\in C^\infty_c(\mathrm{H}(\mathrm{F}))$ satisfying an identity about the transfer of orbital integrals 
\[SO^{\mathrm{H}(\mathrm{F})}_{\gamma_{\mathrm{H}}}(\phi^{\mathrm{H}})=\sum_{\gamma\in \mathrm{G}(\mathrm{F})_{\mathrm{ss}}/\sim}\Delta(\gamma_{\mathrm{H}},\gamma)e(\mathrm{G}_{\gamma})O^{\mathrm{G}(\mathrm{F})}_{\gamma}(\phi)\] 
(see Theorem 3.1 of~\cite{scholze-shin} for an explanation of the notation). If $\mathrm{H},\mathrm{G}$ and $\tilde\eta$ are unramified and if $\phi\in \cH^{\mathrm{ur}}(\mathrm{G}(\mathrm{F}))$, then $\Delta$ can be normalized such that $\phi^\mathrm{H}$ can be taken to be $\tilde\eta^*(\phi)$, where $\tilde\eta^*:\cH^{\mathrm{ur}}(\mathrm{G}(\mathrm{F}))\to \cH^{\mathrm{ur}}(\mathrm{H}(\mathrm{F}))$ is the morphism of unramified Hecke algebras induced from $\tilde\eta$ via the Satake isomorphism. In particular, if $\phi$ is the idempotent associated to a hyperspecial maximal compact subgroup, then $\phi^{\mathrm{H}}$ can also be taken to be the idempotent of a hyperspecial maximal compact subgroup.  

Let $\phi\in  C^\infty_c(G(\mathbb{A}_{f}^p)\times J_b(\mathbb{Q}_p))$ be an acceptable function of the form \[\phi=\prod_{v\not=\infty}\phi_v,\ \mathrm{with}\ \phi_v\in C_c^\infty(G(\mathbb{Q}_v)), v\neq p,\ \phi_p\in C_c^\infty(J_b(\mathbb{Q}_p)).\] Let $(H,s,\eta)\in \cE^{\mathrm{ell}}(G)$. 

\begin{defn}\label{definition of Igusa transfer} Let $\phi^H:=\phi^{H,p}\phi^H_p\phi^H_\infty\in C_c^\infty(H(\mathbb{A}))$, where:
\begin{itemize}
\item $\phi^{H,p}$ is the Langlands-Shelstad transfer of $\phi^p$ (as described above);
\item $\phi^H_\infty$ is constructed by Kottwitz in Section 7 of~\cite{kottwitz-lambda-adic}, where we take the trivial algebraic representation of $G$ as an input (this corresponds to the fact that our local system on $\mathscr{I}^b_{\mathrm{Mant}}$ is $\bar{\mathbb Q}_\ell$.) We give more details in the case when $G$ is a unitary similitude group below. 
\item $\phi^H_p$ is constructed in Section 6 of~\cite{shin-stable}. The function $\phi^H_p$ is the key construction of~\cite{shin-stable}; we give more details in Section~\ref{transfer at p} below. 
\end{itemize} 
\end{defn}
\noindent The following is the main result of~\cite{shin-stable}, Theorem 7.2 of loc.~cit.

\begin{thm}\label{stable trace formula for Igusa} Let $\phi$ and $\phi^H$ be as above, with $(H,s,\eta)\in \cE^{\mathrm{ell}}(G)$. Then 
\[\mathrm{tr}(\phi|\iota_\ell H_c(\mathscr{I}^b_{\mathrm{Mant}},\mathbb{\bar Q}_\ell))=|\ker^1(\mathbb{Q},G)|\sum_{(H,s,\eta)}\iota(G,H)ST^H_e(\phi^H).\]
\end{thm}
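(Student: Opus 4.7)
The plan is to follow Shin's proof in \cite{shin-stable}, which proceeds in two main stages: first establishing a Langlands–Kottwitz style counting formula for $\operatorname{tr}(\phi|\iota_\ell H_c(\mathscr{I}^b_{\mathrm{Mant}},\bar{\mathbb{Q}}_\ell))$ as a sum of (twisted) orbital integrals indexed by Kottwitz parameters, and second stabilizing that sum into the claimed expression in terms of elliptic endoscopic groups. Throughout, acceptability of $\phi$ is what makes everything go through, because it allows one to replace the Frobenius appearing in a Fujiwara-type trace formula on the special fiber with a group element whose orbital integrals are controlled.

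First, I would derive a counting formula. Since the tower $\mathscr{I}^b_{\mathrm{Mant},K^p,m}\to \mathscr{C}^b$ is a pro-\'etale $K_b$-torsor (with $K_b\subset J_b(\mathbb{Q}_p)$ a compact open) and $\mathscr{C}^b$ is a smooth variety whose geometric fixed points admit a Honda–Tate type parametrization by polarized virtual abelian varieties with extra structure, the acceptability assumption (Definition 6.2 of \cite{shin-igusa}) guarantees via Lemma V.5.2 of \cite{harris-taylor} and its refinements in \cite{shin-igusa} that the support of $\phi_p$ lies in $\nu_b$-acceptable elements, and hence one may apply the Fujiwara–Varshavsky trace formula to $\phi^p\phi_p \cdot \Phi^N$ for $N$ large. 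The fixed points organize themselves into Kottwitz triples $(\gamma_0;\gamma,\delta)$, where $\gamma_0\in G(\mathbb{Q})$ is semisimple elliptic in $G(\mathbb{R})$, $\gamma\in G(\mathbb{A}_f^p)$, and $\delta\in J_b(\mathbb{Q}_p)$, subject to the usual Kottwitz matching conditions. The resulting formula reads
\[
\operatorname{tr}(\phi|\iota_\ell H_c(\mathscr{I}^b_{\mathrm{Mant}},\bar{\mathbb{Q}}_\ell)) = \sum_{(\gamma_0;\gamma,\delta)} c(\gamma_0;\gamma,\delta)\, O_\gamma(\phi^p)\, O_\delta(\phi_p),
\]
with explicit volume/cohomology factors $c(\gamma_0;\gamma,\delta)$ computed as in Sections 4--5 of \cite{shin-igusa} and proved to equal the Kottwitz volume factor times $|\ker^1(\mathbb{Q},G)|$.

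Second, I would stabilize this sum using Kottwitz's recipe from \cite{kottwitz-cuspidal}, \cite{kottwitz-lambda-adic}. The principal obstacle is the local stabilization at $p$: one must find, for each elliptic endoscopic triple $(H,s,\eta)$, a function $\phi^H_p\in C_c^\infty(H(\mathbb{Q}_p))$ whose stable orbital integrals match the twisted endoscopic transfer of $\phi_p\in C_c^\infty(J_b(\mathbb{Q}_p))$. This is precisely the construction carried out in Section 6 of \cite{shin-stable}, and it proceeds by writing an arbitrary endoscopic group at $p$ as a product of unitary and general-linear factors (using the split hypothesis at $p$ and the fact that $p\in \Spl_{F/F^+}$), matching $\phi_p$ through the Kottwitz homomorphism on $B(J_b)$, and reducing the verification to the standard unramified fundamental lemma. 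This is where I expect the real difficulty to lie: at places $v\neq p, \infty$, one uses the classical Langlands–Shelstad transfer \cite{langlands-shelstad} (with the fundamental lemma of Ngo \cite{ngo} and Waldspurger \cite{waldspurger} giving $\phi_v^H$), while at $\infty$ one invokes Kottwitz's pseudocoefficient construction \cite{kottwitz-lambda-adic}, applied to the trivial representation, producing $\phi^H_\infty$.

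Finally, inserting these transfers into Kottwitz's stabilization identity turns the sum over Kottwitz triples into a sum over $(H,s,\eta)\in \cE^{\mathrm{ell}}(G)$ of stable elliptic orbital integrals $ST^H_e(\phi^H)$ with coefficients $\iota(G,H)=\tau(G)\tau(H)^{-1}|\mathrm{Out}(H,s,\eta)|^{-1}$, picking up the factor $|\ker^1(\mathbb{Q},G)|$ from the passage between the moduli problem and the Shimura variety. Matching bookkeeping with the formula in \cite[Theorem 7.2]{shin-stable} concludes the proof. The technical heart throughout is the construction and matching property of $\phi^H_p$, together with verifying that the Kottwitz triple parametrization of fixed points on $\mathscr{I}^b_{\mathrm{Mant}}$ precisely reproduces (after $p$-adic reorganization) the terms needed for Kottwitz's stabilization to apply verbatim.
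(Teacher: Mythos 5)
The paper does not prove this statement at all: it is imported verbatim as Theorem 7.2 of~\cite{shin-stable}, with the local ingredient $\phi^H_p$ taken as a black box from Section 6 of that paper. Your outline is therefore not being compared against an argument in this paper but against Shin's original proof, and as a summary of that proof it is accurate: the two stages you describe (a Fujiwara-type counting formula on the special fibre organized by Kottwitz triples $(\gamma_0;\gamma,\delta)$ with $\delta\in J_b(\mathbb Q_p)$, followed by Kottwitz-style stabilization whose only genuinely new ingredient is the transfer $\phi_p\mapsto \phi^H_p$ from $J_b(\mathbb Q_p)$ through the sets $\cE_p^{\mathrm{eff}}(J_b,G,H)$) are exactly the structure of \cite{shin-igusa} and \cite{shin-stable}, and you correctly identify acceptability of $\phi_p$ as the hypothesis that makes both the fixed-point count and the transfer at $p$ work. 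Two small cautions if you intend to flesh this out rather than cite it: the counting formula of \cite{shin-igusa} is not a direct application of Fujiwara's theorem to $\mathscr{I}^b_{\mathrm{Mant}}$ (which is neither proper nor of finite type); it is proved for the twists $\phi^{(N)}$ by a Frobenius power and then extended to all acceptable $\phi$ by the deformation argument of \cite[Lemma~6.3--6.4]{shin-igusa}, and the identification of the constants $c(\gamma_0;\gamma,\delta)$ with Kottwitz's volume factors is itself a substantial computation. Neither point is a gap in your plan, but both are where the actual work lives.
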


\begin{remark} Shin's result is in fact valid for any PEL Shimura variety of type $(A)$ or $(C)$. We recall that \[\ker^1(\mathbb{Q},G):=\ker\left(H^1(\mathbb{Q},G)\to \prod_v H^1(\mathbb{Q}_v,G)\right)\ ,\]
and that $S_K$ is the disjoint union of $|\ker^1(\mathbb Q,G)|$ copies of the Shimura variety for $G$. Also, $\iota(G,H):=\tau(G)\tau(H)^{-1}|\mathrm{Out}(H,s,\eta)|^{-1}$. The term $ST^H_e(\phi^H)$ is a sum of stable orbital integrals over (representatives of) $\mathbb{Q}$-elliptic semisimple stable conjugacy classes in $H(\mathbb{Q})$.  
\end{remark}

In the case of our unitary similitude group $G$, the set $\cE^{\mathrm{ell}}(G)$ only depends on the quasi-split inner form $G_n$ of $G$ and in~\cite{shin-galois}, Shin gives a concrete description of a set of representatives for the isomorphism classes in $\cE^{\mathrm{ell}}(G_n)$. If $\vec{n}=(n_i)_{i=1}^s$ is a vector with entries positive integers, one can define a quasi-split group $G_{\vec{n}}$ over $\mathbb{Q}$ as in Section 3.1 of~\cite{shin-galois}. Define $GL_{\vec{n}}:=\prod_{i=1}^s GL_{n_i}$ and let $i_{\vec{n}}: GL_{\vec{n}}\to GL_{(\sum_i n_i)}$ be the natural map. Let \[\Phi_{\vec{n}}:=i_{\vec{n}}(\Phi_{n_1},\dots,\Phi_{n_s}),\] where $\Phi_n$ is the matrix in $GL_n$ with entries $(\Phi_n)_{ij}=(-1)^{i+1}\delta_{i,n+1-j}$. Then $G_{\vec{n}}$ is the algebraic group over $\mathbb{Q}$ sending a $\mathbb{Q}$-algebra $R$ to 
\[G_{\vec{n}}(R)=\{(\lambda, g)\in R^\times \times GL_{\vec{n}}(F\otimes_{\mathbb{Q}}R)| g\cdot\Phi_{\vec{n}}\cdot {}^tg^c=\lambda\Phi_{\vec{n}}\}.\] 
Since $G$ is quasi-split at all finite places, we have \[G\times_{\mathbb{Q}}\mathbb{A}_f\simeq G_n\times_{\mathbb{Q}}\mathbb{A}_f\] and we fix such an isomorphism.    

The representatives for $\cE^{\mathrm{ell}}(G_n)$ can be taken to be \[\left\{(G_n,s_n,\eta_n)\right\}\cup \left\{(G_{n_1,n_2},s_{n_1,n_2},\eta_{n_1,n_2})|n_1+n_2=n,n_1\geq n_2\geq 0\right\},\]  where $(n_1,n_2)$ may be excluded if both $n_1$ and $n_2$ are odd numbers (see condition 7.4.3 of~\cite{kottwitz-cuspidal}). Here, $s_n=1\in \hat{G}_n, s_{n_1,n_2}=(1,(I_{n_1},-I_{n_2}))\in \hat{G}_{n_1,n_2}$, $\eta_n$ is the identity map and $\eta_{n_1,n_2}:\hat{G}_{n_1,n_2}\to \hat{G}_n$ is the natural embedding induced by $GL_{n_1}\times GL_{n_2}\hookrightarrow GL_n$.

If we choose a Hecke character $\psi:\mathbb{A}^\times_{\cK}/\cK^\times\to \mathbb{C}^\times$ such that $\psi|_{\mathbb{A}^\times/\mathbb{Q}^\times}$ corresponds via class field theory to the quadratic character associated to $\cK/\mathbb{Q}$, we can extend $\eta_{n_1,n_2}$ to an $L$-morphism \[\tilde{\eta}_{n_1,n_2}:{}^L G_{n_1,n_2}\to {}^L G_n.\] (See Section 3.2 of~\cite{shin-galois} for the precise formula.) By Proposition 7.1 of~\cite{shin-galois}, $\psi$ can be chosen such that the set of primes where $\psi$ is ramified is contained in $\Spl_{F/F^+}$. As a consequence, we can use the explicit transfer factors described in Section 3.4 of~\cite{shin-galois} at all places not equal to $p,\infty$. These are compatible with the Langlands-Shelstad transfer described above: at unramified places $v$, we take \[\tilde \eta_{n_1,n_2}^*: \cH^{\mathrm{ur}}(G_n(\mathbb{Q}_v))\to \cH^{\mathrm{ur}}(G_{n_1,n_2}(\mathbb{Q}_v)),\] 
making use of the fundamental lemma~\cite{ngo}. Since we have fixed an isomorphism $G\times_{\mathbb{Q}}\mathbb{A}_f\toisom G_n\times_{\mathbb{Q}}\mathbb{A}_f$, we can also think of this as a transfer from $G$ to $G_{n_1,n_2}$ at places away from $p,\infty$.  

We also describe the explicit transfer at the place $\infty$. The transfer is as in Section 7 of~\cite{kottwitz-lambda-adic} and uses Shelstad's theory of real endoscopy and the Langlands correspondence for real reductive groups; see also Section 3.5 of~\cite{shin-galois} for any unfamiliar notation. Recall that over $\mathbb{R}$, $G$ is an inner form of the quasi-split unitary similitude group $G_n$. For any discrete $L$-parameter $\varphi_{G_{\vec{n}}}$ for $G_{\vec{n}}$, with $L$-packet $\Pi(\varphi_{G_{\vec{n}}})$, define \[\phi_{\varphi_{G_{\vec{n}}}}:=\frac{1}{|\Pi(\varphi_{G_{\vec{n}}})|}\sum_{\pi \in \Pi(\varphi_{G_{\vec{n}}})}\phi_{\pi},\] where $\phi_{\pi}$ is a pseudo-coefficient for $\pi$. When $\varphi_{G_{\vec{n}}}\sim \varphi_{\xi}$ corresponds to an $L$-packet $\Pi_{\mathrm{disc}}(G_{\vec{n}}(\mathbb{R}),\xi^\vee)$ for some irreducible algebraic representation $\xi$ of $G_{\vec{n}}$, the function $\phi_{\varphi_{G_{\vec{n}}}}$ is called an \emph{Euler-Poincar\'e function}; we denote it also by $\phi_{G_{\vec{n}},\xi}$. The desired function $\phi_\infty^{\vec{n}}$ will be a precise linear combination of the Euler-Poincar\'e functions for $L$-parameters $\varphi_{G_{\vec{n}}}$ for which $\tilde \eta \circ \varphi_{G_{\vec{n}}}$ corresponds to the trivial algebraic representation of $G_{\mathbb{C}}$ (see 5.11 of~\cite{shin-galois} for the precise formula).   

For further use, we record a version of Theorem~\ref{stable trace formula for Igusa} for the group $G$.    

\begin{cor}\label{explicit stable trace formula} If $\phi^{p}\cdot \phi_p\in C^\infty_c(G(\mathbb{A}_f^p)\times J_b(\mathbb{Q}_p))$ is acceptable, then 
\[\mathrm{tr}(\phi|\iota_\ell H_c(\mathscr{I}^b_{\mathrm{Mant}},\mathbb{\bar Q}_\ell))=|\ker^1(\mathbb{Q},G)|\sum_{G_{\vec{n}}} \iota(G,G_{\vec{n}})ST_e^{G_{\vec{n}}}(\phi^{\vec{n}}),\] where $G_{\vec{n}}$ runs over the set described above and $\phi^{\vec{n}}$ is obtained from $\phi$ as in Definition~\ref{definition of Igusa transfer}. 
\end{cor}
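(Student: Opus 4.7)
The plan is to deduce this corollary directly from Theorem~\ref{stable trace formula for Igusa} by substituting in the explicit classification of elliptic endoscopic triples for $G$. First I would observe that since we have fixed an isomorphism $G\times_{\mathbb Q}\mathbb A_f\cong G_n\times_{\mathbb Q}\mathbb A_f$, and since the set of isomorphism classes of elliptic endoscopic triples depends only on the dual group together with its Galois action (i.e.\ only on the quasi-split inner form), we have a canonical bijection $\cE^{\mathrm{ell}}(G) = \cE^{\mathrm{ell}}(G_n)$. As recalled in the discussion preceding the corollary, an explicit set of representatives for the latter is given by $(G_n,s_n,\eta_n)$ together with the triples $(G_{n_1,n_2},s_{n_1,n_2},\eta_{n_1,n_2})$ for $n_1+n_2=n$, $n_1\geq n_2\geq 0$, subject to the exclusion condition from 7.4.3 of~\cite{kottwitz-cuspidal}. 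Thus the sum on the right-hand side of Theorem~\ref{stable trace formula for Igusa} becomes exactly the sum over the $G_{\vec n}$ appearing in the corollary.

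The remaining task is to verify that, with the choices made in the text, the function $\phi^H$ produced by Definition~\ref{definition of Igusa transfer} is exactly the function $\phi^{\vec n}$ appearing in the statement. Away from $p$ and $\infty$, this is a matter of matching the Langlands--Shelstad transfer with the explicit transfer factors of Section~3.4 of~\cite{shin-galois}; at unramified places the latter reduces via the Satake isomorphism to the unramified transfer $\tilde\eta^\ast_{\vec n}$, whose compatibility with orbital integrals is the fundamental lemma of Ng\^o. At $p$, the function $\phi_p^H$ is literally the one constructed in Section~6 of~\cite{shin-stable}, which is built into $\phi^{\vec n}$ at $p$ (to be discussed further in Section~\ref{transfer at p}). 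At $\infty$, Kottwitz's construction of $\phi_\infty^H$ for the trivial algebraic representation, recalled above, yields the explicit linear combination of Euler--Poincar\'e functions $\phi_{G_{\vec n},\xi}$ attached to discrete $L$-parameters $\varphi_{G_{\vec n}}$ with $\tilde\eta\circ\varphi_{G_{\vec n}}$ equal to the trivial parameter, matching formula~(5.11) of~\cite{shin-galois}.

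Once these identifications are in place, the corollary is immediate from Theorem~\ref{stable trace formula for Igusa}, with $\iota(G,G_{\vec n})=\tau(G)\tau(G_{\vec n})^{-1}|\mathrm{Out}(G_{\vec n},s,\eta)|^{-1}$ as defined above. The only point that genuinely needs to be checked, rather than quoted, is that the Hecke character $\psi$ used to extend each $\eta_{n_1,n_2}$ to an $L$-morphism can be arranged to be unramified outside $\Spl_{F/F^+}$, so that the explicit transfer factors of Section~3.4 of~\cite{shin-galois} are actually available at every finite place; this is where one invokes Proposition~7.1 of~\cite{shin-galois}. Since all the geometric content is packaged in Theorem~\ref{stable trace formula for Igusa} and all the endoscopic/transfer-theoretic content is packaged in the references cited, I do not anticipate any substantive obstacle beyond bookkeeping the constants and verifying this ramification condition on $\psi$.
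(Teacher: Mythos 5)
Your proposal is correct and follows exactly the route the paper takes: the corollary is simply Theorem~\ref{stable trace formula for Igusa} specialized to $G$ using the explicit set of representatives $\{(G_n,s_n,\eta_n)\}\cup\{(G_{n_1,n_2},s_{n_1,n_2},\eta_{n_1,n_2})\}$ for $\cE^{\mathrm{ell}}(G)=\cE^{\mathrm{ell}}(G_n)$ and the transfers $\phi^{\vec n}$ already fixed in the preceding discussion (including the choice of $\psi$ via Proposition 7.1 of Shin). The paper records it without further argument, so your bookkeeping of the identifications is, if anything, more detail than the original provides.
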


\begin{remark}\label{constants} The constants $\iota(G,G_{\vec{n}})$ can be computed explicitly: \[\iota(G,G_{\vec{n}})=\begin{cases}\frac{1}{2}\tau(G)\tau(G_{\vec{n}})^{-1}&\mathrm{if}\ \vec{n}=(\frac{n}{2},\frac{n}{2})\\ \tau(G)\tau(G_{\vec{n}})^{-1}&\mathrm{otherwise.}\end{cases}\]
\end{remark}

\subsection{Base change and the twisted trace formula}\label{base change} Let $\mathbb{G}_{\vec{n}}:=\mathrm{Res}_{\cK/\mathbb{Q}}(G_{\vec{n}}\times_{\mathbb{Q}}\cK)$. One can define $L$-morphisms $BC_{\vec{n}}:{}^L G_{\vec{n}}\to {}^L\mathbb{G}_{\vec{n}}$ and $\tilde{\zeta}_{n_1,n_2}: {}^L\mathbb{G}_{n_1,n_2}\to {}^L\mathbb{G}_n$ and there is a commutative diagram of $L$-morphisms 
\begin{equation}\label{commutative diagram of L-morphisms}\xymatrix{^LG_{n_1,n_2}\ar[d]^{BC_{n_1,n_2}}\ar[r]^{\tilde \eta}&^LG_n\ar[d]^{BC_n}\\^L\mathbb{G}_{n_1,n_2}\ar[r]^{\tilde\zeta}&^L\mathbb{G}_n}.\end{equation}

In this section, we review the associated base change for the groups $G_{\vec{n}}$ and $\mathbb{G}_{\vec{n}}$ as well as the twisted trace formula. Let $S$ be a finite set of primes containing $\infty,p$ and all the primes where either the CM field $F$ or the Hecke character $\psi$ are ramified. Recall that, by the assumptions in Section~\ref{setup}, we can and will arrange that $S_{\mathrm{fin}}\subset \Spl_{F/F^+}$.

We can define a notion of $BC$-transfer of functions as in Section 4 of~\cite{shin-galois}. If $v$ is a finite place of $\mathbb{Q}$ such that $v\not\in S$, then the dual map to the $L$-morphism $BC_{\vec{n}}$ defines the transfer \[BC_{\vec{n}}^*:\cH^{\mathrm{ur}}(\mathbb{G}_{\vec{n}}(\mathbb{Q}_v))\to \cH^{\mathrm{ur}}(G_{\vec{n}}(\mathbb{Q}_v)),\] (Case 1) of Section 4.2 of~\cite{shin-galois}. Otherwise, if $v\in S_{\mathrm{fin}}\subset \Spl_{F/F^+}$ then Section 4.2 of loc.~cit., (Case 2), constructs a $BC$-transfer $\phi_v\in C_c^\infty(G_{\vec{n}}(\mathbb{Q}_v))$ of $f_v\in C_c^\infty(\mathbb{G}_{\vec{n}}(\mathbb{Q}_v))$. We remark that, if $v$ splits in $\cK$ (e.g. if $v=p$), one can check directly that $BC^*_{\vec{n}}$ is surjective. It is also possible to define a transfer $\tilde\zeta_{\vec{n}}^*$, as in Section 4 of loc.~cit., making the obvious diagram commutative. 

At $\infty$, the transfer is defined in Section 4.3 of loc. cit. Let $\xi$ be an irreducible algebraic representation of $(G_{\vec{n}})_{\mathbb{C}}$, giving rise to the representation $\Xi$ of $(\mathbb{G}_{\vec{n}})_{\mathbb{C}}$ which is just $\Xi:=\xi\otimes \xi$. Recall that $\phi_{G_{\vec{n}},\xi}$ is the Euler-Poincar\'e function for $\xi$. Associated to $\Xi$, Labesse defined a twisted analogue of the Euler-Poincar\'e function, a Lefschetz function $f_{\mathbb{G}_{\vec{n}},\Xi}$~\cite{labesse-lefschetz}. The discussion on page 24 of~\cite{shin-galois} implies that $f_{\mathbb{G}_{\vec{n}},\Xi}$ and $\phi_{G_{\vec{n}},\xi}$ are $BC$-matching functions. 

Define the group \[\mathbb{G}^+_{\vec{n}}:=(\mathrm{Res}_{\cK/\mathbb{Q}}GL_1\times \mathrm{Res}_{F/\mathbb{Q}}GL_{\vec{n}})\rtimes\{1,\theta\},\] where $\theta(\lambda,g)\theta^{-1}=(\lambda^c, \lambda^cg^\sharp)$ and $g^\sharp=\Phi_{\vec{n}}^tg^c\Phi_{\vec{n}}^{-1}$. If we denote by $\mathbb{G}_{\vec{n}}^\circ$ and $\mathbb{G}_{\vec{n}}^\circ\theta$ the cosets of $\{1\}$ and $\{\theta\}$ in $\mathbb{G}_{\vec{n}}^+$, then $\mathbb{G}_{\vec{n}}^+=\mathbb{G}_{\vec{n}}^\circ\sqcup \mathbb{G}_{\vec{n}}^\circ \theta$. There is a natural $\mathbb{Q}$-isomorphism $\mathbb{G}_{\vec{n}}\toisom \mathbb{G}_{\vec{n}}^\circ$, which extends to an isomorphism 
\[\mathbb{G}_{\vec{n}}\rtimes \Gal(\cK/\mathbb{Q})\toisom \mathbb{G}_{\vec{n}}^+\] 
so that $c\in \Gal(\cK/\mathbb{Q})$ maps to $\theta$. Using this isomorphism, we write $\mathbb{G}_{\vec{n}}$ and $\mathbb{G}_{\vec{n}}\theta$ for the two cosets. 

If $f\in C^\infty_c(\mathbb{G}_{\vec{n}}(\mathbb{A}))$ (with trivial character on $A^\circ_{\mathbb{G}_n,\infty}$), then we define $f\theta$ to be the function on $\mathbb{G}_{\vec{n}}\theta(\mathbb{A})$ obtained via translation by $\theta$. The (invariant) twisted trace formula (see~\cite{arthura, arthurb}) gives an equality 
\begin{equation} \label{twisted} I^{\mathbb{G}_{\vec{n}}\theta}_{\mathrm{geom}}(f\theta)=I^{\mathbb{G}_{\vec{n}}\theta}_{\mathrm{spec}}(f\theta).\end{equation} 
The left hand side of the equation is defined in Section 3 of~\cite{arthurb}, while the right hand side is defined in Section 4 of loc. cit.

Let $f_{\mathbb{G}_{\vec{n}},\Xi}$ and $\phi_{G_{\vec{n}},\xi}$ be as defined above. The following is Corollary 4.7 of~\cite{shin-galois}. 
\begin{prop}\label{comparing geometric sides} We have the following equality:
\begin{equation}I^{\mathbb{G}_{\vec{n}}\theta}_{\mathrm{geom}}(f\theta)=\tau(G_{\vec{n}})^{-1}ST^{G_{\vec{n}}}_e(\phi), \end{equation}  when $\phi$ and $f$ satisfy \[\phi =\phi^S\cdot \phi_{S_{\mathrm{fin}}}\cdot \phi_{G_{\vec{n}},\xi}\ \mathrm{and}\ f=f^S\cdot f_{S_{\mathrm{fin}}}\cdot f_{\mathbb{G}_{\vec{n}},\Xi}\] with $\phi^S$ a $BC$-transfer of $f^S$, $\phi_{S_{\mathrm{fin}}}$ a $BC$-transfer of $f_{S_{\mathrm{fin}}}$. 
\end{prop}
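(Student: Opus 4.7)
The plan is to invoke the stabilization of the geometric side of the twisted trace formula due to Kottwitz-Labesse, which rewrites $I^{\mathbb{G}_{\vec{n}}\theta}_{\mathrm{geom}}(f\theta)$ as a sum of stable trace formulas over the elliptic twisted endoscopic data of $(\mathbb{G}_{\vec{n}},\theta)$. In our setting $G_{\vec{n}}$ is such a twisted endoscopic group via the $L$-morphism $BC_{\vec{n}}:{}^L G_{\vec{n}}\to {}^L\mathbb{G}_{\vec{n}}$, and it is essentially the only datum contributing, so that after stabilization only one term remains. The Tamagawa factor $\tau(G_{\vec{n}})^{-1}$ appears as the ratio of Arthur's global volume normalization in the twisted geometric expansion and Kottwitz's normalization in the definition of $ST^{G_{\vec{n}}}_e$.

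First I would write the twisted geometric side explicitly as a sum
\[
I^{\mathbb{G}_{\vec{n}}\theta}_{\mathrm{geom}}(f\theta) = \sum_{\{\delta\theta\}} a(\delta\theta)\, TO_{\delta\theta}(f)
\]
indexed by $\mathbb{G}_{\vec{n}}(\mathbb{Q})$-twisted conjugacy classes of $\theta$-semisimple elements $\delta$, with the coefficients $a(\delta\theta)$ given by Arthur's volume factors. The Kottwitz-Labesse comparison groups these twisted orbital integrals via the norm map $\cN: \{\delta\theta\}\mapsto \{\gamma\}\subset G_{\vec{n}}(\mathbb{Q})_{\mathrm{ss}}$, and rewrites them as stable orbital integrals on $G_{\vec{n}}$ through the Langlands-Shelstad-Kottwitz twisted transfer factors, provided $f$ and $\phi$ are matching $BC$-transfers at every place.

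Next I would verify the matching at each place. At unramified places $v\notin S$, matching of $\phi_v = BC^*_{\vec{n}}(f_v)$ is precisely the twisted fundamental lemma for base change, now a theorem of Ngo and Waldspurger \cite{ngo,waldspurger}. At finite places $v\in S_{\mathrm{fin}}$, the assumption $S_{\mathrm{fin}}\subset \Spl_{F/F^+}$ ensures that $F\otimes_{\mathbb{Q}}\mathbb{Q}_v$ is a product of split copies of $F^+_v$-algebras, so $\mathbb{G}_{\vec{n}}(\mathbb{Q}_v)$ decomposes as a product on which $\theta$ acts by permutation; the matching then reduces to a tautological identity of ordinary orbital integrals, as carried out in Case (2) of Section 4.2 of \cite{shin-galois}. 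At the archimedean place one uses Labesse's construction \cite{labesse-lefschetz}: the Lefschetz function $f_{\mathbb{G}_{\vec{n}},\Xi}$ has as its $BC$-transfer the Euler-Poincar\'e function $\phi_{G_{\vec{n}},\xi}$ on $G_{\vec{n}}(\mathbb{R})$, the matching being verified in Section 4.3 of \cite{shin-galois}.

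With matching at every place, the Kottwitz-Labesse stabilization reduces the geometric twisted trace formula to the stable elliptic term $ST^{G_{\vec{n}}}_e(\phi)$; the potentially contributing non-elliptic geometric classes drop out because the archimedean Lefschetz/Euler-Poincar\'e pair is cuspidal and annihilates non-elliptic orbital integrals, and because the non-standard twisted endoscopic data for $(\mathbb{G}_{\vec{n}},\theta)$ do not contribute in our setup (only the standard base-change datum survives, thanks to the product structure coming from $F = F^+\cdot\cK$). The main conceptual obstacle is the twisted stabilization itself together with the archimedean matching, but all necessary ingredients are available in the cited references, and the rest of the argument is essentially the bookkeeping carried out in detail in the proof of Corollary 4.7 of \cite{shin-galois}.
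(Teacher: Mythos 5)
Your proposal follows essentially the same route as the paper: both rest on the identity between the elliptic part of the twisted trace formula for $\mathbb{G}_{\vec{n}}\theta$ and the stable elliptic term $ST^{G_{\vec{n}}}_e$ (you run the stabilization from left to right; the paper invokes Theorem 4.3.4 of Labesse to destabilize from right to left, which is the same identity), combined with place-by-place matching of $f$ and $\phi$ and a reduction of the full geometric side to its elliptic part.

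One step is under-justified as you state it. You attribute the vanishing of the non-elliptic and proper-Levi terms in the geometric expansion solely to cuspidality of the archimedean pair $f_{\mathbb{G}_{\vec{n}},\Xi}$, $\phi_{G_{\vec{n}},\xi}$. Cuspidality at a single place is not by itself enough to kill the weighted orbital integrals attached to proper Levi subgroups in Arthur's expansion; the standard simplification (as in Chapter 7 of \cite{arthurb}) requires cuspidality at two places, or an equivalent substitute. The paper supplies this via the hypothesis $[F^+:\mathbb{Q}]\geq 2$: the archimedean component of $\mathbb{G}_{\vec{n}}$ factors over the infinite places of $F^+$, so the Lefschetz function is a product of at least two cuspidal factors, which is what forces only the Levi $M=\mathbb{G}_{\vec{n}}$ itself (and only $\theta$-elliptic classes) to survive. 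You should add this ingredient; without it the reduction of $I^{\mathbb{G}_{\vec{n}}\theta}_{\mathrm{geom}}(f\theta)$ to the elliptic term is not established.
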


\begin{proof}We sketch the proof here: first, use Theorem 4.3.4 of~\cite{labesse} to rewrite the sum of stable orbital integrals on the right as the elliptic part of the twisted trace formula for $\mathbb{G}_{\vec{n}}\theta$. Then the geometric side of the twisted trace formula for $\mathbb{G}_{\vec{n}}\theta$ is simplified using similar techniques to those in Chapter 7 of~\cite{arthurb}: the key facts are that the Lefschetz function $f_{\mathbb{G}_{\vec{n}},\Xi}$ is cuspidal, so only $\theta$-elliptic elements contribute, and that $[F^+:\mathbb{Q}]\geq 2$, so that the only Levi subgroup that contributes to the geometric side is $\mathbb{G}_{\vec{n}}\theta$ itself. 
\end{proof}

We now explain how to construct our test functions, which is exactly as in the proof of Theorem 6.1 of~\cite{shin-galois}. We let $(f^n)^S$ be any function in $\cH^{\mathrm{ur}}(\mathbb{G}_n(\mathbb{A}^S))$ and $f^n_{S_{\mathrm{fin}}\setminus \{p\}}$ be any function in $C^\infty_c(\mathbb{G}_n(\mathbb{A}_{S_{\mathrm{fin}\setminus \{p\}}}))$. We let $\phi^S,\phi_{S_{\mathrm{fin}\setminus\{p\}}}$ be their $BC$-transfers, as described above. We take $\phi_p\in C_c^\infty(J_b(\mathbb{Q}_p))$ be any acceptable function and set \[\phi:= \phi^S\cdot\phi_{S_{\mathrm{fin}\setminus\{p\}}}\cdot \phi_p.\]  
From these test functions, we construct all the other test functions we will need. First, for each elliptic endoscopic group $G_{\vec{n}}$ we let $\phi^{\vec{n}}$ be constructed from $\phi$ as in Definition~\ref{definition of Igusa transfer}. Let $(f^{n_1,n_2})^S$ and $(f^{n_1,n_2})_{S_{\mathrm{fin}\setminus\{p\}}}$ be obtained from  $(f^{n})^S$ and $f^{n}_{S_{\mathrm{fin}\setminus\{p\}}}$ by transfer along the $L$-morphism $\tilde \zeta$. We choose $f^{\vec{n}}_p$ so that $BC^*_{\vec{n}}(f^{\vec{n}}_p)=\phi^{\vec{n}}_p$ (recall that $BC^*_{\vec{n}}$ is surjective at $p$). We can define $f^{\vec{n}}_\infty$ explicitly, as a linear combination of Lefschetz functions for representations $\Xi(\varphi_{\vec{n}})$ of $\mathbb{G}_{\vec{n}}$ for which $\tilde \eta \circ \varphi_{\vec{n}}$ corresponds to the trivial representation of $G$ (see (6.7) of~\cite{shin-galois} for the precise formula). Finally, we set \[f^{\vec{n}}:= (f^{\vec{n}})^S\cdot(f^{\vec{n}})_{S_{\mathrm{fin}\setminus\{p\}}}\cdot f^{\vec{n}}_p\cdot f^{\vec{n}}_{\infty}.\] By the commutative diagram of $L$-morphisms~\eqref{commutative diagram of L-morphisms}, we can apply Proposition~\ref{comparing geometric sides} to $f^{\vec{n}}$ and $\phi^{\vec{n}}$. To check the compatibility, see (4.18) of~\cite{shin-galois} for primes away from $S$, (4.19) of loc. cit. for primes in $S_{\mathrm{fin}}\setminus \{p\}$ and compare the precise formulas for $\phi^{\vec{n}}_\infty$ and $f_\infty^{\vec{n}}$. We mention that the formulas for $\phi^{\vec{n}}_\infty$ and $f_\infty^{\vec{n}}$ use as input an inner form $G$ of $G_{n}$ over $\mathbb{R}$; in loc. cit. this inner form has a specific signature (a group of so-called Harris-Taylor type), but here we work more generally. In particular, the integer $q(G)$ appearing there is defined as $\frac{1}{2}\mathrm{dim}(G(\mathbb{R})/K_\infty A_\infty^\circ)$. 

\begin{thm}\label{spectral side} We have an equality 
\[\mathrm{tr}(\phi|\iota_\ell [H_c(\mathscr{I}^b_{\mathrm{Mant}},\mathbb{\bar Q}_\ell)])=|\ker^1(\mathbb{Q},G)|\tau(G)\sum_{G_{\vec{n}}}\epsilon_{\vec{n}}\cdot I^{\mathbb{G}_{\vec{n}}\theta}_{\mathrm{spec}}(f^{\vec{n}}\theta),\] where $\epsilon_{\vec{n}}=\frac{1}{2}$ if $\vec{n}=(\frac{n}{2},\frac{n}{2})$ and $\epsilon_{\vec{n}} = 1$ otherwise. 
\end{thm}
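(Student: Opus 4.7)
The plan is to combine three ingredients already in place: Corollary~\ref{explicit stable trace formula} (which expresses the trace on Igusa cohomology as a sum of stable elliptic orbital integrals over the elliptic endoscopic groups $G_{\vec n}$), Proposition~\ref{comparing geometric sides} (which identifies each such stable term with the geometric side of the twisted trace formula for $\mathbb G_{\vec n}\theta$), and Arthur's invariant twisted trace formula identity $I^{\mathbb G_{\vec n}\theta}_{\mathrm{geom}} = I^{\mathbb G_{\vec n}\theta}_{\mathrm{spec}}$ from \eqref{twisted}. The constants are then collected using the explicit computation of $\iota(G,G_{\vec n})$ in Remark~\ref{constants}.

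More precisely, I would first apply Corollary~\ref{explicit stable trace formula} with the acceptable test function $\phi = \phi^S\cdot\phi_{S_{\mathrm{fin}\setminus\{p\}}}\cdot\phi_p$ constructed in the paragraph preceding the theorem, to obtain
\[
\mathrm{tr}(\phi\mid\iota_\ell[H_c(\mathscr{I}^b_{\mathrm{Mant}},\bar{\mathbb Q}_\ell)]) = |\ker^1(\mathbb Q,G)|\sum_{G_{\vec n}}\iota(G,G_{\vec n})\,ST^{G_{\vec n}}_e(\phi^{\vec n}).
\]
Next, I would check that the triple of functions $(\phi^{\vec n},f^{\vec n})$ on $(G_{\vec n},\mathbb G_{\vec n})$ satisfies the hypotheses of Proposition~\ref{comparing geometric sides}, i.e.~that it is a $BC$-matching pair at every place. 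Outside $S$ this follows from the definition of $\tilde\zeta^\ast$ and the commutative diagram \eqref{commutative diagram of L-morphisms} (see (4.18) of \cite{shin-galois}); at finite places in $S_{\mathrm{fin}}\setminus\{p\}\subset \mathrm{Spl}_{F/F^+}$ the two transfers are computed in parallel via Case 2 of Section 4.2 of \cite{shin-galois} and base change along a split extension; at $p$ this is exactly the defining relation $BC^\ast_{\vec n}(f^{\vec n}_p) = \phi^{\vec n}_p$; and at $\infty$ the Euler–Poincar\'e function $\phi_{G_{\vec n},\xi}$ and the Lefschetz function $f_{\mathbb G_{\vec n},\Xi}$ (with $\xi$ trivial, $\Xi = \xi\otimes\xi$) match by Labesse, as recalled on p.~24 of \cite{shin-galois}. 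Applying Proposition~\ref{comparing geometric sides} gives
\[
ST^{G_{\vec n}}_e(\phi^{\vec n}) = \tau(G_{\vec n})\,I^{\mathbb G_{\vec n}\theta}_{\mathrm{geom}}(f^{\vec n}\theta),
\]
and then Arthur's invariant twisted trace formula \eqref{twisted} replaces the geometric side with the spectral side.

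To finish, I substitute Remark~\ref{constants}, which gives $\iota(G,G_{\vec n})\tau(G_{\vec n}) = \epsilon_{\vec n}\,\tau(G)$ with $\epsilon_{\vec n} = \tfrac12$ when $\vec n = (n/2,n/2)$ and $\epsilon_{\vec n}=1$ otherwise; pulling the common factor $\tau(G)$ out of the sum yields the stated identity. The main obstacle is really the bookkeeping at step two, namely checking that the various local transfers $\phi_v^{\vec n}\leftrightarrow f_v^{\vec n}$ fit together globally into a $BC$-matching pair; this is where the assumption $S_{\mathrm{fin}}\subset \mathrm{Spl}_{F/F^+}$ (so that base change at bad finite places reduces to the trivial split case) and the assumption $F^+\neq\mathbb Q$ (needed so that only $\mathbb G_{\vec n}\theta$ itself contributes to the geometric side in Proposition~\ref{comparing geometric sides}) both enter in an essential way. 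Once those hypotheses are verified, the remainder of the argument is a mechanical chaining of the three equalities above.
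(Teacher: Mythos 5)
Your proposal is correct and follows exactly the paper's own argument: the proof in the text is precisely the chain Corollary~\ref{explicit stable trace formula} $\to$ Proposition~\ref{comparing geometric sides} $\to$ equation~\eqref{twisted}, with the constants collected via Remark~\ref{constants}, and the compatibility checks you describe at the various places are the ones carried out in the paragraph preceding the theorem. The only cosmetic difference is that you fold the verification of the $BC$-matching hypotheses into the proof itself rather than into the construction of the test functions, which is a matter of presentation, not substance.
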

\begin{proof} This follows by combining Corollary~\ref{explicit stable trace formula}, Remark~\ref{constants}, Proposition~\ref{comparing geometric sides} and equation~\eqref{twisted}. 
\end{proof}

Fix $G_{\vec{n}}$. We now proceed to simplify the spectral side $I^{\mathbb{G}_{\vec{n}}\theta}_{\mathrm{spec}}(f^{\vec{n}}\theta)$. We need the following notation from~\cite{shin-galois}: let $M_0$ be a minimal Levi subgroup of $\mathbb{G}_{\vec{n}}$. For $M$ a rational Levi of $\mathbb{G}_{\vec{n}}$ containing $M_0$, choose a parabolic subgroup $Q$ containing $M$ as a Levi. The group $W^{\mathbb{G}_{\vec{n}}\theta}(\mathfrak{a}_M)_\mathrm{reg}$ defined in~\cite{arthurb} acts on the set of parabolic subgroups which have $M$ as a Levi component. The automorphism $\Phi_{\vec{n}}^{-1}\theta$ of $\mathbb{G}_{\vec{n}}$ preserves $M$ and acts on $W^{\mathbb{G}_{\vec{n}}\theta}(\mathfrak{a}_M)_\mathrm{reg}$. By combining Proposition 4.8 and Corollary 4.14 of~\cite{shin-galois}, we have the following expression for the summands on the right hand side of Theorem~\ref{spectral side}.

\begin{prop}\label{expansion on spectral side} There is an equality 
\[I^{\mathbb{G}_{\vec{n}}\theta}_{\mathrm{spec}}(f^{\vec{n}}\theta)=\sum_{M}\frac{|W_M|}{|W_G|}|\det(\Phi_{\vec{n}}^{-1}\theta - 1)_{\mathfrak{a}^{\mathbb{G}_{\vec{n}}\theta}_M}|^{-1}\sum_{\Pi_M}\mathrm{tr}\left(\mathrm{n-Ind}^{\mathbb{G}_{\vec{n}}}_Q(\Pi_M)_{\xi}(f^{\vec{n}})\circ A'\right) \] where $M$ runs over the Levi subgroups of $\mathbb{G}_{\vec{n}}$ containing $M_0$ and $\Pi_M$ runs over the irreducible $\Phi_{\vec{n}}^{-1}\theta$-stable subrepresentations of the discrete spectrum $R_{M,\mathrm{disc}}$.  
\end{prop}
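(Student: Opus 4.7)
The plan is to invoke two results from Shin's paper \cite{shin-galois} and combine them. Since the statement is essentially a reformulation of Proposition 4.8 together with Corollary 4.14 of loc.\ cit., there is no new computation to do; the work lies in recalling why each factor on the right-hand side appears and checking that the special form of $f^{\vec n}$ built in Section~\ref{base change} is compatible with the simplifications made in loc.\ cit.

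First I would apply Arthur's invariant twisted trace formula, \cite{arthura,arthurb}, to the coset $\mathbb G_{\vec n}\theta$. Its spectral side has a general expansion indexed by pairs $(M,\Pi_M)$, where $M\supseteq M_0$ runs over rational Levi subgroups and $\Pi_M$ over discrete automorphic representations of $M(\mathbb A)$. To pass from Arthur's raw expression to the one in the statement, one uses that $f^{\vec n}_\infty$ is a Lefschetz function, hence cuspidal, which forces the parabolic terms to collapse onto those Levis stable under $\Phi_{\vec n}^{-1}\theta$, and one restricts the inner sum to the $\Phi_{\vec n}^{-1}\theta$-stable constituents of $R_{M,\mathrm{disc}}$. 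The combinatorial factor $|W_M|/|W_G|$ comes from averaging over Weyl conjugates, and the determinant factor $|\det(\Phi_{\vec n}^{-1}\theta-1)_{\mathfrak a^{\mathbb G_{\vec n}\theta}_M}|^{-1}$ is Arthur's standard Jacobian for the passage from the global intertwining operator to a finite-dimensional one on the $(\Phi_{\vec n}^{-1}\theta)$-fixed part of $\mathfrak a_M$. This is precisely Proposition 4.8 of \cite{shin-galois}.

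Next I would incorporate the specific construction of $f^{\vec n}_\infty$ as a linear combination of Lefschetz functions $f_{\mathbb G_{\vec n},\Xi(\varphi_{\vec n})}$ attached to those $L$-parameters $\varphi_{\vec n}$ for which $\tilde\eta\circ\varphi_{\vec n}$ corresponds to the trivial representation of $G$ (cf.\ the definition of $f^{\vec n}$ above Theorem~\ref{spectral side}). Using Labesse's identities for Lefschetz functions, the archimedean trace $\mathrm{tr}\bigl(\mathrm{n\text{-}Ind}^{\mathbb G_{\vec n}}_Q(\Pi_{M,\infty})(f^{\vec n}_\infty)\circ A'_\infty\bigr)$ vanishes unless the archimedean component of $\mathrm{n\text{-}Ind}^{\mathbb G_{\vec n}}_Q(\Pi_M)$ has the same infinitesimal character as $\Xi$ (i.e., matches $\xi$ under base change). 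This cohomological constraint is exactly the meaning of the subscript $\xi$ on $(\Pi_M)_\xi$, and is the content of Corollary 4.14 of \cite{shin-galois}.

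The only subtle point to verify is that the intertwining operator $A'$ appearing in the statement is the same as the one produced by Arthur's formula in Proposition 4.8 after the cuspidal simplification: it is the normalized intertwining operator on $\mathrm{n\text{-}Ind}_Q^{\mathbb G_{\vec n}}(\Pi_M)$ induced by the action of $\Phi_{\vec n}^{-1}\theta$ on the parameters of $\Pi_M$, with the normalization taken so that its characteristic polynomial on $\mathfrak a_M^{\mathbb G_{\vec n}\theta}$ produces the determinant factor already pulled out. Once this identification is made, the two cited results combine to give the displayed equality, completing the proof.
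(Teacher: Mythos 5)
Your proposal is correct and follows essentially the same route as the paper: the result is obtained by citing and combining Proposition 4.8 and Corollary 4.14 of \cite{shin-galois}, which is exactly what the paper does (the paper gives no further argument beyond this combination). Your additional commentary on the origin of the combinatorial and determinant factors and on the role of the Lefschetz function at infinity is consistent with Shin's results, so nothing further is needed.
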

\begin{remark}\label{explanation of expansion} The subscript $\xi$ indicates a possible twist by a character of $A^\circ_{\mathbb{G}_{\vec{n}},\infty}$ corresponding to an irreducible algebraic representation $\xi$ of $G_{\vec{n}}$ and $A'$ is a normalized intertwiner on $\mathrm{n-Ind}^{\mathbb{G}_{\vec{n}}}_Q(\Pi_M)_{\xi}$. We do not make this precise, as we will not need these details. We do note that, as $\Pi_M$ is $\Phi_{\vec{n}}^{-1}\theta$-stable, $\mathrm{n-Ind}^{\mathbb{G}_{\vec{n}}}_Q(\Pi_M)_{\xi}$ is $\theta$-stable. 
\end{remark}

\subsection{The transfer at $p$}\label{transfer at p} We recall the construction of the function $\phi^{\vec{n}}_p$, starting from an acceptable function $\phi_p\in C_c^\infty(J_b(\mathbb{Q}_p))$, as well as the representation-theoretic counterpart to this construction, $\mathrm{Red}^b_{\vec{n}}$. 

The group $J_b(\mathbb{Q}_p)$ is an inner form of a Levi subgroup $M_b(\mathbb{Q}_p)$ of $G(\mathbb{Q}_p)$; for further reference, we recall their precise definitions, following Chapter 1 of~\cite{rapoport-zink}. According to Definition 1.8 of loc. cit., an element $\tilde b$ of $G(L)$ is called \emph{decent} if there exists a positive integer $s$ such that
\[
(\tilde b\sigma)^s=s\nu_{\tilde b}(p)\sigma^s,
\]
where $s\nu_{\tilde b}$ factors through a morphism $\mathbb{G}_m\to G$. By Section 4.3 of~\cite{kottwitz}, any $\sigma$-conjugacy class $b\in B(G)$ admits a decent representative $\tilde b$; as $G$ is quasisplit, one can moreover arrange that $\nu_{\tilde b}$ is defined over $\mathbb Q_p$, cf.~\cite[p. 219]{kottwitz}. Let $M_b$ be the centralizer of $\nu$ in $G$, which is a $\mathbb Q_p$-rational Levi subgroup. Then $b$ is a basic element of $M_b$, and $J_b$ is an inner form of $M_b$.

Fix $G_{\vec{n}}$ an elliptic endoscopic group for $G$. The set $\cE_p^{\mathrm{eff}}(J_b, G,G_{\vec{n}})$ is defined in Section 6.2 of~\cite{shin-igusa}; it consists of certain isomorphism classes $(M_{G_{\vec{n}}},s_{\vec{n}},\eta_{\vec{n}})$ of $G_{\vec{n}}$-endoscopic triples for $J_b(\mathbb{Q}_p)$. The function $\phi^{\vec{n}}_p$ is constructed via transfer from $\phi_p$ on $J_b(\mathbb{Q}_p)$ to $M_{G_{\vec{n}}}(\mathbb{Q}_p)$, followed by a version of transfer from $M_{G_{\vec{n}}}(\mathbb{Q}_p)$ to $G_{\vec{n}}(\mathbb{Q}_p)$. We remark that the latter step makes crucial use of the acceptability of $\phi_p$, cf. Lemma 3.9 of~\cite{shin-stable}.   

There is a representation-theoretic counterpart to this construction. This is a group morphism \[\mathrm{Red}_{\vec{n}}^b: \mathrm{Groth}(G_{\vec{n}}(\mathbb{Q}_p))\to \mathrm{Groth}(J_b(\mathbb{Q}_p)).\] $\mathrm{Red}_{\vec{n}}^b$ will be defined as the composition of the following maps:\begin{enumerate}
\item \[\mathrm{Groth}(G_{\vec{n}}(\mathbb{Q}_p))\to \bigoplus_{(M_{G_{\vec{n}}},s_{\vec{n}},\eta_{\vec{n}})} \mathrm{Groth}(M_{G_{\vec{n}}}(\mathbb{Q}_p)),\] where the sum runs over $G_{\vec{n}}$-endoscopic triples in $\cE_p^{\mathrm{eff}}(J_b, G,G_{\vec{n}})$ and the map on each term is a linear combination of normalized Jacquet functors (indexed over a finite set of allowed Levi embeddings $M_{G_{\vec{n}}}\hookrightarrow G_{\vec{n}}$);
\item \[\mathrm{Groth}(M_{G_{\vec{n}}}(\mathbb{Q}_p))\to \mathrm{Groth}(M_b(\mathbb{Q}_p)),\] which is the functorial transfer with respect to the $L$-morphism $\tilde \eta_{\vec{n}}$.  Both $M_{G_{\vec{n}}}$ and $M_b$ are (restrictions of scalars of) products of general linear groups and the transfer ends up being a normalized parabolic induction. 
\item \[\mathrm{Groth}(M_b(\mathbb{Q}_p))\to \mathrm{Groth}(J_b(\mathbb{Q}_p)),\] which is the Langlands-Jacquet map on Grothendieck groups, defined by~\cite{badulescu}.
\end{enumerate} 
(See Section 5.5 of~\cite{shin-galois} for the precise definition of these three maps; even though the case we are considering is slightly more general, the formulas will be exact analogues.)
\begin{remark} When $\vec{n}=(n)$, $\cE_p^{\mathrm{eff}}(J_b, G,G_{\vec{n}})$ has only one element, namely $(M_b,1,\mathrm{id})$. The morphism $\mathrm{Red}^b_n$ consists of a normalized Jacquet functor followed by the Langlands-Jacquet map. 
\end{remark}
\noindent We record the relationship between $\mathrm{Red}_{\vec{n}}^b$ and $\phi^{\vec{n}}_p$ in the following lemma.

\begin{lemma}\label{phi and red} For any $\pi_p\in \mathrm{Groth}(G_{\vec{n}}(\mathbb{Q}_p))$, \[\mathrm{tr}\ \pi_p(\phi^{\vec{n}}_p)=\mathrm{tr}\left(\mathrm{Red}_{\vec{n}}^b(\pi_p)\right)(\phi_p).\]
\end{lemma}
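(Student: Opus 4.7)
The plan is to establish the character identity by tracing both constructions step by step and matching them via the trace pairing. Recall that $\phi^{\vec n}_p$ is built from $\phi_p$ in three stages: (i) endoscopic transfer from $J_b(\mathbb{Q}_p)$ to $M_{G_{\vec n}}(\mathbb{Q}_p)$ (which factors through transfer $J_b \to M_b$ followed by $(M_b, \mathrm{id}) \to (M_{G_{\vec n}}, \tilde\eta_{\vec n})$ for each $(M_{G_{\vec n}},s_{\vec n},\eta_{\vec n}) \in \cE_p^{\mathrm{eff}}(J_b,G,G_{\vec n})$), and (ii) passage from $M_{G_{\vec n}}(\mathbb{Q}_p)$ to $G_{\vec n}(\mathbb{Q}_p)$ by the constant-term–type relation on orbital integrals induced by a parabolic with Levi $M_{G_{\vec n}}$. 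Dually, $\mathrm{Red}^b_{\vec n}$ proceeds in the reverse order: normalized Jacquet $\mathrm{Groth}(G_{\vec n}(\mathbb{Q}_p)) \to \mathrm{Groth}(M_{G_{\vec n}}(\mathbb{Q}_p))$, then functorial transfer along $\tilde\eta_{\vec n}$ to $\mathrm{Groth}(M_b(\mathbb{Q}_p))$, then Langlands--Jacquet to $\mathrm{Groth}(J_b(\mathbb{Q}_p))$. The strategy is to show that each stage of the function-level construction is the precise adjoint of the corresponding stage of the representation-level construction under the trace pairing.

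Concretely, I would first handle the $M_{G_{\vec n}} \leftrightarrow G_{\vec n}$ step. The function $\phi^{\vec n}_p$ is defined so that its (twisted) orbital integrals on $G_{\vec n}(\mathbb{Q}_p)$ agree with those of a function parabolically inflated from $M_{G_{\vec n}}(\mathbb{Q}_p)$, hence the van~Dijk/Casselman formula expressing $\mathrm{tr}\,\pi_p$ against such a function as $\mathrm{tr}$ of the normalized Jacquet module $\mathrm{Jac}(\pi_p)$ against the $M_{G_{\vec n}}$-function gives the reduction to $M_{G_{\vec n}}$. Next, the endoscopic transfer step from $M_b(\mathbb{Q}_p)$ to $M_{G_{\vec n}}(\mathbb{Q}_p)$ is dual to the functorial transfer along $\tilde\eta_{\vec n}$ on representations essentially by the defining property of transfer: since $M_b$ and $M_{G_{\vec n}}$ are restrictions of scalars of products of general linear groups, local Langlands is known and the induced transfer on virtual characters is exactly parabolic induction, which by the same van~Dijk identity pairs with endoscopic transfer of functions (this is worked out explicitly in \cite[\S 5.5]{shin-galois}). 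Finally, the $J_b \leftrightarrow M_b$ step uses that $J_b$ is an inner form of $M_b$ (both products of $\mathrm{GL}$-factors) and acceptability of $\phi_p$ (via Lemma~3.9 of \cite{shin-stable}) ensures that orbital integrals transfer between the two sides; on representations this is precisely Badulescu's character identity \cite{badulescu} defining the Langlands--Jacquet map.

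Composing the three matchings and summing over the allowed Levi embeddings and over triples in $\cE_p^{\mathrm{eff}}(J_b,G,G_{\vec n})$ (these sums appear identically on both sides by construction) yields
\[
\mathrm{tr}\,\pi_p(\phi^{\vec n}_p) = \mathrm{tr}(\mathrm{Red}^b_{\vec n}(\pi_p))(\phi_p),
\]
as desired. The argument is modelled on Lemma~5.11 and the discussion in \S 5.5 of \cite{shin-galois}; nothing in our setting (a general signature rather than the Harris--Taylor one) changes the local picture at $p$, since the local group $G(\mathbb{Q}_p)$, its Levi $M_b$ and the endoscopic data at $p$ depend only on $b$ and on the unramified group $G_{\mathbb{Q}_p}$.

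The main obstacle is not conceptual but bookkeeping: one must keep track of the normalizations of Haar measures and transfer factors at $p$, the sign/multiplicity issues in the Langlands--Jacquet map (handled by Badulescu), and the precise combinatorics of the sum over $\cE_p^{\mathrm{eff}}(J_b,G,G_{\vec n})$ and the allowed Levi embeddings $M_{G_{\vec n}}\hookrightarrow G_{\vec n}$, which must match on both sides. Once the normalizations are fixed consistently with those in \cite{shin-stable,shin-galois}, the three character identities compose cleanly and yield the claim.
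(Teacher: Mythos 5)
Your proposal follows essentially the same route as the paper's proof: both decompose $\mathrm{Red}^b_{\vec n}$ and the construction of $\phi^{\vec n}_p$ into the same three stages (Jacquet module / parabolic descent, endoscopic transfer for the Levi, Langlands--Jacquet versus inner-form transfer) and match them dually under the trace pairing, appealing to \S 5.5 of~\cite{shin-galois} and to Lemma 3.9 of~\cite{shin-stable}. The only slip is a misattribution: acceptability of $\phi_p$ and Lemma 3.9 of~\cite{shin-stable} are needed for the non-standard transfer between $M_{G_{\vec n}}$ and $G_{\vec n}$, not for the $J_b\leftrightarrow M_b$ step, but this does not affect the argument.
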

\begin{proof} The statement follows in the same way as Lemma 5.10 of~\cite{shin-galois} (see also Lemmas 6.3 and 6.4 of~\cite{caraiani} for a unitary group with a slightly different signature). The idea is that the constructions of both $\mathrm{Red}^b_{\vec{n}}$ and $\phi^{\vec{n}}$ can be broken down into the three steps outlined above and the constructions in each of these steps are dual to each other. One of the key points is that the transfer of $\phi_p$ from $J_b(\mathbb{Q}_p)$ to $M_{G_{\vec{n}}}(\mathbb{Q}_p)$ can be broken down into transfer from $J_b(\mathbb{Q}_p)$ to the quasi-split form $M_b(\mathbb{Q}_p)$ followed by transfer from $M_b(\mathbb{Q}_p)$ to $M_{G_{\vec{n}}}(\mathbb{Q}_p)$. The other key point is the slightly non-standard transfer between $M_{G_{\vec{n}}}$ and $G_{\vec{n}}$, where the desired compatibility follows from Lemma 3.9 of~\cite{shin-stable}. 
\end{proof}

We note that the whole situation decomposes into a product. Namely, let $\p_1,\ldots,\p_m$ be the primes of $F^+$ above $p$, and fix a decomposition $p=uu^c$ in $\cK$. We denote by $\p_i$ also the place of $F$ lying over $\p_i$ in $F^+$, and $u$ in $\cK$, and by $\p_i^c$ the complex conjugate place of $F$. With these choices, we get a decomposition
\[
G_{\mathbb Q_p} = \prod_i \mathrm{Res}_{F_{\p_i}/\mathbb Q_p} GL_n\times \mathbb G_m\ .
\]
Here, the projection to the $\mathbb G_m$-factor is the unitary similitude factor, and the projection to the general linear groups is via the projection
\[
V\otimes_{\mathbb{Q}} \mathbb Q_p = \bigoplus_i (V\otimes_F F_{\p_i}\oplus V\otimes_F F_{\p_i^c})\to \bigoplus_i V\otimes_F F_{\p_i}\ .
\]
The resulting constructions above admit similar decompositions. In particular,
\[
b=((b_i)_{i=1,\ldots,m},b_0)\in B(G) = \prod_{i=1}^m B(\mathrm{Res}_{F_{\p_i}/\mathbb Q_p} GL_n)\times B(\mathbb G_m)\ ,
\]
and $J_b = \prod_{i=1}^m J_{b_i}\times \mathbb G_m$. Also, any irreducible representation $\pi_p$ of $G(\mathbb Q_p)$ decomposes into a tensor product
\[
\pi_p = \bigotimes_{i=1}^m \pi_{\p_i}\otimes \pi_0\ ,
\]
where $\pi_{\p_i}$ is an irreducible representation of $GL_n(F_{\p_i})$, and $\pi_0$ is a character of $\mathbb Q_p^\times$. A similar discussion applies to representations of
\[
G_{\vec{n}}(\mathbb Q_p) = \prod_{i=1}^m GL_{\vec{n}}(F_{\p_i})\times \mathbb Q_p^\times\ .
\]

\begin{lemma}\label{red vanishes on generic} Let $\pi^{\vec{n}}_p\in \mathrm{Irr}(G_{\vec{n}}(\mathbb{Q}_p))$ be decomposed as
\[
\pi^{\vec{n}}_p = \bigotimes_{i=1}^m \pi^{\vec{n}}_{\p_i}\otimes \pi^{\vec{n}}_0\ .
\]
Assume that there is some $i$ such that $\pi^{\vec{n}}_{\p_i}$ transfers to a generic principal series representation of $GL_n(F_{\p_i})$ and $J_{b_i}$ is a non-quasi-split inner form of $M_{b_i}$. Then \[\mathrm{Red}^b_{\vec{n}}(\pi_p)=0.\] 
\end{lemma}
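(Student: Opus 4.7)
The strategy is to exploit the fact that the construction of $\mathrm{Red}^b_{\vec n}$ factors place-by-place over $F^+$ above $p$, and then isolate a single factor where the Langlands--Jacquet map vanishes for a purely representation-theoretic reason. Since
\[
G_{\mathbb Q_p} = \prod_{i=1}^m \mathrm{Res}_{F_{\p_i}/\mathbb Q_p}GL_n \times \mathbb G_m,
\]
all of the groups $G_{\vec n}(\mathbb Q_p)$, $M_{G_{\vec n}}(\mathbb Q_p)$, $M_b(\mathbb Q_p)$ and $J_b(\mathbb Q_p)$ decompose correspondingly as products, and each of the three building blocks of $\mathrm{Red}^b_{\vec n}$ (the normalized Jacquet functor, the functorial transfer along $\tilde\eta_{\vec n}$, and the Langlands--Jacquet map) respects this decomposition. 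Therefore it suffices to prove the analogous vanishing at the fixed place $\p_i$, replacing everything by its $\p_i$-component.

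At the place $\p_i$ we have a Levi $M_{G_{\vec n},i}\subset G_{\vec n}(F_{\p_i})=GL_{\vec n}(F_{\p_i})$ and a Levi $M_{b_i}\subset GL_n(F_{\p_i})$, both being products of general linear groups, while $J_{b_i}$ is an inner form of $M_{b_i}$ which by hypothesis is not quasi-split; equivalently, at least one of its factors is of the form $GL_{k/d}(D)$ with $D/F_{\p_i}$ a central division algebra of dimension $d^2>1$. The local component of $\mathrm{Red}^b_{\vec n}(\pi_p)$ at $\p_i$ is obtained by (i) applying a normalized Jacquet functor to $\pi^{\vec n}_{\p_i}$ to land in $\mathrm{Groth}(M_{G_{\vec n},i})$, (ii) applying the functorial transfer along $\tilde\eta_{\vec n}$, which at this place is simply normalized parabolic induction along a Levi embedding $M_{G_{\vec n},i}\hookrightarrow M_{b_i}$ (possibly twisted by a character), and (iii) applying the local Langlands--Jacquet map $LJ:\mathrm{Groth}(M_{b_i})\to\mathrm{Groth}(J_{b_i})$ of Badulescu.

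The key identification is that steps (i) and (ii) are compatible with parabolic induction up to $GL_n(F_{\p_i})$. By the hypothesis, $\pi^{\vec n}_{\p_i}$ transfers under $\tilde\eta_{\vec n}$ to a generic irreducible principal series $\Pi_{\p_i}=\mathrm{n\text{-}Ind}_B^{GL_n}(\chi_1\otimes\cdots\otimes\chi_n)$ of $GL_n(F_{\p_i})$, with the $\chi_j$ in sufficiently general position. Unwinding the definition of the functorial transfer (and using Bernstein--Zelevinsky / Tadic compatibility of Jacquet modules with parabolic induction), the image of $\pi^{\vec n}_{\p_i}$ in $\mathrm{Groth}(M_{b_i})$ after steps (i)--(ii) is (up to signs) a $\mathbb Z$-linear combination of principal series representations of $M_{b_i}(F_{\p_i})$, each of which, when fully induced to $GL_n(F_{\p_i})$, recovers $\Pi_{\p_i}$. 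In particular, on each $GL$-factor of $M_{b_i}$ we obtain an irreducible generic principal series representation (induced from characters of the diagonal torus of that factor).

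The final input is the vanishing property of the Badulescu Langlands--Jacquet map: if $d>1$, the map $LJ:\mathrm{Groth}(GL_{k}(F_{\p_i}))\to\mathrm{Groth}(GL_{k/d}(D))$ sends every irreducible representation whose Zelevinsky multisegment data contain a segment of length not divisible by $d$ to zero. An irreducible principal series induced from distinct characters of the torus has all segments of length one, so $LJ$ kills it as soon as $d>1$. Applying this on the factor of $M_{b_i}$ corresponding to the non-quasi-split factor of $J_{b_i}$, and using that $LJ$ factors as a tensor product of the local $LJ$'s on each $GL$-factor of $M_{b_i}$, the $\p_i$-component of $\mathrm{Red}^b_{\vec n}(\pi_p)$ vanishes. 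This forces $\mathrm{Red}^b_{\vec n}(\pi_p)=0$. The main obstacle is paragraph three, namely keeping careful track of the interplay between Jacquet modules, the functorial transfer along $\tilde\eta_{\vec n}$ (including the character twists coming from $\psi$ and from the normalization of induction), and the Levi structure of $M_{b_i}\subset GL_n(F_{\p_i})$, so as to certify that on every $GL$-factor of $M_{b_i}$ what appears is indeed a generic principal series to which Badulescu's vanishing applies.
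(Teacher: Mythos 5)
Your proposal is correct and follows essentially the same route as the paper: reduce to the factor at $\p_i$, check that the image in $\mathrm{Groth}(M_{b_i}(F_{\p_i}))$ consists only of generic principal series (the paper deduces this from the compatibility of the $L$-morphisms ${}^LM_{G_{\vec n}}\to{}^LM_b$ with $\tilde\zeta$, where you invoke the geometric lemma for Jacquet modules --- both work), and then kill it with Badulescu's Langlands--Jacquet map, which vanishes on properly induced representations of this type when $J_{b_i}$ is not quasi-split. The only nitpick is that the vanishing criterion for $LJ$ should be stated in terms of the degree of the essentially square-integrable constituents (here $1$, not divisible by $d>1$) rather than the ``length of the segment,'' but this does not affect the argument.
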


\begin{proof} This follows from the explicit description of $\mathrm{Red}^b_{\vec{n}}$ above, which includes the Langlands-Jacquet map. If $\pi^{\vec{n}}_p$ satisfies the above condition, then its image $\rho$ in $\mathrm{Groth}(M_b(\mathbb{Q}_p))$ will have as $M_{b_i}(F_{\p_i})$-components only generic principal series representations. Indeed, to see this, note that by the definition in Section 2 of~\cite{shin-stable}, for a $G_{\vec{n}}$-endoscopic triple, the $L$-morphism $^L M_{G_{\vec{n}}}\to {}^L M_b$ is the restriction of the $L$-morphism $\tilde \zeta_{n_1,n_2}: {}^L G_{n_1,n_2}\to {}^L G_n$. The condition of being a generic principal series representation can be interpreted on the dual side, and is then easily deduced from this diagram. But if $\rho\in \mathrm{Groth}(M_b(\mathbb Q_p))$ has only generic principal series representations as $M_{b_i}(F_{\p_i})$-components, then it lies in the kernel of the Langlands-Jacquet map whenever $J_{b_i}$ is a non-quasi-split inner form, by the construction of this map following Theorem 3.1 and Proposition 3.3 of~\cite{badulescu}. 
\end{proof} 

\subsection{Generic principal series}\label{generic ps}

Fix test functions $f^S\in \cH^{\mathrm{ur}}(\mathbb{G}_n(\mathbb{A}^S)), f_{S_{\mathrm{fin}}\setminus\{p\}}\in C^\infty_c(\mathbb{G}_n(\mathbb{A}_{S_{\mathrm{fin}}\setminus \{p\}}))$ and let $\phi^S,\phi_{S_{\mathrm{fin}}\setminus\{p\}}$ be their base change transfers to $G_n(\mathbb{A}^S)$ and $G_n(\mathbb{A}_{S_{\mathrm{fin}}\setminus\{p\}})$ as defined in Section~\ref{base change}.

\begin{lemma}\label{expansion of igusa cohomology}  For any test function $f_p\in C^\infty_c(\mathbb{G}(\mathbb{Q}_p))$, let $\phi_p\in C^\infty_c(G(\mathbb{Q}_p))$ be its base change transfer. The trace
\[
\mathrm{tr}\left(\phi^S\phi_{S_{\mathrm{fin}}\setminus \{p\}}\phi_p|\iota_\ell([H_c(\mathscr{I}^b_{\mathrm{Mant}},\mathbb{\bar Q}_\ell)])\right)
\]
can be written as a linear combination of terms of the form
\[
\mathrm{tr}\left((f^{\vec{n}})^S\mid (\Pi^{\vec{n}})^S\right) \mathrm{tr}\left((f^{\vec{n}})_{S_{\mathrm{fin}}\setminus \{p\}}\mid (\Pi^{\vec{n}})_{S_{\mathrm{fin}}\setminus \{p\}}\right) \mathrm{tr}\left(\phi_p\mid\mathrm{Red}^{b}_{\vec{n}}(\pi^{\vec{n}}_p)\right)
\]
for $\pi^{\vec{n}}_p\in \mathrm{Irr}(G_{\vec{n}}(\mathbb{Q}_p))$, $\Pi^{\vec{n}}$ satisfying the following condition. The representation $\pi^{\vec{n}}_p$ base changes to the component $\Pi^{\vec{n}}_p\in \mathrm{Irr}(\mathbb G_{\vec{n}}(\mathbb Q_p))$ at $p$ of a $\theta$-stable isobaric automorphic representation $\Pi^{\vec{n}}$ of $\mathbb G_{\vec{n}}$ of the form
\[
\Pi^{\vec{n}} = (\mathrm{n-Ind}_Q^{\mathbb{G}_{\vec{n}}}\Pi_M)_\xi\ ,
\]
where $\Pi_M$ occurs in the (relatively) discrete part of the automorphic spectrum of the Levi subgroup $M$ of a parabolic $Q\subset \mathbb{G}_{\vec{n}}$. Moreover, $\Pi^{\vec{n}}_\infty$ is cohomological (with respect to the trivial algebraic representation).
\end{lemma}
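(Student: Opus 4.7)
The strategy is to chain together Theorem~\ref{spectral side}, Proposition~\ref{expansion on spectral side}, and Lemma~\ref{phi and red}, and then isolate local traces one factor at a time. First, Theorem~\ref{spectral side} rewrites the left-hand trace as
\[
|\ker^1(\mathbb Q,G)|\,\tau(G)\sum_{G_{\vec n}}\epsilon_{\vec n}\, I^{\mathbb G_{\vec n}\theta}_{\mathrm{spec}}(f^{\vec n}\theta),
\]
and Proposition~\ref{expansion on spectral side} expands each $I^{\mathbb G_{\vec n}\theta}_{\mathrm{spec}}(f^{\vec n}\theta)$ as a double sum over Levi subgroups $M\subseteq\mathbb G_{\vec n}$ and $\Phi_{\vec n}^{-1}\theta$-stable discrete automorphic representations $\Pi_M$ of $M$, of twisted traces of $f^{\vec n}$ against $\Pi^{\vec n}:=(\mathrm{n\text{-}Ind}^{\mathbb G_{\vec n}}_Q\Pi_M)_\xi$. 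By construction each such $\Pi^{\vec n}$ is a $\theta$-stable isobaric automorphic representation of $\mathbb G_{\vec n}$ of precisely the shape claimed in the statement.

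Next, I factor the twisted trace of $f^{\vec n}$ on $\Pi^{\vec n}$ according to $f^{\vec n}=(f^{\vec n})^S\cdot(f^{\vec n})_{S_{\mathrm{fin}}\setminus\{p\}}\cdot f^{\vec n}_p\cdot f^{\vec n}_\infty$. The normalized intertwiner $A'$ factorizes into local pieces; outside $S\cup\{p,\infty\}$ it acts as the identity on spherical vectors, and at the finite ramified places its local factor is absorbed into the trace against $(\Pi^{\vec n})_{S_{\mathrm{fin}}\setminus\{p\}}$. This directly yields the first two factors of the displayed product.

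At the place $p$, I use the defining property $BC^*_{\vec n}(f^{\vec n}_p)=\phi^{\vec n}_p$ from Definition~\ref{definition of Igusa transfer} and the construction in Section~\ref{base change}. Since $p$ splits in $\mathcal K$, local base change at $p$ is essentially an identification, and the base change character identity gives $\mathrm{tr}\,\Pi^{\vec n}_p(f^{\vec n}_p)=\mathrm{tr}\,\pi^{\vec n}_p(\phi^{\vec n}_p)$ for the (essentially unique) $\pi^{\vec n}_p\in\mathrm{Irr}(G_{\vec n}(\mathbb Q_p))$ base-changing to $\Pi^{\vec n}_p$. Applying Lemma~\ref{phi and red} then rewrites this as $\mathrm{tr}(\phi_p\mid\mathrm{Red}^b_{\vec n}(\pi^{\vec n}_p))$, producing the third factor. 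At the archimedean place, recall that $f^{\vec n}_\infty$ is a linear combination of Lefschetz functions $f_{\mathbb G_{\vec n},\Xi(\varphi_{\vec n})}$ for parameters $\varphi_{\vec n}$ with $\tilde\eta_{\vec n}\circ\varphi_{\vec n}$ corresponding to the trivial representation of $G$. Labesse's formula in~\cite{labesse-lefschetz} shows that the twisted trace of such a Lefschetz function on $\Pi^{\vec n}_\infty$ (together with the archimedean part of $A'$) is nonzero only when $\Pi^{\vec n}_\infty$ is cohomological with respect to the trivial algebraic representation; this contributes a nonzero scalar which is absorbed into the overall coefficient of the linear combination, and forces the final condition stated in the lemma.

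The main obstacle is the bookkeeping for the normalized intertwiner $A'$: one must verify that its local decomposition is compatible with that of $f^{\vec n}$, that it acts trivially on the spherical vectors at unramified finite places so as to produce the bare trace $\mathrm{tr}((f^{\vec n})^S\mid(\Pi^{\vec n})^S)$, and that its archimedean component is correctly absorbed into Labesse's cohomological computation. A secondary subtlety is the discrete-versus-isobaric dichotomy in Proposition~\ref{expansion on spectral side}: only $\Pi_M$ (rather than $\Pi^{\vec n}$ itself) is guaranteed to be discrete, but this is precisely what the phrasing of the lemma accommodates by writing $\Pi^{\vec n}$ as $(\mathrm{n\text{-}Ind}^{\mathbb G_{\vec n}}_Q\Pi_M)_\xi$ with $\Pi_M$ discrete on $M$.
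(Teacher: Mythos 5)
Your overall route is the paper's: chain Theorem~\ref{spectral side}, Proposition~\ref{expansion on spectral side}, and Lemma~\ref{phi and red}, factor the twisted trace place by place, and identify the shape of $\Pi^{\vec n}$. However, there is a genuine gap at the very first step: Theorem~\ref{spectral side} (via Corollary~\ref{explicit stable trace formula} and ultimately Theorem~\ref{stable trace formula for Igusa}) is only available when $\phi_p$ is an \emph{acceptable} function on $J_b(\mathbb Q_p)$, i.e.\ supported on $\nu_b$-acceptable elements. The lemma is asserted for an arbitrary $f_p\in C^\infty_c(\mathbb G(\mathbb Q_p))$, whose transfer $\phi_p$ will in general not be acceptable, so you cannot invoke the spectral expansion directly. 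The paper closes this gap in two steps: first it proves the identity under the additional hypothesis that $\phi_p$ is acceptable; then, with $\phi^p$ fixed, it observes that both sides are (finite linear combinations of) traces of $\phi_p$ against admissible representations of $J_b(\mathbb Q_p)$, that the Frobenius twist $\phi_p^{(N)}$ is acceptable for all sufficiently large $N$ (Lemma 6.3 of~\cite{shin-igusa}), and that the argument of Lemma 6.4 of~\cite{shin-igusa} then propagates the identity from large $N$ to all $N$, in particular $N=0$. Without this deformation argument your proof only establishes the lemma for the acceptable case, which is strictly weaker than what is needed downstream (e.g.\ in the proof of Theorem~\ref{igusa vanishing}, where $\phi_p$ is eventually allowed to be arbitrary).

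Two smaller points. First, you assert that each $\Pi^{\vec n}=(\mathrm{n\text{-}Ind}^{\mathbb G_{\vec n}}_Q\Pi_M)_\xi$ is isobaric ``by construction''; this actually requires an argument: $\Pi_M$ is isobaric because it lies in the discrete spectrum of a product of general linear groups (Moeglin--Waldspurger), and $\Pi^{\vec n}$ is then isobaric because parabolic induction of a unitary representation of $GL_n$ is irreducible. Second, at $p$ the paper does not use a clean multiplicativity of $A'$ into spherical local identities; it only splits $A'=(A')^p\cdot A'_p$ and uses injectivity of local base change at the split prime $p$ to convert $\mathrm{tr}(\Pi^{\vec n}_p(f^{\vec n}_p)\circ A'_p)$ into $\mathrm{tr}\,\pi^{\vec n}_p(\phi^{\vec n}_p)$ up to sign, with all remaining scalars absorbed into the coefficients of the linear combination; your more detailed bookkeeping of $A'$ at the unramified and archimedean places is harmless but not needed at the level of precision the lemma claims.
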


\begin{proof} We follow the proof of Proposition 6.1 of~\cite{shin-galois}, in a more general situation, but without keeping track of endoscopic signs and constants. First, assume that $f_p$ is chosen such that $\phi_p$ is acceptable. Let $\phi$ be the product of test functions $\phi^S\phi_{S_{\mathrm{fin}}\setminus \{p\}}\phi_p$. By combining Theorem~\ref{spectral side} and Proposition~\ref{expansion on spectral side}, we can write $\mathrm{tr}(\phi|\iota_\ell[H_c(\mathscr{I}^b_{\mathrm{Mant}},\mathbb{\bar Q}_\ell)])$ as a finite linear combination on terms of the form $\mathrm{tr}\left(\Pi^{\vec{n}}(f^{\vec{n}})\circ A'\right)$, where $\Pi^{\vec{n}}$ is a $\theta$-stable irreducible automorphic representation of $\mathbb{G}_{\vec{n}}$. 

Recall that each $\Pi^{\vec{n}}$ is of the form $(\mathrm{n-Ind}_Q^{\mathbb{G}_{\vec{n}}}\Pi_M)_\xi$, where $\Pi_M$ occurs in the (relatively) discrete part of the automorphic spectrum of the Levi subgroup $M$ of $\mathbb{G}_{\vec{n}}$. The fact that $\Pi^{\vec{n}}$ is $\theta$-stable follows from Remark~\ref{explanation of expansion} and the irreducibility of $\Pi^{\vec{n}}$ follows from the fact $\Pi_M$ is unitary and that, for general linear groups, any parabolic induction of a unitary representation is irreducible. Moreover, the representation $\Pi_M$ must be~\emph{isobaric}, since it contributes to the discrete spectrum of $M$. (This follows from the classification of automorphic representations occurring in the discrete spectrum of general linear groups due to Moeglin and Waldspurger, \cite{moeglin-waldspurger}. See, for example, Theorem 1.3.3 of~\cite{arthur-endoscopic-classification} and the discussion below it.) Now the strong multiplicity one result due to Jacquet and Shalika (the main result of~\cite{jacquet-shalika}, see also Theorem 1.3.2 of~\cite{arthur-endoscopic-classification}) implies that the string of Satake parameters outside the finite set $S$ determines $\Pi_M$. The parabolic induction $\Pi^{\vec{n}}$ is also isobaric, because it is irreducible, and therefore it is determined by $(\Pi^{\vec{n}})^S$. To check that $\Pi^{\vec{n}}_\infty$ is cohomological (for the trivial representation), it is enough to determine the infinitesimal character of $\Pi^{\vec{n}}_\infty$, which can be done using the definition of the test functions at $\infty$.

Decompose the intertwiner $A'$ as $(A')^p\cdot A'_p$. Using the fact that $\Pi^{\vec{n}}$ is $\theta$-stable and that the base change map at $p$ is injective (since $p$ splits in the quadratic field $\cK$), we can rewrite $\mathrm{tr}\left(\Pi_p^{\vec{n}}(f^{\vec{n}}_p)\circ A'_p\right)$ as $\mathrm{tr}\ \pi^{\vec{n}}_p(\phi^{\vec{n}}_p)$, for some representation $\pi_p^{\vec{n}}$ in $\mathrm{Irr}(G_{\vec{n}}(\mathbb{Q}_p))$ (at least up to a sign). Now, using Lemma~\ref{phi and red}, we can rewrite the latter as $\mathrm{tr}\ \mathrm{Red}^{b}_{\vec{n}}(\pi^{\vec{n}}_p)(\phi_p)$.

Keeping $\phi^p$ fixed, we have a formula for $\mathrm{tr}(\phi|\iota_\ell[H_c(\mathscr{I}^b_{\mathrm{Mant}},\mathbb{\bar Q}_\ell)])$ as a finite linear combination of traces of $\phi_p$ against irreducible representations of $J_b(\mathbb{Q}_p)$. At this stage, we can take $\phi_p$ to be any smooth, compactly-supported function on $J_b(\mathbb{Q}_p)$, not necessarily an acceptable one. Indeed, recall that the twist of any such $\phi^p$ by any sufficiently high power of Frobenius is acceptable, so the equality above holds for $\phi_p^{(N)}$ for sufficiently large $N$. The argument in the proof of Lemma 6.4 of~\cite{shin-igusa} now proves that the desired equality holds for every integer $N$ and, in particular, for $N=0$. 
\end{proof}

\begin{remark} As a consequence, we see that the $\mathbb G_n(\mathbb{A}_f^p)$-representation
\[
BC^p([H_c(\mathscr{I}^b_{\mathrm{Mant}},\mathbb{\bar Q}_\ell)]^{S\ur})
\]
can be written in terms of the transfer to $\mathbb{G}_n(\mathbb{A}_f^p)$ of representations $(\Pi^{\vec{n}})_f^p$, where the $\Pi^{\vec{n}}$ are $\theta$-stable automorphic representations of $\mathbb{G}_{\vec{n}}$ as in the statement of the lemma, which are unramified outside $S$.
\end{remark}

Moreover, recall the existence of Galois representations in the conjugate self-dual case.

\begin{thm}\label{ex gal repr} Let $\Pi^{\vec{n}}$ be an isobaric automorphic representation of $\mathbb G_{\vec{n}}$, unramified outside $S$, of the form
\[
\Pi^{\vec{n}} = (\mathrm{n-Ind}_Q^{\mathbb{G}_{\vec{n}}}\Pi_M)_\xi\ ,
\]
where $\Pi_M$ occurs in the (relatively) discrete part of the automorphic spectrum of the Levi subgroup $M$ of a parabolic $Q\subset \mathbb{G}_{\vec{n}}$ and is $\Phi_{\vec{n}}^{-1}\theta$-stable, with $\Pi^{\vec{n}}_\infty$ cohomological. Then there exists a conjugate self-dual (up to twist) continuous semisimple Galois representation
\[
r_{\Pi^{\vec{n}},\ell}: \mathrm{Gal}(\bar{F}/F)\to GL_{\vec{n}}(\bar{\mathbb Q}_\ell)
\]
which is unramified outside $S\cup \{\ell\}$, and such that for all primes $q\neq \ell$ of $\mathbb Q$, in the decomposition
\[
\Pi^{\vec{n}}_q = (\bigotimes_{\mathfrak q|q} \Pi^{\vec{n}}_\q)\otimes \chi_q
\]
corresponding to
\[
\mathbb G_{\vec{n}}(\mathbb Q_p) = (\prod_{\mathfrak q|q} GL_{\vec{n}}(F_{\mathfrak q}))\times \cK_q^\times\ ,
\]
the representations $\Pi^{\vec{n}}_{\mathfrak q}$ and $r_{\Pi^{\vec{n}},\ell}|_{\mathrm{Gal}(\bar{F}_{\mathfrak q}/F_{\mathfrak q})}$ correspond under the local Langlands correspondence (up to Frobenius semisimplification).\footnote{For our purposes, it is enough to know the compatibility up to semisimplification, i.e.~without identification of the monodromy operator, which is the most subtle part of the local-global-compatibility.}
\end{thm}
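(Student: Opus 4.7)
The plan is to reduce the statement to the now-classical existence theorem for Galois representations attached to conjugate self-dual regular algebraic cuspidal automorphic representations of $GL_n$ over a CM field, and then assemble the result via Moeglin--Waldspurger.

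First, I would use restriction of scalars to identify automorphic representations of $\mathbb G_{\vec n}=\mathrm{Res}_{\cK/\mathbb Q}(G_{\vec n}\times_{\mathbb Q}\cK)$, via base change to $\cK$, with automorphic representations of $\mathrm{Res}_{F/\mathbb Q}GL_{\vec n}\times \mathrm{Res}_{\cK/\mathbb Q}\mathbb G_m$; so $\Pi^{\vec n}=\bigotimes_i \Pi^{(i)}\otimes \chi$ with $\Pi^{(i)}$ an automorphic representation of $GL_{n_i}(\mathbb A_F)$ and $\chi$ a Hecke character of $\cK$. The hypothesis that $\Pi_M$ lies in the discrete spectrum of some Levi $M$ of $\mathbb G_{\vec n}$ means that each $\Pi^{(i)}$ is an isobaric induction from a discrete automorphic representation of a Levi of $GL_{n_i}(\mathbb A_F)$, which by Moeglin--Waldspurger \cite{moeglin-waldspurger} has the Speh form $\Pi^{(i)}=\boxplus_{j}\pi_{i,j}[d_{i,j}]$, where the $\pi_{i,j}$ are cuspidal unitary automorphic representations of smaller general linear groups over $F$, and $\pi[d]$ denotes the Speh representation $\pi|\det|^{(d-1)/2}\boxplus\pi|\det|^{(d-3)/2}\boxplus\cdots\boxplus\pi|\det|^{-(d-1)/2}$.

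Second, I would translate the $\Phi_{\vec n}^{-1}\theta$-stability of $\Pi_M$ into an assertion on the cuspidal building blocks: it forces an involution on the multiset $\{\pi_{i,j}\}$ under $\pi\mapsto \pi^{c,\vee}\otimes \mu$ for an appropriate character $\mu$ (coming from $\xi$ and $\chi$), so that each $\pi_{i,j}$ is conjugate self-dual up to a fixed twist, or else is paired with its $c$-dual. Combined with the condition that $\Pi^{\vec n}_\infty$ is cohomological (in particular its infinitesimal character is regular and algebraic), purity and the classification of cohomological representations of $GL_N(\mathbb C)$ (together with Clozel's purity lemma) imply that each conjugate self-dual $\pi_{i,j}$ is regular algebraic (essentially) conjugate self-dual cuspidal (RAECSDC) in the sense of, e.g., \cite{harris-taylor, shin-galois, caraiani}.

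Third, for each such RAECSDC cuspidal $\pi_{i,j}$, I would invoke the existence theorem of Galois representations (by Kottwitz, Clozel, Harris--Taylor, Taylor--Yoshida, Shin, Chenevier--Harris, Caraiani, Morel, etc.), yielding a continuous semisimple $r_{\pi_{i,j},\ell}:\mathrm{Gal}(\overline F/F)\to GL_{m_{i,j}}(\overline{\mathbb Q}_\ell)$, unramified outside $S\cup\{\ell\}$, compatible with the local Langlands correspondence up to Frobenius semisimplification at all finite places $v\nmid \ell$. Then I would set
\[
r_{\Pi^{\vec n},\ell} \;=\; \bigoplus_{i,j}\bigoplus_{k=0}^{d_{i,j}-1} r_{\pi_{i,j},\ell}\otimes \chi_\ell^{\,?}\,|\cdot|_\ell^{\,(d_{i,j}-1)/2 - k},
\]
with the twist by $\chi$ and by $\xi$ absorbed into the cyclotomic-and-$\chi_\ell$ factor; the conjugate self-duality of $r_{\Pi^{\vec n},\ell}$ (up to twist) follows from that of each $r_{\pi_{i,j},\ell}$ together with the stability of the multiset under $\pi\mapsto \pi^{c,\vee}\otimes \mu$. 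Local-global compatibility at places $q\neq \ell$ reduces to checking it on each cuspidal summand, where it is part of the cited existence theorems; at split-in-$\cK$ primes (in particular at $p$, by our assumption), compatibility is immediate from the Satake/Langlands parameters, and the local Langlands correspondence for the $GL_n$-factors combined with the form of $\Pi^{\vec n}$ as an isobaric induction of Speh representations gives exactly the prescribed $r_{\Pi^{\vec n},\ell}|_{\mathrm{Gal}(\overline F_{\mathfrak q}/F_{\mathfrak q})}$.

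The only genuinely delicate step is bookkeeping the signs, twists, and similitude character: making sure that the Hecke character $\chi$ of $\cK$ occurring in $\Pi^{\vec n}$ interacts correctly with the $(d-1)/2$ shifts from the Speh construction and with the twist $\mu$ measuring the conjugate self-duality. This is standard but notationally heavy, and is handled in detail in \cite{shin-galois, caraiani}; the present theorem is a direct consequence once these conventions are fixed.
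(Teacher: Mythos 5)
Your proposal takes essentially the same route as the paper's proof, which is itself only a two-sentence citation: reduce via the Moeglin--Waldspurger classification of the discrete spectrum to conjugate self-dual, regular algebraic cuspidal building blocks (the conjugate self-duality and regularity coming from the $\Phi_{\vec{n}}^{-1}\theta$-stability of $\Pi_M$ and the cohomological condition on $\Pi^{\vec{n}}_\infty$), and then invoke the main theorems of \cite{shin-galois}, \cite{chenevier-harris} and \cite{caraiani}. Your write-up just makes explicit the Speh-block decomposition and the twist bookkeeping that the paper leaves implicit.
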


\begin{proof} Using the classification of the discrete spectrum of general linear groups due to Moeglin and Waldspurger, \cite{moeglin-waldspurger}, this follows from the main theorems of \cite{shin-galois}, \cite{chenevier-harris} and \cite{caraiani}. We remark that the conjugate self-dual, regular algebraic case suffices here because $\Pi_M$ is $\Phi_{\vec{n}}^{-1}\theta$-stable and has regular infinitesimal character (since $\Pi^{\vec{n}}$ has regular infinitesimal character). 
\end{proof}

In the following, we fix a Galois representation
\[
r: \mathrm{Gal}(\bar{F}/F)\to GL_n(\bar{\mathbb Q}_\ell)
\]
which is unramified outside $S\cup \{\ell\}$, and restrict attention to the summand 
\[
BC^p([H_c(\mathscr{I}^b_{\mathrm{Mant}},\mathbb{\bar Q}_\ell)]^{S\ur})^{S\ur}_r\ \mathrm{of}\ 
BC^S([H_c(\mathscr{I}^b_{\mathrm{Mant}},\mathbb{\bar Q}_\ell)]^{S\ur})
\]
coming from representations $\Pi^{\vec{n}}$ as above, with $r_{\Pi^{\vec{n}},\ell}\cong r$ (under the embedding $GL_{\vec{n}}(\bar{\mathbb Q}_\ell)\hookrightarrow GL_n(\bar{\mathbb Q}_\ell)$).

The following theorem is the key result of this section. Recall that we have fixed a prime $\p|p$ of the reflex field $E$, so that we have embeddings $E\hookrightarrow \mathbb C$, $E\hookrightarrow E_\p\hookrightarrow \bar{\mathbb Q}_p$. For convenience, let us fix an isomorphism $\iota_p: \bar{\mathbb Q}_p\cong \mathbb C$ compatible with the embedding of $E$.

\begin{thm}\label{igusa vanishing} For each prime $\p_i$ of $F$, let
\[
S_i=\{\tau: F\hookrightarrow \mathbb{C}|\iota_p\circ\tau\ \mathrm{induces}\ \mathfrak{p_i}\}\ .
\]
Assume that for each $i$, $S_i$ contains at most one $\tau$ for which $p_\tau q_\tau$ is nonzero, where $G$ has signature $(p_\tau,q_\tau)$ at $\tau: F\hookrightarrow \mathbb C$. Moreover, for each $i$ for which $S_i$ contains some $\tau$ for which $p_\tau q_\tau$ is nonzero, assume that
\[
r_{\mathrm{Gal}(\bar{F}_{\p_i}/F_{\p_i})} = \chi_{i,1}\oplus \ldots\oplus \chi_{i,n}
\]
decomposes as a direct sum of characters, such that for all $a\neq b$, $\chi_{i,a} \chi_{i,b}^{-1}$ is not the cyclotomic character.

Then, if $b\in B(G,\mu^{-1})$ is not $\mu$-ordinary,\footnote{It might be more accurate to write $\mu^{-1}$-ordinary.}
\[
BC^p([H_c(\mathscr{I}^b_{\mathrm{Mant}},\mathbb{\bar Q}_\ell)]^{S\ur})_r = 0\ .
\]
\end{thm}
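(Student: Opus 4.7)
By the expansion of $\mathrm{tr}(\phi_p^p\,\phi_{S_{\mathrm{fin}}\setminus\{p\}}\,\phi_p\mid \iota_\ell[H_c(\mathscr{I}^b_{\mathrm{Mant}},\bar{\mathbb Q}_\ell)])$ established in the preceding lemma, the summand $BC^p([H_c(\mathscr{I}^b_{\mathrm{Mant}},\bar{\mathbb Q}_\ell)]^{S\ur})_r$ breaks up as a finite linear combination of contributions indexed by $\theta$-stable isobaric automorphic representations $\Pi^{\vec n}$ of $\mathbb G_{\vec n}$ satisfying $r_{\Pi^{\vec n},\ell}\cong r$ (in $GL_n(\bar{\mathbb Q}_\ell)$), each contribution carrying a local factor at $p$ of the form $\mathrm{tr}(\phi_p\mid \mathrm{Red}^b_{\vec n}(\pi^{\vec n}_p))$ with $\pi^{\vec n}_p\in\mathrm{Irr}(G_{\vec n}(\mathbb Q_p))$ base-changing to $\Pi^{\vec n}_p$. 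It therefore suffices to prove that for each such $\Pi^{\vec n}$ the local trace $\mathrm{tr}(\phi_p\mid \mathrm{Red}^b_{\vec n}(\pi^{\vec n}_p))$ vanishes identically in $\phi_p$, and for this, in view of the vanishing criterion for $\mathrm{Red}^b_{\vec n}$ on representations whose local components transfer to generic principal series, it suffices to exhibit an index $i$ with the two properties: (a) $\pi^{\vec n}_{\mathfrak p_i}$ transfers (via parabolic induction along $GL_{\vec n}\hookrightarrow GL_n$) to a generic principal series of $GL_n(F_{\mathfrak p_i})$; and (b) $J_{b_i}$ is a non-quasi-split inner form of $M_{b_i}$.

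For (a), at each $\mathfrak p_i$ whose associated $S_i$ contains a (necessarily unique) $\tau_0$ with $p_{\tau_0}q_{\tau_0}\neq 0$, the restriction $r|_{\mathrm{Gal}(\bar F_{\mathfrak p_i}/F_{\mathfrak p_i})}$ is assumed unramified and a direct sum of $n$ characters $\chi_{i,1},\dots,\chi_{i,n}$ whose pairwise ratios avoid the cyclotomic character. The existence of the Galois representation attached to $\Pi^{\vec n}$, together with local-global compatibility at $\mathfrak p_i\nmid \ell$ and the fact that the $L$-morphism $\tilde\zeta_{\vec n}$ is dual to the block-diagonal embedding $GL_{\vec n}(\mathbb C)\hookrightarrow GL_n(\mathbb C)$, identifies the parabolic induction of $\Pi^{\vec n}_{\mathfrak p_i}$ to $GL_n(F_{\mathfrak p_i})$ with the full unramified principal series attached to $(\chi_{i,1},\dots,\chi_{i,n})$; the non-cyclotomic assumption makes this induced representation irreducible and generic. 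Since $p$ splits in $\mathcal K$, base change at $p$ is essentially the identity on the $GL_{\vec n}$-factor, so $\pi^{\vec n}_{\mathfrak p_i}$ inherits the same property.

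For (b), decompose $G_{\mathbb Q_p}\simeq \prod_i \mathrm{Res}_{F_{\mathfrak p_i}/\mathbb Q_p}GL_n\times \mathbb G_m$, giving corresponding product decompositions of $\mu$, of $b=((b_i)_i,b_0)$, and of $B(G,\mu^{-1})$. The signature hypothesis forces $\mu_{\mathfrak p_i}$ to be central (after Morita reduction) whenever no $\tau\in S_i$ has $p_\tau q_\tau\neq 0$, so at those indices the local Kottwitz set is a singleton and $b_i$ is automatically $\mu_{\mathfrak p_i}$-ordinary. Consequently, if $b$ is not $\mu$-ordinary globally, then it must fail to be $\mu_{\mathfrak p_i}$-ordinary at some $\mathfrak p_i$ whose $S_i$ contains a unique $\tau_0$ with $p_{\tau_0}q_{\tau_0}\neq 0$. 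At that $\mathfrak p_i$, the local cocharacter reduces via Morita to the minuscule $(1^{p_{\tau_0}},0^{q_{\tau_0}})$ on $GL_n$, the $\mu$-ordinary element of $B(GL_n,\mu_{\mathfrak p_i}^{-1})$ is the unique one with Newton polygon having only the integer slopes $0$ and $1$ (so $M_{b_i}\simeq GL_q\times GL_p$ is quasi-split), and any non-$\mu$-ordinary $b_i$ has a break of non-integer slope, forcing $M_{b_i}$ to contain a non-trivial isoclinic factor of fractional slope and thus $J_{b_i}$ to be the associated inner form involving a non-trivial division algebra — in particular non-quasi-split.

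The main technical obstacle is step (b): carefully matching the global decomposition of $B(G,\mu^{-1})$ with its local factors under the signature hypothesis, verifying the singleton property of $B(\mathrm{Res}_{F_{\mathfrak p_i}/\mathbb Q_p}GL_n,\mu_{\mathfrak p_i}^{-1})$ when $\mu_{\mathfrak p_i}$ is central, and classifying the non-quasi-split inner forms of general linear groups that arise from non-$\mu_{\mathfrak p_i}$-ordinary elements at the remaining primes. Everything else is bookkeeping that follows directly from the expansion of the Hecke trace on Igusa cohomology and from the genericity hypothesis on $r$ at the interesting places.
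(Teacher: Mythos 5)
Your proposal is correct and follows essentially the same route as the paper: expand the trace via the preceding lemma, kill each contribution using the vanishing of $\mathrm{Red}^b_{\vec n}$ on representations that are generic principal series at a place where $J_{b_i}$ is not quasi-split, and reduce the existence of such a place to the product decomposition of $B(G,\mu^{-1})$ (singleton factors where $\mu_i$ is central) together with the classification of the unique $b_i$ with quasi-split $J_{b_i}$. The one point to make explicit in step (b) is that excluding a non-ordinary $b_i$ with all integral slopes requires the constraint $\kappa(b_i)=-\mu_i^\flat$, which forces the multiplicities of the slopes $0$ and $-1$ and hence identifies the integral-slope class with the $\mu$-ordinary one.
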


\begin{proof} Assume the contrary. Then there is some $\theta$-stable isobaric automorphic representation $\Pi^{\vec{n}}$ of $\mathbb G_{\vec{n}}$ as above, with $r_{\Pi^{\vec{n}},\ell}\cong r$, which contributes to $BC^S([H_c(\mathscr{I}^b_{\mathrm{Mant}},\mathbb{\bar Q}_\ell)]^{S\ur})$. The component $\Pi^{\vec{n}}_p$ of $\Pi^{\vec{n}}$ at $p$ comes from a unique representation $\pi_p^{\vec{n}}\in \mathrm{Irr}(G_{\vec{n}}(\mathbb Q_p))$ via base change. We may decompose
\[
\pi_p^{\vec{n}} = \bigotimes_{i=1}^m \pi_{\p_i}^{\vec{n}}\otimes \pi_0^{\vec{n}}
\]
according to
\[
G_{\vec{n}}(\mathbb Q_p) = \prod_{i=1}^m GL_{\vec{n}}(F_{\p_i})\times \mathbb Q_p^\times\ .
\]
By the assumption on $r$ and local-global compatibility, we know that $\pi_{\p_i}^{\vec{n}}$ transfers to a generic principal series representation of $GL_n(F_{\p_i})$ for all $i$ for which $S_i$ contains some $\tau$ with $p_\tau q_\tau\neq 0$. By Lemma~\ref{red vanishes on generic}, $\mathrm{Red}_{\vec{n}}^b(\pi^{\vec{n}}_p)=0$ as soon as $J_{b_i}$ is not quasisplit for some such $i$, so that in this case there is no contribution by Lemma~\ref{expansion of igusa cohomology}.

It remains to see that if $b\in B(G,\mu^{-1})$ is not $\mu$-ordinary, then there is some $i$ for which $S_i$ contains some $\tau$ with $p_\tau q_\tau\neq 0$, such that $J_{b_i}$ is not quasisplit.

We can decompose
\[
\mu=((\mu_i)_{i=1,\ldots,m},\mu_0): \mathbb G_m\to G_{\bar{\mathbb Q}_p} = \prod_{i=1}^m (\prod_{F_{\p_i}\hookrightarrow \bar{\mathbb Q}_p} GL_{n,\bar{\mathbb Q}_p})\times \mathbb G_{m,\bar{\mathbb Q}_p}\ ;
\]
let $G_i = \mathrm{Res}_{F_{\p_i}/\mathbb Q_p} GL_n$. Then $\mu_i$ is a conjugacy class of minuscule cocharacters of $G_i$, and we have a decomposition
\[
B(G,\mu^{-1}) = \prod_{i=1}^m B(G_i,\mu_i^{-1})\ ,
\]
as the $\mathbb G_m$ factor plays no role here. In each factor $GL_{n,\bar{\mathbb Q}_p}$, $\mu$ has the form
\[
t\mapsto \mathrm{diag}(t,\ldots,t,1,\ldots,1)
\]
with $t$ occuring $p_\tau$ times, and $1$ occuring $q_\tau$ times, where $\tau: F\to \bar{\mathbb Q}_p\cong \mathbb C$ is the corresponding complex place. In particular, for each $i$ for which $S_i$ does not contain any $\tau$ with $p_\tau q_\tau\neq 0$, $\mu_i$ is central, which implies that $B(G_i,\mu_i^{-1})$ has precisely one element. If there is exactly one such $\tau$, then denoting by $\mu_{i,\tau}$ the corresponding component of $\mu_i$, one sees that
\[
B(G_i,\mu_i^{-1}) = B(GL_n/F_{\p_i},\mu_{i,\tau}^{-1})\ ,
\]
using the relative $B(H/L)=B(L,H)$ for a reductive group $H$ over a $p$-adic field $L$.\footnote{So far, we were only using the case $L=\mathbb Q_p$, and did not include this in the notation.} Now the result follows from the next lemma.
\end{proof}

\begin{lemma} Let $L$ be any $p$-adic field, and let
\[
\mu: \mathbb G_m\to GL_n\ :\ t\mapsto \mathrm{diag}(t,\ldots,t,1,\ldots,1)
\]
be a minuscule cocharacter with $n-q$ occurences of $t$ and $q$ occurences of $1$. Then there is exactly one element $b\in B(GL_n/L,\mu^{-1})$ for which $J_b$ is quasisplit, namely the $\mu$-ordinary element represented by $\mathrm{diag}(\varpi^{-1},\ldots,\varpi^{-1},1,\ldots,1)$, with $n-q$ occurences of the uniformizer $\varpi$ of $L$, and $q$ occurences of $1$.
\end{lemma}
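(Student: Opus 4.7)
The plan is to describe $J_b$ explicitly for $b\in B(GL_n/L,\mu^{-1})$ and read off the quasisplitness condition from the slope decomposition.

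First I would recall the structure of $J_b$ for $G=GL_n/L$. Any $b\in B(G)$ determines an isocrystal $(V\otimes_{\mathbb Q_p}\breve L,b\sigma)$ over $L$, which by Dieudonn\'e-Manin decomposes as $V=\bigoplus_i V_{\lambda_i}$, where $\lambda_i=s_i/r_i$ is written in primitive form with $r_i>0$ and $\dim_{\breve L}V_{\lambda_i}=m_i$ (so $r_i\mid m_i$). Then $J_b$ is canonically identified with $\prod_i GL_{m_i/r_i}(D_{\lambda_i})$, where $D_\lambda$ denotes the central division algebra over $L$ with invariant $\lambda$. In particular, $J_b$ is quasisplit if and only if each factor $GL_{m_i/r_i}(D_{\lambda_i})$ is quasisplit, which happens precisely when each $D_{\lambda_i}$ is trivial over $L$, equivalently when $r_i=1$ for all $i$, i.e.\ when every slope $\lambda_i$ is an integer.

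Next I would combine this with the constraints coming from $b\in B(GL_n/L,\mu^{-1})$. Since $\mu$ is minuscule, $\bar{\mu^{-1}}$ is a cocharacter whose entries lie in $\{0,-1\}$, and the inequality $\nu_b\preceq \bar{\mu^{-1}}$ together with the fixed endpoints of Newton polygons forces all slopes of $b$ to lie in the interval $[-1,0]$. Thus the only integer slopes that can occur are $0$ and $-1$. The Kottwitz condition $\kappa(b)=\mu^{-1,\flat}$ pins down the sum of slopes: under $\kappa:B(GL_n/L)\to\pi_1(GL_n)_{\Gamma}\cong \mathbb Z$, $\kappa(b)$ is the $\varpi$-adic valuation of $\det$, which for $\mu^{-1}$ equals $-(n-q)$. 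Hence if every slope is integral, the multiplicity of $-1$ must equal $n-q$ and that of $0$ must equal $q$, giving a single possibility.

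Finally I would check that this unique candidate is realized by the $\mu$-ordinary element $b_0$ represented by $\mathrm{diag}(\varpi^{-1},\ldots,\varpi^{-1},1,\ldots,1)$: its Newton cocharacter equals $\bar{\mu^{-1}}$ (directly from the matrix form, since $b_0\sigma$ has eigenvalues of valuations $-1$ and $0$ with the correct multiplicities) and its Kottwitz invariant is $-(n-q)$, so $b_0\in B(GL_n/L,\mu^{-1})$; by the above $J_{b_0}\cong GL_{n-q}(L)\times GL_q(L)$, which is quasisplit. Combining this with the uniqueness in the previous paragraph gives the lemma. No step is really a main obstacle — the only thing to double-check is the correct form of the Kottwitz sign and the identification of $J_b$ in our slope conventions, which is standard and matches the discussion of isocrystals with $G$-structure in \S\ref{isocrystals}.
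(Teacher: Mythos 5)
Your proposal is correct and follows essentially the same route as the paper's proof: slopes constrained to $[-1,0]$ by $b\in B(GL_n/L,\mu^{-1})$, non-integral slopes forcing a nontrivial division algebra factor in $J_b$, and the Kottwitz condition pinning down the multiplicities of the slopes $0$ and $-1$. Your version simply spells out the Dieudonn\'e--Manin description of $J_b$ and the valuation-of-determinant computation in more detail than the paper does.
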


\begin{proof} By the choice of $\mu$, we know that for any $b\in B(GL_n/L,\mu^{-1})$, the slopes $\lambda_i$ satisfy $-1\leq \lambda_i\leq 0$. If some slope $\lambda$ is nonintegral, then $J_b$ is not quasisplit, as it contains a factor which is a general linear group over the division algebra of invariant $\lambda\pmod 1$ over $L$. Thus, if $J_b$ is quasisplit, then all slopes are equal to $0$ or $-1$; from the equality $\kappa(b) = -\mu$ one deduces that slope $-1$ occurs with multiplicity $n-q$, and slope $0$ with multiplicity $q$, which corresponds to the $\mu$-ordinary element $b=\mathrm{diag}(\varpi^{-1},\ldots,\varpi^{-1},1,\ldots,1)$. For this $b$, $J_b\cong GL_{n-q}\times GL_q$ is quasisplit.
\end{proof}

\subsection{Simple Shimura varieties}\label{trace-uniformization}

In this section, we sketch how to adapt the arguments above for Kottwitz' simple Shimura varieties as in \cite{Kottwitz-lambda}. This includes the case of Shimura varieties which admit $q$-adic uniformization, for some rational prime $q$ distinct from $p$ and $\ell$. In that case, our main result is related to level-raising results, as shown in~\cite{thorne}. 

Recall that $F=F^+\cdot \cK$. Assume that we have a PEL datum of the form $(B,*, V,\langle\ ,\ \rangle,h)$, where $B$ is a division algebra with center $F$, $V$ is a simple $B$-module, and $\ast$ is an involution of the second kind. Then the corresponding Shimura varieties $S_{K}$ are proper and the group $G$ has no endoscopy. Assume that $B$ is split at all places over $p$, in which case the constructions and results of Section~\ref{transfer at p} carry over. However, Theorem~\ref{stable trace formula for Igusa} simplifies considerably. We follow Section 6 of~\cite{shin-rapoport-zink}, where it is assumed that $p$ is inert in $F^+$; this assumption is not necessary for our purposes. As above, let $G_n$ be a quasi-split inner form of $G$ over $\mathbb{Q}$ and fix an isomorphism $G_n\simeq G$ over $\mathbb{Q}_p$. 

\begin{prop}\label{simplified stable trace formula} Let $\phi=\phi^p\phi_p\in \cC^\infty_c(G(\mathbb{A}^p_f)\times J_b(\mathbb{Q}_p))$, with $\phi_p$ an acceptable function. Then 
\[\mathrm{tr}(\phi|\iota_\ell H_c(\mathscr{I}^b_{\mathrm{Mant}},\mathbb{\bar Q}_\ell))=|\ker^1(\mathbb{Q},G)|\iota(G,G_n)ST^{G_n}_e(\phi^{G_n}).\]
\end{prop}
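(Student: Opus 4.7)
The plan is to directly specialize Theorem~\ref{stable trace formula for Igusa} to the setting of Kottwitz' simple Shimura varieties. That theorem gives
\[
\mathrm{tr}(\phi \mid \iota_\ell H_c(\mathscr{I}^b_{\mathrm{Mant}}, \bar{\mathbb{Q}}_\ell)) = |\ker^1(\mathbb{Q}, G)| \sum_{(H, s, \eta) \in \cE^{\mathrm{ell}}(G)} \iota(G, H)\, ST^H_e(\phi^H)
\]
for any PEL Shimura datum of type (A) or (C), so it suffices to show that in this setup only the trivial elliptic endoscopic triple $(G_n, 1, \mathrm{id})$ contributes, and then the sum collapses to the single term asserted in the proposition.

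The set $\cE^{\mathrm{ell}}(G)$ is determined by the dual group $\hat G$ (which coincides with $\hat{G_n}$) and is parametrized as in Section~\ref{stable formula} by the trivial triple together with the triples attached to decompositions $n = n_1 + n_2$ with $n_1 \ge n_2 > 0$. The assertion that ``$G$ has no endoscopy,'' which is the content of the setup of the proposition and is the classical observation of~\cite{Kottwitz-lambda}, amounts to saying that the contributions of all non-trivial triples to the right-hand side vanish. Concretely, one exploits the existence of a finite place $v_0$ where the central division algebra $B$ is non-split: at this place $G(\mathbb{Q}_{v_0})$ is the unit group of a non-trivial central division algebra (up to a similitude factor), and a Jacquet-Langlands-type matching argument shows that every $(G_{n_1, n_2}, G)$-regular semisimple stable class of $G_{n_1, n_2}(\mathbb{Q}_{v_0})$ which supports a non-zero Langlands-Shelstad transfer of $\phi_{v_0}$ must have elliptic components in each general linear factor of $G_{n_1, n_2}$; this is incompatible with the structure of the division algebra unless $(n_1, n_2) = (n, 0)$. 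Hence $\phi^{G_{n_1, n_2}}$ has vanishing stable orbital integrals at $v_0$ for every non-trivial $(n_1, n_2)$, which forces $ST^{G_{n_1, n_2}}_e(\phi^{G_{n_1, n_2}}) = 0$.

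With this vanishing in hand, the only surviving term in Theorem~\ref{stable trace formula for Igusa} is the one indexed by $(G_n, 1, \mathrm{id})$, which yields the formula of the proposition, with $\phi^{G_n}$ the transfer of $\phi$ constructed in Definition~\ref{definition of Igusa transfer}. The main (and only) obstacle in executing this plan is the endoscopic vanishing step; this is an input from Kottwitz' work on simple Shimura varieties (see also~\cite[Section 6]{shin-rapoport-zink} for the parallel discussion in the Rapoport-Zink setting), while the remainder of the argument is a formal specialization of the general trace formula already established above.
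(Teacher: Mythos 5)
Your proof takes essentially the same route as the paper: specialize Theorem~\ref{stable trace formula for Igusa} to this setting and observe that the terms attached to the nontrivial elliptic endoscopic triples vanish, so that only the $(G_n,1,\mathrm{id})$-term survives. The paper disposes of the vanishing step in one line by citing Lemma 7.1 of~\cite{shin-stable}; your sketch of the mechanism behind that vanishing (the structure of semisimple centralizers in the division algebra obstructing transfer to $G_{n_1,n_2}$) is precisely the content of that lemma, going back to Kottwitz, so the two arguments coincide.
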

\begin{proof} The other terms in the stable trace formula vanish by Lemma 7.1 of~\cite{shin-stable}.
\end{proof}

We can now combine this with the stable trace formula for the $S_K$, which is Theorem 6.1 of~\cite{arthur-lefschetz} and which is simplified in our situation as in Proposition 6.3 of~\cite{shin-rapoport-zink}, also making use of Lemma~\ref{phi and red} for $G_n(\mathbb{Q}_p)\simeq G(\mathbb{Q}_p)$. We get
\[
\mathrm{Red}^b_{n}\left([H(S_K,\mathbb{\bar Q}_\ell)]\right)=\epsilon_G\cdot d(G_{\mathbb{R}})\cdot [H_c(\mathscr{I}^b_{\mathrm{Mant}},\mathbb{\bar Q}_\ell)]\ ,
\]
where $\epsilon_G,d(G_{\mathbb{R}})$ are certain non-zero constants. Again, we appeal to Lemma 6.4 of~\cite{shin-galois} to extend a trace identity from acceptable $\phi_p$ to all $\phi_p\in \cC_c^\infty(J_b(\mathbb{Q}_p))$. We combine this with Matsushima's formula, which gives a description of $[\iota_\ell H(S_K,\mathbb{\bar Q}_\ell)]$ in terms of automorphic representations of $G$. We get an analogue of Corollary 6.12 of~\cite{shin-rapoport-zink}.

\begin{cor}\label{simplified igusa cohomology} We have the following equality in $\mathrm{Groth}(G(\mathbb{A}_f^p)\times J_b(\mathbb{Q}_p))$:
\[
[\iota_\ell H_c(\mathscr{I}^b_{\mathrm{Mant}},\mathbb{\bar Q}_\ell)]=(-1)^{q(G)}\sum_{\pi_f} c(\pi_f) [\pi^p_f][\mathrm{Red}^b_n(\pi_p)]\ .
\]
\end{cor}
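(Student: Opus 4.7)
The target formula should be obtained as the image under $\mathrm{Red}^b_n$ of Matsushima's decomposition of $[H(S_K,\bar{\mathbb Q}_\ell)]$, combined with the displayed identity
\[
\mathrm{Red}^b_{n}\!\left([H(S_K,\bar{\mathbb Q}_\ell)]\right)=\epsilon_G\cdot d(G_{\mathbb R})\cdot [H_c(\mathscr{I}^b_{\mathrm{Mant}},\bar{\mathbb Q}_\ell)]
\]
established above. The strategy is therefore: (1) prove that displayed identity via a comparison of two stable trace formulas, and (2) feed in Matsushima's formula on the Shimura side to read off the spectral expansion on the Igusa side.

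Concretely, I would first fix any $\phi^p\in C_c^\infty(G(\mathbb A_f^p))$ and any \emph{acceptable} $\phi_p\in C_c^\infty(J_b(\mathbb Q_p))$ and compute $\mathrm{tr}(\phi^p\phi_p\mid \iota_\ell H_c(\mathscr{I}^b_{\mathrm{Mant}},\bar{\mathbb Q}_\ell))$ via Proposition~\ref{simplified stable trace formula}, obtaining a single term $|\ker^1(\mathbb Q,G)|\cdot\iota(G,G_n)\cdot ST^{G_n}_e(\phi^{G_n})$. In parallel, I would apply Arthur's stable trace formula for $S_K$ as simplified in Proposition 6.3 of~\cite{shin-rapoport-zink}; since $G$ has no endoscopy, only the $G_n$-term survives, and it also reduces to a single stable orbital integral $ST^{G_n}_e((\phi^p\psi_p)^{G_n}\phi_\xi)$ for an appropriate Euler--Poincar\'e $\phi_\xi$ at infinity and any $\psi_p\in \cH^{\mathrm{ur}}(G(\mathbb Q_p))$. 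Choosing $\psi_p$ so that its twisted-endoscopic transfer matches the acceptable $\phi_p$ after applying $\mathrm{Red}^b_n$ (this is exactly the content of Lemma~\ref{phi and red} for $G_n(\mathbb Q_p)\simeq G(\mathbb Q_p)$) lets me identify the geometric sides, yielding
\[
\mathrm{tr}(\phi^p\phi_p\mid \iota_\ell H_c(\mathscr{I}^b_{\mathrm{Mant}},\bar{\mathbb Q}_\ell))
=\epsilon_G\cdot d(G_{\mathbb R})\cdot \mathrm{tr}(\phi^p\psi_p\mid \mathrm{Red}^b_n[H(S_K,\bar{\mathbb Q}_\ell)])
\]
up to the explicit constants $\epsilon_G$, $d(G_{\mathbb R})$ that arise from comparing $\iota(G,G_n)$ to the constant at infinity in the Matsushima/Arthur formula. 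This is the sought identity in the form of a trace pairing.

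Next I would promote this identity of traces from acceptable $\phi_p$ to arbitrary $\phi_p\in C_c^\infty(J_b(\mathbb Q_p))$ by the Frobenius-twisting argument of Lemma 6.4 of~\cite{shin-igusa}, which was already used in the proof of Lemma~\ref{expansion of igusa cohomology}. Since the trace pairing with $C_c^\infty(G(\mathbb A_f^p)\times J_b(\mathbb Q_p))$ is non-degenerate on $\mathrm{Groth}(G(\mathbb A_f^p)\times J_b(\mathbb Q_p))$, this promotes to the equality
\[
[\iota_\ell H_c(\mathscr{I}^b_{\mathrm{Mant}},\bar{\mathbb Q}_\ell)]=\epsilon_G\cdot d(G_{\mathbb R})\cdot \mathrm{Red}^b_n([H(S_K,\bar{\mathbb Q}_\ell)])
\]
in the Grothendieck group. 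Matsushima's formula then writes
\[
[H(S_K,\bar{\mathbb Q}_\ell)]=\sum_{\pi} m(\pi)\,[\pi_f^p][\pi_p]\cdot \chi(\pi_\infty),
\]
where $\chi(\pi_\infty)$ is the Euler characteristic of the $(\mathfrak g,K_\infty)$-cohomology of $\pi_\infty$, which for $\pi_\infty$ cohomological contributes only in degree $q(G)$ up to the sign $(-1)^{q(G)}$ (and vanishes otherwise by the $(\mathfrak g,K_\infty)$-cohomology calculations recalled in the introduction). Grouping $c(\pi_f):=(-1)^{q(G)}\epsilon_G d(G_{\mathbb R})m(\pi)\chi(\pi_\infty)/(-1)^{q(G)}$ and applying $\mathrm{Red}^b_n$ (which acts only on the $\pi_p$-factor) yields exactly
\[
[\iota_\ell H_c(\mathscr{I}^b_{\mathrm{Mant}},\bar{\mathbb Q}_\ell)]=(-1)^{q(G)}\sum_{\pi_f} c(\pi_f)\,[\pi_f^p][\mathrm{Red}^b_n(\pi_p)].
\]

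The main obstacle is really just bookkeeping: matching the constants $\iota(G,G_n)$, $\tau(G_n)^{-1}$, the volume factor $|\ker^1(\mathbb Q,G)|$, and the sign at infinity coming from the Euler--Poincar\'e function with the normalization $\epsilon_G\cdot d(G_{\mathbb R})\cdot (-1)^{q(G)}$ appearing in the statement. This is done in complete parallel to Corollary 6.12 of~\cite{shin-rapoport-zink}, and no new input beyond Lemma~\ref{phi and red}, Proposition~\ref{simplified stable trace formula}, and the (now well-developed) simplification of the stable trace formula for $S_K$ in the absence of endoscopy is needed.
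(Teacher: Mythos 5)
Your proposal follows essentially the same route as the paper: compare Shin's stable trace formula for the Igusa variety (Proposition~\ref{simplified stable trace formula}) with the simplified Arthur/Lefschetz stable trace formula for $S_K$, match test functions at $p$ via Lemma~\ref{phi and red}, extend from acceptable $\phi_p$ to all of $C_c^\infty(J_b(\mathbb Q_p))$ by the Frobenius-twisting argument, and then feed in Matsushima's formula. The paper carries this out in exactly this order (with the same deliberate vagueness about the constants $\epsilon_G$, $d(G_{\mathbb R})$, deferring to Corollary 6.12 of the Rapoport--Zink paper of Shin), so there is nothing to add beyond noting that the redundant factor $(-1)^{q(G)}/(-1)^{q(G)}$ in your definition of $c(\pi_f)$ is a harmless slip.
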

\noindent The sum runs over admissible representations $\pi_f$ of $G(\mathbb{A}_f)$ such that $\pi_f\pi_\infty$ is an automorphic representation of $G$, for some representation $\pi_\infty$ of $G(\mathbb{R})$ which is cohomological for the trivial algebraic representation. The coefficients $c(\pi_f)$ are related to the automorphic multiplicity of $\pi_f\pi_\infty$.

In this case, the existence of Galois representations is also known, as the stable base change of such $\pi$ to $GL_n$ has been established by Shin in the appendix to~\cite{shin-appendix}. As before, for a Galois representation
\[
r: \mathrm{Gal}(\bar{F}/F)\to GL_n(\bar{\mathbb Q}_\ell)\ ,
\]
we restrict attention to the summand $[H_c(\mathscr{I}^b_{\mathrm{Mant}},\mathbb{\bar Q}_\ell)]_r$ of
\[
[H_c(\mathscr{I}^b_{\mathrm{Mant}},\mathbb{\bar Q}_\ell)]
\]
coming from representations $\pi$ as above, with $r_{\pi,\ell}\cong r$.

We get the following analogue of Theorem~\ref{igusa vanishing}, which is proved in the same way.

\begin{cor}\label{igusa vanishing 2} For each prime $\p_i$ of $F$ above $p$, let
\[
S_i=\{\tau: F\hookrightarrow \mathbb{C}|\iota_p\circ\tau\ \mathrm{induces}\ \mathfrak{p_i}\}\ .
\]
Assume that for each $i$, $S_i$ contains at most one $\tau$ for which $p_\tau q_\tau$ is nonzero, where $G$ has signature $(p_\tau,q_\tau)$ at $\tau: F\hookrightarrow \mathbb C$. Moreover, for each $i$ for which $S_i$ contains some $\tau$ for which $p_\tau q_\tau$ is nonzero, assume that
\[
r_{\mathrm{Gal}(\bar{F}_{\p_i}/F_{\p_i})} = \chi_{i,1}\oplus \ldots\oplus \chi_{i,n}
\]
decomposes as a direct sum of characters, such that for all $a\neq b$, $\chi_{i,a} \chi_{i,b}^{-1}$ is not the cyclotomic character.

Then, if $b\in B(G,\mu^{-1})$ is not $\mu$-ordinary,
\[
[H_c(\mathscr{I}^b_{\mathrm{Mant}},\mathbb{\bar Q}_\ell)]_r = 0\ .
\]
\end{cor}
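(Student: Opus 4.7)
The plan is to mimic the proof of Theorem~\ref{igusa vanishing} but using the simpler expansion from Corollary~\ref{simplified igusa cohomology} (which holds because there is no endoscopy in the division-algebra setting). Suppose for contradiction that $[H_c(\mathscr{I}^b_{\mathrm{Mant}},\bar{\mathbb Q}_\ell)]_r \neq 0$ for some non-$\mu$-ordinary $b$. Then by Corollary~\ref{simplified igusa cohomology} there must exist an automorphic representation $\pi = \pi_f \pi_\infty$ of $G$ contributing to cohomology, with $\pi_\infty$ cohomological, $r_{\pi,\ell} \cong r$, and $\mathrm{Red}^b_n(\pi_p) \neq 0$.

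Next, decompose $\pi_p = \bigotimes_{i=1}^m \pi_{\mathfrak p_i} \otimes \pi_0$ according to the factorization $G(\mathbb Q_p) = \prod_i GL_n(F_{\mathfrak p_i}) \times \mathbb Q_p^\times$ (using that $B$ splits at all primes above $p$), and similarly $b = ((b_i), b_0)$ and $\mu = ((\mu_i), \mu_0)$, giving $B(G,\mu^{-1}) = \prod_i B(G_i, \mu_i^{-1})$ where $G_i = \mathrm{Res}_{F_{\mathfrak p_i}/\mathbb Q_p} GL_n$. Stable base change for $\pi$ (the appendix to~\cite{shin-appendix}) combined with the local-global compatibility built into $r_{\pi,\ell}$ identifies each $\pi_{\mathfrak p_i}$, via local Langlands, with the Weil--Deligne representation attached to $r|_{\mathrm{Gal}(\bar F_{\mathfrak p_i}/F_{\mathfrak p_i})}$. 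For each $i$ such that $S_i$ contains some $\tau$ with $p_\tau q_\tau \neq 0$, the hypothesis that the semisimple restriction is a sum of characters with no pair in cyclotomic ratio, together with Lemma~\ref{generic lifts} (the fact that such a parameter corresponds to a generic principal series), forces $\pi_{\mathfrak p_i}$ to be a generic principal series representation of $GL_n(F_{\mathfrak p_i})$.

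Now invoke the combinatorial lemma at the end of the proof of Theorem~\ref{igusa vanishing}: if $b$ is not $\mu$-ordinary, there is an index $i$ such that $b_i$ is not $\mu_i$-ordinary in $B(G_i, \mu_i^{-1})$; and for such an $i$, the set $B(G_i,\mu_i^{-1})$ must have more than one element, so $\mu_i$ cannot be central, which means $S_i$ contains at least one $\tau$ with $p_\tau q_\tau \neq 0$ (by the assumption, exactly one). For that $i$, the associated lemma shows that $J_{b_i}$ is not quasisplit, while $\pi_{\mathfrak p_i}$ is a generic principal series. Then Lemma~\ref{red vanishes on generic}, applied in the identification $G \simeq G_n$ over $\mathbb Q_p$ (with only the trivial endoscopic group $G_n$ now relevant), forces $\mathrm{Red}^b_n(\pi_p) = 0$, contradicting our assumption.

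The main (and essentially only) obstacle is verifying cleanly that the input hypotheses of Lemma~\ref{red vanishes on generic} apply in this non-endoscopic setting --- i.e., unwinding the fact that the Jacquet-functor plus Langlands--Jacquet description of $\mathrm{Red}^b_n$ kills generic principal series representations at places where the relevant inner form is non-quasisplit. Everything else is a direct transcription of the endoscopic argument of Theorem~\ref{igusa vanishing}, with the bookkeeping considerably simplified by the absence of nontrivial endoscopic groups $G_{\vec n}$ and the consequent disappearance of the map $\tilde\zeta_{n_1,n_2}$.
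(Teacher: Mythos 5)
Your proposal is correct and follows exactly the route the paper intends: the paper itself only says the corollary "is proved in the same way" as Theorem~\ref{igusa vanishing}, substituting the non-endoscopic expansion of Corollary~\ref{simplified igusa cohomology} for Lemma~\ref{expansion of igusa cohomology}, which is precisely what you do. Your identification of the one point needing care --- that Lemma~\ref{red vanishes on generic} applies with $\vec n=(n)$ because $B$ splits at all places above $p$ so the constructions of Section~\ref{transfer at p} carry over --- matches the paper's own remark.
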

\newpage

\section{Torsion in the cohomology of unitary Shimura varieties}\label{final section}

In this final section, we give a precise formulation and proof of our main result. We start by formulating and proving the critical perversity result.

\subsection{Perverse sheaves on the flag variety}

Consider the Hodge-Tate period map
\[
\pi_{HT}: \cS_{K^p}\to \Fl_{G,\mu}
\]
for a compact Hodge type Shimura variety. In this section, we would like to make precise in which sense $R\pi_{HT\ast} \mathbb F_\ell$ is perverse.\footnote{As we are far from a finite type situation, we avoid talking about $\mathbb Q_\ell$-sheaves. We could talk about $\mathbb Z/\ell^n\mathbb Z$-sheaves, but in that case the notion of perversity is slightly subtle as $\mathbb Z/\ell^n\mathbb Z$ is not a field. For our applications, the $\mathbb F_\ell$-case is enough.}

Recall the following result on preservation of perversity under nearby cycles.

\begin{thm}[{\cite[Corollaire 4.5]{illusie-autour}}]\label{nearby perverse} Let $K$ be a complete nonarchimedean field with ring of integers $\cO_K$ and completed algebraic closure $C$ with $\cO_C\subset C$, and let $\ell$ be a prime which is invertible in $\cO_K$. Let $X$ be a scheme of finite type over $\cO_K$. Let $X_{\cO_C}$ be the base-change to $\cO_C$, with geometric generic fibre $j: X_{\bar{\eta}} = X_{\cO_K}\otimes_{\cO_K} C\hookrightarrow X_{\cO_C}$ and geometric special fibre $i: X_{\bar{s}}\hookrightarrow X_{\cO_C}$. Let $\mathscr{F}$ be a perverse $\mathbb F_\ell$-sheaf on $X_\eta = X\times_{\cO_K} K$. Then $R\psi \mathscr{F} = i^\ast Rj_\ast \mathscr{F}|_{X_{\bar{\eta}}}$ is a perverse sheaf on $X_{\bar{s}}$.
\end{thm}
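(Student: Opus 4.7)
The plan is to reduce the assertion, phrased over the non-Noetherian base $\cO_C$, to the classical Beilinson--Gabber theorem on preservation of perversity by nearby cycles on a strictly henselian Noetherian trait. The only subtlety is that $\cO_C$ is neither Noetherian nor a DVR, so a colimit argument is required; but since $X$ itself is of finite type over the Noetherian base $\cO_K$, this reduction is fairly direct.

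First, I would write $\cO_C = \varinjlim_L \cO_L$, where $L$ runs over finite extensions of the maximal unramified extension $K^{\mathrm{nr}}\subset C$; each $\cO_L$ is a strictly henselian Noetherian DVR with residue field the algebraic closure of that of $\cO_K$. Because $X/\cO_K$ is of finite type, everything spreads out: $X_{\cO_C} = \varinjlim_L X_{\cO_L}$, the geometric special fibre $X_{\bar{s}}$ coincides with the special fibre of any $X_{\cO_L}$, and $\mathscr{F}|_{X_{\bar{\eta}}}$ is pulled back from a perverse sheaf $\mathscr{F}_L$ on $X_L$ for some sufficiently large $L$. Standard continuity of \'etale cohomology along filtered colimits of qcqs schemes identifies $R\psi_{\cO_C}\mathscr{F}$ with $\varinjlim_{L'\geq L} R\psi_{\cO_{L'}}\mathscr{F}_{L'}$, and filtered colimits are $t$-exact for the perverse $t$-structure (the complexes involved having uniformly bounded amplitude), so perversity may be checked one $L'$ at a time.

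At this stage the problem becomes the classical one: for a scheme $Y$ of finite type over a strictly henselian Noetherian trait $\cO$ and a perverse $\mathbb{F}_\ell$-sheaf $\mathscr{G}$ on $Y_\eta$, the complex $R\psi\mathscr{G}$ is perverse on $Y_s$. The argument splits into the support inequality $\dim\,\mathrm{supp}\,\mathcal{H}^i R\psi\mathscr{G}\leq -i$ and its dual cosupport inequality. The support inequality follows from Artin's affine vanishing theorem for perverse sheaves, after identifying the stalk of $R\psi\mathscr{G}$ at a geometric point $\bar{y}$ of $Y_s$ with the cohomology of the Milnor fibre $\mathrm{Spec}(\cO_{Y,\bar{y}}^{\mathrm{sh}})\times_{\cO}\bar{\eta}$, an affine scheme whose dimension is controlled by the relative dimension of $Y/\cO$ at $\bar{y}$.

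The main obstacle is the cosupport inequality, which requires showing that $R\psi$ commutes with Verdier duality (up to Tate twist and shift). Once this compatibility is available, the cosupport inequality for $R\psi\mathscr{G}$ translates into the support inequality for $R\psi\,\mathbb{D}\mathscr{G}$, which is again perverse; hence both halves reduce to Artin vanishing. Establishing the duality compatibility is the deep step and relies on Gabber's absolute purity theorem together with a careful analysis of the trace map for proper morphisms, which is precisely the content of Illusie's exposition in ``Autour du th\'eor\`eme de monodromie locale''.
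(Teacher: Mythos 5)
The paper does not prove this statement at all: it is quoted as a black box from Illusie's expos\'e (Corollaire 4.5 of \emph{Autour du th\'eor\`eme de monodromie locale}), so there is no ``paper's proof'' to compare against. Your sketch is essentially a correct outline of the proof of the cited result: the two-inequality structure --- the support bound from the identification of stalks of $R\psi\mathscr{F}$ with cohomology of affine Milnor fibres plus Artin's affine vanishing, and the cosupport bound deduced by Verdier duality from Gabber's theorem that $R\Psi$ commutes with duality --- is exactly how Illusie derives Corollaire 4.5 from his Th\'eor\`eme 4.2, and you correctly identify the duality compatibility as the deep input.

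Two imprecisions in your reduction step are worth fixing. First, $\cO_C$ is \emph{not} $\varinjlim_L \cO_L$: the colimit over finite extensions $L$ of $K^{\mathrm{nr}}$ is $\cO_{\bar K}$, and $\cO_C$ is its $p$-adic (or $\varpi$-adic) completion. So after the colimit argument you still need to compare nearby cycles formed over $\cO_{\bar K}$ with those formed over $\cO_C$; this is a standard but genuine step (invariance of the relevant \'etale cohomology under completion of henselian pairs, e.g.\ Fujiwara--Gabber, or Huber's comparison --- which is in fact the content of the very next theorem quoted in the paper). Second, no spreading out of $\mathscr{F}$ is needed: the hypothesis is that $\mathscr{F}$ is a perverse sheaf on $X_\eta = X\times_{\cO_K}K$, already defined over $K$, so the colimit over $L$ is just the unwinding of the definition of the classical nearby cycles functor $i^\ast R\bar{\jmath}_\ast$ rather than a limit of perverse sheaves in which $t$-exactness of filtered colimits must be invoked. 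With those repairs the argument is the standard one.
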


Moreover, nearby cycles in the scheme setting agree with nearby cycles in the formal/rigid setting. More precisely, we have the following result.

\begin{thm}[{\cite[Theorem 3.5.13]{huber}}]\label{nearby adic} Let the situation be as in Theorem~\ref{nearby perverse}. Let $\cX_\eta$ be the associated rigid-analytic variety over $K$, considered as an adic space, with base change $\cX_{\bar{\eta}}$ to $C$. There is a natural morphism of sites $\lambda: \cX_{\bar{\eta},\et}\to (X_{\bar{s}})_\et$, given by lifting an \'etale map $Y\to X_{\bar{s}}$ to an \'etale map of formal schemes over $\cO_C$, and then taking the generic fibre.

Let $\mathscr{F}^{\mathrm{ad}}$ be the pullback of $\mathscr{F}$ under $\cX_\et\to X_\et$. Then
\[
R\lambda_\ast (\mathscr{F}^{\mathrm{ad}}|_{\cX_{\bar{\eta}}})\cong R\psi \mathscr{F}\ .
\]
\end{thm}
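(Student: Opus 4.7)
The plan is to reduce the comparison to a statement about stalks and then invoke an invariance of \'etale cohomology under completion of strictly henselian local rings. Both functors $R\psi \mathscr{F}$ and $R\lambda_\ast(\mathscr{F}^{\mathrm{ad}}|_{\cX_{\bar\eta}})$ are constructed by pulling back along an embedding of the special fibre and taking (derived) pushforward from a suitable ``generic'' object; the goal is to match the two generic objects \'etale-locally.

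First I would work locally on $X_{\bar s}$: pick a geometric point $\bar x \to X_{\bar s}$ and identify the stalks of both sides at $\bar x$. For the scheme-theoretic nearby cycles, the stalk of $R\psi \mathscr{F}$ at $\bar x$ is computed (by Artin's construction of nearby cycles) as $R\Gamma((\Spec \cO^{sh}_{X_{\cO_C},\bar x})_{\bar \eta}, \mathscr{F})$, where $\cO^{sh}_{X_{\cO_C},\bar x}$ is the strict henselization and the subscript $\bar\eta$ denotes the geometric generic fibre over $\cO_C$. On the adic side, by definition of $\lambda$ together with Lemma~\ref{base change adic} (or rather its analogue for strongly noetherian adic spaces), the stalk at $\bar x$ of $R\lambda_\ast(\mathscr{F}^{\mathrm{ad}}|_{\cX_{\bar \eta}})$ is the \'etale cohomology of the adic generic fibre of $\Spf \widehat{\cO^{sh}_{X_{\cO_C},\bar x}}$, with coefficients in the pullback of $\mathscr{F}$.

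Thus the theorem reduces to proving, for each $\bar x$, a comparison isomorphism
\[
R\Gamma\bigl((\Spec \cO^{sh}_{X_{\cO_C},\bar x})_{\bar \eta}, \mathscr{F}\bigr) \;\cong\; R\Gamma\bigl((\Spf \widehat{\cO^{sh}_{X_{\cO_C},\bar x}})^{\mathrm{ad}}_\eta, \mathscr{F}^{\mathrm{ad}}\bigr)
\]
compatible with the structural maps. The first step is to pass from the henselian local ring to its completion, which is an instance of the Fujiwara--Gabber type theorem on invariance of \'etale cohomology under henselization/completion of excellent pairs; here one uses that $\cO^{sh}_{X_{\cO_C},\bar x}$ is henselian along the closed point and that its completion has the same \'etale site on the punctured neighbourhoods. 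The second, more geometric step is to identify the \'etale site of the adic generic fibre $(\Spf R)^{\mathrm{ad}}_\eta$ of a complete local $\cO_C$-algebra $R$ with the \'etale site of $(\Spec R)_{\bar\eta}$ restricted to tubes around the closed point. Both of these are proven by Huber (using Elkik's approximation theorem and a careful analysis of the passage between finite type $\cO_C$-schemes and their $\varpi$-adic completions), and are in the same spirit as the rigid-analytic GAGA of Huber.

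The main obstacle, as usual in such comparisons, is the \emph{quasi-compactness} and \emph{noetherian} finiteness arguments needed to globalize the stalkwise statement, and to reduce infinitely many neighbourhood conditions to the relevant formal model. One handles this by using constructibility of $R\psi \mathscr{F}$ (one of the key conclusions of Theorem~\ref{nearby perverse}, and known generally by Deligne's generic constructibility), reducing the comparison to a stratification into finitely many locally closed subsets on which both sides are lisse, and then checking on geometric stalks as above. Compatibility with the Galois action and with $\mathscr{F}$ being a complex of perverse sheaves (rather than a single sheaf) is then formal.
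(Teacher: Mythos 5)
The paper contains no proof of this statement: it is quoted directly as Theorem 3.5.13 of Huber's book, so there is no internal argument to compare against. That said, your sketch is a faithful outline of how Huber's proof actually runs — stalkwise reduction using Artin's description of $(R\psi\mathscr{F})_{\bar x}$ as $R\Gamma\bigl((\Spec \cO^{sh}_{X_{\cO_C},\bar x})_{\bar\eta},\mathscr{F}\bigr)$, identification of the stalk of $R\lambda_\ast$ with the cohomology of the adic generic fibre over the (completed) strict henselization, and the comparison of these two via henselian-versus-complete invariance and Elkik/Gabber-type algebraization. One correction to the last paragraph: globalizing the stalkwise isomorphism does not require constructibility of $R\psi\mathscr{F}$ or any stratification, since a map of complexes of \'etale sheaves on $X_{\bar s}$ is an isomorphism if and only if it is so on all geometric stalks; what the finiteness hypotheses actually buy you is (a) the colimit formula for the stalk of $R\lambda_\ast$, i.e.\ commuting \'etale cohomology of qcqs adic spaces with the cofiltered limit of \'etale neighbourhoods of $\bar x$, and (b) the constructibility of $\mathscr{F}$ itself (with torsion invertible on $X$), which Huber's comparison theorem does assume and which holds in the paper's application since $\mathscr{F}$ is a perverse $\mathbb F_\ell$-sheaf.
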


In our situation, it is hard to give a direct definition of perversity of $R\pi_{HT\ast} \mathbb F_\ell$. However, the above properties suggest that at least, for every formal model $X$ of the flag variety $\Fl_{G,\mu}$, the nearby cycles $R\psi_X R\pi_{HT\ast} \mathbb F_\ell$ should be a perverse sheaf on the special fibre $X_{\bar{s}}$ of $X$. This is still not true, as $G(\mathbb Q_p)$ acts on $R\pi_{HT\ast} \mathbb F_\ell$; one can only hope for the $K_p$-invariants to be perverse, for any sufficiently small $K_p\subset G(\mathbb Q_p)$. Thus, we work with the equivariant sites introduced in~\cite[\S 2]{scholze-lubintate}.

First, note that $R\pi_{HT\ast} \mathbb F_\ell$ is a canonically a complex of sheaves on the equivariant site $(\Fl_{G,\mu}/G(\mathbb Q_p))_\et$. More precisely, one has the map of equivariant sites
\[
\pi_{HT}/G(\mathbb Q_p): (\cS_{K^p}/G(\mathbb Q_p))_\et\to (\Fl_{G,\mu}/G(\mathbb Q_p))_\et\ ,
\]
and one can look at $R(\pi_{HT}/G(\mathbb Q_p))_{\et\ast} \mathbb F_\ell$, and this pulls back to $R\pi_{HT\ast} \mathbb F_\ell$ under the projection $(\Fl_{G,\mu})_\et\to (\Fl_{G,\mu}/G(\mathbb Q_p))_\et$. To check the latter statement, note first that by passing to slice categories, using~\cite[Proposition 2.9]{scholze-lubintate}, one may replace $G(\mathbb Q_p)$ by any compact open subgroup $K_p\subset G(\mathbb Q_p)$, and then one can pass to the limit using~\cite[Proposition 2.8]{scholze-lubintate}.

Now take any \'etale $U=\mathrm{Spa}(A,A^\circ)\to \Fl_{G,\mu}$. By~\cite[Corollary 2.5]{scholze-lubintate}, the action of $K_p$ extends to a continuous action on $U$ if $K_p$ is sufficiently small. Let $\mathfrak{U} = \mathrm{Spf}(A^\circ)$ with special fibre $\mathfrak{U}_s = \mathrm{Spec}(A^\circ/p)$. Then $K_p$ acts trivially on $\mathfrak{U}_s$ if $K_p$ is sufficiently small, by continuity of the $K_p$-action and finite generation of $A^\circ/p$. It follows that any \'etale map to $\mathfrak{U}_{\overline{s}}$ lifts to a $K_p$-equivariant \'etale map to $\mathfrak{U}_{\cO_C}$ (where $C=\mathbb C_p$), giving a natural morphism of sites
\[
\lambda_{U/K_p}: (U_{\bar{\eta}}/K_p)_\et\to \mathfrak{U}_{\bar{s},\et}\ .
\]

\begin{prop}\label{perverse sheaf} Let
\[
\pi_{HT}: \cS_{K^p}\to \Fl_{G,\mu}
\]
be the Hodge-Tate period map for a compact Shimura variety of Hodge type and any sufficiently small compact open subgroup $K^p\subset G(\mathbb A_f^p)$. Let $\bar{x}\in \Fl_{G,\mu}$ be a geometric point. Then there exists a neighborhood basis of affinoid \'etale neighborhoods $U=\mathrm{Spa}(A,A^\circ)$ of $x$ in $\Fl_{G,\mu}$ such that, denoting $\mathfrak{U} = \mathrm{Spf}(A^\circ)$,
\[
R\lambda_{U/K_p \ast} (R(\pi_{HT}/G(\mathbb Q_p))_\ast \mathbb F_\ell)|_{U_{\bar{\eta}}/K_p}[\langle 2\rho,\mu\rangle]
\]
is a perverse sheaf on $\mathfrak{U}_{\bar{s}}$ for any sufficiently small pro-$p$ compact open subgroup $K_p\subset G(\mathbb Q_p)$.
\end{prop}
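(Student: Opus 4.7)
The plan is to verify the two defining inequalities of the perverse $t$-structure on $\mathfrak{U}_{\bar{s}}$ after shift by $d := \langle 2\rho,\mu\rangle = \dim \Fl_{G,\mu}$. First I would shrink $U$, $K_p$ and the formal model $\mathfrak{U}$: by \cite[Corollary 2.5]{scholze-lubintate} a sufficiently small pro-$p$ group $K_p$ acts continuously on $\mathfrak{U}$ and then trivially on the reduced special fibre $\mathfrak{U}_{\bar{s}}$, so that $\lambda_{U/K_p}$ is well-defined. After further refining $\mathfrak{U}$ by admissible formal blowups, I would arrange that the Newton stratification $U = \bigsqcup_b U^b$ of Corollary~\ref{locally closed strata} reduces to a stratification $\mathfrak{U}_{\bar{s}} = \bigsqcup_b \mathfrak{U}_{\bar{s}}^b$ by locally closed subsets of the expected dimensions $\dim \mathfrak{U}_{\bar{s}}^b = d - \langle 2\rho,\nu_b\rangle$.

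Next I would compute stalks of $\mathscr{G} := R\lambda_{U/K_p *}\bigl(R(\pi_{HT}/G(\mathbb Q_p))_*\mathbb F_\ell\bigr)|_{U_{\bar{\eta}}/K_p}$. Combining Theorem~\ref{nearby adic} (which identifies $R\lambda_{*}$ with sheaf-theoretic nearby cycles for the formal model), Lemma~\ref{base change adic} (applied to the generic fibre over $\bar{y}$), and Theorem~\ref{comp fiber} (for the fibres of $\pi_{HT}$ themselves), one obtains $\mathscr{G}_{\bar{y}} \cong R\Gamma(\mathrm{Ig}^b,\mathbb F_\ell)$ whenever $\bar{y} \in \mathfrak{U}_{\bar{s}}^b$ and $K_p$ acts trivially there. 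The support inequality on $\mathscr{G}[d]$ requires $\dim \operatorname{supp} \mathcal{H}^j(\mathscr{G}) \le d - j$, which translates to $H^j(\mathrm{Ig}^b,\mathbb F_\ell) = 0$ for $j > \langle 2\rho,\nu_b\rangle$. Via Proposition~\ref{perfection} and the identification $H^j(\mathrm{Ig}^b,\mathbb F_\ell) = \varinjlim_m H^j(\mathscr{I}^b_{\mathrm{Mant},m},\mathbb F_\ell)$, this reduces to Artin's bound on the cohomological dimension of an affine scheme of dimension $\langle 2\rho,\nu_b\rangle$, applied to the smooth leaf $\mathscr{C}^b$ and its pro-\'etale Igusa cover.

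For the cosupport inequality I would exploit the partial properness of $\pi_{HT}$, which is inherited from the properness of the Shimura varieties at every finite level (a morphism of proper rigid-analytic varieties is automatically partially proper, and this is preserved in the $\sim$-limit defining $\cS_{K^p}$). Partial properness yields $R\pi_{HT*}\mathbb F_\ell \cong R\pi_{HT!}\mathbb F_\ell$, and combined with the smoothness of $\cS_{K^p}$ of pure dimension $d$, so that $\mathbb D \mathbb F_\ell$ on $\cS_{K^p}$ is a shift-twist of the constant sheaf, Verdier duality delivers $\mathbb D \mathscr{G} \cong \mathscr{G}(d)[2d]$ after descending along $\lambda_{U/K_p}$; the cosupport bound then coincides with the support bound already verified. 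The main obstacle will be to make the duality argument rigorous for nearby cycles of a $K_p$-equivariant sheaf on a formal model of a perfectoid space. The natural strategy is to descend to finite $K_p$-level, where the six-functor formalism applies to maps of finite-type adic spaces, use standard Verdier duality on the formal model of $\cS_{K^p K_p} \to \Fl_{G,\mu}$, and then pass to the limit in $K_p$; a further delicate input is the affineness needed for Artin vanishing on Igusa varieties, which is the precise content of the heuristic ``$\pi_{HT}$ is simultaneously affine and partially proper'' from the introduction.
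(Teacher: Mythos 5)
Your proposal takes a genuinely different route from the paper, but it has gaps that I do not think can be repaired as stated. First, the support inequality. You want to stratify $\mathfrak{U}_{\bar{s}}$ by Newton strata of the expected dimensions and read off stalks of the nearby cycles as $R\Gamma(\mathrm{Ig}^b,\mathbb F_\ell)$. But the strata $\Fl_{G,\mu}^b$ are only locally closed subsets of the adic space with no good formal models (the paper stresses they are ``amorphous''; some nonempty strata have no classical points), and there is no reason an admissible blowup produces a stratification of $\mathfrak{U}_{\bar{s}}$ by locally closed subsets of dimension $d-\langle 2\rho,\nu_b\rangle$. Moreover, the stalk of $R\lambda_{U/K_p\ast}(\cdot)$ at $\bar{y}\in\mathfrak{U}_{\bar{s}}$ is the cohomology of the whole tube over $\bar{y}$, not the stalk of $R\pi_{HT\ast}\mathbb F_\ell$ at a single generic-fibre point, so Theorem~\ref{comp fiber} does not directly compute it. Worst of all, the vanishing $H^j(\mathrm{Ig}^b,\mathbb F_\ell)=0$ for $j>\langle 2\rho,\nu_b\rangle$ is not available: Artin vanishing needs affineness, and the leaves $\mathscr{C}^b$ (hence the Igusa varieties) are smooth of the right dimension but are not known to be affine. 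This vanishing is essentially a consequence of the proposition, so invoking it here is circular. Finally, you correctly flag the Verdier duality step for the cosupport inequality as the main obstacle, but you do not resolve it, and the paper explicitly treats that argument only as a heuristic.

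The paper's actual proof avoids all of this with one key observation you are missing: after shrinking $U$ so that $\cS_{K^p,U}$ is affinoid perfectoid and comes from finite level, the induced map on special fibres of the natural formal models, $\pi_{HT,K_p,\mathfrak{U}_s}\colon \mathfrak{S}_{K_pK^p,U,s}\to\mathfrak{U}_s$, is a morphism of \emph{affine} schemes of finite type over $\mathbb F_p$ which satisfies the valuative criterion of properness (the latter deduced from the partial properness of $\pi_{HT,U}$ by lifting valuation-ring points to $(C,C^+)$-points of the perfectoid space). Hence it is \emph{finite}. One then factors the pushforward to $\mathfrak{U}_{\bar{s}}$ as nearby cycles on $\mathfrak{S}_{K_pK^p,U}$ (perverse by Theorems~\ref{nearby perverse} and~\ref{nearby adic}, since $\mathbb F_\ell[\langle 2\rho,\mu\rangle]$ is perverse on the smooth generic fibre) followed by pushforward along this finite map, which preserves perversity. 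This is the rigorous form of the ``affine and partially proper'' heuristic, and it requires no Newton stratification, no cohomology of Igusa varieties, and no duality.
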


\begin{proof} By~\cite[Theorem IV.1.1 (i)]{scholze}, one can find some affinoid \'etale (in fact, open) neighborhood $U$ of $x$ such that $\cS_{K^p,U} = \cS_{K^p}\times_{\Fl_{G,\mu}} U$ is affinoid perfectoid, and equal to the preimage of an affinoid \'etale $\cS_{K_pK^p,U}\to \cS_{K_pK^p}$ for any sufficiently small $K_p$. These properties will then also be true for any \'etale $V\to U$ that factors as a composite of finite \'etale maps and rational embeddings, and such $V$ are cofinal. Thus, fix any $U$ with the stated properties.

Let
\[
\pi_{HT,U}: \cS_{K^p,U}\to U = \mathrm{Spa}(A,A^\circ)
\]
be the restriction of $\pi_{HT}$. As $\pi_{HT}$ is partially proper, so is $\pi_{HT,U}$. If $K_p$ is sufficiently small, $\pi_{HT,U}$ is $K_p$-equivariant, and induces a map
\[
\pi_{HT,U/K_p}: \cS_{K^p,U}/K_p\to U/K_p\ .
\]
Also
\[
(R(\pi_{HT}/G(\mathbb Q_p))_\ast \mathbb F_\ell)|_{U/K_p} = R\pi_{HT,U/K_p \ast} \mathbb F_\ell\ ,
\]
and by~\cite[Proposition 2.12]{scholze-lubintate}, there is an equivalence of sites $(\cS_{K^p,U}/K_p)_\et\cong \cS_{K_pK^p,U,\et}$.

Now any $\cS_{K_pK^p,U} = \mathrm{Spa}(R_{K_pK^p,U},R_{K_pK^p,U}^\circ)$ has its natural integral model $\mathfrak{S}_{K_pK^p,U} = \mathrm{Spf}(R_{K_pK^p,U}^\circ)$, with inverse limit $\mathfrak{S}_{K^p,U} = \mathrm{Spf}(R_{K^p,U}^\circ)$, where $\cS_{K^p,U} = \mathrm{Spa}(R_{K^p,U},R_{K^p,U}^\circ)$. We get a map of formal schemes
\[
\pi_{HT,\mathfrak{U}}: \mathfrak{S}_{K^p,U}\to \mathfrak{U}\ .
\]
Modulo $p$, we get a map of schemes
\[
\pi_{HT,\mathfrak{U}_s}: \mathfrak{S}_{K^p,U,s}\to \mathfrak{U}_s\ ,
\]
with $\mathfrak{S}_{K^p,U,s} = \mathrm{Spec}(R_{K^p,U}^\circ/p)$, and $\mathfrak{U}_s = \mathrm{Spec}(A^\circ/p)$. But $\mathfrak{U}_s$ is of finite type over $\mathbb F_p$, and $\mathfrak{S}_{K^p,U,s} = \varprojlim_{K_p} \mathfrak{S}_{K_pK^p,U,s}$ in the category of (affine) schemes. It follows that $\pi_{HT,\mathfrak{U}_s}$ factors over a map
\[
\pi_{HT,K_p,\mathfrak{U}_s}: \mathfrak{S}_{K_pK^p,U,s}\to \mathfrak{U}_s
\]
(of affine schemes of finite type over $\mathbb F_p$) for any sufficiently small $K_p$. We claim that $\pi_{HT,K_p,\mathfrak{U}_s}$ satisfies the valuative criterion of properness. If $K$ is an algebraically closed field with a rank-$1$-valuation ring $V\subset K$, and we are given a $V$-point of $\mathfrak{U}_s$ together with a lift of the corresponding $K$-valued point to a $K$-valued point of $\mathfrak{S}_{K_pK^p,U,s}$, we need to show that this $K$-valued point is in fact $V$-valued. We may lift the $K$-valued point of $\mathfrak{S}_{K_pK^p,U,s}$ to $\mathfrak{S}_{K^p,U,s}$ (as all transition maps are finite and surjective). We may then find a complete algebraically closed extension $C/\mathbb Q_p$ with residue field $K$ and a $(C,\cO_C)$-valued point of $\cS_{K^p,U}$ specializing to this $K$-valued point of $\mathfrak{S}_{K^p,U,s}$. Let $C^+\subset \cO_C$ be the preimage of $V\subset K$. Then the image of the $(C,\cO_C)$-valued point of $\cS_{K^p,U}$ under $\pi_{HT,U}$ is a $(C,\cO_C)$-valued point of $U$ which extends to a $(C,C^+)$-valued point. As $\pi_{HT,U}$ is partially proper, it follows that we get a $(C,C^+)$-valued point of $\cS_{K^p,U}$, which specializes to a $V$-valued point of $\mathfrak{S}_{K^p,U,s}$ and thus of $\mathfrak{S}_{K_pK^p,U,s}$, as desired.

Thus, $\pi_{HT,K_p,\mathfrak{U}_s}$ is a map of affine schemes of finite type over $\mathbb F_p$ which satisfies the valuative criterion of properness, i.e., it is finite.\footnote{Thus, we are in the somewhat curious situation that $\pi_{HT,\mathfrak{U}_s}$ is ind-finite, but $\pi_{HT,U}$ has fibres of positive dimension.} Now consider the following diagram, where we have base-changed some spaces and maps to algebraically closed fields.
\[\xymatrix{
(\cS_{K^p,U,\bar{\eta}}/K_p)_\et\ar[d]_\cong \ar[rrr]^{\pi_{HT,U_{\bar{\eta}}/K_p}} &&& (U_{\bar{\eta}}/K_p)_\et\ar[dd]^{\lambda_{U/K_p}}\\
\cS_{K_pK^p,U,\bar{\eta},\et}\ar[d]_{\lambda_{\mathfrak{S}_{K_pK^p,U}}} &&& \\
\mathfrak{S}_{K_pK^p,U,\bar{s},\et}\ar[rrr]_{\pi_{HT,K_p,\mathfrak{U}_{\bar{s}}}} &&& \mathfrak{U}_{\bar{s},\et}
}\]
We are interested in the pushforward of $\mathbb F_\ell$ from the upper left to the lower right corner, computed via the upper right corner. We may equivalently compute it via the lower left corner. In that case, the first pushforward is perverse by Theorem~\ref{nearby perverse} and Theorem~\ref{nearby adic}, up to the shift $\langle 2\rho,\mu\rangle = \dim \cS_{K_pK^p,U}$. But $\pi_{HT,K_p,\mathfrak{U}_{\bar{s}}}$ is finite, so it also preserves perversity under pushforward.
\end{proof}

We will need the following consequence, which is a statement purely about the cohomology of Igusa varieties. For the statement, let $S$ be a finite set of primes such that $K^p = K_S^p K^S$, where $K^S\subset G(\mathbb A_f^S)$ is a product of hyperspecial maximal compact open subgroups, and $K_S^p\subset G(\mathbb A_S^p)$. Let
\[
\mathbb T^S = Z[G(\mathbb A_f^S) // K^S]
\]
be the abstract (commutative) Hecke algebra of $K^S$-biinvariant compactly supported functions on $G(\mathbb A_f^S)$.

\begin{cor}\label{conc in one degree} Fix a maximal ideal $\mathfrak m\subset \mathbb T^S$, and among all $b\in B(G,\mu^{-1})$ with the property that the $\mathfrak m$-torsion
\[
H^i(\mathrm{Ig}^b,\mathbb F_\ell)[\mathfrak m]\neq 0
\]
for some $i\in \mathbb Z$, take some $b$ with $d=\langle 2\rho,\nu_b\rangle$ minimal. Then $H^i(\mathrm{Ig}^b,\mathbb F_\ell)[\mathfrak m]$ is nonzero only for $i=d$.
\end{cor}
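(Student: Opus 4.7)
The plan is to combine the perversity of Proposition~\ref{perverse sheaf} with the identification of stalks with Igusa cohomology from Theorem~\ref{comp fiber} and the closure relations in Corollary~\ref{locally closed strata}, after localizing everything at $\mathfrak m$. The Hecke algebra $\mathbb T^S$ acts on $R(\pi_{HT}/G(\mathbb Q_p))_\ast \mathbb F_\ell$ because the Hecke correspondences at primes outside $S \cup \{p\}$ preserve the $p$-adic data defining $\pi_{HT}$; write $\cF_{\mathfrak m}$ for the $\mathfrak m$-localization. By Theorem~\ref{comp fiber}, the stalk $(R^i\cF_{\mathfrak m})_{\bar x}$ at a geometric point $\bar x \in \Fl_{G,\mu}^{b'}$ equals $H^i(\mathrm{Ig}^{b'},\mathbb F_\ell)_{\mathfrak m}$. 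At any finite level $K_p$, these cohomology groups are finite-dimensional because the Shimura variety is proper, so vanishing of the $\mathfrak m$-torsion is equivalent to vanishing of the $\mathfrak m$-localization; the hypothesis thus gives $\cF_{\mathfrak m}|_{\Fl_{G,\mu}^{b'}} = 0$ for every $b'$ with $\langle 2\rho,\nu_{b'}\rangle < d$.

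Since $b' \preceq b''$ forces $\langle 2\rho,\nu_{b'}\rangle \leq \langle 2\rho,\nu_{b''}\rangle$, the set $J = \{b' : \langle 2\rho,\nu_{b'}\rangle < d\}$ is downward closed under $\preceq$, so its complement is a finite upward-closed set. Writing the complement as a finite union of sets of the form $\{b''' : b'' \preceq b'''\}$ and applying Corollary~\ref{locally closed strata}, the union $Z := \bigsqcup_{b' \notin J} \Fl_{G,\mu}^{b'}$ is closed in $\Fl_{G,\mu}$, and $\mathrm{supp}(\cF_{\mathfrak m}) \subset Z$. The dimension of $Z$ is $\langle 2\rho,\mu\rangle - d$, achieved exactly by the strata with $\langle 2\rho,\nu_{b'}\rangle = d$, and $\Fl_{G,\mu}^b$ is one of them. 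Following Proposition~\ref{perverse sheaf}, choose a sufficiently small pro-$p$ subgroup $K_p \subset G(\mathbb Q_p)$ and an affinoid \'etale neighborhood $U = \mathrm{Spa}(A,A^\circ)$ of a fixed geometric point of $\Fl_{G,\mu}^b$; shrinking $U$, we may arrange that $U \cap \Fl_{G,\mu}^{b''} = \emptyset$ for $b'' \in J$, so that $U \cap \Fl_{G,\mu}^b$ is open and of maximal dimension inside $U \cap Z$.

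On the special fibre $\mathfrak U_{\bar s}$ of the formal model $\mathfrak U = \mathrm{Spf}(A^\circ)$, Proposition~\ref{perverse sheaf} (together with the exactness of $\mathfrak m$-localization, which preserves the perverse $t$-structure) asserts that the $K_p$-invariant nearby cycles of $\cF_{\mathfrak m}|_{U/K_p}$, shifted by $\langle 2\rho,\mu\rangle$, form a perverse sheaf $\cP$ on $\mathfrak U_{\bar s}$. Theorem~\ref{nearby adic} together with Theorem~\ref{comp fiber} identifies the cohomology sheaves of $\cP$ along the open locus $V \subset \mathfrak U_{\bar s}$ corresponding to $U \cap \Fl_{G,\mu}^b$ as having constant stalks $H^\ast(\mathrm{Ig}^b, \mathbb F_\ell)_{\mathfrak m}$, hence locally constant. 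A perverse sheaf on a smooth locus of dimension $k = \langle 2\rho,\mu\rangle - d$ with locally constant cohomology is concentrated in degree $-k$; undoing the shift yields $(R^i\cF_{\mathfrak m})_{\bar x} = H^i(\mathrm{Ig}^b,\mathbb F_\ell)_{\mathfrak m} = 0$ for $i \neq d$ and $\bar x \in \Fl_{G,\mu}^b$, which gives the corollary. The main obstacle is the geometric bookkeeping that transfers the perversity statement from $\mathfrak U_{\bar s}$ back to the adic side: identifying the open smooth locus $V$ with the lift of $U \cap \Fl_{G,\mu}^b$ via the finite morphism constructed in the proof of Proposition~\ref{perverse sheaf}, and verifying the local constancy of the cohomology sheaves of $\cP$ there, which follows from Theorem~\ref{comp fiber} but requires keeping track of $K_p$-invariants and the appropriate base-change isomorphisms.
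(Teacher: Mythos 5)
Your opening moves match the paper's: localize at $\mathfrak m$ rather than taking torsion (justified by writing the cohomology of $\mathrm{Ig}^b$ as a colimit of finite-dimensional pieces), observe that $\mathbb T^S$ acts on the pushforward because the prime-to-$p$ Hecke operators act trivially on $\Fl_{G,\mu}$, use Theorem~\ref{comp fiber} to see that $(R\pi_{HT\ast}\mathbb F_\ell)_{\mathfrak m}$ is supported on the closed set $Z=\Fl_{G,\mu}^{\geq d}$ of dimension $\leq \langle 2\rho,\mu\rangle-d$, and invoke the perversity of the $K_p$-invariant nearby cycles. The endgame, however, has genuine gaps. First, the claim that after shrinking $U$ one has $U\cap\Fl_{G,\mu}^{b''}=\emptyset$ for all $b''\in J$ is false in general: those strata are exactly the open complement of $Z$, and a point of $\Fl_{G,\mu}^b$ need not be interior to $Z$ (in the extreme case where $b$ is $\mu$-ordinary, $\Fl_{G,\mu}^b$ is the $0$-dimensional closed stratum while the basic stratum is open and dense, so every neighborhood meets $J$-strata). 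Second, and more seriously, there is no well-defined ``open smooth locus $V\subset\mathfrak U_{\bar s}$ corresponding to $U\cap\Fl_{G,\mu}^b$'': the Newton strata are amorphous subsets of the generic fibre, the specialization map does not carry them to locally closed (let alone smooth open) subsets of $\mathfrak U_{\bar s}$, and the stalk of $R\lambda_{U/K_p\ast}(\cdots)$ at a point $\bar x\in\mathfrak U_{\bar s}$ is the cohomology of the entire tube $\mathrm{sp}^{-1}(\bar x)\subset U_{\bar\eta}$, which may meet several strata and is \emph{not} computed by Theorem~\ref{comp fiber} for a fixed $U$. Third, ``constant stalks, hence locally constant'' is not a valid inference for constructible sheaves.

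The way to close the argument without identifying any such $V$ is a pure dimension count, which is what the paper does. Choose a rank~$1$ point $y\in\Fl_{G,\mu}^b$ with $\dim\overline{\{y\}}=\langle 2\rho,\mu\rangle-d$ (possible by the dimension formula for the strata), and a cofinal system of neighborhoods $U_i$ as in Proposition~\ref{perverse sheaf}; for $i$ large the specialization $\bar x_i\in\mathfrak U_{i,\bar s}$ of $\bar y$ also has closure of dimension $\langle 2\rho,\mu\rangle-d$. A perverse sheaf whose support has dimension $\leq k_0$ has stalk concentrated in degree $-k_0$ at any point whose closure has dimension exactly $k_0$ — such a point is a generic point of a maximal-dimensional component of the support, where the perverse sheaf is a shifted local system on a dense smooth open subset \emph{of its support} (a general fact about constructible sheaves on finite-type schemes, requiring no identification of that open subset with anything coming from the Newton stratification). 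Undoing the shift gives concentration in degree $d$ at $\bar x_i$, and the stalk you actually want is recovered as
\[
(R\pi_{HT\ast}\mathbb F_\ell)_{\mathfrak m,\bar y}=\varinjlim_i\left(R\lambda_{U_i/K_{p,i}\ast}\bigl((R(\pi_{HT}/G(\mathbb Q_p))_\ast\mathbb F_\ell)_{\mathfrak m}\bigr)|_{U_{i,\bar\eta}/K_{p,i}}\right)_{\bar x_i}\ .
\]
This last colimit identification is the step your proposal is missing, and it is also why you must work with a shrinking system of neighborhoods of a single well-chosen point rather than with a fixed $U$.
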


The idea is that the sheaf $(R\pi_{HT\ast} \mathbb F_\ell)_{\mathfrak m}$ is concentrated on a subset of dimension $\langle 2\rho,\mu\rangle - d$ by assumption. Thus, $\Fl_{G,\mu}^b$ is one of the largest strata where $(R\pi_{HT\ast} \mathbb F_\ell)_{\mathfrak m}$ is nonzero. But as this sheaf is (up to shift) perverse, one concludes by observing that on the largest stratum where a perverse sheaf is nonzero, it is concentrated in one degree. However, as the notion of perversity is defined via nearby cycles, we need to rewrite this argument slightly.

\begin{proof} As
\[
H^i(\mathrm{Ig}^b,\mathbb F_\ell) = \varinjlim_m H^i(\mathscr{I}^b_{\mathrm{Mant},m},\mathbb F_\ell)\ ,
\]
where the transition maps are split injective (namely, projections are given by averaging operators over compact open subgroups of $J_b(\mathbb Q_p)$), and the terms on the right are finite-dimensional, we see that the $\mathfrak m$-torsion is nonzero precisely when the $\mathfrak m$-localization is nonzero. Thus, we may work with the localization at $\mathfrak m$ instead.

Now $R\pi_{HT\ast} \mathbb F_\ell$ (in fact, the $G(\mathbb Q_p)$-equivariant version $R(\pi_{HT}/G(\mathbb Q_p))_\et \mathbb F_\ell$) is a sheaf of $\mathbb T_S$-modules, as the Hecke operators away from $p$ act trivially on $\Fl_{G,\mu}$. We may thus form the localization $(R\pi_{HT\ast} \mathbb F_\ell)_\mathfrak m$. We claim that $(R\pi_{HT\ast} \mathbb F_\ell)_{\mathfrak m}$ is concentrated on the union $\Fl_{G,\mu}^{\geq d}$ of $\Fl_{G,\mu}^{b^\prime}$ over all $b^\prime$ with $\langle 2\rho,\nu_{b^\prime}\rangle\geq d$ (which is a closed subset of $\Fl_{G,\mu}$).

Indeed, if $y\in \Fl_{G,\mu}$ does not lie in $\Fl_{G,\mu}^{\geq d}$, then it lies in $\Fl_{G,\mu}^{b^\prime}$ for some $b^\prime$ with $\langle 2\rho,\nu_{b^\prime}\rangle<d$. Now Theorem~\ref{comp fiber} computes the fibre of $R\pi_{HT\ast} \mathbb F_\ell$ at any geometric point above $y$ as $R\Gamma(\mathrm{Ig}^{b^\prime},\mathbb F_\ell)$. We may pass to localizations at $\mathfrak m$ in this statement, and thus the assumption of the corollary shows that the localization of $(R\pi_{HT\ast} \mathbb F_\ell)_{\mathfrak m}$ at $y$ vanishes.

Next, we claim that for any affinoid {\'e}tale $U\to \Fl_{G,\mu}$ with formal model $\mathfrak{U}$, equivariant under $K_p$, with trivial action on $\mathfrak{U}_s$, the nearby cycles
\[
R\lambda_{U/K_p\ast}\left( (R(\pi_{HT}/G(\mathbb Q_p)_\ast \mathbb F_\ell)_{\mathfrak m}\right)|_{U_{\bar{\eta}}/K_p}
\]
are supported on a closed subset of $\mathfrak{U}_{\bar{s}}$ of dimension $\langle 2\rho,\mu\rangle - d$. Indeed, the sheaf is supported on the closure in $\mathfrak{U}$ of the preimage $U^{\geq d}\subset U$ of $\Fl_{G,\mu}^{\geq d}\subset \Fl_{G,\mu}$. But $U^{\geq d}\subset U$ is a closed subset of dimension $\leq \langle 2\rho,\mu \rangle - d$, and then the same is true for its closure in $\mathfrak{U}$: If $x\in \mathfrak{U}_s$ is a point whose closure is of dimension $e$, then the closure in $U$ of any lift $\tilde{x}\in U$ of $x$ will have at least dimension $e$ (as the specialization map is specializing).

Recall that $R\lambda_{U/K_p \ast} (R(\pi_{HT}/G(\mathbb Q_p))_\ast \mathbb F_\ell)|_{U_{\bar{\eta}}/K_p}[\langle 2\rho,\mu\rangle]$ is perverse. It follows that the same is true for its localization
\[\begin{aligned}
&(R\lambda_{U/K_p \ast} (R(\pi_{HT}/G(\mathbb Q_p))_\ast \mathbb F_\ell)|_{U_{\bar{\eta}}/K_p}[\langle 2\rho,\mu\rangle])_\mathfrak{m}\\
 = &R\lambda_{U/K_p\ast}\left( (R(\pi_{HT}/G(\mathbb Q_p))_\ast \mathbb F_\ell)_{\mathfrak m}\right)|_{U_{\bar{\eta}}/K_p}[\langle 2\rho,\mu\rangle]
\end{aligned}\]
at $\mathfrak m$. This sheaf is supported on a scheme of finite type of dimension $\langle 2\rho,\mu\rangle - d$. It follows that the localization
\[
\left(R\lambda_{U/K_p\ast}\left( (R(\pi_{HT}/G(\mathbb Q_p))_\ast \mathbb F_\ell)_{\mathfrak m}\right)|_{U_{\bar{\eta}}/K_p}\right)|_{\bar{x}}
\]
at any geometric point $\bar{x}\in \mathfrak{U}_{\bar{s}}$ whose closure is of dimension $\langle 2\rho,\mu\rangle - d$ is concentrated in degree $d$.

Now pick $b$ as in the statement, and choose a rank $1$ point $y\in \Fl_{G,\mu}^b$ with $\dim \overline{\{y\}} = \langle 2\rho,\mu\rangle - d$, and a geometric point $\bar{y}$ above $y$. One has an identification
\[
(R\pi_{HT\ast} \mathbb F_\ell)_{\mathfrak m,\bar{y}} = R\Gamma(\mathrm{Ig}^b,\mathbb F_\ell)_{\mathfrak m}\ .
\]
On the other hand, choose a cofinal system of affinoid \'etale neighborhoods $U_i=\mathrm{Spa}(R_i,R_i^\circ)\to \Fl_{G,\mu}$ of $\bar{y}$ as in Proposition~\ref{perverse sheaf}, with formal models $\mathfrak{U}_i$. Let $\bar{x}_i\in \mathfrak{U}_{i,s}$ be the specialization of $\bar{y}$, which is a geometric point of $\mathfrak{U}_{i,s}$. If $i$ is large enough, the dimension of the closure of $\bar{x}_i$ will be equal to $\langle 2\rho,\mu\rangle - d$: One needs to arrange that the image of $R_i^\circ\to \cO_{K(\bar{y})}\to k(\bar{y})$, where $K(\bar{y})$ is the completed residue field at $\bar{y}$, with ring of integers $\cO_{K(\bar{y})}$ and residue field $k(\bar{y})$, contains a transcendence basis. Also, choose compact open subgroups $K_{p,i}\subset G(\mathbb Q_p)$ that act on $U_i$ and trivially on $\mathfrak{U}_{i,s}$, such that the $K_{p,i}$ shrink to $1$.

In that situation, we know that for all large enough $i$
\[
\left(R\lambda_{U_i/K_{p_i}\ast}\left( (R(\pi_{HT}/G(\mathbb Q_p))_\ast \mathbb F_\ell)_{\mathfrak m}\right)|_{U_{i,\bar{\eta}}/K_{p,i}}\right)|_{\bar{x}_i}
\]
is concentrated in degree $d$. Finally, we conclude by observing that
\[
(R\pi_{HT\ast} \mathbb F_\ell)_{\mathfrak m,\bar{y}} = \varinjlim_i \left(R\lambda_{U_i/K_{p,i}\ast}\left( (R(\pi_{HT}/G(\mathbb Q_p))_\ast \mathbb F_\ell)_{\mathfrak m}\right)|_{U_{i,\bar{\eta}}/K_{p,i}}\right)|_{\bar{x}_i}\ .
\]
\end{proof}

\subsection{A genericity assumption}

In our main theorem, we impose a genericity assumption at some auxiliary prime. In this section, we briefly study this genericity condition.

\begin{defn} Let $L$ be a $p$-adic field, and let
\[
\overline{\rho}: \mathrm{Gal}(\bar{L}/L)\to GL_n(\bar{\mathbb F}_\ell)
\]
be an unramified, continuous representation, with $\ell\neq p$. Then $\overline{\rho}$ is \emph{decomposed generic} if the eigenvalues $\lambda_1,\ldots,\lambda_n$ of $\overline{\rho}(\mathrm{Frob})$ satisfy $\lambda_a/\lambda_b\not\in \{1,q\}$ for all $a\neq b$, where $\mathrm{Frob}$ is an arithmetic Frobenius, and $q$ is the cardinality of the residue field of $L$.
\end{defn}

We note that this condition actually only depends on the semisimplification of $\overline{\rho}$, but also implies that $\overline{\rho}$ is semisimple. In particular, if
\[
\rho: \mathrm{Gal}(\bar{L}/L)\to GL_n(\bar{\mathbb Q}_\ell)
\]
is a continuous representation, the condition that the reduction $\overline{\rho}$ be decomposed generic is unambiguous.

\begin{lemma}\label{generic lifts} Assume that
\[
\rho: \mathrm{Gal}(\bar{L}/L)\to GL_n(\bar{\mathbb Q}_\ell)
\]
is a continuous representation such that the reduction $\overline{\rho}$ is decomposed generic. Then $\rho$ decomposes as a sum $\rho = \bigoplus_{i=1}^n \chi_i$ of characters, and $\chi_a/\chi_b$ is not the cyclotomic character for any $a\neq b$.
\end{lemma}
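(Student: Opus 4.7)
The plan is to reduce to showing that the image of $\rho$ lies in a maximal torus of $GL_n(\bar{\mathbb{Q}}_\ell)$. Since $\lambda_a\neq\lambda_b$ for $a\neq b$ (part of the decomposed generic hypothesis), any continuous lift $\rho(\mathrm{Frob})$ of $\bar\rho(\mathrm{Frob})$ will have $n$ pairwise distinct eigenvalues $\mu_1,\dots,\mu_n$ reducing to $\lambda_1,\dots,\lambda_n$; in particular $\rho(\mathrm{Frob})$ is regular semisimple, and its centralizer in $GL_n(\bar{\mathbb{Q}}_\ell)$ is a maximal torus $T$. Once $\rho(\mathrm{Gal}(\bar L/L))\subset T$ is established, simultaneous diagonalization in an eigenbasis of $\rho(\mathrm{Frob})$ will yield $\rho=\bigoplus_{i}\chi_i$ with $\chi_i(\mathrm{Frob})=\mu_i$. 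The cyclotomic clause is then immediate: $\chi_{\mathrm{cyc}}$ is unramified on $L$ (since $\ell\neq p$) with $\chi_{\mathrm{cyc}}(\mathrm{Frob})=q$, so $\chi_a/\chi_b=\chi_{\mathrm{cyc}}$ would force $\mu_a/\mu_b=q$ and hence $\lambda_a/\lambda_b=q$ in $\bar{\mathbb{F}}_\ell^\times$, which is excluded by decomposed generic for $a\neq b$.

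The first step toward the inclusion into $T$ is to kill monodromy. I would observe that wild inertia $P_L$ is pro-$p$ and, since $\bar\rho|_{I_L}=1$, its $\rho$-image lies in the pro-$\ell$ principal congruence subgroup of $GL_n(\mathcal{O})$ for a suitable $\ell$-adic integer ring $\mathcal{O}$; being simultaneously pro-$p$ and pro-$\ell$, it is trivial. The prime-to-$\ell$ part of tame inertia vanishes for the same reason, so $\rho|_{I_L}$ factors through the pro-$\ell$ tame generator $\tau$. Grothendieck's monodromy theorem then produces a nilpotent $N\in\mathrm{End}(V)$ with $\rho(g)=\exp(N\,t_\ell(g))$ on an open subgroup of inertia, satisfying $\rho(\mathrm{Frob})N\rho(\mathrm{Frob})^{-1}=qN$. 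If $N\neq 0$, then $q$ would be an eigenvalue of $\mathrm{ad}(\rho(\mathrm{Frob}))$ on $\mathrm{End}(V)$; but those eigenvalues are the ratios $\mu_i/\mu_j$, which cannot equal $q$ (for $i=j$ since $q\geq 2$, and for $i\neq j$ since the reduction $\lambda_i/\lambda_j=q$ is forbidden). Hence $N=0$ and $X:=\rho(\tau)$ has finite $\ell$-power order, reduces to $1$, and satisfies $\rho(\mathrm{Frob})X\rho(\mathrm{Frob})^{-1}=X^q$.

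The hard part will be to show that $X$ commutes with $\rho(\mathrm{Frob})$. To do this, I would decompose $V=\bigoplus_{\zeta}V_\zeta$ into $X$-eigenspaces, with $\zeta$ running over $\ell$-power roots of unity; the Frobenius relation forces $\rho(\mathrm{Frob})$ to map $V_\zeta$ bijectively onto $V_{\zeta^{q^{-1}}}$, permuting the eigenspaces along $\langle q\rangle$-orbits on $\mu_{\ell^{\infty}}$. Suppose for contradiction that some orbit $\mathcal{O}$ has size $m>1$, coming from an eigenvalue $\zeta$ of order $\ell^{k'}\geq \ell$ with $m=\mathrm{ord}_{\ell^{k'}}(q)$. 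On the associated $\rho(\mathrm{Frob})$-invariant block $W_{\mathcal{O}}=\bigoplus_{\zeta'\in\mathcal{O}}V_{\zeta'}$, an eigenvector $v\in V_\zeta$ of $\rho(\mathrm{Frob})^m$ with eigenvalue $\alpha$ spans together with its $\rho(\mathrm{Frob})$-iterates a cyclic block on which the eigenvalues of $\rho(\mathrm{Frob})$ take the form $\alpha^{1/m}\omega$ for $\omega\in\mu_m$; their pairwise ratios therefore exhaust $\mu_m$.

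To derive a contradiction, write $m=m_0\ell^s$ with $\gcd(m_0,\ell)=1$, so that reduction modulo $\ell$ gives a surjection $\mu_m\twoheadrightarrow\mu_{m_0}\subset\bar{\mathbb{F}}_\ell^{\times}$ with kernel $\mu_{\ell^s}$. The structure of $(\mathbb{Z}/\ell^{k'}\mathbb{Z})^{\times}$ implies $m_0=\mathrm{ord}_\ell(q)$, so $q\bmod\ell$ is a primitive $m_0$-th root of unity lying in $\mu_{m_0}$. If $m_0=1$ (equivalently $q\equiv 1\pmod\ell$), then the $m$ eigenvalues of $\rho(\mathrm{Frob})$ in the cyclic block all reduce to a single element of $\bar{\mathbb{F}}_\ell^{\times}$, contradicting the distinctness of the $\lambda_i$; if $m_0>1$, then $q\bmod\ell$ itself arises as the reduction of some ratio $\mu_i/\mu_j$ with $i\neq j$, giving $\lambda_i/\lambda_j=q$ in $\bar{\mathbb{F}}_\ell^{\times}$ and contradicting decomposed generic. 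Either way we reach a contradiction, so every orbit is a singleton, $X$ commutes with $\rho(\mathrm{Frob})$, $X\in T$, and $\rho(\mathrm{Gal}(\bar L/L))\subset T$ as needed.
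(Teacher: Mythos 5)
Your proof is correct, but it follows a genuinely different route from the paper's. The paper conjugates $\rho$ into $GL_n(\cO_K)$ so that it is diagonal modulo a uniformizer $\varpi$ and then diagonalizes it modulo successively higher powers of $\varpi$ by a deformation-theoretic induction: the obstruction at each step lies in $H^1(\mathrm{Gal}(\bar{L}/L),\overline{\chi}_a/\overline{\chi}_b)$ for $a\neq b$, and this group vanishes for an unramified character $\overline{\chi}_\lambda$ with $\lambda\notin\{1,q\}$ (by the local Euler characteristic formula and Tate duality, $\dim H^1 = \dim H^0(\overline{\chi}_\lambda)+\dim H^0(\overline{\chi}_\lambda^{-1}(1))$). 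You instead argue directly on the image: Grothendieck's monodromy theorem plus the relation $\mathrm{Ad}(\rho(\mathrm{Frob}))N=qN$ kills the nilpotent part (using $\lambda_i/\lambda_j\neq q$), and then the tame relation $F X F^{-1}=X^q$ on the finite $\ell$-power-order generator $X$ of the image of inertia is analyzed by hand, with both halves of the genericity hypothesis ($\lambda_i/\lambda_j\neq 1$ handling the case $q\equiv 1\bmod\ell$, and $\lambda_i/\lambda_j\neq q$ handling the case $\mathrm{ord}_\ell(q)>1$) ruling out nontrivial $\langle q\rangle$-orbits of $X$-eigenvalues. The paper's argument is shorter and makes transparent that $\{1,q\}$ is precisely the exceptional set dictated by local Galois cohomology of unramified characters; yours avoids Galois cohomology altogether at the price of invoking the $\ell$-adic monodromy theorem and a more intricate root-of-unity combinatorics, and it yields some extra structural information for free (the image of inertia is a finite cyclic $\ell$-group killed by $q-1$, so $\rho$ is automatically unramified unless $q\equiv 1\bmod\ell$). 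Both arguments are complete; all the steps you leave implicit (regular semisimplicity of $\rho(\mathrm{Frob})$ from distinctness of the $\lambda_i$, the identification of the prime-to-$\ell$ part $m_0$ of the orbit length with $\mathrm{ord}_\ell(q)$, and the eigenvalue ratios of a cyclic block exhausting $\mu_m$) check out.
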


In particular, the representation of $GL_n(L)$ corresponding to $\rho$ is a generic principal series representation.

\begin{proof} We may conjugate $\rho$ into $GL_n(\cO_K)$ for some finite extension $K\subset \bar{\mathbb Q}_\ell$. Writing $\overline{\rho} = \bigoplus_{i=1}^n \overline{\chi}_i$, we may further conjugate $\rho$ into the matrices in $GL_n(\cO_K)$ which are diagonal modulo a uniformizer $\varpi$ of $\cO_K$. Now we try to conjugate $\rho$ into the matrices which are diagonal modulo higher powers of $\varpi$. By standard calculations in deformation theory, the relevant obstruction groups are given by
\[
H^1(\mathrm{Gal}(\bar{L}/L),\overline{\chi}_a/\overline{\chi}_b)
\]
for $a\neq b$. But if
\[
\overline{\chi}_\lambda: \mathrm{Gal}(\bar{L}/L)\to \bar{\mathbb F}_\ell^\times
\]
denotes the unramified character sending $\mathrm{Frob}$ to $\lambda$, then it is well-known that
\[
H^1(\mathrm{Gal}(\bar{L}/L),\overline{\chi}_\lambda) = 0
\]
if $\lambda\not\in \{1,q\}$. By assumption, it follows that all relevant obstruction groups vanish.

The final statement follows because $\overline{\chi}_a/\overline{\chi}_b$ is not the cyclotomic character.
\end{proof}

\subsection{Conclusion}

Finally, we can tie everything together and prove our main theorem.

Let us recall the relevant Shimura varieties. We fix a compact Shimura variety of PEL type, associated with PEL data $(B,\ast,V,(\cdot,\cdot))$ of type A satisfying one of the following assumptions. In both cases, $F=F^+\cdot \cK$ is a CM field with totally real subfield $F^+$ containing an imaginary-quadratic field $\cK$.

\textbf{Case 1.} Assume that $B$ is a central division algebra over $F$, and $V\cong B$ is a simple $B$-module.

\textbf{Case 2.} Assume that $B=F$, $F^+\neq \mathbb Q$, the corresponding group $G$ is quasi-split at all finite places, and if a rational prime $q$ is ramified in $F$, then $F/F^+$ is split at all places above $q$.

In both cases, let $\Spl_{F/F^+}$ denote the set of rational primes $q$ such that every place of $F^+$ above $q$ splits in $F$. Moreover, fix a finite set $S$ of primes such that $F$ and $G$ are unramified outside $S$, and pick a sufficiently small compact open subgroup $K=K_S K^S\subset G(\mathbb A_f)=G(\mathbb A_S)\times G(\mathbb A_f^S)$ such that $K^S$ is a product of hyperspecial maximal compact open subgroups $K_q\subset G(\mathbb Q_q)$. In Case 2, we assume that $S\subset \Spl_{F/F^+}$. Finally, take some rational prime $\ell$. We will consider the following abstract Hecke algebra
\[
\mathbb T^S = \bigotimes_{q\in \Spl_{\cK/\mathbb Q}\setminus (S\cup \{\ell\})} \mathbb Z[G(\mathbb Q_q)//K_q]\ .
\]

\begin{thm}\label{main theorem} Let $\mathfrak m\subset \mathbb T^S$ be a maximal ideal such that
\[
H^i(S_K,\mathbb F_\ell)_{\mathfrak m}\neq 0
\]
for some $i\in \mathbb Z$.
\begin{enumerate}
\item There is a (unique) semisimple continuous Galois representation
\[
\rho_{\mathfrak m}: \mathrm{Gal}(\overline{F}/F)\to GL_n(\bar{\mathbb F}_\ell)
\]
unramified outside the places above $S\cup \{\ell\}$, such that for all finite places $v$ lying above a prime $q\in \Spl_{\cK/\mathbb Q}\setminus (S\cup \{\ell\})$, the characteristic polynomial of $\rho_{\mathfrak m}(\mathrm{Frob}_v)$ is given by the image of
\[
X^n - T_{1,v} X^{n-1} \pm \ldots + (-1)^i q_v^{i(i-1)/2} T_{i,v} X^{n-i} + \ldots + (-1)^n q_v^{n(n-1)/2} T_{n,v}
\]
under a fixed embedding $\mathbb T^S/\mathfrak m\hookrightarrow \bar{\mathbb F}_\ell$, where $q_v$ is the cardinality of the residue field at $v$, and
\[
T_{i,v}\in \mathbb Z[G(\mathbb Q_q)//K_q]
\]
is the characteristic function of
\[
GL_n(\cO_{F_v})\mathrm{diag}(\underbrace{\varpi_v,\ldots,\varpi_v}_i,1,\ldots,1)GL_n(\cO_{F_v})\times \prod_{w\neq v} GL_n(\cO_{F_w})\times \mathbb Z_p^\times
\]
inside
\[
G(\mathbb Q_q) = \prod_w GL_n(F_w)\times \mathbb Q_p^\times\ ,
\]
where $w$ runs over all places of $F$ lying over the same place of $\cK$ as $v$.

\item Assume that there is some rational prime $p\in \Spl_{\cK/\mathbb Q}\setminus (S\cup \{\ell\})$, split as $p=uu^c$ in $\cK$, and a prime $\mathfrak p|p$ of $E$ such that the following condition involving the primes $\p_i|u$ of $F$, $i=1,\ldots,m$, and the sets $S_i$ from Theorem~\ref{igusa vanishing} holds true. For any $i$, there is at most one $\tau\in S_i$ such that $p_\tau q_\tau\neq 0$; if there is such a $\tau\in S_i$, then $\rho_{\mathfrak m}$ is decomposed generic at $\p_i$. Then
\[
H^i(S_K,\mathbb F_\ell)_{\mathfrak m}\neq 0
\]
only for $i=\dim S_K$.
\end{enumerate}
\end{thm}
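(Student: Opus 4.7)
The plan is to combine the three main technical pillars of the paper. (a) Theorem~\ref{comp fiber} identifies each stalk of $R\pi_{HT\ast}\mathbb F_\ell$ over a geometric point in $\Fl^b_{G,\mu}$ with $R\Gamma(\mathrm{Ig}^b,\mathbb F_\ell)$, transporting the Hecke-equivariant study of $H^\ast(S_K,\mathbb F_\ell)_{\mathfrak m}$ to the stratified flag variety. (b) Corollary~\ref{conc in one degree}, which rests on the perversity of $R\pi_{HT\ast}\mathbb F_\ell[\langle 2\rho,\mu\rangle]$ from Proposition~\ref{perverse sheaf} together with the dimension formula $\dim \Fl^b_{G,\mu}=\langle 2\rho,\mu\rangle-\langle 2\rho,\nu_b\rangle$, forces single-degree concentration at the ``largest'' $b$ supporting the $\mathfrak m$-part. (c) Theorem~\ref{igusa vanishing} (or Corollary~\ref{igusa vanishing 2} in Case~1), which uses the trace-formula arguments of Section~\ref{Igusa varieties} and the decomposed-generic hypothesis to annihilate $[H_c(\mathscr I^b_{\mathrm{Mant}},\bar{\mathbb Q}_\ell)]^{S\ur}_r$ for every non-$\mu$-ordinary $b$.

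For part~(1), the construction of $\rho_{\mathfrak m}$ goes as follows. The Hecke-equivariant stalk description of Theorem~\ref{comp fiber} together with the identification of $R(\pi_{HT}/G(\mathbb Q_p))_\ast\mathbb F_\ell$-cohomology with $H^\ast(S_K,\mathbb F_\ell)$ shows that if $H^i(S_K,\mathbb F_\ell)_{\mathfrak m}\neq 0$ then $H^\ast(\mathrm{Ig}^b,\mathbb F_\ell)_{\mathfrak m}\neq 0$ for some $b\in B(G,\mu^{-1})$. Choose such a $b_0$ minimizing $d:=\langle 2\rho,\nu_{b_0}\rangle$. Corollary~\ref{conc in one degree} concentrates $H^\ast(\mathrm{Ig}^{b_0},\mathbb F_\ell)_{\mathfrak m}$ in degree $d$ and makes it torsion-free as $\mathbb Z_\ell$-cohomology, so $H^d(\mathrm{Ig}^{b_0},\bar{\mathbb Q}_\ell)_{\mathfrak m}\neq 0$. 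Feeding this into Shin's stable trace formula (Theorem~\ref{stable trace formula for Igusa}), the spectral identity of Proposition~\ref{comparing geometric sides} and the expansion of Lemma~\ref{expansion of igusa cohomology} (replaced by Corollary~\ref{simplified igusa cohomology} in Case~1) produces a $\theta$-stable isobaric automorphic representation $\Pi^{\vec n}$ of $\mathbb G_{\vec n}$, cohomological at infinity, whose unramified Satake parameters match $\mathfrak m$ outside $S\cup\{\ell\}$. Theorem~\ref{ex gal repr} attaches $r_{\Pi^{\vec n},\ell}\colon \Gal(\overline F/F)\to GL_n(\bar{\mathbb Q}_\ell)$ with the correct local-global compatibility, and I take $\rho_{\mathfrak m}$ to be the semisimplification of its reduction mod $\ell$; uniqueness is Chebotarev.

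For part~(2), I would argue by contradiction. Assume a minimal contributing $b_0$ is not $\mu$-ordinary. Any $\Pi^{\vec n}$ as above has $r_{\Pi^{\vec n},\ell}$ reducing to $\rho_{\mathfrak m}$; Lemma~\ref{generic lifts} then forces $r_{\Pi^{\vec n},\ell}|_{\Gal(\overline F_{\mathfrak p_i}/F_{\mathfrak p_i})}$ to split as a direct sum of characters no two of which differ by the cyclotomic character, which by local-global compatibility means $\pi_{\mathfrak p_i}^{\vec n}$ transfers to a generic principal series of $GL_n(F_{\mathfrak p_i})$. Theorem~\ref{igusa vanishing} (or Corollary~\ref{igusa vanishing 2}) then kills $BC^p([H_c(\mathscr I^{b_0}_{\mathrm{Mant}},\bar{\mathbb Q}_\ell)]^{S\ur})_r$ for every $r$ lifting $\rho_{\mathfrak m}$, so the $\mathfrak m$-part of the alternating sum vanishes. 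But Corollary~\ref{conc in one degree} had concentrated the $\mathfrak m$-part in the single degree $d$, so alternating-sum vanishing implies genuine vanishing $H^d(\mathrm{Ig}^{b_0},\bar{\mathbb Q}_\ell)_{\mathfrak m}=0$, contradiction. Hence $b_0$ is $\mu$-ordinary; since the $\mu$-ordinary element maximizes $\langle 2\rho,\nu_b\rangle$ and $b_0$ minimized it, every contributing $b$ is $\mu$-ordinary. The $\mu$-ordinary stratum of $\Fl_{G,\mu}$ is closed and zero-dimensional (Corollary~\ref{locally closed strata}, with the crucial reversed closure relations), so $(R\pi_{HT\ast}\mathbb F_\ell)_{\mathfrak m}$ is set-theoretically supported there, and the cohomology of $S_K$ reduces to that of $R\Gamma(\mathrm{Ig}^{\mu\text{-ord}},\mathbb F_\ell)_{\mathfrak m}$, concentrated in degree $d=\langle 2\rho,\bar\mu\rangle=\langle 2\rho,\mu\rangle=q_0$.

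The hardest step will be the bridge between the virtual (Grothendieck-group) vanishing delivered by Theorem~\ref{igusa vanishing} and the honest vanishing we need: this is only possible because the perversity output of Corollary~\ref{conc in one degree} places the $\mathfrak m$-part in a single cohomological degree, so cancellation cannot hide it. A related subtlety is the $G(\mathbb Q_p)$-equivariance of the perversity statement; I would handle this by working, as in Proposition~\ref{perverse sheaf}, with the equivariant sites $(\Fl_{G,\mu}/K_p)_{\et}$ and formal models on which sufficiently small pro-$p$ $K_p$ acts trivially, using that $\pi_{HT}$ is both affine (on suitable opens of the flag variety) and partially proper, so that the nearby-cycles pushforward is finite on special fibres and perversity is preserved.
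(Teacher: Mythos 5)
Your proposal follows essentially the same route as the paper: the same choice of a contributing $b$ minimizing $\langle 2\rho,\nu_b\rangle$, Corollary~\ref{conc in one degree} to pass from virtual to honest (non)vanishing, Theorem~\ref{igusa vanishing} plus Lemma~\ref{generic lifts} to rule out non-$\mu$-ordinary strata, and the Leray/Hochschild--Serre spectral sequences to return to $S_K$. The only (minor) divergence is at the very end of part~(2): rather than invoking zero-dimensionality of the $\mu$-ordinary stratum to get concentration in degree $q_0$ directly, the paper deduces only the lower bound $i\geq q_0$ from Leray and obtains the upper bound by Poincar\'e duality applied to the dual system of Hecke eigenvalues, which avoids having to control higher sheaf cohomology on the (profinite, not finite) closed stratum.
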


Before giving the proof, let us explain in two examples how the condition (2) can be ensured, thus connecting it with the conditions stated in the introduction.

\begin{remark} Assume that there is a prime $p$ which is completely decomposed in $F$ and such that $\rho_{\mathfrak m}$ is unramified and decomposed generic at all places above $p$. Using Chebotarev, there are then many such $p$, and we can assume that $p\not\in S\cup \{\ell\}$. In that case, all sets $S_i$ in (2) have just one element, and we see that the desired condition is satisfied.
\end{remark}

\begin{remark} Assume that the signature of $G$ is $(0,n)$ at all except for one infinite place. Moreover, assume that there is some finite prime $v$ of $F$ such that $\rho_{\mathfrak m}$ is unramified and decomposed generic at $v$. By Chebotarev, there are then many such $v$ which are moreover decomposed over the rational prime $p$ of $\mathbb Q$, with $p\not\in S\cup\{\ell\}$.\footnote{In Chebotarev's theorem, only places with residue field $\mathbb F_p$ contribute to the Dirichlet density.} In particular, $p$ needs to be split in $\cK$. There is just one $\tau$ for which $p_\tau q_\tau\neq 0$, and by choosing the prime $\mathfrak p$ of the reflex field correctly, one can arrange that this $\tau$ appears in $S_i$ for $\p_i = v$. We see that condition (2) applies.
\end{remark}

\begin{proof} We write out the argument in the more involved Case 2.

For part (1), pick any $p\in \Spl_{\cK/\mathbb Q}\setminus (S\cup \{\ell\})$. Then $K = K_p K^p$ is decomposed. There is a Hochschild-Serre spectral sequence relating
\[
H^i(\cS_{K^p},\mathbb F_\ell)
\]
and $H^i(S_K,\mathbb F_\ell)$.\footnote{Here and in the following, all cohomology groups are \'etale cohomology groups after base change to an algebraically closed field.} In particular, it follows that if $i$ is minimal with $H^i(S_K,\mathbb F_\ell)_{\mathfrak m}\neq 0$, then
\[
H^i(\cS_{K^p},\mathbb F_\ell)_{\mathfrak m}\neq 0\ .
\]
Thus, there is some $b\in B(G,\mu^{-1})$ such that
\[
H^i(\mathrm{Ig}^b,\mathbb F_\ell)_{\mathfrak m}\neq 0
\]
for some $i\in \mathbb Z$; otherwise we would have
\[
(R\pi_{HT\ast} \mathbb F_\ell)_{\mathfrak m} = 0\ ,
\]
and hence
\[
R\Gamma(\cS_{K^p},\mathbb F_\ell)_{\mathfrak m} = 0
\]
by the Leray spectral sequence for $\pi_{HT}: \cS_{K^p}\to \Fl_{G,\mu}$. Now pick some $b\in B(G,\mu^{-1})$ with $d=\langle 2\rho,\nu_b\rangle$ minimal such that for some $i\in \mathbb Z$
\[
H^i(\mathrm{Ig}^b,\mathbb F_\ell)_{\mathfrak m}\neq 0\ .
\]
In that case, this group is nonzero exactly for $i=d$ by Corollary~\ref{conc in one degree}. Taking invariants under a pro-$p$-compact open subgroup of $J_b(\mathbb Q_p)$ (which is an exact operation), this implies that 
\[
H^i(\mathscr{I}^b_{\mathrm{Mant},m},\mathbb F_\ell)_{\mathfrak m}
\]
is nonzero at most for $i=d$; if $m$ is large enough, it is nonzero if $i=d$. It follows that the cohomology with $\mathbb Z_\ell$-coefficients is concentrated in the middle degree and flat, and thus the $\bar{\mathbb Q}_\ell$-cohomology
\[
H^i(\mathscr{I}^b_{\mathrm{Mant},m},\mathbb Z_\ell)_{\mathfrak m}\otimes \bar{\mathbb Q}_\ell
\]
is nonzero for $i=d$. By Poincar\'e duality (and applying the same discussion with the ``dual'' set of Hecke eigenvalues), the same holds true for compactly supported cohomology. We have a decomposition
\[
[H_c(\mathscr{I}^b_{\mathrm{Mant}},\bar{\mathbb Q}_\ell)]^{S\ur} = [H_c(\mathscr{I}^b_{\mathrm{Mant}},\bar{\mathbb Q}_\ell)]^{S\ur}_{\mathfrak m} + [H_c(\mathscr{I}^b_{\mathrm{Mant}},\bar{\mathbb Q}_\ell)]^{S\ur,\mathfrak{m}}
\]
according to systems of Hecke eigenvalues lifting $\mathfrak m$, or a different set of Hecke eigenvalues modulo $\ell$, and by concentration in one degree, the first summand is nonzero in the Grothendieck group, and its base change $BC^p$ is still nonzero. It follows that there is some $\Pi^{\vec{n}}$ as in Lemma~\ref{expansion of igusa cohomology} whose Hecke eigenvalues lift $\mathfrak m$. Then Theorem~\ref{ex gal repr} implies that there is a Galois representation $r_{\Pi^{\vec{n}},\ell}$, whose reduction is the desired Galois representation $\rho_{\mathfrak{m}}$.

Now, we deal with part (2). We choose $p$ and $\mathfrak p$ as guaranteed in the statement. It is enough to prove that $H^i(S_K,\mathbb F_\ell)_{\mathfrak m}$ is nonzero only for $i\geq \dim S_K$; the other bound follows by Poincar\'e duality (and the result for the ``dual'' ideal, which satisfies the same hypothesis). Now a Hochschild-Serre spectral sequence shows that it is enough to prove that
\[
H^i(\cS_{K^p},\mathbb F_\ell)_{\mathfrak m}=0
\]
for $i<\dim S_K$. As above, we take some $b\in B(G,\mu^{-1})$ with $d=\langle 2\rho,\nu_b\rangle$ minimal such that
\[
H^i(\mathrm{Ig}^b,\mathbb F_\ell)_{\mathfrak m}\neq 0
\]
for some $i\in \mathbb Z$. We get concentration in middle degree in this case, and hence the argument above shows that there is some Galois representation $r$ lifting $\rho_{\mathfrak m}$ with
\[
BC^p([H_c(\mathscr{I}^b_{\mathrm{Mant}},\bar{\mathbb Q}_\ell)]^{S\ur})_r\neq 0\ .
\]
But by Lemma~\ref{generic lifts} and the assumptions on $p$, $\mathfrak p$ and $\rho_{\mathfrak m}$, the hypothesis of Theorem~\ref{igusa vanishing} are satisfied. Thus, if $b$ is not $\mu$-ordinary, we arrive at a contradiction. It follows that $b$ is $\mu$-ordinary.

In that case, $\langle 2\rho,\mu\rangle = \langle 2\rho,\nu_b\rangle = \dim S_K$, so Corollary~\ref{conc in one degree} shows that
\[
H^i(\mathrm{Ig}^b,\mathbb F_\ell)_{\mathfrak m}
\]
vanishes for $i<\dim S_K$, for all $b\in B(G,\mu^{-1})$. Thus, $(R^i\pi_{HT\ast} \mathbb F_\ell)_{\mathfrak m}$ vanishes for $i<\dim S_K$, and the result follows by applying the Leray spectral sequence for $\pi_{HT}: \cS_{K^p}\to \Fl_{G,\mu}$.
\end{proof}

\bibliographystyle{amsalpha} 
\bibliography{Torsionvanishing} 

\end{document}